\documentclass[11pt,a4paper]{amsart}
\usepackage[margin=2.5cm]{geometry}
\usepackage{longtable,booktabs}
\usepackage{mathtools}
\usepackage[hidelinks]{hyperref}
\usepackage[nameinlink]{cleveref}
\usepackage{stackrel}
\usepackage{todonotes}


\setlength\emergencystretch{\hsize}

\usepackage{multirow}
\usepackage[normalem]{ulem}

\usepackage{amsthm}

\usepackage{graphicx}
\usepackage{braket}
\usepackage{enumitem}
\usepackage[all,cmtip]{xy}
\usepackage{rotating}

\usepackage[cmyk,table]{xcolor}
\usepackage{tikz}
\usetikzlibrary{positioning}
\usetikzlibrary{matrix,arrows,decorations.pathreplacing,decorations.pathmorphing,cd,calc,patterns,patterns.meta}
\usepackage{comment}

\usepackage{accents}

\usepackage{amssymb}
\usepackage{amsmath}
\usepackage{mathrsfs}
\usepackage{mathbbol}

\usepackage{bbm}




\definecolor{refkey}{gray}{.3}
\definecolor{labelkey}{gray}{.3}

\newcommand{\Sym}{\mathrm{Sym}}

\newcommand{\rO}{\mathrm{O}}

\newcommand{\pr}{\mathrm{pr}}

\newcommand{\Rad}{\mathrm{Rad}}
\newcommand{\Res}{\mathrm{Res}}

\newcommand{\Aut}{\mathrm{Aut}}

\newcommand{\KZ}{K_0}

\DeclareMathOperator{\Mod}{Mod}

\def\olF{\overline{F}}

\def\barbB{\overline{\bB}}
\def\barbH{\overline{\bH}}

\def\Stab{{\rm Stab}}

\def\id{{\rm id}}

\def\sgn{{\rm sgn}}

\makeatletter
\def\inn#1#2{\left\langle 
\def\ta{#1}\def\tb{#2}
\ifx\ta\@empty{\;} \else {\ta}\fi ,
\ifx\tb\@empty{\;} \else {\tb}\fi
\right\rangle} 
\makeatother


\makeatletter
 
\makeatother

\def\innvv#1#2{\inn{#1}{#2}_{\bV}}



\def\Sp{{\rm Sp}}
\def\Rep{\mathcal{R}}
\def\Rep{\mathrm{Rep}}

\def\SL{{\rm SL}}
\def\sl{{\frak{sl}}}
\def\O{{\rm O}}

\def\SO{{\rm SO}}
\def\det{{\rm det}}

\def\NV{{\cN_{\bV}}}
\def\tNV{{\widetilde{\cN}_{\bV}}}

\usepackage{xparse}
\def\usecsname#1{\csname #1\endcsname}
\def\useLetter#1{#1}
\def\usedbletter#1{#1#1}

\usepackage{stackengine}
\stackMath
\newcommand\tsup[2][2]{%
 \def\useanchorwidth{T}%
  \ifnum#1>1%
    \stackon[-1.3ex]{\tsup[\numexpr#1-1\relax]{#2}}{\mathchar"307E}%
  \else%
    \stackon[-1ex]{#2}{\mathchar"307E}%
  \fi%
}

\ExplSyntaxOn 

\def\mydefol#1#2#3{\expandafter\def\csname ol#3{#1}\endcsname{\overline{#2{#1}}}}
\def\mydefbar#1#2#3{\expandafter\def\csname bar#3{#1}\endcsname{\bar{#2{#1}}}}
\def\mydefhat#1#2#3{\expandafter\def\csname hat#3{#1}\endcsname{\hat{#2{#1}}}}
\def\mydefwh#1#2#3{\expandafter\def\csname wh#3{#1}\endcsname{\widehat{#2{#1}}}}
\def\mydeft#1#2#3{\expandafter\def\csname t#3{#1}\endcsname{\tilde{#2{#1}}}}
\def\mydefu#1#2#3{\expandafter\def\csname u#3{#1}\endcsname{\underline{#2{#1}}}}
\def\mydefr#1#2#3{\expandafter\def\csname r#3{#1}\endcsname{\mathrm{#2{#1}}}}
\def\mydefb#1#2#3{\expandafter\def\csname b#3{#1}\endcsname{\mathbb{#2{#1}}}}
\def\mydefwt#1#2#3{\expandafter\def\csname wt#3{#1}\endcsname{{\widetilde{#2{#1}}}}}
\def\mydeff#1#2#3{\expandafter\def\csname f#3{#1}\endcsname{\mathfrak{#2{#1}}}}
\def\mydefbf#1#2#3{\expandafter\def\csname bf#3{#1}\endcsname{\mathbf{#2{#1}}}}
\def\mydefc#1#2#3{\expandafter\def\csname c#3{#1}\endcsname{\mathcal{#2{#1}}}}
\def\mydefsf#1#2#3{\expandafter\def\csname sf#3{#1}\endcsname{\mathsf{#2{#1}}}}
\def\mydefs#1#2#3{\expandafter\def\csname s#3{#1}\endcsname{\mathscr{#2{#1}}}}
\def\mydefcks#1#2#3{\expandafter\def\csname cks#3{#1}\endcsname{{\check{
\csname s#2{#1}\endcsname}}}}
\def\mydefckc#1#2#3{\expandafter\def\csname ckc#3{#1}\endcsname{{\check{
\csname c#2{#1}\endcsname}}}}
\def\mydefck#1#2#3{\expandafter\def\csname ck#3{#1}\endcsname{{\check{#2{#1}}}}}

\NewDocumentCommand{\doGreek}{m}
{
\clist_map_inline:nn {alpha,beta,gamma,Gamma,delta,Delta,epsilon,varepsilon,zeta,eta,theta,vartheta,Theta,iota,kappa,lambda,Lambda,mu,nu,xi,Xi,pi,Pi,rho,sigma,varsigma,Sigma,tau,upsilon,Upsilon,phi,varphi,Phi,chi,psi,Psi,omega,Omega,tG} {#1{##1}{\usecsname}{\useLetter}} 
}

\NewDocumentCommand{\doAtZ}{m}
{
\clist_map_inline:nn {A,B,C,D,E,F,G,H,I,J,K,L,M,N,O,P,Q,R,S,T,U,V,W,X,Y,Z} {#1{##1}{\useLetter}{\useLetter}} 
}
\NewDocumentCommand{\doatz}{m}
{
\clist_map_inline:nn {a,b,c,d,e,f,g,h,i,j,k,l,m,n,o,p,q,r,s,t,u,v,w,x,y,z} {#1{##1}{\useLetter}{\usedbletter}} 
}

\NewDocumentCommand{\doallAtZ}{}
{
\clist_map_inline:nn {mydefol,mydefsf,mydeft,mydefu,mydefwh,mydefhat,mydefr,mydefwt,mydeff,mydefb,mydefbf,mydefc,mydefs,mydefck,mydefcks,mydefckc,mydefbar} {\doAtZ{\csname ##1\endcsname}}
}
\NewDocumentCommand{\doallatz}{}
{
\clist_map_inline:nn {mydefol,mydefsf,mydeft,mydefu,mydefwh,mydefhat,mydefr,mydefwt,mydeff,mydefb,mydefbf,mydefc,mydefs,mydefck,mydefbar} {\doatz{\csname ##1\endcsname}}
}

\NewDocumentCommand{\doallGreek}{}
{
\clist_map_inline:nn {mydefol,mydefck,mydefwt,mydeft,mydefwh,mydefbar,mydefu} {\doGreek{\csname ##1\endcsname}}
}

\NewDocumentCommand{\doGroups}{m}
{
\clist_map_inline:nn {GL,Sp,rO,rU,fgl,fsp,foo,fuu,fkk,fuu,ufkk,uK} {#1{##1}{\usecsname}{\useLetter}} 
}
\NewDocumentCommand{\doallGroups}{}
{
\clist_map_inline:nn {mydefol,mydeft,mydefu,mydefwh,mydefhat,mydefwt,mydefck,mydefbar} {\doGroups{\csname ##1\endcsname}}
}

\doallAtZ
\doallatz
\doallGreek
\doallGroups
\ExplSyntaxOff

\def\GL{\mathrm{GL}}

\def\Id{\mathrm{Id}}

\def\Ker{{\rm Ker}\,}
\def\Lie{{\rm Lie}}
\def\Im{{\rm Im\,}}
\def\Mat{{\rm Mat}}
\def\Ind{{\rm Ind}}

\DeclareMathOperator{\Ad}{Ad}
\DeclareMathOperator{\Spec}{Spec}
\DeclareMathOperator{\Hom}{Hom}
\DeclareMathOperator{\End}{End}
\DeclareMathOperator{\Irr}{Irr}
\DeclareMathOperator{\Herm}{Herm}

\DeclareMathOperator{\IC}{IC}

\def\Supp{\mathrm{Supp}}
\def\Sym{\mathrm{Sym}}

\def\Gr{\mathrm{Gr\,}}

\DeclareMathOperator{\rank}{rank}

\DeclareMathOperator{\Gal}{Gal}

\DeclareMathOperator{\St}{St}


\long\def\mjj#1{{{\color{purple}MJJ: #1}}}
\long\def\qcl#1{{{\color{green}QCL: #1}}}
\long\def\Yun#1{{{\color{brown}Yun: #1}}}
\def\LSG{\widehat{\mathcal{N}}_G}
\def\LSGO{\widehat{\mathcal{N}}_{G^\circ}}
\def\LSGS{\widehat{\mathcal{N}}_{G^{\star}}}
\def\LSGi{\widehat{\mathcal{N}}_{G_i}}

\newcommand{\zjl}[2][]{\if\relax\detokenize{#1}\relax{\color{blue}\vspace{0em}{ZJL:}#2}\else\ifx#1h\relax\else{\color{blue}\vspace{0em}{ZJL}#2}\fi\fi
}


\long\def\delete#1{}


\newcommand{\trivial}[2][]{\if\relax\detokenize{#1}\relax {\color{red} \vspace{0em} {[} #2 {]}} \else\ifx#1h\ifcsname showtrivial\endcsname{\color{orange} \vspace{0em} {[}  #2 {]}}\fi\else {\red Wrong argument!} \fi\fi}

\def\Hom{{\mathrm{Hom}}}

\def\Stab{{\mathrm{Stab}}}
\def\Sp{{\mathrm{Sp}}}

\def\half{{\frac{1}{2}}}

\NewDocumentCommand\cent{m o}{ 
\IfValueTF{#2}{\mathrm{Z}_{#1}({#2})
}{\mathrm{Z}({#1})}
}

\def\bcB{\overline{\cB}}
\def\bcH{\overline{\cH}}
\def\bcM{\overline{\cM}}

\def\Dtate{D^{\mathrm{Tate}}}

\def\NL{\cN_{\bL}}

\def\Ql{\overline{\bQ_\ell}}

\def\mydefdb#1#2#3{\expandafter\def\csname db#3{#1}\endcsname{\breve{#2{#1}}}}
\doGreek{\mydefdb}
\doAtZ{\mydefdb}
\doatz{\mydefdb}

\def\TTidx#1#2{\,{}^{#1}\hspace{-0.1em}#2} 

\def\any{\smblksquare}
\def\mydefTT#1#2#3{
\expandafter\def\csname ii#3{#1}\endcsname{\TTidx{i}{#2{#1}}}
\expandafter\def\csname jj#3{#1}\endcsname{\TTidx{j}{#2{#1}}}
\expandafter\def\csname zz#3{#1}\endcsname{\TTidx{0}{#2{#1}}}
\expandafter\def\csname ll#3{#1}\endcsname{\TTidx{l}{#2{#1}}}
\expandafter\def\csname aa#3{#1}\endcsname{\TTidx{a}{#2{#1}}}
\expandafter\def\csname bb#3{#1}\endcsname{\TTidx{b}{#2{#1}}}
\expandafter\def\csname oo#3{#1}\endcsname{\TTidx{1}{#2{#1}}}
\expandafter\def\csname ss#3{#1}\endcsname{\TTidx{\boxslash}{#2{#1}}}
\expandafter\def\csname dg#3{#1}\endcsname{\TTidx{\boxbackslash}{#2{#1}}}
\expandafter\def\csname any#3{#1}\endcsname{\TTidx{\any}{#2{#1}}}
}
\def\usecsname#1{\csname #1\endcsname}
\def\useLetter#1{#1}
\def\usedbletter#1{#1#1}

 \def\ol{\overline}
\doGreek{\mydefTT}
\doAtZ{\mydefTT}
\doatz{\mydefTT}

\mydefTT{cTp}{\usecsname}{\useLetter}
\mydefTT{sB}{\usecsname}{\useLetter}
\mydefTT{sL}{\usecsname}{\useLetter}
\mydefTT{tD}{\usecsname}{\useLetter}
\mydefTT{tSigma}{\usecsname}{\useLetter}
\mydefTT{Sigmap}{\usecsname}{\useLetter}
\mydefTT{ckGamma}{\usecsname}{\useLetter}
\mydefTT{Gammap}{\usecsname}{\useLetter}
\mydefTT{OmegaK}{\usecsname}{\useLetter}
\mydefTT{etanD}{\usecsname}{\useLetter}
\mydefTT{teta}{\usecsname}{\useLetter}
\mydefTT{dbkappa}{\usecsname}{\useLetter}
\mydefTT{bomega}{\usecsname}{\useLetter}
\mydefTT{biota}{\usecsname}{\useLetter}
\mydefTT{nrho}{\usecsname}{\useLetter}
\mydefTT{nnrho}{\usecsname}{\useLetter}
\mydefTT{dbrho}{\usecsname}{\useLetter}
\mydefTT{bfbb}{\usecsname}{\useLetter}
\mydefTT{sL}{\usecsname}{\useLetter}
\mydefTT{sLp}{\usecsname}{\useLetter}
\mydefTT{lD}{\usecsname}{\useLetter}
\mydefTT{nD}{\usecsname}{\useLetter}
\mydefTT{tnD}{\usecsname}{\useLetter}
\mydefTT{Vp}{\usecsname}{\useLetter}
\mydefTT{bfW}{\usecsname}{\useLetter}
\mydefTT{dbK}{\usecsname}{\useLetter}
\mydefTT{dbeta}{\usecsname}{\useLetter}
\mydefTT{dbG}{\usecsname}{\useLetter}
\mydefTT{dbJ}{\usecsname}{\useLetter}
\mydefTT{SK}{\usecsname}{\useLetter}
\mydefTT{ckG}{\usecsname}{\useLetter}
\mydefTT{End}{\usecsname}{\useLetter}
\mydefTT{fgg}{\usecsname}{\useLetter}
\mydefTT{fggp}{\usecsname}{\useLetter}
\mydefTT{sH}{\usecsname}{\useLetter}

\newcommand{\BTB}[2][]{\if\relax\detokenize{#1}\relax
\cB(#2)
\else 
\cB(#2,#1)
\fi
}
\newcommand{\rBTB}[2][]{\if\relax\detokenize{#1}\relax
\cB_{\rm red}(#2)
\else 
\cB_{\rm red}(#2,#1)
\fi
}

\newcommand{\remove}[1]{\relax}

\newcommand{\tr}{\mathrm{tr}}
\newcommand{\diag}{\mathrm{diag}}
\newcommand{\red}{\color{red}}

\crefformat{equation}{(#2#1#3)}

\crefformat{enumi}{(#2#1#3)}
\crefformat{enumi}{(#2#1#3)}
\Crefformat{enumi}{part (#2#1#3)}
\crefmultiformat{enumi}{(#2#1#3)}{ and~(#2#1#3)}{, (#2#1#3)}{ and~(#2#1#3)}
\Crefmultiformat{enumi}{part~(#2#1#3)}{ and~(#2#1#3)}{, (#2#1#3)}{ and~(#2#1#3)}
\crefrangeformat{enumi}{(#3#1#4) to~(#5#2#6)}
\Crefrangeformat{enumi}{part~(#3#1#4) to~(#5#2#6)}
\crefformat{rmk}{remark~#2#1#3}

\crefformat{thmM}{main theorem}
\Crefformat{thmM}{Main Theorem}
\newtheorem{thm}{Theorem}[section]

\newtheorem{conv}[thm]{Convention}

\newtheorem{lemma}[thm]{Lemma}
\newtheorem{example}[thm]{Example}
\newtheorem{ass}[thm]{Assumption}
\newtheorem{remark}[thm]{Remark}

\newtheorem*{lemma*}{Lemma}
\newtheorem{prop}[thm]{Proposition}

\newtheorem{cor}[thm]{Corollary}
\newtheorem{Exa}[thm]{Example}
\newtheorem{Cor}[thm]{Corollary}

\newtheorem{claim}{Claim}
\newtheorem*{claim*}{Claim}

\theoremstyle{definition}

\newtheorem{defn}[thm]{Definition}

\newtheorem{warning}[thm]{Warning}

\crefformat{del}{{\mbox{\red Deleted reference!}}}
\Crefformat{del}{{\mbox{\red Deleted reference!}}}


\newlist{enumR}{enumerate}{1} 
\setlist[enumR]{wide,label=\arabic*.}

\newlist{enumC}{enumerate}{3} 
\setlist[enumC]{label=(\alph*)}
\newlist{enumP}{enumerate}{3} 
\setlist[enumP]{label=(\roman*)}
\setlist[enumP,1]{label=(\roman*)}
\setlist[enumP,2]{label=(\alph*)}
\setlist[enumP,3]{label=(\arabic*)}
\newlist{enumPF}{enumerate}{3}
\setlist[enumPF]{label=(\roman*),wide}
\setlist[enumPF,1]{label=(\roman*),wide}
\setlist[enumPF,2]{label=(\alph*)}
\setlist[enumPF,3]{label=\arabic*).}
\newlist{enumS}{enumerate}{3} 
\setlist[enumS]{label=\roman*)}
\setlist[enumS,1]{label=\roman*)}
\setlist[enumS,2]{label=\alph*)}
\setlist[enumS,3]{label=\arabic*.}
\newlist{enumIL}{enumerate*}{1} 
\setlist*[enumIL]{label=\roman*)}
\newlist{enumI}{itemize}{1} 
\setlist*[enumI]{label=\textbullet}

\newlist{enumST}{enumerate}{2} 
\setlist*[enumST,1]{wide,font=\bfseries,label=Step~\arabic*,leftmargin=2em}
\setlist*[enumST,2]{wide,font=\bfseries, label*=.\arabic*,leftmargin=2em}

\def\CC{\sfC}


\newcommand{\VV}{\mathbb{V}}

\newcommand{\DD}{\mathbb{D}}
\newcommand{\ZZ}{\mathbb{Z}}
 
\newcommand{\frb}{\mathfrak{b}}
\newcommand{\frg}{\mathfrak{g}}
\newcommand{\frh}{\mathfrak{h}}
\newcommand{\frk}{\mathfrak{k}}
\newcommand{\frl}{\mathfrak{l}}
\newcommand{\frn}{\mathfrak{n}}
\newcommand{\fru}{\mathfrak{u}}
\newcommand{\frp}{\mathfrak{p}}

\newcommand{\Four}{\mathrm{Four}}

\newcommand{\ov}{\overline}
\newcommand{\ot}{\otimes}
\def\wt#1{{\widetilde{#1}}}
\newcommand{\bt}{\boxtimes}
\newcommand{\Qlbar}{\overline{\mathbb{Q}}_{\ell}}
\newcommand{\bij}{\longleftrightarrow}
\newcommand{\Gm}{\mathbb{G}_{m}}

\newtheorem{exam}[thm]{Example}
\newtheorem{cons}[thm]{Construction}


\renewcommand\a\alpha
\renewcommand\b\beta
\newcommand\G\Gamma
\newcommand\g\gamma
\renewcommand\d\delta
\newcommand\D{\Delta}
\newcommand{\e}{\epsilon}
\newcommand{\io}{\iota}

\newcommand{\ph}{\varphi}
\renewcommand{\r}{\rho}
\newcommand{\s}{\sigma}
\renewcommand{\t}{\tau}

\newcommand{\z}{\zeta}
\newcommand{\ep}{\epsilon}

\renewcommand{\l}{\lambda}
\renewcommand{\L}{\Lambda}
\newcommand{\om}{\omega}

\newcommand{\Sig}{\Sigma}
\newcommand{\Fr}{\mathrm{Fr}}
\newcommand{\AS}{\mathcal{AS}}

\def\NLC{\cN_{\bL}^{\bC}}
\def\PM{{\mathrm{PM}}}
\def\SPM{{\mathrm{SPM}}}
\def\add{{\mathrm{add}}}
\newcommand\chM[2]{\mathrm{H}_{c}^{#1}(#2)}
\newcommand\hBM[2]{\mathrm{H}^{\mathrm{BM}}_{#1}(#2)}
\newcommand\xr{\xrightarrow}
\newcommand\cohog[2]{\mathrm{H}^{#1}(#2)}

\newcommand{\Corr}{\mathrm{Corr}}
\newcommand{\Ext}{\mathrm{Ext}}
\newcommand{\rs}{\mathrm{rs}}

\author{Jia-jun Ma}
\address{
School of Mathematical Sciences, Xiamen University,
Xiamen China
}

\email{hoxide@xmu.edu.cn}

\author{Congling Qiu}
\address{
Department of Mathematics, 77 Massachusetts Avenue, Cambridge, MA 02139, USA 
}
\email{qiuc@mit.edu}

\author{Zhiwei Yun}
\address{
Department of Mathematics, 77 Massachusetts Avenue, Cambridge, MA 02139, USA
}
\email{zyun@mit.edu}

\author{Jialiang Zou}
\address{
Department of Mathematics, 77 Massachusetts Avenue, Cambridge, MA 02139, USA
}
\email{jlzou@mit.edu}

\def\CC{\mathrm{CC}}

\subjclass{22E46, 22E47}

\numberwithin{equation}{section}


\title[]{Theta correspondence and Springer correspondence}

 

\def\ch{\mathrm{ch}}
\def\chA{\mathrm{ch^A}}

\def\AND{\quad\text{and}\quad}
\providecommand{\Span}{\mathrm{Span}}


\def\Ak'{A'_{k-1}}
\def\cAk'{\cA'_{k-1}}

\def\PGL{{\mathrm{PGL}}}

\def\RS{\mathrm{RS}}
\def\tcN{{\widetilde{\cN}}}
\def\tcG{{\widetilde{\mathfrak g}}}
\def\tcR{{\widehat{\cR}}}
\def\btcN{{\widetilde{\ov \cN}}}

\def\Herm{\mathrm{Herm}}
\def\pt{\mathrm{pt}}
\def\Supp{\mathrm{Supp}\,}
\def\For {\mathrm {For}}

\def\Znu{\bZ[\nu^{\half},\nu^{-\half}]}

\def\NLone{\cN_{\bL_1}}
\def\NLtwo{\cN_{\bL_2}}

\def\Dmb#1{{D_{m,#1}^b}}
\def\Db#1{{D_{#1}^b}}
\def\starbarB{\stackrel[\barB]{}{\star}}
\def\starbarbB{\stackrel[{\barbB}]{}{\star}}
\def\starB{\stackrel[B]{}{\star}}
\def\DB{D^b_{m,B}}

\def\Dpari{D^{pari}}
\def\Mpari{\cM^{pari}}
\def\Hpari{\cH^{pari}}
\def\bHpari{\overline{\cH}^{pari}}

\def\Perv{{\mathrm{Perv}}}

\def\tcF{\widetilde{\cF}}

\def\tsG{\widetilde{\sG}}

\def\Semi{\mathrm{Semi}}
\def\brH{\overline{\mathrm{H}}}
\def\NLone{\cN_{\bL_1}}
\def\NLtwo{\cN_{\bL_2}}
\def\CM{\mathrm{FT}}

\def\DbarBB{D_{m,\barB_1\times \barB_2}^b}
\def\DbarBB{D_{m,\barB}^b}
\def\GmBB{\Gm\times \barB_1\times \barB_2}
\def\GmBB{\Gm\times \barB}
\def\BB{\barB_1\times \barB_2}
\def\DGmBB{D_{m,\GmBB}^b}
\def\kk{\mathbb{k}}

\def\wtfll{{\widetilde{\fll}}}
\def\wtfgg{{\widetilde{\fgg}}}

\def\oddeven{\mathrm{OE}}
\def\evenodd{\mathrm{EO}}
\def\eveneven{\mathrm{EE}}
\def\oddodd{\mathrm{OO}}
\def\odd{\mathrm{O}}
\def\even{\mathrm{E}}

\def\bL{{\mathbb L}}

\tikzcdset{
  cells={font=\everymath\expandafter{\the\everymath\displaystyle}},
}
\def\WP{{W_P}}
\def\WL{{W_{L}}}

\def\quot#1#2{{\frac{#1}{#2}}}
\def\quot#1#2{{{#1}{}}}

\def\rhors{\rho^{rs}}

\def\barG{\overline{G}}
\def\barB{\overline{B}}

\def\PP{\mathrm{PP}}
\def\SPP{\mathrm{SPP}}

\begin{document}

\def\sfPp{\sfP_{V'^+}}
\def\xiVpp{\xi_{V'^+}}
\def\sfbb{\bW}
\def\disc{\mathrm{disc}}

\def\lVp{l'}
\def\lWp{l}
\def\ZV{Z'}
\def\ZW{Z}
\def\PV{P'}
\def\PW{P}
\def\LV{L'}
\def\LW{L}
\def\UV{U'}
\def\UW{U}

\def\cksigmaP{\cksigma_{P}}
\def\cksigmaPp{\cksigma'_{P'}}
\def\sigmaP{\sigma_{l}}
\def\sigmaL{\sigma_{L}}
\def\tsigmaL{\widetilde{\sigma}_{L}}
\def\sigmaPp{\sigma'_{l'}}

\def\tsgn{\widetilde{\sgn}}
\def\sigmaPW{\sigma_{\PW}}
\def\sigmaPV{\sigma'_{\PV}}
\def\eV{\varepsilon_V}

\def\Mp{\mathrm{Mp}}

\def\SPM{\mathrm{SPM}}
\def\Tate{\mathrm{Tate}}
\def\bcF{\overline{\cF}}

\def\barX{\overline{X}}

\def\Iso{\mathrm{Iso}}
\def\CIC{{\mathcal{IC}}}
\def\IC{{\mathrm{IC}}}
\def\ICone{{\mathrm{{}^1IC}}}
\def\ICtwo{{\mathrm{{}^2IC}}}
\def\barIC{\overline{\mathrm{IC}}}
\def\DBB{D_{\barB_1\times B_2}(\cN_{\bL})}
\def\piSpr{\pi^{\mathrm{Spr}}}
\def\NCC{\mathrm{cc}}
\def\cNC{{\cN^\bC}}
\def\cNF{{\cN^{\olF}}}
\def\HBM{{\textup{H}_{\textup{top}}^{\textup{BM}}}}

\def\deltaan{\delta}

\def\Weil#1{{{\omega}_{\ensuremath{#1}}}}
\def\tNM{\widetilde{\mathcal{N}}_{M}}
\def\tNV{\widetilde{\mathcal{N}}_{\mathbb V}}
\def\tNVs{\widetilde{\mathcal{N}}_{\mathbb V, s}}
\def\ttNVs{\accentset{\approx}{\mathcal{N}}_{\mathbb V, s}}
\def\NV{{\mathcal{N}}_{\mathbb V}}
\def\OSP{\mathcal {OSP}}
\def\ROSP{\mathcal {ROSP}}
\def\BP{{\mathcal {DP}}}
\def\RBP{{\mathcal {RDP}}}
\def\RT{{\mathcal {RT}}}
\def\RQ{{\mathcal {RQ}}}
\def\bsfH{\overline{\sfH}}
\def\res{{\mathrm{res}}}
\def\hS{{\mathscr{X}}}
\def\hSX{{\mathscr{X}}}
\def\hSXY{{\mathscr{X}}}
\def\bH{\mathbb H}
\def\LS{\mathrm{LS}}
\def\RS{\mathrm{RS}}
\def\LM{\mathrm{LM}}
\def\RM{\mathrm{RM}}

\def\LXY{\Lambda(X,Y)}
\def\cRX{\wt {\mathcal R}^{\diamond}}

\def\cNCL#1{\cN_{\bL_{#1}}^\bC}

\def\SRHom{\mathcal{R}\mathcal{H}om}

\def\cRV{\mathcal{R}^{\heartsuit}_{G_1\times G_2}(\mathbb V)}
\def\cRL{\mathcal{R}^{\heartsuit}_{\ov G_1\times \ov G_2}(\ov \bL)}

\def\BC{B^{\bC}}
\def\cBC{\cB^{\bC}}
\def\barBC{\overline{B}^{\bC}}
\def\cNC{\mathcal{N}^{\bC}}
\def\fnnC{\mathfrak{n}^{\bC}}
\def\act{\mathrm{act}}
\def\ckact{\check{\mathrm{act}}}
\def\FT{\mathrm{FT}}
\def\SemiBBNL{\Semi_{\barB_1\times B_2}(\cN_{\bL_1})}
\def\lsim{{\sim \hspace{-.8em} \raisebox{-0.4em}{\tiny{L}}\hspace{.2em}}}
\def\piCC{\pi^{\CC}}
\def\Mod#1{\mathrm{Mod}(#1)}
\def\AS{\mathrm{AS}}
\def\BS{\mathrm{BS}}
\def\*S{\mathrm{*Sym}}
\def\AJ{\mathrm{AJ}}
\def\BJ{\mathrm{BJ}}
\def\AM{\mathrm{AM}}
\def\BM{\mathrm{BM}}
\newcommand{\bsfM}{\overline{\mathsf{M}}}
\newcommand{\mix}{\mathrm{mix}}
\newcommand{\bs}{\backslash}
\newcommand{\Frob}{\mathrm{Frob}}
\newcommand{\un}{\underline}
\newcommand{\one}{\mathbf{1}}
\newcommand{\wh}{\widehat}
\newcommand{\isom}{\xrightarrow{\sim}}
\newcommand{\Spr}{\textup{Spr}}
\def\ptau{{}^p\tau}

\newcommand{\bigset}[1]{\left\{\,\begin{aligned} #1 \end{aligned}\,\right\}}

\newcommand{\Comp}{\mathrm{Comp}}

\newcommand{\hs}{\heartsuit}
\newcommand{\spd}{\spadesuit}

\def\cNL#1{\cN_{\bL_{#1}}}
\def\Orb#1#2{\underline{#1 \backslash #2}}

\def\TOP{\mathrm{top}}

\maketitle
\setcounter{tocdepth}{1}

\newcommand{\dashedbackslash}{%
  \tikz[baseline, x=1.2ex, y=1.2ex, line width=0.1ex, dash pattern=on 0.4ex off 0.3ex]{
  \draw (0.6,0) -- (0,1.5); 
  }\,%
}

\newcommand{\dashedslash}{%
  \tikz[baseline, x=1.2ex, y=1.2ex, line width=0.1ex, dash pattern=on 0.4ex off 0.3ex]{
    \draw (0,0) -- (0.6,1.5); 
  }%
}

\tikzset{
  wgraph/node/.style={draw, thick, circle, minimum size=14mm, align=center},
  wgraph/edge/.style={-Latex, line width=0.9pt}
}

\newcommand{\WNode}[5][]{%
  \node[wgraph/node] (#2) at #3 {#4};
  \node[#1=2pt of #2] {\scriptsize #5};
}

\newcommand{\WEdge}[3]{%
  \ifnum#3=0
    \draw[wgraph/edge] (#1) -- (#2);
  \else
    \draw[wgraph/edge] (#1) to[bend left=#3] (#2);
  \fi
}
\newcommand{\WEdgeBoth}[3]{%
  \ifnum#3=0
    \draw[<->,thick] (#1) -- (#2);
  \else
    \draw[<->,thick,bend left=#3] (#1) to (#2);
  \fi
}

\section*{Abstract}
In this paper we obtain an explicit formula for the theta correspondence of unipotent principal-series representations between an even orthogonal and a symplectic group or between general linear groups over a finite field. The formula is in terms of the Springer correspondence. Along the way we prove general results about module categories of Hecke categories arising from spherical varieties, and give a similar formula for the multiplicities of unipotent principal series representations in the function space of the spherical variety in terms of relative Springer theory.

 \tableofcontents 
\section{Introduction}
Let $\bF_q$ be a finite field. Let $(G_1,G_2)$ be a reductive dual pair over $\bF_q$ in either of the following cases:
\begin{itemize}
    \item $G_1=\rO(V_1)$ is the full orthogonal group on an even dimensional {\em split} orthogonal space $(V_1,\langle ,\rangle_{V_1})$, and $G_2=\Sp(V_2)$ is the symplectic group on $(V_2,\langle ,\rangle_{V_2})$. In this case, we require the characteristic of $\bF_q$ is not $2$;
    \item $G_1$ and $G_2$ are both general linear groups.
\end{itemize}
Let $W_i$ be the Weyl group of $G_i$.

The theta correspondence restricts to a correspondence between irreducible unipotent principal series representations of $G_1(\bF_q)$ and $G_2(\bF_q)$, which are known to be in natural bijection with $\Irr(W_1)$ and $\Irr(W_2)$ (irreducible complex representations of $W_1$ and $W_2$). The main result of this paper is a description of the theta correspondence for unipotent principal series representations of $G_1(\bF_q)$ and $G_2(\bF_q)$ as a correspondence between $\Irr(W_1)$ and $\Irr(W_2)$ via Springer theory, namely via nilpotent orbits of $G_1$ and $G_2$ and local systems on them.

For the introduction, we will focus on the case of the ortho-symplectic dual pair, where the main result is Theorem \ref{thm: Springer for theta}.




\subsection{Theta correspondence for unipotent principal series}\label{theta correspondence finite field}

Let $(G_1=\rO(V_1),G_2=\Sp(V_2))$ be an ortho-symplectic dual pair where $V_1$ is split with $\dim V_1=2m$ 
 and $\dim V_2=2n$. 

For any finite group $\Gamma$ or a finite dimensional $\bC$-algebra $A$, let $\Irr(\Gamma)$ or $\Irr(A)$ denote the set of isomorphism classes of irreducible $\bC$-representations of $\Gamma$ or $A$. 

Fix a Borel subgroup $B_i\subset G_i$ for $i=1,2$.
Let $\Irr^{pu}(G_i(\bF_q))\subset \Irr(G_i(\bF_q))$ be the classes of irreducible representations that have nonzero fixed vectors under $B_i(\bF_q)$. Equivalently, $\Irr^{pu}(G_i(\bF_q))$ consists of isomorphism classes of irreducible summands of the unipotent principal series $\Ind_{B_i(\bF_q)}^{G_i(\bF_q)}(1)$. We call such representations {\em irreducible unipotent principal series} of $G_i(\bF_q)$.

Recall the Iwahori-Matsumoto Hecke algebra 
\begin{equation} \label{eq:Hecke def}
\begin{split}
  H_i:=& \{f: G_i(\bF_q) \to \mathbb{C} \mid f(b_1gb_2)=f(g),\,\, \forall b_1,b_2\in B_i(\bF_q)\},
\end{split}
\end{equation}
with the product given by convolution. 
It is well-known that taking $B_i(\bF_q)$-fixed vectors gives a bijection
\begin{equation}\label{bij pu with Hecke mod}
    \cE_i: \Irr^{pu}(G_i(\bF_q))\bij \Irr(H_i).
\end{equation}


Let $\VV=V_1\ot V_2$ be equipped with the symplectic form $\langle,\rangle_{\VV}:=\langle{,\rangle}_{V_1}\ot \langle{,\rangle}_{V_2}$. Fix a non-trivial additive character $\psi$ of $\bF_q$ and let $\omega$ be the Weil representation (with $\bC$-coefficients) of $\Sp(\VV)(\bF_q)$ attached to $\psi$ (see \cite[\S 1.4]{MR4677077}). We view $\om$ as a representation of $G_1(\bF_q)\times G_2(\bF_q)$ via the canonical map $G_1\times G_2\to \Sp(\VV)$.  The main problem in the theta correspondence is to decompose \( \omega \) into irreducible representations of \( G_1(\mathbb{F}_q) \times G_2(\mathbb{F}_q) \). This problem has been studied in \cite{AM}, \cite{AMR}, \cite{MR4752682}, \cite{MR4214396}, and \cite{MR4677077}.

Let $\pi_i$ 
be an irreducible representation of $G_i(\bF_q)$, i=1,2. By \cite[\S3,IV.4]{MR1041060}, if $\pi_1\bt \pi_2$ appears in $\om$, and if one of $\{\pi_1,\pi_2\}$ is an irreducible unipotent principal series representation 
the same is true for the other one. Thus, to study how irreducible unipotent principal series representations behave under theta correspondence, we may focus on the summand $\om^{pu}\subset \om$ consisting of $\pi_1\bt\pi_2$ where  $\pi_i\in \Irr^{pu}(G_i(\bF_q))$ for both $i=1$ and $2$. The bijection \eqref{bij pu with Hecke mod} shows that the knowledge of $\om^{pu}$ as a $G_1(\bF_q)\times  G_2(\bF_q)$-representation is equivalent to the knowledge of 
\begin{equation}\label{eq:M}
M:=\omega^{B_1(\bF_q)\times B_2(\bF_q)}
\end{equation}
as a module for the tensor product Hecke algebra $H_1\otimes H_2$. We call $M$ the {\em oscillator bimodule} for the dual pair $(G_1,G_2)$.





Let $W_i$ be the abstract Weyl group of $G_i$. 
Lusztig \cite[Theorem 3.1]{Lu1981BC} has given a canonical isomorphism between $H_i$  and the group algebra $\bC[W_i]$. In particular, we may view $M$ as a $W_1\times W_2$-module, whose structure will be described in terms of Springer correspondence.



\subsection{The geometry of moment maps and Springer correspondence}\label{subsection moment map}  
In this section, we work over the base field $\bC$. 

We first recall the Springer correspondence for a connected reductive group $G$ over $\bC$. Let $\fgg$ be the  Lie algebras of $G$ and $\cN\subset \fgg$ be the nilpotent cone. Let
\begin{equation}\label{eq local system}
 \LSG\coloneqq \set{ (\cO,\cL)|\mbox{$\cO\subset \cN $ a nilpotent orbit, $\cL$ an irreducible $G$-equivariant local system on $\cO$}}.
\end{equation}
Here local systems have $\bC$-coefficients. If $e\in \cO$, and $A_e:=\pi_0(C_G(e))$, then the set of $\cL$ such that $(\cO,\cL)\in \LSG$ are in canonical bijection with irreducible characters of $A_e$. 
Let $W$ be the Weyl group of $G$.
The Springer correspondence is an injective map
\begin{equation}\label{Springer map}
    \Spr_G: \Irr(W)\hookrightarrow \LSG.
\end{equation}
For $(\cO,\cL)\in \LSG$ in the image of $\Spr_G$, let $E_{\cO,\cL}=\Spr_G^{-1}(\cO,\cL)\in \Irr(W)$ be the corresponding irreducible representation of $W$; otherwise define $E_{\cO,\cL}$ to be zero. 
Here we normalize the Springer correspondence such that the trivial local system on the $0$-orbit corresponds to the sign representation of $W$ (cf. \Cref{section relative Springer general}). 

In our paper, we will use the Springer correspondence for the full orthogonal group, which is not connected. The Springer correspondence for disconnected groups is worked out in \cite{MR3845761} and \cite{dillery2023stacky}. For the reader’s convenience, we include a self-contained description of the Springer correspondence for the full orthogonal group in \Cref{sec:orthogonal Springer}.

We now return to the setup of ortho-symplectic dual pair but working over the base field $\bC$:  $(V_1, \langle, \rangle_{V_1})$ and $(V_2, \langle, \rangle_{V_2})$ are orthogonal and symplectic vector spaces over  $\bC$, and $G_1=\rO(V_1)$ and $G_2=\Sp(V_2)$ are also considered over $\bC$. Note that $G_1$ is not connected; let $G_1^\circ = \SO(V_1) \subset G_1$ be its neutral component. Let $\mathfrak{g}_1$ and $\mathfrak{g}_2$ denote the Lie algebras of $G_1^\circ$ and $G_2$, respectively. In order to state the theta correspondence between $\Irr(W_1)$ and $\Irr(W_2)$, we need the notion of a \textit{relevant quintuple}, which we now introduce.


The space $\VV:=  {V}_1\otimes  {V}_2$ is equipped with the symplectic form characterized by 
\begin{equation}\label{eq:vvform}
\innvv{v_1\otimes v_2}{v'_1 \otimes v'_2} =\inn{v_1}{v_2}_{V_1} \inn{v'_1}{v'_2}_{V_2} 
\end{equation}
for all $v_1,v'_1\in V_1$ and $v_2,v'_2\in V_2$.

The $G_1\times G_2$-action on the symplectic space 
has a 
moment map
\begin{equation}\label{eq:moment map}
\mu=(\mu_1,\mu_2): \VV\longrightarrow \mathfrak g_1^* \times \mathfrak g_2^* 
\end{equation}
characterized by 
\[
\mu_1(v)(X_1)=\half \langle X_1 v, v\rangle_{\VV}, 
\quad \mu_2(v)(X_2)=\half \langle X_2 v, v\rangle_{\VV}\quad \mbox{for}\,\, v\in \VV, X_1\in \fgg_1, X_2\in \fgg_2. 
\]
\trivial[h]{
 The factor of $\frac{1}{2}$ appears because when you differentiate a quadratic form, you get a factor of 2 which is then canceled
}
We identify $\mathfrak g_1^*$ and $\mathfrak g_2^*$ with $\mathfrak g_1$ and $\mathfrak g_2$ via the trace forms and view the moment map $\mu$ as a map from $\VV$ to $\fgg_1\times \fgg_2$.  
Let 
\begin{equation}\label{Def: nilcone}
    \cN_{\VV}\coloneqq \mu^{-1}(\cN_1\times \cN_2) 
\end{equation}
be the nilcone of $\VV$. By Kraft--Procesi \cite[\S 6]{MR694606}, we know that $\cN_{\VV}$ has finitely many $G_1\times G_2$-orbits.

\begin{defn}\label{def relevant orthsymplectic}
    \begin{enumerate}
        \item A $G_1\times G_2$-orbit $\cO\subset \cN_{\VV}$ is called \emph{relevant} if it satisfies 
\begin{equation}\label{eq:relevant}
    \dim\cO=\frac{1}{2}(\dim\VV+\dim\mu_1(\cO)+\dim\mu_2(\cO)).
\end{equation} 
We denote the set of relevant $G_1\times G_2$-orbits in $\cN_{\VV}$ by $\cR_{\VV}$. 
        \item Consider the quintuple $(\cO, \cO_{1},\cL_1,\cO_{2},\cL_2)$ such that 
        \begin{itemize}
        \item $\cO$ is a $G_1\times G_2$-orbit in $ \cN_{\VV}$,
        \item  $\cO_i=\mu_i(\cO)\subset \cN_i$ under the moment map for $i=1,2$ and 
        \item $(\cO_i,\cL_i)\in \LSGi$ for $i=1,2$.
        \end{itemize}
        It is called {\em relevant} if the following conditions are satisfied:
        \begin{enumerate}
            \item[(i)] $\cO$ is relevant in the sense of part (1);
            \item[(ii)] Choose an $e\in \cO$ and let $\mu_i(e)=e_i$ for $i=1,2$. Then $\cL_i$ corresponds to $\chi_i\in \Irr(A_{e_i})=\Hom(A_{e_i},\bC^\times)$. Let
            \begin{equation}\label{eq component group O}
           A_{e}\coloneqq \pi_{0}(\Stab_{G_{1}\times G_{2}}(e))
        \end{equation} 
        equipped with maps
        \begin{equation}\label{eq moment map component group}
            \D_i: A_e\to A_{e_i}
        \end{equation} induced by the projections $\Stab_{G_1\times G_2}(e)\to C_{G_i}(\mu_i(e))$. Then the restrictions $\chi_i|_{A_{e}}:=\chi_i\circ\D_i\in \Irr(A_e)$ satisfy
        \begin{equation}\label{eq relevant character}
        (\chi_1|_{A_{e}})\cdot (\chi_2|_{A_e})=1\in \Irr(A_{e}).
        \end{equation}
        \end{enumerate}

    \end{enumerate}
We denote the set of relevant quintuples by $\cRV$. 
\end{defn}




Following the discussion in the last paragraph of \Cref{theta correspondence finite field},  for each $E\in\Irr(W_i)$, let  $E(q)$ denote the corresponding $H_i$-module via Lusztig's canonical isomorphism $H_i \cong \mathbb{C}\left[W_i\right]$.

The following is our main result that determines the structure of the oscillator bimodule. 

\begin{thm}\label{thm: Springer for theta}
Let $M$ be as in \eqref{eq:M}. There is a canonical isomorphism of $H_1\otimes H_2$-modules 
\begin{equation}\label{deformation 1}
   M\ot \sgn_{H_1\ot H_2} \cong 
\bigoplus_{(\cO, \cO_{1},\cL_{1},\cO_{2},\cL_{2})\in \cRV}E_{\cO_{1}, \cL_{1}}(q)\bt E_{\cO_{2},\cL_{2}}(q).
\end{equation}
Here, $\sgn_{H_1\ot H_2}$ is the sign character of $H_1\ot H_2$. 
\end{thm}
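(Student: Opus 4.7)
The plan is to realize the oscillator bimodule $M$ and the right-hand side of \eqref{deformation 1} as two avatars of the same geometric object, built from sheaves on the nilpotent commuting variety $\cN_{\VV}$ via the moment map \eqref{eq:moment map}. The starting point is a geometric model of the Weil representation: the function $\om$ can be realized as the trace-of-Frobenius of an $\ell$-adic sheaf on $\VV$ whose Fourier transform is (a shift/twist of) the constant sheaf on $\VV$. Passing to $B_1(\bF_q)\times B_2(\bF_q)$-invariants then translates, via the sheaf-function dictionary, into taking $(B_1\times B_2)$-equivariant cohomology of an appropriate pullback of the Weil sheaf, which by Fourier inversion can be rewritten as cohomology of constant sheaves on the moment map preimages. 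Concretely, the plan is to identify $M\otimes\sgn$ with the $(W_1\times W_2)$-equivariant structure on
\[
\bigoplus_{\cO\in \cR_{\VV}} \mathrm H^{\mathrm{top}}_c\!\bigl(\mu^{-1}(\cO_1\times \cO_2)\cap \cO,\; \mathrm{Spr}_{G_1}\!\boxtimes\!\mathrm{Spr}_{G_2}\bigr)
\]
(and similar cohomology for nontrivial local systems), where the Springer sheaves provide the $W_1\times W_2$-action. The sign twist is the familiar discrepancy between the Fourier transform of the Weil sheaf (which introduces a $\sgn$ twist under the $W_i$-action) and the standard normalization of the Springer correspondence sending the zero orbit to $\sgn$.

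The first step is to establish the general framework promised in the abstract: given the spherical-like $(G_1\times G_2)$-variety $\VV$, set up an equivalence between the category of $(H_1\otimes H_2)$-modules arising as $(B_1\times B_2)$-invariants of $G_1(\bF_q)\times G_2(\bF_q)$-representations ``generated by'' $\VV$, and a category of equivariant complexes on $\cN_{\VV}$ constructed from Springer sheaves. This should be done at the level of Hecke categories (rather than only algebras), using Lusztig's monodromic/character sheaf formalism so that Lusztig's isomorphism $H_i\cong \bC[W_i]$ is built in. Applied to the Weil sheaf, this framework reduces the computation of $M$ to the combinatorics of $(G_1\times G_2)$-orbits on $\cN_{\VV}$ together with their component-group data \eqref{eq component group O}.

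The second step is the local contribution computation at each orbit $\cO\subset \cN_{\VV}$. Fix $e\in\cO$ with $\mu_i(e)=e_i$. The Springer local system $\cL_i$ on $\cO_i$ pulls back under $\mu_i$ to an $A_e$-local system on $\cO$ via the map $\Delta_i$ of \eqref{eq moment map component group}. The key point, to be verified by a direct fiber-dimension estimate using Kraft--Procesi \cite{MR694606}, is that the pulled-back sheaf $\mu_1^*\IC(\cO_1,\cL_1)\otimes \mu_2^*\IC(\cO_2,\cL_2)$ contributes nontrivially to top cohomology along $\cO$ precisely when the dimension condition \eqref{eq:relevant} is satisfied (i.e.\ $\cO$ is relevant), and the contribution is nonzero precisely when the pulled-back local systems pair to the trivial character on $A_e$, which is exactly \eqref{eq relevant character}. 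When both conditions hold, the contribution is a one-dimensional $(A_e)$-invariant line, yielding a single copy of $E_{\cO_1,\cL_1}(q)\boxtimes E_{\cO_2,\cL_2}(q)$.

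The main obstacle I foresee is the first step: formulating and proving the right ``Hecke-categorical'' equivalence that makes the Springer-theoretic bookkeeping match the $(H_1\otimes H_2)$-module $M$ on the nose, especially keeping careful track of the $\sgn$-twist coming from Fourier transform and of the disconnectedness of $G_1=\rO(V_1)$ (which forces one to use the version of Springer correspondence for disconnected groups developed in \Cref{sec:orthogonal Springer}). Once that framework is in place, the orbit-by-orbit computation and the compatibility with Lusztig's isomorphism $H_i\simeq \bC[W_i]$ should follow from standard Springer-theoretic techniques, but pinning down the precise equivariant structure so that the relevance condition \eqref{eq:relevant} emerges as a support condition, and \eqref{eq relevant character} as a nonvanishing condition on the stalk, will require the bulk of the work.
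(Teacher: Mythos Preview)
Your proposal identifies the right endpoint—decomposing via relevant orbits and component-group characters—but has a genuine gap at the first step, and this gap is precisely where the paper's main technical work lies.

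The core difficulty you gloss over is that $\VV$ is \emph{not} polarizable as a $(G_1\times G_2)$-Hamiltonian space: it is not of the form $T^*X$ for any $(G_1\times G_2)$-variety $X$, and $B_1\times B_2$ does not act with finitely many orbits on $\VV$. Consequently there is no single sheaf category on $\VV$ (or on a single ``moment cone'') carrying commuting actions of both full Hecke categories $\cH_1$ and $\cH_2$. Your proposed ``Hecke-categorical equivalence'' therefore cannot be set up in the direct way you describe. The paper's solution is to choose \emph{two} partial polarizations: the Lagrangian $L_1\subset V_1$ gives $\VV\cong T^*\bL_1$ with $\bL_1=\Hom(L_1,V_2)$, and the moment cone $\cN_{\bL_1}$ carries only an action of $\ov G_1\times G_2$ (Levi of the Siegel parabolic times $G_2$), hence only a $\ov\cH_1\otimes\cH_2$-module category $\cM_1$. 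Symmetrically one gets $\cM_2$ with a $\cH_1\otimes\ov\cH_2$-action. The two categories are then glued by a partial Fourier--Deligne transform (\Cref{thm:Fourier}), which is $\ov\cH_1\otimes\ov\cH_2$-equivariant. The full $\sfH_1\otimes\sfH_2$-action exists only after passing to Grothendieck groups, using that $\sfH_1\otimes\sfH_2$ is the pushout of $\ov\sfH_1\otimes\sfH_2$ and $\sfH_1\otimes\ov\sfH_2$ over $\ov\sfH_1\otimes\ov\sfH_2$.

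A second missing ingredient is the deformation argument. The paper does not compute $M$ directly at the level of the Hecke algebra $H_i$; instead it constructs a generic module $\sfM$ over the generic Hecke algebra $\sfH_1\otimes_R\sfH_2$ (with $R=\bZ[v,v^{-1}]$), computes its specialization at $v=1$ via characteristic cycles and relative Springer theory (this is where your second step, and \Cref{thm:Htop}, enter), and then uses Tits deformation together with Lusztig's homomorphism (\Cref{TitsLusztigsec}) to transport the answer to $v=\sqrt q$. Your sketch suggests working directly at $q$ with Lusztig's isomorphism ``built in,'' but that isomorphism is not visible at the categorical level and must be invoked as a separate specialization step.

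Your second step—the local computation at each orbit—is essentially correct in spirit and matches the paper's \Cref{thm:Htop}: the dimension condition \eqref{eq:relevant} does arise as a top-degree support condition, and \eqref{eq relevant character} as a nonvanishing of $A_e$-invariants. But this computation is carried out on the Borel--Moore homology of the ortho-symplectic Steinberg variety $\St_{\VV}$ (\Cref{eq orthogonal-symplectic Steinberg variety}), not via pullbacks of IC sheaves to $\cN_{\VV}$ as you phrase it; and it only determines the $W_1\times W_2$-module at $v=1$, not the $H_1\otimes H_2$-module directly.
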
 
\begin{remark}
Since $G_1$ is disconnected, special care is required in defining the sign character of $H_1$. We use the convention established in \Cref{sgn orthogonal group} for $\sgn_{H_1}$.
\end{remark}
In \Cref{orthosymplectic orbits},
we will give a combinatorial description of the set $\cRV$ in terms of {\em ortho-symplectic partitions}, making the above result more concrete. We remark that, in a relevant quintuple $(\cO, \cO_1,\cL_1,\cO_2, \cL_2)$, $\cO$ is in fact uniquely determined by $(\cO_1,\cO_2)$ (see  \Cref{relevatbt quintuple counting}). The proof of \Cref{thm: Springer for theta} will be given in \Cref{section proof main thm}. 


\subsection{Spherical modules for the Hecke algebra}\label{subsection spherical hecke modules}
Our method for proving \Cref{thm: Springer for theta} applies more broadly to a large class of Hecke modules arising from spherical varieties. We describe the parallel results here.

Let $G$ be a split connected reductive group over $\bF_q$, and let $B \subset G$ be a Borel subgroup. Let $k=\ov\bF_q$. Let $X$ be a smooth spherical $G$-variety; that is, $B$ has finitely many {\em geometric} orbits on $X$ (i.e., orbits of $B_k=B\otimes_{\bF_q}k$ acting on $X_k=X\otimes_{\bF_q}k$). We impose the following simplifying assumption: 
\begin{equation}\label{intro stab}
    \text{Each geometric $B$-orbit in $X$ contains an $\bF_q$-point with connected stabilizer.}
\end{equation}
We remark that many interesting examples—such as symmetric varieties—do not satisfy this assumption. In those cases, the analogue of the result below becomes significantly more involved and will not be addressed in this paper.

The space of functions $M \coloneqq \bC[B(\bF_q) \backslash X(\bF_q)]$ is naturally a module over the Hecke algebra $H \coloneqq  \bC[B(\bF_q) \backslash G(\bF_q) / B(\bF_q)]$ via the convolution 
\begin{equation}\label{hecke function covolution}
    (f* h) (x)=|B(\bF_q)|^{-1} \sum_{g\in G(\bF_q)} f(g) h(g^{-1}x) \quad \forall f\in H, h\in M, x\in X(\bF_q).
\end{equation}
We refer to $M$ as a {\em spherical Hecke module}. Our goal is to describe the $H$-module $M$ in terms of the Springer correspondence, in a manner analogous to \Cref{thm: Springer for theta}.

\begin{ass}
We assume that $G$ and $X$ (and the action) admit integral models. We denote by $G_\bC$ and $X_{\bC}$ their base changes to $\bC$. Furthermore, we assume the existence of a well-behaved bijection (see \Cref{orbit assumption bFq bC} for a precise formulation) between the orbit sets $\underline{B(k)\backslash X(k)}$ and $\underline{B(\bC)\backslash X(\bC)}$.
\end{ass}

We now pass to the complex setting. Let $\frak g_\bC$ denote the Lie algebra of $G_\bC$, and let $\cN_\bC \subset \frak g_\bC$ be the nilpotent cone. The $G_\bC$-action on $X_\bC$ lifts to a Hamiltonian action on the cotangent bundle $T^*X_\bC$, with moment map $\mu: T^*X_\bC \to \frak g_\bC^*$. The following definitions is an analog of those in \Cref{def relevant orthsymplectic}.

\begin{defn}\label{defn intro relevant}
\leavevmode
\begin{enumerate}
    \item A $G_\bC$-orbit $\cO \subset \cN_\bC$ is said to be \emph{$X_\bC$-relevant} if it satisfies the dimension condition
    \[
    \dim \mu^{-1}(\cO) = \tfrac{1}{2} \dim \cO + \dim X_\bC.
    \]
    We denote the set of such orbits by $\cR_{G_\bC}(X_\bC)$.
    
    \item A pair $(\cO,\cL) \in \widehat {\cN}_{G_\bC}$ is \emph{$X_\bC$-relevant} if $\cO \in \cR_{G_\bC}(X_\bC)$, and for a representative $e \in \cO$, the fiber $(T^*X_\bC)_e := \mu^{-1}(e)$ satisfies
        \begin{equation}\label{Vechi}
                V_{e,\chi}(X_\bC) := \Hom_{A_e}(\chi, \HBM((T^*X_\bC)_e,\bC)) \neq 0,
        \end{equation}
        where $\chi \in \Irr(A_e)$ corresponds to the local system $\cL$ and  $\HBM(-,\bC)$ denotes top Borel-Moore homology with $\bC$-coefficients.
\end{enumerate}
We write $\widehat \cR_{G_\bC}(X_\bC)$ for the set of $X_\bC$-relevant pairs.
\end{defn}
In the above definition, the vector space $V_{e,\chi}(X_\bC)$ depends only on $(\cO,\cL)\in\widehat {\cN}_{G_\bC}$; we henceforth denote it by $V_{\cO,\cL}(X_\bC)$. 

\begin{thm}\label{thm:intro spherical}
Let $X$ be a smooth spherical $G$-variety over $\bF_q$ satisfying \Cref{intro stab} and admitting an integral model satisfying \Cref{orbit assumption bFq bC}. Then there is a canonical isomorphism of $H$-modules:
\[
M\ot \sgn_H\cong  
\bigoplus_{(\cO,\cL) \in \widehat \cR_{G_\bC}(X_\bC)} E_{\cO,\cL}(q) \otimes V_{\cO,\cL}(X_\bC).
\]
\end{thm}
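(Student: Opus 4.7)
The plan is to lift the problem to geometry over $\bC$ and invoke the decomposition theorem together with Chriss--Ginzburg convolution theory. View $H$ as arising from the monoidal convolution category $D_{B\times B}(G_\bC)$ of $B$-bi-equivariant sheaves on $G_\bC$, and $M$ via the sheaf-function dictionary as the trace of the $B$-equivariant constant sheaf on $X_\bC$ in the module category $D_B(X_\bC)$. Under Lusztig's isomorphism $H\cong\bC[W]$, it suffices to describe $M\otimes \sgn$ as a $W$-module and to recover the $H$-module structure via the standard $q$-deformation.

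The geometric heart is the moment map $\mu:T^{*}X_\bC\to \fgg_\bC^{*}\cong\fgg_\bC$ and its restriction $\mu^{-1}(\cN_\bC)\to\cN_\bC$. Applying the decomposition theorem to $\mu_!\bC_{\mu^{-1}(\cN_\bC)}$ yields a semisimple direct sum of shifted $\IC(\cO,\cL)$-sheaves on $\cN_\bC$. The dimension condition $\dim \mu^{-1}(\cO)=\tfrac{1}{2}\dim\cO+\dim X_\bC$ defining $X_\bC$-relevance is precisely the condition that $\mu^{-1}(\cN_\bC)$ has pure dimension $\dim X_\bC$ over the stratum $\cO$, making the top Borel--Moore homology $\HBM(\mu^{-1}(e),\bC)$ the correct multiplicity vector space. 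Decomposing this $A_e$-equivariantly into isotypic components picks out exactly the $X_\bC$-relevant pairs $(\cO,\cL)$, and identifies the multiplicity space with $V_{\cO,\cL}(X_\bC)$.

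To match with the Hecke/Weyl side, I would form the relative Steinberg variety $\wt\cN\times_{\cN}\mu^{-1}(\cN_\bC)$, whose top BM homology carries a canonical convolution action of $\HBM(\wt\cN\times_{\cN}\wt\cN)\cong\bC[W]$. Base change identifies this top BM homology with $M\otimes\sgn$ as a $W$-module; the sign twist arises because the normalization of Springer correspondence used in the paper sends the trivial local system on the zero orbit to $\sgn_W$. Decomposing both sides under $W\times A_e$ --- the former via Chriss--Ginzburg/Springer, the latter via the decomposition theorem of the previous step --- and matching isotypic components produces the stated formula. To pass from $\bC$ back to $\bF_q$, I would combine the Grothendieck--Lefschetz trace formula with the orbit-bijection assumption; the connected-stabilizer hypothesis \eqref{intro stab} ensures the $B(\bF_q)$-orbit stratification of $X(\bF_q)$ matches the complex $B_\bC$-orbit stratification without extra cocycle data, so the complex character identity descends to the $H$-module statement over $\bF_q$.

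The main obstacle will be the precise comparison between the categorical convolution action of $D_{B\times B}(G_\bC)$ on $D_B(X_\bC)$ and the Chriss--Ginzburg action of $\bC[W]$ on the relative Steinberg BM homology --- in effect, a module-category analogue of the Bezrukavnikov--Finkelberg--Ostrik theorem, adapted to Hecke categories arising from spherical varieties. Tracking the $\sgn$-twist, the correct $q$-deformation, and the compatibility of Lusztig's $H\cong\bC[W]$ with the Springer grading across all these identifications is where the technical bookkeeping concentrates, and is presumably where the general ``Hecke categories for spherical varieties'' machinery developed earlier in the paper does its essential work.
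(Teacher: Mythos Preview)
Your geometric outline is on target: the relative Steinberg variety, the decomposition of the moment-map pushforward, and the identification of multiplicity spaces with $V_{\cO,\cL}(X_\bC)$ are exactly the content of \Cref{Sec Springer}, and the obstacle you flag—comparing the categorical Hecke action with the Springer action—is indeed resolved in \Cref{Prop. CC W eq} via Hotta's local formula (\Cref{Sec. Hotta}).

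The genuine gap is your passage between the $H$-module $M$ over $\bF_q$ and the $W$-module $\HBM(\St_{T^*X_\bC},\bC)$ over $\bC$. The phrase ``base change identifies this top BM homology with $M\otimes\sgn$'' hides the real work: Grothendieck--Lefschetz relates $M$ to traces of Frobenius on stalks of sheaves over $\bF_q$, but there is no direct base change taking this to complex Borel--Moore homology as $W$-modules. The paper instead builds a one-parameter family. It restricts to the full subcategory $\cM^{\Tate}\subset D^b_B(X)$ of sheaves whose stalk Frobenius eigenvalues lie in $q^{\bZ/2}$, so the weight-polynomial map $\ch^{\Tate}$ gives an $R$-linear isomorphism $K_0(\cM^{\Tate})\cong\sfM:=R[\underline{B\backslash X}]$ over $R=\bZ[v,v^{-1}]$, compatible with the generic Hecke algebra $\sfH$ (\Cref{Cor. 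Hk ring isom}, \Cref{Prop ICs preserves Tate}). Specializing $v=\sqrt q$ recovers $M$; specializing $v=1$ gives $K_0$ of sheaves over $k=\overline{\bF_q}$, then over $\bC$ via the orbit-bijection hypothesis, and finally $\HBM(\St_{T^*X_\bC},\bC)$ via characteristic cycles. That the two specializations agree as $W$-modules is Tits' deformation (\Cref{TitsLusztig}) applied to Lusztig's homomorphism $\lambda_W$. Without the generic module $\sfM$ and this deformation step, you have computed $\sfM_{v=1,\bC}$ but not $M=\sfM_{v=\sqrt q,\bC}$.

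Two smaller points. The sign twist is not a consequence of Springer normalization alone; it is proved in \Cref{Prop. CC W eq} by comparing both actions on simple reflections via Hotta's formula. And the Springer $W$-action on $\HBM(\St_{\hS},\bC)$ is defined in \Cref{W-action on relative hom} through the identification with $\Hom(\cS_\cN,\cS_{\hS})$, not via Chriss--Ginzburg convolution, though the two are of course closely related.
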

The proof of \Cref{thm:intro spherical} will be given in \Cref{subsection spherical proof}. We list three examples covered by \Cref{thm:intro spherical}:
\begin{exam}\label{example spherical}
\begin{enumerate}
    \item \textbf{Type II theta correspondence.} Let $L_1$ and $L_2$ be finite-dimensional $\bF_q$-vector spaces of dimensions $m$ and $n$, respectively. Let $G = \GL(L_1) \times \GL(L_2)$ act naturally on the affine space $X = \Hom(L_1, L_2)$. Then $X$ is a spherical $G$-variety. We study this example in detail in \Cref{ex: GL theta}.
\item \textbf{Rankin-Selberg case.} Let $L_2$ be a $n$-dimensional vector space over $\bF_q$, and $L_1=L_2\oplus L$ be the direct sum with a one-dimensional space $L$.
    Let $\GL(L_1)$ and $\GL(L_2)$ be the corresponding general linear groups, and set $G = \GL(L_1) \times \GL(L_2)$, with $H = \GL(L_2)$ diagonally embedded in $G$. Then $X = G/H$ is a spherical $G$-variety. We will explore this example in a forthcoming work.
    \item \textbf{Orthogonal Gan–Gross–Prasad case.} Let $(W_2, \langle \cdot, \cdot \rangle_{W_2})$ be a $2n$-dimensional split orthogonal space over $\bF_q$, and $W_1=W_2\oplus L$ be the orthogonal direct sum with a one-dimensional orthogonal space $L$.
    Let $\SO(W_1)$ and $\SO(W_2)$ be the corresponding special orthogonal groups, and set $G = \SO(W_1) \times \SO(W_2)$, with $H = \SO(W_2)$ diagonally embedded in $G$. Then $X = G/H$ is a spherical $G$-variety. We will explore this example in a forthcoming work.
\end{enumerate}
\end{exam}

\subsection{Sketch of proofs}

Both \Cref{thm: Springer for theta} and \Cref{thm:intro spherical} are established by geometrizing the oscillator bimodule and spherical Hecke module and applying a relative version of Springer theory. We outline the main ideas of the proofs in the remainder of this introduction.

\subsubsection{Geometrization and sketch of proof of \Cref{thm:intro spherical}}\label{intro geom}

Fix a prime $\ell$ different from $\textup{char}(\bF_q)$ and an isomorphism $\Qlbar\cong \bC$. For a variety or stack $X$ over $\bF_q$, let $D^{\mix}(X)$ denote the bounded derived category of mixed $\Qlbar$-sheaves on $X$. The sheaf-to-function correspondence defines a linear map
\begin{equation*}
    \phi: K_0(D^{\mix}(X)) \to \Qlbar[X(\bF_q)] \cong \bC[X(\bF_q)].
\end{equation*}
We regard $D^{\mix}(X)$ as a geometrization (or categorification) of the function space $\bC[X(\bF_q)]$. In this framework, the Hecke algebra $H$ of a reductive group $G$ over $\bF_q$ geometrizes to the monoidal Hecke category $\cH^{\mix} = D^{\mix}_B(\cB)$, where $\cB = G/B$ is the flag variety. The $H$-module $\bC[B(\bF_q) \backslash X(\bF_q)]$ (for a $G$-variety $X$) geometrizes to $D^{\mix}_B(X)$, which carries an action of $\cH^{\mix}$.

The Hecke algebra $H$ fits into a one-parameter deformation known as the generic Hecke algebra $\sfH$: it is an $R = \bZ[v,v^{-1}]$-algebra generated by $\sfT_s$ ($s \in W$ simple reflections), subject to the quadratic relation $(\sfT_s+1)(\sfT_s - v^2) = 0$ and the braid relations. For $r \in \bC^\times$, we denote by $\sfH_{v=r,\bC}\coloneqq \sfH\otimes_{R} \bC$ the extension of scalars along the specialization map $\bZ[v,v^{-1}]\rightarrow \bC$ sending $v$ to $r\in \bC^\times$. In particular, the Hecke algebra $H$ of $G$ is the specialization $\sfH_{v=q^{1/2}, \bC}$, and $\sfH_{v=1,\bC}$ is the group ring $\bC[W]$. Similar specialization notation applies to any $R$-module $\sfN$.

In the situation of \Cref{thm:intro spherical}, the main players in the proof can be summarized as follows:
\begin{equation}\label{M diagram}
\xymatrix{
(\cH^{\mix} \curvearrowright \cM^{\mix}) \ar[d]^{\phi} & \ar[l] \ar[r] (\cH^{\Tate} \curvearrowright \cM^{\Tate}) \ar[d]^{\ch^{\Tate}} & (\cH^{k} \curvearrowright \cM^{k}) \ar[d]^{\chi} \ar@{--}[r] & (\cH^{\bC} \curvearrowright \cM^{\bC}) \ar[d]^{\CC} \\
(H \curvearrowright M) & \ar[l]_-{v=q^{1/2}} \ar[r]^-{v=1} (\sfH \curvearrowright \sfM) & (W \curvearrowright \sfM_{v=1,\bC}) \ar[r]^-{c}_-{\sim} & (W \curvearrowright \HBM(\St_{T^*X_{\bC}}, \bC))
}
\end{equation}

Here the top row consists of categories of sheaves on $B \backslash X$ with Hecke category actions. The vertical arrows map the Grothendieck groups of the first row to modules on the second row, compatible with the corresponding Hecke algebra actions.

The map $\phi$ is the sheaf-to-function map. The categories $\cM^\Tate$ and $\cH^\Tate$ are full subcategories of $\cH^\mix$ and $\cM^\mix$ by imposing conditions on the Frobenius eigenvalues on stalks. They are designed so that by passing to Grothendieck groups, the $\sfH\cong K_0(\cH^{\Tate})$-module $\sfM:=K_0(\cM^{\Tate})$ gives a one-parameter deformation of $M$ as an $H$-module. The categories $\cH^{k}$ and $\cM^{k}$ are $B_k$-equivariant sheaves on the base changes $\cB_k$ and $X_k$, where $k=\ov\bF_q$. Finally, using suitable integral models of $G$ and $X$, we may talk about $B_\bC$-equivariant sheaves on the complex fibers $\cB_\bC$ and $X_\bC$, so that the dotted arrow above means an isomorphism between Grothendieck groups $K_0(\cM^k)\cong K_0(\cM^{\bC})$. In the bottom right corner, we have
\begin{equation*}   
\St_{T^*X_\bC}:=(T^*X_\bC)\times_{\frg^*_\bC}\tcN_\bC
\end{equation*}
where $\tcN_\bC=T^*(\cB_\bC)$ with moment map $\pi:\tcN_\bC\to \frg^*_\bC$, so that $G_\bC\bs \St_{T^*X_\bC}$ is the Hamiltonian reduction of $T^*X_\bC$ by $B_\bC$. 

The map labeled by $\CC: K_0(\cM^{\bC})_{\bC}\to \HBM(\St_{T^*X_{\bC}},\bC)$ in \eqref{M diagram} is taking characteristic cycles, and it is an isomorphism. The first two horizontal maps on the second row are specialization maps. The last one $c: \sfM_{v=1,\bC}\cong\HBM(\St_{T^*X_{\bC}},\bC)$ uses the bijection between orbit sets $\un{B_k\bs X_k}$ and $\un{B_{\bC}\bs X_{\bC}}$, and the $\CC$ map.

Next we recall how the top Borel--Moore homology $\HBM(\St_{T^*X_{\bC}}, \bC)$
carries a $W$-action from relative Springer theory. Identifying $\frg_{\bC}^* \cong \frg_{\bC}$ via a non-degenerate invariant form, $\pi$ factors through the Springer resolution $\pi_{\cN_{\bC}} : \tcN_{\bC} \to \cN_{\bC}$. The Springer sheaf is defined by
\begin{equation}\label{SpringerSheaf}
    \cS_{\cN_{\bC}} := (\pi_{\cN_{\bC}})_* \bC_{\tcN_{\bC}} [\dim \tcN_{\bC}] \in \Perv_{G_\bC}(\cN_{\bC}).
\end{equation}
It is semisimple and carries a canonical $W$-action. To simplify notation, we denote by \( \hS \coloneqq T^*X_{\bC} \), equipped with the moment map \( \mu_{\hS} \colon \hS \to \mathfrak{g}^* \). Define the \emph{\( \hS \)-relevant Springer sheaf} $\cS_{\hS}$ and its perverse truncation $\cS^\diamond_{\hS}$:
\begin{equation}\label{ReleventSpringerSheaf}
    \cS_{\hS} := i_{\cN}^! \mu_{\hS*} \mathbb{D}_{\hS}[-\dim \hS], \quad \cS^\diamond_{\hS} := \ptau_{\leq 0} \cS_{\hS} \in \Perv_{G_{\bC}}(\cN_{\bC}).
\end{equation}
where $\mathbb{D}_{\hS}$ is the dualizing sheaf on $\hS$. A standard computation (see \Cref{W-action on relative hom} and \Cref{eq iso trucation}) yields a canonical isomorphism
\begin{equation}\label{eq W-action on relative hom}
    \HBM(\St_{\hS}, \bC) \cong \Hom_{D^b_{G_{\bC}}(\cN_{\bC})}(\cS_{\cN}, \cS_{\hS}) \cong \Hom_{\Perv_{G_{\bC}}(\cN_{\bC})}(\cS_{\cN}, \cS^\diamond_{\hS})
\end{equation}
and hence an induced action of \( W \) on \( \HBM(\St_{\hS}, \bQ) \) via the action on $\cS_{\cN}$. 
The decomposition of $\cS_{\cN}$ and $\cS^\diamond_{\hS}$ into simple perverse sheaves yields the following (see \Cref{sec Relevant Orbits}):
\begin{thm}\label{Thm decomposition of SthS intro}
There is a decomposition of $\HBM(\St_{\hS}, \bC)$ as a $W$-representation:
\[
    \HBM(\St_{\hS}, \bC) \cong \bigoplus_{(\cO,\cL) \in \widehat \cR_{G_{\bC}}(X_{\bC})} E_{\cO,\cL} \otimes V_{\cO,\cL}(X_{\bC}).
\]
\end{thm}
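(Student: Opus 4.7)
The plan is to decompose both $\cS_\cN$ and $\cS^\diamond_\hS$ into their simple perverse summands and then apply \eqref{eq W-action on relative hom} together with the orthogonality of simple perverse sheaves. By the Springer correspondence for $G_\bC$, the Springer sheaf admits the canonical $W\times G_\bC$-equivariant decomposition
\[
\cS_\cN \cong \bigoplus_{(\cO,\cL) \in \widehat\cN_{G_\bC}} E_{\cO,\cL} \otimes \IC(\cO,\cL),
\]
with $E_{\cO,\cL}=0$ whenever $(\cO,\cL)$ lies outside the image of $\Spr_{G_\bC}$.

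Next I would establish a parallel decomposition
\[
\cS^\diamond_\hS \cong \bigoplus_{(\cO,\cL) \in \widehat\cN_{G_\bC}} V_{\cO,\cL} \otimes \IC(\cO,\cL)
\]
for some finite-dimensional multiplicity spaces $V_{\cO,\cL}$. Semisimplicity is built into the $\ell$-adic construction: $\mathbb{D}_\hS$ is pure, so its pushforward along $\mu_\hS$ (over $\cN_\bC$, where $\mu_\hS$ is expected to be proper) and its $!$-restriction to $\cN_\bC$ remain pure, and pure perverse sheaves are geometrically semisimple by Beilinson--Bernstein--Deligne. For any $G_\bC$-equivariant semisimple perverse sheaf $\cP$ on $\cN_\bC$, the standard multiplicity formula for simple summands reads
\[
V_{\cO,\cL} \;\cong\; \Hom_{A_e}\bigl(\chi_\cL,\ \cH^{-\dim\cO}(i_e^*(\cP|_\cO))\bigr),
\]
where $e \in \cO$ and $\chi_\cL$ is the character of $A_e$ corresponding to $\cL$.

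The core step identifies this multiplicity with the top Borel--Moore homology of the moment-map fiber. Proper base change along the cartesian square
\[
\begin{tikzcd}
(T^*X_\bC)_e \ar[r] \ar[d] & \hS \ar[d, "\mu_\hS"]\\
\{e\} \ar[r, "i_e"] & \frg_\bC^*
\end{tikzcd}
\]
together with the definition $\cS_\hS = i_\cN^!\mu_{\hS *}\mathbb{D}_\hS[-\dim\hS]$ identifies $i_e^! \cS_\hS$ with the shifted dualizing complex of the fiber $(T^*X_\bC)_e$; extracting cohomology in the appropriate degree after perverse truncation yields $\HBM((T^*X_\bC)_e, \bC)$ as an $A_e$-module precisely when the dimensions balance, that is, when $\dim \mu_\hS^{-1}(\cO) = \tfrac12\dim\cO + \dim X_\bC$. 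This is exactly the relevance condition of \Cref{defn intro relevant}(1); for non-relevant orbits the fibers have strictly smaller dimension and the relevant cohomology vanishes. Consequently $V_{\cO,\cL} \cong V_{\cO,\cL}(X_\bC)$ for $(\cO,\cL)\in\widehat\cR_{G_\bC}(X_\bC)$, and $V_{\cO,\cL}=0$ otherwise.

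Combining the two decompositions with \eqref{eq W-action on relative hom} and Schur orthogonality produces the desired $W$-equivariant isomorphism, the $W$-action being entirely inherited from $\cS_\cN$. The main obstacle is the stalk/costalk computation in the third paragraph: one must verify that $\mu_\hS$ is proper over $\cN_\bC$ for a general smooth spherical $G_\bC$-variety (so that proper base change applies), that $\cS_\hS$ lies in nonnegative perverse degrees (so that $\ptau_{\leq 0}$ genuinely extracts the perverse piece $\cS^\diamond_\hS$), and that the symplectic dimension estimate $\dim\mu_\hS^{-1}(\cO)\geq \tfrac12\dim\cO + \dim X_\bC$—encoding the isotropy of generic moment-map fibers—interacts correctly with the perverse truncation to single out relevance precisely as the equality case.
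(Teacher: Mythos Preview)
Your overall strategy matches the paper's: decompose $\cS_\cN$ by Springer theory, decompose $\cS^\diamond_\hS$ into simples, and combine via \eqref{eq W-action on relative hom}. Two points need correction.

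\textbf{Semisimplicity of $\cS^\diamond_\hS$.} Your purity argument does not go through. You are in the analytic topology over $\bC$ with $\bC$-coefficients, so Beilinson--Bernstein--Deligne is not directly available; and in any case $\mu_\hS\colon T^*X_\bC\to\frg_\bC^*$ is \emph{not} proper for a general spherical $X_\bC$, so purity of the pushforward fails. The paper avoids this entirely: the category $\Perv_{G_\bC}(\cN_\bC)$ is already semisimple (Lusztig; see the reference in \Cref{sec:orthogonal Springer}), so $\cS^\diamond_\hS$ decomposes into $\IC$'s automatically. Once you know this, properness of $\mu_\hS$ is irrelevant throughout --- the base change you need is $g^! f_* \cong f'_* g'^!$, which holds for any Cartesian square, and the costalk computation in the paper's \Cref{Decomposition relevant springer sheaf} uses only this.

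\textbf{The dimension estimate.} Your inequality is backwards: the correct bound is
\[
\dim \mu_\hS^{-1}(\cO) \;\le\; \tfrac{1}{2}\dim\cO + \dim X_\bC,
\]
with relevance being the equality case. This is \Cref{lem dimension relevant} (proved via the Springer-fiber dimension formula and the Lagrangian nature of $\St_\hS$). This upper bound is exactly what forces $\cS_\hS\in {}^pD^{\ge 0}$ (\Cref{lem Half perversity}), which you correctly flag as a needed ingredient. With the inequality reversed to $\ge$, neither the perversity estimate nor the vanishing for irrelevant orbits would follow.
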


In \Cref{Prop. CC W eq}, we show that the $W$-action on $\HBM(\St_{\hS}, \bC)$ induced from the Hecke category coincides with the Springer action, up to tensoring with the sign character. \Cref{thm:intro spherical} follows from the bottom row of \eqref{M diagram} together with \Cref{Thm decomposition of SthS intro}.

\subsubsection{Geometrization of oscillator bimodule and sketch of proof of \Cref{thm: Springer for theta}}\label{Section Geometrization of oscillator bimodule}
The oscillator bimodule $M$ comes from the non-polarizable Hamiltonian $G_1\times G_2$-variety $\VV=V_1\ot V_2$, and its geometrization is more involved. As is common in theta correspondence, we use two polarizations of $\VV$ to get two diagrams as in \eqref{M diagram}, each only seeing the action of part of the Hecke algebra $\sfH_1\ot \sfH_2$. We then glue the two constructions by the partial Fourier transform.

Let $V_1=L_1\oplus L_1^\vee$ be a polarization of $V_1$ such that 
$L_{1}$ is stable under $B_{1}$, and 
\begin{equation}\label{def L1}
\bL_1:=\Hom(L_1, V_2).
\end{equation}
We write $\barG_1 := \GL(L_1)$, and denote by $\barB_1$ the image of $B_1$ in $\barG_1$, which is a Borel subgroup of $\barG_1$.  Let $\ov H_1$ denote the Hecke algebra of $\barG_1$, which is naturally a subalgebra of $H_1$. Define the {\em moment cone} 
\begin{equation}\label{moment cone 1}
      \cNL{1}\coloneqq \set{T\in \Hom(L_1,V_2)| T(L_1)\, \text{is isotropic}}.
\end{equation}
The group $B_2$ and $\ov B_1\subset \GL(L_1)$ acts on $\cNL{1}$. Using the Schr\"odinger model of $\om$ given by $\bL_1$, one checks that there is a canonical isomorphism of $\ov H_1\ot H_2$-modules (cf. \Cref{Lem. N_L invariant})
\begin{equation*}
    M\cong M_1:=\bC[(\ov B_1\times B_2)(\bF_q)\bs \cNL{1}(\bF_q)].
\end{equation*}
We then apply the geometrization procedure in \Cref{intro geom} to $M_1$ to obtained a certain category $\cM^\Tate_1$ of $\ov B_1\times B_2$-equivariant complexes of sheaves on $\cNL{1}$. This is a module category under $\ov \cH_1^\Tate \ot \cH_2^\Tate$. Passing to the Grothendieck group gives a $\ov\sfH_1\ot \sfH_2$-module $\sfM_1$, whose specialization at $v=q^{1/2}$ recovers $M_1$.

Using the polarization $V_2=L_2\oplus L_2^\vee$ where $L_2$ is stable under $B_2$ instead, we define another moment cone
\begin{equation}\label{moment cone 2}
      \cNL{2}\coloneqq \set{T\in \bL_2:=\Hom(L_2,V_1)| T(L_2)\, \text{is isotropic}}.
\end{equation}
We write $\barG_2 := \GL(L_2)$, the Borel subgroup $\barB_2$ of $\barG_2$ as image of $B_2$ and the Hecke algebra $\ov H_2$ as a subalgebra of $H_2$. Then $\cNL{2}$ carries an action of $B_1\times \ov B_2$. We similarly observes that $M$ as an $H_1\ot \ov H_2$-module is isomorphic to $M_2:=\bC[(B_1\times \ov B_2)(\bF_q)\bs \cNL{2}(\bF_q)]$, and the latter geometrizes to a category $\cM_2^{\Tate}$ of $B_1\times \ov B_2$-equivariant sheaves on $\cNL{2}$, giving an $\sfH_1\ot \ov\sfH_2$-module $\sfM_2$ whose specialization at $v=q^{1/2}$ recovers $M_2$.

The following theorem, whose proof is completed in \Cref{Sec Fourier}, allows us to ``glue" the two geometrizations.

\begin{thm}\label{thm:Fourier}
The partial Fourier-Deligne transform defines an equivalence of $\ov\cH^{\Tate}_1\ot \ov\cH^{\Tate}_2$-module categories 
\[
\FT : \cM^{\Tate}_1\isom \cM^{\Tate}_2.
\]

\end{thm}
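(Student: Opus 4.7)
The strategy is to realize $\bL_1$ and $\bL_2$ as dual vector bundles over a common base $\bE_0$, apply Laumon's partial Fourier--Deligne transform $\FT$ along the fibers, and then verify that $\FT$ matches all the required structures: supports, equivariance, weight filtration, and Hecke bimodule action.

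\textbf{Bundle setup.} The polarizations $V_i = L_i \oplus L_i^\vee$ yield decompositions
\[
\bL_1 = \Hom(L_1, L_2) \oplus \Hom(L_1, L_2^\vee), \qquad \bL_2 = \Hom(L_2, L_1) \oplus \Hom(L_2, L_1^\vee).
\]
The tensor-swap identifies the second summands canonically, giving a common base
\[
\bE_0 := \Hom(L_1, L_2^\vee) \cong \Hom(L_2, L_1^\vee),
\]
and the trace pairing $\Hom(L_1, L_2) \times \Hom(L_2, L_1) \to \bF_q$ identifies the first summands as dual vector spaces. Thus $\bL_1, \bL_2 \to \bE_0$ are dual vector bundles, and Laumon's partial Fourier--Deligne transform (with respect to the chosen additive character $\psi$) furnishes an equivalence $\FT: D^b(\bL_1) \isom D^b(\bL_2)$.

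\textbf{Support matching.} For $T = (T_0, T_1) \in \bE_0 \times \Hom(L_1, L_2)$, unfolding the condition $\langle T(l), T(l') \rangle_{V_2} = 0$ and using that $L_2, L_2^\vee$ are isotropic in the symplectic $V_2$, one finds $T \in \cNL{1}$ iff the bilinear form $(l, l') \mapsto T_0(l')(T_1(l))$ on $L_1 \times L_1$ is symmetric. In particular, the fibers of $\cNL{1} \to \bE_0$ are linear subspaces of $\Hom(L_1, L_2)$. The parallel analysis for $\cNL{2}$, now using that $L_1, L_1^\vee$ are isotropic in the orthogonal $V_1$, yields fibers that are linear subspaces of $\Hom(L_2, L_1)$ cut out by an antisymmetry condition. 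A direct computation with bases (valid since $\mathrm{char}\,\bF_q \neq 2$) shows that these fibers are orthogonal complements under the trace pairing, and the standard calculation of Fourier transforms of constant sheaves on vector subbundles then implies that $\FT$ restricts to an equivalence between the full subcategories of $D^b(\bL_1)$ and $D^b(\bL_2)$ consisting of objects supported on $\cNL{1}$ and $\cNL{2}$ respectively.

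\textbf{Equivariance, Tate condition, and Hecke action.} The common subgroup $\barB_1 \times \barB_2 \subset \GL(L_1) \times \GL(L_2)$ preserves both Lagrangians $L_i$ and the complements $L_i^\vee$, hence acts on $\bE_0$ and on both bundles $\bL_i$ compatibly with the duality pairing; consequently $\FT$ descends to an equivalence of $(\barB_1 \times \barB_2)$-equivariant derived categories. Fourier--Deligne transform preserves purity up to a Tate twist and respects the weight filtration, so it matches the Tate subcategories $\cM_1^\Tate$ and $\cM_2^\Tate$. Finally, the $\ov\cH_i^\Tate$-action on $\cM_i^\Tate$ is implemented by convolution along $\barB_i \backslash \barG_i / \barB_i$; since $\barG_i$ acts on both $\bL_1$ and $\bL_2$ preserving the common base $\bE_0$, and the Fourier transform occurs only in the fiber direction, the standard compatibility of $\FT$ with pullback and proper pushforward along maps transverse to the Fourier fibers yields the desired bimodule equivalence.

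\textbf{Main obstacle.} The technical core is the verification that the fibers $\cNL{1}|_{T_0}$ and $\cNL{2}|_{T_0}$ are orthogonal complements under the trace pairing. This encodes the precise duality between symmetric and antisymmetric bilinear forms determined by $T_0 \in \bE_0$, and is the reason the \emph{ortho-symplectic} dual pair, with its contrast between orthogonal $V_1$ and symplectic $V_2$, fits into this Fourier-theoretic picture. Once this orthogonality is established, the remaining verifications (equivariance, weight control, Hecke compatibility) are formal consequences of general properties of the partial Fourier--Deligne transform.
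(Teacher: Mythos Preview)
Your bundle setup is correct --- the paper uses the same identification of $\bL_1, \bL_2$ as dual vector bundles over $\bL^b$ (your $\bE_0$). But the ``Support matching'' step contains a fundamental error. The Fourier--Deligne transform does \emph{not} send sheaves supported on a sub-bundle $A \subset E$ to sheaves supported on $A^\perp \subset E^\vee$; rather, it sends sheaves \emph{constant along} $A$ (i.e.\ pulled back from $E/A$) to sheaves supported on $A^\perp$. Your claimed implication holds only for the constant sheaf on $A$, not for arbitrary sheaves with support in $A$. (The orthogonality claim itself also fails over degenerate $T_0$: at $T_0 = 0$ both fibers are the entire space, and these are certainly not mutual annihilators.)

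The mechanism you are missing involves the unipotent radicals $U_i \subset B_i$ of the Siegel parabolics. By definition $\cM_1 = D^\Tate_{\barB_1 \times B_2}(\cNL{1})$ carries equivariance under the \emph{full} Borel $B_2$, not merely $\barB_2$; equivalently (since $U_2$ is unipotent), $\cM_1$ consists of those $\barB_1 \times \barB_2$-equivariant objects on $\bL_1$ that are both $U_2$-equivariant and supported on $\cNL{1}$. The paper observes (\Cref{Fourier-transform 2}, via \Cref{lem:FourierSupport}) that $U_2$ acts on the fibers of $\bL_1 \to \bL^b$ by \emph{translations}, so Fourier transform exchanges $U_2$-equivariance (constancy along $U_2$-orbits) with support on the annihilator of those orbits --- which turns out to be precisely $\cNL{2}$. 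Symmetrically, support on $\cNL{1}$ becomes $U_1$-equivariance on the other side. Thus the two conditions defining $\cM_1$ are swapped with those defining $\cM_2$. Your argument addresses only $\barB_1 \times \barB_2$-equivariance and only the support condition, so it never engages this exchange. The preservation of the Tate condition also needs its own argument (\Cref{FT tate type I}): purity alone does not force Frobenius eigenvalues to lie in $q^{\bZ/2}$.
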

In \Cref{subsection Fourier bijection}, we determine the bijection between simple perverse sheaves induced by the above partial Fourier-Deligne transform. Passing to Grothendieck groups, $\FT$ induces an isomorphism $\sfM_1\isom \sfM_2$, through which both acquire modules structure under $\sfH_1\ot \sfH_2$. We denote the resulting $(\sfH_1, \sfH_2)$-bimodule by $\sfM$, the generic oscillator bimodule.


Moreover, the specialization $\sfM_{v=q^{1/2}, \bC}$ is canonically isomorphic to the oscillator $H_1\times H_2$-module $M$.
The specialization $\sfM_{v=1,\bC}$ then carries an action of $W_1\times W_2$. We have now obtained the analog of the bottom row of \eqref{M diagram} for the oscillator bimodule
\begin{equation}\label{osc M diagram}
    \xymatrix{(H_1\ot H_2\curvearrowright M) & \ar[l]_-{v=q^{1/2}}\ar[r]^-{v=1}(\sfH_1\ot \sfH_2\curvearrowright \sfM) & (W_1\times W_2\curvearrowright\sfM_{v=1,\bC}) \ar[r]^-{c}_-{\sim}& (W_1\times W_2\curvearrowright\HBM(\St_{\VV},\bC))}
\end{equation}
Here $\St_{\VV}$ is the {\em ortho-symplectic Steinberg variety}, considered before by Braverman--Finkelberg--Travkin \cite{MR4503990} in a related context.
\begin{equation}\label{eq orthogonal-symplectic Steinberg variety}
    \St_{\VV}:= \VV\times_{\frg_1^*\times\frg_2^*}(\wt\cN_1\times\wt\cN_2)
    =\Set{(T,\mathsf{B}_1,\mathsf{B}_2)\in \cN_{\VV}\times \cB_1\times \cB_2 | \mu_i(T)\in (\Lie \mathsf{B}_i)^\perp, i=1,2}.
\end{equation}
Then $(G_1\times G_2)\bs \St_\VV$ is the Hamiltonian reduction of $\VV$ by $B_1\times B_2$.

Similar to the case of spherical varieties, the relative Springer theory provides a canonical action of \( W_1 \times W_2 \) on the top Borel–Moore homology $\HBM(\St_{\VV}, \mathbb{C})$.
The following theorem describes the decomposition of \( \HBM(\St_{\VV}, \mathbb{C}) \), which will be established in \Cref{section Ortho-symplectic}.
\begin{thm}\label{thm:Htop}
As a representation of \( W_1 \times W_2 \), there is a canonical isomorphism
\[
\HBM(\St_{\VV}, \bC) \cong \bigoplus_{(\cO, \cO_1, \cL_1, \cO_2, \cL_2) \in \cRV} E_{\cO_1, \cL_1} \boxtimes E_{\cO_2, \cL_2}.
\]
\end{thm}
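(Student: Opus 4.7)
The plan is to adapt the argument for \Cref{Thm decomposition of SthS intro} in the spherical case to the Hamiltonian $G_1\times G_2$-variety $\VV$ with moment map $\mu=(\mu_1,\mu_2)$. First I would set up the \emph{$\VV$-relevant Springer sheaf} on $\cN_1\times\cN_2$ by
\[
\cS_{\VV} := i^{!}_{\cN_1\times\cN_2}\,\mu_{*}\,\mathbb{D}_{\VV}[-\dim \VV],
\qquad
\cS^\diamond_{\VV} := {}^p\tau_{\leq 0}\cS_{\VV},
\]
mirroring \eqref{ReleventSpringerSheaf}. Combining \eqref{eq orthogonal-symplectic Steinberg variety} with base change against the product of Springer resolutions $\wt\cN_1\times\wt\cN_2\to\cN_1\times\cN_2$ produces an isomorphism
\[
\HBM(\St_{\VV},\bC) \;\cong\; \Hom_{\Perv_{G_1\times G_2}(\cN_1\times\cN_2)}\bigl(\cS_{\cN_1}\boxtimes\cS_{\cN_2},\,\cS^\diamond_{\VV}\bigr),
\]
exactly as in \eqref{eq W-action on relative hom}. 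The Springer action of $W_1\times W_2$ on $\cS_{\cN_1}\boxtimes\cS_{\cN_2}$, using the disconnected-group Springer correspondence of \Cref{sec:orthogonal Springer} for $G_1$, induces the required $W_1\times W_2$-action on the left-hand side.

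Next I would decompose both sheaves into simple perverse summands. The product Springer correspondence gives
\[
\cS_{\cN_1}\boxtimes\cS_{\cN_2}\;\cong\;\bigoplus_{(\cO_i,\cL_i)}\bigl(E_{\cO_1,\cL_1}\boxtimes E_{\cO_2,\cL_2}\bigr)\otimes\bigl(\IC(\cO_1,\cL_1)\boxtimes\IC(\cO_2,\cL_2)\bigr),
\]
so the decomposition of $\HBM(\St_\VV,\bC)$ as a $W_1\times W_2$-module is controlled by the multiplicity of each $\IC(\cO_1,\cL_1)\boxtimes\IC(\cO_2,\cL_2)$ in $\cS^\diamond_\VV$. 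Stratifying $\cN_\VV$ by its finitely many $G_1\times G_2$-orbits (Kraft--Procesi) and applying the decomposition theorem to $\mu_{\cN}\colon\cN_\VV\to\cN_1\times\cN_2$, the contribution of an orbit $\cO\subset\cN_\VV$ with $\mu_i(\cO)=\cO_i$ to this multiplicity is
\[
\Hom_{A_e}\!\bigl((\chi_1|_{A_e})\otimes(\chi_2|_{A_e}),\;\HBM(\mu^{-1}(e_1,e_2)\cap \cO,\,\bC)\bigr),
\]
where $e\in\cO$, $e_i:=\mu_i(e)$, and $\chi_i\in\Irr(A_{e_i})$ corresponds to $\cL_i$, restricted via the maps $\D_i$ of \eqref{eq moment map component group}.

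Finally I would match this with \Cref{def relevant orthsymplectic}. A dimension count for the restricted moment map shows that the fiber $\mu^{-1}(e_1,e_2)\cap\cO$ has non-vanishing top Borel--Moore homology precisely when the dimension equality \eqref{eq:relevant} holds, i.e., when $\cO\in\cR_\VV$; in that case the space is one-dimensional and $A_e$ acts through the product character $(\chi_1|_{A_e})\cdot(\chi_2|_{A_e})$, so $A_e$-equivariance forces \eqref{eq relevant character}. Hence the multiplicity is $1$ exactly on relevant quintuples. Since $\cO$ is uniquely determined by $(\cO_1,\cO_2)$ inside a relevant quintuple (\Cref{relevatbt quintuple counting}), summing the non-zero contributions produces the asserted formula indexed by $\cRV$.

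The main obstacle is the orbit-by-orbit multiplicity computation in the final step: one needs a precise grip on the local geometry of $\mu$ over each relevant orbit $\cO\subset\cN_\VV$, including the $A_e$-action on the top Borel--Moore homology of $\mu^{-1}(e_1,e_2)\cap\cO$. This will require the ortho-symplectic partition combinatorics developed in \Cref{orthosymplectic orbits} to identify the component groups $A_e, A_{e_i}$ and the transition maps $\D_i$ explicitly, together with care regarding the disconnectedness of $G_1$, which is handled by the conventions of \Cref{sec:orthogonal Springer}.
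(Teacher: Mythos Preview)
Your strategy matches the paper's (see \S\ref{section Ortho-symplectic}): apply \Cref{Thm decomposition of SthS} to the Hamiltonian $G_1\times G_2$-action on $\VV$ to obtain a decomposition with multiplicity spaces $V_{\cO_1\times\cO_2,\cL_1\otimes\cL_2}(\VV)=\Hom_{A_{e_1}\times A_{e_2}}\bigl(\chi_1\otimes\chi_2,\;\HBM(\VV_{e_1,e_2},\bC)\bigr)$, then compute these by stratifying the fiber $\VV_{e_1,e_2}:=\mu^{-1}(e_1,e_2)$ by its intersections $\VV^\cO_{e_1,e_2}$ with the finitely many $G_1\times G_2$-orbits $\cO\subset\cN_\VV$. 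Two side remarks: the moment map $\mu$ is not proper (its fibers are affine and typically positive-dimensional), so the decomposition theorem is not available; the paper uses \Cref{lem Half perversity} instead. And the explicit ortho-symplectic combinatorics you flag as an obstacle is not needed for this proof---only the finiteness of orbits (Kraft--Procesi) and the homogeneous-space identification below enter.

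There is, however, a genuine slip in your multiplicity formula. For a relevant $\cO$, the fiber piece $\VV^\cO_{e_1,e_2}$ is the homogeneous space $(C_{G_1}(e_1)\times C_{G_2}(e_2))/\Stab_{G_1\times G_2}(e)$, so
\[
\HBM(\VV^\cO_{e_1,e_2},\bC)\;\cong\;\bC\bigl[(A_{e_1}\times A_{e_2})/A_e\bigr],
\]
which is generally \emph{not} one-dimensional (the map $\Delta\colon A_e\to A_{e_1}\times A_{e_2}$ is typically not surjective), and the $A_e$-action on it via $\Delta$ is trivial. Your displayed $\Hom_{A_e}$ with this space in the second argument would therefore yield the overcount $|(A_{e_1}\times A_{e_2})/A_e|$ whenever \eqref{eq relevant character} holds. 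The correct step is to take the $(\chi_1\otimes\chi_2)$-isotypic component under the full $A_{e_1}\times A_{e_2}$-action; Frobenius reciprocity then converts this to $\Hom_{A_e}\bigl((\chi_1|_{A_e})\cdot(\chi_2|_{A_e}),\,\mathbf{1}\bigr)$, recovering multiplicity one on relevant quintuples. With this fix, your argument coincides with the paper's.
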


In \Cref{prop:CC W12 eq}, we show that the isomorphism $c: \sfM_{v=1,\bC}\isom \HBM(\St_\VV,\bC)$ in the bottom row of \eqref{osc M diagram} intertwines the $W_1\times W_2$-action on $\sfM_{v=1,\bC}$ (as the specialization of the Hecke algebra action) and the Springer $W_1\times W_2$-action on $\HBM(\St_\VV,\bC)$, up to tensoring with the sign characters of $W_1\times W_2$. The proof of \Cref{thm: Springer for theta} follows from the bottom row of \eqref{osc M diagram}  together with \Cref{thm:Htop}.

\subsection{Future directions}
\subsubsection*{The oscillator $W_1\times W_2$-graph}
In the course of geometrizing the oscillator bimodule $M$, we introduce a Kazhdan-Lusztig type basis for it, and use it to define a cell filtration on $M$ and a $W_1\times W_2$-graph in the sense of \cite{MR0560412} that we call the {\em oscillator $W_1\times W_2$-graph} (see \Cref{Sec gen oscillator bimod} and \Cref{ss: type I W graph examples} for some examples). These combinatorial objects deserve further study.

\subsubsection*{More hyperspherical varieties}
In view of the relative Langlands duality proposed by Ben-Zvi--Sakellaridis--Venkatesh \cite{ben2024relative}, the geometric input of both \Cref{thm: Springer for theta} and \Cref{thm:intro spherical} are examples of hyperspherical $G$-varieties: it is the $G$-variety $T^*X$ in \Cref{thm:intro spherical}, and  the $G_1\times G_2$-variety $\VV$ in \Cref{thm: Springer for theta}. The latter is non-polarizable, and the proof of \Cref{thm: Springer for theta} is significantly more involved. 

Both theorems are proved by constructing a sheaf category that can be viewed as a (constructible) quantization of the $B$-Hamiltonian reduction of a hyperspherical $G$-variety $\mathscr{X}$.  We expect a suitable adaptation of our method to yield constructible quantizations and similar results on Hecke modules for a broader class of hyperspherical varieties.


\subsubsection*{The Koszul duality and relative Langlands duality}
In \cite{ben2024relative}, Ben-Zvi, Sakellaridis and Venkatesh proposed and partially defined a duality between hyperspherical varieties:
\[
G \circlearrowright \mathscr{X} \longleftrightarrow \mathscr{X}^{\vee} \circlearrowleft G^{\vee},
\]
Here, $G^\vee$ denotes the Langlands dual group of $G$. 
One striking instance of this duality is that the hyperspherical variety $\mathscr{X} =\VV$ for $G = \SO(V_1) \times \Sp(V_2)$ with 
$\dim V_1=\dim V_2=2n$ considered in \Cref{subsection moment map}, underlying the type I theta correspondence, is dual to the hyperspherical variety $\mathscr{X}^\vee=T^*(X)$ of $G^\vee=\SO(W_1)\times \SO(W_2)$ with $\dim W_2=2n,\dim W_1=2n+1$ in \Cref{example spherical} (3), underlying the branching problem occurring in the orthogonal Gan-Gross-Prasad conjecture. 

In  \cite{MR4865134}, Finkelberg, Ginzburg, and Travkin  provide a geometric conjecture expressing this duality as an isomorphism of Borel–Moore homology groups: 
\begin{equation}\label{FGT conjecture}
    \HBM(\St_{\mathscr{X}},\bC) \cong \HBM(\St_{\mathscr{X}^\vee},\bC)
\end{equation}
as $W = W^\vee$-modules, where $W$ and $W^\vee$ are the Weyl group of $G$ and $G^\vee$. Going further in the spirit of Koszul duality, they also conjecture an equivalence between suitably defined sheaf categories attached to the $B$-Hamiltonian reduction of $\mathscr{X}$ and the $B^\vee$-Hamiltonian reduction of $\mathscr{X}^\vee$. Moreover, such an equivalence should intertwine the actions of the Hecke categories of $G$ and $G^\vee$ under the monoidal Koszul duality in \cite{MR3003920}. We will attempt to prove their Koszul duality conjecture in the case of type I theta correspondence and the orthogonal GGP.

\subsubsection*{Relative Springer sheaves and generalized Springer correspondence}
In \Cref{intro geom}, we introduced the relative Springer sheaf $\cS_{\hS}$ and its perverse truncation $\cS^\diamond_{\hS}$ for $\hS = T^*(X_{\bC})$, where $X$ is a $G$-spherical variety. The key results—\Cref{eq W-action on relative hom}, \Cref{thm:intro spherical}, and \Cref{Thm decomposition of SthS intro}—show that the multiplicity of the Springer-type IC sheaves $\IC(\cO,\cL)$ (for $(\cO,\cL) \in \Im(\Spr_G)$; see \Cref{Springer map}) in $\cS^\diamond_{\hS}$ equals the multiplicity of the corresponding Springer representation $E_{\cO,\cL}(q)$ in
\[
    M \coloneqq \bC[B(\bF_q) \backslash X(\bF_q)].
\]
In short, the Springer summands of $\cS^\diamond_{\hS}$ can be viewed as an avatar of $M$.

The map in \Cref{Springer map} is generally not surjective.  In \cite{MR0732546}, Lusztig extended the Springer correspondence \Cref{Springer map} to all pairs in $\LSG$, now known as the \emph{generalized Springer correspondence}. In the generalized Springer correspondence, the set $\LSG$ admits a canonical partition into blocks, indexed by irreducible cuspidal local systems on nilpotent orbits in Levi subgroups of $G$. Each block is naturally parametrized by the irreducible representations of a relative Weyl group. 
It is an interesting problem to study other simple summands of the sheaf $\cS^\diamond_{\hS}$ in terms of the generalized Springer correspondence, and look for their representation-theoretic interpretation.


The construction of the relative Springer sheaf extends to $G$-hyperspherical varieties $\hS$ (see \Cref{section relative Springer general}). A particularly interesting case occurs when $\hS = \VV$ attached to a dual pair $G = G_1 \times G_2$ considered in \Cref{subsection moment map}:
\[
    \cS^\diamond_{\VV} \in \Perv_{G_1 \times G_2}(\cN_1 \times \cN_2).
\]
The decomposition of $\cS^\diamond_{\VV}$ into simple perverse sheaves gives rise to a correspondence between simple objects of $\Perv_{G_1}(\cN_1)$ and simple objects of $\Perv_{G_2}(\cN_2)$,
which may be viewed as a geometric avatar of the classical theta correspondence. We plan to study this version of the theta correspondence in  forthcoming work.

\subsection{Organization of the paper}\label{subsec:organization}
Section~\ref{Sec Springer} develops a relative Springer theory, studying the geometry obtained from the Springer resolution by base change. Section~\ref{Sec. geom gen} geometrizes Hecke modules attached to spherical varieties and proves \Cref{thm:intro spherical}. Section~\ref{ex: GL theta} treats type~II theta correspondence as a special case of Section~\ref{Sec. geom gen}, and \Cref{Theta type II Hecke module} refines \Cref{thm:intro spherical} in this setting. Section~\ref{sec Hecke type I} analyzes the Hecke modules arising from type~I theta correspondence: using two polarizations, we construct two models and glue them via a partial Fourier transform. The proof of \Cref{thm: Springer for theta} is then completed in Section~\ref{section proof main thm}.

The paper have five appendices. \Cref{sec:orthogonal Springer} records general results on the Springer correspondence for disconnected groups and makes the correspondence explicit for even orthogonal groups. \Cref{TitsLusztigsec} recalls Tits’ deformation and Lusztig’s homomorphism used in our specialization arguments. \Cref{subsection explicit relavant} gives a combinatorial parametrization of various nilpotent orbits appearing throughout the paper,  based on the work \cite{MR549399}. \Cref{subsection Fourier bijection} determines the bijection of simple perverse sheaves under (partial) Fourier transform arising in the theta correspondence, with input from \cite{MR4865134}. \Cref{Sec. Hotta} revisits Hotta’s local formula \cite{Hotta} for the Springer action of a simple reflection.

\subsection*{Acknowledgment}

The research of C.Q. was supported by the Simons Foundation through Wei~Zhang. 
The research of Z.Y. was partially supported by the Simons Foundation. The research of J.Z. was supported by the Simons Foundation. J.Z would like to thanks Tasho Kaletha and Yiannis Sakellaridis for the interest and encouragement on this work. He also thanks Rui Chen, Nhat Hoang Le, Hao Peng, Yicheng Qin, Zeyu Wang for helpful discussions. 

\section*{Notation and symbols}
 

\begin{longtable}{>{\raggedright\arraybackslash}p{2cm}>{\raggedright\arraybackslash}p{9cm} >{\arraybackslash}p{2cm} }

\toprule
\textbf{Notation} & \textbf{Meaning} & \textbf{Location} \\
\midrule
\endhead
 
\midrule
\multicolumn{3}{r}{\textit{Continued on next page}} \\
\endfoot
 
\bottomrule
\endlastfoot
 
$\cS_\cN$ & Springer sheaf on the nilpotent cone $\cN$ &\eqref{SpringerSheaf} \\
 $\IC(\cO,\cL)$ &  The (perverse) intersection complex associated to a local system $\cL$ on $\cO$ (middle extended to $\ov\cO$)& \eqref{Springer sheaf decomposition}\\
  $E_{\cO,\cL}$ &  The Springer representation associated to a local system $\cL$ on $\cO$ & \eqref{Springer map}\\
$\mathbb D$ & Dualizing sheaf & \\
$\St_\sX$ & Relative Steinberg scheme associated to $\sX$ & \eqref{Relative Steinberg}\\
$\cS_\sX$ & $\sX$-relevant Springer sheaf & \eqref{ReleventSpringerSheaf}\\
$\cS^\diamond_{\hS} $ & Perverse truncation of $\cS_\sX$ & \Cref{defn trancated Springer}\\
$\phi$ & Sheaf to function map & \eqref{phi}\\
$\chi$ & Euler characteristic map & \eqref{chi}\\
$\ch$ & Weight polynomial map & \eqref{ch}\\
$\CC$ & The characteristic cycle map & \Cref{CC gen}\\
$\tcR_{G}(\hS)$ & Relevant pairs associated to $\hS$ &\Cref{defn relevant pair} \\
$\cRL$ & Relevant triples for type II dual pair & \Cref{def relevant GL}\\
 $\cRV$ & Relevant quintuples for type I dual pair & \Cref{def relevant orthsymplectic}\\
$\cP(n)$ & The set of partitions of $n$ & \Cref{defn ab diagram}\\
$\BP(m,n)$& The set of decorated bipartitions of $(m,n)$& \Cref{defn ab diagram}\\
$\RBP(m,n)$& The set of relevant decorated bipartitions of $(m,n)$& \Cref{defn ab diagram}\\
$\cP_{\pm}(n)$ & The set of orthogonal and symplectic partitions of $n$ & \Cref{defn orthogonal symplectic partition}\\
$\OSP(M, N)$ & The set of ortho-symplectic partitions & \Cref{defn orth-symplectic partitions} \\
$\ROSP(M, N)$ & The set of relevant ortho-symplectic partitions & \Cref{defn orth-symplectic partitions} \\
$\PM(m, n)$ & The set of partial matchings & 
\Cref{def partial matching}\\

$\SPM(m, n)$ & The set of signed partial matchings & 
\Cref{def signed partial matching}\\

\end{longtable}

\section{Relative Springer theory}\label{Sec Springer}

Relative Springer theory refers to the study of geometry obtained from the Springer resolution by base change. After proving a general result (\Cref{Thm decomposition of SthS}), we consider three special cases in the remaining subsections, corresponding respectively to the geometry involved in \Cref{Sec. geom gen}, \Cref{ex: GL theta} and \Cref{sec Hecke type I}.

In this section, all schemes are considered over $\bC$ and let $G$ be a connected reductive group defined over $\bC$. All sheaves are constructible sheaves in the analytic topology with $\bC$-coefficients. In particular, $\hBM{*}{-,\bC}$ denotes Borel-Moore homology with $\bC$-coefficients.

\subsection{Generalities on relative Springer theory}\label{section relative Springer general}
We begin by recalling the classical Springer theory. We retain the notation introduced at the end of \Cref{intro geom}, omitting the subscript \( \bC \) for simplicity. In particular, the Springer sheaf \( \cS_{\cN} \) is defined as in \Cref{SpringerSheaf}.

There is a canonical isomorphism (see \cite[Theorems~2 and~3]{BM})
\begin{equation}\label{End Springer}
    \End_{D^b_{G}(\cN)}(\cS_{\cN}) \cong \bC[W],
\end{equation}
\trivial[h]{Am I right that this still true over $\bQ$ and the decomposition \Cref{Springer sheaf decomposition} also holds where all Springer sheaves appears. The only problem is the decomposition of $ \hBM{\TOP}{\cB_e,\bQ}$ may have some $\IC(\cO,\cL)$ glued together due to the problem that the representation of $A_e$ may not defined over $\bQ$. All the Springer sheaf are defined over $\bQ$, where means the correspondence representation of $A_e$ is defined over $\bQ$.
}
which endows \( \cS_{\cN} \) with an action of the Weyl group \( W \). We normalize this action so that the space of \( W \)-invariants in \( \cS_{\cN} \) is the shifted constant sheaf \( \bC_{\cN}[\dim \cN] \). The Springer sheaf \( \cS_{\cN} \) is semisimple and decomposes as
\begin{equation}\label{Springer sheaf decomposition}
    \cS_{\cN} \cong \bigoplus_{(\cO,\cL)\in \LSG} \IC(\cO,\cL) \boxtimes V_{\cO,\cL},
\end{equation}
where $\LSG$ is defined in \eqref{eq local system}, $\IC(\cO,\cL)$ is the associated simple perverse sheaf and $V_{\cO,\cL}$ is the multiplicity space. The isomorphism \eqref{End Springer} implies that each \( V_{\cO,\cL} \) is an irreducible representation of \( W \), and that every irreducible representation of \( W \) arises in this way. Let $E_{\cO,\cL}=V_{\cO,\cL}^\vee$ endowed with the dual $W$-action, the map $E_{\cO,\cL}\mapsto (\cO,\cL)$ establishes the Springer correspondence \( \Spr_G \) defined in \Cref{Springer map}.

We now explain how the representation \( V_{\cO,\cL}\cong E_{\cO,\cL}^\vee \) relates to the top Borel--Moore homology of the Springer fiber. Fix a point \( e \in \cO \), and let \( \cB_e \coloneqq \pi_{\cN}^{-1}(e)\) denote the Springer fiber over \( e \). Let \( A_e := \pi_0(C_G(e)) \) be the component group. Then an irreducible \( G \)-equivariant local system \( \cL \) on \( \cO \) corresponds to an irreducible representation \( \chi \in \Irr_{\bC}(A_e) \).

Taking the costalk of both sides of \eqref{Springer sheaf decomposition} at the point \( e \in \cO \), and then taking cohomology in the appropriate degree, yields an isomorphism of \( A_e \times W \)-modules:
\[
    \hBM{\TOP}{\cB_e,\bC}\cong   \bigoplus_{\chi \in A_e} \chi \boxtimes E^\vee_{\cO,\cL},
\]
where $(\cO,\cL)$ corresponds to $\chi$. Then the $W$-module $E^\vee_{\cO,\cL}$ is
\begin{equation}\label{eq Springer representation}
E^\vee_{\cO,\cL}\cong E^\vee_{e,\chi}:=\Hom_{A_{e}}(\chi, \HBM(\cB_{e},\bC)).
\end{equation}
This realizes the Springer representation \( E^\vee_{\cO,\cL} \) as a multiplicity space inside the top Borel--Moore homology of the Springer fiber \( \cB_e \).

\medskip 
Now we introduce the set up for the \emph{relative Springer theory}. Let $\hS$ be a $G$-scheme equipped with a $G$-equivariant map $\mu_{\hS}:\hS\rightarrow \frak g^*$. 
\begin{defn}\label{defn Steiberg}
Define the \em{$\hS$-Steinberg scheme} $\St_{\hS}\coloneqq \hS\times_{\frak g^*} \wt\cN$. More precisely, we have 
\begin{equation}\label{Relative Steinberg}
\St_{\hS} =\set{(x,B)\in \hS\times \cB|\mu_{\hS}(x)\in \Lie(B)^\bot}.
\end{equation}
and following diagram with the square Cartesian.  
 \begin{equation}\label{St hS Cart}
\begin{tikzcd}
\St_\sX \arrow[d, "\Pi"'] \arrow[r, "\nu_{\sX}"] & \tcN \arrow[d, "\pi"] \ar[rd,"\pi_\cN"]\\
\sX \arrow[r, "\mu_{\sX}"] & \fgg^* & \ar[l,"i_\cN"'] \cN \\
\end{tikzcd} 
    \end{equation}
\end{defn}

From this point forward, we fix a Borel subgroup $B$ with Lie algebra $\frb$ and define 
\begin{equation}\label{Lambdafrb}
    \L_{\hS}\coloneqq \mu_{\hS}^{-1}(\frb^\bot).
\end{equation}
We identify $\cB$ with $G/B$.
Then the map $(g,x) \mapsto (gB,g x)$ gives an identification 
\begin{equation}\label{eqSteinberg}
G\times^{B} \L_{\hS} = \St_{\hS}
\end{equation} 
This identification induces a natural isomorphism 
\[
\HBM(\L_{\hS},\bC)\cong \HBM(\St_{\hS},\bC).\]

Define the \emph{$\hS$-relevant Springer sheaf} 
\begin{equation}\label{ReleventSpringerSheaf}
\cS_{\hS}\coloneqq i_\cN^!\,  \mu_{\hS *}\mathbb D_{\hS}[-2\dim \St_{\hS}+\dim \wt \cN].
\end{equation}

\begin{lemma}\label{W-action on relative hom}
    There is a natural isomorphism 
    \begin{equation}\label{eq W-action on relative hom}
          \Hom_{D^b_G(\cN)}(\cS_{\cN},\cS_{\hS}) \cong \HBM(\St_{\hS},\bC) .
    \end{equation}
\end{lemma}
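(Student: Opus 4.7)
The plan is to compute $\Hom_{D^b_G(\cN)}(\cS_{\cN},\cS_{\hS})$ by a sequence of standard six-functor manipulations, reducing it step by step to the top Borel--Moore homology of $\St_{\hS}$. First I would substitute the definitions $\cS_{\cN}=\pi_{\cN *}\bC_{\wt\cN}[\dim\wt\cN]$ and $\cS_{\hS}=i_\cN^!\mu_{\hS *}\mathbb{D}_{\hS}[-2\dim\St_{\hS}+\dim\wt\cN]$, then apply the $(\pi_{\cN !},\pi_\cN^!)$ adjunction (using that $\pi_\cN$ is proper, so $\pi_{\cN *}=\pi_{\cN !}$) to move the Springer resolution to the right-hand side.

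Next I would use $\pi_\cN^!\circ i_\cN^!=(i_\cN\circ\pi_\cN)^!=\pi^!$ to combine the two upper-shriek pullbacks along the factorization $\pi=i_\cN\circ\pi_\cN$ visible in the diagram \eqref{St hS Cart}. Applying base change for the Cartesian square
\[
\pi^!\,\mu_{\hS *}\;\cong\;\nu_{\hS *}\,\Pi^!,
\]
which is the upper-shriek dual of proper base change in the six-functor formalism for the equivariant derived category (and requires no properness hypothesis on $\mu_{\hS}$), I transport everything to $\St_{\hS}$. Then the $(\nu_\hS^*,\nu_{\hS *})$ adjunction combined with $\nu_\hS^*\bC_{\wt\cN}=\bC_{\St_{\hS}}$ and $\Pi^!\mathbb{D}_{\hS}=\mathbb{D}_{\St_{\hS}}$ yields
\[
\Hom_{D^b_G(\cN)}(\cS_{\cN},\cS_{\hS})\;\cong\;\Hom_{D^b_G(\St_{\hS})}\bigl(\bC_{\St_{\hS}},\mathbb{D}_{\St_{\hS}}[-2\dim\St_{\hS}]\bigr),
\]
which by the very definition of Borel--Moore homology is $\hBM{2\dim\St_{\hS}}{\St_{\hS},\bC}=\HBM(\St_{\hS},\bC)$. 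The precisely chosen cohomological shift $-2\dim\St_{\hS}+\dim\wt\cN$ in the definition of $\cS_{\hS}$ is exactly what is needed to make this final degree land in the \emph{top} Borel--Moore degree.

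The one step that deserves care is the base change $\pi^!\mu_{\hS *}\cong\nu_{\hS *}\Pi^!$, since $\mu_{\hS}$ need not be proper or smooth; I would justify it by invoking the standard $!$-version of base change (i.e.\ the Verdier dual of $g^*f_!\cong f'_!g'^*$) in the $G$-equivariant constructible derived category, which is valid for arbitrary Cartesian squares of reasonable schemes. All the manipulations are $G$-equivariant throughout, so the identification takes place in $D^b_G(\cN)$ as asserted. No further geometric input is required at this stage; the $W$-action on the right-hand side will then come from functoriality via $\End_{D^b_G(\cN)}(\cS_{\cN})\cong\bC[W]$.
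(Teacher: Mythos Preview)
Your proof is correct and follows essentially the same six-functor chase as the paper. The only cosmetic difference is the order: the paper first pulls $\cS_{\cN}$ back to $\hS$ via the adjunctions $({i_\cN}_!,i_\cN^!)$ and $(\mu_{\hS}^*,\mu_{\hS *})$, then uses ordinary proper base change $\mu_{\hS}^*\pi_*\cong \Pi_*\nu_{\hS}^*$ (valid since $\pi$ is proper) and finally the $(\Pi_!,\Pi^!)$ adjunction; you instead pull $\cS_{\hS}$ back to $\wt\cN$ via $(\pi_{\cN !},\pi_\cN^!)$ and use the dual base change $\pi^!\mu_{\hS *}\cong\nu_{\hS *}\Pi^!$, which as you note is obtained by taking right adjoints in the proper base change isomorphism and needs no hypothesis on $\mu_{\hS}$---the two routes are mirror images of each other and both land at $\Hom_{D^b_G(\St_{\hS})}(\bC_{\St_{\hS}},\mathbb{D}_{\St_{\hS}}[-2\dim\St_{\hS}])$.
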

\begin{proof}
This ia a standard computation using the formalism of the six functors as below: 
   \[
   \begin{split}
    \Hom_{D^b_G(\cN)}(\cS_{\cN},\cS_{\hS})
    & \cong  
    \Hom_{D^b_G(\hS)}({\mu_{\hS }^* i_\cN}_!\cS_{\cN},  \mathbb D_{\hS}[-2\dim \St_{\hS}+\dim \wt \cN]) \\
    & \cong  
    \Hom_{D^b_G(\hS)}(\Pi_*\bC_{\St_\hS}[\dim \tcN],  \mathbb D_{\hS}[-2\dim \St_{\hS}+\dim \tcN]) \\
    &\cong  \Hom_{D^b_G(\St_\hS)}(\bC_{\St_\hS}, \bD_{\St_\hS}[-2\dim \St_{\hS}]) \\
    & = \HBM(\St_{\hS},\bC).
    \end{split}
   \]
\end{proof}   
Combining \Cref{End Springer} and \Cref{eq W-action on relative hom}, we obtain a natural \( W \)-action on \( \HBM(\St_{\hS}, \bC) \). Alternatively, one may describe the \( W \)-action on \( \hBM{*}{\St_{\hS}, \bC} \) as follows. Transport the \( W \)-action from the Springer sheaf \( \cS_{\cN} \) via base change using the Cartesian diagram \eqref{St hS Cart}. This yields a natural \( W \)-action on 
\begin{equation}\label{cons:W on hBM}
    \Pi_{*} \bC_{\St_{\hS}}[\dim \widetilde{\cN}] \cong \mu_{\hS}^{*} {i_\cN}_! \cS_{\cN}.
\end{equation}
Since \( \Pi \) is proper, this induces a natural \( W \)-action on the compactly supported cohomology \( \chM{*}{\St_{\hS}, \bC} \), and hence a dual \( W \)-action on the Borel–Moore homology \( \hBM{*}{\St_{\hS}, \bC} \).

We now explain how the decomposition of \( \HBM(\St_{\hS}, \bC) \) as a \( W \)-representation relates to the Borel--Moore homology of certain fibers of the moment map, which we call \emph{hyperspherical fibers}.

Let \( e \in \cN \) be a nilpotent element. Define the hyperspherical fiber at \( e \) to be
\begin{equation}\label{Hyperspherical fiber}
    \sX_{e} \coloneqq \mu_{\hS}^{-1}(e).
\end{equation}
The top Borel--Moore homology \( \HBM(\sX_{e}, \bC) \) carries a natural action of the component group \( A_e := \pi_0(C_G(e)) \). For each \( \chi \in \Irr(A_e) \), define the \emph{multiplicity space}
\begin{equation}\label{eq hyperspherical fibre multiplicty}
    V_{e,\chi}(\hS) \coloneqq \Hom_{A_e}(\chi, \HBM(\sX_{e}, \bC)).
\end{equation}
In the above definition, the vector space $V_{e,\chi}(\hS)$ only depends only on the corresponding  $(\cO,\cL)\in\widehat {\cN}_{G}$; we henceforth denote it by $V_{\cO,\cL}(\hS)$.
\trivial[h]{
Suppose $ge = e'$, 
then $\Hom_{A_e}(\cL_e,\HBM(\hS_e,\bQ))\rightarrow \Hom_{A_{e'}}(\cL_{e'},\HBM(\hS_{e'},\bQ))$
is given by $T\mapsto  g T g^{-1}$. 
Note that if $e'=e$, i.e. $g\in C_G(e)$ then $gTg^{-1}=T$, i.e. the map is the identity map. 
}
\begin{lemma}\label{lem dimension relevant}\label{Defi relevant orbit}
    Let $\cO\subset \cN$ be a nilpotent orbit. Then
    \begin{equation}
        \label{leq relevant dim}
        \dim \mu_{\hS}^{-1}(\cO)\leq  \dim \St_{\hS}+\half \dim \cO -\half \dim \wt \cN.
    \end{equation}
\end{lemma}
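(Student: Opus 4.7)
The plan is to exhibit a locally closed subvariety of $\St_{\hS}$ whose dimension exceeds $\dim \mu_{\hS}^{-1}(\cO)$ by exactly $\half \dim \wt\cN - \half \dim \cO$, and then use the trivial bound that its dimension is at most $\dim \St_{\hS}$.

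Concretely, first I would consider
\[
Z_\cO \coloneqq \Pi^{-1}\!\bigl(\mu_{\hS}^{-1}(\cO)\bigr) = \mu_{\hS}^{-1}(\cO)\times_{\cO}\pi_\cN^{-1}(\cO),
\]
using the Cartesian diagram \eqref{St hS Cart} and the factorization $\pi = i_\cN\circ\pi_\cN$. This is a locally closed subset of $\St_\hS$ (it is the preimage under $\Pi$ of the locally closed subset $\mu_\hS^{-1}(\cO)\subset \hS$), so
\[
\dim Z_\cO \leq \dim \St_{\hS}.
\]

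Next I would compute $\dim Z_\cO$ by projecting to the first factor. The projection $Z_\cO\to \mu_{\hS}^{-1}(\cO)$ is surjective, and its fiber over a point $x$ with $\mu_{\hS}(x)=e\in\cO$ is precisely the Springer fiber $\cB_e=\pi_\cN^{-1}(e)$. Since all Springer fibers over points of a single $G$-orbit are isomorphic, the projection has equidimensional fibers, giving
\[
\dim Z_\cO = \dim \mu_{\hS}^{-1}(\cO) + \dim \cB_e.
\]
Invoking the classical dimension formula for Springer fibers, $\dim\cB_e = \tfrac12\bigl(\dim\cN - \dim\cO\bigr)$, together with $\dim\wt\cN = \dim\cN$, this becomes
\[
\dim Z_\cO = \dim \mu_{\hS}^{-1}(\cO) + \tfrac12\dim\wt\cN - \tfrac12\dim\cO.
\]
Combining with $\dim Z_\cO\leq \dim\St_{\hS}$ and rearranging yields the claimed inequality \eqref{leq relevant dim}. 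There is no real obstacle here; the only point requiring care is correctly identifying the fibers of $\Pi$ over $\mu_\hS^{-1}(\cO)$ with Springer fibers, which follows directly from the base change in \eqref{St hS Cart} and the fact that $\pi$ factors through $\cN$.
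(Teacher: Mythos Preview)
Your proof is correct and follows essentially the same approach as the paper: both consider $\Pi^{-1}(\mu_\hS^{-1}(\cO))\subset \St_\hS$, identify its fibers over $\mu_\hS^{-1}(\cO)$ with Springer fibers via the Cartesian square, invoke Steinberg's formula $\dim\cB_e=\tfrac12(\dim\wt\cN-\dim\cO)$, and bound by $\dim\St_\hS$. The paper's version is just more terse.
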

\begin{proof}
 For $e\in \cO$, let $\cB_e:= \pi^{-1}(e)$ denote the Springer fiber. By Steinberg's theorem \cite{St}, we have $\dim \cB_e=\frac{1}{2} (\dim \wt \cN-\dim \cO)$. By the Cartesian diagram \eqref{St hS Cart}, for $e\in \cO$, we have
    \begin{equation}\label{muhS fiber dim}
        \dim \mu_{\hS}^{-1}(\cO)+\dim \cB_e=\dim \Pi^{-1}\mu_{\hS}^{-1}(\cO)\leq \dim \St_{\hS} 
    \end{equation}
    and the lemma follows. 
\end{proof}

\begin{defn}\label{defn relevant pair}
\begin{enumerate}
    \item   A nilpotent orbit $\cO$ in $\cN$ is called \emph{$\hS$-relevant} if the following equality holds    \begin{equation}\label{eq hS relevant}
        \dim \mu_{\hS}^{-1}(\cO) =  \dim \St_{\hS}+\half \dim \cO -\half \dim \wt \cN.
    \end{equation}
Let $\cR_{G}(\sX)$ be the set of relevant nilpotent $G$-orbits.
\item  A pair $(\cO,\cL)\in \LSG$ is called a \em{$\hS$-relevant pair} if $\cO\in \cR_{G}(\sX)$ and 
   $V_{\cO,\cL}(\hS)\neq 0$. We denote the set of $\hS$-relevant pairs be $\tcR_{G}(\hS)$. 
\end{enumerate}
\end{defn}

\begin{thm}\label{Thm decomposition of SthS}
  We have a decomposition of $\HBM(\St_{\hS},\bC)$ as a $W$-module 
  \[
 \HBM(\St_{\hS},\bC)\cong \bigoplus_{(\cO,\cL)\in \tcR_{G}(\hS)} E_{\cO,\cL}\otimes V_{\cO,\cL}(\hS).
  \]
\end{thm}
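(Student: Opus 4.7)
My plan is to start from the canonical identification
$$\HBM(\St_\hS,\bC) \cong \Hom_{D^b_G(\cN)}(\cS_\cN, \cS_\hS)$$
supplied by \Cref{W-action on relative hom}, with $W$ acting through $\cS_\cN$. Feeding in the Springer decomposition~\eqref{Springer sheaf decomposition} together with $V_{\cO,\cL}=E_{\cO,\cL}^\vee$, this splits as
$$\HBM(\St_\hS,\bC) \cong \bigoplus_{(\cO,\cL)\in \LSG} E_{\cO,\cL}\otimes \Hom_{D^b_G(\cN)}(\IC(\cO,\cL),\cS_\hS),$$
so the theorem reduces to showing that $\Hom_{D^b_G(\cN)}(\IC(\cO,\cL),\cS_\hS) \cong V_{\cO,\cL}(\hS)$ when $(\cO,\cL)\in \tcR_G(\hS)$, and that this Hom vanishes otherwise.

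To analyze these Hom spaces I would pass to the perverse heart, replacing $\cS_\hS$ by $\cS^\diamond_\hS = \ptau_{\leq 0}\cS_\hS$, and probe $\cS^\diamond_\hS$ via its costalks. For $e\in\cO$, proper base change along the Cartesian square defining the hyperspherical fiber $\hS_e$, together with the identity $i_{\hS_e}^!\mathbb{D}_\hS = \mathbb{D}_{\hS_e}$, gives
$$i_e^!\cS_\hS \cong \pi_{\hS_e*}\mathbb{D}_{\hS_e}\bigl[-2\dim\St_\hS+\dim\wt\cN\bigr].$$
Since $i_e^!\IC(\cO,\cL)$ for $e\in\cO$ is concentrated in degree $\dim\cO$, only the cohomology $H^{\dim\cO}(i_e^!\cS_\hS)$ contributes. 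A short count (using $\dim\mu_\hS^{-1}(\cO)=\dim\cO+\dim\hS_e$) shows that the corresponding Borel--Moore degree of $\hS_e$ matches the top degree $2\dim\hS_e$ precisely when the relevance equality~\eqref{eq hS relevant} holds; in that case the group equals $\HBM(\hS_e,\bC)$ as an $A_e$-module. For non-relevant $\cO$, the strict inequality in \Cref{lem dimension relevant} pushes the required Borel--Moore degree past $2\dim\hS_e$, and the group vanishes.

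Extracting the $\chi$-isotypic component (with $\chi$ corresponding to $\cL$) then identifies
$$\Hom_{\Perv_G(\cN)}(\IC(\cO,\cL),\cS^\diamond_\hS) \cong \Hom_{A_e}(\chi,\HBM(\hS_e,\bC)) = V_{\cO,\cL}(\hS),$$
which vanishes exactly when $(\cO,\cL)\notin \tcR_G(\hS)$. The main obstacle I anticipate is upgrading this pointwise costalk computation into the global multiplicity statement: one must show that $\cS^\diamond_\hS$ has no extraneous $\IC(\cO',\cL')$ summands with $\cO'$ non-relevant sitting in the boundary of a relevant orbit, and that no perverse cohomology piece of $\cS_\hS$ in negative degree is lost in passing to $\cS^\diamond_\hS$. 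The key input in both cases is again \Cref{lem dimension relevant}: the dimension bound forces any perverse summand whose support is a non-relevant orbit closure into perverse degree $\leq -1$, so it does not contribute to $\cS^\diamond_\hS$ or to $\Hom(\IC(\cO,\cL),\cS^\diamond_\hS)$. Once this structural control is in place, assembling the local computations yields the claimed decomposition as $W$-modules.
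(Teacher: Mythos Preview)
Your proposal is correct and follows essentially the same route as the paper: identify $\HBM(\St_\hS,\bC)$ with $\Hom(\cS_\cN,\cS_\hS)$ via \Cref{W-action on relative hom}, insert the Springer decomposition of $\cS_\cN$, pass to the perverse truncation $\cS^\diamond_\hS$, and compute the multiplicity of each $\IC(\cO,\cL)$ in $\cS^\diamond_\hS$ by taking costalks along $\cO$; the dimension estimate of \Cref{lem dimension relevant} then identifies that multiplicity with $V_{\cO,\cL}(\hS)$ exactly as in the paper's \Cref{Decomposition relevant springer sheaf}.

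One phrasing in your last paragraph is backwards, though it does not affect the argument. You write that ``the dimension bound forces any perverse summand whose support is a non-relevant orbit closure into perverse degree $\leq -1$.'' This is not what happens: the dimension bound forces \emph{all} of $\cS_\hS$ into ${}^pD^{\geq 0}$ (this is \Cref{lem Half perversity} in the paper), so there are no negative-degree perverse pieces at all, and hence nothing is lost when applying $\ptau_{\leq 0}$. The vanishing of the $\IC(\cO',\cL')$-multiplicity for non-relevant $\cO'$ is then read off directly from the same costalk computation you already did: the strict inequality in \Cref{lem dimension relevant} makes the relevant Borel--Moore degree exceed $2\dim\hS_{e'}$, so the group is zero. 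With that correction your argument is complete and matches the paper's.
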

The remainder of this subsection is devoted to the proof of \Cref{Thm decomposition of SthS}.

\begin{lemma}\label{lem Half perversity}
    We have 
    \begin{equation*}\label{Half perversity}
           \cS_{\hS}\in {}^p D^b_G(\cN)^{\geq 0}. 
    \end{equation*}
\end{lemma}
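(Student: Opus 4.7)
The plan is to verify the costalk characterization of ${}^p D^{\geq 0}$ along the stratification of $\cN$ by $G$-orbits. Concretely, since $\cS_{\hS}$ is $G$-equivariant and $\cN$ has finitely many $G$-orbits, it suffices to show that for every nilpotent orbit $\cO \subset \cN$ with inclusion $i_{\cO}^{\cN}: \cO \hookrightarrow \cN$, one has
\[
\cH^{j}\bigl((i_{\cO}^{\cN})^{!} \cS_{\hS}\bigr) = 0 \quad \text{for all } j < -\dim \cO.
\]

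The first step is to compute $(i_{\cO}^{\cN})^{!}\cS_{\hS}$ using proper base change. Let $j_{\cO}: \cO \hookrightarrow \fgg^{*}$ denote the composition $i_{\cN} \circ i_{\cO}^{\cN}$, and form the Cartesian square obtained from $\mu_{\hS}: \hS \to \fgg^{*}$:
\[
\begin{tikzcd}
\mu_{\hS}^{-1}(\cO) \arrow[r,"\tilde\imath"] \arrow[d,"\tilde\mu"'] & \hS \arrow[d,"\mu_{\hS}"] \\
\cO \arrow[r,"j_{\cO}"] & \fgg^{*}.
\end{tikzcd}
\]
Using $(i_{\cO}^{\cN})^{!} i_{\cN}^{!} = j_{\cO}^{!}$ and proper base change, together with $\tilde\imath^{!}\mathbb{D}_{\hS} = \mathbb{D}_{\mu_{\hS}^{-1}(\cO)}$, yields
\[
(i_{\cO}^{\cN})^{!}\cS_{\hS} \;=\; \tilde\mu_{*}\, \mathbb{D}_{\mu_{\hS}^{-1}(\cO)}\,[-2\dim \St_{\hS}+\dim \wt{\cN}].
\]

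The second step is a standard cohomological amplitude estimate. For any variety $Y$, the dualizing complex satisfies $\mathbb{D}_{Y}\in D^{\geq -2\dim Y}$, and $\tilde\mu_{*}$ preserves the lower bound of cohomological degrees (as $R^{i}\tilde\mu_{*}=0$ for $i<0$). Therefore
\[
(i_{\cO}^{\cN})^{!}\cS_{\hS} \;\in\; D^{\geq -2\dim \mu_{\hS}^{-1}(\cO)+2\dim \St_{\hS}-\dim \wt{\cN}}.
\]
Finally, I would invoke \Cref{lem dimension relevant}, which gives $\dim \mu_{\hS}^{-1}(\cO)\leq \dim \St_{\hS}+\tfrac{1}{2}\dim\cO-\tfrac{1}{2}\dim\wt{\cN}$. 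Multiplying by $-2$ and combining with the previous inequality produces exactly the bound $-\dim\cO$, so the required vanishing $\cH^{j}((i_{\cO}^{\cN})^{!}\cS_{\hS}) = 0$ for $j<-\dim\cO$ follows. No serious obstacle is expected: the whole lemma is essentially a bookkeeping exercise in which the shift in the definition of $\cS_{\hS}$ has been chosen precisely so that the dimension inequality in \Cref{lem dimension relevant} matches the perversity bound. The only point to keep in mind is to use the $G$-orbit stratification (on which $G$-equivariant complexes are constructible) when invoking the costalk characterization of ${}^{p}D^{\geq 0}$.
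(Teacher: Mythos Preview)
Your proposal is correct and follows essentially the same strategy as the paper: verify the costalk condition on each $G$-orbit $\cO$, identify $(i_\cO^{\cN})^!\cS_{\hS}$ via base change as $\tilde\mu_*\mathbb D_{\mu_{\hS}^{-1}(\cO)}$ up to the fixed shift, and then invoke the dimension bound of \Cref{lem dimension relevant}. The paper carries this one step further by restricting to a point $e\in\cO$ (using $i_e^*=i_e^![2\dim\cO]$ and equivariance), but since $G$ acts transitively on $\cO$ this is equivalent to your formulation. One terminological quibble: the isomorphism you use is $g^! f_*\cong f'_* g'^!$, which holds in general (it is Verdier dual to proper base change) and does not require $\mu_{\hS}$ to be proper; calling it ``proper base change'' is slightly imprecise, though the mathematics is unaffected.
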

\begin{proof}

  Since $\cS_{\hS}$ is $G$-equivariant, 
    it suffices to prove that for each nilpotent orbit $\cO$ in $\cN$ and any $e\in \cO$ , the cohomology $H^k(i_e^* i_{\cO}^!\cS_{\hS})$ is non-zero only if the $k \geq - \dim \cO$ (see \cite[Exercises 3.1.5]{MR4337423}).
 We have the following Cartesian diagram:
\[
\label{Base change cO}
  \begin{tikzcd}
\sX_{e}\ar[r,"i'_e"]\ar[d,"\mu_e"] &\mu_\hS^{-1}(\cO) \ar[rr,"i'"] \ar[d,"\mu'"] &  &\hS \ar[d,"\mu_\hS"]\\
\set{e} \ar[r,"i_e"]& \cO \ar[r,"i_\cO"] &  \cN \ar[r,"i_\cN"]&   \frak g^* \\ 
 \end{tikzcd} 
\]   
Since $\cO$ is smooth, applying base change yields
   \begin{equation}\label{eq cShScO}
   \begin{split}
     i_e^* i_{\cO}^!\cS_{\hS} & = i_e^![2\dim \cO] (i_{\cO}^! i_{\cN}^! {\mu_\sX}_* \bD_{\hS}[-2\dim \St_{\hS}+\dim \wt \cN])\\ 
    & = {\mu_e}_* \mathbb D_{\hS_{e}}[2\dim \cO -2\dim \St_{\hS}+\dim \wt \cN]. 
    \end{split}
  \end{equation}
  
Applying \eqref{leq relevant dim},    we see that 
    $H^k(i^*_ei^!_\cO \cS_\hS) \neq 0$ only if
    \[
    \begin{split}
        k &\geq - 2\dim \hS_e - 2 \dim \cO  + 2\dim \St_\hS - \dim \tcN 
        \geq - \dim \cO
    \end{split} 
    \] 
This finishes the proof. 

\end{proof}
\begin{defn}\label{defn trancated Springer}
    Define the \emph{truncated Springer sheaf} by
\[
    \cS^\diamond_{\hS} \coloneqq \ptau_{\leq 0} \cS_{\hS} \in \Perv_G(\cN).
\]
\end{defn}
Decompose  $ \cS^\diamond_{\hS}$ into simple perverse sheaves gives  
    \begin{equation}\label{perverse dec}
  \cS^\diamond_{\hS}= \bigoplus_{(\cO,\cL)\in \LSG} \IC(\cO,\cL)\otimes M_{\cO,\cL}
    \end{equation}
    where $M_{\cO,\cL}$ is the multiplicity space. Hence
\begin{equation}\label{eq iso trucation}
        \begin{split}
  \Hom_{D^b_G(\cN)}(\cS_{\cN},\cS_{\hS}) &\cong   \Hom_{\Perv_G(\cN)}(\cS_{\cN},\cS^\diamond_{\hS})
  \cong \bigoplus_{(\cO,\cL)\in \LSG} E_{\cO,\cL} \otimes M_{\cO,\cL}.
    \end{split}
\end{equation}
\trivial[h]{
From the distinguished triangle 
\begin{equation}\label{distinguished triangle}
  \ptau_{\leq 0} \cS_{\hS} \longrightarrow  \cS_{\hS} \longrightarrow \ptau_{\geq 1} \cS_{\hS},  
\end{equation}
We get 
\[
\begin{split}
&\cdots \to \Hom_{D^b_G(\cN)}(\cS_{\cN}, \tau_{\geq 1}\cS_{\hS}[-1]) 
\longrightarrow 
\Hom_{D^b_G(\cN)}\big(\cS_{\cN}, \tau_{\leq 0}\cS_{\hS}\big)\\
\longrightarrow 
&\Hom_{D^b_G(\cN)}\big(\cS_{\cN}, \cS_{\hS}\big)
\to \Hom_{D^b_G(\cN)}(\cS_{\cN}, \tau_{\geq 1}\cS_{\hS,\cN}) 
\longrightarrow \cdots
\end{split}
\]
Note that $\Hom(A, B)=0$  for $A \in D^{\leq n}$ and $B\in D^{\geq n+1}$.
$\tau_{\geq n}[-k] = \tau_{\geq 0}[-n-k]=\tau_{\geq n+k}$.
So that the last term and first terms are zero.  
}
\begin{lemma}\label{Decomposition relevant springer sheaf}
    The space $M_{\cO,\cL}\neq 0$ if and only if $(\cO,\cL)\in \tcR_G(\hS)$ and in that case, 
    \[
    M_{\cO,\cL} = V_{\cO,\cL}(\hS). 
    \]
\end{lemma}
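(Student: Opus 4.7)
The plan is to identify $M_{\cO,\cL}$ with a local multiplicity at a point $e\in\cO$ and then use base change along the Cartesian square in \Cref{St hS Cart} to match it with the top Borel--Moore homology of the hyperspherical fiber $\sX_e$.

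First, since $\cS_\hS^\diamond$ is a semisimple $G$-equivariant perverse sheaf on $\cN$ (semisimplicity following from purity of $\cS_\hS$ as a shifted pushforward of the dualizing sheaf under the moment map), the IC support axioms give the standard stalk-multiplicity formula
\[
M_{\cO,\cL}=\dim\Hom_{A_e}\!\bigl(\chi_\cL,\cH^{-\dim\cO}(i_e^*\cS_\hS^\diamond)\bigr),
\]
where contributions from $\IC(\cO',\cL')$ with $\cO'\neq\cO$ vanish in cohomological degree $-\dim\cO$ by the $*$-support condition of IC (for $\cO'>\cO$) or by support considerations (for $\cO'<\cO$ or incomparable).

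Second, base change along the Cartesian square of \Cref{St hS Cart}, extended with $\iota\colon\{e\}\hookrightarrow\fgg^*$, yields the costalk formula
\[
i_e^!\cS_\hS\;\cong\;\mu_{e*}\bD_{\sX_e}[-2\dim\St_\hS+\dim\tcN],
\qquad
\cH^k(i_e^!\cS_\hS)\;\cong\;\hBM{-k+2\dim\St_\hS-\dim\tcN}{\sX_e,\bC}.
\]
Passing through Verdier duality on the point $\{e\}$ (and using semisimplicity of $\cS_\hS^\diamond$ to rule out contributions from $\ptau_{\geq 1}\cS_\hS$ in the relevant degree of $i_e^*$), this translates into the computation of $\cH^{-\dim\cO}(i_e^*\cS_\hS^\diamond)$ needed in step one, identifying it with (the dual of) a single Borel--Moore homology group of $\sX_e$.

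Third, the $\hS$-relevance condition \eqref{eq hS relevant} is precisely the numerical identity $2\dim\sX_e=2\dim\St_\hS-\dim\cO-\dim\tcN$ which makes the cohomological degree above coincide with $2\dim\sX_e$, so that the Borel--Moore homology appears in its top degree $\HBM(\sX_e,\bC)$. If $\cO$ is not $\hS$-relevant, \Cref{lem dimension relevant} gives a strict inequality forcing the relevant cohomology to vanish. Combining everything,
\[
M_{\cO,\cL}=\Hom_{A_e}\!\bigl(\chi_\cL,\HBM(\sX_e,\bC)\bigr)=V_{\cO,\cL}(\hS)\quad\text{for }\cO\in\cR_G(\hS),
\]
and $M_{\cO,\cL}=0$ otherwise, giving both assertions of the lemma.

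The main obstacle is the careful bookkeeping of perversities and degree shifts: one must verify that the higher perverse cohomologies ${}^p\cH^k(\cS_\hS)$ for $k\geq 1$ contribute nothing to $\cH^{-\dim\cO}(i_e^*\,-)$ on either side of the defining triangle $\cS_\hS^\diamond\to\cS_\hS\to\ptau_{\geq 1}\cS_\hS$. This reduces to a sharper form of the dimension bound used in the proof of \Cref{lem Half perversity}, showing that these higher pieces are supported where $e$ sees a strictly smaller cohomological range than the relevance estimate allows.
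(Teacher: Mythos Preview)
Your strategy — read off the multiplicity $M_{\cO,\cL}$ from the cohomology of a (co)stalk at $e\in\cO$ and compute that costalk via base change along \eqref{St hS Cart} — is exactly the paper's approach. But the execution has a real gap, and the ``main obstacle'' you flag at the end is a symptom of it.

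The problem is your mixing of $i_e^*$ and $i_e^!$. In step one you extract $M_{\cO,\cL}$ from $\cH^{-\dim\cO}(i_e^*\cS_\hS^\diamond)$, while in step two you compute $i_e^!\cS_\hS$. To connect them you invoke Verdier duality on $\{e\}$ together with semisimplicity of $\cS_\hS^\diamond$, but this does not work: duality gives $\bD(i_e^!\cS_\hS^\diamond)=i_e^*\bD\cS_\hS^\diamond$, and there is no reason for $\cS_\hS^\diamond$ to be Verdier self-dual (nor for $\cS_\hS$ to be). Your ``sharper dimension bound'' would have to control $\cH^{-\dim\cO}(i_e^*\ptau_{\geq 1}\cS_\hS)$, but $i^*$ is only \emph{right} $t$-exact for the perverse $t$-structure, so perversity gives no lower bound on stalk degrees of objects in ${}^pD^{\geq 1}$. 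The bound from \Cref{lem Half perversity} was a costalk estimate and does not transfer.

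The paper resolves this by working with $i_e^!$ throughout. One has $H^{\dim\cO}(i_e^!\IC(\cO,\cL))=\cL_e$ and $H^{\dim\cO}(i_e^!\IC(\cO',\cL'))=0$ for $\cO'\neq\cO$ (by the IC costalk condition), so applying $H^{\dim\cO}(i_e^!(-))$ to the decomposition \eqref{perverse dec} gives $\bigoplus_\cL \cL_e\otimes M_{\cO,\cL}$ directly. The obstacle then disappears: since $i^!_\cO$ is \emph{left} $t$-exact, $i_\cO^!\ptau_{\geq 1}\cS_\hS\in {}^pD^{\geq 1}(\cO)$, and passing to the point gives $i_e^!\ptau_{\geq 1}\cS_\hS\in D^{\geq \dim\cO+1}(\{e\})$, so $H^{\dim\cO}(i_e^!\cS_\hS^\diamond)\cong H^{\dim\cO}(i_e^!\cS_\hS)$ immediately. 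Your step two then finishes the proof. No sharper dimension estimate is needed.

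A minor point: semisimplicity of $\cS_\hS^\diamond$ is not due to purity (we are over $\bC$ in the analytic topology, and $\mu_\hS$ is not proper). Rather, $\Perv_G(\cN)$ is a semisimple abelian category by Lusztig's work on the generalized Springer correspondence, so any object in it decomposes as a sum of simples.
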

\begin{proof}
 Take a nilpotent orbit $\cO$ and an element $e\in \cO$.  Applying $H^{\dim \cO}(i_e^! i_\cO^!(\underline{\ \ \ } ) )$ to the right hand side of  
\eqref{perverse dec} yields $\cL_e \otimes M_{\cO,\cL}$.
\trivial[h]{
   Note $H^{\dim \cO}(i_e^! \IC(\cO,\cL)) = H^{\dim \cO}(i_e^* [-2\dim \cO] \cL[\dim \cO]) = \cL_e$.   

   If $e \notin \overline{\cO'}$, 
   then $H^{\dim \cO}(i_e^! \IC(\cO',\cL')) = 0$. 
   
   If $e \notin \overline{\cO'} - \cO'$,
   then $i_{\cO}^! \IC(\cO',\cL')\in D^b_{locf}(\cO)^{\geq - \dim \cO +1}$, 
    then $i_e^! i_{\cO}^! \IC(\cO',\cL')
    =  i_e^* [-2\dim \cO] i_{\cO}^! \IC(\cO',\cL')\in D^b(\set{e})^{\geq \dim\cO +1}$.
    Therefore,  
   $H^{\dim \cO}(i_e^! i_{\cO}^! \IC(\cO',\cL')) = 0$. 

Apply $i_\cO^!$ to the distinguished triangle 
we have long exact sequence 
\[
i^!_\cO \ptau_{\geq 2} S \to i^!_\cO \ptau_{\leq 0} S  \to i^!_\cO S \to i^!_\cO \ptau_{\geq 1} S 
\]
Now $i^!_\cO \ptau_{\geq k} S \in D(\cO)^{\geq - \dim \cO +k}$. 
Apply $i^!_e = i^*_e[-2\dim \cO]$, 
one have  $i^!_e i^!_\cO \ptau_{\geq k} S  \in D^b(\set{e})^{\geq \dim \cO +k}$.
So taking $H^{\dim \cO}$ yields the isomorphism $H^{\dim \cO}(i_e^! i_\cO^! \ptau_{\leq 0}\cS_{\hS} )\cong    H^{\dim \cO}(i_e^!  i_{\cO}^! \cS_{\hS} )$. 
} 
\begin{equation}\label{HBMXe}
\begin{split}
H^{\dim \cO}(i_e^! i_\cO^! \cS^\diamond_{\hS} )\cong    H^{\dim \cO}(i_e^!  i_{\cO}^! \cS_{\hS} )\cong \hBM{2\dim \St_{\hS}-\dim \wt \cN-\dim \cO}{\hS_e,\bQ}.
\end{split}
\end{equation}
By \Cref{lem dimension relevant},  
\[
\dim \hS_e\leq \dim  \St_{\hS}-\half(\dim \wt \cN+\dim \cO).
\]
and the equality holds if and only if $\cO$ is $\hS$-relevant. Compare the $\cL_e$-isotypic component finish the proof.  
\end{proof} 

\begin{proof}[Proof of \Cref{Thm decomposition of SthS}]
    This follows by combining \Cref{W-action on relative hom}, \Cref{eq iso trucation} and \Cref{Decomposition relevant springer sheaf}.
\end{proof}

\subsection{Relative Springer theory for spherical varieties}\label{sec Relevant Orbits}
The goal of this subsection is to prove \Cref{Thm decomposition of SthS intro} by applying \Cref{Thm decomposition of SthS}. We shall work in a slightly more general setting than that of \Cref{Thm decomposition of SthS intro}. Namely, we assume that \( X \) is a \( G \)-variety with finitely many \( B \)-orbits. We do not require \( X \) to be smooth, but we assume that \( X \) admits a \( G \)-equivariant embedding into a smooth \( G \)-variety \( Y \). This generality will be needed in \Cref{sec Hecke type I}.

Let $\hSXY\coloneqq (T^*Y)|_X$  be the restriction of $T^*Y$ to $X$, and 
$\mu_\hSXY: \hSXY \to \frg^*$ be the restriction of the moment map $T^*Y\to \fgg^*$ for the $G$-action on $T^*Y$.
By the construction in \Cref{section relative Springer general}, we have 
\begin{equation}\label{LhSXY}
   \L_{\hSXY}=\bigsqcup_{Z\in \underline{B\backslash X}} T_Z^*(Y)\subset T^*(Y). 
\end{equation}
Since  $\underline{B\backslash X}$ is finite, $\Lambda_\hSXY$ is a finite union of Lagrangian sub-varieties of $T^*Y$ and by \eqref{eqSteinberg}  
\begin{equation}\label{dim SthSX}
        \dim \St_{\hSXY}=\dim Y+ \half \dim \wt \cN.
\end{equation}
Applying \Cref{defn relevant pair} in this case, we see that a nilpotent orbit is relevant if and only if 
\begin{equation}\label{eq dim relevant orbit spherical}
\dim \mu_\hSXY^{-1}(\cO)= \dim Y + \half \dim \cO.
\end{equation}
%
\begin{example}
Suppose  $H$ be a spherical subgroup of $G$ and $Y = X = G/H$. We have 
 $\hSX\coloneqq T^*(X)\cong G\times^H \fhh^{\perp}$ and the moment map $\mu_{\hS}: \hSX=G\times^H \frak h^{\perp} \to \frg^*$ is given by 
\[
\mu_{\hS}(g,f)=\Ad^*(g) f,\quad \mbox{for all}\, g\in G, f\in \frak h^\perp
\]
where $\Ad^*$ is the adjoint action of $G$ on $\fgg^*$. 
One compute that $\mu_{\hS}^{-1}(\cO)\cong G\times^H(\cO\cap \fhh^\perp)$, hence deduce that  
$\cO$ is relevant if and only if
\begin{equation}\label{relevant dimen homogenious}
    \dim (\cO\cap \frak h^\perp) =\half \dim \cO. 
\end{equation}
\end{example}
\trivial[h]{
we deduce that \(\cO\in \cR_G(\hSX)\) (see \Cref{def Relevant orbit}) if and only if 
\[
\dim \mu^{-1}(\cO)=\dim X+\half\dim\cO.
\]
}

Let $\{X_i\}$ be the set of $G$-orbits of $X$.
We define 
\[
\cR_G(X)=\bigcup_{i} \cR_G(T^*X_i)  \quad \text{and}\quad  \widehat \cR_G (X)=\bigcup_{i} \widehat \cR_G(T^*X_i) \subset\LSG.
\]
where $\cR_G(T^*X_i)$ and $ \widehat \cR_G(T^*X_i)$ are defined by applying  \Cref{defn relevant pair} to the Hamiltonian $G$ action on $T^*X_i$. For each $(\cO,\cL)\in \widehat \cR (X)$, we also define 
\begin{equation}\label{VcOcLX}
    V_{\cO,\cL}(X)=\bigoplus_{i}V_{\cO,\cL}(T^*X_i). 
\end{equation}
where $V_{\cO,\cL}(T^*X_i)$ is defined according to \Cref{eq hyperspherical fibre multiplicty}. Recall that we also defined 
$\cR_G(\hSXY), \widehat \cR_G(\hSXY),  V_{\cO,\cL}(\hS)$ in \Cref{eq hyperspherical fibre multiplicty} and \Cref{defn relevant pair}.

\begin{lemma}\label{lem relevant decomoposition into pieces}
    We have 
    $    \cR_G(X)= \cR_G(\hSXY)$,$\widehat \cR_G (X)=\widehat \cR_G(\hSXY)$ and 
    $V_{\cO,\cL}(X) \cong V_{\cO,\cL}(\hS)$ for each $(\cO,\cL)\in \widehat \cR_G (X)=\widehat \cR_G(\hSXY)$. 
\end{lemma}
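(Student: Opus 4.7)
The plan is to exploit the $G$-orbit stratification $X=\bigsqcup_i X_i$ and the induced constructible decomposition $\hSXY=\bigsqcup_i (T^*Y)|_{X_i}$, reducing the statement to the single-orbit pieces $T^*X_i$ via a compatibility between moment maps and the conormal exact sequence. For each smooth $G$-orbit $X_i\subset Y$, the short exact sequence
\[
0\longrightarrow N^*_{X_i/Y}\longrightarrow (T^*Y)|_{X_i}\longrightarrow T^*X_i\longrightarrow 0
\]
of $G$-equivariant vector bundles on $X_i$ gives a surjection $p_i\colon (T^*Y)|_{X_i}\twoheadrightarrow T^*X_i$ which is an affine bundle of rank $\dim Y-\dim X_i$.

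The first key observation is that $\mu_\hSXY|_{(T^*Y)|_{X_i}}=\mu_{T^*X_i}\circ p_i$: for $(x,\xi)\in(T^*Y)|_{X_i}$ and $Z\in\frg$, we have $\mu_\hSXY(x,\xi)(Z)=\xi(\hat Z(x))$, where the infinitesimal vector $\hat Z(x)\in T_x X_i\subset T_x Y$ because $x\in X_i$; thus $\mu_\hSXY(x,\xi)$ only depends on $p_i(x,\xi)$. Consequently, for any nilpotent orbit $\cO\subset\cN$ (and any $e\in\cO$),
\[
\mu_\hSXY^{-1}(\cO)\cap(T^*Y)|_{X_i}=p_i^{-1}\bigl(\mu_{T^*X_i}^{-1}(\cO)\bigr)
\]
is an affine bundle of rank $\dim Y-\dim X_i$ over $\mu_{T^*X_i}^{-1}(\cO)$, and similarly for fibers over $e$.

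Taking dimensions and applying \Cref{lem dimension relevant} separately to each $T^*X_i$ (using $\dim\St_{T^*X_i}=\dim X_i+\tfrac12\dim\wt\cN$), each stratum contributes at most $\dim Y+\tfrac12\dim\cO$ to $\dim\mu_\hSXY^{-1}(\cO)$, with equality precisely when $\cO\in\cR_G(T^*X_i)$. Hence $\cO$ is $\hSXY$-relevant if and only if some $X_i$ is $T^*X_i$-relevant for $\cO$, proving $\cR_G(\hSXY)=\bigcup_i\cR_G(T^*X_i)=\cR_G(X)$. For the fibers, when $\cO\in\cR_G(\hSXY)$ one has $\dim(\hSXY)_e=\dim Y-\tfrac12\dim\cO$, and only the strata with $\cO\in\cR_G(T^*X_i)$ reach this top dimension; the remaining strata have strictly smaller dimension and contribute $0$ to top Borel--Moore homology. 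The affine-bundle pullback along $p_i$ gives $A_e$-equivariant isomorphisms
\[
\HBM\bigl((\hSXY)_e\cap(T^*Y)|_{X_i},\bC\bigr)\cong\HBM\bigl((T^*X_i)_e,\bC\bigr)
\]
on the relevant strata, whence
\[
\HBM((\hSXY)_e,\bC)\cong\bigoplus_{i:\,\cO\in\cR_G(T^*X_i)}\HBM((T^*X_i)_e,\bC).
\]
Taking $\Hom_{A_e}(\chi,-)$ yields $V_{\cO,\cL}(\hSXY)\cong V_{\cO,\cL}(X)$ as in \eqref{VcOcLX} (the non-relevant summands vanishing at the relevant degree), and $\widehat\cR_G(X)=\widehat\cR_G(\hSXY)$ follows immediately.

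The one step requiring genuine care is the moment-map compatibility $\mu_\hSXY|_{(T^*Y)|_{X_i}}=\mu_{T^*X_i}\circ p_i$; once this is established, the dimension matching and the affine-bundle identification of fibers are formal, and everything else is bookkeeping with the $G$-equivariance passing to the component-group action on each hyperspherical fiber.
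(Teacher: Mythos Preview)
Your proposal is correct and follows essentially the same route as the paper: decompose $\hSXY$ along the $G$-orbit stratification, use the conormal exact sequence to exhibit $(T^*Y)|_{X_i}\to T^*X_i$ as an affine fibration, observe the moment-map factorization $\mu_{\hSXY}|_{(T^*Y)|_{X_i}}=\mu_{T^*X_i}\circ p_i$, and then match dimensions and top Borel--Moore homology stratum by stratum. Your justification of the moment-map factorization via $\hat Z(x)\in T_xX_i$ is exactly the point the paper asserts without elaboration, and your remark that the non-relevant strata contribute nothing in the top degree makes explicit what the paper leaves implicit in the phrase ``in view of \eqref{muYX decomposition}.''
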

\begin{proof}
    Write $\hS_i := (T^*Y)|_{X_i}$ for each $i$. 
For a nilpotent orbit $\cO$ and $e\in \cO$, we have 
\begin{equation}\label{muYX decomposition}
  (\mu_{\hS})^{-1}(e)=\bigsqcup_{i} (\mu_{\hS_i})^{-1}(e). 
\end{equation}
So \Cref{eq hS relevant} holds if and only if 
\begin{equation}\label{eq dim relevant fibre i}
    \dim (\mu_{\hS_i})^{-1}(e)= \dim Y-\half \dim \cO. 
\end{equation}
for some $i$. 

Since $X_i$ is a $G$-orbit, the natural map $\pi_{X_i}: (T^*Y)|_{X_i}\rightarrow T^*X_i $ is the fibration whose fibers are 
affine spaces modeled by $(T_xY/T_xX_i)^*$ with $x\in X_i$.
On the other hand, $\mu_{\hS_i}= \mu_{T^*X_i}\circ \pi_{X_i}$.
Pulling back via the natural inclusion $\mu_{T^*X_i}^{-1}(e)\to T^*X_i$ induces the fibration 
\[
p_{X_i,Y}: (\mu_{\hS_i})^{-1}(e) \longrightarrow (\mu_{T^*X_i})^{-1}(e).
\]
Since $(T_xY/T_xX_i)^*$ has dimension $\dim Y -\dim X_i$, it implies that \Cref{eq dim relevant fibre i} is equivalent to  
\begin{equation}\label{dim relevant single orbit}
    \dim \mu_{T^*X_i}^{-1} (e)=\dim X_i-\half \dim \cO,
\end{equation}
 and so $\cO$ being relevant is independent of $Y$. Since the fibers of $p_{X_i,Y}$ are affine spaces, $p_{X_i,Y}$ induces the following $A_e$-equivariant isomorphism 
 \[p_{X_i,Y}^*: \hBM{2\dim X_i - \dim \cO }{\mu_{T^*X_i}^{-1}(e),\bC}\cong \hBM{2\dim Y - \dim \cO}{(\mu_{\hS_i})^{-1}(e),\bC}.
 \]
 In view of \Cref{muYX decomposition}, the lemma follows.
\end{proof}
The above lemma shows that the definition of relevant pairs $\widehat \cR_G(\hSXY)$ is independent of the embedding into $Y$.  

  \begin{thm}\label{Thm decomposition of SthS intro more}
There is a decomposition of $\HBM(\St_{\hS}, \bC)$ as a $W$-representation:
\[
    \HBM(\St_{\hS}, \bQ) \cong \bigoplus_{(\cO,\cL) \in \widehat \cR_{G}(X)} E_{\cO,\cL} \otimes V_{\cO,\cL}(X).
\]
\end{thm}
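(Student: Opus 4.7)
The plan is to deduce this theorem as a direct consequence of the general decomposition in \Cref{Thm decomposition of SthS} combined with the identifications established in \Cref{lem relevant decomoposition into pieces}. Concretely, we apply the general relative Springer machinery to the specific hyperspherical $G$-scheme $\hS = \hSXY = (T^*Y)|_X$ with its moment map $\mu_{\hSXY} \colon \hSXY \to \frg^*$, and then translate the answer from the intrinsic invariants $\widehat{\cR}_G(\hSXY)$, $V_{\cO,\cL}(\hS)$ to the $X$-intrinsic invariants $\widehat{\cR}_G(X)$, $V_{\cO,\cL}(X)$.

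First, I would invoke \Cref{Thm decomposition of SthS} directly for $\hS = \hSXY$. This yields a canonical isomorphism of $W$-representations
\[
\HBM(\St_{\hSXY},\bC) \cong \bigoplus_{(\cO,\cL) \in \widehat{\cR}_G(\hSXY)} E_{\cO,\cL} \otimes V_{\cO,\cL}(\hSXY),
\]
where the $W$-action on the left-hand side is induced from the $W$-action on the Springer sheaf $\cS_{\cN}$ via the isomorphism of \Cref{W-action on relative hom} and the decomposition \eqref{eq iso trucation}. Note that this step uses only the hypotheses already in place: $X$ embeds $G$-equivariantly into the smooth $G$-variety $Y$, so $\hSXY$ is a well-defined $G$-scheme over $\frg^*$, and the dimension computation \eqref{dim SthSX} (together with \eqref{LhSXY} and the finiteness of $\underline{B \backslash X}$) ensures that $\St_{\hSXY}$ has the expected dimension, so that the notion of $\hSXY$-relevant orbit is equivalent to the condition \eqref{eq dim relevant orbit spherical}.

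Second, I would apply \Cref{lem relevant decomoposition into pieces}, which gives the equalities $\cR_G(\hSXY) = \cR_G(X)$, $\widehat{\cR}_G(\hSXY) = \widehat{\cR}_G(X)$, and a canonical isomorphism $V_{\cO,\cL}(\hSXY) \cong V_{\cO,\cL}(X)$ for every $(\cO,\cL)$ in this common set. Substituting these identifications termwise into the decomposition obtained in the previous step yields
\[
\HBM(\St_{\hSXY},\bC) \cong \bigoplus_{(\cO,\cL) \in \widehat{\cR}_G(X)} E_{\cO,\cL} \otimes V_{\cO,\cL}(X),
\]
which is the claimed formula.

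There is essentially no hard step: the content of the theorem was already encoded in \Cref{Thm decomposition of SthS} and \Cref{lem relevant decomoposition into pieces}, and the present statement is the concrete repackaging needed for applications in \Cref{Sec. geom gen} (and also for the type~I setup in \Cref{sec Hecke type I}, where $X$ is only required to be a locally closed $G$-stable subvariety of a smooth $G$-variety $Y$). The only point deserving a brief remark in the write-up is that the $W$-action on $\HBM(\St_{\hS},\bC)$ we use is, by construction in \eqref{cons:W on hBM}, independent of the auxiliary embedding $X \hookrightarrow Y$, because the identifications in \Cref{lem relevant decomoposition into pieces} are compatible with the pullback maps $p_{X_i,Y}^*$ along the affine-space fibrations $(T^*Y)|_{X_i} \to T^*X_i$; these pullbacks are $G$-equivariant isomorphisms on top Borel--Moore homology, and hence commute with the $W$-action induced from the Springer sheaf by base change.
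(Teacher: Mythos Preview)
Your proposal is correct and follows exactly the paper's approach: apply \Cref{Thm decomposition of SthS} to $\hS=\hSXY=(T^*Y)|_X$ and then use \Cref{lem relevant decomoposition into pieces} to rewrite the indexing set and multiplicity spaces in terms of $X$. The paper's proof is a single sentence citing precisely these two ingredients; your additional remarks on the $Y$-independence of the $W$-action are not needed for the argument but do no harm.
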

\begin{proof}
    The theorem follows by applying \Cref{Thm decomposition of SthS} to the case \( \hSXY= (T^*Y)|_X \), together with the result of \Cref{lem relevant decomoposition into pieces}.
\end{proof}

\begin{proof}[Proof of \Cref{Thm decomposition of SthS intro}]
This is a special case of \Cref{Thm decomposition of SthS intro more}, applied to the situation where \( X \) is smooth and we take $Y=X$.
\end{proof}

\subsection{Relative Springer theory for the Steinberg variety arising from type II theta correspondence}\label{section type II}

Let $L_1$ and $L_2$ be finite-dimensional vector spaces over $\bC$ of dimensions $m$ and $n$, respectively. Set $\ov G_1 = \GL(L_1)$ and $\ov G_2 = \GL(L_2)$, with Lie algebras $\ov{\mathfrak g}_1$ and $\ov{\mathfrak g}_2$ and nilcones $\ov {\cN}_1$ and $\ov \cN_2$. Let $\ov W_1$ and $\ov W_2$ be the Weyl groups of $\ov G_1$ and $\ov G_2$, respectively. Define $\ov \bL_1 \coloneqq \Hom(L_1, L_2)$, $\ov \bL_2 \coloneqq \Hom(L_2, L_1)$, and $\ov \bL \coloneqq \ov \bL_1 \oplus \ov \bL_2$.

The trace pairing
\begin{equation}\label{bL1 dual complex}
\begin{split}
    \langle\,,\,\rangle: \ov \bL_1 \otimes \ov \bL_2 &\longrightarrow \bC \\
    (T_1, T_2) &\longmapsto \tr(T_1 T_2)
\end{split}
\end{equation}
is a perfect pairing between $\ov \bL_1$ and $\ov \bL_2$, inducing identifications of $\ov \bL$ with $T^*\ov \bL_1$ and $T^*\ov \bL_2$. The group $\ov G_1 \times \ov G_2$ acts naturally on both $\ov \bL_1$ and $\ov \bL_2$, making them spherical $\ov G_1 \times \ov G_2$-varieties (cf. \Cref{PM vs orbits}). This action naturally extends to $\ov \bL$ and is Hamiltonian, with moment map
\begin{equation}\label{moment map type II}
\mu_{\ov\bL} = (\mu_1, \mu_2): \ov \bL = \ov \bL_1 \oplus \ov \bL_2 \longrightarrow \ov{\mathfrak g}_1^* \times \ov{\mathfrak g}_2^*
\end{equation}
given by
\[
\mu_1(T_1, T_2)(A_1) = \inn{A_1 T_1}{ T_2}, \quad \mu_2(T_1, T_2)(A_2) = \inn{T_1}{A_2 T_2} \quad \forall A_1\in \fgg_1, A_2\in \fgg_2.
\]
Identifying $\ov{\fgg}_i^*$ with $\ov{\fgg}_i$ via the trace form, we regard $\mu$ as a map from $\ov \bL$ to $\ov{\fgg}_1 \times \ov{\fgg}_2$ with
\[
\mu_1(T_1, T_2) = T_2 T_1 \in \ov{\fgg}_1, \quad \mu_2(T_1, T_2) = T_1 T_2 \in \ov{\fgg}_2.
\]

View $\ov \bL$ as $T^*\ov\bL_1$. It fits into the framework of \Cref{sec Relevant Orbits}, and \eqref{dim SthSX} reads:
\begin{equation}\label{Lagrangian Steiberg bL dimension}
    \dim \St_{\ov \bL} = \tfrac{1}{2}(\dim \btcN_1 + \dim \btcN_2 + \dim \ov \bL).
\end{equation}
Note that the component groups $A_{e_1}$ and $A_{e_2}$ are trivial for every nilpotent orbit $\cO_1 \times \cO_2$ and $(e_1,e_2)\in \cO_1\times \cO_2$. Hence, each such orbit admits a unique irreducible local system, namely the trivial one. Consequently, the natural projection gives an identification $\widehat \cR_{\ov G_1 \times \ov G_2}(\ov \bL) = \cR_{\ov G_1 \times \ov G_2}(\ov \bL)$. For convenience, we write:
\[
V_{\cO_1 \times \cO_2}(\ov \bL) := V_{\cO_1 \times \cO_2, \cL_1 \otimes \cL_2}(\ov \bL) \quad \text{and} \quad E_{\cO_i} := E_{\cO_i, \cL_i}
\]
where $\cL_i$ denotes the trivial local system on $\cO_i$ for $i = 1, 2$. Then \Cref{Thm decomposition of SthS} gives the decomposition:
\begin{equation}\label{eq hBMV type II first}
    \hBM{\TOP}{\St_{\ov \bL}, \bC} \cong \bigoplus_{(\cO_1, \cO_2) \in \cR_{\ov G_1 \times \ov G_2}(\ov \bL)} E_{\cO_1} \otimes E_{\cO_2} \otimes V_{\cO_1\times \cO_2}(\ov \bL).
\end{equation}
Next we refine this decomposition by computing each multiplicity space $V_{\cO_1, \cO_2}(\ov \bL)$ explicitly. 

Define the nilcone of $\ov \bL$ as follows:
\begin{equation}\label{Def: nilcone type II}
    \cN_{\ov \bL} \coloneqq \mu^{-1}(\ov \cN_1 \times \ov \cN_2).
\end{equation}
It is well known that $\cN_{\ov \bL} = \mu_i^{-1}(\ov \cN_i)$ for $i=1$ or $2$, i.e., an element $(T_1, T_2) \in \ov \bL$ lies in $\cN_{\ov \bL}$ if and only if $T_1 T_2$ (or equivalently $T_2 T_1$) is nilpotent. We refer to such elements as \emph{nilpotent pairs}. The number of $\ov G_1 \times \ov G_2$-orbits in $\cN_{\ov \bL}$ are classified by the ``ab'' diagrams, which are finite in number (see \Cref{prop GL orbits}). 
\begin{defn}\label{def relevant GL}
   An $\ov G_1\times \ov G_2$-orbit $\cO \subset \cN_{\ov \bL}$ is called \emph{relevant} if it satisfies the condition:
\begin{equation}\label{relevant in cnbL}
    \dim \cO = \tfrac{1}{2} \left( \dim \mu_1(\cO) + \dim \mu_2(\cO) + \dim \ov \bL \right).
\end{equation}
We refer to $(\cO, \mu_1(\cO), \mu_2(\cO))$ as a \emph{relevant triple} if $\cO$ is relevant.
Let $\cR_{\ov \bL}$ denote the set of relevant orbits in $\ov \bL$, and let $\cRL$ denote the set of relevant triples.
\end{defn}
By \Cref{eq dim relevant orbit spherical}, a pair $(\cO_1, \cO_2)$ lies in $\cR_{\ov G_1 \times \ov G_2}(\ov \bL)$ if and only if
\begin{equation}\label{Dimension relevant}
    \dim \mu^{-1}(\cO_1 \times \cO_2) = \tfrac{1}{2}(\dim \ov \bL + \dim \cO_1 + \dim \cO_2).
\end{equation}
In view of \Cref{relevant in cnbL} and \Cref{Dimension relevant}, a triple $(\cO,\cO_1, \cO_2)$ is relevant if and only if $\cO$ attains the maximal possible dimension $\mu^{-1}(\cO_1\times \cO_2)$. 

\begin{thm}\label{thm:Htop type II}
As a $\ov W_1 \times \ov W_2$-module, there is an isomorphism:
\[
\HBM(\St_{\ov \bL}, \bC) \cong \bigoplus_{(\cO,\cO_1, \cO_2) \in \cRL} E_{\cO_1} \boxtimes E_{\cO_2}.
\]
\end{thm}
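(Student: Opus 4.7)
The plan is to apply Theorem \ref{Thm decomposition of SthS intro more} with $G = \ov G_1 \times \ov G_2$ and $X = \ov \bL_1$, so that $T^*X \cong \ov \bL$ with the moment map of \eqref{moment map type II}. This would directly yield the decomposition \eqref{eq hBMV type II first}, and it would remain to identify each multiplicity space $V_{\cO_1\times \cO_2}(\ov \bL)$ with the $\bC$-vector space on the set $\{\cO : (\cO,\cO_1,\cO_2) \in \cRL\}$. Since stabilizers of nilpotent elements in general linear groups are connected, the component group $A_{(e_1,e_2)}$ at every $(e_1,e_2) \in \cO_1\times\cO_2$ is trivial, so \eqref{eq hyperspherical fibre multiplicty} collapses to
\[
V_{\cO_1\times\cO_2}(\ov \bL) = \HBM(\mu^{-1}(e_1,e_2),\bC).
\]

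To compute this top Borel--Moore homology, I would invoke the Kraft--Procesi finiteness (in type II, \Cref{prop GL orbits}) to conclude that $\cN_{\ov \bL}$ is a finite union of $\ov G_1\times\ov G_2$-orbits, and hence $\mu^{-1}(e_1,e_2)$ is a finite union of $\Stab_{\ov G_1\times\ov G_2}(e_1,e_2)$-orbits. The natural map $\cO \mapsto \cO \cap \mu^{-1}(e_1,e_2)$ defines a bijection between $\ov G_1\times\ov G_2$-orbits in $\mu^{-1}(\cO_1\times\cO_2)$ and $\Stab(e_1,e_2)$-orbits in the fiber, with the dimension relation $\dim \cO = \dim(\Stab(e_1,e_2)\cdot x) + \dim(\cO_1\times\cO_2)$. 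Since $\mu^{-1}(e_1,e_2)$ is stratified by finitely many smooth locally closed orbits, its top Borel--Moore homology has a canonical basis given by fundamental classes of the closures of the top-dimensional orbits. Assuming $(\cO_1,\cO_2) \in \cR_{\ov G_1\times\ov G_2}(\ov\bL)$, so that $\dim \mu^{-1}(\cO_1\times\cO_2) = \half(\dim\ov\bL + \dim\cO_1 + \dim\cO_2)$ by \eqref{Dimension relevant}, the top-dimensional $\Stab$-orbits in the fiber correspond under this bijection to $\ov G_1\times\ov G_2$-orbits $\cO \subset \mu^{-1}(\cO_1\times\cO_2)$ of maximal dimension, which by \eqref{relevant in cnbL} are precisely those with $(\cO,\cO_1,\cO_2) \in \cRL$. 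Substituting $\dim V_{\cO_1\times\cO_2}(\ov\bL) = \#\{\cO : (\cO,\cO_1,\cO_2) \in \cRL\}$ back into \eqref{eq hBMV type II first} would yield the claimed $\ov W_1\times\ov W_2$-equivariant isomorphism.

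The main technical point that I expect to require care is the bookkeeping between the pair-relevance condition \eqref{Dimension relevant} (needed for $(\cO_1,\cO_2)$ to contribute to the sum in \eqref{eq hBMV type II first}) and the triple-relevance condition \eqref{relevant in cnbL} (which enumerates the basis of $V_{\cO_1\times\cO_2}(\ov\bL)$). This compatibility reduces to the observation that whenever $(\cO,\cO_1,\cO_2) \in \cRL$, \Cref{lem dimension relevant} forces $(\cO_1,\cO_2)$ to be pair-relevant, so that the two indexing sets match consistently and no basis vectors are missed.
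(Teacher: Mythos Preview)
Your proposal is correct and follows essentially the same approach as the paper: reduce to \eqref{eq hBMV type II first} via the relative Springer decomposition, use triviality of the component groups to identify $V_{\cO_1\times\cO_2}(\ov\bL)$ with $\HBM(\mu^{-1}(e_1,e_2),\bC)$, and then count top-dimensional components of the fiber using Kraft--Procesi finiteness. The only cosmetic difference is that the paper passes from the fiber $\mu^{-1}(e_1,e_2)$ to the saturation $\mu^{-1}(\cO_1\times\cO_2)$ and reads off a basis from closures of the relevant $\ov G_1\times\ov G_2$-orbits $\cO$, whereas you work directly with $\Stab(e_1,e_2)$-orbits in the fiber; the two descriptions are equivalent via the bijection you wrote down.
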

\begin{proof}
Let $\cO_1 \times \cO_2 \in \cR_{\ov G_1 \times \ov G_2}(\ov \bL)$. Then $\mu$ restricts to a fibration $\mu^{-1}(\cO_1 \times \cO_2) \to \cO_1 \times \cO_2$. Fix a point $(e_1, e_2) \in \cO_1 \times \cO_2$. The evaluation of \eqref{eq hyperspherical fibre multiplicty} yields:
\begin{equation}\label{eq VOO type II}
\begin{split}
    V_{\cO_1 \times \cO_2}(\ov\bL) &\cong \HBM(\mu^{-1}(e_1, e_2), \bC) 
    \cong \HBM(\mu^{-1}(\cO_1 \times \cO_2), \bC) \\
    &\cong \bC\left[\left\{\ov \cO \;\middle|\; (\cO, \cO_1, \cO_2) \in \cRL\right\}\right].
\end{split}
\end{equation}
The last isomorphism holds because $\underline{(\ov G_1 \times \ov G_2)\backslash \cN_{\ov \bL}}$ is finite. Hence, the union $\bigsqcup_{(\cO, \cO_1, \cO_2) \in \cRL} \cO$ is open and dense in $\mu^{-1}(\cO_1 \times \cO_2)$. Therefore, the cycles generated by $\left\{\ov \cO \mid (\cO, \cO_1, \cO_2) \in \cRL\right\}$ form a basis of $\HBM(\mu^{-1}(\cO_1 \times \cO_2), \bC)$. Substituting \eqref{eq VOO type II} into \Cref{eq hBMV type II first} completes the proof.
\end{proof}
Next, we provide a combinatorial description of $\cRL$ based on results of Kraft-Procesi \cite[\S 5.3]{MR549399}.

\def\mudg{\mu^{\dagger}}

\begin{defn}\label{defn ab diagram}
For each non-negative integer $k$, let $\mathcal{P}(k)$ denote the set of all partitions of $k$. 
Practically, we view a partition as an equivalence class of multisets of non-negative integers under the relation $\gamma \sim \gamma \cup \set{0}$.    
\begin{enumerate}[wide]
    \item 
A \emph{decorated bipartition} of $(m,n)$ is a multiset of decorated pairs of non-negative integers:
    \begin{equation}\label{eq DBP}
        \gamma = \{ (x_1, y_1)^{\epsilon_1}, \ldots, (x_r, y_r)^{\epsilon_r} \}
    \end{equation}
    where each pair $(x_i, y_i)$ is either undecorated, in which case we set $\ep_i=\varnothing$, or decorated with a sign $\ep_i=+$ or $-$ and satisfies: 
    \begin{itemize}
        \item Each $(x_i, y_i)^{\epsilon_i}$ is one of the four types: $(k, k-1)$, $(k-1, k)$, $(k, k)^+$, or $(k, k)^-$ for some $k \geq 1$;
        \item $\sum_{i=1}^r x_i = m$ and $\sum_{i=1}^r y_i = n$.
    \end{itemize}
    Let $\BP(m,n)$ denote the set of all decorated bipartitions of $(m,n)$. 
\item 
    A decorated bipartition $\gamma \in \BP(m,n)$ is called \emph{relevant} if, for every $k \geq 1$, the types $(k,k-1)$ and $(k-1,k)$ do not simultaneously appear in $\gamma$. Let $\RBP(m,n) \subseteq \BP(m,n)$ denote the subset of relevant decorated bipartitions.
\item 
The combinatorial \emph{moment map}  
    \begin{equation}\label{moment nilcone II}
        \mu^{\dagger} := \mu_1^{\dagger} \times \mu_2^{\dagger} \colon \BP(m,n) \longrightarrow \cP(m) \times \cP(n).
    \end{equation}
    is defined by:
    \[
    \mudg_1(\gamma):= \set{x_i| 1\leq i \leq r} \quad \text{and} \quad
    \mudg_2(\gamma):= \set{y_i| 1\leq i \leq r}. 
    \]
    for a decorated bipartition as in \eqref{eq DBP}. 
\end{enumerate}
\end{defn}

The $\ov G_1$- and $\ov G_2$-orbits in $\ov \cN_1$ and $\ov \cN_2$ are parameterized by the sets $\cP(m)$ and $\cP(n)$ respectively by taking the lengths of Jordan blocks. In the following discussion, for $\gamma_1 \in \cP(m)$ and $\gamma_2 \in \cP(n)$, we denote the corresponding orbits in $\ov \cN_1$ and $\ov \cN_2$ by $\cO_{\gamma_1}$ and $\cO_{\gamma_2}$.
Moreover, there is a unique way of writing
\begin{equation}\label{std ga1 ga2}
    \gamma_1 = \set{x_1, x_2, \ldots, x_r} \quad  
    \gamma_2 = \set{y_1, y_2, \ldots, y_r},
\end{equation}
such that 
    $ x_1 \geq \cdots \geq x_r$, 
    $y_1 \geq \cdots \geq y_r$,
and $x_r$ and $y_r$ are not both zero. 

\begin{defn}\label{relevant pair type II}
\begin{enumerate}
    \item We call the pair $(\gamma_1,\gamma_2)\in \cP(m)\times \cP(n)$ \emph{relevant} 
if, under the unique expression \eqref{std ga1 ga2},  we have $|x_i - y_i| \leq 1$ for all $1 \leq i \leq r$. We denote the set of relevant pairs by $\cR(\cP(m)\times \cP(n))$. 
    \item For a relevant $(\g_1,\g_2)\in \cP(m)\times \cP(n)$, let $\lambda_k$ denotes the multiplicity of $(k,k)$ in the multiset of pairs
\begin{equation}\label{eq relevant DBP}
   \set{ (x_1, y_1), (x_2, y_2), \ldots, (x_r, y_r) },
\end{equation}
and define 
\begin{equation}\label{eq multi Type II}
    m_{\gamma_1,\gamma_2}:= \prod_{k=1}^\infty (\lambda_k + 1).
\end{equation}
\end{enumerate}
\end{defn}    

\begin{lemma}\label{relevant triple counting type II}
The pair $(\gamma_1,\gamma_2)\in \cP(m)\times \cP(n)$ is in the image of $\mudg$ if and only if the pair is relevant. 
In this case, ${\mudg}^{-1}(\gamma_1,\gamma_2)\cap \RBP(m,n)$ is a non-empty set containing $m_{\gamma_1,\gamma_2}$ elements.      
\end{lemma}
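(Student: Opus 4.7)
The plan is to translate the statement into a combinatorial problem about bijective pairings between the multisets $\gamma_1 = \{x_i\}$ and $\gamma_2 = \{y_i\}$, padded by zeros to have a common number of parts. First, I will observe that a decorated bipartition $\gamma \in \BP(m,n)$ with $\mudg(\gamma)=(\gamma_1,\gamma_2)$ is the same data as (a) a bijection pairing the $x$'s with the $y$'s such that every matched pair $(x,y)$ satisfies $|x-y|\le 1$, together with (b) a $\pm$-decoration on each matched pair of the form $(k,k)$.

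For the ``if and only if'', I plan to establish an elementary exchange lemma: if $a \ge c$, $d \ge b$, and $|a-b|, |c-d| \le 1$, then $|a-d|, |c-b| \le 1$ as well. Iterating this lemma as a bubble-sort step converts any valid pairing into the sorted one (matching the $i$-th largest parts) without spoiling validity. This will show that $(\gamma_1,\gamma_2) \in \Im(\mudg)$ if and only if the sorted pairing is valid, which is precisely the relevance condition of Definition~\ref{relevant pair type II}.

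For the counting step, I will fix a relevant pair and extract invariants of any $\gamma \in (\mudg)^{-1}(\gamma_1,\gamma_2) \cap \RBP(m,n)$. Writing $\alpha_k, \beta_k, \delta_k$ for the number of pairs of types $(k,k-1)$, $(k-1,k)$, $(k,k)$ in $\gamma$ (ignoring decoration), and $a_k, b_k$ for the multiplicities of $k$ in $\gamma_1, \gamma_2$, one has $a_k = \alpha_k + \beta_{k+1} + \delta_k$ and $b_k = \alpha_{k+1} + \beta_k + \delta_k$, with the relevance of $\gamma$ translating to $\alpha_k \beta_k = 0$. Subtracting and telescoping yields $\alpha_k - \beta_k = c_k$, where $c_k := \sum_{k' \ge k}(a_{k'} - b_{k'})$ depends only on $(\gamma_1,\gamma_2)$; combined with $\alpha_k \beta_k = 0$, this forces $\alpha_k = \max(c_k,0)$ and $\beta_k = \max(-c_k,0)$, and then $\delta_k$ is determined. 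I will then verify that these forced values agree with those read off from the sorted pairing of Definition~\ref{relevant pair type II}: the sorted pairing itself satisfies $\alpha_k \beta_k = 0$ by a short monotonicity argument, so by uniqueness the underlying undecorated pairing of any relevant preimage must coincide with the sorted one, and in particular $\delta_k = \lambda_k \ge 0$.

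It will remain only to count decorations: the $\lambda_k$ identical $(k,k)$-pairs can each be decorated by $\pm$, and since a decorated bipartition is a multiset, this amounts to choosing how many copies carry the sign $+$, giving $\lambda_k + 1$ options for each $k$. Multiplying over $k$ produces the formula $m_{\gamma_1,\gamma_2} = \prod_k (\lambda_k + 1)$. The hardest step will be the uniqueness-of-pairing argument, which rests on identifying the forced $(\alpha_k, \beta_k)$ with those read off from the sorted pairing; this in turn reduces to the monotonicity-based verification that the sorted pairing is itself relevant.
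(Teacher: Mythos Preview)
Your argument is correct. The overall strategy coincides with the paper's: both proofs establish that (i) the image of $\mudg$ consists exactly of the relevant pairs, (ii) for a relevant pair the undecorated multiset of any relevant preimage is forced to be the sorted pairing $\{(x_i,y_i)\}$, and (iii) the remaining freedom is only in the $\pm$ decorations on the $(k,k)$-blocks, giving $\prod_k(\lambda_k+1)$.

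The execution differs. The paper packages (i) and (ii) into a single observation: after listing the pairs of $\gamma$ in lexicographic order on the first coordinate, $\gamma$ is relevant if and only if the second coordinates are weakly decreasing; when they are not, the unique local obstruction is an adjacent pair $\{(k,k-1),(k-1,k)\}$, which one replaces by $\{(k,k)^+,(k-1,k-1)^+\}$. This observation simultaneously yields the image description and the uniqueness of the undecorated multiset. Your route separates these: the exchange lemma plus bubble sort handles (i), while your telescoping computation $\alpha_k-\beta_k=\sum_{k'\ge k}(a_{k'}-b_{k'})$ together with $\alpha_k\beta_k=0$ handles (ii). Your exchange step is in fact the same swap as the paper's replacement (take $(a,b)=(k,k-1)$, $(c,d)=(k-1,k)$), just stated more generally. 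The paper's version is shorter; yours is more systematic and makes the determination of $(\alpha_k,\beta_k,\delta_k)$ fully explicit, which is a nice byproduct. One small point to keep in mind in your formulation: different $\gamma$'s in the fiber can have different numbers of parts (because $(1,0)$ and $(0,1)$ may both occur in a non-relevant $\gamma$), so ``padding by zeros to a common length'' should be understood as padding to a length at least the number of parts of the given $\gamma$ and then discarding any $(0,0)$-pairs produced by the sort; this does not affect the argument.
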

\begin{proof}
Suppose $(\gamma_1,\gamma_2)$ is in the image of $\mudg$, take $\gamma = \set{ (a_j,b_j)^{\epsilon_j} | 1\leq j \leq r }\in \BP(m,n)$ such that   $\mudg(\gamma) = (\gamma_1,\gamma_2)$ where the pairs $\set{(a_j,b_j) }$ is ordered in lexical order ($a_j \geq a_{j+1}$ and $(b_j \geq b_{j+1})$ if $a_j=a_{j+1}$) and $m_j$ is the multiplicity of the pair. Observe that 
\begin{equation}\label{obs relevant type II}
\text{
$\gamma$ is relevant if and only if $b_j \geq b_{j+1}$ for all $j$}
\end{equation}
In particular, this implies $\set{(a_j,b_j)| 1\leq j\leq r}$ equals to \eqref{eq relevant DBP} 
and $(\gamma_1,\gamma_2) \in \cR(\cP(m)\times \cP(n))$. 

If $\gamma$ is not relevant, then $b_j < b_{j+1}$ only happens when $(a_j,b_j) = (k,k-1)$ and $(a_{j+1},b_{j+1}) = (k-1,k)$ for some $k\geq 1$.
Replace all the pairs $\set{ (k, k-1), (k-1, k)}$ iteratively from the largest $k$ by 
$\set{(k,k)^+, (k-1,k-1)^+} $
we get a relevant decorated bipartition in ${\mudg}^{-1}(\gamma_1,\gamma_2)$.  This proves the first part of the lemma. 

    The second part of the lemma also follows from \eqref{obs relevant type II} directly.  
\end{proof}

\begin{exam}
   Let $\gamma_1=\set{4,3,1}$ and $\gamma_2=\set{4,3,2}$. Then $(\gamma_1,\gamma_2)$ is relevant with 
   \[
    {\mudg}^{-1}(\gamma_1,\gamma_2)=\bigset{ 
   \set{(4,4)^+, (3,3)^+,(1,2)}, \set{(4,4)^+, (3,3)^-,(1,2)},\\
    \set{(4,4)^-, (3,3)^+,(1,2)}, \set{(4,4)^-, (3,3)^-,(1,2)} \\
     \set{(4,3)^+, (3,4)^-,(1,2)}
    }   \subseteq \BP(8,9), 
   \]
   and 
    \[
    {\mudg}^{-1}(\gamma_1,\gamma_2)\cap \RBP(8,9)=\bigset{ 
   \set{(4,4)^+, (3,3)^+,(1,2)}, \set{(4,4)^+, (3,3)^-,(1,2)},\\
    \set{(4,4)^-, (3,3)^+,(1,2)}, \set{(4,4)^-, (3,3)^-,(1,2)}
    } . 
   \]
\end{exam} 
The following lemma is a rephrasing of the work by Kraft and Procesi \cite[\S6]{MR694606}. For the reader’s convenience, we include a complete proof in \Cref{Sec GL nilpotent}.

\begin{prop}\label{prop GL orbits}
There is a bijection
\[
\BP(m,n)\;\xrightarrow{\sim}\;\underline{(\ov G_1 \times \ov G_2)\backslash \cN_{\ov \bL}},\qquad 
\gamma \longmapsto \cO_\gamma,
\]
with the following properties:
\begin{enumerate}
    \item $\mu(\cO_\gamma) = (\cO_{\mudg_1(\gamma)}, \cO_{\mudg_2(\gamma)})$;
    \item $\gamma \in \RBP(m,n)$ if and only if $\cO_\gamma$ is in the set $\cR_{\ov \bL}$ defined in \Cref{def relevant GL}.
\end{enumerate}
\end{prop}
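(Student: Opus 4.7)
The plan is to follow Kraft--Procesi \cite{MR549399, MR694606} and classify nilpotent pairs via the representation theory of a quiver. First I would identify $\cN_{\ov\bL}$ with the variety of representations of the quiver $Q$ with two vertices $\{1,2\}$ and arrows $a\colon 1\to 2$, $b\colon 2\to 1$ of dimension vector $(m,n)$, subject to the nilpotency condition that $ba$ (equivalently $ab$) acts nilpotently; under this identification, the $(\ov G_1\times \ov G_2)$-orbits on $\cN_{\ov\bL}$ correspond bijectively to isomorphism classes of such nilpotent representations with the prescribed dimension vector.

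The next step is to classify the indecomposable nilpotent representations of $Q$. Since $Q$ is of affine type $A_1^{(1)}$ and the nilpotency hypothesis kills the continuous family of band modules, a standard string-module analysis yields exactly four families indexed by $k\geq 1$: a string starting and ending at vertex $1$ with dimension vector $(k,k-1)$, the reversed string with dimension vector $(k-1,k)$, and two non-isomorphic even strings with dimension vector $(k,k)$ (distinguished by whether they start at vertex $1$ or at vertex $2$), which we record with the decorations $+$ and $-$. By Krull--Schmidt, every nilpotent representation decomposes uniquely as a direct sum of these indecomposables, producing the bijection $\gamma\leftrightarrow\cO_\gamma$ between $\BP(m,n)$ and the orbit set. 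Part~(1) of the proposition then follows by direct inspection of each summand: an indecomposable of type $(k,k-1)$ contributes a single Jordan block of size $k$ to the action of $T_2T_1$ on $L_1$ and a block of size $k-1$ to the action of $T_1T_2$ on $L_2$, and analogously for the other three types; additivity over summands matches the combinatorial moment map $\mudg$.

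For part~(2) I would exploit the identity $\dim\cO_\gamma = m^2+n^2-\dim\End_Q(M_\gamma)$, where $M_\gamma$ is the representation corresponding to $\gamma$, and compare it with the right-hand side of \eqref{relevant in cnbL}, expanded via the standard formula for the dimensions of nilpotent $\GL$-orbits in terms of the conjugate partitions of $\mudg_1(\gamma)$ and $\mudg_2(\gamma)$. The endomorphism algebra decomposes as a direct sum of $\Hom_Q$-spaces between pairs of indecomposable summands, and an explicit case analysis of the four families shows that $\dim\Hom_Q(I,I')$ matches the ``expected'' generic count for every pair of summands except when the two types are $(k,k-1)$ and $(k'-1,k')$; this exceptional pairing produces an extra dimension in $\End_Q(M_\gamma)$ precisely when both $(k,k-1)$ and $(k-1,k)$ appear in $\gamma$. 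Consequently, equality in \eqref{relevant in cnbL} holds iff $\gamma\in\RBP(m,n)$. The main obstacle is this bookkeeping of $\Hom$-dimensions between the four families of string modules, which is the technical heart of the argument and which I expect to demand the most care.
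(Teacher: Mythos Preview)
Your proposal is correct and would work. The paper takes a closely related but differently packaged route: rather than the quiver language, it sets up the group $\widetilde{\SL}_2(\bC)=\SL_2(\bC)\rtimes\bZ/2\bZ$ (with $\sigma$ acting by sign on $e,f$), classifies its irreducibles into the same four families $S_{k+1,k},\ S_{k,k+1},\ S^+_{k,k},\ S^-_{k,k}$, and then uses the graded Jacobson--Morozov theorem to identify $\ov G_1\times\ov G_2$-orbits in $\cN_{\ov\bL}$ with isomorphism classes of $\widetilde{\SL}_2(\bC)$-modules on $L_1\oplus L_2$ for which $\sigma$ acts as $\mathrm{diag}(\Id_{L_1},-\Id_{L_2})$. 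The decorated bipartition is then the multiplicity data of this decomposition, and part~(1) falls out by reading off the $\pm 1$-eigenspaces of $\sigma$ in each irreducible. This is essentially your string-module classification translated through $\sl_2$-triples; the four indecomposable strings you describe are precisely the four irreducible $\widetilde{\SL}_2$-modules.

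For part~(2) the paper does not redo the endomorphism bookkeeping you outline; it simply invokes the Kraft--Procesi dimension formula $\dim\cO_\gamma=\tfrac12(\dim\mu_1(\cO_\gamma)+\dim\mu_2(\cO_\gamma)+\dim\ov\bL)-\cD_\gamma$ with $\cD_\gamma=\sum_k(\text{mult.\ of }(k{+}1,k))\cdot(\text{mult.\ of }(k,k{+}1))$, from which the relevance criterion is immediate. Your proposed $\Hom_Q$-computation is exactly how that formula is established in \cite{MR549399}, so you would be reproving what the paper cites. The advantage of the paper's $\widetilde{\SL}_2$-framework is that it is set up to run in parallel with the ortho-symplectic case (Proposition~\ref{orthosymplectic orbits}), where one passes to a $\bZ/4\bZ$-grading and $\theta$-twisted invariant forms; your quiver approach is cleaner for type~II alone but would need substantial modification there.
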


Consequently, \Cref{thm:Htop type II} can be rephrased as follows.
\begin{cor}\label{thm:Htop type II combi}
   As a $\ov W_1 \times \ov W_2$-module, there is an isomorphism:
\[
\HBM(\St_{\ov \bL}, \bC) \cong \bigoplus_{\gamma \in\RBP(m,n)}  E_{\cO_{ \mudg_1(\gamma)}} \boxtimes E_{\cO_{ \mudg_2(\gamma)}}\cong  \bigoplus_{(\gamma_1, \gamma_2)\in \cR(\cP(m)\times \cP(n))} m_{\gamma_1,\gamma_2} E_{\cO_{\gamma_1}} \boxtimes E_{\cO_{\gamma_2}}.
\]
\end{cor}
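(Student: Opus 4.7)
The plan is to derive both isomorphisms by directly re-indexing the decomposition of \Cref{thm:Htop type II} using the combinatorial parametrization of \Cref{prop GL orbits} and the fiber count of \Cref{relevant triple counting type II}; no new geometric input is required.

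For the first isomorphism, I would observe that in a relevant triple $(\cO,\cO_1,\cO_2)\in\cRL$, the orbits $\cO_1$ and $\cO_2$ are uniquely determined by $\cO$ via $\cO_i=\mu_i(\cO)$, so projection onto the first factor identifies $\cRL$ with $\cR_{\ov\bL}$. By \Cref{prop GL orbits}(2), the assignment $\gamma\mapsto\cO_\gamma$ then identifies $\RBP(m,n)$ with $\cR_{\ov\bL}$, and part (1) of the same proposition gives $\mu_i(\cO_\gamma)=\cO_{\mudg_i(\gamma)}$. Substituting these identifications into \Cref{thm:Htop type II} yields the first displayed isomorphism at once.

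For the second isomorphism, I would group the summands above according to the map $\mudg=(\mudg_1,\mudg_2)\colon \RBP(m,n)\to\cP(m)\times\cP(n)$. By \Cref{relevant triple counting type II}, the image of $\mudg$ is precisely $\cR(\cP(m)\times\cP(n))$, and the fiber over each relevant pair $(\gamma_1,\gamma_2)$ has cardinality $m_{\gamma_1,\gamma_2}$. Since every element of a given fiber contributes the same summand $E_{\cO_{\gamma_1}}\boxtimes E_{\cO_{\gamma_2}}$, collecting terms over fibers yields the stated multiplicity and hence the second isomorphism.

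I do not anticipate a substantive obstacle, as the corollary is a pure bookkeeping step built entirely on results already established. The only mild care required is to ensure that the geometric relevance condition defining $\cRL$ matches the combinatorial relevance defining $\RBP(m,n)$, but this compatibility is exactly the content of \Cref{prop GL orbits}(2), so no further argument is needed.
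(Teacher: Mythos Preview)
Your proposal is correct and follows essentially the same approach as the paper, which simply states that the corollary is a rephrasing of \Cref{thm:Htop type II} via \Cref{prop GL orbits} and does not spell out the bookkeeping. Your identification of $\cRL$ with $\RBP(m,n)$ via \Cref{prop GL orbits}(2) and the subsequent fiber count using \Cref{relevant triple counting type II} are exactly the intended steps.
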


\subsection{Relative Springer theory for the ortho-symplectic Steinberg variety}\label{section Ortho-symplectic}

In this subsection, we apply \Cref{Thm decomposition of SthS} to the ortho-symplectic Steinberg variety $\St_{\VV}$ introduced in \Cref{eq orthogonal-symplectic Steinberg variety} to prove \Cref{thm:Htop}. 

We retain the notation from \Cref{Section Geometrization of oscillator bimodule}. The action of $G_1 \times G_2$ on $\VV$, together with the moment map $\mu=\mu_1\times \mu_2: \VV \to  \frak g_1^* \times \frak g_2^*$, fits into the framework of \Cref{section relative Springer general}. One checks that the variety $\St_{\VV}$ defined in \Cref{eq orthogonal-symplectic Steinberg variety}  fits into \Cref{defn Steiberg}. Applying \Cref{defn relevant pair} and \Cref{eq hyperspherical fibre multiplicty} yields the set $\tcR_{G_1 \times G_2}(\VV)$ and, for each element $(\cO_1\times \cO_2,\cL_1\otimes \cL_2) \in \tcR_{G_1 \times G_2}(\VV)$, a multiplicity space $V_{\cO_1\times \cO_2,\cL_1\otimes \cL_2}(\VV)$.

We thus obtain, via \Cref{Thm decomposition of SthS}, a $W_1\times W_2$-action on $\hBM{\TOP}{\St_{\VV},\bC}$ and the following decomposition as $W_1\times W_2$-module:
\begin{equation}\label{eq hBMV first}
    \hBM{\TOP}{\St_{\VV},\bC} \cong \bigoplus_{(\cO_1\times \cO_2,\cL_1\otimes \cL_2) \in \tcR_{G_1 \times G_2}(\VV)} E_{\cO_1,\cL_1} \otimes E_{\cO_2,\cL_2} \otimes V_{\cO_1\times \cO_2,\cL_1\otimes \cL_2}(\VV).
\end{equation}
Next we refine this decomposition to prove \Cref{thm:Htop}. 

Applying \Cref{Lambdafrb} with the fixed Borel subgroup $B_1\times B_2$ of $G_1\times G_2$ yields  
\begin{equation}\label{Eq Lambda}
  \Lambda_{\VV} := \mu^{-1}(\frak b_1^\perp \times \frak b_2^\perp). 
\end{equation}

Then \Cref{eqSteinberg} gives an isomorphism
\begin{equation}\label{isomorhis StVV Lambdavv}
    \St_{\VV}\cong (G_1\times G_2)\times^{B_1\times B_2} \Lambda_{\VV},
\end{equation}
which induces an isomorphism
\[
\hBM{\TOP}{\St_{\VV},\bC} \cong \hBM{\TOP}{\Lambda_{\VV},\bC}.  
\]

Recall that we fixed a polarization $V_1 = L_1 \oplus L_1^\vee$  stable under $B_1$ and defined the variety $\bL_1$ and the moment cone $\cN_{\bL_1}$ in \Cref{def L1} and \Cref{moment cone 1}. There is a natural isomorphism 
\begin{equation}\label{VV poloraization}
\VV \cong T^* \bL_1.  
\end{equation}
We may regard $\Lambda_{\VV}$ as a subscheme of $T^* \bL_1$ via the isomorphism above. We begin by describing its relationship with the microlocal geometry of the moment cone $\cN_{\bL_1}$  under the action of $\ov B_1 \times B_2$. 
For each such orbit $\cO\in \underline{\overline B_1\backslash \cN_{\bL_1}/B_2}$,  let
\begin{equation*}
    T^*_{\cO} \bL_1 \subset T^* \bL_1
\end{equation*}
denote the conormal bundle to $\cO$.

\begin{prop}\label{pro Lambda}
We have
\[
\Lambda_{\VV}(\bC) = \bigsqcup_{\cO \in \underline{\overline B_1\backslash \cN_{\bL_1}/B_2}} T^*_\cO \bL_1(\bC). 
\]
Since $\underline{\overline B_1\backslash \cN_{\bL_1}/B_2}$ is finite (cf. \Cref{SPM vs orbits}), the reduced scheme of $\Lambda_{\VV}$ is a Lagrangian subvariety of $\VV \cong T^* \bL_1$.
\end{prop}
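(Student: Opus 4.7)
The plan is to exploit the polarization $V_1=L_1\oplus L_1^\vee$, which gives the identification $\VV\cong T^*\bL_1$ with zero section $\bL_1$, and then to analyze the moment condition $\mu\in\fbb_1^\perp\times\fbb_2^\perp$ by splitting $\fbb_1$ along the Levi-radical decomposition of the Siegel parabolic $P_{L_1}\supset B_1$. Writing $\fbb_1=\overline{\fbb}_1\oplus\fnn_{L_1}$, where $\overline{\fbb}_1\subset\overline{\fgg}_1=\fgl(L_1)$ is a Levi Borel and $\fnn_{L_1}\cong\wedge^2 L_1$ is the Siegel radical, the condition $\mu_1(T,T')\in\fbb_1^\perp$ decomposes into the two independent conditions $\mu_1|_{\overline{\fbb}_1}(T,T')=0$ and $\mu_1|_{\fnn_{L_1}}(T,T')=0$ (with no constraint coming from the opposite radical, which does not lie in $\fbb_1$).

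For the Levi part, the $\overline{G}_1\times G_2$-action on $\VV\cong T^*\bL_1$ is the cotangent lift of its natural action on $\bL_1=\Hom(L_1,V_2)$, so the standard cotangent moment-map computation identifies the simultaneous conditions $\mu_1|_{\overline{\fbb}_1}(T,T')=0$ and $\mu_2(T,T')\in\fbb_2^\perp$ with the assertion that $T'\in T^*_T\bL_1$ annihilates $(\overline{\fbb}_1\oplus\fbb_2)\cdot T=T_T\bigl((\overline{B}_1\times B_2)\cdot T\bigr)$; equivalently, $(T,T')$ lies in the conormal bundle $T^*_{\cO}\bL_1$ for the $\overline{B}_1\times B_2$-orbit $\cO$ through $T$ in $\bL_1$. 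For the Siegel-radical part, since $N_{L_1}$ fixes $L_1\subset V_1$ and shifts $L_1^\vee$ into $L_1$, its infinitesimal action on $\VV\cong\bL_1\oplus\bL_1^*$ is $\xi^*(T,T')=(0,\xi\cdot T)$ for $\xi\in\fnn_{L_1}$, i.e.\ a translation in the cotangent direction depending only on the base point $T$. A direct computation with the canonical symplectic form yields the moment Hamiltonian $H_\xi(T,T')=-\tfrac{1}{2}\langle T,\xi\cdot T\rangle_{\VV}$, which is quadratic in $T$ and independent of $T'$. Expanding in a basis $\{e_i\}$ of $L_1$, $H_\xi$ becomes a pairing of the matrix entries of $\xi\in\wedge^2 L_1$ against the quantities $\langle T(e_i),T(e_j)\rangle_{V_2}$; its vanishing for every antisymmetric $\xi$ is exactly the condition that $T(L_1)\subset V_2$ be isotropic, i.e.\ $T\in\cN_{\bL_1}$.

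Putting the two steps together, $(T,T')\in\Lambda_{\VV}$ if and only if $T\in\cN_{\bL_1}$ and $T'$ is conormal to the $\overline{B}_1\times B_2$-orbit through $T$. Since $\cN_{\bL_1}$ is $\overline{G}_1\times G_2$-stable, every $\overline{B}_1\times B_2$-orbit is either entirely contained in $\cN_{\bL_1}$ or entirely disjoint from it, so the relevant orbits are precisely those in $\underline{\overline{B}_1\backslash\cN_{\bL_1}/B_2}$; finiteness of this set (\Cref{SPM vs orbits}) then gives the asserted finite disjoint union of Lagrangian conormal bundles. I expect the main obstacle to be the careful sign- and convention-bookkeeping in the Siegel-radical step, where the $N_{L_1}$-action on $T^*\bL_1$ is not a cotangent lift but the shearing action induced from the ambient $G_1$-action; producing the exact formula for $H_\xi$ and matching its zero locus with the defining condition of $\cN_{\bL_1}$ is the essential computation.
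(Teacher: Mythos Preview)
Your proposal is correct and follows essentially the same approach as the paper: both split $\frb_1=\overline{\frb}_1\oplus\fru_1$ along the Siegel parabolic, identify the Levi-plus-$\frb_2$ condition with the conormal description via the fact that the $\overline{G}_1\times G_2$-action on $T^*\bL_1$ is the cotangent lift, and identify $\mu_1^{-1}(\fru_1^\perp)=\Pi_1^{-1}(\cN_{\bL_1})$. The paper records the latter as ``one checks that'' while you spell out the computation (the $U_1$-action is a shear in the cotangent direction, so its Hamiltonian depends only on $T\in\bL_1$ and pairs $\xi\in\wedge^2L_1$ against the Gram matrix $(\langle T(e_i),T(e_j)\rangle_{V_2})$); this is exactly the verification the paper suppresses.
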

\begin{proof}
Let $U_1$ be the unipotent radical of the Siegel parabolic subgroup $P_1$ of $G_1$ stabilizing $L_1$. We write $\frak u_{1}=\Lie(U_1), \ov{\frak b}_1=\Lie(\ov B_1)$. Then we have a semidirect decomposition:
\begin{equation}\label{unipotent semi-direct}
    \frak b_1 = \frak u_{1} \oplus \ov{\frak b}_1.
\end{equation}

It follows that
\begin{equation}\label{Eq lambda vv}
    \Lambda_{\VV} = \mu^{-1}(\frb_1^\perp \times \frb_2^\perp) = \mu_1^{-1}(\fru_{1}^\perp) \cap \mu^{-1}(\ov \frb_1^\perp \times \frb_2^\perp).
\end{equation}
Let $\Pi_1: \VV \to \bL_1$ denote the natural projection. One checks that
\begin{equation}\label{Moment map fru1}
    \mu_1^{-1}(\fru_{L_1}^\perp) = \Pi_1^{-1}(\cN_{\bL_1}).
\end{equation}

\trivial[h]{
For each $T\in \Hom(V_1,V_1)$, we define $T^*\in \Hom(V_1,V_1)$ by requiring that 
\[
\langle Tv, v'\rangle_{V_1}= \langle v, T^* v'\rangle_{V_1},\quad \forall v,v'\in V_1. 
\]
Then
\[
\frak g_1= \Herm(V_1,V_1) \coloneqq \{T\in \Hom(V_1,V_1)|T^*=-T\}.
\]
We identify $\Hom(V_1/L_1,L_1)$ as a subspace of $\Hom(V_1,V_1)$ via the natural embedding. Then 
\[
\frak u_{L_1}=\Herm(V_1/L_1, L_1) \coloneqq \Hom(V_1/L_1,L_1)\cap \Herm(V_1,V_1).
\]
Let $\pr: \Hom(V_1,V_1)\rightarrow \Hom(L_1,V_1/L_1) $ be the natural quotient map and define 
\[
\Herm(L_1,V_1/L_1)\coloneqq \pr(\Herm(V_1,V_1)). 
\]
We identify $\Herm(L_1,V_1/L_1)$ with $\frak u_{L_1}^*=\Herm(V_1/L_1, L_1)^*$ by using the perfect trace pairing 
    \[
    \begin{split}
    \Herm(L_1,V_1/L_1)\times \Herm(V_1/L_1,L_1)&\longrightarrow \bC\\
     (b, b')& \mapsto \tr(b'b).
         \end{split}
    \]
For $A\in \bL_1=\Hom(L_1, V_2)$, we define $A^*\in \Hom(V_2, V_1/L_1)$ characterized by  
\begin{equation}\label{T*}
    \langle A v_1 , v_2\rangle_{V_2}=\langle v_1, A^* v_2\rangle_{V_1},\quad \forall v_1\in L_1, v_2\in V_2. 
\end{equation}
Note that $A^*A \in \Herm(L_1,V_1/L_1)=\frak u_{L_1}^*$ and we define the quadratic map 
\begin{equation}\label{eq quadratic map}
\begin{split}
     \l_1: \bL_1=\Hom(L_1,V_2) &\longrightarrow  \frak u_{L_1}^*\\
           A& \mapsto \frac{1}{2} A^* A. 
\end{split}
\end{equation}
Recall the moment cone $\cN_{\bL_1}$ from \eqref{moment cone 1}, which is the same as $\l^{-1}_1(0)$. It is easy to check that we have the following commutative diagram 
\[
\begin{tikzcd}[ampersand replacement=\&]
    \Hom(V_1,V_2) \ar[r,"\mu_1"] \ar[d,"\Pi_1"] \&  \frak g_1^* \ar[d,"\res^{\frak g_1}_{\frak u_{L_1}}"] \\
     \Hom(L_1,V_2) \ar[r,"\l_1"] \& \frak u_{L_1}^*
\end{tikzcd}
\]
where $\Pi_1$ is the natural projection map and $\res^{\frak g_1}_{\frak u_{L_1}}$ is the restriction map. In particular, 
    \begin{equation}\label{Moment map fru1}
     \mu_1^{-1}(\frak u_{L_1}^{\perp})= \Pi_1^{-1}(\cN_{\bL_1}).
    \end{equation}
}
Furthermore, since the isomorphism \Cref{VV poloraization} is $\GL(L_1) \times G_2$-equivariant, we have
\begin{equation}\label{Eq conormal 1}
    \mu^{-1}(\ov\frb_1^\perp \times \frb_2^\perp)(\bC) = \bigsqcup_{\cO \in \underline{\ov B_1 \backslash \bL_1 /B_2}} T^*_\cO \bL_1(\bC).
\end{equation}
The result follows by combining \Cref{Eq lambda vv}, \Cref{Moment map fru1}, and \Cref{Eq conormal 1}.
\end{proof}
Now switch the roles of $V_1$ and $V_2$. Let $ V_2=L_2\oplus L_2^\vee$ be a polarization of $V_2$ stable under $B_2$ and define $\bL_2 = \Hom(L_2, V_1)$ and $\cN_{\bL_2}$ under the $B_1 \times \ov B_2$-action. 
The same argument as in \Cref{pro Lambda} applies here as well.

\begin{cor}\label{Lagrangian Steiberg VV}
The reduced scheme of $\St_{\VV}$ is a Lagrangian subvariety of $\VV \times \tcN_1 \times \tcN_2$. In particular,
\begin{equation}\label{Lagrangian Steiberg VV dimension}
    \dim \St_{\VV} = \tfrac{1}{2}(\dim \wt\cN_1 + \dim \wt\cN_2 + \dim \VV).
\end{equation}
\end{cor}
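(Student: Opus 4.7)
My plan is to derive both statements as direct consequences of \Cref{pro Lambda} combined with the quotient presentation $\St_{\VV}\cong (G_1\times G_2)\times^{B_1\times B_2}\Lambda_{\VV}$ coming from \eqref{eqSteinberg} (which is \eqref{isomorhis StVV Lambdavv} in our setting). The preceding proposition already does all the nontrivial geometric work by polarizing $\VV\cong T^*\bL_1$ and identifying $\Lambda_{\VV}$ with the union of conormal bundles to $\overline B_1\times B_2$-orbits on $\cN_{\bL_1}$; what remains is a bundle-theoretic dimension count and a mild isotropy check.

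First, \Cref{pro Lambda} tells me that the reduced scheme of $\Lambda_{\VV}$ is the finite disjoint union $\bigsqcup_{\cO} T^*_\cO\bL_1$, each piece being a conormal variety and hence Lagrangian in the symplectic vector space $\VV\cong T^*\bL_1$. In particular $\Lambda_{\VV}$ is equidimensional of dimension $\dim\bL_1 = \tfrac{1}{2}\dim\VV$. Using \eqref{isomorhis StVV Lambdavv} and the standard identity $\dim G_i - \dim B_i = \dim\cB_i$, I compute
\[
\dim \St_{\VV} \;=\; \dim(\cB_1\times\cB_2) + \dim \Lambda_{\VV} \;=\; \dim\cB_1 + \dim\cB_2 + \tfrac{1}{2}\dim\VV.
\]
Since $\wt\cN_i = T^*\cB_i$ has dimension $2\dim\cB_i$, this collapses to $\tfrac{1}{2}(\dim\wt\cN_1 + \dim\wt\cN_2 + \dim\VV)$, which is exactly \eqref{Lagrangian Steiberg VV dimension} and equals half of $\dim(\VV\times\wt\cN_1\times\wt\cN_2)$, as required for a Lagrangian embedding.

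Finally, for the Lagrangian claim itself, I would decompose $\St_{\VV}$ along the $\overline B_1\times B_2$-orbits of \Cref{pro Lambda} into irreducible pieces $(G_1\times G_2)\times^{B_1\times B_2} T^*_\cO \bL_1$ and verify isotropy on each piece inside $\VV\times\wt\cN_1\times\wt\cN_2$ (equipped with the natural symplectic form arising from its moment-map description). The key input, namely that each $T^*_\cO\bL_1$ is Lagrangian in $\VV$, is already supplied; the twisted product construction then transports isotropy along the $G_1\times G_2$-orbit directions using the standard fact that the graph of the moment map condition produces coisotropic subschemes of products of Hamiltonian $G$-spaces. Combined with the half-dimensional count established above, isotropy upgrades to the Lagrangian property. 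I expect the principal technical step to be this last isotropy verification, as the right choice of sign convention for the symplectic form on the product and the interaction with the moment-map constraint $\mu_i(T)\in(\Lie B_i')^\perp$ must be handled carefully; however, since \Cref{pro Lambda} reduces everything to the already-established Lagrangian property of $\Lambda_{\VV}$ in $\VV$, no essentially new geometry is needed.
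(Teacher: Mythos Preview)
Your dimension count is correct and matches the paper exactly: use \Cref{pro Lambda} to get $\dim\Lambda_{\VV}=\tfrac12\dim\VV$, then the bundle identification \eqref{isomorhis StVV Lambdavv} gives \eqref{Lagrangian Steiberg VV dimension}.

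For the isotropy part, however, the paper's argument is much shorter than your proposed piece-by-piece verification. The point is that $\St_{\VV}$ is \emph{globally} the zero fiber of the moment map for the diagonal Hamiltonian $G_1\times G_2$-action on the symplectic variety $\VV\times\wt\cN_1\times\wt\cN_2$; zero fibers of moment maps are always coisotropic, so one line gives coisotropy of all of $\St_{\VV}$ at once. Combined with the half-dimensionality you already established, this forces Lagrangian. Your approach of decomposing into twisted products of conormal bundles and checking isotropy on each piece would also work (isotropy is a pointwise tangent-space condition, hence passes to closures), but it is unnecessarily involved and the ``principal technical step'' you flag is in fact avoided entirely by the global coisotropy observation. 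You even allude to this fact yourself (``the graph of the moment map condition produces coisotropic subschemes''), but you apply it locally rather than recognizing that it applies directly to $\St_{\VV}$ as a whole.
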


\begin{proof}
By construction, $\St_{\VV}$ is the zero fiber of the moment map $\VV \times \wt\cN_1 \times \wt\cN_2 \to \frg_1^* \times \frg_2^*$ associated with the diagonal Hamiltonian $G_1\times G_2$-action. Hence, $\St_{\VV}$ is coisotropic in $\VV \times \wt\cN_1 \times \wt\cN_2$. Since $\Lambda_{\VV}$ is Lagrangian in $\VV$, we have $\dim \Lambda_{\VV} = \tfrac{1}{2} \dim \VV$, and the stated formula for $\dim \St_{\VV}$ follows from \Cref{isomorhis StVV Lambdavv}. Thus, $\St_{\VV}$ is Lagrangian.
\end{proof}

Let $\cR_{G_1\times G_2}(\VV)$ be the set of relevant nilpotent orbits as in \Cref{defn relevant pair}. It follows directly from \Cref{defn relevant pair} and \Cref{Lagrangian Steiberg VV dimension} that a pair $\cO_1\times \cO_2$ belongs to $\cR_{G_1\times G_2}(\VV)$ if and only if
\begin{equation}\label{ortho-symplectic fibre dimension bound}
    \dim \mu^{-1}(\cO_1 \times \cO_2) = \tfrac{1}{2}(\dim \VV + \dim \cO_1 + \dim \cO_2).
\end{equation}
\begin{proof}[Proof of \Cref{thm:Htop}]
Let $\cO_1 \times \cO_2 \in \cR_{G_1 \times G_2}(\VV)$, and choose a representative $e_1 \times e_2 \in \cO_1 \times \cO_2$. Set $\VV_{e_1,e_2} := \mu^{-1}(e_1, e_2)$. For each $\cO \in \mu^{-1}(\cO_1 \times \cO_2) \cap \cR_{\VV}$, we also define $\VV^{\cO}_{e_1,e_2} := \VV_{e_1,e_2} \cap \cO$.

Since the quotient $\underline{(G_1 \times G_2) \backslash \cN_{\VV}}$ is finite, the union 
\[
\bigsqcup_{\cO \in \mu^{-1}(\cO_1 \times \cO_2) \cap \cR_{\VV}} \VV^\cO_{e_1,e_2}
\]
is open and dense in $\VV_{e_1,e_2}$. It follows that the restriction map
\begin{equation}\label{restri top Borel}
    \res: \hBM{\TOP}{\VV_{e_1,e_2},\bC} \longrightarrow \bigoplus_{\cO \in \mu^{-1}(\cO_1 \times \cO_2) \cap \cR_{\VV}} \hBM{\TOP}{\VV^{\cO}_{e_1,e_2},\bC}
\end{equation}
is an isomorphism. Moreover, this isomorphism intertwines the natural action of $A_{e_1} \times A_{e_2}$ on both sides.

Fix a $e \in \cO$ such that $\mu(e) = (e_1, e_2)$. The natural isomorphism
\[
\VV^{\cO}_{e_1,e_2}\cong C_{G_1}(e_1) \times C_{G_2}(e_2) / \Stab_{G_1 \times G_2}(e)
\]
induces an $A_{e_1} \times A_{e_2}$-equivariant isomorphism
\[
\hBM{\TOP}{\VV^{\cO}_{e_1,e_2},\bC} \cong \bC[A_{e_1} \times A_{e_2}/A_e] \cong \bigoplus_{(\chi_1,\chi_2) \in \widehat A_{e_1} \times \widehat A_{e_2}} (\chi_1 \otimes \chi_2)^{\dim (\chi_1 \otimes \chi_2)^{A_e}}.
\]
Since $A_{e_1}$ and $A_{e_2}$ are abelian, all characters $\chi_1$, $\chi_2$ are one-dimensional, and we have
\[
\dim (\chi_1 \otimes \chi_2)^{A_e} =
\begin{cases}
    1 & \text{if } (\chi_1|_{A_e}) \cdot (\chi_2|_{A_e}) = 1 \in \Irr(A_e), \\
    0 & \text{otherwise}.
\end{cases}
\]
Therefore, we obtain the decomposition
\begin{equation}\label{eq decom VVcO}
   \hBM{\TOP}{\VV^{\cO}_{e_1,e_2},\bC} \cong 
\bigoplus_{\substack{
  (\chi_1,\chi_2) \in \widehat A_{e_1} \times \widehat A_{e_2} \\
  (\chi_1|_{A_e}) \cdot (\chi_2|_{A_e}) = 1
}}
 \chi_1 \otimes \chi_2.
\end{equation}
Substituting \eqref{restri top Borel} and  \eqref{eq decom VVcO} into \Cref{eq hBMV first}  completes the proof.
\end{proof}
Next we provide a combinatorial description of $\cRV$ based on results of Kraft-Procesi \cite[\S 6]{MR694606}.

\begin{defn}\label{defn orthogonal symplectic partition}
\begin{enumerate}
    \item For a non-negative integer $k$, we define the \emph{orthogonal partitions} (resp. \emph{symplectic partitions}) $\cP_{+}(k)$ (resp. $\cP_{-}(k)$) of $k$ be the subset of $\cP(k)$  consisting of partitions in which even (resp. odd) parts occur with even multiplicity.   
    \item Let 
    \[
    \lambda = [\underbrace{t_1, \ldots, t_1}_{z_1}, \ldots, \underbrace{t_l, \ldots, t_l}_{z_l}] \in \cP_{\pm}(k),
    \]
    where $t_1 > \cdots > t_l > 0$. Define $\odd_{\lambda} := \{ t_i \mid t_i \text{ is an odd part of $\l$} \}$ and $\even_{\lambda} := \{ t_i \mid t_i \text{ is an even part of $\l$} \}$.

    \begin{enumerate}
        \item If $\lambda \in \cP_{+}(k)$, define the  group
        \[
        \cS_{\lambda} \coloneqq (\bZ/2\bZ)^{\odd_{\l}}=\bigoplus_{t \in \odd_{\lambda}} \bZ/2\bZ \cdot a_t.
        \]
        \item If $\lambda \in \cP_{-}(k)$, define the  group
        \[
        \cS_{\lambda} \coloneqq (\bZ/2\bZ)^{\even_{\l}}=\bigoplus_{t \in \even_{\lambda}} \bZ/2\bZ \cdot a_t.
        \]
    \end{enumerate}
\end{enumerate}
\end{defn}

It is well known (cf. \cite[\S 5.1]{MR1251060}) that the set of $G_1$-orbits on $\cN_1$ is parametrized by $\cP_{+}(2m)$, and the set of $G_2$-orbits on $\cN_2$ are parametrized by $\cP_{-}(2n)$, by recording the lengths of Jordan blocks. For each $i = 1, 2$, denote by $\cO_\lambda\subset \cN_i$ the nilpotent orbit corresponding to a partition $\lambda \in \cP_{+}(2m)$ or $\lambda \in \cP_{-}(2n)$, with a representative $e_i \in \cO_\lambda$. There is a canonical isomorphism (cf. \cite[\S 6.1]{MR1251060}) 
\[
A_{e_i} \cong \cS_{\lambda},
\]
and we identify these groups accordingly. For $\lambda \in \cP_{+}(2m)$ or $\cP^-(2n)$ and $\chi \in \Irr(\cS_{\lambda})$, we denote the corresponding Weyl group representation by  $E_{\cO_{\lambda}, \chi}$.
\begin{defn}\label{defn orth-symplectic partitions}
\begin{enumerate}
    \item For a pair of non-negative integers $(M, N)$, define the set of \emph{ortho-symplectic partitions} $\OSP(M, N) \subseteq \BP(M, N)$ (see \Cref{defn ab diagram}) to consist of all
    \[
    \{(x_1, y_1)^{\epsilon_1}, \ldots, (x_r, y_r)^{\epsilon_r}\} \in \BP(M, N)
    \]
    satisfying:
    \begin{itemize}
        \item For $k\ge1$, $(k,k)^+$-parts and $(k,k)^-$-parts occur with the same multiplicity;
        \item For even $k$, both $(k,k+1)$ and $(k,k-1)$ appear with even multiplicities.
    \end{itemize}
    \item  We define the set \emph{relevant ortho-symplectic partitions} $\ROSP(M, N)\coloneqq \OSP(M, N)\cap \RBP(M,N)$, i.e., $\gamma \in \OSP(M, N)$ is said to be \emph{relevant} if, for every $k \geq 0$, the types $(k+1,k)$ and $(k,k+1)$ do not simultaneously appear in $\gamma$.
    \item Let 
   \[
        \gamma = [\underbrace{(x_1, y_1)^{\epsilon_1}, \ldots, (x_1, y_1)^{\epsilon_1}}_{z_1}, \ldots, \underbrace{(x_l, y_l)^{\epsilon_l}, \ldots, (x_l, y_l)^{\epsilon_l}}_{z_l}] \in \OSP(M,N)
    \]
    with pairwise distinct $(x_i, y_i)^{\epsilon_i}$. Define:
    \begin{itemize}
        \item $\oddeven_\gamma$: the set of distinct pairs $(x_i,y_i)$ in $\gamma$ where $x_i$ is odd and $y_i$ is even;
        \item $\evenodd_\gamma$: the set of distinct pairs $(x_i,y_i)$ in $\gamma$ where $x_i$ is even and $y_i$ is odd;
        \item $\oddodd_\gamma$:  the set of distinct pairs $(x_i,x_i)^{\e_i}$ in $\gamma$ where both $x_i$ is odd;
        \item $\eveneven_\gamma$:  the set of distinct pairs $(x_i,x_i)^{\e_i}$ in $\gamma$ where $x_i$ is even.
    \end{itemize}
    Define the group
    \begin{equation}\label{eq component group orth-symplectic partitions}
        \cS_\gamma \coloneqq (\bZ/2\bZ)^{\oddeven_\gamma}=\bigoplus_{(x_i,y_i)\in \oddeven_\gamma} \bZ/2\bZ \cdot a_{(x_i,y_i)}.
    \end{equation}

    \item The combinatorial moment map $\mu^\dagger$ defined in \Cref{moment nilcone II} restricts to a map
    \begin{equation}\label{moment nilcone I}
        \mu^{\dagger} := \mu_1^{\dagger} \times \mu_2^{\dagger} \colon \OSP(M,N) \longrightarrow \cP^+(M) \times \cP^-(N).
    \end{equation}
    Given $\mu^\dagger(\gamma) = (\gamma_1, \gamma_2)$, we obtain a natural map 
    \begin{equation}\label{map component group}
        \Delta^\dagger := (\Delta_1^\dagger, \Delta_2^\dagger) \colon \cS_\gamma \longrightarrow \cS_{\gamma_1} \times \cS_{\gamma_2}
    \end{equation}
    defined by $\Delta_1^\dagger(a_{(x,y)}) = a_x$ and $\Delta_2^\dagger(a_{(x,y)}) = a_y$ for all $(x,y)\in \oddeven_\gamma$.

    \item A \emph{relevant quintuple} associated to $(M, N)$ is a tuple $(\gamma, \gamma_1, \chi_1, \gamma_2, \chi_2)$ such that:
    \begin{itemize}
        \item $\gamma \in \ROSP(M, N)$;
        \item $(\gamma_1, \gamma_2) = \mu^\dagger(\gamma)$;
        \item $(\chi_1 \otimes \chi_2) \circ \Delta^\dagger = 1 \in \Irr(\cS_\gamma)$.
    \end{itemize}
    Denote the set of relevant quintuples by $\RQ(M, N)$.
\end{enumerate}
\end{defn}

\begin{remark}
    In \cite[\S 6]{MR694606}, the set $\OSP(M,N)$ is referred to as the ortho-symplectic $ab$-diagrams.
\end{remark}

\begin{Exa}
    Let $\gamma=\set{(3,3)^+,(3,3)^-,(3,2),(1,2)}\in \ROSP(10,10)$.
\begin{enumerate}
    \item  We have $\gamma_1=\mudg_1(\gamma)=\set{3,3,3,1}$ and $\gamma_2=\mudg_1(\gamma)=\set{3,3,2,2}$. 
     \item We have 
    \[
    \cS_{\gamma}\cong \bZ/2\bZ\cdot  a_{(3,2)}\oplus \bZ/2\bZ \cdot a_{(1,2)},
    \]
    \[
    \cS_{\gamma_1}\cong \bZ/2\bZ\cdot  a_{3}\oplus \bZ/2\bZ \cdot a_{1},\quad \cS_{\gamma_2}\cong \bZ/2\bZ\cdot  a_2\]
    and the map $\Delta^\dagger=(\Delta^\dagger_1,\Delta^\dagger_2)$ is given by 
        \[
        \Delta_1^\dagger(a_{(3,2)})=a_3,\quad \Delta_1^\dagger(a_{(1,2)})=a_{1} 
        \] 
        and 
        \[
\Delta_2^\dagger(a_{(3,2)})=\Delta_2^\dagger(a_{(1,2)})=a_2.
        \]
\end{enumerate}    
\end{Exa}

\begin{lemma}\label{relevatbt quintuple counting}
Let $(\gamma_1, \gamma_2) \in \cP^+(M) \times \cP^-(N)$, written as
\[
    \gamma_1 = [x_1, \ldots, x_l], \quad x_1 \geq \cdots \geq x_l, \qquad \gamma_2 = [y_1, \ldots, y_l], \quad y_1 \geq \cdots \geq y_l,
\]
where zero parts are added as needed so that $\gamma_1$ and $\gamma_2$ have equal length. Then there exists $\gamma \in \ROSP(M,N)$ such that $\mu^{\dagger}(\gamma) = (\gamma_1, \gamma_2)$ if and only if $|x_i - y_i| \leq 1$ for all $1 \leq i \leq l$. In this case, there is a unique such $\gamma$ given by
\[
    \gamma = \{ (x_1, y_1)^{\epsilon_1}, \ldots, (x_l, y_l)^{\epsilon_l} \},
\]
where the signs $\epsilon_i$ are uniquely determined by the condition in \Cref{defn orth-symplectic partitions}(1).
\end{lemma}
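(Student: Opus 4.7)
The plan is to combine \Cref{relevant triple counting type II} with the ortho-symplectic parity constraints to isolate both the pair multiset and the signs. Since $\ROSP(M,N)\subseteq\RBP(M,N)$, any $\gamma\in\ROSP(M,N)$ with $\mudg(\gamma)=(\gamma_1,\gamma_2)$ is in particular a relevant decorated bipartition, and applying \Cref{relevant triple counting type II} (forgetting the signs) immediately yields the ``only if'' direction $|x_i-y_i|\le 1$ and identifies the unsigned multiset of pairs of $\gamma$ as $\{(x_i,y_i)\}_{i=1}^l$. In particular $\gamma$ is pinned down modulo the choices of signs on the diagonal entries $(k,k)$.

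For the converse, suppose $|x_i-y_i|\le 1$ for all $i$, and write $n_{a,b}$ for the multiplicity of $(a,b)$ in the candidate multiset $P:=\{(x_i,y_i)\}_{i=1}^l$, abbreviating $\lambda_k:=n_{k,k}$, $p_k:=n_{k,k+1}$, $q_k:=n_{k+1,k}$. Three things have to be checked to conclude $\gamma:=\{(x_i,y_i)^{\epsilon_i}\}\in\ROSP(M,N)$: (a)~the relevance condition $p_k q_k=0$ for every $k\ge 0$; (b)~$\lambda_k$ is even for every $k\ge 1$, and $p_k$, $q_{k-1}$ are even whenever $k$ is even; (c)~the signs on the $(k,k)$-entries exist and are uniquely determined. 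Condition (a) is automatic from the sorted order: an occurrence of $(k+1,k)$ at index $i$ together with an occurrence of $(k,k+1)$ at some $j>i$ would force $y_i=k<k+1=y_j$, contradicting $y_i\ge y_j$. Condition (c) is immediate once (b) is known, since the $\OSP$ requirement that $(k,k)^+$ and $(k,k)^-$ occur with equal multiplicity uniquely distributes $\lambda_k/2$ pluses and $\lambda_k/2$ minuses.

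The main work is (b), which I would establish via a parity analysis based on the identities
\[
m^1_k = q_{k-1}+\lambda_k+p_k, \qquad m^2_k = p_{k-1}+\lambda_k+q_k,
\]
where $m^i_k$ denotes the multiplicity of $k$ in $\gamma_i$, together with the parity constraints $m^1_k\equiv 0\pmod 2$ for $k$ even (from $\gamma_1\in\cP^+$) and $m^2_k\equiv 0\pmod 2$ for $k$ odd (from $\gamma_2\in\cP^-$). The relevance disjointness $p_j q_j=0$ from (a) is used at each step to kill one of the cross terms, after which a downward induction on $k$ propagates evenness of $p_k$ (from $m^2_{k+1}\equiv 0$ at even $k$), then of $q_{k-1}$, and finally of $\lambda_k$ itself. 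The delicate point is precisely this bookkeeping: neither $m^1_k\equiv 0$ nor $m^2_k\equiv 0$ alone determines the parity of any individual one of $\lambda_k$, $p_k$, $q_{k-1}$, so one must carefully interleave the $\cP^+$-congruences at even indices with the $\cP^-$-congruences at odd indices, using the relevance disjointness as the mechanism that isolates one unknown at a time. Once (b) is in place, (c) follows immediately, and the existence and uniqueness statements of the lemma are concluded.
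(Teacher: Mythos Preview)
Your argument for step~(b) cannot be completed, because the conclusion is false under the stated hypotheses. Take $M=N=4$, $\gamma_1=[2,2]\in\cP^+(4)$ and $\gamma_2=[2,1,1]\in\cP^-(4)$: the sorted pairs $(2,2),(2,1),(0,1)$ all satisfy $|x_i-y_i|\le 1$, yet the only candidate pair multiset $\{(2,2),(2,1),(0,1)\}$ has $\lambda_2=1$ and $q_1=1$ both odd, so it violates both $\OSP$ conditions at $k=2$, and no $\gamma\in\ROSP(4,4)$ maps to $(\gamma_1,\gamma_2)$. Your downward induction stalls exactly where you flag it as delicate: for $k$ even you do obtain $p_k\equiv 0$ from $m^2_{k+1}\equiv 0$ together with the inductive hypothesis at level $k+1$, but then the only remaining constraint is $m^1_k\equiv 0$, i.e.\ $q_{k-1}+\lambda_k\equiv 0\pmod 2$, and neither the relevance disjointness $p_{k-1}q_{k-1}=0$ nor any lower-level $\cP^\pm$-congruence separates $q_{k-1}$ from $\lambda_k$. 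In the counterexample $q_1+\lambda_2=2$ is even while each summand is odd, so no parity argument can exclude it.

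This means the ``if'' direction of the lemma, as literally stated, is not correct; the paper's one-line proof (deferring to the type~II argument) passes over exactly the verification you attempted. What \emph{is} correct---and is all the paper actually uses (see the remark on relevant quintuples following the main theorem in the introduction)---is the ``only if'' direction together with uniqueness: if some $\gamma\in\ROSP(M,N)$ with $\mudg(\gamma)=(\gamma_1,\gamma_2)$ exists, then $|x_i-y_i|\le 1$, the unsigned pair multiset is forced to be $\{(x_i,y_i)\}$, and the signs are uniquely determined by the $\OSP$ constraints. Your arguments for those parts are fine.
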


\begin{proof}
This follows immediately from the same reasoning as in the proof of \Cref{relevant triple counting type II}.
\end{proof}

The following result is partially proved in \cite[\S6]{MR694606}.  For the reader’s convenience we give a complete proof in \Cref{Sec Orth-symplectic}.

\begin{prop}\label{orthosymplectic orbits}
There is a bijection
\[
\OSP(2m,2n)\;\xrightarrow{\sim}\;\underline{(G_1 \times G_2)\backslash \cN_{\VV}},\qquad 
\gamma \longmapsto \cO_\gamma,
\]
with the following properties:
\begin{enumerate}
\item For any $e \in \cO_\gamma$ there is an isomorphism
\[
A_e \;\cong\; \cS_\gamma,
\]
where $A_e$ is defined in \Cref{eq component group O} and $\cS_\gamma$ in \Cref{eq component group orth-symplectic partitions}.

\item The moment map satisfies
\[
\mu(\cO_\gamma)=\bigl(\cO_{\mudg_1(\gamma)},\cO_{\mudg_2(\gamma)}\bigr),
\]
and under the identification in (1) the component-group map $\Delta_i$ of \Cref{eq moment map component group} coincides with the map  $\Delta_i^\dagger$ from \Cref{map component group}.

\item One has $\gamma\in \ROSP(2m,2n)$ if and only if $\cO_\gamma\in\cR_{\VV}$, where $\cR_{\VV}$ is defined in \Cref{def relevant orthsymplectic}.  
In particular, the relevant quintuples 
\[
(\cO,\cO_1,\cL_1,\cO_2,\cL_2)\in\cRV
\]
are parametrized by the set $\RQ(2m,2n)$ of relevant quintuples $(\gamma,\gamma_1,\chi_1,\gamma_2,\chi_2)$.
\end{enumerate}
\end{prop}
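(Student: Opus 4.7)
The plan is to carry out the classification of $G_1\times G_2$-orbits in $\cN_{\VV}$ directly via Kraft--Procesi's ``ab-diagram'' normal forms, and then extract the additional data (component groups, moment image, relevance) from the explicit model. Concretely, following \cite[\S6]{MR694606}, every $(G_1\times G_2)$-orbit in $\cN_{\VV}$ has a representative $e\in\Hom(V_2,V_1)\oplus\Hom(V_1,V_2)$ such that $(V_1,V_2)$ decomposes as an orthogonal/symplectic direct sum of indecomposable ``elementary'' sub-datums. Each indecomposable block is isomorphic to one of the four types recorded by the pairs $(k,k-1)$, $(k-1,k)$, $(k,k)^+$, $(k,k)^-$ in \Cref{defn orth-symplectic partitions}, where the superscript $\pm$ distinguishes the two inequivalent ``square'' blocks (which are exchanged by an outer automorphism and have isomorphic Jordan types). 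Thus $\gamma\mapsto\cO_\gamma$ is a well-defined bijection, and by construction reading off the Jordan blocks of $\mu_1(e)$ and $\mu_2(e)$ recovers $\mudg_1(\gamma)$ and $\mudg_2(\gamma)$, proving the Jordan-type half of (2).

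For part (1) I would compute $\Stab_{G_1\times G_2}(e)$ block by block. Since the isometry group of a direct sum of pairwise non-isomorphic indecomposables factors as a product of the individual isometry groups together with permutation factors permuting isomorphic blocks, the component group decomposes accordingly. A direct inspection of the four block types shows: an $(\mathrm{odd},\mathrm{even})$ block contributes a $\bZ/2$ (coming from $\O(1)\bs\SO(1)$ on the orthogonal side, the symplectic side being connected), while $(\mathrm{even},\mathrm{odd})$, $(\mathrm{odd},\mathrm{odd})$, and $(\mathrm{even},\mathrm{even})^\pm$ blocks give connected stabilizers modulo permutation factors (which are themselves connected). Multiplying these up yields $A_e\cong(\bZ/2)^{\oddeven_\gamma}=\cS_\gamma$. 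The explicit generator $a_{(x,y)}$ acts as $-1$ on the corresponding odd Jordan block of $\mu_1(e)$ and as $-1$ on the even Jordan block of $\mu_2(e)$, which under the Kraft--Procesi identifications $A_{e_i}\cong\cS_{\gamma_i}$ (see \cite[\S6.1]{MR1251060}) sends $a_{(x,y)}$ to $a_x$ and $a_y$ respectively; this is exactly the map $\Delta^\dagger$, completing (2).

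For part (3) the key input is the Kraft--Procesi dimension formulas for $\cO_\gamma$, $\cO_{\mudg_1(\gamma)}$, $\cO_{\mudg_2(\gamma)}$ in terms of the block data. Writing these out, the relevance condition $\dim\cO_\gamma=\tfrac12(\dim\VV+\dim\mu_1(\cO_\gamma)+\dim\mu_2(\cO_\gamma))$ becomes a sum over pairs of blocks of local contributions that one checks are non-negative and vanish exactly when no ``crossing'' pair $\{(k,k-1),(k-1,k)\}$ occurs for any $k\geq1$, i.e.\ exactly when $\gamma\in\ROSP(2m,2n)$. Combined with (1) and (2), the final assertion on relevant quintuples follows: a quintuple $(\cO_\gamma,\cO_{\gamma_1},\cL_1,\cO_{\gamma_2},\cL_2)$ is relevant iff $\gamma\in\ROSP(2m,2n)$ and the characters $\chi_1,\chi_2$ corresponding to $\cL_1,\cL_2$ satisfy $(\chi_1\otimes\chi_2)\circ\Delta^\dagger=1$, matching \Cref{defn orth-symplectic partitions}(5).

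The main obstacle is the explicit computation of $A_e$ and the map $\Delta$: the block classification is standard, but keeping track of signs and of the outer $\O/\SO$ component on the orthogonal side (which is what makes the $(\mathrm{odd},\mathrm{even})$ blocks the only ones contributing to $A_e$) requires care. The dimension/relevance computation in (3) is essentially a bookkeeping exercise once the block-wise formulas of \cite{MR694606} are in place, but one must verify termwise non-negativity rather than just a total inequality, since we need a characterization of equality rather than a bound.
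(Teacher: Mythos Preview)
Your approach is correct and broadly parallels the paper's. The chief difference is organizational: rather than inspecting the Kraft--Procesi normal forms directly, the paper embeds $\cN_{\VV}$ as the degree-$1$ piece of a $\bZ/4\bZ$-grading on $\mathfrak{gl}(V_1\oplus V_2)$ (via an order-$4$ automorphism $\theta$), applies the graded Jacobson--Morozov theorem, and reinterprets orbits as $\widetilde{\SL}_2(\bC)=\SL_2(\bC)\rtimes\bZ/2\bZ$-representations on $V_1\oplus V_2$ equipped with a $\theta$-twisted invariant form. The form then descends to the multiplicity spaces, so that $\Stab_{G_1\times G_2}(\Gamma)$ is automatically a product of isometry groups of those spaces: $\rO(M_{x,y})$ for $(x,y)\in\oddeven_\gamma$, $\Sp(M_{x,y})$ for $(x,y)\in\evenodd_\gamma$, and $\GL(M^+_{k,k})$ for the $(k,k)^\pm$ pairs. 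This makes the component group and the maps $\Delta_i$ drop out without case analysis.

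One imprecision in your sketch: the parenthetical ``coming from $\O(1)\setminus\SO(1)$'' suggests each individual $(\mathrm{odd},\mathrm{even})$ block yields its own $\bZ/2$. The correct statement is that all copies of a fixed $(x,y)\in\oddeven_\gamma$ together contribute a single factor $\rO(M_{x,y})$, hence a single $\bZ/2$ regardless of multiplicity. Your remark about ``permutation factors (which are themselves connected)'' does not capture this---for $\oddeven$ types the relevant automorphism group of the isotypic component is $\rO(M_{x,y})$, and the $\bZ/2$ sits there. The multiplicity-space framework makes this transparent.

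For (3), the paper simply cites the Kraft--Procesi formula $\dim\cO_\gamma=\tfrac12(\dim\mu_1(\cO_\gamma)+\dim\mu_2(\cO_\gamma)+\dim\VV-\cD_\gamma)$ with $\cD_\gamma=\sum_k\dim M_{k+1,k}\cdot\dim M_{k,k+1}$; relevance is then $\cD_\gamma=0$, and the termwise non-negativity you worry about is automatic since $\cD_\gamma$ is a sum of products of nonnegative integers.
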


Consequently, \Cref{thm:Htop} can be rephrased as follows.
\begin{cor}\label{thm:Htop com}
As a $W_1 \times W_2$-module, there is a canonical isomorphism:
\[
\HBM(\St_{\VV}, \bC) \cong \bigoplus_{(\gamma, \gamma_1, \chi_1, \gamma_2, \chi_2) \in \RQ(2m,2n)} E_{\cO_{\gamma_1}, \chi_1} \boxtimes E_{\cO_{\gamma_2}, \chi_2}.
\]
\end{cor}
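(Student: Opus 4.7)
The plan is to deduce this corollary directly from \Cref{thm:Htop} by applying the combinatorial dictionary supplied by \Cref{orthosymplectic orbits}. Since \Cref{thm:Htop} already produces a decomposition
\[
\HBM(\St_\VV,\bC)\cong \bigoplus_{(\cO,\cO_1,\cL_1,\cO_2,\cL_2)\in \cRV} E_{\cO_1,\cL_1}\boxtimes E_{\cO_2,\cL_2},
\]
the only remaining task is to relabel the indexing set $\cRV$ in terms of the combinatorial data $\RQ(2m,2n)$ and check that the two summand descriptions agree.

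First, I would use \Cref{orthosymplectic orbits}(1) to translate the orbit $\cO\subset \cN_\VV$ into the ortho-symplectic partition $\gamma\in\OSP(2m,2n)$ with $\cO=\cO_\gamma$, and note that the relevance condition $\cO\in \cR_\VV$ in part (1) of \Cref{def relevant orthsymplectic} is equivalent, by part (3) of \Cref{orthosymplectic orbits}, to the combinatorial condition $\gamma\in \ROSP(2m,2n)$. Next, part (2) of \Cref{orthosymplectic orbits} identifies the nilpotent orbits $\cO_i=\mu_i(\cO)\subset\cN_i$ with the orbits $\cO_{\gamma_i}$ attached to $(\gamma_1,\gamma_2)=\mu^\dagger(\gamma)\in \cP^+(2m)\times \cP^-(2n)$.

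To translate the equivariant local systems: for a representative $e\in \cO_\gamma$ with $\mu_i(e)=e_i$, the canonical isomorphisms $A_{e_i}\cong \cS_{\gamma_i}$ recalled just before \Cref{defn orth-symplectic partitions} convert characters $\chi_i\in\Irr(A_{e_i})$ into elements $\chi_i\in\Irr(\cS_{\gamma_i})$, so the local system $\cL_i$ is encoded by $\chi_i$ and the Springer representation $E_{\cO_i,\cL_i}$ becomes $E_{\cO_{\gamma_i},\chi_i}$. Finally, part (2) of \Cref{orthosymplectic orbits} states that, under the identification $A_e\cong \cS_\gamma$, the component-group maps $\Delta_i$ of \Cref{eq moment map component group} agree with the combinatorial maps $\Delta^\dagger_i$ of \Cref{map component group}. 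Hence the relevance condition \eqref{eq relevant character} on characters, namely $(\chi_1|_{A_e})\cdot (\chi_2|_{A_e})=1$, becomes $(\chi_1\otimes \chi_2)\circ \Delta^\dagger=1$ in $\Irr(\cS_\gamma)$, which is exactly the condition defining $\RQ(2m,2n)$.

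Assembling these translations yields a bijection $\cRV\xrightarrow{\sim}\RQ(2m,2n)$ that matches summands termwise, and the corollary follows. There is no substantive obstacle here: the whole argument is a bookkeeping exercise relying on the structural results already packaged in \Cref{orthosymplectic orbits}, whose actual content (the nontrivial part being the identification of component groups and their moment-map homomorphisms with the combinatorial $\cS_\gamma$ and $\Delta^\dagger$) is deferred to \Cref{Sec Orth-symplectic}.
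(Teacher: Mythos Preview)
Your proposal is correct and matches the paper's approach exactly: the corollary is stated immediately after \Cref{orthosymplectic orbits} with the sentence ``Consequently, \Cref{thm:Htop} can be rephrased as follows,'' and no further proof is given. You have simply spelled out the bookkeeping that the word ``consequently'' encodes, namely that part~(3) of \Cref{orthosymplectic orbits} gives a bijection $\cRV\xrightarrow{\sim}\RQ(2m,2n)$ matching the summands of \Cref{thm:Htop} termwise.
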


\section{Geometrization of Hecke modules from spherical varieties}
\label{Sec. geom gen}

\subsection{Generalities on geometrization by mixed sheaves}\label{Sec. Cat gen}
In this subsection, we work over $\bF_q$ and sheaves are constructible sheaves in the \'etale topology with $\Qlbar$-coefficients. Let $k=\ov\bF_q$. Let $X$ be a scheme of finite type over $\bF_q$. Let $B$ be an algebraic group over $\bF_q$ acting on $X$ with finitely many {\em geometric} orbits $\{X_\a\}_{\a\in I}$ (i.e., orbits of $B_k=B\otimes_{\bF_q}k$ acting on $X_k=X\otimes_{\bF_q}k$). Define a partial order on $I$ so that $\a\le \b$ iff $X_\a\subset \ov X_\b$.

We make the following simplifying assumption:
\begin{equation}\label{orbit assumption}
    \mbox{Each geometric orbit $X_\a$ contains an $\bF_q$-point with connected stabilizer.}
\end{equation} 
This assumption implies that each $X_\a$ is defined over $\bF_q$, and that $B(\bF_q)$ acts transitively on $X_\a(\bF_q)$.  We choose a point $x_\a\in X_\a(\bF_q)$.

Let $D^b_{B}(X)$ be the $B$-equivariant bounded derived category of $\Qlbar$-complexes on $X$.  We also have the similarly defined derived category $D^b_{B_k}(X_k)$ for the base-changed situation over $k$. We have the pullback functor 
\begin{equation*}
    \b: D^b_{B}(X)\to D^b_{B_k}(X_k).
\end{equation*}
We choose a square root $\sqrt{q}$ of $q$ in $\Ql$. Let $\Ql(\half)$ be the corresponding square root of Tate sheaf on $\Spec(\bF_q)$. For ease of notation, given an object \( \sF \in D_{B}^b(X) \), we write
\[
\sF\braket{n} \coloneqq \sF[n]\left(\tfrac{n}{2}\right),
\]
where \( \sF[n] \) denotes the cohomological shift and \( \sF\!\left(\tfrac{n}{2}\right) =\sF \otimes \Ql\left(\tfrac{n}{2}\right) \) denotes the \( n/2 \)-th Tate twist.

For $\a\in I$, let $j_\a: X_\a\hookrightarrow X$ be the inclusion. Let $d_\a=\dim X_\a$. Let 
\begin{equation}\label{Define Deltaa}
    \Delta_\a=j_{\a,!}(\underline{\Ql} )\braket{d_\a}
\quad\mbox{and} \quad \IC_\a=j_{\a,!*}(\underline{\Ql})\braket{d_\a}
\end{equation}
be the standard perverse sheaf and the IC sheaf constructed from the shifted and twisted constant sheaves on $X_\a$.

For $\sF\in D^b_{B}(X)$, its stalk $\sF_{x_\a}$ carries an action of the geometric Frobenius $\Frob\in \Gal(\ov\bF_q/\bF_q)$. Fix an isomorphism of fields $\io: \Qlbar\cong\bC$. We say an element $a\in \Qlbar$ has $\io$-weight $m\in \ZZ$ if $|\io(a)|=q^{m/2}$.
\begin{defn} An object $\sF\in D^b_{B}(X)$ is called
    \begin{enumerate}
        \item {\em mixed} if the eigenvalues of $\Frob$ on $H^i\sF_{x_\a}$ have integer $\io$-weights, for all $\a\in I$ and $i\in \ZZ$.
        \item  {\em Tate} if the eigenvalues of $\Frob$ on $H^i\sF_{x_\a}$ are inside  $\{q^{m/2}; m\in \ZZ\}\subset \Qlbar$, for all $\a\in I$ and $i\in \ZZ$.

        \item  {\em $*$-pure of weight 0} if the the eigenvalues of $\Frob$ on $H^i\sF_{x_\a}$ have  $\io$-weight $i$, for all $\a\in I$ and $i\in \ZZ$.

        \item {\em $*$-even} if $H^{i}\sF_{x_\a}=0$ for all odd $i$ and $\a\in I$; {\em $*$-odd} if $H^{i}\sF_{x_\a}=0$ for all odd $i$ and $\a\in I$. We say $\sF$ $*$-parity with parity $\e\in \bZ/2\bZ$ to mean that $\sF$ is $*$-even if $\e=0\mod 2$ and $*$-odd if $\e=1\mod 2$.
    \end{enumerate}
\end{defn}
Because of the transitive $B(\bF_q)$-action on each $X_\a(\bF_q)$, the above definitions are independent of the choice of the point $x_\a\in X_\a(\bF_q)$.
Let $D^{\mix}_{B}(X)\subset D^b_{m, B}(X)$ be the full subcategory of mixed objects; let   $D^{\Tate}_{B}(X)\subset D^b_{m, B}(X)$ be the full subcategory of Tate objects.

For any commutative ring $A$, consider the free $A$-module $A[\un{B\bs X}]$ over the geometric orbit set $\un{B\bs X}$ with a basis $\{\one_\a\}_{\a\in I}$ .
By Assumption \eqref{orbit assumption},
We may identify  $I = \un{B\bs X}$ with $B(\bF_q)\bs X(\bF_q)$, and thus identify $A[\un{B\bs X}]$ with $B(\bF_q)$-invariant $A$-valued functions on $X(\bF_q)$ by identifying  $\one_\alpha$ with the characteristic function of $X_\alpha(\bF_q)$.

Let $R=\bZ[v,v^{-1}]$. 
We now construct a commutative diagram
\begin{equation}\label{diag cat}
\begin{tikzcd}[ampersand replacement=\&,column sep=large, row sep=large]
K_0(D^b_B(X))
  \arrow[r, "\phi"]
\& \overline{\mathbb{Q}}_\ell [\underline{B \backslash X}] \makebox[0em][l] {$\displaystyle\  =\overline{\mathbb{Q}}_\ell [X(\mathbb{F}_q)]^{B(\mathbb{F}_q)} $}\\
K_0(D^{\Tate}_B(X))
  \arrow[u]
  \arrow[d]
\\
K_0(D^{\mix}_B(X))
  \arrow[r, "\operatorname{ch}"]
  \arrow[d]
\& R[\underline{B\backslash X}]
  \arrow[uu, "v = \sqrt{q}"']
  \arrow[d, "v=1"]
\\
K_0(D^b_{B_k}(X_k))
  \arrow[r, "\chi"]
\& \mathbb{Z}[\underline{B\backslash X}]
\end{tikzcd}
\end{equation}
Here the maps on the left side and induced by the natural functors between sheaf categories (such as the pullback from $X$ to $X_k$); the arrows on the right are the extension of scalars along the specialization maps $R=\bZ[v,v^{-1}]\to \Qlbar$ (sending $v$ to $\sqrt{q}\in \Qlbar$) and  $R=\bZ[v,v^{-1}]\to \bZ$ (sending $v$ to $1$).

We explain the horizontal maps $\phi,\ch$ and $\chi$. 
\begin{enumerate}
    \item The map $\phi$ is the sheaf-to-function map. It sends the class of $\sF\in D^b_B(X)$ to
    \begin{equation}\label{phi}    \phi(\sF)=\sum_{\a\in I}\left(\sum_j (-1)^j \tr(\Frob, H^j(\sF_{x_\a}))\right)\one_{\a}.
    \end{equation}

    \item The map $\chi$ is the Euler characteristic map. It sends the class of $\sF\in D^b_{B_k}(X_k)$ to
    \begin{equation}\label{chi}     \chi(\sF)=\sum_{\a\in I}\left(\sum_{j}(-1)^j \dim H^j(\sF_{x_\a})\right)\one_{\a}.
    \end{equation}

    \item The map $\ch$ is the weight polynomial map. For $\sF\in D^{\mix}_{B}(X)$, $\a\in I$ and $j\in \ZZ$,  $H^j(\sF_{x_\a})$ has a weight grading
    \begin{equation*}       H^j(\sF_{x_\a})=\oplus_{i\in \ZZ}\Gr^{\mathrm{W}}_i(H^j(\sF_{x_\a}))
    \end{equation*}
    where $\Gr^{\mathrm{W}}_i(H^j(\sF_{x_\a}))$ is the direct sum of generalized eigenspaces of $H^j(\sF_{x_\a})$ under $\Frob$ with eigenvalues of $\io$-weight $i$. Then $\ch$ sends the class of $\sF$ to 
    \begin{equation}\label{ch}
        \ch(\sF)=\sum_{\alpha\in I} \left(\sum_{i}\left(\sum_{j}(-1)^j\dim  \Gr^{\mathrm{W}}_i H^j(\sF_{x_\a}) \right)v^{i}\right) \one_\alpha.
    \end{equation}    
\end{enumerate}
It is straightforward to check that the diagram \eqref{diag cat} is commutative.

The following properties of $\ch$ are also easy to check:
\begin{align*}
\ch(j_{\a!}\Qlbar) &= \one_\a,\\
\ch(\sF(1/2)) &= v^{-1} \ch(\sF), \\
\ch(\sF[1]) &= - \ch(\sF),\\
\ch(\sF\braket{1}) &= - v^{-1} \ch(\sF). 
\end{align*}
Therefore, we have $\ch(\Delta_\a)=(-v)^{-d_\alpha} \one_{\a}$. 
The Grothendieck groups $K_0(D^\Tate_B(X)), K_0(D^\mix_B(X))$ and $K_0(D^b_B(X))$ carry $R$-module structures: $v\in R$ acts by $\sF\mapsto \sF(-1/2)$. Then the map $\ch$ is $R$-linear. 
\begin{warning} 
Although the sheaf-to-function map is also defined for mixed sheaves, it only agrees with the composition of $\ch$ and specialization at $v = \sqrt{q}$ when restricted to Tate objects as \eqref{diag cat}  indicates. 
\end{warning}

\begin{lemma}\label{Lem. ch isom}
    \begin{enumerate}
        \item The map $\chi: K_0(D^b_{B_k}(X_k))\to \bZ[\un{B\bs X}]$ is an isomorphism.
        \item The restriction of the map $\ch$ to Tate objects
        \begin{eqnarray*}
            \ch^\Tate: K_0(D^\Tate_B(X))\to R[\un{B\bs X}]
        \end{eqnarray*}
        is an isomorphism.
    \end{enumerate}
\end{lemma}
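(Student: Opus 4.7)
The plan is to compute both maps on the standard classes $[\Delta_\a]$ and then to show that these classes freely generate the relevant Grothendieck groups.

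First, I would evaluate $\chi$ and $\ch$ on $[\Delta_\a]$. Since $\Delta_\a = j_{\a!}\Ql\braket{d_\a}$, its stalk at $x_\a$ equals $\Ql[d_\a](d_\a/2)$ (pure of weight $-d_\a$, concentrated in degree $-d_\a$), and its stalk at $x_\b$ for $\b \neq \a$ vanishes. Definitions \eqref{chi} and \eqref{ch} then give
\[
\chi([\Delta_\a]) = (-1)^{d_\a}\one_\a, \qquad \ch([\Delta_\a]) = (-v)^{-d_\a}\one_\a.
\]
So $\{(-1)^{d_\a}\one_\a\}_{\a \in I}$ is a $\bZ$-basis of $\bZ[\un{B\bs X}]$, and $\{(-v)^{-d_\a}\one_\a\}_{\a\in I}$ is a free $R$-basis of $R[\un{B\bs X}]$.

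The main step is to show that $\{[\Delta_\a]\}_{\a \in I}$ generates $K_0(D^b_{B_k}(X_k))$ as a $\bZ$-module and $K_0(D^{\Tate}_B(X))$ as an $R$-module. I would proceed by induction on the closure order on $I$: picking an orbit $X_\b$ that is open in a $B$-stable closed subscheme $Z$ and writing $i$ for the closed embedding of $Z \setminus X_\b$ into $Z$, the standard recollement triangle
\[
j_{\b!}j_\b^*\sF \longrightarrow \sF \longrightarrow i_* i^* \sF \longrightarrow
\]
gives $[\sF] = [j_{\b!}j_\b^*\sF] + [i_* i^*\sF]$ in $K_0$. The second summand lies in the span of $\{[\Delta_\a] : X_\a \subset Z\setminus X_\b\}$ by induction on $\dim Z$. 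For the first summand, Assumption \eqref{orbit assumption} makes the stabilizer $H_\b$ of $x_\b$ connected, so the induction equivalence identifies $D^b_{B_k}(X_{\b,k})$ with $D^b_{H_{\b,k}}(\pt)$ and $D^{\Tate}_B(X_\b)$ with $D^{\Tate}_{H_\b}(\pt)$. Because $H_\b$ is connected, the only simple constructible equivariant sheaf on $\pt$ is the constant sheaf $\Ql$, so passing to Grothendieck groups of the hearts yields $K_0 = \bZ$ in the first case and $K_0 = R$ in the second, each generated by $[\Ql]$. Consequently $[j_\b^*\sF]$ is a scalar multiple of $[\Ql]$ on $X_\b$, and $[j_{\b!}j_\b^*\sF]$ is the same scalar multiple of $[\Delta_\b]$.

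Once generation is established, define $F : R[\un{B\bs X}] \to K_0(D^{\Tate}_B(X))$ by $F(\one_\a) = (-v)^{d_\a}[\Delta_\a]$. Then $F$ is surjective by the generation step, and $\ch^{\Tate}\circ F = \mathrm{id}$ by the first paragraph, so $F$ is also injective; hence both $F$ and $\ch^{\Tate}$ are isomorphisms. The argument for $\chi$ is entirely parallel. I expect the main subtle point to be the identification of the Grothendieck groups on a single orbit: one must argue that simple $B$-equivariant (resp.\ Tate) constructible sheaves on $X_\b$ are exhausted by shifts and twists of the constant sheaf, which is exactly where the connected-stabilizer hypothesis \eqref{orbit assumption} enters. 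Without it, $\pi_0(H_\b)$ would contribute additional local systems on $X_\b$ and the generating set $\{[\Delta_\a]\}$ would have to be enlarged accordingly.
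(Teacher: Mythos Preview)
Your proposal is correct and follows essentially the same approach as the paper: decompose any class over the orbit stratification, use the connected-stabilizer hypothesis to reduce each orbit to the point (where $K_0$ is $\bZ$ or $R$), and observe that the coefficients recovered are exactly those of $\chi$ or $\ch$. The only cosmetic difference is that the paper works with $j_{\a!}\Qlbar$ (so $\chi(j_{\a!}\Qlbar)=\one_\a$ on the nose) rather than $\Delta_\a$, and compresses your inductive generation plus inverse-construction into the single identity $[\sF]=\sum_{\a}\bigl(\sum_i(-1)^i[H^i\sF_{x_\a}]\bigr)[j_{\a!}\Qlbar]$, which simultaneously gives surjectivity and injectivity.
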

\begin{proof}
    (1) Since $\chi(j_{\a!}\Qlbar)=\one_\a\in \bZ[\un{B\bs X}]$, $\chi$ is surjective. 
    
    Now if $\chi(\sF)=0$, we want to show $[\sF]=0\in K_0(D^b_{B_k}(X_k))$. Now $\sF$ is a successive extension of $j_{\a!}j_\a^*\sF$, and each  $j_\a^*\sF$ is a successive extension of its shifted cohomology sheaves (which are constant), we have in the Grothendieck group
    \begin{equation}\label{F in K}
        [\sF]=\sum_{\a\in I}\sum_{i\in \bZ}(-1)^i[j_{\a!}(H^i\sF_{x_\a})]=\sum_{\a\in I}\left(\sum_{i\in \bZ}(-1)^i\dim(H^i\sF_{x_\a}) \right)[j_{\a!}\Qlbar].
    \end{equation}
    The inner sum is the  coefficient of $\one_\a$ in $\chi(\sF)$, which is zero by assumption. Therefore $[\sF]=0\in K_0(D^b_{B_k}(X_k))$.

    (2) The argument is the same. The only modification is that \eqref{F in K} should be replaced by
    \begin{equation}\label{F in K Tate}
        [\sF]=\sum_{\a\in I}\left(\sum_{i\in \bZ}(-1)^i[H^i\sF_{x_\a}]\right)[j_{\a!}\Qlbar].
    \end{equation}
    Here we understand the inner sum as an element in $K_0(D^\Tate(\pt))$. It is easy to check that $\ch$ induces an isomorphism $K_0(D^\Tate(\pt))\cong R$, therefore the inner sum in \eqref{F in K Tate} is the coefficient of $\one_\a$ in $\ch(\sF)$. 
\end{proof}
For each \( \alpha \in \underline{B \backslash X} \), we define
\begin{equation}\label{define Kazhan-Lustig}
    C'_\alpha := (-1)^{-d_\a} \, \ch(\IC_\a) \in R[\un{B\bs X}].
\end{equation}
The element \( C'_\a \) serves as an analogue of the Kazhdan--Lusztig basis in \( R[\underline{B \backslash X}] \).

\subsection{Geometrization of the Hecke algebra}\label{Geo hecke algebra}

Let $G$ be a connected split reductive group over $\bF_q$ with flag variety $\cB$. Choose a Borel subgroup $B\subset G$ so that $\cB=G/B$. The left $B$-action on $\cB$ satisfies the assumption \eqref{orbit assumption}. We then apply the general discussion in \Cref{Sec. Cat gen} to the $B$-action on $X=\cB$ to obtain  various versions of the Hecke category
\begin{equation*}
    \xymatrix{ \cH^{\Tate}=D^{\Tate}_{B}(\cB) \ar@{^{(}->}[r] & \cH^{\mix}=D^{\mix}_{B}(\cB) \ar@{^{(}->}[r] &\cH=D^{b}_{B}(\cB) \ar@{->}[r] &\cH^k=D^b_{B_{k}}(\cB_{k}).}
\end{equation*}
Now $\cH, \cH^{\mix}$ and $\cH^k$ carry canonical monoidal structures by convolution, so that their Grothendieck groups carry canonical ring structures. 

The $G$-orbits on $\cB\times \cB$, or equivalently, the $B$-orbits on $\cB$, are indexed by the abstract Weyl group $W$ of $G$. The length function $l: W\to \ZZ_{\ge0}$ is defined so that $l(w)$ is the dimension of the $B$-orbit $\cO_w\subset \cB$. Let $S=l^{-1}(1)\subset W$. Then $(W,S)$ is a Coxeter system with $S$ as the set of simple reflections.

We have the standard and IC sheaves in $\cH$
\[
\Delta_w=j_{w,!}(\underline{\Ql} )\braket{l(w)}\quad\mbox{and} \quad \IC_w=j_{w,!*}(\underline{\Ql} )\braket{l(w)}.
\]

The following facts about the Hecke categories are well-known (cf. \cite[\S 4]{BGS} and \cite[\S 7.5]{MR4337423}).
\begin{prop}
\begin{enumerate}
    \item The full subcategory $\cH^{\Tate} \subset\cH$ is closed under convolution. In particular, it inherits a monoidal structure from $\cH$, and $K_0(\cH^\Tate)$ carries a canonical ring structure.
    \item For all $w\in W$, we have $\IC_w$ is Tate, $*$-pure of weight zero, and $*$-parity with parity $l(w)$. In particular, $\IC_w\in \cH^{\Tate}$.
\end{enumerate}
\end{prop}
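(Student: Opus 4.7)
The plan is to prove (2) first, since it contains the heart of the matter, and then deduce (1) formally from purity of iterated convolutions and the decomposition theorem.

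For (2), the main device is the Bott--Samelson resolution. Fix $w\in W$ and a reduced expression $\underline{w}=(s_1,\dots,s_n)$ with $n=l(w)$. Form the iterated convolution $\sR_{\underline{w}}:=\IC_{s_1}\ast\cdots\ast\IC_{s_n}$. Since for a simple reflection $s$ one has $\ov{\cO_s}\simeq \bP^1$ and $\IC_s$ is the shifted constant sheaf on this $\bP^1$, a direct unwinding identifies $\sR_{\underline{w}}$ with $(\pi_{\underline{w}})_*\underline{\Qlbar}\braket{n}$, where $\pi_{\underline{w}}\colon BS(\underline{w})\to \cB$ is the proper Bott--Samelson map from the tower of $\bP^1$-bundles. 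By induction on $n$ (using the projection formula and proper base change for each $\bP^1$-bundle), $\sR_{\underline{w}}$ is $\ast$-pure of weight zero, Tate, and $\ast$-parity with parity $n$. By the Decomposition Theorem applied to the proper map from the smooth $BS(\underline{w})$ it splits as a direct sum of shifts and Tate twists of simple perverse IC sheaves $\IC_v\braket{k}$ for various $v\leq w$; moreover, a standard argument (e.g.\ restricting to the open stratum $\cO_w$) shows that $\IC_w$ occurs with multiplicity one and that all other summands involve $v<w$. Therefore the statement for $\IC_w$ follows by induction on $l(w)$: each $\IC_v$ with $v<w$ is already known to be Tate, pure, and of the correct parity by the inductive hypothesis, and $\IC_w$ inherits these properties from $\sR_{\underline{w}}$ by taking complements.

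For (1), note first that $\cH^{\Tate}$ is stable under $\braket{1}$, under extensions, and under direct summands, because these preserve the pointwise condition on Frobenius eigenvalues. Since convolution is bi-exact, commutes with shifts and Tate twists, and $\cH^{\mix}$ is generated (as a triangulated category closed under summands) by the $\IC_w\braket{k}$, it is enough to verify that $\IC_v\ast\IC_w$ is Tate for arbitrary $v,w\in W$. But this convolution is the proper pushforward of the pure complex $\IC_v\boxtimes_B\IC_w$ along $G\times^B\cB\to\cB$; by Deligne's theorem it is pure of weight zero, and the Decomposition Theorem writes it as a direct sum of shifts and Tate twists of $\IC_u$'s, each of which is Tate by part~(2). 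Hence $\IC_v\ast\IC_w\in\cH^{\Tate}$, which gives the closedness of $\cH^{\Tate}$ under convolution and therefore the monoidal structure on $\cH^{\Tate}$ and the ring structure on $K_0(\cH^{\Tate})$.

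The main technical obstacle, and the step where one has to actually do work rather than invoke general nonsense, is verifying the three properties (Tate, pure of weight zero, parity $l(w)$) for Bott--Samelson pushforwards. This reduces inductively to the one-step $\bP^1$-bundle $p\colon \cB\to\cB/P_s$ for each simple reflection $s$, where $p_*\underline{\Qlbar}\braket{1}$ is a direct sum of two shifted Tate sheaves differing by $\braket{2}$; iterating gives the required properties. Once this input is in hand, both (1) and (2) fall into place as outlined.
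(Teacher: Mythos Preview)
The paper does not prove this proposition; it cites \cite{BGS} and \cite{MR4337423} as references. Your approach via Bott--Samelson resolutions is the standard one and is essentially correct for part~(2): the iterated $\bP^1$-bundle computation shows that $\sR_{\underline w}$ has Tate, pointwise-pure, and parity stalks, and $\IC_w$ occurs as a direct summand with multiplicity space exactly $\Qlbar$ (as you note, by restricting to the open cell $\cO_w$, where $\pi_{\underline w}$ is an isomorphism). Since Tate, $*$-purity, and $*$-parity are all preserved under direct summands, $\IC_w$ inherits all three. The induction on $l(w)$ you invoke is actually not needed here: the summand argument suffices on its own.

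Your argument for (1) contains a gap. You assert that the decomposition theorem writes $\IC_v\ast\IC_w$ as a direct sum of shifts and \emph{Tate twists} of $\IC_u$'s, but purity and the decomposition theorem together only give summands of the form $\IC_u\otimes L_u$ with $L_u$ a one-dimensional Frobenius module of the correct weight --- there is no a priori reason its eigenvalue should lie in $\{q^{j/2}\}$. (This very subtlety is flagged in a footnote later in the paper, in the proof of Proposition~\ref{Prop ICs preserves Tate}.) The cleanest fix bypasses the decomposition of $\IC_v\ast\IC_w$ entirely: since $\IC_v$ and $\IC_w$ are direct summands (in the mixed category) of $\sR_{\underline v}$ and $\sR_{\underline w}$ by your own argument for~(2), the convolution $\IC_v\ast\IC_w$ is a direct summand of $\sR_{\underline v}\ast\sR_{\underline w}=\sR_{\underline v\cdot\underline w}$, a Bott--Samelson for the concatenated (possibly non-reduced) word. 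The $\bP^1$-tower stalk computation applies verbatim to non-reduced words, so $\sR_{\underline v\cdot\underline w}$ is Tate, and hence so is its summand $\IC_v\ast\IC_w$. A separate slip: you write ``$\cH^{\mix}$ is generated \dots\ by the $\IC_w\langle k\rangle$,'' but $\cH^{\mix}$ allows arbitrary weight-one twists, not just half-integral Tate twists; you mean $\cH^{\Tate}$, and even there the natural generators via d\'evissage are the $\Delta_y(j/2)$ rather than the $\IC_w\langle k\rangle$.
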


Recall that the generic Hecke algebra $\sfH$ of the Coxeter system $(W,S)$ is defined 
to be the unique associative $R$-algebra with $R$-basis $\set{\sfT_{w}  |  w\in W}$
  such that
\begin{enumerate}[label=(\alph*),wide=0pt]
\item $(\sfT_{s}+1)(\sfT_{s}-v^2)=0$ for $s\in S$,
\item $\sfT_{w_1} \sfT_{w_2} = \sfT_{w_1w_2}$ if 
$l(w_1w_2) = l(w_1)+l(w_2)$. 
\end{enumerate}

We identify $R[\un{B\bs \cB}]$ with the underlying $R$-module of the generic Hecke algebra $\sfH$ so that $\one_w$ corresponds to the standard basis $\sfT_w$. Then we have 
\begin{equation}\label{p:ch Hecke}
        \ch(\D_w)=(-v)^{-l(w)}\sfT_w.
\end{equation}
We may view $\ch$ as a map
\begin{equation*}
    \ch: K_0(\cH^\mix)\to \sfH.
\end{equation*}
\trivial[h]{
If we following \Cref{ss:KLWCell} to define the Kazhdan-Lusztig polynomial $P_{y,w}$. Then we have 
\[
C'_w= v^{-l(w)}\sum_{y\le w}P_{y,w}(-v)\sfT_y.
\]
In this case, $\IC_w$ is $*$-parity of $l(w)$, hence $P_{y,w}(v)$ is an even polynomial. I think in Kazhdan-Lusztig's paper, the Kazhdan-Lusztig polynomial $P'_{y,w}$ (I use this for distinguish) is defined to be 
\[
P'_{y,w}(v)= P_{y,w}(\sqrt v)
\]
Therefore, the above equation rewrite as 
\[
C'_w= v^{-l(w)}\sum_{y\le w}P'_{y,w}(v^2)\sfT_y.
\]
Why the parity is important here. 
}

Similarly, we identify $\bZ[\un{B\bs \cB}]$ with $\bZ[W]$ and identify $\Qlbar[\un{B\bs \cB}]$ with the Iwahori-Hecke algebra $H$ (using also the isomorphism $\io:\Qlbar\cong \bC$). Thus we may view $\chi$ and $\phi$ as maps
\begin{equation}\label{hecke algebra iso}
\begin{split}
    \chi: K_0(\cH^k)\to \bZ[W],\\
    \phi: K_0(\cH)\to H.
\end{split}
\end{equation}

\begin{lemma}
    The maps $\chi, \ch$ and $\phi$ above are ring homomorphisms. 
\end{lemma}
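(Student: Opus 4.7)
The plan is to check compatibility with convolution on a set of generators, where both the sheaf-theoretic side and the algebraic side are governed by the Coxeter presentation. Recall that convolution in the Hecke category is defined through the diagram
\[
\cB \xleftarrow{\,p\,} G \times^B \cB \xrightarrow{\,m\,} \cB,
\]
where $m$ is proper. For the sheaf-to-function map $\phi$, multiplicativity under convolution is precisely the Grothendieck--Lefschetz trace formula applied to $m_!$; this yields $\phi(\sF \ast \sG) = \phi(\sF) \ast \phi(\sG)$ in $H$ via the standard computation on $B(\bF_q)$-double cosets. For $\chi$, the analogous statement holds over $k$ because Euler characteristics of stalks are multiplicative under external tensor product and compatible with proper pushforward (no Frobenius trace needed). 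For $\ch$, one uses the additional fact that under proper pushforward of mixed sheaves, Frobenius eigenvalues (hence $\iota$-weights) of stalks combine additively under tensor product, so the $R$-valued weight polynomial is multiplicative.

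Concretely, I would reduce to checking the statement on the classes of the standard objects $[\D_w]$, which form an $R$-basis of $K_0(\cH^{\Tate})$ (and similarly over $\bZ$ and $\Qlbar$ after specialization). First, when $l(w w') = l(w) + l(w')$, a standard fact about Bott--Samelson style resolutions gives $\D_w \ast \D_{w'} \cong \D_{ww'}$ in the Hecke category, matching the braid relation $\sfT_w \sfT_{w'} = \sfT_{ww'}$ on the Hecke algebra side by \eqref{p:ch Hecke}. Second, for a simple reflection $s \in S$, the convolution $\D_s \ast \D_s$ fits in a distinguished triangle whose class in the Grothendieck group expresses exactly the quadratic relation $(\sfT_s + 1)(\sfT_s - v^2) = 0$ after applying $\ch$. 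These two families of relations are the defining relations of $\sfH$, so $\ch$ is a ring homomorphism; specializing $v = \sqrt{q}$ and $v = 1$ (and using \Cref{Lem. ch isom} together with commutativity of diagram \eqref{diag cat}) gives the corresponding statements for $\phi$ and $\chi$.

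No serious obstacle is expected: the content is the standard compatibility between sheaf convolution and algebraic convolution, combined with bookkeeping of shifts and Tate twists to match the normalizations $\ch(\D_w) = (-v)^{-l(w)} \sfT_w$. The one point that merits care is verifying that convolution preserves the subcategories $\cH^{\Tate}$ and $\cH^{\mix}$, so that the restricted maps land in the stated Hecke algebras; this is already guaranteed by the preceding proposition, so the argument goes through directly.
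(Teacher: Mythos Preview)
Your approach is essentially the same as the paper's: the direct argument for $\phi$ via the trace formula, the reduction for $\ch$ to checking $\D_s \star \D_w$ against the braid and quadratic relations of $\sfH$, and deducing $\chi$ from $\ch$ by specialization at $v=1$ using that $K_0(\cH^{\mix})$ surjects onto $K_0(\cH^k)$.

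One small point to clean up: at the end you also propose deducing $\phi$ from $\ch$ by specializing at $v=\sqrt{q}$, but this only shows that $\phi$ restricted to the image of $K_0(\cH^{\Tate})$ in $K_0(\cH)$ is multiplicative, and that inclusion is not surjective (arbitrary Weil twists are allowed in $\cH$). Since you already have the direct Grothendieck--Lefschetz argument for $\phi$ on all of $K_0(\cH)$, this is harmless redundancy rather than a gap. The paper likewise treats $\phi$ directly and only uses the specialization trick for $\chi$.
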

\begin{proof}
    For $\phi$, it follows from the functoriality of the sheaf-to-function correspondence with respect to $\ot$, $*$-pullback and $!$ pushforward.

    For $\ch$, using that the classes of $\D_w(\l)$ (where $(\l)$ means twisting the Weil structure by a scalar $\l\in \Qlbar^\times$) generate the abelian group $K_0(\cH^\mix)$ for various $w\in W$ and $\l\in \Qlbar^\times$, it suffices to check that
    \begin{equation}\label{ch std}
        \ch(\D_{w_1}\star \D_{w_2})=\ch(\D_{w_1})\ch(\D_{w_2})
    \end{equation}
    for $w_1,w_2\in W$. Using a reduced word for $w_1$, it suffices to prove \eqref{ch std} for $w_1=s$ a simple reflection, which by using \Cref{p:ch Hecke}, is equivalent to
    \begin{equation}\label{ch sw}
        \ch(\D_{s}\star \D_{w_2})=(-v)^{-1-l(w_2)}\sfT_{s}\sfT_{w_2}.
    \end{equation}
    If $l(sw_2)=l(w_2)+1$, we have $\D_s\star\D_{w_2}\cong \D_{sw_2}$, hence both sides of of \eqref{ch sw} are equal to $(-v)^{-l(sw_2)}\sfT_{sw_2}$.

    If $l(sw_2)=l(w_2)-1$, we write $w_2=sw$, so that $\D_{w_2}\cong \D_s\star\D_w$. The left side of \eqref{ch sw} is then $\ch(\D_s\star\D_s\star \D_w)$. Using the relation $[\IC_s]=[\D_s]-[\D_1(\half)]$ in  $K_0(\cH^\mix)$, and the obvious isomorphism $\IC_s\star\IC_s\cong \IC_s\langle{1\rangle}\oplus \IC_s\langle{-1\rangle}$, we see that $[\D_s\star\D_s]=[\D_s(\half)]-[\D_s(-\half
    )]+[\D_1]\in K_0(\cH^\mix)$. This implies
    \begin{equation*}
        \ch(\D_s\star\D_s\star \D_w)=(v^{-1}-v)\ch(\D_s\star\D_w)+\ch(\D_w).
    \end{equation*}
    Using the case already proved (i.e., \eqref{ch sw} holds for $\ch(\D_s\star\D_w)$), we see that the right side above is $(-v)^{-l(sw)}(v^{-1}-v)\sfT_{sw}+(-v)^{-l(w)}\sfT_w$, which by a direct calculation is equal to the right side of \eqref{ch std}. 
    
    To show that $\chi$ is a ring homomorphism, we use the lower square of the diagram \eqref{diag cat}. Since the natural map $K_0(\cH^{\mix})\to K_0(\cH^{k})$ is a surjective ring homomorphism, and $\ch$ is known to be a ring homomorphism, so is $\chi$.

\end{proof}

Combined with  \Cref{Lem. ch isom}, we conclude:
\begin{cor}\label{Cor. Hk ring isom}
\begin{enumerate}
    \item The map 
    \begin{equation*}
        \ch^\Tate: K_0(\cH^\Tate)\to \sfH
    \end{equation*}
    is an isomorphism of $R$-algebras.
 
    \item The map 
    \begin{equation*}
        \chi: K_0(\cH^k)\to \bZ[W]
    \end{equation*}
    is a ring isomorphism.
\end{enumerate} 
\end{cor}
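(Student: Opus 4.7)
The plan is short: the corollary is essentially the combination of two facts that have already been established in the preceding paragraphs, so the proof reduces to observing that a bijective ring homomorphism is a ring isomorphism.

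First I would note that in the preceding lemma we have shown that $\chi$ and $\ch$ (in particular its restriction $\ch^{\Tate}$) are ring homomorphisms with respect to the multiplicative structures coming from convolution in $\cH^k$ and $\cH^{\Tate}$, and from the Hecke algebra structures on $\bZ[W]\cong \bZ[\un{B\bs\cB}]$ and $\sfH\cong R[\un{B\bs\cB}]$. Next I would invoke \Cref{Lem. ch isom}, applied to the $B$-action on $X=\cB$: part~(1) of that lemma says $\chi$ is an isomorphism of abelian groups, and part~(2) says $\ch^{\Tate}$ is an isomorphism of $R$-modules. Combining these two inputs gives both assertions immediately, since a bijective (ring) homomorphism is automatically an isomorphism of rings (and of $R$-algebras in the Tate case, the $R$-linearity being built into the definition of $\ch$).

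There is really no hard step here; the content has been done earlier. The only minor point worth spelling out is compatibility of the identifications: one must check that the bijection $R[\un{B\bs\cB}]\cong \sfH$ sending $\one_w\mapsto \sfT_w$ (and similarly $\bZ[\un{B\bs\cB}]\cong \bZ[W]$) matches the multiplicative structures being used. For $\ch^{\Tate}$ this is ensured by \eqref{p:ch Hecke} together with the preceding computation showing $\ch(\D_{w_1}\star\D_{w_2})=\ch(\D_{w_1})\ch(\D_{w_2})$; for $\chi$ it follows by specializing $v=1$ in the commutative diagram \eqref{diag cat}, using that the vertical map $K_0(\cH^{\mix})\to K_0(\cH^k)$ is a surjective ring homomorphism. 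No further work is required.
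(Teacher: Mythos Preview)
Your proposal is correct and matches the paper's approach exactly: the corollary is stated immediately after the lemma showing $\chi,\ch,\phi$ are ring homomorphisms, with the paper simply noting ``Combined with \Cref{Lem. ch isom}, we conclude'' before stating it. Your additional remarks about compatibility of identifications are accurate elaborations but not strictly needed, as they were already handled in the proof of the preceding lemma.
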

Finally,  we note that $C'_w=(-1)^{-l(w)} \ch(\IC_w)$ defined according to \Cref{define Kazhan-Lustig} is the Kazhdan-Lusztig basis of $\sfH$ defined in \cite[Equation 1.1.c]{MR0560412}.

\subsection{Geometrization of Hecke modules from spherical varieties}\label{Sec. Hk mod}
In the set up of \Cref{Sec. Cat gen}, assume $G$ is a split reductive group over $\bF_q$, and $B\subset G$ is a Borel subgroup. Suppose the $B$-action on $X$ extends to a $G$-action. When $X$ is irreducible, the finiteness of the number of $B$-orbits implies that $X$ is a spherical $G$-variety. The assumption \eqref{orbit assumption} is still in effect.

We introduce the various categories 
\begin{equation*}
\cM^{\Tate}=D^{\Tate}_B(X)\hookrightarrow \cM^{\mix}=D^{\mix}_B(X)\hookrightarrow
\cM=D^{b}_B(X)\to\cM^k=D^{b}_{B_k}(X_k).
\end{equation*}
We also denote the various spaces $\sfM\coloneqq R[\un{B\bs X}]$ and $M\coloneqq \Qlbar[\un{B\bs X}]=\bC[\un{B\bs X}]=\bC[B(\bF_q)\backslash X(\bF_q)]$ (using the isomorphism $\io:\Qlbar\cong \bC$).

Let $A$ be a commutative ring with an element $r\in A$, we denote by 
$\sfM_{v=r,A}\coloneqq \sfM\otimes_{R} A$ the extension of scalar along the specialization map $\bZ[v,v^{-1}]\rightarrow A$ (sending $v$ to $r\in A$). Note that $\sfM_{v=r,A}$ is naturally a $\sfH_{v=r,A}\coloneqq \sfH\otimes_{R} A$-module. We identify $M=\sfM_{v=\sqrt{q},\bC}=\sfM_{v=\sqrt{q},\Qlbar}$ as $H=\sfH_{v=\sqrt{q},\bC}=\sfH_{v=\sqrt{q},\Qlbar}$ modules (using the isomorphism $\io:\Qlbar\cong \bC$).

Now $\cH, \cH^{\mix}$ and  $\cH^k$ act on $\cM, \cM^{\mix}$ and $\cM^k$ respectively by convolution by the following formula: if $\sE\in \cH$ and $\sF\in\cM$, their convolution $\sE\star\sF\in \cM$ is 
\begin{equation*}
    \sE\star\sF=a_!(p_G^*\sE\ot p_X^*\sF).
\end{equation*}
where the maps are as follows
\begin{equation*}
    \xymatrix{& B\bs G\times^B X\ar[rr]^{a:(g,x)\mapsto gx}\ar[dl]_{p_G}\ar[dr]^{p_X} && B\bs X\\
    B\bs G/B & & B\bs X}
\end{equation*}

We recall some notions of the spherical varieties from \cite[\S 3 and \S 4]{MR1324631}. Let $s\in W$ be a simple reflection, and $P_s$ be the parabolic subgroup of $G$ containing $B$ whose Levi quotient has roots $\pm\a_s$. 
Let $\widetilde a_s: P_s\times^B X\to X$ is the action map. Let $\a\in I$, and $D_{x_\a}=\widetilde a_s^{-1}(x_\a)$, which is isomorphic to $P_s/B\cong \bP^1$. Now $D_{x_\a}$ has an action of $P_{s,x_\a}=\Stab_{P_s}(x_\a)$ with finitely many orbits. The action of $P_{s,x_\a}$ on $D_{x_\a}$ factors through a quotient $Q_{x_\a}\subset \Aut(D_{x_\a})\cong \PGL_2$, in which $Q_{x_\a}$ is a spherical subgroup. According to the possibilities of $Q_{x_\a}$ we call $(s,X_\a)$
    \begin{enumerate}
        \item[\textbf {Type (G)}:] if $Q_{x_\a}=\PGL_2$. In this case  $D_{x_\a}$ is a single $Q_{x_\a}$-orbit, and $\widetilde a_s^{-1}(X_\a)$ is a single $B$-orbit.
        \item[\textbf {Type (U)}:] if $Q_{x_\a}$ is contained in a Borel subgroup of $\PGL_2$ and contains its unipotent radical. Then $D_{x_\a}$ decomposes into two $Q_{x_\a}$-orbits $D^\circ_{x_\a}\cup \{\infty_{x}\}$, where $D^\circ_{x_\a}\cong \bA^1$. Accordingly,  $\widetilde a_s^{-1}(X_\a)$  contains two $B$-orbits. 
        
        \item[\textbf {Type (T)}:] if $Q_{x_\a}$ is a maximal torus of $\PGL_2$. Then $D_{x_\a}$ decomposes into three $Q_{x_\a}$-orbits $D^\circ_{x_\a}\cup \{\infty_{x_\a}\}\cup \{0_{x_\a}\}$, where $D^\circ_{x_\a}\cong \bG_m$. Accordingly,  $\widetilde a_s^{-1}(X_\a)$  contains three $B$-orbits, one open and two closed.       
    \end{enumerate}
    
\begin{remark} The so-called type (N) case cannot occur under our assumption \eqref{orbit assumption}:  if $Q_{x_\a}$ was the normalizer of a maximal torus of $\PGL_2$, then the stabilizer of $y$ in the open $Q_{x_\a}$-orbit of $D_{x_\a}$ would be disconnected, which would imply that $\Stab_B(y)$ was disconnected. 
\end{remark}

\begin{defn}
    When $(s,X_\a)$ is of type (U) or (T), any orbit $X_{\a'}$ that intersects $D_{x_\a}$ (equivalently $X_{\a'}\subset P_s\cdot 
    X_\a$) is called a {\em $s$-companion} of $X_\a$. By convention, $X_\a$ is its own $s$-companion. We use $\b\sim_s\a$ to denote that $X_\b$ is an $s$-companion of $X_\a$. 
\end{defn}

We say  $(s,X_\a)$ is of type (U+) (resp.  (T+)) if $(s,X_\a)$ is of type (U) (resp. (T)) and $X_\a$ has larger dimension than its $s$-companions; otherwise we say $(s,X_\a)$ is of type (U-) (resp.  (T-)).
    
\begin{prop}\label{Prop ICs preserves Tate}
    The full subcategory $\cM^\Tate\subset\cM$ is stable under the convolution by $\cH^\Tate$. In particular, $K_0(\cM^\Tate)$ is a module over $K_0(\cH^\Tate)$. Via the isomorphism $\ch^\Tate: K_0(\cM^\Tate)\cong \sfM$ (by \Cref{Lem. ch isom}), $\sfM$ has a structure of an $\sfH$-module.
    
    Explicitly, the action of $\sfT_s+1\in \sfH$ (for any $s\in S$) on the standard basis of $\sfM$ is given by
    \begin{equation*}
        (\sfT_s+1)\one_\a=
        \begin{cases}
            (v^2+1)\one_\a & \mbox{$(s,X_\a)$ is of type (G),}\\
            v^2\sum_{\b\sim_s \a}\one_\b & \mbox{$(s,X_\a)$ is of type (U+),}\\

            (v^2-1)\sum_{\b\sim_s \a}\one_\b & \mbox{$(s,X_\a)$ is of type (T+),}\\
            \sum_{\b\sim_s \a}\one_\b   &\mbox{$(s,X_\a)$ is of type (U-) or (T-),}\\
        \end{cases}
    \end{equation*}
\end{prop}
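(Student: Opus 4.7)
The plan is to reduce to an explicit local computation at a simple reflection, then read off the formula in each case.

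\textbf{Stability.} The monoidal subcategory $\cH^\Tate$ is generated, under extensions, direct summands, shifts, and Tate twists, by $\{\IC_s\}_{s\in S}$: indeed, each $\IC_w$ is a direct summand of $\IC_{s_1}\star\cdots\star\IC_{s_k}$ for any reduced expression $w=s_1\cdots s_k$ (Bott--Samelson), and every Tate perverse sheaf on $\cB$ admits a filtration whose subquotients have the form $\IC_w\braket{n}$. A parallel d\'evissage on $\sF\in\cM^\Tate$ further reduces the stability assertion to the statement that $\IC_s\star\D_\a\in\cM^\Tate$ for each $s\in S$ and each orbit $X_\a$. The same computation will then yield the explicit action of $\sfT_s+1$ on $\one_\a$ via the normalizations $\sfT_s+1=-v\cdot\ch(\IC_s)$ and $\ch(\D_\a)=(-v)^{-d_\a}\one_\a$, combined with the multiplicativity $\ch(\sE\star\sF)=\ch(\sE)\cdot\ch(\sF)$.

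\textbf{Parabolic computation.} Using the action map $\widetilde a_s\colon P_s\times^B X\to X$ and the description of $\IC_s$ as a shift-and-twist of the constant sheaf on $P_s/B\cong\bP^1$, the convolution takes the form
\[
\IC_s\star\D_\a\;\cong\;\widetilde a_{s,!}\bigl(\underline{\Ql}_{P_s\times^B X_\a}\bigr)\braket{d_\a+1}.
\]
Since $\widetilde a_s$ is proper and each $s$-companion $X_\b$ of $X_\a$ is a locally closed $B$-stable stratum of $P_s\cdot X_\a$, this pushforward can be analyzed by base change along each $X_\b\hookrightarrow P_s\cdot X_\a$. The fiber over $x_\b\in X_\b$ is identified, inside $D_{x_\b}\cong\bP^1$, with the appropriate $Q_{x_\b}$-orbit; depending on the type of $(s,X_\b)$, this is one of $\bP^1$, $\bA^1$, $\Gm$, or a point, each with purely Tate cohomology. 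This already settles stability.

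\textbf{Case-by-case formulas.} In case (G), $\widetilde a_s$ restricts to a smooth proper $\bP^1$-bundle over $X_\a$, yielding $\IC_s\star\D_\a\cong\D_\a\braket{1}\oplus\D_\a\braket{-1}$; applying $-v\cdot\ch(-)$ gives $(\sfT_s+1)\one_\a=(v^2+1)\one_\a$. In case (U$+$), $X_\a$ is open in $P_s\cdot X_\a$ with $\bA^1$-fiber, and its unique smaller companion $X_\b$ is closed with a point-fiber; the localization triangle produces the coefficient $v^2$ in front of $\one_\a+\one_\b$. Cases (U$-$), (T$+$), (T$-$) are handled identically, by reading off which $Q$-orbit of $D_{x_\b}$ corresponds to each companion and computing its Borel--Moore homology; the three-orbit structure of $D_{x_\b}$ in the (T) cases is precisely what produces the sum over the (up to three) $s$-companions on the right-hand side.

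\textbf{Main obstacle.} The principal bookkeeping is to pin down the Frobenius weights---equivalently, the Tate twists---contributed by each fiber, so as to land on the sharp coefficients stated in the proposition. Here assumption \eqref{orbit assumption} is essential: connectedness of geometric stabilizers forces every $B$-orbit to have pure Tate cohomology with no nontrivial local systems appearing, so that the pushforward splits cleanly into a direct sum of shifted-and-twisted standard sheaves $\D_\b$ with integer Tate twists. Once these twists are correctly tracked, the five formulas for $(\sfT_s+1)\one_\a$ fall out by direct calculation.
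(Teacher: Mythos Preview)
Your approach is the paper's: reduce to $\IC_s$, rewrite convolution as a parabolic push-pull, and compute stalks as $H^*_c(D_{x_\beta}\cap X_\alpha)$. There is, however, one slip in the fiber analysis. You assert that the fiber over $x_\beta$ depends on the type of $(s,X_\beta)$ and, in case (U$+$), that over the closed companion $X_\beta$ it is a point. But $\widetilde a_s|_{P_s\times^B X_\alpha}\colon P_s\times^B X_\alpha\to P_s\cdot X_\alpha$ is $P_s$-equivariant with target a single $P_s$-orbit, so \emph{all} its fibers are isomorphic---to the $Q_{x_\alpha}$-orbit of the identity coset $eB$ in $D_{x_\alpha}$---and this depends only on the type of $(s,X_\alpha)$: it is $\bP^1$ for (G), $\bA^1$ for (U$+$), a point for (U$-$), $\Gm$ for (T$+$), a point for (T$-$). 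In particular, for (U$+$) the fiber is $\bA^1$ over both strata; a point-fiber over $X_\beta$ would give $v^2\one_\alpha+\one_\beta$ rather than $v^2(\one_\alpha+\one_\beta)$. With this correction the formula becomes uniform: $(\sfT_s+1)\one_\alpha=\ch\bigl(R\Gamma_c(F_\alpha)\bigr)\sum_{\beta\sim_s\alpha}\one_\beta$ where $F_\alpha$ is the common fiber.

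One more remark: the key role of assumption~\eqref{orbit assumption} is not quite what you state. Rather than ensuring ``pure Tate cohomology of orbits,'' it excludes type (N), where $Q_{x_\alpha}$ would be a torus normalizer and the closed locus in $D_{x_\alpha}$ a length-two scheme whose points could be Galois-conjugate, introducing a non-Tate quadratic twist.
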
 

\begin{proof} 
    
    Since $\cH^\Tate$ is generated by Tate twists of $\IC_w$,  for the first statement it suffices to show that $\IC_w\star(-)$ preserves $\cM^\Tate$. Since $\IC_w$ is a direct summand of a successive convolution of $\IC_s$ for simple reflections $s$ \footnote{This is true in $\cH^k$ by the decomposition theorem. For the purpose of checking $\IC_w\star \sF$ is Tate, one can work after Frobenius semisimplification, in which case $\IC_w$ becomes a direct summand of a successive extension of $\IC_s$. The same direct summand statement is indeed true in $\cH^\mix$ without Frobenius semisimplification by \cite{MR3003920}, although it is not necessarily to appeal to this stronger statement here.}, it suffices to show that $\IC_s\star(-)$ preserves $\cM^\Tate$.

    Let $a_s: B\bs  X\to P_s\bs X$ be the projection. The fiber $a_s^{-1}(x_\a)$ is $D_{x_\a}$.
    Since $\IC_s\langle -1\rangle$ is the constant sheaf on $B\bs P_s/B$, we see that for $\sF\in \cM$, \begin{equation}\label{conv ICs}
        \IC_s\star \sF\langle -1\rangle\cong  a_s^* a_{s !}\sF.
    \end{equation}
    By proper base change for the map $a_s$, the stalk of $\IC_s\star \sF$ at $x_\a$ is    
    \begin{equation}\label{stalk ICs F}
        (\IC_s\star \sF)_{x_\a}\cong R\Gamma(D_{x_\a}, \sF|_{D_{x_\a}})\langle 1 \rangle.
    \end{equation}
    We discuss each type of $(s,X_\a)$.
    For type (G), $\sF|_{D_{x_\a}}$ is geometrically constant, we have
    \begin{equation}\label{type G stalk}
        H^*(D_{x_\a}, \sF|_{D_{x_\a}})\cong \sF_{x_\a}\otimes H^*(\bP^1,\Qlbar).
    \end{equation} 
    So if $\sF$ is Tate along $X_\a$, so is  $H^*(D_{x_\a}, \sF|_{D_{x_\a}})$, hence the same is true for $\IC_s\star\sF$ along $X_\a$ by \eqref{stalk ICs F}. 

    For type (U),(T), we assume $(s,X_\a)$ is of type (U+), (T+), and let $X_{\a'}$ (or $X_{\a'}$ and $X_{\a''}$ in type (T)) be its $s$-companions. Let $D^\circ_{x_\a}=D_{x_\a}\cap X_\a$ and $D^\bullet_{x_\a}$ be its complement. 
    In this case
    $R\Gamma(D_{x_\a}, \sF|_{D_{x_\a}})$ fits into a distinguished triangle
    \begin{equation}\label{RG tri}
        \xymatrix{R\Gamma_c(D^\circ_{x_\a}, \sF|_{D^\circ_{x_\a}}) \ar[r] & R\Gamma(D_{x_\a}, \sF|_{D_{x_\a}})\ar[r] & \sF|_{D^\bullet_{x_\a}}\ar[r] & }
    \end{equation}
    Now if $\sF$ is Tate, the third term above is Tate because $D^\bullet_{x_\a}$ consists of either one or two points defined over $\bF_q$.  The first term above is $H^*_c(D^\circ_{x_\a})\ot \sF|_{D^\circ_{x_\a}}$ which is also Tate since $D^\circ_{x_\a}\cong \bA^1$ or $\Gm$ over $\bF_q$.  We conclude that $\IC_s\star \sF$ is Tate along any orbit $X_\a$.

    The formula for the $\sfT_s+1$-action are obtained as follows. 
    Apply \eqref{conv ICs} to $\sF=j_{\a!}\Qlbar$, and we need to express the class of $a_s^*a_{s !}j_{\a!}\Qlbar$ in terms of a linear combination of $j_{\b!}\Qlbar$, i.e., we need to compute the stalks $(a_s^*a_{s !}j_{\a!}\Qlbar)|_{x_\b}$. This amounts to computing $H^*_c(D_{x_\b}\cap X_\a)$, which can be done case by case.
\end{proof}

The next statement follows directly from the construction. 
\begin{lemma}\label{Lem. Hecke mod specialization}
    The specialization maps $\sfM=R[\un{B\bs X}]\xr{v=\sqrt q} M$ and $\sfM=R[\un{B\bs X}]\xr{v=1} \bZ[\un{B\bs X}]$ are compatible with the $\sfH$-actions, which acts on $M$ and on $\bZ[\un{B\bs X}]$ via the specializations maps for Hecke algebras $\sfH\xr{v=\sqrt q}H$ and $\sfH\xr{v=1}\bZ[W]$.
\end{lemma}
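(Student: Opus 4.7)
The plan is to reduce both specialization statements to the commutativity of the diagram \eqref{diag cat} combined with the fact that each of the horizontal maps $\phi$ and $\chi$ intertwines convolution. By \Cref{Prop ICs preserves Tate}, the $\sfH$-module structure on $\sfM$ is defined as the unique structure obtained by transporting the convolution action of $K_0(\cH^\Tate)$ on $K_0(\cM^\Tate)$ through the ring/module isomorphisms $\ch^\Tate$ on the two sides (the Hecke side established in \Cref{Cor. Hk ring isom}, the module side in \Cref{Prop ICs preserves Tate}). Hence to compare the $\sfH$-action on $\sfM$ with the $H$-action on $M$ and with the $\bZ[W]$-action on $\bZ[\un{B\bs X}]$, it is enough to track what happens to convolution after applying $\phi$ and $\chi$ respectively.

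For the $v=\sqrt{q}$ specialization, the first step is to record the multiplicativity of the sheaf-to-function map under convolution: for $\sE\in\cH$ and $\sF\in\cM$ one has $\phi(\sE\star\sF)=\phi(\sE)*\phi(\sF)$, where the right-hand side is the $H$-module convolution on $M=\Qlbar[B(\bF_q)\backslash X(\bF_q)]$ given by \eqref{hecke function covolution}. This is a standard consequence of Grothendieck's trace formula applied to the convolution diagram $B\backslash G/B \leftarrow B\backslash G\times^B X\to B\backslash X$, using proper base change and the projection formula. Combined with the identity $\phi|_{\cH^\Tate}=(v=\sqrt q)\circ\ch^\Tate$ and its module counterpart $\phi|_{\cM^\Tate}=(v=\sqrt q)\circ\ch^\Tate$, which is exactly the upper square of the diagram \eqref{diag cat}, this yields the desired compatibility at $v=\sqrt{q}$.

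For the $v=1$ specialization, the argument is the same with the Euler characteristic map $\chi$ in place of $\phi$. Concretely, one first passes to the geometric categories $\cH^k$ and $\cM^k$ via the pullback $\cH^\Tate\to\cH^k$ and $\cM^\Tate\to\cM^k$; the key observation is that Euler characteristics of stalks are preserved under this base change, so the bottom square of \eqref{diag cat} commutes. Then one uses that $\chi:K_0(\cH^k)\to\bZ[W]$ is multiplicative under convolution (by the same six-functor reasoning as for $\phi$), and that the analogous statement holds for the convolution action on $K_0(\cM^k)\cong\bZ[\un{B\bs X}]$. Transporting through the isomorphisms on both sides shows that the specialization at $v=1$ of the $\sfH$-action on $\sfM$ is precisely the $\bZ[W]$-action on $\bZ[\un{B\bs X}]$ induced from $K_0(\cH^k)$.

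The only non-bookkeeping input is the multiplicativity of $\phi$ and $\chi$ under convolution, which I would expect to cite as standard from the six-functor formalism rather than prove from scratch; everything else is a matter of assembling the two squares of \eqref{diag cat} together with the $\cH^\Tate$-action on $\cM^\Tate$ constructed in \Cref{Prop ICs preserves Tate}. Once these pieces are in place, the lemma follows formally, and I do not anticipate any genuine obstacle.
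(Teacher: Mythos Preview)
Your approach is correct, but you are proving more than the lemma asks. The paper dispatches this statement in one line (``follows directly from the construction''), and the reason is that the claim is essentially tautological: the $\sfH$-action on $\sfM$ is an $R$-bilinear map $\sfH \times \sfM \to \sfM$ (given explicitly by the formulas in \Cref{Prop ICs preserves Tate}), and base change along any ring map $R \to R'$ automatically yields a compatible $(\sfH \otimes_R R')$-action on $\sfM \otimes_R R'$. The specialization map $\sfM \to \sfM \otimes_R R'$ intertwines these actions by functoriality of the tensor product; nothing about $\phi$ or $\chi$ is needed.

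What you establish---that $\phi$ intertwines categorical convolution with function convolution---is precisely the content of the \emph{next} lemma in the paper (\Cref{lem two hecke action}), which identifies the specialized $H$-action on $M$ with the concrete convolution action of \eqref{hecke function covolution}. So your argument effectively merges \Cref{Lem. Hecke mod specialization} and \Cref{lem two hecke action} into a single step. There is no error, and no circularity since the multiplicativity of $\phi$ is independent of the present lemma, but the paper deliberately separates the formal base-change statement from the identification with function convolution.
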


There is a natural action of $H=\bC[B(\bF_q)\backslash G(\bF_q)/B(\bF_q)$ on $M=\bC[B(\bF_q)\backslash X(\bF_q)]$ by the
convolution defined by \Cref{hecke function covolution}. 

\begin{lemma}\label{lem two hecke action}
 The $H$ action on $M$ via applying \Cref{Lem. Hecke mod specialization} to the specialization $v=\sqrt{q}$ and via \Cref{hecke function covolution} coincide using the isomorphism $\io:\Qlbar\cong \bC$.
\end{lemma}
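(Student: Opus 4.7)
The plan is to deduce the lemma from the general compatibility of the sheaf-to-function map with sheaf-theoretic convolution. Concretely, I will show that for every $\sE\in\cH$ and $\sF\in\cM$,
\begin{equation}\label{eq:phi-conv-plan}
\phi(\sE\star\sF) \;=\; \phi(\sE)*\phi(\sF),
\end{equation}
where $\star$ is the convolution introduced in \Cref{Sec. Hk mod} and $*$ is the function-theoretic convolution of \eqref{hecke function covolution}. Granting \eqref{eq:phi-conv-plan}, the lemma follows immediately: the commutativity of the diagram \eqref{diag cat} shows that on Tate objects $\phi$ factors as $\ch$ followed by the specialization $v=\sqrt q$, and by \Cref{Lem. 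Hecke mod specialization} the $H$-module structure on $M$ obtained from the $\sfH$-action on $\sfM$ at $v=\sqrt q$ is exactly the image under $\phi$ of the convolution action $\star$ of $\cH^{\Tate}$ on $\cM^{\Tate}$.

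To prove \eqref{eq:phi-conv-plan}, I will work with the diagram
\[
\begin{tikzcd}
G\times X \arrow[r,"\widetilde a"]\arrow[d,"\pi"'] & X\arrow[d] \\
B\backslash G\times^{B} X \arrow[r,"a"] & B\backslash X
\end{tikzcd}
\]
where $\widetilde a(g,x)=gx$ and $\pi$ is the canonical $B\times B$-torsor (one factor acting by right translation on $G$, the other diagonally via $b\cdot(g,x)=(gb^{-1},bx)$). Applying the Grothendieck--Lefschetz trace formula to the proper morphism $\widetilde a$, together with proper base change and the multiplicativity of $\phi$ with respect to $\otimes$ and $*$-pullback, one computes that $\phi(a_!(p_G^*\sE\otimes p_X^*\sF))$ evaluated at $x\in X(\bF_q)$ equals
\[
\tfrac{1}{|B(\bF_q)|}\sum_{g\in G(\bF_q)}\phi(\sE)(g)\,\phi(\sF)(g^{-1}x),
\]
which is precisely the right-hand side of \eqref{hecke function covolution}. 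The factor $|B(\bF_q)|^{-1}$ arises from passing from the groupoid $B\backslash G\times^{B} X$ to $\bF_q$-points of $G\times X$ through the torsor $\pi$: one copy of $B$ is absorbed by the equivariance condition imposed on both factors while the other yields the volume factor $|B(\bF_q)|^{-1}$.

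The main technical obstacle is the bookkeeping of normalizations: the Tate twists and shifts in the definition of $\Delta_w,\IC_w$, the convention $gx$ versus $g^{-1}x$ used in \eqref{hecke function covolution}, and the precise factor of $|B(\bF_q)|$. A more concrete alternative is to verify \eqref{eq:phi-conv-plan} only on algebra generators, taking $\sE=\Delta_s$ for simple reflections $s\in S$ and $\sF=\Delta_\alpha$ for $\alpha\in I$; then the left-hand side is given by the explicit geometric formula of \Cref{Prop ICs preserves Tate} specialized at $v=\sqrt q$, while the right-hand side reduces to a direct $\bF_q$-point count of the fibers $a_s^{-1}(x_\alpha)=D_{x_\alpha}(\bF_q)$. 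Matching the two in each of the types (G), (U$\pm$), (T$\pm$) --- essentially counting the $\bF_q$-points of $\bA^1$, $\Gm$, or a small finite set of rational points --- is straightforward and completes the verification.
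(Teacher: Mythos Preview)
Your proposal is correct and follows the same approach as the paper, which simply states that the lemma follows from the functoriality of the sheaf-to-function correspondence with respect to $\otimes$, $*$-pullback, and $!$-pushforward. Your write-up spells out this functoriality argument in detail and offers an alternative verification on generators, but the underlying idea is identical.
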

\begin{proof}
  This follows from the functoriality of the sheaf-to-function correspondence with respect to  $\ot$, $*$-pullback and $!$ pushforward. 
\end{proof}

Applying \Cref{Prop ICs preserves Tate} and \Cref{Lem. Hecke mod specialization} to the specialization $v=1$, we recover the calculations of Knop \cite[\S 5]{MR1324631}.
\begin{cor}\label{Cor. W action sph orb}
    The action of $\bZ[W]\cong K_0(\cH^k)$ on $\bZ[\un{B\bs X}]\cong K_0(\cM^k)$ is given on simple reflections by
    \begin{equation*}
        s\cdot \one_\a=\begin{cases}
            \one_\a & \mbox{$(s,X_\a)$ is of type (G),} \\
            \one_{\a'} & \mbox{$(s,X_\a)$ is of type (U), $\a\ne \a'\sim_s \a$}, \\
            
            -\one_{\a}, &
            \mbox{$(s,X_\a)$ is of type (T+)}, \\
            \one_{\a'}+\one_{\a''}, &
            \mbox{$(s,X_\a)$ is of type (T-), $\{\a, \a',\a''\}$ are $s$-companions of $\a$},\\
        \end{cases}
    \end{equation*}
\end{cor}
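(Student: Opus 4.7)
The plan is to deduce this corollary by specializing at $v=1$ the formulas computed in \Cref{Prop ICs preserves Tate} for the action of $\sfT_s+1 \in \sfH$ on the standard basis of $\sfM$, using the compatibility provided by \Cref{Lem. Hecke mod specialization}. Under the specialization $\sfH \xrightarrow{v=1} \bZ[W]$, the generator $\sfT_s$ maps to the simple reflection $s$ (since the quadratic relation $(\sfT_s+1)(\sfT_s-v^2)=0$ becomes $(s+1)(s-1)=0$), so computing $s\cdot \one_\a$ in $\bZ[\un{B\bs X}] \cong K_0(\cM^k)$ reduces to evaluating $((\sfT_s+1)\one_\a)|_{v=1} - \one_\a$.

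Concretely, I would go through the cases one by one. In type (G), $(\sfT_s+1)\one_\a = (v^2+1)\one_\a$ specializes to $2\one_\a$, yielding $s\cdot \one_\a = \one_\a$. In type (U+), $(\sfT_s+1)\one_\a = v^2(\one_\a + \one_{\a'})$ specializes to $\one_\a + \one_{\a'}$, giving $s\cdot \one_\a = \one_{\a'}$; the type (U-) case follows identically from $(\sfT_s+1)\one_\a = \one_\a + \one_{\a'}$. For type (T+), $(\sfT_s+1)\one_\a = (v^2-1)\sum_{\b\sim_s\a}\one_\b$ specializes to $0$, hence $s\cdot \one_\a = -\one_\a$. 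Finally, in type (T-) with companions $\{\a,\a',\a''\}$, $(\sfT_s+1)\one_\a = \one_\a + \one_{\a'} + \one_{\a''}$ specializes to itself, yielding $s\cdot \one_\a = \one_{\a'}+\one_{\a''}$.

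There is essentially no obstacle here beyond bookkeeping: the corollary is a direct specialization of \Cref{Prop ICs preserves Tate} together with \Cref{Lem. Hecke mod specialization}, which ensures that the $\sfH$-module structure on $\sfM$ is compatible with the $\bZ[W]$-module structure on $K_0(\cM^k)$ under $v\mapsto 1$. The only mild point worth noting is to verify that the identification $\sfH_{v=1}\cong \bZ[W]$ sends $\sfT_s$ to $s$ (rather than $-s$), which is fixed by our normalization of the quadratic relation and matches Lusztig's conventions recalled in \Cref{TitsLusztigsec}.
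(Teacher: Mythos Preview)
Your proposal is correct and takes exactly the same approach as the paper: the corollary is stated as a direct consequence of \Cref{Prop ICs preserves Tate} and \Cref{Lem. Hecke mod specialization} under the specialization $v=1$, and your case-by-case verification of the formulas is accurate.
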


\begin{defn} Let $s\in S$ and $\a\in I$. We say $X_\a$ is {\em $s$-vertical} if  $(s,X_\a)$ is of type (G), (U+) or (T+). Let $\cD(\a)=\{s\in S|(s,X_\a) \mbox{ is $s$-vertical}\}$.
\end{defn}

\begin{lemma}\label{Lem. conv ICs}
    Let $s\in S$ and $\a\in I$. 
    \begin{enumerate}
        \item If $s\in \cD(\a)$, then
        \begin{equation*}
            \IC_s\star\IC_\a\cong \IC_\a\langle 1\rangle\oplus\IC_\a\langle -1\rangle.
        \end{equation*}
        \item If $s\notin \cD(\a)$, 
        let $X_{\a'}$ be the unique $s$-companion of $X_\a$ such that $\a'>\a$. 
        Then there is a canonical decomposition in $\cM$
        \begin{equation}\label{ICs decomp}
        \IC_s\star\IC_\a\cong\IC_{\a'}\oplus\left(\bigoplus_{\b\in I, \b\le \a, s\in \cD(\b)}\IC_\b\ot H^{-d_\b-1}(\IC_\a|_{x_\b})(-\frac{d_\b+1}{2})\right).
        \end{equation}
    \end{enumerate}
\end{lemma}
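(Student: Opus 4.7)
The plan is to apply the identity $\IC_s \star \sF \cong a_s^* a_{s*}\sF\langle 1\rangle$ derived in the proof of \Cref{Prop ICs preserves Tate} (the map $a_s\colon B\bs X \to P_s\bs X$ is a proper $\bP^1$-fibration, so $a_{s!} = a_{s*}$), combined with the stalk identity
\[
(\IC_s\star\sF)_{x_\b}\cong R\Gamma(D_{x_\b},\sF|_{D_{x_\b}})\langle 1\rangle
\]
from \eqref{stalk ICs F}.

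\emph{Part (1).} I would first verify that $\ov{X_\a}$ is $P_s$-invariant whenever $s\in\cD(\a)$: this is immediate in type~(G), and in types (U+) and (T+) the $s$-companions of $X_\a$ have strictly smaller dimension and so lie in $\partial X_\a$, forcing $P_sX_\a\subseteq\ov{X_\a}$ and hence $\ov{P_sX_\a}=\ov{X_\a}$ to be $P_s$-invariant. Therefore $a_s$ restricts to a smooth proper $\bP^1$-fibration $B\bs\ov{X_\a}\to P_s\bs\ov{X_\a}$, and $\IC_\a$ descends as $\IC_\a\cong a_s^*\cG\langle 1\rangle$ where $\cG$ is the IC sheaf of $P_s\bs\ov{X_\a}$ (the $\langle 1\rangle$ encodes the relative dimension). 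The projection formula together with $a_{s*}\Qlbar\cong\Qlbar\oplus\Qlbar\langle -2\rangle$ gives $a_{s*}\IC_\a\cong\cG\langle 1\rangle\oplus\cG\langle -1\rangle$, from which $\IC_s\star\IC_\a\cong\IC_\a\langle 1\rangle\oplus\IC_\a\langle -1\rangle$ follows.

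\emph{Part (2).} The top companion $X_{\a'}$ is $s$-vertical (type (U+) or (T+)), so by Part~(1)'s geometric observation $\ov{X_{\a'}}$ is $P_s$-invariant and contains $\ov{X_\a}$. Since $\IC_\a$ is pure of weight~$0$ and $a_s$ is proper, the decomposition theorem writes $a_{s*}\IC_\a$ canonically as a direct sum of Tate twists of simple perverse sheaves on $P_s\bs X$, all supported on $P_s\bs\ov{X_{\a'}}$. Pulling back by $a_s^*\langle 1\rangle$ converts each summand of the form $\IC(P_s\bs\ov{X_\b},\Qlbar)$ into $\IC_\b$ (up to Tate twist) exactly when $X_\b$ itself is $s$-vertical, i.e.~$s\in\cD(\b)$; the only simple-perverse summands of $\IC_s\star\IC_\a$ are therefore $\IC_{\a'}$ and $\IC_\b$ for $\b<\a$ with $s\in\cD(\b)$. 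The top summand contributes exactly one untwisted copy of $\IC_{\a'}$ (verified by evaluating the stalk formula at a point $x_{\a'}$ in the open $B$-orbit of $P_sx_{\a'}$ and using the (U-) or (T-) geometry of $X_\a$ inside $D_{x_{\a'}}$), while for each lower $\b$ with $s\in\cD(\b)$ the multiplicity of $\IC_\b\langle m\rangle$ equals the $m$-graded piece of $H^*((\IC_s\star\IC_\a)_{x_\b})$ in the appropriate degree; applying \eqref{stalk ICs F} and the $*$-parity of $\IC_\a$ then identifies this multiplicity space with $H^{-d_\b-1}(\IC_\a|_{x_\b})(-\tfrac{d_\b+1}{2})$.

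\emph{Main obstacle.} The delicate step is the Tate-twist and perverse-degree bookkeeping in Part~(2): one must simultaneously rule out simple-perverse summands of $a_{s*}\IC_\a$ whose support in $P_s\bs X$ is not of the form $P_s\bs\ov{X_\b}$ for some $s$-vertical $\b$, and pin down the precise graded multiplicity space in terms of the costalk of $\IC_\a$. Both points rest on the purity and $*$-parity of $\IC_\a$ (which are preserved by the decomposition-theorem summands), combined with a careful local analysis of $a_{s*}\IC_\a$ near each stratum $P_s\bs X_\b$ via the stalk formula.
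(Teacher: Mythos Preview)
Your Part~(1) is correct and matches the paper's argument exactly.

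Part~(2) has a genuine gap. The decomposition theorem applied to $a_{s*}\IC_\a$ only gives a direct sum of \emph{shifted} simple perverse sheaves, so after pulling back you get $\IC_s\star\IC_\a\cong\bigoplus_{\b,m}\IC_\b\langle m\rangle^{\oplus n_{\b,m}}$ with $s\in\cD(\b)$; you still need $n_{\b,m}=0$ for $m\ne 0$, i.e.\ that $\IC_s\star\IC_\a$ is perverse. Your proposal tries to force this by invoking the $*$-parity of $\IC_\a$, but that property is only established later in \Cref{Cor. I0 implies all I}, whose proof (via \Cref{Lem. heart gen}) depends on the present lemma. So the argument is circular.

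The paper avoids this by proving perversity directly: for each $\b$ it estimates the degrees of $(\IC_s\star\IC_\a)_{x_\b}$ using \eqref{stalk ICs F} and the type of $(s,X_\b)$, showing the stalk lies in degrees $\le -d_\b$. This gives $\IC_s\star\IC_\a\in{}^pD^{\le 0}$, and Verdier self-duality (from $\DD(\IC_s\star\sF)\cong\IC_s\star\DD\sF$) then yields perversity. Once this is known, semisimplicity (purity plus the decomposition theorem) gives a decomposition into $\IC_\b$'s with no shifts, and the multiplicity of $\IC_\b$ is read off as $H^{-d_\b}((\IC_s\star\IC_\a)_{x_\b})(-d_\b/2)$. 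The same stalk computations already identify this top cohomology with $H^{-d_\b-1}((\IC_\a)_{x_\b})$ (up to the stated twist) for $s$-vertical $\b$, and show it vanishes for $s$-non-vertical $\b\ne\a'$; parity plays no role. Your framework is salvageable if you replace the parity appeal with this stalk-degree estimate and self-duality step.
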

\begin{proof} We use \eqref{conv ICs} to compute $\IC_s\star \IC_{\a}$. If $s\in \cD(\a)$, $\IC_\a\cong a_s^*\sF$ for a simple perverse sheaf $\sF$, therefore $\IC_s\star \IC_\a\cong a_s^*a_{s*}a_s^*\sF\langle 1\rangle\cong a_s^*(\sF\oplus \sF\langle -2\rangle)\langle 1\rangle\cong a_s^*\sF\langle 1\rangle\oplus \sF\langle -1\rangle$, using that $a_s$ is a $\bP^1$-fibration.

Now assume $s\notin\cD(\a)$. We first show that $\IC_s\star \IC_\a$ is perverse. 

Let $\b\in I$. When $(s,X_\b)$ is of type (G) (in particular, $\b\ne\a$), we easily see from \eqref{stalk ICs F} that
\begin{equation*}
    (\IC_s\star\IC_\a)|_{x_\b}\cong \IC_\a|_{x_\b}\ot H^*(\bP^1)\langle 1\rangle\cong \IC_\a|_{x_\b}\langle 1\rangle\oplus \IC_\a|_{x_\b}\langle-1\rangle.
\end{equation*}
Since $\IC_\a|_{x_\b}$ lies in degrees $\le -d_\b-1$,  $(\IC_s\star\IC_\a)|_{x_\b}$ lies in degrees $\le -d_\b$, and we have a canonical isomorphism
\begin{equation}\label{type G b top}
    H^{-d_\b}(\IC_s\star\IC_\a)|_{x_\b}\cong H^{-d_\b-1}(\IC_\a|_{x_\b} )(1/2).
\end{equation}
If $\b=\a'$ then $D_{x_{\a'}}\cap \ov X_\a$ consists of a single $\bF_q$-point. This implies
\begin{equation}\label{ICs conv ICa big orbit}
    (\IC_s\star\IC_\a)|_{x_{\a'}}\cong \Qlbar\langle d_{\a'}\rangle.
\end{equation}
Next, consider the case $\b\ne \a'$ and $(s,X_\b)$ is of type (U+) or (T+). By the distinguished triangle \eqref{RG tri} we have
\begin{equation*}
    \xymatrix{\IC_\a|_{x_\b}\ot H^*_c(D^\circ_{x_\b})\langle 1\rangle\ar[r] & (\IC_s\star \IC_\a)|_{x_\b} \ar[r] & \bigoplus_{\b'\sim_s\b, s\notin \cD(\b')}\IC_\a|_{x_{\b'}}\langle 1\rangle\ar[r] & }.
\end{equation*}
Using that $D^\circ_{x_\b}\cong \bA^1$ or $\Gm$, the first term above are in degrees $\le -d_\b$, and each summand in the third term is in degrees $\le -d_{\b'}-2<-d_\b$ because $d_{\b'}=d_\b-1$.  From this we see that the highest possible degree of $(\IC_s\star \IC_\a)|_{x_\b}$ is $-d_\b$, and we have a canonical isomorphism
\begin{equation}\label{type U b top}
    H^{-d_\b}(\IC_s\star\IC_\a)|_{x_\b}\cong H^{-d_\b-1}(\IC_\a|_{x_\b})(1/2).
\end{equation}
When $(s,X_\b)$ is of type (U-) or (T-), let $\b'$ be the unique $s$-companion of $\b$ such that $\b'>\b$.  The argument above shows that $(\IC_s\star \IC_\a)|_{x_\b}$ are in degrees $\le -d_{\b'}=-d_{\b}-1$. These degree estimates show that $\IC_s\star\IC_\a\in {}^pD^{\le 0}(X)$. By the Verdier self-duality of $\IC_s\star \IC_\a$, we conclude it is perverse. 

As a perverse sheaf, $\IC_s\star \IC_\a$ has a canonical decomposition according to the support. The multiplicity spaces that appear in \eqref{ICs decomp} follows from \eqref{type G b top}, \eqref{ICs conv ICa big orbit} and \eqref{type U b top}. 
\end{proof}

\begin{cor}
We have 
\[
\dim \Supp(\IC_w \star \IC_\alpha) \leq l(w)+d_\alpha 
\] 
for every $w\in W$ and $\alpha\in I$. 
\end{cor}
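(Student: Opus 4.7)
The plan is to argue by induction on $l(w)$, with the key input being the identity
\[
\IC_s\star \sG \;\cong\; a_s^*\,a_{s!}\,\sG\langle 1\rangle
\]
from \eqref{conv ICs}, where $a_s:B\backslash X\to P_s\backslash X$ is the $\bP^1$-fibration used throughout the proof of \Cref{Prop ICs preserves Tate}. This identity immediately yields the following key lemma: for any $\sG\in\cM$,
\[
\dim\Supp(\IC_s\star\sG)\;\le\;\dim\Supp(\sG)+1,
\]
since $a_s^*\,a_{s!}\,\sG$ is supported on $a_s^{-1}(a_s(\Supp\sG))$ and each fibre of $a_s$ is one-dimensional.

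The base case $l(w)=0$ is trivial, since $\IC_e$ is the unit for convolution and $\Supp(\IC_e\star\IC_\alpha)=\overline{X_\alpha}$ has dimension $d_\alpha$. For the inductive step, fix a reduced expression $w=s_1\cdots s_r$ with $r=l(w)$. As already used in the proof of \Cref{Prop ICs preserves Tate} (see the footnote there), $\IC_w$ is a direct summand of the Bott--Samelson convolution $\IC_{s_1}\star\cdots\star\IC_{s_r}$ in $\cM^k$ (and in $\cM^\mix$ by \cite{MR3003920}), so in particular
\[
\Supp(\IC_w\star\IC_\alpha)\;\subseteq\;\Supp\bigl(\IC_{s_1}\star\cdots\star\IC_{s_r}\star\IC_\alpha\bigr).
\]
Iterating the key lemma $r$ times, starting from $\Supp(\IC_\alpha)=\overline{X_\alpha}$ of dimension $d_\alpha$, gives
\[
\dim\Supp\bigl(\IC_{s_1}\star\cdots\star\IC_{s_r}\star\IC_\alpha\bigr)\;\le\; r+d_\alpha \;=\; l(w)+d_\alpha,
\]
which yields the desired bound.

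The only delicate point is the direct-summand statement for $\IC_w$; since we only need a containment of supports, it suffices to invoke this fact in $\cM^k$ (where it follows from the decomposition theorem applied to the proper, semismall Bott--Samelson map), so no appeal to the more refined mixed version is necessary. I therefore expect no serious obstacle: the argument reduces to the one-step bound for a single $\IC_s$, which is immediate from \eqref{conv ICs}, together with a standard application of the decomposition theorem.
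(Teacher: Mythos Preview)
Your proposal is correct and takes essentially the same approach as the paper: induction on $l(w)$, reducing to the one-step bound that convolution with $\IC_s$ increases support dimension by at most $1$, and using the Bott--Samelson fact that $\IC_w$ is a summand of $\IC_{s_1}\star\cdots\star\IC_{s_r}$. The only cosmetic difference is that the paper cites \Cref{Lem. conv ICs} (the explicit decomposition of $\IC_s\star\IC_\alpha$ into IC sheaves of dimension $\le d_\alpha+1$), whereas you invoke the more primitive identity \eqref{conv ICs} to get the dimension bound for arbitrary $\sG$; your route is slightly more direct since the full decomposition is not needed.
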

\begin{proof}
This follows from \Cref{Lem. conv ICs} by induction on $l(w)$. 
\end{proof}

\begin{defn}\label{def. I0}
We define the following subset of $I=\un{B\bs X}$:
\begin{equation*}
    I_0=\{\a\in I \mid \text{for all $s\in S$, $(s,X_\a)$ is of type (G), (U-), or (T-)}\}.
\end{equation*}
\end{defn}
$I_0$ consists of the minimal dimensional orbits in the equivalence classes generated by the relation of companions under simple reflections.

We show $\{\IC_\b|\b\in I_0\}$ is a set of generating objects for $\cM$ under the $\cH$-action in the following sense:
\begin{lemma}\label{Lem. heart gen}
\begin{enumerate}
    \item For any $\a\in I$, $\IC_\a$ is a direct summand of $\IC_w\star \IC_\b$ for some $w\in W$ and $\b\in I_0$ 
    such that $d_\a-d_\b=l(w)$.
    \item As an $\sfH$-module, $\sfM$ is generated by $\{C'_\b|\b\in I_0\}$. 
\end{enumerate}
\end{lemma}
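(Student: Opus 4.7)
The plan is to prove both parts by induction on $d_\alpha$, with part (1) providing the key convolution structure needed for part (2).

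For part (1), the base case is $\alpha \in I_0$, where we take $w = 1$ and $\beta = \alpha$. Otherwise, choose $s \in S$ with $(s, X_\alpha)$ of type (U+) or (T+), and let $\alpha''$ be a smaller $s$-companion, so $d_{\alpha''} = d_\alpha - 1$ and $(s, X_{\alpha''})$ is of type (U-) or (T-). Then $s \notin \cD(\alpha'')$ and the unique $s$-companion of $\alpha''$ of larger dimension is $\alpha$, so by \Cref{Lem. conv ICs}(2), $\IC_\alpha$ is a direct summand of $\IC_s \star \IC_{\alpha''}$. By induction, there exist $w' \in W$ and $\beta \in I_0$ with $l(w') = d_{\alpha''} - d_\beta$ such that $\IC_{\alpha''}$ is a summand of $\IC_{w'} \star \IC_\beta$; hence $\IC_\alpha$ is a summand of $\IC_s \star \IC_{w'} \star \IC_\beta$. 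The crucial point is that $l(sw') = l(w') + 1$: otherwise $\IC_s \star \IC_{w'} \cong \IC_{w'}\langle 1\rangle \oplus \IC_{w'}\langle -1\rangle$, so $\IC_\alpha$ would be a summand of $\IC_{w'} \star \IC_\beta$ up to Tate twist; but this convolution is supported on a variety of dimension at most $l(w') + d_\beta = d_\alpha - 1$, contradicting the fact that $\IC_\alpha$ has support of dimension $d_\alpha$. Thus $\IC_s \star \IC_{w'} \cong \IC_{sw'}$, and we may take $w = sw'$ with $l(w) = d_\alpha - d_\beta$.

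For part (2), let $\sfM' \subset \sfM$ denote the $\sfH$-submodule generated by $\{C'_\beta \mid \beta \in I_0\}$. We show by induction on $d_\alpha$ that $C'_\alpha \in \sfM'$. The base case (minimal dimension) forces $\alpha \in I_0$. For the step, if $\alpha \in I_0$ the claim is trivial; otherwise choose $s$ and $\alpha''$ as in part (1). Any $\gamma$ with $\gamma \leq \alpha''$ in the closure order and $\gamma \neq \alpha''$ satisfies $d_\gamma < d_{\alpha''}$, so \Cref{Lem. conv ICs}(2) yields
\[
\IC_s \star \IC_{\alpha''} \cong \IC_\alpha \oplus \bigoplus_{\gamma : d_\gamma < d_{\alpha''}} \IC_\gamma \otimes V_\gamma
\]
for certain multiplicity spaces $V_\gamma$. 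Applying $\ch$ and tracking the signs coming from $\ch(\IC_\alpha) = (-1)^{d_\alpha} C'_\alpha$ gives an identity in $\sfM$ of the form
\[
C'_s \cdot C'_{\alpha''} = C'_\alpha + \sum_{\gamma: d_\gamma < d_{\alpha''}} c_\gamma(v)\, C'_\gamma, \qquad c_\gamma(v) \in R.
\]
By the inductive hypothesis, $C'_{\alpha''}$ and each $C'_\gamma$ on the right lie in $\sfM'$, hence so does $C'_\alpha$.

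The main obstacle is the length-additivity $l(sw') = l(w') + 1$ in part (1): the statement has to be extracted from a support-dimension argument that rules out the opposite case, rather than being seen directly from the Bruhat geometry of $W$. Once that point is settled, part (2) follows by a clean descending solve: because the smaller companion $\alpha''$ always satisfies $d_{\alpha''} < d_\alpha$ and the other summands in \Cref{Lem. conv ICs}(2) are strictly smaller still, the inductive hypothesis suffices to express $C'_\alpha$ in $\sfM'$ without any need to invert nontrivial multiplicities or handle same-dimension cross-terms.
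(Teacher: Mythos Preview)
Your approach matches the paper's, and part (2) is fine (in fact more explicit than the paper's one-line argument). But part (1) contains a genuine error: the assertion ``Thus $\IC_s \star \IC_{w'} \cong \IC_{sw'}$'' is false in general. Knowing $l(sw') = l(w') + 1$ only tells you that $\IC_{sw'}$ appears as a summand of $\IC_s \star \IC_{w'}$; by \Cref{Lem. conv ICs}(2) applied to $X = \cB$, the decomposition is
\[
\IC_s \star \IC_{w'} \cong \IC_{sw'} \oplus \bigoplus_{u' \le w',\, su' < u'} \IC_{u'} \otimes V_{u'},
\]
and the extra summands need not vanish. (You may be thinking of the identity $\D_s \star \D_{w'} \cong \D_{sw'}$ for standard sheaves, which does hold when lengths add.)

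The fix is immediate and is precisely what the paper does: once you know $\IC_\alpha$ is a summand of $\IC_s \star \IC_{w'} \star \IC_\beta$, observe that for each extra summand $\IC_{u'}$ with $l(u') \le l(w')$, the support of $\IC_{u'} \star \IC_\beta$ has dimension at most $l(u') + d_\beta \le l(w') + d_\beta = d_\alpha - 1 < d_\alpha$, so $\IC_\alpha$ cannot occur there. Hence $\IC_\alpha$ is forced to be a summand of $\IC_{sw'} \star \IC_\beta$, and you may take $w = sw'$. This is the same support-dimension argument you already used to rule out $l(sw') = l(w') - 1$; it just needs to be applied once more.
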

\begin{proof}
\begin{enumerate}[wide]
  \item 
  We prove by induction on $d_\alpha$. 
If $X_\a$ is an orbit having with minimal dimension, then $\a\in I_0$ and there is nothing to prove. 
Now assume that the 
statement is proved for all $\a'$ such that $d_{\a'}<d_\a$, we show that it is true for $\a$ as follows. It suffices to consider the case when $\a\notin I_0$ for otherwise there is nothing to show. 
Since $\a\notin I_0$, there exists $s\in S$ such that $(s,X_\a)$ is of type (U+), (T+) or (N+). Let $\a'\ne \a$ be an $s$-companion of $\a$, then $d_{\a'}=d_\a-1$. Apply \Cref{Lem. conv ICs}~(2) (the role of $\a$ and $\a'$ are switched now), we conclude that $\IC_\a$ is a direct summand of $\IC_s\star\IC_{\a'}$. By induction hypothesis, $\IC_{\a'}$ is a direct summand of $\IC_u\star \IC_\b$ for some $u\in W$ and $\b\in I_0$ such that $l(u)=d_\a'-d_\b$. Therefore $\IC_\a$ is a direct summand of $\IC_s\star\IC_u\star\IC_{\b}$. The support of $\IC_s\star\IC_u\star\IC_{\b}$ contains $X_\alpha$ and has dimension $\le l(s)+l(u)+d_\b=1+d_{\a'}=d_\a$, so $l(s)+l(u) + d_\beta = d_\alpha = (d'_\alpha -d_\beta) + 1$. Hence $l(su)=1+l(u)=d_\alpha - d_\beta$.
Note that $\IC_{\alpha}$ cannot occur as a direct summand of $\IC_{u'}\star \IC_\beta$ for $l(u') \leq  l(u)$, since in that case the support dimension would be strictly less than $d_\alpha$.
Therefore $\IC_{\alpha}$ must be a direct summand of $\IC_{su}\star \IC_\b$, 
since $\IC_{s}\star \IC_{u}$ is a direct sum of $\IC_{u'}$ with $l(u')\leq l(u)$ except one summand $\IC_{su}$. 
\item
    By induction on $d_\alpha$ and part (1), it is clear that, for each $\alpha\in I$,  $C'_\alpha$ can be expressed as linear combinations of $C'_w C'_\beta$ where $w\in W$ and $\beta\in I_0$. 
\end{enumerate}
\end{proof}

\begin{prop}\label{Prop ICs preserves purity and parity} Suppose for all $s\in S$ and $\a\in I$,  $(s,X_\a)$ is of type (G) or (U). Let $w\in W$ and $\sF\in \cM$.
    \begin{enumerate}
        \item If $\sF$ is $*$-pure of weight zero, then the same is true for $\IC_w\star\sF$.
        \item If $\sF$ is $*$-parity with parity $\e$, then $\IC_w\star\sF$ is $*$-parity with parity $\e+l(w)$.
    \end{enumerate}
\end{prop}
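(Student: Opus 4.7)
The plan is to reduce to the case $w=s$ a simple reflection and then analyze the stalk of $\IC_s\star \sF$ via the formula $(\IC_s\star\sF)_{x_\a}\cong R\Gamma(D_{x_\a},\sF|_{D_{x_\a}})\langle 1\rangle$ obtained in the proof of \Cref{Prop ICs preserves Tate}. I would argue by induction on $l(w)$. For the inductive step, write $w=sw'$ with $l(w')=l(w)-1$; applying the Hecke category decomposition in $\cH^{\mix}$ (using that $\IC_s\star \IC_{w'}$ is pure of weight $0$ and perverse, hence semisimple as a mixed perverse sheaf), one has $\IC_s\star\IC_{w'}\cong \IC_w\oplus \bigoplus_{w''<w}\IC_{w''}^{\oplus n_{w''}}$. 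This exhibits $\IC_w\star \sF$ as a direct summand of $\IC_s\star(\IC_{w'}\star \sF)$; since $*$-purity of weight zero and $*$-parity are inherited by direct summands and sums, the inductive step reduces to the simple reflection case.

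For $w=s$, case (G) is immediate: the map $D_{x_\a}\cong \bP^1\to X_\a$ factors through the single $B$-orbit $X_\a$, so $\sF|_{D_{x_\a}}$ is a shifted--twisted constant sheaf (using \eqref{orbit assumption} so that $\sF|_{X_\a}$ is constant), and
\[
(\IC_s\star\sF)_{x_\a}\cong \sF_{x_\a}\otimes H^*(\bP^1)\langle 1\rangle\cong \sF_{x_\a}\langle 1\rangle \oplus \sF_{x_\a}\langle -1\rangle.
\]
The shift $\langle k\rangle=[k](k/2)$ preserves purity of weight zero (it shifts cohomological degree by $k$ and weight by $-k$) and flips parity by $k\bmod 2$, so both properties follow.

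For case (U), I would use the open--closed triangle
\[
R\Gamma_c(D^\circ_{x_\a},\sF|_{D^\circ_{x_\a}})\longrightarrow R\Gamma(D_{x_\a},\sF|_{D_{x_\a}})\longrightarrow \sF|_{\{\infty_x\}}\xrightarrow{+1}.
\]
Each $Q_{x_\a}$-orbit on $D_{x_\a}$ maps into a single $B$-orbit in $X$, so by $B$-equivariance $\sF|_{D^\circ_{x_\a}}$ is a shifted--twisted constant sheaf on $\bA^1$ with stalk $\sF_{x_\b}$ for an appropriate $\b$; together with $H^*_c(\bA^1)=\Qlbar(-1)[-2]$ this gives $R\Gamma_c(D^\circ_{x_\a},\sF|_{D^\circ_{x_\a}})\cong \sF_{x_\b}\langle -2\rangle$, and $\sF|_{\{\infty_x\}}\cong \sF_{x_{\b'}}$ for some $\b'$. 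Both outer terms are therefore $*$-pure of weight zero (resp.\ of parity $\e$) whenever $\sF$ is, because $\langle -2\rangle=[-2](-1)$ preserves both properties. Finally, applying weight theory to the long exact sequence, all boundary maps $H^{i-1}(C)\to H^i(A)$ vanish (they would be morphisms between pure modules of different weights), yielding short exact sequences $0\to H^i(A)\to H^i(B)\to H^i(C)\to 0$ with purity of weight $i$ propagating to $H^i(B)$; the analogous parity argument gives parity $\e$ for the middle term. The outer $\langle 1\rangle$ then yields purity of weight zero and parity $\e+1=\e+l(s)$, completing the step.

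The potential obstacle is verifying the decomposition $\IC_s\star \IC_{w'}\cong \IC_w\oplus(\text{lower})$ in the \emph{mixed} category rather than merely in $\cM^k$; however this is standard because pure perverse sheaves on $B\bs \cB$ are semisimple as mixed perverse sheaves. The hypothesis excluding type (T) is essential: for type (T) the relevant piece would be $R\Gamma_c(\Gm,\cdot)$, and since $H^*_c(\Gm)$ is concentrated in two degrees with different Tate twists, the above purity/parity argument would fail there.
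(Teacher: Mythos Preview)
Your proof is correct and follows essentially the same approach as the paper's: reduce to simple reflections, then compute stalks using the formula $(\IC_s\star\sF)_{x_\a}\cong R\Gamma(D_{x_\a},\sF|_{D_{x_\a}})\langle 1\rangle$, treating types (G) and (U) via the distinguished triangle from the proof of \Cref{Prop ICs preserves Tate}. The paper is terser—it simplifies the type (U) case by assuming (U+) without loss of generality (since $\IC_s\star\sF$ is pulled back along $a_s$ and hence constant on $P_sX_\a$), and it leaves the vanishing-of-boundary-maps argument implicit—whereas you spell these steps out; your closing remark on why type (T) would break the argument is a correct observation not made in the paper.
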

\begin{proof}
    As in the proof of \Cref{Prop ICs preserves Tate}, it suffices to treat the case $w=s\in S$. 
    Using notation from the proof of \Cref{Prop ICs preserves Tate}, we compute the stalk $\IC_s\star \sF$ at $x_\a$. If $(s,X_\a)$ is of type (G), \eqref{type G stalk} implies the purity and parity of $\IC_s\star\sF$ along $X_\a$.

    If $(s,X_\a)$ is of type (U), we may assume it is of type (U+) for  $\IC_s\star\sF$ is constant along $P_s X_\a$.  The distinguished triangle \eqref{RG tri} has first term equal to $\sF|_{x_\a}\ot H^*_c(\bA^1)\cong \sF|_{x_\a}[-2](-1)$, and third term $\sF|_{x_{\a'}}$, where $\a\ne \a'\sim_s \a$. We conclude that $\IC_s\star \sF$ has the required purity and parity properties along $X_\a$.  This completes the argument.
\end{proof}

By \Cref{Lem. heart gen}, \Cref{Prop ICs preserves Tate} and \Cref{Prop ICs preserves purity and parity}, we deduce the following.
\begin{cor}\label{Cor. I0 implies all I}
\begin{enumerate}
    \item If $\IC_\a$ is Tate for all $\a\in I_0$, the same is true for all $\a\in I$.
    \item Suppose for all $s\in S$ and $\a\in I$,  $(s,X_\a)$ is of type (G) or (U). If $\IC_\a$ is $*$-pure of weight zero for all $\a\in I_0$, the same is true for all $\a\in I$.
    \item Suppose for all $s\in S$ and $\a\in I$,  $(s,X_\a)$ is of type (G) or (U). If $\IC_\a$ is $*$-parity of partiy $d_\a$ for all $\a\in I_0$, the same is true for all $\a\in I$.
\end{enumerate}
\end{cor}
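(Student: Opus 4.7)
The plan is to reduce all three assertions to statements about the generating set $\{\IC_\beta \mid \beta \in I_0\}$ by invoking the generation result of Lemma~\ref{Lem. heart gen}(1), and then to propagate the relevant property (being Tate, $*$-pure of weight zero, or $*$-parity of a specified parity) along Hecke convolutions using Propositions~\ref{Prop ICs preserves Tate} and~\ref{Prop ICs preserves purity and parity}. The key observation is that each of the three properties is stable under taking direct summands in $\cM$, so it suffices to establish them for the full convolution $\IC_w \star \IC_\beta$ rather than directly for $\IC_\alpha$.

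Concretely, for each $\alpha \in I$, Lemma~\ref{Lem. heart gen}(1) supplies $w \in W$ and $\beta \in I_0$ with $l(w) = d_\alpha - d_\beta$ such that $\IC_\alpha$ is a direct summand of $\IC_w \star \IC_\beta$. For part~(1), since $\IC_w \in \cH^{\Tate}$ and $\IC_\beta \in \cM^{\Tate}$ by hypothesis, Proposition~\ref{Prop ICs preserves Tate} gives $\IC_w \star \IC_\beta \in \cM^{\Tate}$, and hence so is its summand $\IC_\alpha$. For part~(2), under the hypothesis that every pair $(s,X_\alpha)$ is of type (G) or (U), Proposition~\ref{Prop ICs preserves purity and parity}(1) shows that convolution with $\IC_w$ preserves $*$-purity of weight zero; starting from the $*$-pure weight-zero object $\IC_\beta$, we conclude $\IC_\alpha$ is $*$-pure of weight zero as well. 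For part~(3), Proposition~\ref{Prop ICs preserves purity and parity}(2) shows that convolving with $\IC_w$ shifts $*$-parity by $l(w)$; starting from $\IC_\beta$ with parity $d_\beta$ and using $l(w) = d_\alpha - d_\beta$, the convolution, and thus $\IC_\alpha$, has parity $d_\beta + l(w) = d_\alpha$, as required.

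There is no substantive obstacle; the proof is essentially formal once the preparatory results are in hand. The only point deserving a brief comment is the precise compatibility of the length/dimension equation $l(w) = d_\alpha - d_\beta$ in Lemma~\ref{Lem. heart gen}(1) with the parity shift statement of Proposition~\ref{Prop ICs preserves purity and parity}(2), which is what makes the $*$-parity bookkeeping in part~(3) come out to the desired value $d_\alpha$ rather than something off by an integer.
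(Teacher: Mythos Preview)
Your proof is correct and matches the paper's approach exactly: the paper simply cites Lemma~\ref{Lem. heart gen}, Proposition~\ref{Prop ICs preserves Tate}, and Proposition~\ref{Prop ICs preserves purity and parity} without further detail, and you have supplied precisely the formal argument those citations encode, including the correct observation that the three properties pass to direct summands and the parity bookkeeping via $l(w)=d_\alpha-d_\beta$.
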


\subsection{Kazhdan-Lusztig polynomials,  $W$-graph and cells attached to $X$}\label{ss:KLWCell}

We introduce the analog of Kazhdan-Lusztig polynomials in this context. For $\a,\b\in I$, let 
\begin{equation*}
    P_{\b,\a}(t)=\sum_{i\in \bZ}(-1)^ i\dim H^{-d_{\a}+i}(j_\b^*\IC_\a) t^i=(-t)^{d_\a} \sum_{i\in \bZ}\dim (-1)^ i H^{i}(j_\b^*\IC_\a) t^i \in \bZ[t].
\end{equation*}
\trivial[h]{If not, use 
\[
   P_{\b,\a}(t)=\sum_{i\in \bZ}\dim H^{-d_{\a}+i}(j_\b^*\IC_\a) t^i=t^{d_\a} \sum_{i\in \bZ}\dim H^{i}(j_\b^*\IC_\a) t^i \in \bZ_{\ge0}[t].
\]
}
From the definition of IC sheaves, we see that $P_{\b,\a}=0$ unless $\b\le \a$, $P_{\a,\a}=1$ and if $\b<\a$, then $P_{\b,\a}$ has degree at most $d_\a-d_\b-1$. Note that if $\overline X_\a$ is smooth, then $\IC_\a=\underline{\Ql}_{\overline X_\a}\braket{d_\alpha}$ and we  have 
\begin{equation}\label{Kazhan-Lustig smooth}
    P_{\b,\a}(t)=
    \begin{cases}
        1 \quad \mbox{if $\b\le \a$}\\
        0 \quad  \mbox{otherwise}.
    \end{cases}
\end{equation}

Recall that we defined the Kazhdan-Lusztig basis $ C'_\a=(-1)^{-d_\a} \ch(\IC_\a)$ of $\sfM$ in \Cref{define Kazhan-Lustig}. The following is immediate from the definitions.
\begin{lemma}\label{Lem. KL poly ch} If $\IC_\a$ is $*$-pure of weight zero, then
\begin{equation*}
    C'_\a=v^{-d_\a}\sum_{\b\le \a}P_{\b,\a}(v)\one_{\b}\in \sfM.
\end{equation*}
\trivial[h]{If not, use 
\begin{equation*}
    C'_\a=v^{-d_\a}\sum_{\b\le \a}P_{\b,\a}(-v)\one_{\b}\in \sfM.
\end{equation*}
}
If $\IC_\a$ is $*$-parity with parity $d_\a$, then $P_{\b,\a}(v)$ has only even degrees terms, and hence $P_{\b,\a}(v)\in \bZ_{\geq 0}[t]$.
\end{lemma}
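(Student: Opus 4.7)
The plan is to unpack the three definitions involved—the weight polynomial map $\ch$ from \eqref{ch}, the Kazhdan--Lusztig basis element $C'_\alpha=(-1)^{-d_\alpha}\ch(\IC_\alpha)$ from \eqref{define Kazhan-Lustig}, and the polynomial $P_{\beta,\alpha}(t)$—and then use the $*$-purity (resp.\ $*$-parity) hypothesis to relate them. No deep geometric input is required; the statement should follow by direct computation.

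First I would expand $\ch(\IC_\alpha)$ and examine its coefficient along each orbit $X_\beta$. Under the $*$-purity hypothesis, $H^j(\IC_{\alpha,x_\beta})$ is pure of weight $j$, so the weight filtration is concentrated in a single weight per cohomological degree and the triple sum in \eqref{ch} collapses to $\sum_\beta\bigl(\sum_j(-v)^j\dim H^j(\IC_{\alpha,x_\beta})\bigr)\one_\beta$. Reindexing the inner sum via $i=j+d_\alpha$ then identifies it with $(-v)^{-d_\alpha}P_{\beta,\alpha}(v)$. Multiplying by the sign $(-1)^{-d_\alpha}$ built into $C'_\alpha$ cancels the sign $(-1)^{-d_\alpha}$ produced by this reindexing, yielding the formula $C'_\alpha=v^{-d_\alpha}\sum_{\beta\le\alpha}P_{\beta,\alpha}(v)\one_\beta$. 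The restriction of the sum to $\beta\le\alpha$ follows from the vanishing $P_{\beta,\alpha}=0$ when $\beta\not\le\alpha$, which was observed immediately after the definition of $P_{\beta,\alpha}$.

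For the second assertion the key observation is a parity matching: the $*$-parity hypothesis with parity $d_\alpha$ means $H^{-d_\alpha+i}(j_\beta^*\IC_\alpha)$ vanishes unless $-d_\alpha+i\equiv d_\alpha\pmod 2$, i.e.\ unless $i$ is even. Thus only monomials $t^i$ with $i$ even appear in $P_{\beta,\alpha}(t)$, and the corresponding signs $(-1)^i$ are all $+1$. The remaining coefficients $\dim H^{-d_\alpha+i}(j_\beta^*\IC_\alpha)$ are visibly nonnegative integers, so $P_{\beta,\alpha}(t)\in\bZ_{\ge 0}[t^2]\subset\bZ_{\ge 0}[t]$.

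There is essentially no serious obstacle; the only point requiring care is keeping track of the several sign and shift conventions at play—the shifts and twists $\braket{d_\alpha}$ appearing in the definition of $\IC_\alpha$, the $(-1)^{-d_\alpha}$ built into $C'_\alpha$, and the $(-t)^{d_\alpha}$ prefactor implicit in $P_{\beta,\alpha}$—all of which must be matched consistently so that the final answer has the asserted positive coefficients under the parity hypothesis.
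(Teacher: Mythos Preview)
Your proposal is correct and is exactly what the paper intends: it states the lemma as ``immediate from the definitions'' and gives no further proof, so your unpacking of $\ch$, $C'_\alpha$, and $P_{\beta,\alpha}$ together with the purity/parity hypotheses is precisely the intended argument.
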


    
The notion of a $W$-graph (undirected graph with labels on vertices and edges) is introduced by Kazhdan and Lusztig in \cite[\S 1]{MR0560412}. We define a slightly variation of a $W$-graph $\bG(X)$ (in that it is directed) attached to the spherical $G$-variety $X$ as follows. 

\begin{cons}\label{cons:W graph}
The vertex set of $\bG(X)$ we take to be $I$, and each vertex $\a\in I$ is labeled by the set $\cD(\a)\subset S$. There is an edge  $e=(\a\rightarrow \b)$ if the following conditions are satisfied
\begin{itemize}
   \item $\cD(\b)\not\subset\cD(\a)$;
   \item (Degree condition) Either $\beta<\alpha$, in which case $\deg P_{\beta,\alpha}(t)=d_\alpha-d_\beta-1$, or $\beta>\alpha$, in which case $\deg P_{\alpha,\beta}(t)=d_\beta-d_\alpha-1$. In both cases we define $\mu(e)$ to be the leading coefficient of the relevant polynomial $P_{\beta,\alpha}(t)$ or $P_{\alpha,\beta}(t)$. Note that if $d_\alpha=d_\beta+1$ in the first case (or $d_\beta=d_\alpha+1$ in the second), the degree condition is automatic
\end{itemize}
We denote the set of directed edges by $E$, and denote the labeled directed graph thus defined by
\begin{equation*}
    \bG(X)=(I, \{\cD(\a)\}_{\a\in I}, E, \{\mu(e)\}_{e\in E}).
\end{equation*}
\end{cons}
\trivial[h]{
If $\b< \a$ with $d_\a=d_\b+1$. Then  $e=(\a\leftrightarrow \b)\in E$. If $\overline {\cO_\a}$ is normal, then $\mu(e)=1$. 
}

The following result says essentially that $\bG(X)$ is a $W$-graph in the sense of Kazhdan-Lusztig, although we use the basis $\{C'_\a\}$ for the $\sfH$-module $R[I]$ which differs from theirs.

\begin{prop}\label{Cor. Hk action on KL basis} 
Suppose $\IC_\a$ is $*$-pure of weight zero for all $\a\in I$. Then the action of $\sfH$ on $R[\un{B\bs X}]$ is determined by the formulas
    \begin{equation*}
        \sfT_s\cdot C'_\a=\begin{cases}
            v^2C'_\a & s\in \cD(\a)\\
            -C'_\a+v\sum_{e=(\a\rightarrow \b)\in E, s\in \cD(\b)}\mu(e)C'_\b, & s\notin \cD(\a).
        \end{cases}
    \end{equation*}
\end{prop}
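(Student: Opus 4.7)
The plan is to reduce the computation of $\sfT_s \cdot C'_\a$ to the geometric decomposition of $\IC_s \star \IC_\a$ provided by \Cref{Lem. conv ICs}. The starting point is the identity $\sfT_s = vC'_s - 1$ in $\sfH$, which I would derive from the distinguished triangle $\D_s \to \IC_s \to \D_1\langle 1\rangle$ by applying $\ch$ and using $\ch(\D_s) = -v^{-1}\sfT_s$ together with $C'_s = -\ch(\IC_s)$. Thus
\[
    \sfT_s \cdot C'_\a \;=\; v\, C'_s \cdot C'_\a \;-\; C'_\a,
\]
and it suffices to evaluate $C'_s \cdot C'_\a$ from the geometry.

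Case 1 ($s \in \cD(\a)$) is immediate: \Cref{Lem. conv ICs}(1) gives $\IC_s \star \IC_\a \cong \IC_\a\langle 1\rangle \oplus \IC_\a\langle -1\rangle$, whence $C'_s \cdot C'_\a = (v + v^{-1})C'_\a$, and the first branch $\sfT_s \cdot C'_\a = v^2 C'_\a$ drops out. For Case 2 ($s \notin \cD(\a)$), I would apply \Cref{Lem. conv ICs}(2) to write $\IC_s \star \IC_\a \cong \IC_{\a'} \oplus \bigoplus_{\b < \a,\, s\in \cD(\b)} \IC_\b \otimes V_\b$ with $V_\b = H^{-d_\b-1}(\IC_\a|_{x_\b})(-(d_\b+1)/2)$. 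Under the $*$-purity hypothesis, each $V_\b$ is a pure Tate Galois module of weight zero, hence a plain $\Qlbar$-vector space contributing $\dim V_\b$ to $\ch$ with no extra $v$-factors. Taking $\ch$, converting back to the $C'$-basis, and tracking the signs $(-1)^{d_\g}$ carefully, the coefficient of $C'_\b$ in $C'_s \cdot C'_\a$ works out to $(-1)^{d_\a-d_\b-1}\dim V_\b$, which is precisely the coefficient of $t^{d_\a-d_\b-1}$ in $P_{\b,\a}(t)$ and thus equals the edge weight $\mu(\a\to\b)$ from \Cref{cons:W graph}.

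It remains to match the lone contribution $C'_{\a'}$ with the edge $\a \to \a'$ in $\bG(X)$. Since $d_{\a'} = d_\a + 1$ the degree condition is automatic, and I would verify $\mu(\a\to\a') = 1$ by observing that the $P_s$-orbit $P_s \cdot X_\a = X_\a \cup X_{\a'}$ is the smooth homogeneous space $P_s/P_{s,x_\a}$, which locally near $X_\a$ coincides with $\ov{X_{\a'}}$; consequently $\IC_{\a'}|_{x_\a} \cong \Qlbar\langle d_{\a'}\rangle$ and the constant term of $P_{\a,\a'}(t)$ equals $1$. The subtlest step, and what I expect to be the main obstacle, is ruling out spurious edges $\a \to \b$ with $\b > \a$, $\b \ne \a'$, and $s \in \cD(\b)$ which would contribute unwanted terms to the right-hand side of the proposition. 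Such an edge would force $H^{-d_\a-1}(\IC_\b|_{x_\a}) \ne 0$; but $s \in \cD(\b)$ makes $\ov{X_\b}$ $P_s$-stable and $\IC_\b$ thereby $P_s$-equivariant, and the $P_s$-equivariance along the $\bP^1$-fibration $\widetilde a_s: P_s\times^B X \to X$ forces this top-degree stalk to vanish—this is the geometric incarnation of the classical Kazhdan--Lusztig fact that $\mu(y,w) = 0$ when going ``up'' without a matching descent, and can equivalently be read off from the fact that the decomposition in \Cref{Lem. conv ICs}(2) is supported on $\{\a'\}\cup\{\b : \b\le\a\}$. Assembling these observations and substituting back into $\sfT_s \cdot C'_\a = v\, C'_s \cdot C'_\a - C'_\a$ then yields the displayed formula.
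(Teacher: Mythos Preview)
Your approach is correct and aligns closely with the paper's. Both reduce to \Cref{Lem. conv ICs}, and both identify the same ``extra point'' as the subtle step: ruling out contributions from edges $\a\to\b$ with $\b>\a$, $\b\ne\a'$, and $s\in\cD(\b)$.

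On that extra point, your $P_s$-equivariance argument and the paper's coefficient-matching argument are essentially the same computation in two guises. The paper expands $\sfT_s\cdot C'_\b=v^2C'_\b$ in the $\one$-basis and compares with the formula for $\sfT_s\cdot\one_\a$ to derive $P_{\a',\b}=P_{\a,\b}$, then concludes by degree bounds (and indeed remarks in an aside that this identity ``simply follows from the fact that $\IC_\b$ is constant along the fibre of $X/B\to X/P_s$''). Your version states this geometric fact up front: $s\in\cD(\b)$ makes $\IC_\b$ $P_s$-equivariant, whence $\IC_\b|_{x_\a}\cong\IC_\b|_{x_{\a'}}$, and the IC support condition for $\a'\ne\b$ forces $H^{-d_{\a'}}(\IC_\b|_{x_{\a'}})=0$. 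This is arguably the cleaner phrasing. The same trick applied to $\b=\a'$ itself gives $\IC_{\a'}|_{x_\a}\cong\IC_{\a'}|_{x_{\a'}}=\Qlbar\langle d_{\a'}\rangle$, which is a more direct route to $\mu(\a\to\a')=1$ than your ``locally coincides with $\ov{X_{\a'}}$'' sketch (the closure need not be smooth there).

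One small correction: your alternative justification---``can equivalently be read off from the fact that the decomposition in \Cref{Lem. conv ICs}(2) is supported on $\{\a'\}\cup\{\b:\b\le\a\}$''---does not work. That decomposition describes the summands of $\IC_s\star\IC_\a$, i.e., which $C'_\b$ appear in $C'_sC'_\a$; it says nothing about whether a formal edge $\a\to\b$ with $\b>\a$ might exist in $\bG(X)$ and contribute to the \emph{right-hand side} of the proposition's formula. The $P_s$-equivariance argument is what actually closes this gap.
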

\begin{proof}
This is a direct consequence of \Cref{Lem. conv ICs}, with one extra point to verify: if there is an edge $e=(\alpha\to\beta)$ with $\alpha<\beta$ and if $s\in \cD(\beta)\setminus \cD(\alpha)$, then $\beta\sim_s \alpha$ and $\mu(e)=1$.

We first consider the case that $(s,X_\alpha)$ is of type $U^-$. Let $\gamma\sim_s\alpha$ be the $s$–companion of $\alpha$ with $s\in \cD(\gamma)$. Then $d_\gamma=d_\alpha+1$. 
By \Cref{Lem. conv ICs}(1),
\begin{equation}\label{action 1}
  \sfT_s\cdot C'_\beta = v^2\, C'_\beta .
\end{equation}
 By \Cref{Prop ICs preserves Tate},
\begin{equation}\label{action 2}
  \sfT_s\, \one_\alpha = \one_\gamma,\qquad
  \sfT_s\, \one_\gamma = v^2\, \one_\alpha + (v^2-1)\,\one_\gamma .
\end{equation}
Substitute the expansion of $C'_\beta$ from \Cref{Lem. KL poly ch} into \eqref{action 1} and compare the $\one_\alpha$– and $\one_\gamma$–coefficients using \eqref{action 2}. Since $\one_\gamma$ appears on the left by \Cref{action 2}, it must also appear on the right, hence $\gamma\leq \beta$. Then comparing the $\one_\alpha$–coefficients give
\[
  P_{\gamma,\beta}(v)=P_{\alpha,\beta}(v).
\]
\trivial[h]{From Zhiwei, this identity simply follows from the fact that $\IC_\beta$ is constant along the fibre of $X/B \rightarrow X/P_s$. Therefore, taking the stalk at $\alpha$ and $\gamma$ are the same.
}
If $\gamma\neq \beta$, then 
\[
\deg P_{\alpha,\beta}(t)=\deg P_{\gamma,\beta}(t)\le d_\beta-d_\gamma-1 < d_\beta-d_\alpha-1,
\]
which contradicts the degree constraint recalled above. Hence $\gamma=\beta$ and $P_{\alpha,\beta}(v)=P_{\beta,\beta}(v)=1$. In particular $\mu(e)=1$.

The case when $(s,X_\alpha)$ is of type $T^-$ follows by a similar argument. 
\trivial[h]{
Next we consider case when $(s,X_\a)$ is of type $T^-$. Let $\a'\sim_s \a$ be the companion of $\a$ such that $(s,X_{\a'})$ is also of type $T^-$. Then we have 
\[
 \sfT_s \one_\a= \one_{\gamma}+\one_{\a'},   \qquad  \sfT_s \one_{\a'}= \one_{\gamma}+\one_{\a} \qquad  \sfT_s \one_\gamma= (v^2-2)\one_\gamma+ (v^2-1)(\one_\a+ \one_{\a'}). 
\]
$\one_{\gamma}$ and $\one_{\a'}$ appears on LHS, therefore, it will appears on RHS. This implies that $\gamma<\b$ and $\a'<\b$. The coefficient of $\one_\a$ on LHS is  $v^{-d_\a} \cdot (v^2-1) \cdot P_{\gamma,\b}(v) +v^{-d_\a} \cdot P_{\a',\b}(v) $ and on RHS is  $v^{-d_\a} \cdot v^2 \cdot P_{\a, \b}(v) $. The coefficient of $\one_{\a'}$ on LHS is  $v^{-d_\a} \cdot (v^2-1) \cdot P_{\gamma,\b}(v) +v^{-d_\a} \cdot P_{\a,\b}(v) $ and on RHS is  $v^{-d_\a} \cdot v^2 \cdot P_{\a', \b}(v) $. We conclude from these that $P_{\gamma,\b}(v) =P_{\a', \b}(v)=P_{\a,\b}(v)$.
}
\end{proof}

We now introduce cells in the context of the spherical variety $X$. The set $I$ has a partial order: $\b\preccurlyeq \a$  if there is 
directed path from $\a$ to $\b$ in $\bG(X)$. It induces an equivalence relation: $\a\sim \b$ if $\b\preccurlyeq \a$ and $\a\preccurlyeq \b$.

\begin{defn}\label{def:cell} A cell in $I$ is an equivalence class under the equivalence relation defined above.    
\end{defn}

For each cell $c\subset I$ and $\a\in I$, we write $\a\preccurlyeq c$ to mean there exists $\a'\in c$ such that $\a\preccurlyeq \a'$. For a cell $c\subset I$, Let $R[I]_{\preccurlyeq c}\subset R[I]$ be the (free) $R$-submodule spanned by $C'_\a$ for $\a\preccurlyeq c$. By \Cref{Cor. Hk action on KL basis}, $R[I]_{\preccurlyeq c}$ is an $\sfH$-submodule of $R[I]$. In particular, $R[I]_c:=R[I]_{\preccurlyeq c}/R[I]_{\prec c}$ is an $\sfH$-module. It is the $\sfH$-module attached to the full subgraph $\bG(X)_c\subset \bG(X)$ by restricting the vertex set to $c$.

\subsection{Characteristic cycles and Springer action}\label{ss:cc gen}
 In this subsection, we work over $\bC$ and sheaves are constructible sheaves in the analytic topology with $\bQ$-coefficients. Let $G$ be a connected reductive group over $\bC$. Let $B\subset G$ be a Borel subgroup with Lie algebra $\frb$.  Let $X$ be a $G$-variety with finite many $B$-orbits $\{X_\a|\a\in I\}$. We do not assume $X$ is smooth. However, we assume $X$ is embedded into a smooth $G$-variety $Y$.

We consider the $B$-equivariant derived category $D^b_B(X,\bQ)$ and viewing it as a full subcategory of $D^b_B(Y,\bQ)$. We use parallel notations in \Cref{Sec. Cat gen}. In particular, for each $\a\in I$, we have $\Delta_\a\in D^b_B(X,\bQ)$ and $\IC_\alpha\in D^b_B(X,\bQ)$. 
Each object $\sF\in D^b_B(X,\bQ)$ has a characteristic cycle $\CC(\sF)$, which is a $\bZ$-linear combination of irreducible components of the Lagrangian $\L_{\hSXY}$ defined in \Cref{LhSXY}. This defines a map of abelian groups
\begin{equation}\label{CC gen}
    \CC: K_0(D^b_B(X,\bQ))\to \hBM{\TOP}{\L_{\hSXY},\bZ}.
\end{equation}
Here $\hBM{\TOP}{-,\bZ}$ denotes top dimensional Borel-Moore homology group with $\bZ$-coefficients. It is a free abelian group with a basis given by the classes of the closure of $T^*_{X_\a}Y$, which we denote by $\L_{\a}$.

\begin{lemma}\label{lem:CC isom}
    The map $\CC$ is an isomorphism.
\end{lemma}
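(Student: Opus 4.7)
The plan is to exhibit explicit bases on both sides and check that $\CC$ is represented by an upper-triangular matrix with $\pm 1$ on the diagonal, with respect to the closure partial order on $I$ defined by $\b\le\a$ iff $X_\b\subset\overline{X_\a}$.

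First I would identify $K_0(D^b_B(X,\bQ))$. Under the standing assumption that each $B$-orbit has connected stabilizer (the complex analogue of \eqref{orbit assumption}), the only irreducible $B$-equivariant local system on $X_\a$ is the trivial one. A dévissage along the orbit stratification, exactly parallel to the proof of \Cref{Lem. ch isom}(1), then shows that $K_0(D^b_B(X,\bQ))$ is a free abelian group of rank $|I|$ with basis $\{[\Delta_\a]\}_{\a\in I}$: any $\sF\in D^b_B(X,\bQ)$ is (in the Grothendieck group) a $\bZ$-linear combination of shifts of $j_{\a!}j_\a^*\sF$, and each $j_\a^*\sF$ is a successive extension of shifted constant sheaves. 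On the other hand, by \eqref{LhSXY} the Lagrangian $\L_{\hSXY}$ is the disjoint union of the smooth locally closed subvarieties $T^*_{X_\a}Y$, each of dimension $\dim Y$, and hence $\hBM{\TOP}{\L_{\hSXY},\bZ}$ is the free abelian group on $\{[\L_\a]\}_{\a\in I}$.

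It remains to compute $\CC(\Delta_\a)$ in this basis. By general properties of singular supports of constructible sheaves, the support of $\CC(\Delta_\a)$ is contained in $\mathrm{SS}(\Delta_\a)\cap\L_{\hSXY}\subset T^*Y|_{\overline{X_\a}}\cap\L_{\hSXY}=\bigcup_{\b\le\a}\L_\b$. Moreover, on the open stratum $X_\a\subset\overline{X_\a}$ the sheaf $\Delta_\a=j_{\a!}\Qlbar\braket{d_\a}$ is (a shift/twist of) the rank-one constant sheaf, so the local contribution to $\CC(\Delta_\a)$ along $T^*_{X_\a}Y$ is $\pm[\L_\a]$. Combining these two observations,
\[
\CC(\Delta_\a)=\pm[\L_\a]+\sum_{\b<\a}c_{\a,\b}[\L_\b],\qquad c_{\a,\b}\in\bZ,
\]
so the transition matrix from $\{[\Delta_\a]\}$ to $\{[\L_\a]\}$ is triangular with units on the diagonal. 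Hence $\CC$ is an isomorphism of free $\bZ$-modules of rank $|I|$.

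The only mildly delicate point is the first step: verifying that the Grothendieck group really is freely generated by the $[\Delta_\a]$ requires the connectedness-of-stabilizers hypothesis, so that no nontrivial equivariant local systems contribute. Once this is in place, the triangularity of $\CC$ on the standard basis is a routine consequence of the support estimate for singular supports and the rank-one computation on the open stratum of each orbit closure, so no further work is needed.
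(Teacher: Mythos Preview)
Your argument is correct and essentially identical to the paper's: both identify bases on the two sides and observe that $\CC$ is unitriangular with respect to the closure order on $I$. The paper asserts the diagonal coefficient is exactly $1$ (rather than your cautious $\pm 1$), but either suffices for invertibility; your extra remark that freeness of $K_0$ on $\{[\Delta_\a]\}$ uses connectedness of stabilizers is a point the paper leaves implicit via its reference to the parallel setup in the earlier section.
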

\begin{proof}
For $\D_\a\in D^b_B(X)$, $\CC(\D_\a)=[\L_\a]+\mbox{(linear combination of $[\L_\b]$ for $\b<\a$)}$. Therefore the image of the basis $\{[\D_\a]\}_{\a\in I}$ of $K_0(D^b_B(X))$ under $\CC$ is upper triangular with respect to the basis $\{[\L_\a]\}_{\a\in I}$ with coefficients $1$ on the diagonal. Therefore $\CC$ is an isomorphism.
\end{proof}

Tensoring \Cref{CC gen} by $\bC$ we get an isomorphism 
\begin{equation}\label{CC gen Q}
    \CC_{\bC}: K_0(D^b_B(X))\otimes \bC\to \hBM{\TOP}{\L_{\hSXY},\bC}\cong   \hBM{\TOP}{\St_{\hSXY},\bC}.
\end{equation}
By \Cref{Lem. Hecke mod specialization} and \Cref{W-action on relative hom}, both sides of the map \eqref{CC gen Q} carry $W$-actions. The main result of this subsection is the following.
\begin{prop}\label{Prop. CC W eq}
The map $\CC_\bC$ in \eqref{CC gen} is $W$-equivariant up to tensoring with the sign representation of $W$. In other words, for any $\sF\in D^b_B(X)$ and $w\in W$ we have
\begin{equation}\label{CC W sgn}
    \CC_\bC(w\cdot [\sF])=(-1)^{l(w)}w\cdot \CC_\bC(\sF).
\end{equation}
\end{prop}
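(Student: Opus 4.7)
The plan is to reduce the problem to the case of a simple reflection $w=s\in S$ and then verify the identity locally via the $\bP^1$-fibration $a_s\colon B\bs X \to P_s\bs X$. Since $W$ is generated by $S$ and $\CC_\bC$ is a $\bC$-linear isomorphism by \Cref{lem:CC isom}, the general case follows from the case $w=s$ by induction on $l(w)$: if \Cref{CC W sgn} holds for $s_1$ and $s_2$ separately, then
\[
\CC_\bC(s_1s_2\cdot[\sF])=-s_1\cdot\CC_\bC(s_2\cdot[\sF]) = s_1s_2\cdot\CC_\bC([\sF]),
\]
and iterating yields the desired formula for any $w\in W$. Moreover, since $\CC_\bC$ is $\bC$-linear, it suffices to verify the identity on the spanning set $\{[\IC_\alpha]\}_{\alpha\in I}$.

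For a fixed simple reflection $s$, both sides admit parallel descriptions through the $P_s$-associated $\bP^1$-fibration. On the left, $\ch(\IC_s)|_{v=1}=-(1+s)\in\bC[W]$, so the convolution operator $[\IC_s\star-]|_{v=1}$ on $K_0(D^b_B(X))\otimes\bC$ equals $-(1+s)$. Geometrically, $\IC_s\star(-)$ is realized as a pull-push through $a_s$ by \eqref{conv ICs}; the standard compatibility of characteristic cycles with smooth pullback and proper pushforward then produces a concrete push-pull operator on $\HBM(\L_\sX,\bC)$ along the $\bP^1$-correspondence in $T^*Y$ cut out by the $P_s$-action. On the right, Hotta's local formula (\Cref{Sec. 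Hotta}) expresses the Springer operator $(1+s)$ on $\HBM(\St_\sX,\bC)$ as the analogous push-pull along the same $\bP^1$-correspondence, now viewed intrinsically via the $P_s$-equivariant Springer resolution.

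The heart of the proof is to verify that these two push-pull operators agree up to a sign $-1$, matching the sign character at $s$. This is carried out by case analysis on the type of $(s,X_\alpha)$ among (G), (U$\pm$), (T$\pm$), using the explicit formulas of \Cref{Cor. W action sph orb} on the left and Hotta's local action on the right. In each case, both operators act on the basis $\{[\L_\beta]\}$ in a parallel fashion, and the overall sign discrepancy is a uniform $-1$. The main obstacle is the careful bookkeeping of signs arising from the perverse-sheaf shift producing $\ch(\IC_s)|_{v=1}=-(1+s)$ and from the Springer normalization of $\cS_\cN$ in \eqref{SpringerSheaf}, where the trivial local system on $\{0\}\subset\cN$ corresponds to the sign character of $W$; these two conventions combine to give the $(-1)^{l(w)}$ twist in \Cref{CC W sgn}.
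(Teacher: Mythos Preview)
Your high-level strategy is correct and matches the paper: reduce to a simple reflection $s$, express both sides via a push--pull through the $\bP^1$-fibration $B\bs Y\to P_s\bs Y$, and invoke Hotta's formula. However, your execution has two issues.

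First, a sign error: Hotta's local formula (\Cref{p:Hotta}) identifies the Springer action of $s-1$, not $1+s$, with the push--pull composition $i_*\pi_0^!\pi_{0*}i^!$. On the categorical side, $[\IC_s]$ specializes to $-(1+s)\in\bZ[W]$, so the target identity is $\CC_\bC(\IC_s\star\sF)=(s-1)\cdot\CC_\bC(\sF)$. The paper shows that \emph{both} sides equal the \emph{same} operator $\io_*\b^!\b_*\io^!$ applied to $\CC_\bC(\sF)$: the left side by the Kashiwara--Schapira compatibility of characteristic cycles with smooth pullback and proper pushforward (applied to $\IC_s\star\sF\cong\gamma^*\gamma_*\sF[1]$, the shift contributing the requisite sign), and the right side by reducing to the $\PGL_2$ Springer picture for the Levi $L_s$ via the Cartesian diagram \eqref{Lam s Cart} and then applying \Cref{p:Hotta}. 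There is no residual sign discrepancy to verify; the two push--pull operators are literally equal.

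Second, your proposed case analysis on the types (G), (U$\pm$), (T$\pm$) is both unnecessary and unlikely to go through as stated. The formulas in \Cref{Cor. W action sph orb} describe the $W$-action on the basis $\{\one_\alpha\}$, not on characteristic cycles, and translating between $\CC_\bC(\one_\alpha)$ and the conormal basis $\{[\Lambda_\alpha]\}$ requires knowing the singular support of each $\IC_\alpha$, which is not available in general. The paper's uniform argument via the Lagrangian correspondence avoids this entirely.
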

\begin{proof}
It suffices to check that \eqref{CC W sgn} for $w=s$ a simple reflection. Under the isomorphism $\chi: K_0(D^b_B(G/B))\isom \bZ[W]$ (see \Cref{hecke algebra iso}), $i_{s!}\Qlbar$ gets mapped to $s$, and hence $\IC_s$ gets mapped to $-s-1\in \bZ[W]$. We have $\CC_\bC((-s-1)\cdot [\sF])=\CC_\bC(\IC_s\star \sF)$. Therefore \eqref{CC W sgn} for $w=s$ is equivalent to
    \begin{equation}\label{CC F s}
        \CC_\bC(\IC_s\star\sF)=(s-1)\cdot \CC_\bC(\sF).
    \end{equation}

    Let $P_s\subset G$ be the parabolic subgroup containing $B$ with the only negative root $-\a_s$. Let $\g: B\bs Y\to P_s\bs Y$ be the projection, which is smooth and proper.  By \eqref{conv ICs}, we have
    \begin{equation*}
        \IC_s\star(-)\cong \g^*\g_*[1]: D_B^b(Y)\to D_B^b(Y).
    \end{equation*}
    Consider the Lagrangian correspondence between the cotangent stacks induced by the map $\g$:
    \begin{equation}\label{Lag corr gamma}
        \xymatrix{T^*(B\bs Y)=B\bs \mu^{-1}(\frb^\bot) & B\bs \mu^{-1}(\frp_s^\bot)\ar[l]\ar[r] & T^*(P_s\bs Y)=P_s\bs \mu^{-1}(\frp_s^\bot)}
    \end{equation}
    Let $\L_s=\mu^{-1}_X(\frp_s^\bot)=\mu^{-1}(\frp_s^\bot)\cap (T^*Y)|_X$. We restrict the diagram \eqref{Lag corr gamma} over $X$ and undo the $P_s$-quotient to obtain the correspondence
    \begin{equation*}
        \xymatrix{P_s\times^B \L & P_s\times^B \L_s\ar[l]_-{\io}\ar[r]^-{\b} & \L_s}
    \end{equation*}
    Here $\io$ is induced by the inclusion $\L_s\hookrightarrow \L$, and $\b$ is the action map of $P_s$ on $\L_s$, which is a $\bP^1$-fibration. 
    
    For any $\sF\in D^b_{B}(X,\bQ)$, we consider its $\CC_\bC(\sF)$ as a class in $\hBM{\TOP}{\L,\bC}$, which we identify with $\hBM{\TOP}{P_s\times^B \L,\bC}$ and denote the resulting class by $\wt\CC_\bC(\sF)\in \hBM{\TOP}{P_s\times^B \L,\bC}$.
    
    By the behavior of $\CC_\bC$ under smooth pullback and proper pushforward, see \cite[Definition~9.3.3, Proposition 9.4.2, 9.4.3]{KS}, we have
    \begin{equation}\label{CC conv ICs}
        \wt\CC_\bC(\IC_s\star \sF)=-\wt\CC_\bC(\g^*\g_*\sF)=\io_*\b^!\b_*\io^!\wt\CC_\bC(\sF)\in \hBM{\TOP}{P_s\times^B\L,\bC}.
    \end{equation}
    Here $i_s^!: \hBM{*}{P_s\times^B \L,\bC}\to \hBM{*-2}{P_s\times^B\L_s,\bC}$ is the Gysin pullback along the regular codimension one embedding $\io$, whose definition we will review in \Cref{sss:Gysin i}. The map $\b^!: \hBM{*}{\L_s,\bC}\to \hBM{*+2}{P_s\times^B\L_s,\bC}$ is the usual pullback of cycles along the smooth map $\b$ of relative dimension $1$.

    On the other hand, let $\frn_s$ be the nilpotent radical of $\frp_s$, and let
    \begin{equation*}    \Xi_s:=\mu^{-1}_X(\frn_s^\bot)\subset (T^*Y)|_X,
    \end{equation*}
    which carries an action of $P_s$. Let $L_s$ be the reductive quotient of $P_s$ with Lie algebra $\frl_s$, and let $\ov B_s\subset L_s$ be the image of $B$, with Lie algebra $\ov\frb_s$. Consider the following a diagram in which all squares are Cartesian.
    \begin{equation*}
        \xymatrix{[B\bs \L]\ar[d]\ar[r] & [B\bs \frb^\bot]\ar[r]\ar[d] & [\ov B_s\bs \ov\frb_s^\bot]\ar[d]\\
        [P_s\bs \Xi_s]\ar[d]\ar[r] & [P_s\bs \frn^\bot_s]\ar[r]\ar[d] & [L_s\bs \frl^*_s]\\
        G\bs (T^*Y)|_X\ar[r]^-{\mu_X} & [G\bs \frg^*]
        }
    \end{equation*}
    The composite of the upper squares give a Cartesian diagram
    \begin{equation}\label{Lam s Cart}
        \xymatrix{P_s\times^B \L\ar[r]\ar[d]^{\Pi'_s} & \wt\cN_{L_s}\ar[d]^{\pi'_{L_s}}\\
        \Xi_s\ar[r] & \frl^*_s}
    \end{equation}
    The $s$-action on $\hBM{\TOP}{\L,\bC}\cong \hBM{\TOP}{P_s\times^B\L\,\bC}$ comes from the Springer action on $\pi'_{L_s *}\DD_{[L_s\bs \wt\cN_{L_s}]}$ by the same procedure as \Cref{cons:W on hBM}. We can replace $L_s$ by its adjoint group, which is isomorphic to $\PGL_2$. Then \eqref{Lam s Cart} fits into the situation of \Cref{Sec. Hotta} (see diagram \eqref{Hotta setup Cart}). By \Cref{p:Hotta}, the action of $s-1$ on $\hBM{*}{P_s\times^B\L,\bC}$ is given by the composition
    \begin{equation*}
        \io_*\b^!\b_*\io^!\in \End(\hBM{*}{P_s\times^B\L,\bC}).
    \end{equation*}
    This in particular applies to the top homology. Comparing with \eqref{CC conv ICs}, we see that \eqref{CC F s} holds.
\end{proof}

\begin{remark}\label{r:CC W eq fullO}
    In \Cref{sec Hecke type I} we will need a slight variant of \Cref{Prop. CC W eq} where $G$ is the full orthogonal group $\O(V_1)$ with $\dim V_1=2m$, hence disconnected. The Weyl group $W_1\cong W_1^\circ\rtimes\langle t_m\rangle$ where $W_1^\circ$ is the Weyl group of $G^\circ=\SO(V_1)$ and $t_m$ is an involution acting on $W^\circ$ by Coxeter group automorphism (see \Cref{ss:moment cone L2} for details). The action of $t_m$ on both $K_0(D_B^b(X,\bQ))$ and on $\hBM{\TOP}{\Lambda_{\sX},\bC}$ are induced by an involution of the stack $B\bs X$, hence
    \begin{equation}
        \CC_\bC(t_m\cdot [\sF])=t_m\cdot \CC_\bC(\sF). \quad\forall \sF\in D_B^b(X,\bQ).
    \end{equation}
    If we use the length function defined in \Cref{length H1}, then combined with \Cref{Prop. CC W eq} we get
    \begin{equation*}
        \CC_\bC(w\cdot [\sF])=(-1)^{l(w)}w\cdot \CC_\bC(\sF). \quad\forall w\in W, \sF\in D_B^b(X,\bQ).
    \end{equation*}
\end{remark}

Combining \Cref{Thm decomposition of SthS}, \Cref{lem relevant decomoposition into pieces},\Cref{lem:CC isom} and \Cref{Prop. CC W eq}, we deduce the main theorem of this section. 
\begin{thm}\label{thm cc W equivariant}
Let $X$ be a $G$-variety with finite many $B$-orbits. We assume $X$ is embedded into a smooth $G$-variety $Y$. Then there is a $W$-equivariant decomposition 
  \[
   \left(K_0(D^b_B(X))\otimes \bC
\right)\otimes \sgn_W \cong \bigoplus_{(\cO,\cL)\in  \widehat \cR_G(X)} E_{\cO,\cL}\otimes V_{\cO,\cL}(X).
  \]
  where $\sgn_W$ is the sign representation of $W$. 
\end{thm}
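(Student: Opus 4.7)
The plan is to assemble the four results just mentioned (Lemma \ref{lem:CC isom}, Proposition \ref{Prop. CC W eq}, Theorem \ref{Thm decomposition of SthS}, and Lemma \ref{lem relevant decomoposition into pieces}) into a single chain of isomorphisms, with the sign character appearing naturally from the normalization of the characteristic cycle. Concretely, set $\hS := \hSXY = (T^*Y)|_X$ with moment map $\mu_{\hS} : \hS \to \fgg^*$, so that $\Lambda_{\hS}$ and $\St_{\hS}$ are as in \Cref{Sec Springer}.

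First I will apply \Cref{lem:CC isom} (after tensoring with $\bC$) to obtain an isomorphism of $\bC$-vector spaces
\[
\CC_{\bC} : K_0(D^b_B(X)) \otimes \bC \;\xrightarrow{\sim}\; \HBM(\Lambda_{\hS},\bC) \;\cong\; \HBM(\St_{\hS},\bC),
\]
where the last identification uses \Cref{eqSteinberg}. The left-hand side inherits a $W$-action via the specialization of the $\sfH$-action on $\sfM = R[\underline{B\backslash X}]$ at $v = 1$ (via \Cref{Lem. ch isom} and \Cref{Lem. Hecke mod specialization}), and the right-hand side carries the relative Springer $W$-action of \Cref{W-action on relative hom}. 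By \Cref{Prop. CC W eq}, these two actions agree after twisting by the sign character $\sgn_W$; equivalently, $\CC_{\bC}$ induces a $W$-equivariant isomorphism
\[
\bigl(K_0(D^b_B(X)) \otimes \bC\bigr) \otimes \sgn_W \;\cong\; \HBM(\St_{\hS},\bC).
\]

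Next I will decompose the right-hand side using \Cref{Thm decomposition of SthS} to get
\[
\HBM(\St_{\hS},\bC) \;\cong\; \bigoplus_{(\cO,\cL) \in \widehat{\cR}_G(\hS)} E_{\cO,\cL} \otimes V_{\cO,\cL}(\hS)
\]
as $W$-representations. Finally, \Cref{lem relevant decomoposition into pieces} identifies the indexing set $\widehat{\cR}_G(\hS)$ with $\widehat{\cR}_G(X)$, and gives a canonical isomorphism $V_{\cO,\cL}(\hS) \cong V_{\cO,\cL}(X)$ for each relevant pair. Substituting this into the previous display yields the desired decomposition
\[
\bigl(K_0(D^b_B(X)) \otimes \bC\bigr) \otimes \sgn_W \;\cong\; \bigoplus_{(\cO,\cL) \in \widehat{\cR}_G(X)} E_{\cO,\cL} \otimes V_{\cO,\cL}(X).
\]

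There is no serious obstacle here, since each of the four ingredients has already been established; the only point requiring care is ensuring that the $W$-actions being compared on $K_0(D^b_B(X)) \otimes \bC$ really are the same one that appears in \Cref{Prop. CC W eq}, namely the specialization at $v=1$ of the $\sfH$-action on $\sfM$ arising from the Hecke category action on $\cM^{\Tate}$ via \Cref{Prop ICs preserves Tate}. This is automatic once one notes that both sides of the map in \eqref{CC gen} are computed on the same underlying category $D^b_B(X)$, and the $W$-action used in \Cref{Prop. CC W eq} is precisely the one coming from the Hecke category via the chain of maps in the bottom row of \eqref{M diagram}.
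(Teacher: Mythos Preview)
Your proposal is correct and follows exactly the same approach as the paper: combining \Cref{lem:CC isom}, \Cref{Prop. CC W eq}, \Cref{Thm decomposition of SthS}, and \Cref{lem relevant decomoposition into pieces}. The paper's proof is a one-line citation of these four results, and your write-up simply unpacks how they fit together; the only minor point is that in this subsection everything is already over $\bC$, so the $W$-action on $K_0(D^b_B(X))$ comes directly from the Hecke category $\cH^{\bC}$ via $\chi^{\bC}: K_0(\cH^{\bC}) \cong \bZ[W]$, without needing to pass through $\sfM$ and specialize at $v=1$.
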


\subsection{Proof of the spherical module decomposition}\label{subsection spherical proof}

The goal of this subsection is to prove \Cref{thm:intro spherical}.

We retain the notation from \Cref{subsection spherical hecke modules}. In particular, let $G$ be a split connected reductive group over $\bF_q$, and let $X$ be a smooth spherical $G$-variety over $\bF_q$ satisfying the orbit assumption in \Cref{intro stab}. We assume that both $G$ and $X$ admit integral models such that their base changes to $\bC$ are well-defined. Let $W$ denote the abstract Weyl group of $G$.

In addition, we impose the following compatibility condition assumption:
\begin{equation}\label{orbit assumption bFq bC}
\begin{minipage}{0.85\textwidth}
There exists a well-behaved bijection between the orbit sets $\underline{B(k)\backslash X(k)}$ and $\underline{B(\bC)\backslash X(\bC)}$, compatible with the types assigned to each orbit as defined in \Cref{Sec. Hk mod}, for every simple reflection $s \in W$.
\end{minipage}
\end{equation}

Our constructions of the Hecke category $\cH^k$ (see \Cref{Geo hecke algebra}) and the spherical Hecke module category $\cM^k$ (see \Cref{Sec. Hk mod}) extend naturally to the complex setting, where $k$ is replaced by $\bC$ and we consider constructible sheaves in the analytic topology with $\bQ$-coefficients. Superscripts $\bC$ are used to indicate the counterparts over $\bC$ of the objects defined over $k$.

As in \Cref{Cor. Hk ring isom}, the Euler characteristic map induces an isomorphism:
\begin{equation}\label{chiC H}
    \chi^{\bC}_{\cH}: K_0(\cH^{\bC}) \xrightarrow{\sim} \bZ[W].
\end{equation}

Similarly, by \Cref{Lem. ch isom}, there is an isomorphism
\[
\chi^{\bC}_{\cM}: K_0(\cM^{\bC}) \xrightarrow{\sim} \bZ[\underline{B(\bC) \backslash X(\bC)}],
\]
compatible with the $W$-actions on both sides via \eqref{chiC H}.

Using assumption \Cref{orbit assumption bFq bC}, we identify $\underline{B(\bC) \backslash X(\bC)}$ with $\underline{B(k) \backslash X(k)}$, yielding an isomorphism of abelian groups:
\begin{equation}\label{comp isom sph}
   \Comp_{\cM} \coloneqq (\chi^{\bC}_{\cM})^{-1} \circ \chi^k_{\cM}: K_0(\cM^k) \xrightarrow{\sim} K_0(\cM^{\bC}).
\end{equation}

\begin{lemma}\label{Lem. compare Fq and C sph}
The isomorphism $\Comp_{\cM}$ is $W$-equivariant.
\end{lemma}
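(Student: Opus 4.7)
The plan is to reduce to simple reflections and then invoke the explicit orbit-type formulas of \Cref{Cor. W action sph orb} on both sides.

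Since $W$ is generated by the set $S$ of simple reflections, it suffices to verify that $\Comp_{\cM}$ intertwines the action of each $s \in S$. Fix such an $s$. Under the isomorphism $\chi_{\cH}^k : K_0(\cH^k) \isom \bZ[W]$ of \Cref{Cor. Hk ring isom}, one checks (directly from the definition of $\chi$ and \Cref{Cor. W action sph orb} applied to $X = \cB$) that $s \in \bZ[W]$ corresponds to the class of the standard sheaf $[j_{s!}\Qlbar]$, where $j_s : \cO_s \hookrightarrow \cB_k$ is the inclusion of the $1$-dimensional $B_k$-orbit attached to $s$. The same formula holds over $\bC$. Thus the $s$-action on $K_0(\cM^k)$ and $K_0(\cM^{\bC})$ is induced by convolution with $[j_{s!}\Qlbar]$ in the respective settings.

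Next I compute the action of $[j_{s!}\Qlbar]$ on a standard class $[j_{\alpha,!}\Qlbar]$ on each side. Writing $[j_{s!}\Qlbar] = [\IC_s\braket{-1}] - [\IC_{e}]$ (or equivalently working directly with $\Delta_s$), the calculation in the proof of \Cref{Prop ICs preserves Tate} — which is purely geometric, done over the algebraically closed base field via proper base change applied to $a_s : B \backslash X \to P_s \backslash X$ — specializes, after taking Euler characteristics, to the formula of \Cref{Cor. W action sph orb}. In particular, the resulting $\bZ$-linear endomorphism of $\bZ[\underline{B \backslash X}]$ is determined entirely by the type — (G), (U$\pm$), (T$\pm$) — assigned to each pair $(s, X_\alpha)$, together with the companion relation $\sim_s$. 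Both of these data are preserved under the bijection $\underline{B(k)\backslash X(k)} \cong \underline{B(\bC)\backslash X(\bC)}$ by assumption \eqref{orbit assumption bFq bC}.

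Therefore, for every simple reflection $s$ and every orbit $\alpha \in I$, the elements $s \cdot \chi_{\cM}^k[j_{\alpha,!}\Qlbar]$ and $s \cdot \chi_{\cM}^{\bC}[j_{\alpha,!}\Qlbar]$ are given by identical $\bZ$-linear combinations of basis vectors $\one_\beta$ under the identification of orbit sets. Since the classes $\{[j_{\alpha,!}\Qlbar]\}_{\alpha \in I}$ form a $\bZ$-basis of both $K_0(\cM^k)$ and $K_0(\cM^{\bC})$ (by the argument of \Cref{Lem. ch isom}), and $\Comp_{\cM}$ by construction sends $\chi_{\cM}^k[j_{\alpha,!}\Qlbar] = \one_\alpha$ to $\chi_{\cM}^{\bC}[j_{\alpha,!}\Qlbar] = \one_\alpha$, we conclude $s \cdot \Comp_{\cM}(x) = \Comp_{\cM}(s \cdot x)$ for all $x$ and all $s \in S$, hence $\Comp_{\cM}$ is $W$-equivariant. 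The only subtlety is to verify that the explicit orbit-type formula genuinely matches the effect of convolution with $[j_{s!}\Qlbar]$ as opposed to $[\IC_s]$; this amounts to a routine bookkeeping with the shift and twist in the definition \eqref{Define Deltaa}, and is the one place where care with signs and conventions is required.
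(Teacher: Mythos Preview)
Your proof is correct and follows essentially the same approach as the paper: both $W$-actions are computed via the explicit orbit-type formulas of \Cref{Cor. W action sph orb}, and assumption \eqref{orbit assumption bFq bC} guarantees these formulas agree under the orbit bijection. The paper compresses your argument into a single sentence, and your remark about $[j_{s!}\Qlbar]$ versus $[\IC_s]$ is unnecessary since \Cref{Cor. W action sph orb} already records the action of $s$ itself.
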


\begin{proof}
The $W$-actions on $K_0(\cM^k)$ and $K_0(\cM^\bC)$ are both induced via \Cref{Cor. W action sph orb}. The result follows immediately from the compatibility assumption \Cref{orbit assumption bFq bC}.
\end{proof}

\begin{proof}[Proof of \Cref{thm:intro spherical}]
The proof is summarized by the commutative diagram \Cref{M diagram}.
By \Cref{Lem. Hecke mod specialization} and \Cref{lem two hecke action}, we have the isomorphism
\begin{equation}\label{Special sqrtq hecke spherical}
  \sfM_{v = \sqrt q,\bC} \cong M=\bC[B(\bF_q)\backslash X(\bF_q)]  
\end{equation}
as $H$-modules (via a fixed isomorphism $\iota:\Qlbar \cong \bC$).

Meanwhile, combining \Cref{Lem. ch isom}, \Cref{Lem. Hecke mod specialization}, \Cref{thm cc W equivariant} and \Cref{Lem. compare Fq and C sph}, we obtain a $W$-equivariant decomposition:
\begin{equation}\label{M decomp bC}
\sfM_{v = 1, \bC} \otimes \sgn_W \cong \bigoplus_{(\cO, \cL) \in \widehat \cR_{G_\bC}(X_\bC)} E_{\cO, \cL} \otimes V_{\cO, \cL}(X_\bC).
\end{equation}

Lusztig \cite[Theorem 3.1]{Lu1981BC} constructed a canonical $\bQ[v, v^{-1}]$-algebra homomorphism
\begin{equation}\label{Lusztig homomorphism}
    \lambda_W: \sfH \otimes_R \bQ[v, v^{-1}] \to \bQ[v, v^{-1}][W],
\end{equation}
whose specialization at $v = 1$ is the identity, and at $v = \sqrt q$ is an isomorphism. We use this to identify $H = \sfH_{v = \sqrt q, \bC}$ with $\bC[W]$, defining the $H$-module structure on $E(q)$ for any $W$-representation $E$ over $\bC$.

Applying \Cref{TitsLusztig} to the $\bC[v, v^{-1}]$-algebra $\sfH\otimes_{R} \bC[v,v^{-1}]$, the Lusztig homomorphism \Cref{Lusztig homomorphism} (tensoring over $\bC[v,v^{-1}])$:
\[
\lambda_{W,\bC}: \sfH\otimes_{R} \bC[v,v^{-1}] \to \bC[v, v^{-1}][W],
\]
and the $\sfH\otimes_{R} \bC[v,v^{-1}]$-module $\sfM\otimes_{R} \bC[v,v^{-1}]$, together with specializations $v = 1$ and $v = \sqrt q$,
we deduce that $\sfM_{v = \sqrt q, \bC}$ is isomorphic to $\sfM_{v = 1, \bC}$ as a $H\cong \bC[W]$-module. Hence, combining with \eqref{Special sqrtq hecke spherical} and \eqref{M decomp bC}, we obtain the desired $H$-module decomposition:
\[
M \otimes \sgn_{H} \cong \bigoplus_{(\cO, \cL) \in \widehat \cR_{G_\bC}(X_\bC)} E_{\cO, \cL}(q) \otimes V_{\cO, \cL}(X_\bC),
\]
completing the proof.
\end{proof}

\Cref{thm:intro spherical} also holds without assuming smoothness.
\begin{cor}\label{Cor thm spherical general}
 Let $G$ be a split reductive group over $\bF_q$ and let $X$ be a $G$-variety over $\bF_q$ with finitely many $B$-orbits satisfying assumptions \Cref{orbit assumption bFq bC} and \Cref{intro stab}. Then there is a canonical isomorphism of $H$-modules:
\[
\bC[B(\bF_q) \backslash X(\bF_q)]  \otimes \sgn_{H} \cong 
\bigoplus_{(\cO, \cL) \in \widehat \cR_{G_\bC}(X_\bC)} E_{\cO, \cL}(q) \otimes V_{\cO, \cL}(X_\bC).
\]
\end{cor}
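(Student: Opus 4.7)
The strategy is to rerun the proof of \Cref{thm:intro spherical} verbatim, checking at each step that smoothness of $X$ is never actually used, and to verify that the only step where the smoothness of $X$ entered (namely the invocation of \Cref{thm cc W equivariant}) can be recovered by embedding $X_{\bC}$ into a smooth $G_{\bC}$-variety $Y_{\bC}$.

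First I would observe that the entire geometrization machinery of \Cref{Sec. Hk mod} is formulated for any $G$-variety with finitely many $B$-orbits satisfying \eqref{orbit assumption}. In particular, the categories $\cM^{\Tate}\subset\cM^{\mix}\subset\cM$ and $\cM^k$, the isomorphism $\ch^{\Tate}\colon K_0(\cM^{\Tate})\xrightarrow{\sim}\sfM=R[\underline{B\backslash X}]$ of \Cref{Lem. ch isom}, the $\sfH$-module structure on $\sfM$ constructed in \Cref{Prop ICs preserves Tate}, and the identifications of the specializations at $v=\sqrt{q}$ and $v=1$ (\Cref{Lem. Hecke mod specialization}, \Cref{lem two hecke action}, \Cref{Cor. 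W action sph orb}) all carry over with no change. In particular $\sfM_{v=\sqrt{q},\bC}\cong M$ as $H$-modules. Moreover, \Cref{Lem. compare Fq and C sph} applies under \eqref{orbit assumption bFq bC} to give a $W$-equivariant isomorphism $\Comp_{\cM}\colon K_0(\cM^k)\xrightarrow{\sim}K_0(\cM^{\bC})$.

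Next, to apply \Cref{thm cc W equivariant} to $X_{\bC}$, I would choose a $G_{\bC}$-equivariant locally closed embedding $X_{\bC}\hookrightarrow Y_{\bC}$ into a smooth $G_{\bC}$-variety $Y_{\bC}$. Such an embedding exists by standard equivariant algebraic geometry: $X_{\bC}$ is a finite union of $G_{\bC}$-orbits, hence quasi-projective, and by Sumihiro's theorem it embeds $G_{\bC}$-equivariantly into a projective space with a linear $G_{\bC}$-action. Crucially, by \Cref{lem relevant decomoposition into pieces}, both the set $\widehat{\cR}_{G_{\bC}}(X_{\bC})$ and the multiplicity spaces $V_{\cO,\cL}(X_{\bC})$ are intrinsic to $X_{\bC}$ and independent of the choice of $Y_{\bC}$. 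Applying \Cref{thm cc W equivariant} via the characteristic cycle map to the embedding $X_{\bC}\hookrightarrow Y_{\bC}$ therefore yields a canonical $W$-equivariant decomposition
\[
\sfM_{v=1,\bC}\otimes \sgn_W\;\cong\; \bigoplus_{(\cO,\cL)\in \widehat{\cR}_{G_{\bC}}(X_{\bC})} E_{\cO,\cL}\otimes V_{\cO,\cL}(X_{\bC}).
\]

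Finally, exactly as in the last paragraph of the proof of \Cref{thm:intro spherical}, I would apply \Cref{TitsLusztig} to the Lusztig homomorphism \eqref{Lusztig homomorphism} and to the $\sfH\otimes_R\bC[v,v^{-1}]$-module $\sfM\otimes_R\bC[v,v^{-1}]$, whose specializations at $v=1$ and $v=\sqrt{q}$ are abstractly isomorphic as $\bC[W]\cong H$-modules. Transporting the $v=1$ decomposition to $v=\sqrt{q}$ and using $\sfM_{v=\sqrt{q},\bC}\cong M$ produces the desired $H$-module isomorphism. The only genuinely new ingredient beyond \Cref{thm:intro spherical} is the existence of the equivariant smooth ambient variety $Y_{\bC}$, and this is the single point where one must be careful; it is however a standard fact and is not expected to present a real obstacle.
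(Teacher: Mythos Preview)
Your approach is correct but takes an unnecessarily long route. The paper's proof is a one-liner: stratify $X$ into its finitely many $G$-orbits $X_i$. Each $X_i$ is homogeneous, hence smooth and spherical, and inherits assumptions \eqref{intro stab} and \eqref{orbit assumption bFq bC}, so \Cref{thm:intro spherical} applies to each $X_i$ directly. Since the $X_i$ are $G$-stable, the function space splits as $\bC[B(\bF_q)\backslash X(\bF_q)]=\bigoplus_i \bC[B(\bF_q)\backslash X_i(\bF_q)]$ as $H$-modules; on the other side, by definition (see \eqref{VcOcLX}) one has $\widehat\cR_{G_\bC}(X_\bC)=\bigcup_i\widehat\cR_{G_\bC}(T^*X_{i,\bC})$ and $V_{\cO,\cL}(X_\bC)=\bigoplus_i V_{\cO,\cL}(T^*X_{i,\bC})$, so summing the decompositions for the $X_i$ gives the result.

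Your route---rerunning the whole machinery with a smooth ambient $Y_\bC$---works in principle and is closer to how \Cref{thm cc W equivariant} is actually stated, but it forces you to produce the $G_\bC$-equivariant embedding $X_\bC\hookrightarrow Y_\bC$. Your appeal to Sumihiro is slightly off (Sumihiro's theorem is for normal varieties, and ``finitely many orbits $\Rightarrow$ quasi-projective'' is not automatic), so this step would require more care. The paper's stratification neatly avoids this issue by reducing to orbits, which are already smooth.
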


\begin{proof}
The proof follows by decomposing $X$ into a disjoint union $X = \bigsqcup X_i$ of $G$-stable strata, each of which is a homogeneous spherical $G$-variety, and applying \Cref{thm:intro spherical} to each stratum $X_i$.
\end{proof}

\section{Hecke bimodules from type II theta correspondence}\label{ex: GL theta}

In this section, we apply the general framework developed in \Cref{Sec. geom gen} to study the spherical Hecke module arising from the type II theta correspondence over a finite field.

\subsection{Type II theta correspondence over finite fields}\label{sec type II theta finite}

We work over the finite field $F = \bF_q$ throughout this subsection. Let $L_1$ and $L_2$ be finite-dimensional vector spaces over $\bF_q$ of dimensions $m$ and $n$, respectively. Set $\ov G_1 = \GL(L_1)$ and $\ov G_2 = \GL(L_2)$. The group $\ov G_1 \times \ov G_2$ acts naturally on the affine space $\ov \bL_1 \coloneqq \Hom(L_1, L_2)$. The induced action of $\ov G_1(\bF_q) \times \ov G_2(\bF_q)$ on the function space $\bC[\ov \bL_1(\bF_q)]$ gives rise to the Weil representation $\omega_{\ov \bL_1}$, defined by
\begin{equation}\label{Weil type II bL1}
    (\omega_{\ov \bL_1}(g_1, g_2) f)(T) = f(g_2^{-1} T g_1), \quad g_1 \in \ov G_1(\bF_q),\ g_2 \in \ov G_2(\bF_q),\ f \in \bC[\ov \bL_1(\bF_q)],\ T \in \ov \bL_1(\bF_q).
\end{equation}
The decomposition of $\omega_{\ov \bL_1}$ defines the type II theta correspondence between $\Irr(\ov G_1(\bF_q))$ and $\Irr(\ov G_2(\bF_q))$.

Similarly, $\ov G_1 \times \ov G_2$ acts on the affine space $\ov \bL_2 \coloneqq \Hom(L_2, L_1)$, inducing another model $(\bC[\ov \bL_2(\bF_q)], \omega_{\ov \bL_2})$  of the same Weil representation, given by
\begin{equation}\label{Weil type II bL2}
    (\omega_{\ov \bL_2}(g_1, g_2) f)(T) = f(g_1^{-1} T g_2), \quad g_1 \in \ov G_1(\bF_q),\ g_2 \in \ov G_2(\bF_q),\ f \in \bC[\ov \bL_2(\bF_q)],\ T \in \ov \bL_2(\bF_q).
\end{equation}

Fix a nontrivial additive character $\psi: \bF_q \to \bC^\times$. The two models $\omega_{\ov \bL_1}$ and $\omega_{\ov \bL_2}$ are intertwined by the Fourier transform
\begin{equation}\label{Fourier transform function}
    \FT^{\bF_q}_{\ov \bL_1}: \bC[\ov \bL_1(\bF_q)] \longrightarrow \bC[\ov \bL_2(\bF_q)],
\end{equation}
defined by
\begin{equation}\label{Equa. Partial Fourier}
    \FT^{\bF_q}_{\ov \bL_1}(f)(T_2) = q^{-\frac{1}{2} \dim \ov \bL_1} \sum_{T_1 \in \ov \bL_1(\bF_q)} f(T_1)\, \psi(\langle T_1, T_2 \rangle), \quad f \in \bC[\ov \bL_1(\bF_q)],\ T_2 \in \ov \bL_2(\bF_q),
\end{equation}
where $\langle T_1, T_2 \rangle$ is the trace pairing as in \Cref{bL1 dual complex}.

For $i = 1, 2$, fix a Borel subgroup $\ov B_i \subset \ov G_i$, and let $\ov H_i=\bC[\ov B_i(\bF_q)\bs \ov G_i(\bF_q)/\ov B_i(\bF_q)]$ denote the corresponding Iwahori–Matsumoto Hecke algebra. Let
\begin{equation*}
    \ov M_i \coloneqq \omega_{\ov \bL_i}^{\ov B_1(\bF_q) \times \ov B_2(\bF_q)} = \bC\left[\underline{\ov B_1(\bF_q) \times \ov B_2(\bF_q) \backslash \ov \bL_i(\bF_q)}\right],
\end{equation*}
as modules over $\ov H_1 \otimes \ov H_2$. The Fourier transform $\FT^{\bF_q}_{\ov \bL_1}$ induces an isomorphism of $\ov H_1 \otimes \ov H_2$-modules between $\ov M_1$ and $\ov M_2$. We denote the common isomorphism class by $\ov M$, and call it the \emph{oscillator bimodule} for the pair $(\ov G_1, \ov G_2)$.

Recall from \Cref{relevant pair type II} the set $\cR(\cP(m)\times \cP(n))$ of relevant pairs associated to $\ov \bL$ and the number $m_{\gamma_1,\gamma_2}$ for each $(\gamma_1,\gamma_2)\in \cR(\cP(m)\times \cP(n))$. The following theorem will be proved in \Cref{Geometrization of the oscillator bimodule type II}.

\begin{thm}\label{Theta type II Hecke module}
There is a canonical isomorphism of $\ov H_1 \otimes \ov H_2$-modules:
\begin{equation}\label{deformation 1}
   \ov M \otimes \sgn_{\ov H_1 \otimes \ov H_2} 
\cong \bigoplus_{(\gamma_1, \gamma_2) \in \cR(\cP(m)\times \cP(n))} m_{\gamma_1,\gamma_2} E_{\cO_{\gamma_1}}(q) \boxtimes E_{\cO_{\gamma_2}}(q),
\end{equation}
where $\sgn_{\ov H_1 \otimes \ov H_2}$ denotes the sign character of the Hecke algebra $\ov H_1 \otimes \ov H_2$.
\end{thm}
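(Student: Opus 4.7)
The plan is to realize \Cref{Theta type II Hecke module} as a direct specialization of \Cref{thm:intro spherical} applied to the spherical $\ov G_1\times \ov G_2$-variety $X=\ov \bL_1=\Hom(L_1,L_2)$, and then to identify the multiplicity data with the combinatorial input computed in \Cref{section type II}.

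First I would verify the input hypotheses of \Cref{thm:intro spherical}. The variety $\ov\bL_1$ is smooth (an affine space), and it is $\ov G_1\times \ov G_2$-spherical because $(\ov B_1\times \ov B_2)$-orbits on $\Hom(L_1,L_2)$ are parametrized by partial matchings between $\{1,\dots,m\}$ and $\{1,\dots,n\}$ (an RSK-type combinatorics, compatible with the orbit description in \Cref{SPM vs orbits}/\Cref{PM vs orbits}). Each such orbit has a representative with a connected (in fact, a semidirect product of a torus and a unipotent group) stabilizer, so the assumption \eqref{intro stab} holds. The natural integral model over $\bZ$ gives compatible orbit stratifications over $k=\ov\bF_q$ and over $\bC$, with matching orbit types under simple reflections, verifying \Cref{orbit assumption bFq bC}. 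Finally, by \Cref{Lem. N_L invariant}-style arguments (or directly from the definition \eqref{Weil type II bL1}), the $\ov B_1(\bF_q)\times \ov B_2(\bF_q)$-invariants of $\omega_{\ov\bL_1}$ agree with $\bC[\underline{\ov B_1(\bF_q)\times \ov B_2(\bF_q)\bs \ov\bL_1(\bF_q)}]$, identifying this function space with $\ov M$.

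With the hypotheses in place, applying \Cref{thm:intro spherical} to $X=\ov\bL_1$ gives a canonical isomorphism of $\ov H_1\otimes \ov H_2$-modules
\begin{equation*}
\ov M\otimes \sgn_{\ov H_1\otimes \ov H_2}\cong \bigoplus_{(\cO,\cL)\in \widehat{\cR}_{\ov G_{1,\bC}\times \ov G_{2,\bC}}(\ov\bL_{1,\bC})} E_{\cO,\cL}(q)\otimes V_{\cO,\cL}(\ov\bL_{1,\bC}).
\end{equation*}
Next I would translate the right-hand side into the combinatorial form of \eqref{deformation 1}. Since the cotangent bundle identification gives $T^*\ov\bL_1\cong \ov\bL$ (via the trace pairing \eqref{bL1 dual complex}), the set of relevant pairs and their multiplicity spaces computed for $\hS=T^*\ov\bL_{1,\bC}$ agree with those already described in \Cref{section type II} for $\ov\bL$. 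The component groups of nilpotent orbits in $\ov{\mathfrak g}_1\times \ov{\mathfrak g}_2$ are trivial, so each relevant pair has the form $(\cO_1\times\cO_2,\triv)$ with $(\cO_1,\cO_2)\in \cR_{\ov G_1\times\ov G_2}(\ov\bL)$. Then \Cref{thm:Htop type II combi} (combined with \Cref{prop GL orbits} and \Cref{relevant triple counting type II}) yields
\begin{equation*}
V_{\cO_1\times \cO_2}(\ov\bL_{1,\bC})\cong \bC\bigl[\{\gamma\in \RBP(m,n):\mu^\dagger(\gamma)=(\gamma_1,\gamma_2)\}\bigr],
\end{equation*}
of dimension $m_{\gamma_1,\gamma_2}$, where $\cO_i=\cO_{\gamma_i}$ and the sum over $(\cO_1,\cO_2)$ reindexes as a sum over relevant pairs $(\gamma_1,\gamma_2)\in \cR(\cP(m)\times \cP(n))$. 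Substituting this identification gives exactly \eqref{deformation 1}.

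The only nontrivial step I anticipate is the verification of \Cref{orbit assumption bFq bC} together with the model/sphericity bookkeeping required to invoke \Cref{thm:intro spherical} in this setting; the type classification of each simple reflection's action on the orbit set needs to be performed in terms of the partial matching combinatorics and checked to agree over $\bF_q$ and over $\bC$. Once this bookkeeping is in place, the remainder of the argument is a direct transfer through the isomorphism $V_{\cO_1\times \cO_2}(\ov\bL_{1,\bC})$ computed in \Cref{section type II}. As a final sanity check, using the alternative polarization model $\ov M_2$ via \eqref{Fourier transform function} gives the same decomposition by an entirely parallel argument applied to $X=\ov\bL_2$, confirming that the isomorphism is intrinsic to $\ov M$.
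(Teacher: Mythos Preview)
Your proposal is correct and follows essentially the same approach as the paper: verify the hypotheses of \Cref{thm:intro spherical} for $X=\ov\bL_1$ (using \Cref{PM vs orbits} for the orbit parametrization and connected stabilizers, and \Cref{remark assupmtion orbit type II 2} for the compatibility over $\bF_q$ and $\bC$), apply that theorem, and then identify the multiplicity spaces $V_{\cO_1\times\cO_2}(\ov\bL)$ combinatorially via \Cref{thm:Htop type II} and \Cref{thm:Htop type II combi}. The paper also invokes \Cref{lem relevant decomoposition into pieces} to pass from $V_{\cO,\cL}(X_\bC)$ to $V_{\cO,\cL}(\ov\bL)$, which you handle implicitly via the identification $T^*\ov\bL_1\cong\ov\bL$.
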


\begin{remark}
This recovers \cite[Theorem 5.5]{AMR}.
\end{remark}

\subsection{Geometrization of the oscillator bimodule for $\ov G_1 \times \ov G_2$}\label{Geometrization of the oscillator bimodule type II}

In this subsection, we apply the general results from \Cref{Sec. geom gen} to geometrize the oscillator bimodules $\ov M_i$ for $i = 1,2$.

We continue with the notation from \Cref{sec type II theta finite}. Let $\ov W_i$ denote the Weyl group of $\ov G_i$, $\bsfH_i$ the corresponding generic Hecke algebra, and $\ov H_i$ the Iwahori–Hecke algebra. Fix standard bases $\{e_i\}_{i=1}^m$ and $\{f_j\}_{j=1}^n$ for $L_1$ and $L_2$ such that the complete flags
\[
\Span\{e_1,\dots,e_i\}\quad \text{and} \quad \Span\{f_1,\dots,f_j\} 
\]
are stabilized by the Borel subgroups $\ov B_1$ and $\ov B_2$, respectively. For $1 \le i \le m-1$, let $s_i \in \ov W_1$ correspond to the simple reflection swapping $e_i$ and $e_{i+1}$; similarly, for $1 \le j \le n-1$, let $s_j' \in \ov W_2$ correspond to the simple reflection swapping $f_j$ and $f_{j+1}$. Then $\ov W_1 \cong \bS_m$ and $\ov W_2 \cong \bS_n$ are symmetric groups, with sets of simple reflections
\[
\ov \sfS_1 = \{s_1, \dots, s_{m-1}\}, \qquad \ov \sfS_2 = \{s_1', \dots, s_{n-1}'\}.
\]
The generic Hecke algebra $\bsfH_i$ corresponds to the Coxeter system $(\ov W_i, \ov \sfS_i)$ (cf. \Cref{Geo hecke algebra}).

Let $\bcH_i^\Tate$, $\bcH_i^\mix$, and $\bcH_i^k$ denote various versions of Hecke categories for $\ov G_i$. By \Cref{Cor. Hk ring isom}, we have canonical isomorphisms
\[
\ch^\Tate: K_0(\bcH_i^\Tate) \xrightarrow{\sim} \bsfH_i, \qquad \chi: K_0(\bcH_i^k) \xrightarrow{\sim} \bZ[\ov W_i].
\]

We now turn to the geometry of the $\ov G_1 \times \ov G_2$-action on $\ov \bL_1$. All results below admit parallel formulations for the $\ov G_1 \times \ov G_2$-action $\ov \bL_2$.
\begin{defn}\label{def partial matching}
\begin{enumerate}
    \item A \emph{partial matching} between the sets $\{1,\dots,m\}$ and $\{1,\dots,n\}$ is a triple $\sigma = (I, J, \mu)$, where $I \subseteq \{1,\dots,m\}$, $J \subseteq \{1,\dots,n\}$, and $\mu: I \to J$ is a bijection.
    \item Let $\PM(m,n)$ denote the set of all partial matchings. For $0 \le k \le \min(m,n)$, let $\PM(m,n,k) \subset \PM(m,n)$ be the subset of matchings with $|I| = |J| = k$, so that
    \[
    \PM(m,n) = \bigsqcup_{k=0}^{\min(m,n)} \PM(m,n,k).
    \]
\end{enumerate}
\end{defn}

The group $\ov W_1 \times \ov W_2 \cong \bS_m \times \bS_n$ acts on $\PM(m,n)$ by transport of structure:
\begin{equation}\label{action Sm Sn on PM}
(w, w') * (I, J, \mu) := (w(I), w'(J), w'|_J \circ \mu \circ w^{-1}|_{w(I)}).
\end{equation}

Given a matching $\sigma = (I, J, \mu) \in \PM(m,n)$, define a point $x_\sigma \in \ov \bL_1(\bF_q)$ by:
\begin{equation}\label{def:Tsigma}
x_\sigma(e_i) := \begin{cases}
f_{\mu(i)} & \text{if } i \in I, \\
0 & \text{otherwise}.
\end{cases}
\end{equation}
Let $\cO_\sigma \subset \ov \bL_1$ be the geometric $\ov B_1 \times \ov B_2$-orbit of $x_\sigma$. Write $\mathbf{1}_\sigma$ for the characteristic function of $\cO_\sigma(\bF_q)$. We also denote $d_\s=\dim \cO_\s$. 

\begin{lemma}\label{PM vs orbits}
\begin{enumerate}
    \item The map $\sigma \mapsto \cO_\sigma$ defines a bijection between $\PM(m,n)$ and the set of geometric $\ov B_1 \times \ov B_2$-orbits in $\ov \bL_1$. In particular, $\ov \bL_1$ is a spherical $\ov G_1 \times \ov G_2$-variety.
    
    \item The assumption \eqref{orbit assumption} holds for this action. Consequently, each $\cO_\sigma(\bF_q)$ is a single $\ov B_1(\bF_q) \times \ov B_2(\bF_q)$-orbit, and the functions $\{\mathbf{1}_\sigma\}_{\sigma \in \PM(m,n)}$ form a $\bC$-basis of $\ov M_1$.
\end{enumerate}
\end{lemma}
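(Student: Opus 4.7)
The plan is to identify $\ov\bL_1(k) \cong M_{n \times m}(k)$ via the chosen bases so that the action becomes $(b_1, b_2) \cdot A = b_2 A b_1^{-1}$, and $x_\sigma$ corresponds to a partial permutation matrix. Everything then reduces to a standard analysis of $B \times B$ orbits on rectangular matrices via Gauss elimination.

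For part (1), I would introduce the rank invariants
\[
r_{i,j}(T) := \rk\bigl(A(T)_{[1,j],[1,i]}\bigr),\qquad 0 \le i \le m,\ 0 \le j \le n,
\]
where $A_{[1,j],[1,i]}$ denotes the top-left $j\times i$ submatrix. These are $(\ov B_1 \times \ov B_2)$-invariant, and a direct computation on $x_\sigma$ shows that the collection $\{r_{i,j}(x_\sigma)\}_{i,j}$ determines $\sigma$, proving injectivity of $\sigma \mapsto \cO_\sigma$. For surjectivity, every $T \in \ov\bL_1(k)$ can be brought to some $x_\sigma$ by Gauss elimination respecting the flag structure: processing columns left-to-right, find for each non-zero column the topmost non-zero entry in a row not previously used, then use the $\ov B_2$-action to clear entries below it in its column and the $\ov B_1$-action to clear entries to its right in its row. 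Finiteness of $\PM(m,n)$ then yields that $\ov\bL_1$ is a spherical $(\ov G_1 \times \ov G_2)$-variety.

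For part (2), since $x_\sigma$ has integer entries and is $\bF_q$-rational, $\cO_\sigma$ is defined over $\bF_q$. To establish connectedness of $\Stab := \Stab_{\ov B_1 \times \ov B_2}(x_\sigma)$, regard $\ov B_1 \times \ov B_2$ as an open subscheme of the affine space $V$ of pairs of upper triangular matrices, defined by the non-vanishing of each diagonal entry. The stabilizer equation $b_2 x_\sigma = x_\sigma b_1$ is linear in the matrix entries of $b_1, b_2$, hence cuts out a linear subspace $W \subset V$. Therefore $\Stab = W \cap (\ov B_1 \times \ov B_2)$ is a non-empty open subscheme of the affine space $W$ (non-empty because it contains $(\Id,\Id)$), hence irreducible, hence connected. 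Lang's theorem applied to this connected algebraic group then yields that $\cO_\sigma(\bF_q)$ is a single $\ov B_1(\bF_q) \times \ov B_2(\bF_q)$-orbit, and the indicator functions $\{\mathbf{1}_\sigma\}_{\sigma \in \PM(m,n)}$ form a $\bC$-basis of $\ov M_1$.

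The argument is entirely routine and I do not foresee any real obstacle; the only substantive inputs are the rank-invariants/Gauss-elimination classification of $(\ov B_1 \times \ov B_2)$-orbits on rectangular matrices, and the observation that the stabilizer equation is linear in the matrix entries, so that the stabilizer is an open subscheme of an affine space.
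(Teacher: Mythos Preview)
Your argument is correct. For part (1) it is essentially the same as the paper's (the paper cites \cite[Proposition~15.27]{MR2110098}, but the content is exactly Gauss elimination to partial permutation form plus a system of rank invariants). One small convention issue: with the action $T\mapsto b_2Tb_1^{-1}$ and both Borels upper triangular, the invariant rank functions are those of the \emph{lower-left} (equivalently, upper-right) rectangular blocks rather than the top-left ones; correspondingly, the sweep directions in your elimination sketch need to be adjusted. This is a trivial fix and does not affect the strategy.

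For part (2) you take a genuinely different route from the paper. The paper first observes that the Gauss elimination already works over $\bF_q$, so each $\cO_\sigma(\bF_q)$ is a single $\ov B_1(\bF_q)\times\ov B_2(\bF_q)$-orbit, and then \emph{deduces} connectedness of the stabilizer $S_\sigma$ a posteriori via Galois cohomology: the $\bF_q$-orbits on $\cO_\sigma(\bF_q)$ are parametrized by $H^1(\bF_q,S_\sigma)\cong H^1(\bF_q,\pi_0(S_\sigma))$, and this (run over all finite extensions) forces $\pi_0(S_\sigma)=1$. Your approach reverses the logic: you prove connectedness directly by the clean observation that the stabilizer equation $b_2x_\sigma=x_\sigma b_1$ is linear, so $S_\sigma$ is a nonempty open subscheme of an affine space and hence irreducible; Lang's theorem then gives the single $\bF_q$-orbit. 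Your argument is more direct and yields connectedness as an intrinsic geometric fact; the paper's has the minor economy of reusing only the elimination from part (1).
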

\begin{proof}
Part $(1)$ is proved in \cite[Proposition 15.27]{MR2110098}. Next we prove part $(2)$. The proof for part $(1)$ also works over $\bF_q$. It shows that $\ov \bL_1(\bF_q)$ is the union of $\ov B_1 (\bF_q)\times \ov B_2(\bF_q)$-orbits of $x_\s$, for $\s\in \PM(m,n)$. Therefore each $\cO_\s(\bF_q)$ is a single $\ov B_1 (\bF_q)\times \ov B_2(\bF_q)$-orbit. This implies the connectivity of $S_\s:=\Stab_{\ov B_1\times \ov B_2}(x_\s)$ because the set of $\ov B_1 (\bF_q)\times \ov B_2(\bF_q)$-orbits on $\cO_\s(\bF_q)$ is parametrized by the Galois cohomology $H^1(\bF_q, S_{\s})\cong H^1(\bF_q, \pi_0(S_{\s}))$, which would be nontrivial if $\pi_0(S_\s)$ was nontrivial. This proves part $(2)$. 
\trivial[h]{
(1)By identifying $\ov \bL_1$ with the space of matrices $M_{n \times m}$ using the chosen bases, the $\ov B_i$ actions reduce to upper-triangular 
column (sweeping right) and row (sweeping up) operations. Gaussian elimination shows that each $\ov\bF_q$-point of $\ov \bL_1$ can be brought under such operations to a matrix with entries $\{0,1\}$ such that each column and row has at most one $1$, i.e., a matrix corresponding to a partial matching.  This shows that 
\[
\bigcup_{\s\in \PM(m,n)}\cO_\s=\ov \bL_1.
\]
Notice that such sweeping preserves the ranks of northeast $p\times q$ submatrix for each $1\leq p\leq m, 1\leq q\leq n$. This proves the disjointness of the above decomposition. 
}
\end{proof}


By the above lemma, results from \Cref{Sec. Hk mod} are applicable.
Applying the general framework from \Cref{Sec. Hk mod} to this setting, we obtain the following categories:
\begin{equation*}
\bcM_1^\Tate = D^{\Tate}_{\ov B_1 \times \ov B_2}(\ov \bL_1) \hookrightarrow 
\bcM_1^\mix = D^{\mix}_{\ov B_1 \times \ov B_2}(\ov \bL_1) \hookrightarrow 
\bcM_1 = D^{b}_{\ov B_1 \times \ov B_2}(\ov \bL_1) \to 
\bcM_1^k = D^{b}_{\ov B_{1,k} \times \ov B_{2,k}}(\ov \bL_1).
\end{equation*}
The category $\bcM_1$ carries commuting actions of $\bcH_1$ and $\bcH_2$; similar statements hold for their mixed and $k$ variants. Let $\bsfM_1 \coloneqq R[\un{\ov B_1 \times \ov B_2 \backslash \ov \bL_1}]=R[\PM(m,n)]$ as a free $R$-module.
The following is a special case of \Cref{Lem. ch isom} and \Cref{Prop ICs preserves Tate}, together with the functoriality of the diagram \eqref{diag cat}:

\begin{cor}\label{Cor. K0 M1 type II}
\begin{enumerate}
    \item The full subcategory $\bcM_1^{\Tate} \subset \bcM_1$ is stable under the commuting convolution actions of $\bcH_1^\Tate$ and $\bcH_2^\Tate$. In particular, $K_0(\bcM_1^{\Tate})$ admits a natural structure as a $\bsfH_1 \otimes_R \bsfH_2$-module.
    
    \item The map $\ch^\Tate: K_0(\bcM_1^\Tate) \to \bsfM_1$ is an $R$-module isomorphism, equipping $\bsfM_1$ with a natural structure of a $\bsfH_1 \otimes_R \bsfH_2$-module.
    
    \item Specializing the $\bsfH_1 \otimes_R \bsfH_2$-action on $\bsfM_1$ at $v=\sqrt{q}$, we recover the $\ov H_1\ot \ov H_2$-action on $\ov M_1$ defined in \Cref{sec type II theta finite} (via a fixed isomorphism $\iota:\Qlbar \cong \bC$). 

    \item Specializing the $\bsfH_1 \otimes_R \bsfH_2$-action on $\bsfM_1$ at $v=1$ defines an action of $\ov W_1\times \ov W_2$ on $\ov \sfM_{1,v=1}=\ZZ[\PM(m,n)]$. 
    
\end{enumerate}
\end{cor}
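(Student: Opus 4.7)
The plan is to view this corollary as a direct application of the general spherical-variety machinery of Section \ref{Sec. geom gen} to the $\ov G_1 \times \ov G_2$-variety $\ov \bL_1$, with the Borel subgroup $\ov B_1 \times \ov B_2$. By \Cref{PM vs orbits}, $\ov \bL_1$ is spherical and satisfies the orbit hypothesis \eqref{orbit assumption}, so all the results of \Cref{Sec. Hk mod} apply. The Hecke category of $\ov G_1 \times \ov G_2$ naturally factors as the external tensor product $\bcH_1 \boxtimes \bcH_2$, and under this factorization the single monoidal action of \Cref{Sec. Hk mod} decomposes into two commuting actions of $\bcH_1$ and $\bcH_2$ on $\bcM_1$ (and similarly for the Tate/mixed/$k$-variants).

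For part (1), I would invoke \Cref{Prop ICs preserves Tate} twice: once for the $\bcH_1$-convolution and once for the $\bcH_2$-convolution. The convolution with an object of $\bcH_i^\Tate$ preserves Tate-ness because it can be reduced to the case of $\IC_s$ for simple reflections $s$, and the stalk computations in the proof of \Cref{Prop ICs preserves Tate} work equally well on either side. The two actions commute on the nose because they come from left and right actions of $\ov G_1 \times \ov G_2$ on disjoint factors. Passing to Grothendieck groups then gives $K_0(\bcM_1^{\Tate})$ the structure of a $\bsfH_1\otimes_R\bsfH_2$-module via the ring isomorphism $\ch^\Tate$ of \Cref{Cor. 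Hk ring isom} applied to each factor.

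Part (2) is a direct consequence of \Cref{Lem. ch isom}(2) applied to $X = \ov \bL_1$: the map $\ch^\Tate$ is an $R$-module isomorphism onto $R[\underline{\ov B_1\times \ov B_2\backslash \ov \bL_1}] = R[\PM(m,n)] = \bsfM_1$, where we use the bijection of \Cref{PM vs orbits}(1). Transporting the module structure of part (1) through this isomorphism endows $\bsfM_1$ with a $\bsfH_1\otimes_R\bsfH_2$-module structure, as claimed.

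For parts (3) and (4), I would apply the commutativity of diagram \eqref{diag cat} together with \Cref{Lem. Hecke mod specialization}. Specializing at $v=\sqrt{q}$ the top half of the diagram commutes with the sheaf-to-function map $\phi$; by \Cref{lem two hecke action} the resulting $H_1\otimes H_2$-action on $\bC[\ov B_1(\bF_q)\times \ov B_2(\bF_q)\backslash \ov \bL_1(\bF_q)]$ coincides with the convolution action, which is precisely the $\ov H_1\otimes \ov H_2$-module $\ov M_1$ of \Cref{sec type II theta finite}. Specialization at $v=1$ is immediate from part (2) and the ring specialization $\sfH_i \to \bZ[\ov W_i]$. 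The only conceptually non-trivial point throughout is the compatibility of the two commuting Hecke actions with the tensor-product generic Hecke algebra structure, but this reduces to the observation that the external product of the monoidal structures on $\bcH_1^\Tate$ and $\bcH_2^\Tate$ induces precisely $\bsfH_1\otimes_R \bsfH_2$ on Grothendieck groups.
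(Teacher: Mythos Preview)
Your proposal is correct and matches the paper's approach exactly: the paper states this corollary as a direct consequence of \Cref{Lem. ch isom}, \Cref{Prop ICs preserves Tate}, and the functoriality of diagram \eqref{diag cat}, which are precisely the ingredients you invoke (together with \Cref{Lem. Hecke mod specialization} and \Cref{lem two hecke action} for the specializations). Your additional remarks on factoring the Hecke category as an external product and the commutativity of the two actions are helpful elaborations, but the underlying argument is the same.
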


For each $\sigma = (I, J, \mu) \in \PM(m,n)$, we define 
\begin{equation}\label{extended sigma}
    \sigma^{\dagger}: \{1,\cdots, m\} \longrightarrow \{0,1,2,\cdots, n\}
\end{equation}
by 
\[
\sigma^{\dagger}(i)=\begin{cases}
    \sigma(i) \quad & \mbox{if $i\in I$}\\
    0 \quad & \mbox{if $i \notin I$}.
\end{cases}
\]
We also define 
\begin{equation}\label{extended sigma 2}
    \sigma_{\dagger}: \{1,\cdots, n\} \longrightarrow \{1,2,\cdots, m,\infty\}
\end{equation}
by 
\[
\sigma_{\dagger}(i)=\begin{cases}
    \sigma^{-1}(i) \quad & \mbox{if $i\in J$}\\
    \infty & \mbox{if $i \notin J$}.
\end{cases}
\]
We define orders  $<_{\PM}$ on the set $\{0, 1,2,\cdots, n\}$, $\{1,2,\cdots, m,\infty\}$ by 
\[
0<_{\PM}<1<_{\PM} 2<\cdots<_{\PM} n,\quad 1<_{\PM} 2<\cdots<_{\PM} m <_{\PM} \infty.
\]
The following lemma computes the type $(s,\cO_\sigma)$ for each  simple reflection $s$ in $\ov W_1 \times \ov W_2$ and the orbit $\cO_\sigma$ with $\sigma\in \PM(m,n)$. 

\begin{lemma}\label{Lem. type G or U type II}
Let $s\in \ov \sfS_1\sqcup \ov \sfS_2$ be a simple reflection in $\ov W_1 \times \ov W_2$, and let $\sigma = (I, J, \mu) \in \PM(m,n)$. Then the pair $(s, \cO_\sigma)$ is of type (G) or (U). More precisely, 
\begin{enumerate}
    \item If  $s = s_i$ for $1 \leq i \leq  m-1$, then $(s, \cO_\sigma)$ is of
    \[
    \begin{cases}
        \mbox{type $G$}\quad &\mbox{if $\sigma^\dagger(i)=\sigma^\dagger(i+1)$}\\
         \mbox{type $U^-$}\quad &\mbox{if $\sigma^\dagger(i)<_{\PM}\sigma^\dagger(i+1)$}\\
        \mbox{type $U^+$}\quad &\mbox{if $\sigma^\dagger(i)>_{\PM}\sigma^\dagger(i+1)$}.
    \end{cases}
    \] 
    \item If  $s = s_i'$ for $1 \leq i \leq  n-1$, then $(s, \cO_\sigma)$ is of
    \[
    \begin{cases}
        \mbox{type $G$}\quad &\mbox{if $\sigma_\dagger(i)=\sigma_\dagger(i+1)$}\\
         \mbox{type $U^-$}\quad &\mbox{if $\sigma_\dagger(i)<_{\PM}\sigma_\dagger(i+1)$}\\
        \mbox{type $U^+$}\quad &\mbox{if $\sigma_\dagger(i)>_{\PM}\sigma_\dagger(i+1)$}.
    \end{cases}
    \]
\end{enumerate}
In case $U$, the $s$-companion of $\cO_\sigma$ is $\cO_{s * \sigma}$ as defined in \eqref{action Sm Sn on PM}. 
\end{lemma}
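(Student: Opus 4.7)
The plan is to compute, for each simple reflection $s$ and each partial matching $\sigma$, the $P_s$-orbit of $x_\sigma$ directly in matrix coordinates, and then to decompose it into $\ov B_1 \times \ov B_2$-orbits. The two parts of the lemma are proved by identical arguments: in (1) the minimal parabolic $P_{s_i} \subset \ov G_1$ acts by column operations on columns $i$ and $i+1$ of $x_\sigma$, while in (2) the minimal parabolic $P_{s_i'} \subset \ov G_2$ acts by row operations on rows $i$ and $i+1$. So I focus on (1). Under the action $T \mapsto g_2 T g_1^{-1}$, the parabolic $P_{s_i}$ performs arbitrary $\GL_2$-mixing of the columns $c_k := x_\sigma(e_k) = f_{\sigma^\dagger(k)}$ (with the convention $f_0 := 0$) for $k \in \{i,i+1\}$, modulo the $\ov B_1$-action on the remaining columns.

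If $i, i+1 \notin I$, both columns $c_i, c_{i+1}$ vanish, the $\GL_2$-mixing is trivial, and $P_{s_i} \cdot \cO_\sigma = \cO_\sigma$: this gives type (G). Otherwise, applying the $\GL_2$-permutation $\left(\begin{smallmatrix}0 & 1 \\ 1 & 0\end{smallmatrix}\right)$ swaps $c_i$ and $c_{i+1}$, producing a representative of $s_i \cdot \sigma$, so $\cO_{s_i \cdot \sigma} \subset P_{s_i} \cdot \cO_\sigma$. A short Gaussian elimination shows that these are in fact the only two $\ov B_1 \times \ov B_2$-orbits in $P_{s_i} \cdot \cO_\sigma$, which rules out type (T). Type (N) is excluded by \Cref{PM vs orbits}(2): connectedness of $\Stab_{\ov B_1 \times \ov B_2}(x_\sigma)$ forces the image subgroup $Q_{x_\sigma} \subset \PGL_2$ to be connected. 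Hence the type is (U) with $s$-companion $\cO_{s_i \cdot \sigma}$.

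To distinguish $(U^+)$ from $(U^-)$ we compare $\dim \cO_\sigma$ and $\dim \cO_{s_i \cdot \sigma}$. This is done via the classical rank invariants
\[
r_\sigma(p, q) \;:=\; \bigl|\{k \le q : k \in I,\ \mu(k) \le p\}\bigr|,
\]
which classify the $\ov B_1 \times \ov B_2$-orbits on $\Hom(L_1, L_2)$ (cf.\ \cite[Proposition 15.27]{MR2110098}); the closure order is the entry-wise order on these functions, and orbit dimensions can be computed from them (for instance by an explicit stabilizer count). A direct inspection shows that $r_\sigma$ and $r_{s_i \cdot \sigma}$ agree outside a single entry, and that $\dim \cO_\sigma < \dim \cO_{s_i \cdot \sigma}$ precisely when $\sigma^\dagger(i) <_{\PM} \sigma^\dagger(i+1)$, yielding type $(U^-)$, and conversely type $(U^+)$ in the opposite case.

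The main delicate point is the dimension comparison in the subcase where both $i$ and $i+1$ lie in $I$: one must verify that swapping the labels $\mu(i)\leftrightarrow \mu(i+1)$ alters $\dim \cO_\sigma$ by exactly one, with the correct sign determined by the relative order of $(\mu(i), \mu(i+1))$. This can be checked either from the rank-invariant description above, or by a direct computation of $\Stab_{\ov B_1 \times \ov B_2}(x_\sigma)$ and $\Stab_{\ov B_1 \times \ov B_2}(x_{s_i \cdot \sigma})$; the remaining subcases reduce to elementary linear algebra on the pair of columns $(c_i, c_{i+1})$.
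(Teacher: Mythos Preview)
Your approach is correct but takes a more roundabout path than the paper's. The paper computes the image subgroup $Q_{x_\sigma}\subset \PGL(V_{s_i})$ directly (where $V_{s_i}=\langle e_i,e_{i+1}\rangle$), by reducing to the equation $b_2\, x_\sigma|_{V_{s_i}}\, g_1^{-1}=x_\sigma|_{V_{s_i}}$ and tracking which line $g_1$ must stabilize. One finds that $Q_{x_\sigma}$ is all of $\PGL_2$ when $x_\sigma|_{V_{s_i}}=0$, and otherwise it is the Borel fixing $\langle e_i\rangle$ (when $\sigma^\dagger(i)<_{\PM}\sigma^\dagger(i+1)$) or the Borel fixing $\langle e_{i+1}\rangle$ (when $\sigma^\dagger(i)>_{\PM}\sigma^\dagger(i+1)$). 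This single stabilizer computation determines the type and the $U^\pm$ sign simultaneously: under the identification $D_{x_\sigma}\cong \bP(V_{s_i})$ the point corresponding to $x_\sigma$ is $\langle e_i\rangle$, so if $Q_{x_\sigma}$ fixes $\langle e_i\rangle$ then $x_\sigma$ lies in the closed $Q_{x_\sigma}$-orbit and $\cO_\sigma$ is the smaller of the two companions. No separate dimension count is needed.

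Your route---count $B$-orbits by Gaussian elimination, then compare dimensions via rank invariants---also works, but the dimension comparison you flag as ``delicate'' is exactly the step the paper's stabilizer computation makes unnecessary. If you want to shorten your argument, replace the rank-invariant analysis by the observation above: once you know $Q_{x_\sigma}$ is a Borel, check which of the two lines $\langle e_i\rangle$, $\langle e_{i+1}\rangle$ it fixes, and read off $U^-$ or $U^+$ accordingly.
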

\begin{proof}
We prove the case when $s=s_i$ for some $1\leq j\leq  m-1$. The other case are similar.  We follows the notations in \Cref{Sec. Hk mod}. Let $\ov P_{1,s_i}$ be the parabolic subgroup of $\ov G_1$ stabilizing the partial flag
    \[
    0\subseteq \langle e_1\rangle \cdots \langle e_1,\cdots, e_{i-1}\rangle \subseteq \langle e_1,\cdots,e_{i-1},e_{i},e_{i+1}\rangle\subseteq \cdots \subseteq \langle e_1,\cdots, e_n\rangle.
    \]
Let $V_{s_i}=\langle e_{i},e_{i+1}\rangle$ and $\bP(V_{s_i})$ be the projective lines in $V_{s_i}$. There is a natural projection 
\begin{equation}\label{p1siprojection}
   \ov P_{1,s_i}\longrightarrow \PGL(V_{s_i})=\Aut(\bP(V_{s_i})). 
\end{equation}
Let $P_{s_i}=P_{1,s_i} \times \barB_2$ and $\widetilde a_{s_i}: P_{s_i}\times^{\barB_1\times B_2} \bL_1\to \bL_1$ be the action map. For each $\s\in \PM(m,n)$, we consider the point $x_\s\in \cO_\s(\bF_q)$ defined in \eqref{def:Tsigma}. Then $D_{x_\s}\coloneqq \widetilde a_{s_i}^{-1}(x_\s) \cong P_{s_i}/\barB_1\times \barB_2\cong \bP(V_{s_i})$. We have 
\begin{equation*}
    P_{s_i,x_\sigma}\coloneqq \Stab_{P_{s_i}}(x_\s)=\{(g_1,b_2)\in P_{1,s_i} \times \barB_2| b_2 x_\sigma g_1^{-1}=x_\sigma\}.
\end{equation*}
The action of $P_{s_i,x_\sigma}$ on $D_{x_\s}$ factors through a quotient $Q_{x_\s}\subset \Aut(D_{x_\s})\cong \PGL(V_{s_i})$. More precisely, let 
\[
\pi_{s_i,\s}: P_{s_i}=P_{1,s_i} \times \barB_2 \longrightarrow \GL(V_{s_i})\times \GL(x_{\s}(V_{s_i})).
\]
be the natural projection map and let $\ov P_{s_i,x_\sigma}=\pi_{s_i,\s}( P_{s_i,x_\sigma})$. Then we have 
\begin{equation}\label{Psixs}
    \ov P_{s_i,x_\sigma}=\{(g_1,b_2)\in\GL(V_{s_i})\times \GL(x_\s(V_{s_i}))| b_2 x_\sigma|_{V_{s_i}} g_1^{-1}=x_\sigma|_{V_{s_i}}\} 
\end{equation}
Let $\pr_1: \GL(V_{s_i})\times \GL(T_{\s}(V_{s_i}))\longrightarrow \PGL(V_{s_i})$ be the natural projection. We have $Q_{x_\s}=\pr_1(\ov P_{s_i,x_\sigma})$. 
\begin{enumerate}
    \item If $\sigma^\dagger(i)=\sigma^\dagger(i+1)$, then $\{i, i+1\} \cap I = \emptyset$. We have $x_\s|_{V_{s_i}}=0$ and  $Q_{x_\s}\cong \PGL(V_{s_i})$; 
    \item If $\sigma^\dagger(i)<\sigma^\dagger(i+1)$, then either $i\notin I$ and $i+1\in I$ or $(i,i+1)\in I$ and $\mu(i)<\mu(i+1)$. Let $(g_1,b_2)\in \ov P_{s_i,x_\sigma}$
    \begin{itemize}
        \item If $i\notin I$ and $i+1\in I$, then we see from \Cref{Psixs} that $g_1$ stabilizes $\ker(x_\s|_{V_{s_i}})=\langle e_i\rangle$; 
        \item If $(i,i+1)\in I$ and $\mu(i)<\mu(i+1)$, then we see from \Cref{Psixs} that $g_1$ stabilizes $(x_\s|_{V_{s_i}})^{-1}(f_{\mu(i)})=\langle e_i\rangle$. 
    \end{itemize}
In both cases, one easily see that $Q_{x_\s}$ is the Borel subgroup of $\PGL(V_{s_i})$ stabilizing $\langle e_i\rangle$. Therefore, we know that $(s, \cO_\sigma)$ is of type $U^-$. 
    \item If $\sigma^\dagger(i)>\sigma^\dagger(i+1)$, then either $i\in I$ and $ j+1\notin I$ or $(i,i+1)\in I$ and $\mu(i)>\mu(i+1)$. The computations is similar to case $(2)$ and we can see that $Q_{x_\s}$ is the Borel subgroup of $\PGL(V_{s_i})$ stabilizing $\langle e_{i+1}\rangle$. Therefore, we know that $(s, \cO_\sigma)$ is of type $U^+$. 
\end{enumerate}
This finishes the proof. 
\end{proof}

\begin{remark}\label{remark assupmtion orbit type II 2}
The proof of \Cref{Lem. type G or U type II} also holds when $\bF_q$ is replaced by any field $F$. Thus, the action of $\ov B_1 \times \ov B_2$ on $\ov \bL_1$ satisfies the assumption \eqref{orbit assumption bFq bC}.
\end{remark}

\begin{proof}[Proof of \Cref{Theta type II Hecke module}]
We prove \Cref{Theta type II Hecke module} for $\ov M_1$; the argument for $\ov M_2$ is entirely parallel. By \Cref{PM vs orbits}(2) and \Cref{remark assupmtion orbit type II 2}, we know that the action of $\ov G_1 \times \ov G_2$ on $\ov \bL_1$ satisfies the assumptions \eqref{orbit assumption} and \eqref{orbit assumption bFq bC}. 

Applying \Cref{thm:intro spherical} and \Cref{lem relevant decomoposition into pieces}, we obtain a canonical isomorphism of $\ov H_1 \otimes \ov H_2$-modules:
\begin{equation}\label{thm theta type II step 1}
    \ov M_1 \otimes \sgn_{\ov H_1 \otimes \ov H_2} \cong 
    \bigoplus_{(\cO_1, \cO_2) \in \cR_{\ov G_1 \times \ov G_2}(\ov \bL)}
    E_{\cO_1}(q) \otimes E_{\cO_2}(q) \otimes V_{\cO_1 \times \cO_2}(\ov \bL).
\end{equation}

On the other hand, \Cref{thm:Htop type II}, \Cref{thm:Htop type II combi} provides a refinement of this decomposition:
\begin{equation}\label{hbM bL refinement}
    \bigoplus_{(\cO_1, \cO_2) \in  \cR_{\ov G_1 \times \ov G_2}(\ov \bL)}
    E_{\cO_1} \otimes E_{\cO_2} \otimes V_{\cO_1 \times \cO_2}(\ov \bL)
  \cong \bigoplus_{(\gamma_1, \gamma_2) \in \cR(\cP(m)\times \cP(n))} m_{\gamma_1,\gamma_2} E_{\cO_{\gamma_1}} \boxtimes E_{\cO_{\gamma_2}}.
\end{equation}

Combining \eqref{thm theta type II step 1} with \eqref{hbM bL refinement}, we obtain the desired isomorphism, completing the proof of \Cref{Theta type II Hecke module}.
\end{proof}

\subsection{Another description of the oscillator bimodule}\label{another desc type II}
In this subsection, we give another description of the deformation $q \mapsto 1$ of $\ov M_1$ as a $\ov W_1 \times \ov W_2$-representation.

\begin{lemma}\label{Knop action type II}
The action of $\ov W_1 \times \ov W_2$ on $\bZ[\PM(m,n)]$ via \Cref{Cor. K0 M1 type II} (4) is given by
\[
w \cdot \one_\sigma = \one_{w * \sigma} \quad \text{for all } w\in \ov W_1\times \ov W_2, \sigma \in \PM(m,n).
\]
\end{lemma}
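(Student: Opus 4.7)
The plan is to verify the formula on generators and invoke the fact that both sides define group actions of $\ov W_1\times \ov W_2$. The right-hand side is manifestly a group action by transport of structure (see \eqref{action Sm Sn on PM}). For the left-hand side, the specialization at $v=1$ of the $\bsfH_1\ot\bsfH_2$-action on $\bsfM_1$ produces a $\bZ[\ov W_1\times \ov W_2]$-module structure on $\bsfM_{1,v=1,\bZ}=\bZ[\PM(m,n)]$ by \Cref{Lem. Hecke mod specialization} (applied to the $\ov B_1\times \ov B_2$-action on $\ov\bL_1$ via the diagonal embedding into $\ov G_1\times \ov G_2$-convolution). Hence it suffices to check the identity when $w=s$ is a simple reflection in $\ov\sfS_1\sqcup \ov\sfS_2$.

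For a simple reflection $s$, the action on $\one_\sigma$ is computed by \Cref{Cor. W action sph orb}. By \Cref{Lem. type G or U type II}, the pair $(s,\cO_\sigma)$ is always of type (G), (U$^+$) or (U$^-$); no type (T) occurs. In the type (G) case the corresponding formula in \Cref{Cor. W action sph orb} reads $s\cdot\one_\sigma=\one_\sigma$; and in the two type (U) cases it reads $s\cdot\one_\sigma=\one_{\sigma'}$, where $\sigma'$ is the unique partial matching labeling the $s$-companion of $\cO_\sigma$ (again by \Cref{Lem. type G or U type II}).

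It therefore remains to match the combinatorial description of $s*\sigma$ with the case analysis of \Cref{Lem. type G or U type II}. In the type (G) case, say $s=s_i$, the condition $\sigma^\dagger(i)=\sigma^\dagger(i+1)$ forces $\{i,i+1\}\cap I=\emptyset$, so transposing $e_i$ and $e_{i+1}$ leaves the triple $(I,J,\mu)$ unchanged, i.e.\ $s*\sigma=\sigma$, matching $s\cdot\one_\sigma=\one_\sigma$. In the type (U) cases the $s$-companion is, by the last sentence of \Cref{Lem. type G or U type II}, precisely $\cO_{s*\sigma}$, so $s\cdot\one_\sigma=\one_{s*\sigma}$. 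The same argument applies verbatim to $s=s_j'\in\ov\sfS_2$. This completes the verification on generators and hence on all of $\ov W_1\times \ov W_2$.

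I do not foresee a serious obstacle: the content of the lemma is essentially a bookkeeping check that \Cref{Lem. type G or U type II} has been set up to be compatible with the combinatorial $*$-action of \eqref{action Sm Sn on PM}. The only subtlety worth stressing in the write-up is that no type (T) orbit arises in this geometry, so \Cref{Cor. W action sph orb} really does specialize to the simple formula above without any sign or two-term complications.
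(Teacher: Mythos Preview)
Your proposal is correct and follows exactly the paper's approach: reduce to simple reflections, then invoke \Cref{Cor. W action sph orb} together with \Cref{Lem. type G or U type II}. The paper's proof is a two-line pointer to these same two results; your write-up simply spells out the case analysis (type (G) gives $s*\sigma=\sigma$, type (U) gives the companion $\cO_{s*\sigma}$) that the paper leaves implicit.
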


\begin{proof}
It suffices to check the case where $w=s$ is a simple reflection,  in which case the statement follows from \Cref{Cor. W action sph orb} and \Cref{Lem. type G or U type II}.
\end{proof}

For each integer $0 \le i \le \min(m,n)$, the subset $\PM(m,n,i)\subset \PM(m,n)$ is a single $\bS_m \times \bS_n$-orbit. We define a special partial matching $\overline \s_i \in \PM(m,n,i)$ by
\begin{equation}\label{def sigmai}
   \overline \s_i = \left( I = \{m - i + 1, \ldots, m\} \xrightarrow{\mu_i} \{1, 2, \ldots, i\} = J \right),
\end{equation}
where $\mu_i$ is the unique order-preserving bijection between $I$ and $J$. The stabilizer of $\overline \s_i$ under $\bS_m\times \bS_n$ is the subgroup 
\begin{equation*}
    \bS_{m-i} \times \Delta(\bS_i) \times \bS_{n-i}\subseteq (\bS_{m-i} \times \bS_i )\times (\bS_i \times \bS_{n-i})\subseteq \bS_m\times \bS_n.
\end{equation*}
Here $\D(\bS_i)$ denotes the diagonally embedded copy of $\bS_i$.


In particular, we get an 
isomorphism of $\ov W_1 \times \ov W_2 = \bS_m \times \bS_n$-modules
\begin{equation}\label{degen Hecke type II}
    \bC[\PM(m,n)] \cong   
\bigoplus_{i=0}^{\min(m,n)} \Ind_{\bS_{m-i} \times \Delta(\bS_i) \times \bS_{n-i}}^{\bS_m \times \bS_n} (\mathbf{1}),
\end{equation}
where $\mathbf{1}$ denotes the trivial representation.


Combining \Cref{Knop action type II}, \Cref{Theta type II Hecke module} and \eqref{degen Hecke type II}, we obtain:

\begin{cor} There is an isomorphism of $\ov W_1 \times \ov W_2 = \bS_m \times \bS_n$-modules:
\[
\bigoplus_{i=0}^{\min(m,n)} \Ind_{\bS_{m-i} \times \Delta(\bS_i) \times \bS_{n-i}}^{\bS_m \times \bS_n} (\mathbf{1})\cong \bigoplus_{(\gamma_1, \gamma_2) \in \cR(\cP(m)\times \cP(n))} m_{\gamma_1,\gamma_2} E_{\cO_{\gamma_1}}(q) \boxtimes E_{\cO_{\gamma_2}}(q).
\]
\end{cor}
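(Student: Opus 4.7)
My proof plan is to chain together the three results cited immediately before the corollary, using the Tits--Lusztig deformation as the bridge between the $v=1$ and $v=\sqrt q$ specializations of the generic module $\bsfM_1$.

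First I would identify the left-hand side as a Weyl-group module. By \Cref{Cor. K0 M1 type II}(4), the specialization $\bsfM_{1,v=1,\bC}$ is a representation of $\ov W_1 \times \ov W_2 \cong \bS_m \times \bS_n$ on the free space $\bC[\PM(m,n)]$. \Cref{Knop action type II} tells me that this action is exactly the permutation action via \eqref{action Sm Sn on PM}. The orbit decomposition $\PM(m,n)=\bigsqcup_{i=0}^{\min(m,n)} \PM(m,n,i)$, together with the identification $\Stab_{\bS_m\times \bS_n}(\overline\sigma_i)=\bS_{m-i}\times\Delta(\bS_i)\times \bS_{n-i}$, then gives the decomposition \eqref{degen Hecke type II}. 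Hence the LHS of the corollary is canonically isomorphic to $\bsfM_{1,v=1,\bC}$ as a $\bS_m\times \bS_n$-module.

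Next I would identify the right-hand side with the $v=\sqrt q$ specialization. By \Cref{Cor. K0 M1 type II}(3) we have $\bsfM_{1,v=\sqrt q,\bC}\cong \ov M_1\cong \ov M$ as $\ov H_1\otimes \ov H_2$-modules. \Cref{Theta type II Hecke module} then gives, after tensoring with the sign character, the decomposition into $\bigoplus m_{\gamma_1,\gamma_2} E_{\cO_{\gamma_1}}(q)\boxtimes E_{\cO_{\gamma_2}}(q)$. Viewed through Lusztig's isomorphism $\lambda_W\colon H\xrightarrow{\sim}\bC[W]$, each $E_{\cO_\gamma}(q)$ is simply the Springer representation $E_{\cO_\gamma}$ as a $\bC[W]$-module.

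The bridge between these two specializations is \Cref{TitsLusztig} applied to the $\sfH\otimes_R\bC[v,v^{-1}]$-module $\bsfM_1\otimes_R \bC[v,v^{-1}]$, exactly as in the proof of \Cref{thm:intro spherical}. Since $\lambda_W$ specializes to the identity at $v=1$ and to an isomorphism at $v=\sqrt q$, the Tits deformation principle yields an isomorphism $\bsfM_{1,v=1,\bC}\cong \bsfM_{1,v=\sqrt q,\bC}$ of $\bC[\bS_m\times \bS_n]$-modules. Combining this with the two identifications above yields
\[
\bigoplus_{i=0}^{\min(m,n)} \Ind_{\bS_{m-i}\times\Delta(\bS_i)\times \bS_{n-i}}^{\bS_m\times \bS_n}(\mathbf{1})\;\cong\;\bigoplus_{(\gamma_1,\gamma_2)\in\cR(\cP(m)\times\cP(n))} m_{\gamma_1,\gamma_2}\, E_{\cO_{\gamma_1}}\boxtimes E_{\cO_{\gamma_2}},
\]
possibly up to the sign twist inherited from \Cref{Theta type II Hecke module}.

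The only delicate step—and the part I would treat most carefully—is bookkeeping the sign character across the Tits--Lusztig specialization. The sign of the generic Hecke algebra $\sfH$ (sending each $\sfT_s$ to $-1$) specializes at $v=1$ to the sign of $W$ and at $v=\sqrt q$ to the Hecke-algebra sign of $H$. One then checks that, after transferring \Cref{Theta type II Hecke module} through the identifications above, the tensoring with $\sgn_{\ov H_1\otimes \ov H_2}$ either matches the convention used to state the corollary or is absorbed into the stated equality. Apart from this bookkeeping, the proof is a direct assembly and does not require any new geometric input.
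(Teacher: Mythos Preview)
Your approach is exactly what the paper intends: the corollary carries no separate proof beyond the sentence ``Combining \Cref{Knop action type II}, \Cref{Theta type II Hecke module} and \eqref{degen Hecke type II}, we obtain,'' and you have correctly reconstructed that chain, including the implicit appeal to \Cref{TitsLusztig} (already used inside the proof of \Cref{thm:intro spherical} and hence of \Cref{Theta type II Hecke module}) to identify $\bsfM_{1,v=1,\bC}$ with $\ov M$ as $\bS_m\times\bS_n$-modules.

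Your hedging on the sign, however, is not mere bookkeeping---the sign does \emph{not} get absorbed. The chain gives $\bC[\PM(m,n)]\cong\bsfM_{1,v=1,\bC}\cong\ov M$ on one hand and $\ov M\otimes\sgn\cong\bigoplus m_{\gamma_1,\gamma_2}\,E_{\cO_{\gamma_1}}\boxtimes E_{\cO_{\gamma_2}}$ on the other, so the two sides of the stated corollary differ by a twist by $\sgn_{\bS_m\times\bS_n}$. A direct check at $m=2$, $n=1$ confirms this: under the paper's normalization $E_{\cO_{[1^k]}}=\sgn_{\bS_k}$, the right side is $E_{\cO_{[2]}}\oplus 2\,E_{\cO_{[1,1]}}=\mathbf{1}\oplus 2\,\sgn$ as $\bS_2$-modules, whereas $\bC[\PM(2,1)]\cong 2\cdot\mathbf{1}\oplus\sgn$. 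So what your argument actually establishes is the sign-corrected identity
\[
\Bigl(\bigoplus_{i}\Ind_{\bS_{m-i}\times\Delta(\bS_i)\times\bS_{n-i}}^{\bS_m\times\bS_n}(\mathbf{1})\Bigr)\otimes\sgn\;\cong\;\bigoplus_{(\gamma_1,\gamma_2)} m_{\gamma_1,\gamma_2}\,E_{\cO_{\gamma_1}}\boxtimes E_{\cO_{\gamma_2}},
\]
and the omission of the sign on the left appears to be an oversight in the printed statement rather than a gap in your reasoning.
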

\begin{remark}
It would be interesting to give a direct proof of this identity based on the explicit Springer correspondence for general linear groups.
\end{remark}



\subsection{Generators as $\bcH_1 \otimes \bcH_2$-module}
We now describe a set of generating objects in $\bcM_1$ under the action of $\bcH_1 \otimes \bcH_2$, and draw some geometric consequences.  

Let $\PM(m,n)_0 \subset \PM(m,n)$ denote the subset of partial matchings $\s$ such that for any simple reflection $s \in \Delta_{\ov W_1} \cup \Delta_{\ov W_2}$, the pair $(s, \cO_\s)$ is either of type (G) or type (U$-$), as defined in \Cref{Sec. Hk mod}. This is consistent with the notation $I_0$ introduced in \Cref{def. I0}.

\begin{lemma}\label{Lem. I0 for PM type II}
We have $\PM(m,n)_0 = \{\overline \s_i \mid 0 \le i \le \min(m,n)\}$.
\end{lemma}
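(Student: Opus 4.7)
The plan is to translate the defining condition for $\PM(m,n)_0$ into a purely combinatorial monotonicity condition on the extended functions $\sigma^\dagger$ and $\sigma_\dagger$ introduced in \eqref{extended sigma} and \eqref{extended sigma 2}, and then read off the partial matchings that satisfy it.

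First, I would apply \Cref{Lem. type G or U type II}. By that lemma, for every $\sigma = (I,J,\mu) \in \PM(m,n)$ and every simple reflection $s \in \ov\sfS_1 \sqcup \ov\sfS_2$, the pair $(s,\cO_\sigma)$ is never of type (T) but only of type (G), (U$^+$), or (U$^-$), governed by comparing values of $\sigma^\dagger$ (for $s \in \ov\sfS_1$) or $\sigma_\dagger$ (for $s \in \ov\sfS_2$). Hence $\sigma \in \PM(m,n)_0$ if and only if, for all $1 \le i \le m-1$,
\[
\sigma^\dagger(i) \le_{\PM} \sigma^\dagger(i+1),
\]
and for all $1 \le j \le n-1$,
\[
\sigma_\dagger(j) \le_{\PM} \sigma_\dagger(j+1).
\]
In other words, $\sigma^\dagger$ and $\sigma_\dagger$ are both weakly increasing with respect to the orders $<_{\PM}$ on their respective target sets.

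Next I would analyze these two monotonicity constraints. Recall that $\sigma^\dagger$ takes values in $\{0,1,\ldots,n\}$ with the standard order, where the value $0$ encodes ``unmatched''. Weak monotonicity of $\sigma^\dagger$ forces all indices $i \notin I$ (where $\sigma^\dagger(i)=0$) to precede all indices $i \in I$; moreover on $I$, since $\mu$ is injective, the values $\mu(i)$ must be strictly increasing. If $k := |I|$, this forces $I = \{m-k+1, \ldots, m\}$ and the restriction of $\mu$ to $I$ to be order-preserving. Symmetrically, weak monotonicity of $\sigma_\dagger$ with values in $\{1,\ldots,m,\infty\}$ (where $\infty$ encodes ``unmatched'') forces $J = \{1,2,\ldots,k\}$ and $\mu^{-1}$ to be order-preserving on $J$. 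Combining these constraints yields $\sigma = \overline{\s}_k$ in the notation of \eqref{def sigmai}.

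Finally, I would check the converse inclusion: for each $0 \le i \le \min(m,n)$, the matching $\overline{\s}_i$ satisfies the two monotonicity conditions by construction (the unique order-preserving bijection $\mu_i$ ensures both $\overline{\s}_i^\dagger$ and $\overline{\s}_{i,\dagger}$ are weakly increasing in the $<_{\PM}$ orders). This establishes both inclusions and completes the proof. No step is expected to be difficult; the content of the lemma is essentially a repackaging of \Cref{Lem. type G or U type II}.
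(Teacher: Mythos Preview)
Your proof is correct and takes essentially the same approach as the paper, which simply says ``This follows from \Cref{Lem. type G or U type II}.'' You have spelled out in detail the monotonicity argument that the paper leaves implicit.
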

\begin{proof}
    This follows from \Cref{Lem. type G or U type II}. 
\end{proof}

The following is a direct calculation. 
\begin{lemma}\label{Lem. linear IC type II}
For each $0 \le i \le \min(m,n)$, let $\ov \bL_1^i \subset \ov \bL_1$ be the linear subspace consisting of maps $T: L_1 \to L_2$ satisfying:
\begin{itemize}
    \item $T(e_1) = \cdots = T(e_{m-i}) = 0$;
    \item $T(e_{m-i+a}) \in \langle f_1, \ldots, f_a \rangle$ for $1 \le a \le i$.
\end{itemize}
Then $\ov {\cO_{\overline \s_i}} = \ov \bL_1^i$ and $d_{\overline \s_i}=\dim\ov \bL_1^i=\frac{i(i+1)}{2}$. Moreover, the Kazhdan–Lusztig basis element corresponding to $\s_i$ satisfies
\begin{equation}\label{C' for bar sigma i}
    C'_{\overline \s_i} = v^{-d_{\overline \s_i}} \sum_{\s \le \overline \s_i} \mathbf{1}_\s=v^{-\frac{i(i+1)}{2}} \sum_{\s \le \overline \s_i} \mathbf{1}_\s.
\end{equation}
\end{lemma}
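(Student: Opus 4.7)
The plan is to show $\ov{\cO_{\overline\s_i}} = \ov\bL_1^i$ first, then exploit smoothness to read off $\IC_{\overline\s_i}$ and thus $C'_{\overline\s_i}$.

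I would begin by verifying the inclusion $\cO_{\overline\s_i} \subseteq \ov\bL_1^i$. Direct computation gives $x_{\overline\s_i}(e_{m-i+a}) = f_a$ for $1 \le a \le i$ and $x_{\overline\s_i}(e_a) = 0$ for $a \le m-i$, so $x_{\overline\s_i} \in \ov\bL_1^i$. To see that $\ov\bL_1^i$ is $\ov B_1 \times \ov B_2$-stable, take $T \in \ov\bL_1^i$ and $(b_1,b_2) \in \ov B_1 \times \ov B_2$. Since $b_1^{-1}(e_a) \in \langle e_1,\ldots,e_a\rangle$, one checks directly from the defining conditions of $\ov\bL_1^i$ that $T(b_1^{-1}(e_a)) \in \langle f_1,\ldots,f_{a-(m-i)}\rangle$ when $a > m-i$ (and vanishes when $a \le m-i$); applying $b_2 \in \ov B_2$ preserves this condition since $b_2$ stabilizes each flag step $\langle f_1,\ldots,f_j\rangle$. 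Since $\ov\bL_1^i$ is closed, we conclude $\ov{\cO_{\overline\s_i}} \subseteq \ov\bL_1^i$.

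Next I would compute $\dim \ov\bL_1^i = \sum_{a=1}^i a = \frac{i(i+1)}{2}$ and show the reverse inclusion by exhibiting $\cO_{\overline\s_i}$ as open dense in $\ov\bL_1^i$. A generic $T \in \ov\bL_1^i$ has $T(e_{m-i+a})$ with nonzero $f_a$-coefficient for every $1 \le a \le i$, hence is of rank $i$ with kernel $\langle e_1,\ldots,e_{m-i}\rangle$ and satisfies $T(\langle e_1,\ldots,e_{m-i+a}\rangle) = \langle f_1,\ldots,f_a\rangle$. By successive upper-triangular row/column operations (applying Gaussian-style elimination using only elements of $\ov B_1$ and $\ov B_2$), such a $T$ can be transformed into $x_{\overline\s_i}$, so $T \in \cO_{\overline\s_i}$. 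Since $\ov\bL_1^i$ is irreducible, this open non-empty subset is dense, giving $\ov{\cO_{\overline\s_i}} = \ov\bL_1^i$ and $d_{\overline\s_i} = \frac{i(i+1)}{2}$.

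Finally, because $\ov\bL_1^i$ is a linear subspace, it is smooth, so formula \eqref{Kazhan-Lustig smooth} applies: $\IC_{\overline\s_i} = \underline{\Ql}_{\ov\bL_1^i}\braket{d_{\overline\s_i}}$, which is manifestly $*$-pure of weight zero, and $P_{\s,\overline\s_i}(t) = 1$ exactly for $\s \le \overline\s_i$ and zero otherwise. Plugging into \Cref{Lem. KL poly ch} yields
\[
C'_{\overline\s_i} = v^{-d_{\overline\s_i}} \sum_{\s \le \overline\s_i} \mathbf{1}_\s = v^{-\frac{i(i+1)}{2}} \sum_{\s \le \overline\s_i} \mathbf{1}_\s,
\]
as required. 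The only step that requires more than bookkeeping is the density argument in the second paragraph; the rest is essentially an unpacking of definitions, but one must be careful that the $\ov B_1 \times \ov B_2$-orbit closure order on $\PM(m,n)$ really records exactly which $\cO_\s$ lie in $\ov\bL_1^i$, which is immediate from the support of $\IC_{\overline\s_i}$ once smoothness is in hand.
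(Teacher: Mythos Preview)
Your proof is correct and supplies precisely the ``direct calculation'' that the paper omits: verifying $\ov\bL_1^i$ is $\ov B_1\times\ov B_2$-stable, that $\cO_{\overline\s_i}$ is open dense in it, and then invoking smoothness together with \eqref{Kazhan-Lustig smooth} and \Cref{Lem. KL poly ch}. The last sentence about the closure order is tautological (by definition $\s\le\overline\s_i$ means $\cO_\s\subset\ov{\cO_{\overline\s_i}}$), so no extra care is actually needed there.
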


\begin{cor}[of \Cref{Lem. heart gen}]\label{Lem. type II gene}
The $\bsfH_1 \otimes \bsfH_2$-module $\bsfM_1$ is generated by $\{C'_{\overline \s_i} \mid 0 \le i \le \min(m,n)\}$. In particular, the $\bZ[\ov W_1 \times \ov W_2]$-module $\bsfM_{v=1,\bZ}=\bZ[\PM(m,n)]$ is generated by $\{\chi(\IC_{\overline \s_i}) \mid 0 \le i \le \min(m,n)\}$.
\end{cor}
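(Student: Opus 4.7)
The plan is to deduce both statements directly from Lemma \ref{Lem. heart gen}, using Lemma \ref{Lem. I0 for PM type II} to identify the abstract generating set $I_0$ of \Cref{def. I0} with the concrete set $\{\overline\s_i\mid 0\le i\le \min(m,n)\}$, and then specialize at $v=1$ to obtain the $\bZ[\ov W_1\times\ov W_2]$-statement.

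First I would note that in the setting of this section, the subset $I_0\subset \underline{(\ov B_1\times \ov B_2)\backslash \ov\bL_1}=\PM(m,n)$ from \Cref{def. I0} coincides with the subset $\PM(m,n)_0$ defined above, because both are characterized by the same condition on the local types $(s,\cO_\s)$ for every simple reflection $s\in \ov\sfS_1\sqcup \ov\sfS_2$. By Lemma \ref{Lem. I0 for PM type II}, this common set is precisely $\{\overline\s_i\mid 0\le i\le \min(m,n)\}$. Then Lemma \ref{Lem. heart gen}(2), applied to the spherical $\ov G_1\times \ov G_2$-variety $\ov\bL_1$, yields the first statement: $\bsfM_1$ is generated as a $\bsfH_1\otimes \bsfH_2$-module by $\{C'_{\overline\s_i}\mid 0\le i\le \min(m,n)\}$.

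For the second statement, I would specialize at $v=1$. By the commutativity of diagram \eqref{diag cat} (with $\bcM_1^{\Tate}\hookrightarrow \bcM_1 \to \bcM_1^k$), the weight-polynomial class $\ch(\IC_{\overline\s_i})\in R[\PM(m,n)]$ specializes at $v=1$ to the Euler-characteristic class $\chi(\IC_{\overline\s_i})\in \bZ[\PM(m,n)]$. Combined with the definition $C'_\alpha=(-1)^{-d_\alpha}\ch(\IC_\alpha)$ from \eqref{define Kazhan-Lustig} and the value $d_{\overline\s_i}=i(i+1)/2$ from Lemma \ref{Lem. linear IC type II}, this gives that $C'_{\overline\s_i}$ specializes at $v=1$ to $(-1)^{i(i+1)/2}\chi(\IC_{\overline\s_i})$. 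By Lemma \ref{Lem. Hecke mod specialization}, the $\bsfH_1\otimes \bsfH_2$-action on $\bsfM_1$ specializes to the $\bZ[\ov W_1\times \ov W_2]$-action on $\bsfM_{1,v=1,\bZ}=\bZ[\PM(m,n)]$, so the first statement specializes to the assertion that $\{(-1)^{i(i+1)/2}\chi(\IC_{\overline\s_i})\}$ generates $\bZ[\PM(m,n)]$ over $\bZ[\ov W_1\times \ov W_2]$. The signs are units, so we may drop them and conclude.

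Since this is an immediate corollary of already-established results, there is no substantial obstacle; the only bookkeeping is the identification $I_0=\PM(m,n)_0$ and tracking the sign that relates $C'_{\overline\s_i}$ to $\chi(\IC_{\overline\s_i})$ under the $v=1$ specialization, both of which are essentially automatic.
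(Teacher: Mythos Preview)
Your proposal is correct and follows exactly the approach the paper intends: the corollary is simply Lemma~\ref{Lem. heart gen}(2) applied to the spherical $\ov G_1\times\ov G_2$-variety $\ov\bL_1$, together with Lemma~\ref{Lem. I0 for PM type II} identifying $I_0=\PM(m,n)_0=\{\overline\s_i\}$, and then specialization at $v=1$ via Lemma~\ref{Lem. Hecke mod specialization} and diagram~\eqref{diag cat}. The paper records this as a bare corollary of Lemma~\ref{Lem. heart gen} without further argument; your written-out details (including the sign bookkeeping relating $C'_{\overline\s_i}$ to $\chi(\IC_{\overline\s_i})$) are exactly the omitted routine verification.
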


\begin{cor}\label{Cor. M1 IC typeII} 
For any $\s\in \PM(m,n)$,  $\IC_\s$ is Tate, $*$-pure of weight zero, and $*$-parity with parity $l(\s)$.
\end{cor}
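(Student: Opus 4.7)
The plan is to reduce the claim to the corresponding statement on the distinguished subset $\PM(m,n)_0 \subset \PM(m,n)$ identified in \Cref{Lem. I0 for PM type II}, and then invoke \Cref{Cor. I0 implies all I}. By \Cref{Lem. I0 for PM type II}, we have $\PM(m,n)_0 = \{\overline{\s}_i \mid 0 \le i \le \min(m,n)\}$, so this reduction requires verifying the three properties (Tate, $*$-purity of weight zero, and $*$-parity of parity $d_{\overline{\s}_i}$) only for the generating IC sheaves $\IC_{\overline{\s}_i}$.

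First I would verify these three properties directly for $\IC_{\overline{\s}_i}$. The key input is \Cref{Lem. linear IC type II}, which shows that $\overline{\cO_{\overline{\s}_i}} = \overline{\bL}_1^i$ is a \emph{linear subspace} of $\overline{\bL}_1$, hence smooth. Consequently the intersection cohomology sheaf collapses to a shifted and twisted constant sheaf:
\[
\IC_{\overline{\s}_i} \;\cong\; \Qlbar_{\overline{\bL}_1^i}\!\braket{d_{\overline{\s}_i}}.
\]
Its stalk at any $\bF_q$-point of $\overline{\bL}_1^i$ is concentrated in cohomological degree $-d_{\overline{\s}_i}$, on which Frobenius acts by the scalar $q^{-d_{\overline{\s}_i}/2}$, while stalks off $\overline{\bL}_1^i$ vanish. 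This immediately yields all three required properties for $\IC_{\overline{\s}_i}$: the Frobenius eigenvalue lies in $\{q^{m/2}\}$ (Tate), it has $\iota$-weight equal to the cohomological degree (pure of weight zero), and cohomology is concentrated in a single degree of parity $d_{\overline{\s}_i}$.

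Next I would apply \Cref{Lem. type G or U type II} to observe that, for every simple reflection $s \in \overline{\sfS}_1 \sqcup \overline{\sfS}_2$ and every $\sigma \in \PM(m,n)$, the pair $(s,\cO_\sigma)$ is of type (G) or (U); in particular the ``type (N)'' hypothesis needed in parts (2) and (3) of \Cref{Cor. I0 implies all I} is satisfied globally. Feeding Step~1 into all three parts of \Cref{Cor. I0 implies all I} then propagates the Tateness, weight-zero purity, and parity properties from $I_0 = \PM(m,n)_0$ to all of $I = \PM(m,n)$, completing the proof.

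This argument is mostly bookkeeping: all of the real geometric content has already been established. The only subtlety to double-check is that the parity label $l(\s)$ in the statement agrees with $d_\s = \dim \cO_\s$, which is how parity is tracked in \Cref{Cor. I0 implies all I}; under the identification $l(\s) = d_\s$ (the dimension of $\cO_\s$), the two conclusions match. There is no substantive obstacle to overcome.
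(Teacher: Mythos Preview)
Your proof is correct and follows essentially the same approach as the paper: reduce to the generators $\IC_{\overline\s_i}$ via \Cref{Cor. I0 implies all I}, and verify the properties there using that $\overline{\cO_{\overline\s_i}}=\ov\bL_1^i$ is a linear subspace (\Cref{Lem. linear IC type II}). You spell out in more detail the verification that the type (G)/(U) hypothesis of \Cref{Cor. I0 implies all I}(2)(3) holds (via \Cref{Lem. type G or U type II}), which the paper's one-line proof leaves implicit.
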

\begin{proof} By \Cref{Cor. I0 implies all I}, it suffices to check that $\IC_{\overline \s_i}$ has the stated properties, for $0\le i\le \min\{m,n\}$, which is clear by \Cref{Lem. linear IC type II} because $\IC_{\overline \s_i}$ is the constant sheaf $\Qlbar\langle l(\overline \s_i)\rangle$ on the linear subspace  $ \ov \bL_1^i$ .
\end{proof}

We have the Kazhdan-Lusztig polynomials $P_{\s',\s}\in\bZ[t]$ in our situation, as defined in general in \Cref{Sec. Hk mod}. By \Cref{Cor. M1 IC typeII} and \Cref{Lem. KL poly ch}, $P_{\s',\s}$ has only even degree terms and  $P_{\s',\s}\in\bZ_{\geq 0} [t]$.
\begin{lemma}\label{Lem. KL indep p type II} 
For any $\s\in \PM(m,n)$, the element $C'_\s\in  \bsfM_1$ is independent of $\bF_q$. In particular, for any $\s, \s'\in \PM(m,n)$, the Kazhdan-Lusztig polynomial $P_{\s',\s}$ is independent of $\bF_q$. 
\end{lemma}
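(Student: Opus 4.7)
The plan is to prove the independence by induction on $d_\sigma$, with the crucial point being that both the $R$-basis $\{\one_\sigma\}$ of $\bsfM_1$ and the $\bsfH_1\otimes \bsfH_2$-action on $\bsfM_1$ are described by purely combinatorial data depending only on $m,n$. Specifically, the action formulas in \Cref{Prop ICs preserves Tate} involve only the types of the pairs $(s,\cO_\sigma)$, and by \Cref{remark assupmtion orbit type II 2} these types are determined by $\sigma$ alone, independent of the base field. The second assertion on $P_{\sigma',\sigma}$ will then follow from the first via \Cref{Cor. M1 IC typeII} together with \Cref{Lem. KL poly ch}, which express these polynomials as the coefficients of $C'_\sigma$ in the standard basis.

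For the induction, the base case covers $\sigma = \overline\sigma_i$ for some $0 \le i \le \min(m,n)$, where $C'_{\overline\sigma_i}$ is given by the explicit formula \eqref{C' for bar sigma i} of \Cref{Lem. linear IC type II}, manifestly independent of $\bF_q$. For the inductive step, suppose $\sigma$ is not of the form $\overline\sigma_i$; then by \Cref{Lem. I0 for PM type II} there is a simple reflection $s$ with $(s,\cO_\sigma)$ of type (U$+$), and an $s$-companion $\sigma'$ with $d_{\sigma'} = d_\sigma - 1$ so that $(s,\cO_{\sigma'})$ is of type (U$-$) and $s\notin \cD(\sigma')$. Applying \Cref{Lem. conv ICs}(2) yields a canonical decomposition
\[
\IC_s \star \IC_{\sigma'} \;\cong\; \IC_\sigma \;\oplus\; \bigoplus_{\beta \le \sigma',\, s \in \cD(\beta)} \IC_\beta \otimes H^{-d_\beta-1}(\IC_{\sigma'}|_{x_\beta})(-\tfrac{d_\beta+1}{2}).
\]
Passing to $\ch$ and rearranging gives, in $\bsfM_1$,
\[
\ch(\IC_\sigma) = \ch(\IC_s) \cdot \ch(\IC_{\sigma'}) - \sum_{\beta \le \sigma'} n_{\beta,\sigma'}(v)\, \ch(\IC_\beta),
\]
where the polynomial multiplicities $n_{\beta,\sigma'}(v)$ are determined by the stalks of $\IC_{\sigma'}$. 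By the inductive hypothesis, $C'_{\sigma'}$ and each $C'_\beta$ (which has $d_\beta \le d_{\sigma'} < d_\sigma$) are independent of $\bF_q$; $\ch(\IC_s)\in\sfH$ is independent of $\bF_q$ by construction; and its action on $\bsfM_1$ is independent of $\bF_q$ by the opening remark. Hence $\ch(\IC_\sigma) = (-1)^{-d_\sigma} C'_\sigma$ is independent of $\bF_q$.

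I do not foresee a genuine obstacle here: the work is in verifying that each ingredient is truly combinatorial. The subtlest point is ensuring that the multiplicities $n_{\beta,\sigma'}(v)$ can be extracted from $C'_{\sigma'}$ alone; this uses \Cref{Cor. M1 IC typeII} to guarantee that $\IC_{\sigma'}$ is $*$-pure of weight zero, so that its stalks are encoded by the polynomials $P_{\beta,\sigma'}$, which by \Cref{Lem. KL poly ch} are precisely the expansion coefficients of $C'_{\sigma'}$ in the standard basis. Once this bookkeeping is set up, the induction closes cleanly, and the second assertion about $P_{\sigma',\sigma}$ follows immediately from the same formula.
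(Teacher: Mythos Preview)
Your proposal is correct and follows essentially the same approach as the paper: induction on $d_\sigma$, handling the terminal cases $\sigma=\overline\sigma_i$ via the explicit formula \eqref{C' for bar sigma i}, and otherwise using \Cref{Lem. conv ICs}(2) to express $C'_\sigma$ in terms of $(\sfT_s+1)C'_{\sigma'}$ and lower $C'_\beta$'s. You spell out more carefully than the paper why the multiplicity coefficients are independent of $\bF_q$ (via purity from \Cref{Cor. M1 IC typeII} and the reconstruction of stalks from $C'_{\sigma'}$ through \Cref{Lem. KL poly ch}), which is exactly the point the paper compresses into the phrase ``its coefficient (which involves weight polynomials of stalks of $\IC_{\sigma'}$) are independent of $\bF_q$''.
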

\begin{proof}
We prove the statement by induction on $d_\s$. The case $d_\s=0$ only occurs when  $\cO_\s=\{0\}$, for which the statement is obvious. Suppose the statement is true for orbits of dimension $<d_\s$. If there exists $s\in \ov \sfS_1\sqcup \ov \sfS_2$ such that $(s,\cO_\s)$ is of type (U+), then let $\s'=s*\s$ be the $s$-companion of $\s$. By \Cref{Lem. conv ICs}(2), $C'_\s$ can be expressed as an $R$-linear combination of $(\sfT_s+1)C'_{\s'}$ together with $C'_\t$ where $d_\t<d_\s$. By inductive hypothesis, both $C'_\t$ and its coefficient (which involves weight polynomials of stalks of $\IC_{\s'}$) are independent of $\bF_q$. By \Cref{Prop ICs preserves Tate}, the $\bsfH_1\ot\bsfH_2$-action on $\bsfM_1$ is also independent of $\bF_q$, hence so is  $(\sfT_s+1)C'_{\s'}$. Altogether we conclude that $C'_{\s}$ is independent of $\bF_q$.

If for all $s\in \ov \sfS_1\sqcup \ov \sfS_2$,  $(s,\cO_\s)$ is either of type (G) or type (U-), then $\s\in \PM(m,n)_0$, hence  $\s=\overline \s_i$ by \Cref{Lem. I0 for PM type II}. In this case, \eqref{C' for bar sigma i} shows that $C'_{\overline \s_i}$ is independent of $\bF_q$. 
\end{proof}

\subsection{A $W$-graph example for type II theta correspondence}\label{section W-graph}
In this section, we computing explicitly the $W$-graph associated with the spherical Hecke module arising from the \emph{type II} theta correspondence in the case when $m=n=2$.

{\bf The orbits.}
Recall that the $\ov B_1\times \ov B_2$–orbits in $\bL_1=\Hom(L_1,L_2)$ are parametrized by the set $\PM(2,2)$. There are $7$ orbits and we simply the notation as follows:
\[
\varnothing,\quad
2\mapsto 1,\quad
2\mapsto 2,\quad
1\mapsto 1,\quad
1\mapsto 2,\quad
\begin{array}{c}1\mapsto 1\\[-2pt]2\mapsto 2\end{array},
\quad
\begin{array}{c}1\mapsto 2\\[-2pt]2\mapsto 1\end{array}.
\]

{\bf Bruhat order.}
The Hasse diagram for the  Bruhat order is shown below, where we label each vertex with its corresponding dimension $d_\sigma$.

\begin{center}
\begin{tikzpicture}[node distance=18mm]
\WNode[below]{btop}{(0,0)}
  {$\begin{array}{c}1\mapsto 2\\[-2pt]2\mapsto 1\end{array}$}{$d_\s=4$}

\WNode[below left]{bL}{(-2,-1.8)}{$1\mapsto 2$}{$d_\s=3$}
\WNode[below right]{bR}{( 2,-1.8)}
  {$\begin{array}{c}1\mapsto 1\\[-2pt]2\mapsto 2\end{array}$}{$d_\s=3$}

\WNode[below left]{bLL}{(-2,-4)}{$1\mapsto 1$}{$d_\s=2$}
\WNode[below right]{bRR}{( 2,-4)}{$2\mapsto 2$}{$d_\s=2$}

\WNode[below left]{bmid}{(0,-6)}{$2\mapsto 1$}{$d_\s=1$}

\WNode[below]{bbot}{(0,-8.0)}{$\varnothing$}{$d_\s=0$}

\WEdge{btop}{bL}{0}
\WEdge{btop}{bR}{0}
\WEdge{bL}{bLL}{0}
\WEdge{bR}{bRR}{0}
\WEdge{bL}{bRR}{0}
\WEdge{bR}{bLL}{0}
\WEdge{bLL}{bmid}{0}
\WEdge{bRR}{bmid}{0}
\WEdge{bmid}{bbot}{0}
\end{tikzpicture}
\end{center}

{\bf IC sheaves and Kazhdan–Lusztig polynomials.}
\begin{enumerate}
    \item  If $\sigma\neq 1\mapsto 2$, the orbit closure $\overline\cO_\sigma$ is smooth (in fact a linear subspace in $\Hom(L_1,L_2)$). We have
    \[
      \IC_\sigma=\underline{\Ql}_{\overline{\cO}_\sigma}\langle d_\sigma\rangle
    \]
    and $P_{\sigma',\sigma}(t)=1$ for all $\sigma'<\sigma$. 
    \item If $\sigma= 1\mapsto 2$, then
    \[\overline{\cO}_\sigma=\{T\in\Hom( L_1,L_2)\mid\rank(T)=1\}.
    \]
    This is the determinantal variety of $2\times2$ matrices of rank $1$, a
    three–dimensional cone whose unique singular point is the origin
    $\{0\}$.  There is a small resolution
    \[
      \widetilde{\cO}_\sigma
      =\Bigl\{(T,L)\ \Big|\ L\subset L_1, \dim L=1,\ T\in\Hom( L_1, L_2),
          \ L\subset\ker T  
          \Bigr\}
    \]
    with projection
    \[
      \pi:\widetilde{\cO}_\sigma\longrightarrow\overline{\cO}_\sigma,\qquad (T,L)\longmapsto T, 
    \]
   and the exceptional fiber over the singular point is
    \[
      \pi^{-1}(0)\cong \bP(L_1)\cong\bP^1 .
    \]
Since $\pi$ is a small resolution, we have
$\IC_\sigma\simeq\pi_*\underline{\Ql}_{\widetilde{\cO}_\sigma}\langle 3\rangle$. A direct computation using the fiber over the singular point gives:
\[
\cH^i\bigl(j_{\emptyset}^*\IC_\sigma\bigr)
\cong H^{i+3}\bigl(\bP^1,\Ql\bigr)
=
\begin{cases}
\Ql & i=-3,-1,\\[3pt]
0 & \text{otherwise}.
\end{cases}
\]
Hence
\[
P_{\emptyset,\sigma}(t)=1+t^2,
\qquad
P_{\sigma',\sigma}(t)=1\quad (\sigma'\neq\emptyset,\ \sigma'<\sigma).
\]
\end{enumerate}

{\bf The $W$-graph.}
Combining the Bruhat graph, the above Kazhdan–Lusztig polynomials, and \Cref{Lem. type G or U type II}, we obtain the $W$-graph for the type II theta correspondence in the case $m=n=2$. All edge labels $\mu(e)$ are equal to $1$ and are omitted. 
\begin{center}
\begin{tikzpicture}[node distance=18mm]
\WNode[below left]{btop}{(0,0)}
  {$\begin{array}{c}1\mapsto 2\\[-2pt]2\mapsto 1\end{array}$}{$\{s_1,s'_1\}$}

\WNode[below]{bR}{(0,-3)}
  {$\begin{array}{c}1\mapsto 1\\[-2pt]2\mapsto 2\end{array}$}{$\emptyset$}
  
\WNode[below left]{bLL}{(-2,-6)}{$1\mapsto 1$}{$\{s_1\}$}
\WNode[below right]{bL}{(2,-6)}{$2\mapsto 2$}{$\{s'_1\}$}
\WNode[below]{bRR}{(0,-6)}{$1\mapsto 2$}{$\{s_1,s'_1\}$}

\WNode[below left]{bmid}{(0,-9)}{$2\mapsto 1$}{$\emptyset$}

\WNode[below]{bbot}{(0,-12.0)}{$\varnothing$}{$\{s_1,s'_1\}$}

\WEdge{bR}{btop}{0}

\WEdge{bR}{bLL}{0}
\WEdge{bR}{bL}{0}

\WEdge{bLL}{bRR}{0}
\WEdge{bL}{bRR}{0}

\WEdge{bmid}{bLL}{0}
\WEdge{bmid}{bL}{0}

\WEdge{bmid}{bbot}{0}
\end{tikzpicture}
\end{center}
In this case, each vertex is a cell and there are seven cells.

\section{Hecke bimodules from type I theta correspondence}\label{sec Hecke type I}

In this section, we apply the general framework developed in \Cref{Sec. geom gen} to study the Hecke module arising from the type I theta correspondence for even ortho-symplectic dual pair.

\subsection{The type I theta correspondence over finite fields}\label{Sec type I finite field} 
We retain the notation in \Cref{theta correspondence finite field} and \Cref{Section Geometrization of oscillator bimodule}. 
In particular, $F=\bF_q$ is a finite field of characteristic not equal to $2$. In this section, we recall the Schr\"odinger models of the Weil representation $\omega$ of $G_1(\bF_q)\times G_2(\bF_q)$ and describe the partial Fourier transform which intertwines two different models. 

Let $V_1=L_1\oplus L_1^\vee$ be a polarization of $V_1$ such that $L_1$ is stable under $B_1$. We shall write an element in $G_1$ as a block matrix relative to this polarization. Let $P_1=M_1U_1$ be the Siegel parabolic subgroup of $G_1$ stabilizing $L_1$, where $M_1$ is the Levi component of $P_1$ stabilizing $L_1^\vee$ and $U_1$ is the unipotent radical of $P_1$. We have 
\[
M_1=\{m(a)|a\in \GL(L_1)\}, \quad U_1=\{u(b)|b\in \Herm (L_1^\vee,L_1)\}
\]
where
\begin{align*}
m(a)=\left(\begin{array}{cc}{a} & {} \\ {} &  (a^*)^{-1} \end{array}\right), \quad 
u(b)=\left(\begin{array}{cc}{1} & {b} \\ {} & 1\end{array}\right)
\end{align*}
and 
\[
\Herm(L_1^\vee,L_1)=\{b\in \Hom(L_1^\vee, L_1)|b^*=-b\}.
\]
Here, the elements $a^*\in \GL(L_1^\vee), b^*\in \Hom(L_1^\vee, L_1)$ are defined by requiring that 
\[
\begin{split}
    \langle a v, v'\rangle_{V_1}= \langle v, a^* v'\rangle_{V_1},\\
    \langle b v', v''\rangle_{V_1}=\langle v', b^* v''\rangle_{V_1}
\end{split}
\]
for $v\in L_1$ and $v',v''\in L_1^\vee$. 

Identifying $\bV$ with $\Hom(V_1, V_2)$ via the form on $V_1$. We have a polarization
\begin{equation}\label{pol bV}
    \bV=\Hom(L_1^\vee, V_2)\oplus \Hom(L_1,V_2)
\end{equation}
Let 
\begin{equation*}
    \bL_1=\Hom(L_1,V_2).
\end{equation*}
The polarization \eqref{pol bV} provides a Schr\"odinger model $\omega_{\bL_1}$ of the Weil representation realized on 
$\bC[\bL_1(\bF_q)]$. For $f\in \bC[\bL_1(\bF_q)]$ and $T\in \bL_1(\bF_q)$, the action of $P_1(\bF_q)\times G_2(\bF_q) $ is given as follows:
\begin{equation}\label{mix model equation}
\begin{aligned}
\omega(g_2) f (T) =&  f(g_2^{-1} T)\quad  &  g_2 \in G_2,\\
\omega(m(a)) f(T) =&   f( T  a) &  a \in \GL(L_1),\\
\omega(u(b)) f(T)=&  \half \psi(\langle T, T b\rangle_{\VV}) f(T) & b\in \Herm(L_1^\vee,L_1). \\
\end{aligned}
\end{equation}
We denote by $\overline{B}_1$ the image of $B$ under the projection map $P_1\rightarrow M_1\cong \GL(L_1)$. Then $\overline{B}_1$ is a Borel subgroup of $\GL(L_1)$ and we have an exact sequence 
\begin{equation}\label{exact B_1}
  1\longrightarrow U_1 \longrightarrow B_1 \longrightarrow \overline{B}_1\longrightarrow 1. 
\end{equation}
Recall that we defined the moment cone $\cNL{1}$ in \eqref{moment cone 1}. The following lemma can be easily deduced from \Cref{mix model equation}
\begin{lemma}\label{Lem. N_L invariant}
\begin{enumerate}
    \item The restriction map from $\bL_1$ to $\cN_{\bL_1}$ gives an isomorphism
\begin{equation}\label{NL1}
    \omega_{\bL_1}^{U_1(\bF_q)}\cong \bC[\cNL{1}(\bF_q)].
\end{equation}
The action of $\GL(L_1)(\bF_q)\times G_2(\bF_q)$ on $\omega^{U_1(\bF_q)}$ is transferred to the natural geometric action of $\GL(L_1)(\bF_q)\times G_2(\bF_q)$ on $\mathbb C[\cNL{1}(\bF_q)]$. 
\item \Cref{NL1} induce a canonical isomorphism of $\ov H_1\otimes H_2$-modules 
\begin{equation}\label{oscillator Bi-module Schrodinger model 1}
   M_1\coloneqq \omega_{\bL_1}^{B_1(\bF_q)\times B_2(\bF_q)}\cong \mathbb C[\cN_{\bL_1}(\bF_q)]^{\overline{B}_1(\bF_q)\times B_2(\bF_q)}= \mathbb C[(\overline{B}_1\times B_2)(\bF_q)\backslash \cN_{\bL_1}(\bF_q)].
\end{equation}
\end{enumerate}
\end{lemma}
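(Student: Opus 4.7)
The heart of the proof is to identify the quadratic form $T \mapsto \langle T, Tb\rangle_{\VV}$ appearing in the $U_1$-action with the restriction of the moment map $\mu_1(T)$ to $\fru_1 := \Herm(L_1^\vee, L_1) = \Lie(U_1)$. Concretely, for $T \in \bL_1 \subset \VV$ and $b \in \fru_1$, the defining identity $\mu_1(T)(X) = \tfrac{1}{2}\langle X T, T\rangle_{\VV}$ together with skew-symmetry of $\langle,\rangle_{\VV}$ gives $\langle T, T b\rangle_{\VV} = 2\,\mu_1(T)(b)$. The plan is first to verify (by a short direct computation using the basis of $\bL_1$ induced by any basis of $L_1$ and the identification of $\Herm(L_1^\vee,L_1)$ as $\fru_1$) that the linear functional $b \mapsto \mu_1(T)(b)$ on $\fru_1$ encodes the symmetric bilinear form $(v,v') \mapsto \langle Tv, Tv'\rangle_{V_2}$ on $L_1$; by non-degeneracy of the trace pairing between $\fru_1$ and $\fru_1^*$, this functional vanishes identically if and only if $T(L_1)$ is isotropic in $V_2$, i.e.\ $T \in \cNL{1}$.

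With this identified, part (1) is then a standard character-sum argument. From the third line of \eqref{mix model equation}, $U_1(\bF_q)$-invariance of $f \in \bC[\bL_1(\bF_q)]$ is equivalent to $\psi(\tfrac{1}{2}\langle T, Tb\rangle_{\VV}) \cdot f(T) = f(T)$ for every $b \in \fru_1(\bF_q)$. Using non-triviality of $\psi$ and the non-degeneracy in the preceding step, this forces $f(T) = 0$ unless $T \in \cNL{1}(\bF_q)$. Consequently the restriction map $f \mapsto f|_{\cNL{1}(\bF_q)}$ defines the claimed linear isomorphism, whose inverse is extension by zero. Compatibility with the action of $\GL(L_1)(\bF_q)\times G_2(\bF_q)$ is then immediate from the first two lines of \eqref{mix model equation}: both $g_2$ and $m(a)$ act by the obvious pullback on functions, which preserves the support condition and matches the geometric action on $\cNL{1}$.

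Part (2) follows by taking further invariants under $\ov B_1(\bF_q) \times B_2(\bF_q)$. Using the exact sequence \eqref{exact B_1} and the vanishing of higher group cohomology over a field of characteristic coprime to the order of the unipotent group (so that taking $U_1$-invariants is exact and commutes with further averaging), one gets
\[
M_1 = \omega_{\bL_1}^{B_1(\bF_q) \times B_2(\bF_q)} = (\omega_{\bL_1}^{U_1(\bF_q)})^{\ov B_1(\bF_q) \times B_2(\bF_q)} \cong \bC[\cNL{1}(\bF_q)]^{\ov B_1(\bF_q) \times B_2(\bF_q)},
\]
and the isomorphism of $\ov H_1 \otimes H_2$-modules comes from the fact that the Hecke actions on both sides are induced by the same convolution construction \eqref{hecke function covolution} on the common $\ov B_1(\bF_q) \times B_2(\bF_q)$-invariant function spaces.

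The only non-formal step is the moment-map identification in the first paragraph; everything else is routine manipulation of the Schr\"odinger model. That step is not hard but requires carefully tracking the identifications $\VV \cong \Hom(V_1,V_2)$, $\fgg_1^* \cong \fgg_1$ via the trace form, and the embedding $\bL_1 \hookrightarrow \VV$ coming from the polarization \eqref{pol bV}, so I would devote most of the written proof to that single computation.
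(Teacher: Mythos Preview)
Your proposal is correct and matches the paper's approach: the paper itself does not give a proof but simply notes that the lemma ``can be easily deduced from'' the Schr\"odinger model formulas \eqref{mix model equation}, which is exactly what you carry out in detail. One minor remark: in part (2) you invoke vanishing of higher group cohomology, but this is unnecessary --- since $B_1(\bF_q) = U_1(\bF_q) \rtimes \ov B_1(\bF_q)$ and we are taking invariants of finite groups on $\bC$-vector spaces, the identity $(\omega^{U_1})^{\ov B_1} = \omega^{B_1}$ is immediate.
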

 For each $0\leq k\leq \min\set{m,n}$, we define 
\[
\cNL{1}^k=\{T\in  \cNL{1}|\rank T=k\}.
\]
Then we have a stratification
\[
 \cNL{1}=\bigsqcup_{k=0}^{\min\set{m,n}}  \cNL{1}^k,
\]
and each $ \cNL{1}^k$ is a single $\GL(L_1)\times G_2$-orbit. 

Now we switch the role of $V_1$ and $V_2$. Let $V_2=L_2\oplus L_2^\vee$ be a polarization of $V_2$ stable under $B_2$ and $P_2=M_2U_2$ be the Siegel parabolic subgroup of $G_2$ stabilizing $L_2$, where $M_2$ is the Levi component of $P_2$ stabilizing $L_2^\vee$ and $U_2$ is the unipotent radical of $P_2$. 

This polarization provides another Schr\"odinger model $\omega_{\bL_2}$ of the Weil representation realized on $\bL_2\coloneqq \bC[\Hom(L_2, V_1)]$ and we can write the action of $G_1\times P_2$ similar to \Cref{mix model equation}. Recall that we defined another moment cone $\cNL{2}$ in \Cref{moment cone 2}. In parallel to \Cref{Lem. N_L invariant}, there is a canonical isomorphism of $H_1\otimes \ov H_2$-modules 
\begin{equation}\label{oscillator Bi-module Schrodinger model 2}
   M_2\coloneqq \omega_{\bL_2}^{B_1(\bF_q)\times B_2(\bF_q)}\cong \mathbb C[ \cNL{2}(\bF_q)]^{B_1(\bF_q)\times \overline{B}_2(\bF_q)}=  \mathbb C[(B_1\times \ov B_2)(\bF_q)\backslash \cN_{\bL_2}(\bF_q)]. 
\end{equation}

Write
\begin{eqnarray*}
    \ov \bL_1 \coloneqq  \Hom(L_1,L_2), \quad \ov \bL_2 := \Hom(L_2,L_1),\\
    \bL^b \coloneqq \Hom(L_1,L_2^\vee)=\Hom(L_2,L_1^\vee).
\end{eqnarray*}
where $\Hom(L_1,L_2^\vee)$ and $\Hom(L_2,L_1^\vee)$ are both naturally identified with $L_1^\vee \otimes L_2^\vee$. Then we have
\begin{equation}\label{L1L2 decomp}
    \bL_1=\Hom(L_1,V_2)= \ov \bL_1 \oplus \bL^b, \quad \bL_2=\Hom(L_2,V_1)=\ov \bL_2  \oplus \bL^b.
\end{equation}

Fix a nontrivial additive character $\psi: \bF_q\rightarrow \bC^\times$.
The two models $\omega_{\bL_1}$ and $\omega_{\bL_2}$ are intertwined by the partial Fourier transform  
\begin{equation}\label{partial FT function}
   \CM_{\bL_1}^{\bF_q}: \bC[\bL_1(\bF_q)]\longrightarrow  \bC[\bL_2(\bF_q)],
\end{equation}
defined by 
\begin{equation}\label{Equa. Partial Fourier}
 \CM_{\bL_1}^{\bF_q}(f)(T_2,T)=q^{-\half \dim \ov \bL_1} \sum_{T_1\in \ov \bL_1(\bF_q)} f\left( (T_1,T)\right) \psi(\langle T_1, T_2\rangle), f\in \bC[\bL_1(\bF_q)], T_2\in \ov \bL_2(\bF_q), T\in \bL^b(\bF_q).
\end{equation} 
where $\langle T_1,T_2 \rangle$ is the canonical trace pairing. The partial Fourier transform $\CM_{\bL_1}^{\bF_q}$ induce an isomorphism of $H_1\otimes H_2$-modules between $M_1$ and $M_2$. We denote the common isomorphism class by $M$.

\subsection{Geometrization of the oscillator bimodule}\label{section type I ocilator}

In this section we apply the general results from \Cref{Sec. geom gen} to give a geometrization of the oscillator bimodule $M$. 
\subsubsection{Sheaves on the moment cone}

We retain the notation from \Cref{Sec type I finite field}. For $i = 1,2$, let $W_i$, $\sfH_i$, and $H_i$ denote the Weyl group, the generic Hecke algebra, and the Iwahori–Hecke algebra associated to $G_i$, respectively. Similarly, let $\ov W_i$, $\bsfH_i$, and $\ov H_i$ denote the corresponding objects for $\ov G_i$, whose structure is described in \Cref{Geometrization of the oscillator bimodule type II}. In this subsection, we describe $W_i$, $\sfH_i$, and $H_i$ explicitly. 

We fix a basis $\{e_1, \dots, e_m\}$ of $L_1$ and the dual basis $\{e_{-1}, \dots, e_{-m}\}$ of $L_1^\vee$, such that the complete flag
\[
0 \subset \langle e_1 \rangle \subset \cdots \subset \langle e_1, \dots, e_m \rangle \subset \langle e_1, \dots, e_m, e_{-m} \rangle \subset \cdots \subset \langle e_1, \dots, e_m, e_{-m}, \dots, e_{-1} \rangle
\]
is stabilized by the Borel subgroup $B_1$. Let $T_1$ be the maximal torus in $B_1$ fixing the line $\langle e_i\rangle$ for $i=\pm 1,\cdots,\pm m$.

Likewise, fix a basis $\{f_1, \dots, f_n\}$ of $L_2$ and dual basis $\{f_{-1}, \dots, f_{-n}\}$ of $L_2^\vee$ such that the complete flag
\[
0 \subset \langle f_1 \rangle \subset \cdots \subset \langle f_1, \dots, f_n \rangle \subset \langle f_1, \dots, f_n, f_{-n} \rangle \subset \cdots \subset \langle f_1, \dots, f_n, f_{-n}, \dots, f_{-1} \rangle
\]
is stabilized by $B_2$. Let $T_2$ be the maximal torus in $B_2$ fixing the line $\langle f_i\rangle$ for $i=\pm 1,\cdots,\pm n$.

For $1 \le i \le m-1$, let $s_i \in W_1$ denote the image in $W_1 \cong N_{G_1}(T_1)/T_1$ of an element in $N_{G_1}(T_1)$ that permutes $e_i$ and $e_{i+1}$ (and hence also $e_{-i}$ and $e_{-(i+1)}$). Likewise, let $t_m \in W_1$ denote the image of the element that permutes $e_m$ and $e_{-m}$. Then $W_1 \cong \bW_m$ is a Coxeter group of type $BC_m$, with simple reflections $\sfS_1 = \{s_1, \dots, s_{m-1}, t_m\}$.

For $1 \le i \le n-1$, let $s_i' \in W_2$ denote the image in $W_2 \cong N_{G_2}(T_2)/T_2$ of an element in $N_{G_2}(T_2)$ that permutes $f_i$ and $f_{i+1}$ (and hence also $f_{-i}$ and $f_{-(i+1)}$), and let $t_n' \in W_2$ denote the image of the element that swaps $f_n$ and $f_{-n}$. Then $W_2 \cong \bW_n$ is of type $BC_n$ with simple reflections $\sfS_2 = \{s_1', \dots, s_{n-1}', t_n'\}$. 

Note that the Coxeter systems $(\ov W_i, \ov \sfS_i)$ embed naturally as Coxeter subsystems of $(W_i, \sfS_i)$ for $i = 1,2$.
The generic Hecke algebra $\sfH_2$ is associated to the Coxeter system $(W_2, \sfS_2)$; see \Cref{Geo hecke algebra}. The generic Hecke algebra $\sfH_1$ requires additional care, as $G_1$ is disconnected and the corresponding Hecke algebra carries unequal parameters. Define a length function $l : W_1 \to \bZ_{\geq 0}$ by 
\begin{equation}\label{length H1}
 l(w) := \dim \cO_w \subseteq \cB_1,   
\end{equation}
where $\cO_w$ is the $B_1$-orbit in the flag variety $\cB_1 := G_1 / B_1$ corresponding to $w \in W_1$. Then
\[
l(s_1) = \cdots = l(s_{m-1}) = 1, \quad l(t_m) = 0.
\]

The generic Hecke algebra $\sfH_1$ is the unique associative $\bZ[v^{\pm 1}]$-algebra with basis $\{\sfT_w \mid w \in W_1\}$ satisfying the relations:
\begin{enumerate}[label=(\alph*),wide=0pt]
    \item $(\sfT_{s_i} + 1)(\sfT_{s_i} - v^2) = 0$, for $1 \le i \le m-1$,
    \item $(\sfT_{t_m} + 1)(\sfT_{t_m} - 1) = 0$,
    \item $\sfT_{w_1} \sfT_{w_2} = \sfT_{w_1 w_2}$ whenever $l(w_1 w_2) = l(w_1) + l(w_2)$.
\end{enumerate}
We have 
\[
 H_1\cong \sfH_{1,v=\sqrt{q},\bC},\quad  \bC[W_1]\cong  \sfH_{1,v=1,\bC}. 
\]
The sign character $\sgn$ of $\sfH_1$ is defined by
\begin{equation}\label{sgn orthogonal group}
    \sgn(\sfT_w) = (-1)^{\ell(w)}.
\end{equation}
The similar formula also defines the sgn character on $H_1$ and on $W_1$.
We denote by $\cH^\Tate_2, \cH^\mix_2, \cH_2$ and $\cH^k_{2}$ the various Hecke categories for the connected reductive group $G_2$. For $i=1,2$, we also denote by $\bcH^\Tate_i,\bcH^\mix_i, \bcH_i$ and $\bcH^k_{i}$ the various Hecke categories for $\ov G_i=\GL(L_i)$. Since $G_1$ is disconnected, we postpone the discussion of its Hecke category until \Cref{ss:moment cone L2}

As in \Cref{moment cone 1}, we form the moment cone $\cN_{\bL_1}$, this time as a scheme over $\bF_q$. Then $\cN_{\bL_1}$ carries an action of $\ov G_1\times G_2$.
We would now like to apply the discussion in \Cref{Sec. Hk mod} to the action of $\barB_1\times B_2$ on $X=\cN_{\bL_1}$. The following combinatorial definition classifies $\overline{B}_1\times B_2$-orbits on $\cNL{1}$.

\begin{defn}\label{def signed partial matching}
\begin{enumerate}
    \item A \em{signed partial matchings} between  $\{\pm1,\cdots, \pm m\}$ and  $\{\pm1,\cdots, \pm n\}$ is a triple $\s=(I,J,\mu)$ where 
\begin{itemize}
    \item $I$ and $J$ are two subsets of $\{\pm1,\cdots, \pm m\}$ and  $\{\pm1,\cdots, \pm n\}$ such that $I= -I, J=-J$ and $|I|=|J|$; 
    \item $\mu: I\rightarrow J$ is a bijection with $\mu(-a)=-\mu(a)$ for every $a\in I$. 
\end{itemize}
We denote by $\SPM(m,n)$ the set of signed partial matchings between $\{\pm1,\cdots, \pm m\}$ and $\{\pm1,\cdots, \pm n\}$.

\item For each $0\leq k\leq \min\set{m,n}$, we define the subset $\SPM(m,n,k)\subset \SPM(m,n)$ consisting of triples $(I,J,\mu)$ in $\SPM(m,n)$ with $|I|=|J|=2k$. 
We have 
\[
\SPM(m,n)= \bigsqcup_{k=0}^{\min\set{m,n}}\SPM(m,n,k).
\]

\item For any $(\s:I\to J)\in \SPM(m,n)$, we define $I^+=I\cap [\{1,\cdots, m\}], I^-=I\cap \{-1,\cdots,-m\}, J^+=J\cap \{1,\cdots, n\}$ and $J^-=J\cap \{-1,\cdots,-n\}$.
\end{enumerate}
\end{defn}
The natural $W_1\cong \bW_m$-action on $\{\pm1,\cdots, \pm m\}$ and $W_2\cong \bW_n$-action on $\{\pm1,\cdots, \pm n\}$ induce an action of $W_1\times W_2$ on $\SPM(m,n)$ that we denote by $*$. The formula for the $*$-action is also given by \eqref{action Sm Sn on PM}.

For each $\s=(I,J,\mu)\in \SPM(m,n)$, we define $x_\sigma\in \cNL{1}\subseteq \bL_1=\Hom(L_1,V_2)$ by 
\begin{equation}\label{def:Tsigma type I}
x_\sigma(e_i) =\begin{cases}
    f_{\mu(i)}\quad &\mbox{if $i\in I^+$}\\
    0 \quad &\mbox{if $i\in \{1,\cdots, m\}\backslash I^+$}.
\end{cases}
\end{equation}
Let $\cO_{\sigma}\subset \cNL{1}$ denote the geometric  $\overline{B}_1\times B_2$-orbit of $T_{\sigma}$.  Let $\one_{\sigma}$ be the characteristic function of $\cO_\s(\bF_q)$. We also denote $d_\s=\dim \cO_\s$.
\begin{lemma}\label{SPM vs orbits}
\begin{enumerate}
    \item The map $\sigma\mapsto \cO_{\sigma}$ gives a bijection from $\SPM(m,n)$ to the set of geometric $\overline{B}_1\times B_2$-orbits on $\cNL{1}$. 
    \item The assumption \eqref{orbit assumption} holds for the $\ov B_1 \times  B_2$-action on $\cNL{1}$. In particular, each $\cO_\s$ is a single $\overline{B}_1(\bF_q)\times B_2(\bF_q)$-orbit, and $\{\one_\sigma|\sigma\in \SPM(m,n)\}$ is a $\bC$ basis for $M_1$. 
\end{enumerate}
\end{lemma}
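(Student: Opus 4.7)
The plan is to follow the same strategy as in the proof of \Cref{PM vs orbits} (the type II analog), with the key new ingredient being the symplectic constraint encoded in the moment cone $\cN_{\bL_1}$, which corresponds precisely to the ``signed'' structure of $\SPM(m,n)$.

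For part (1), the first step is to realize the $\ov B_1 \times B_2$-action on $\cN_{\bL_1}$ as a generalization of row/column operations: $\ov B_1 \subset \GL(L_1)$ acts by upper-triangular column operations with respect to the basis $\{e_1,\dots,e_m\}$, while $B_2 \subset \Sp(V_2)$ acts on $V_2$ preserving the symplectic flag. Writing $B_2 = M_{2,B} U_2$ with $M_{2,B} = B_2 \cap M_2$ mapping isomorphically onto $\ov B_2 \subset \GL(L_2)$, we see that $\ov B_2$ performs upper-triangular row operations with respect to $\{f_1,\dots,f_n\}$ and symmetric operations on $\{f_{-1},\dots,f_{-n}\}$, while $U_2$ modifies the Lagrangian complement $L_2^\vee$ to $L_2$. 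Applying Gaussian elimination column-by-column (for $i=1,2,\dots,m$), each nonzero column $T(e_i)$ can be reduced, modulo the columns $T(e_1),\dots,T(e_{i-1})$ already in normal form, to a single basis vector $f_{\mu(i)}$ with $\mu(i)\in\{\pm 1,\dots,\pm n\}$ or to $0$. The isotropy constraint $T(L_1)$ isotropic in $V_2$ forces that no two pivots lie at dual indices $j$ and $-j$ (any such pair would give $\langle f_j,f_{-j}\rangle_{V_2}\ne 0$), which means that if we declare $\mu(-i) := -\mu(i)$ for each pivot index $i$, the resulting $\mu$ is a well-defined bijection with $\mu(-a)=-\mu(a)$, i.e., an element of $\SPM(m,n)$.

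To see that distinct $\sigma,\sigma'\in \SPM(m,n)$ yield disjoint orbits, I would exhibit a one-parameter subgroup $\iota:\Gm \to T_1\times T_2 \subset \ov B_1\times B_2$ (with strictly decreasing weights on $e_1,\dots,e_m$ and on $f_1,\dots,f_n,f_{-n},\dots,f_{-1}$) such that $x_\sigma$ is the unique $\iota(\Gm)$-fixed point in $\cO_\sigma$. Since $x_\sigma$ and $x_{\sigma'}$ are then distinguished by their $\iota(\Gm)$-weight data, they must lie in different orbits. This step is essentially identical to the argument in \Cref{PM vs orbits}.

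For part (2), the Gaussian elimination reduction in the previous paragraph is equally valid over $\bF_q$: the $\ov B_1(\bF_q)\times B_2(\bF_q)$-orbit of any $T \in \cN_{\bL_1}(\bF_q)$ contains some $x_\sigma$ with $\sigma\in \SPM(m,n)$. Consequently each geometric orbit $\cO_\sigma(\bF_q)$ is a single $\ov B_1(\bF_q)\times B_2(\bF_q)$-orbit. The set of $\bF_q$-rational orbits contained in $\cO_\sigma(\bF_q)$ is parametrized by $H^1(\bF_q, S_\sigma) = H^1(\bF_q, \pi_0(S_\sigma))$, where $S_\sigma = \Stab_{\ov B_1\times B_2}(x_\sigma)$; since a nontrivial finite Frobenius-module over $\bF_q$ has nontrivial $H^1$, we conclude $\pi_0(S_\sigma)$ is trivial, i.e., $S_\sigma$ is connected, verifying assumption \eqref{orbit assumption}. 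The claim about the $\bC$-basis of $M_1$ then follows from the identification $M_1 \cong \bC[(\ov B_1\times B_2)(\bF_q)\backslash\cN_{\bL_1}(\bF_q)]$ of \Cref{Lem. N_L invariant}(2).

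The main obstacle will be to keep careful track of the ``pivoting'' procedure in the presence of the symplectic structure: a pivot that creates $f_j$ in the image restricts the later columns to avoid $f_{-j}$, and one must verify that the induced signed structure on $\mu$ matches \Cref{def signed partial matching} precisely. All other steps parallel the type II argument verbatim.
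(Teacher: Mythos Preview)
Your proposal is correct and follows essentially the same approach as the paper. In fact, the paper's proof of this lemma simply reads ``This follows the same idea as in \Cref{PM vs orbits} and we omit the proof here,'' so your detailed sketch---Gaussian elimination adapted to the symplectic flag, the isotropy constraint forcing the signed structure, disjointness via a contracting $\Gm$, and the Galois-cohomology argument for connectedness of stabilizers---is exactly what the paper intends by that reference and in fact supplies more detail than the paper itself.
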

\begin{proof}
This follows the same idea as in \Cref{PM vs orbits} and we omit the proof here. 
\end{proof}


By the above lemma, results from \Cref{Sec. Hk mod} are applicable.
Applying the general framework from \Cref{Sec. Hk mod} to this situation, we obtain the following categories
\begin{equation*} 
\cM^\Tate_1=D^{\Tate}_{\barB_1\times B_2}(\cN_{\bL_1}) \hookrightarrow \cM^\mix_1=D^{\mix}_{\barB_1\times B_2}(\cN_{\bL_1})\hookrightarrow  \cM_1=D^{b}_{\barB_1\times B_2}(\cN_{\bL_1})\to    \cM^k_{1}=D^{b}_{\barB_{1,k}\times B_{2,k}}(\cN_{\bL_1,k}).
\end{equation*}
The category $\cM_1$ carries commuting actions of $\bcH_1$ and $\cH_2$; similar statement holds for their mixed and $k$-versions. 
Let $\sfM_1:=R[\un{\ov B_1\times B_2\bs \cN_{\bL_1}}]$. The following is the special case of  \Cref{Lem. ch isom} , \Cref{Prop ICs preserves Tate} and the functoriality of the diagram \eqref{diag cat}:
\begin{cor}\label{Cor. K0 M1}
\begin{enumerate}
    \item The full subcategory $\cM^{\Tate}_1\subset \cM_1$ is stable under the commuting convolution actions of $\bcH^\Tate_1$ and $\cH^\Tate_2$. In particular, $K_0(\cM^{\Tate}_1)$ carries a canonical structure of a $\ov\sfH_1\ot_R \sfH_2$-module. 
    \item The map $\ch^\Tate: K_0(\cM^\Tate_1)\to \sfM_1$ is an isomorphism of $R$-modules, equipping $\sfM_1$ with a natural structure of a $\ov\sfH_1\ot_R \sfH_2$-module. 
    \item  Specializing the $\bsfH_1 \otimes_R \sfH_2$-action on $\sfM_1$ at $v=\sqrt{q}$, we recover the $\ov H_1\ot H_2$-action on $M_1$ define in \Cref{Sec type I finite field} (via a fixed isomorphism $\iota:\Qlbar \cong \bC$).
    \item Specializing the $\bsfH_1 \otimes_R \sfH_2$-action on $\sfM_1$ at $v=1$ defines an action of $\ov W_1\times W_2$ on $\ov M_{1,v=1}=\ZZ[\SPM(m,n)]$. 
\end{enumerate}
\end{cor}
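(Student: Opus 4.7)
The plan is to apply, component by component, the general results of \Cref{Sec. Hk mod} to the $\ov B_1\times B_2$-action on $X = \cN_{\bL_1}$. The verification that the set-up of that section applies is already in place: by \Cref{SPM vs orbits}, the $\ov B_1\times B_2$-action on $\cN_{\bL_1}$ has finitely many geometric orbits indexed by $\SPM(m,n)$, and each geometric orbit contains an $\bF_q$-point (namely $x_\sigma$) with connected stabilizer, so the assumption \Cref{orbit assumption} holds. Since the $\ov G_1\times G_2$-action extends the $\ov B_1\times B_2$-action, $\cN_{\bL_1}$ is a spherical $\ov G_1\times G_2$-variety in the sense of \Cref{Sec. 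Hk mod}. Hence the full package of \Cref{Sec. Hk mod} applies, with $\bcH_1$ acting by convolution on the $\ov B_1$-factor and $\cH_2$ acting on the $B_2$-factor; the two actions commute because they come from the two independent group factors.

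With this in place, (1) is obtained by applying \Cref{Prop ICs preserves Tate} separately to the $\bcH^\Tate_1$-action and to the $\cH^\Tate_2$-action; the proposition gives stability of $\cM^\Tate_1$ under each, and commutativity of the two actions is inherited from the group-theoretic situation. Passing to $K_0$, we get a $K_0(\bcH^\Tate_1)\otimes_R K_0(\cH^\Tate_2)$-module structure, which via the ring isomorphisms of \Cref{Cor. Hk ring isom} becomes a $\bsfH_1\otimes_R \sfH_2$-module structure. Part (2) is then the specialization of \Cref{Lem. ch isom}(2) to $X = \cN_{\bL_1}$: $\ch^\Tate$ is an $R$-module isomorphism $K_0(\cM^\Tate_1)\iso \sfM_1 = R[\un{\ov B_1\times B_2\bs \cN_{\bL_1}}]$, and transporting the module structure from (1) endows $\sfM_1$ with a $\bsfH_1\otimes_R \sfH_2$-module structure. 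Functoriality of $\ch$ under convolution (built into the diagram \eqref{diag cat}) ensures that this structure is canonically defined.

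Part (3) follows from \Cref{Lem. Hecke mod specialization} applied to each factor: specializing $v=\sqrt q$ in $\sfM_1$ yields $M_1 = \bC[\un{\ov B_1(\bF_q)\times B_2(\bF_q)\bs \cN_{\bL_1}(\bF_q)}]$ (using $\iota:\Qlbar \cong \bC$), together with its $H_1\otimes H_2$-module structure coming from the sheaf-to-function map $\phi$. By \Cref{lem two hecke action} this convolution action coincides with the natural $\ov H_1\otimes H_2$-action on the function space $M_1$ defined in \Cref{Sec type I finite field} via the Schr\"odinger model (cf. \Cref{oscillator Bi-module Schrodinger model 1}). Part (4) is the specialization at $v = 1$ of \Cref{Lem. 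Hecke mod specialization}: the specialization $\sfH\xr{v=1}\bZ[W]$ makes $\sfM_{1,v=1} = \bZ[\SPM(m,n)]$ into a $\bZ[\ov W_1\times W_2]$-module.

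There is in fact no substantial obstacle here: the entire statement is a formal consequence of the general theory built in \Cref{Sec. Hk mod}, once \Cref{SPM vs orbits} has verified the required geometric input. The only small point that deserves attention is the commutativity of the $\bcH_1$- and $\cH_2$-actions at the categorical level, but this is immediate from the fact that convolution by $\bcH_1$ is $B_2$-equivariant and vice versa, so the associativity isomorphism of the bi-convolution supplies the required commutativity data.
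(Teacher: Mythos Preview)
Your proposal is correct and follows essentially the same approach as the paper: the paper states just before the corollary that it is ``the special case of \Cref{Lem. ch isom}, \Cref{Prop ICs preserves Tate} and the functoriality of the diagram \eqref{diag cat},'' and you have simply unpacked this, invoking in addition \Cref{Lem. Hecke mod specialization} and \Cref{lem two hecke action} for the specialization statements (3) and (4). The only extra content you supply is the observation about commutativity of the two Hecke actions, which is indeed immediate from the product group structure.
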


\begin{remark}
    The $\ov\sfH_1\ot_R \sfH_2$-module structure on $\sfM_1$ is the same as the one defined in \cite[\S 5]{MR4677077}. This is easy to check for the actions of the generators $\sfT_s, s\in \ov \sfS_1\sqcup \sfS_2$, but we do not need this fact for the rest of the paper.
\end{remark}

For each $\sigma = (I, J, \mu) \in \SPM(m,n)$, let $j_\sigma: \cO_\sigma\rightarrow \cNL{1}$ be the inclusion. We have the standard and IC sheaves 
\[
\Delta_\sigma=j_{\sigma,!}(\underline{\Ql} )\braket{d_\sigma}
\quad\mbox{and}\quad  \IC_\sigma=(j_{\sigma,!*}(\underline{\Ql} )\braket{d_\sigma}. 
\]
We define 
\begin{equation}\label{extended sigma 3}
    \sigma^{\dagger}: \{\pm 1,\cdots, \pm m\} \longrightarrow \{0,\pm 1,\pm 2,\cdots, \pm n\}
\end{equation}
by 
\[
\sigma^{\dagger}(i)=\begin{cases}
    \sigma(i) \quad & \mbox{if $i\in I$}\\
    0 \quad & \mbox{if $i \notin I$}.
\end{cases}
\]
We also define 
\begin{equation}\label{extended sigma 4}
    \sigma_{\dagger}: \{\pm 1,\cdots, \pm n\} \longrightarrow \{ \pm 1,\pm 2,\cdots, \pm m,\infty\}
\end{equation}
by 
\[
\sigma_{\dagger}(j)=\begin{cases}
    \sigma^{-1}(j) \quad & \mbox{if $j\in J$}\\
    \infty & \mbox{if $j \notin J$}.
\end{cases}
\]
We define orders  $<_{\SPM}$ on the set $\{0, \pm 1,\pm 2,\cdots, \pm n\}$, $\{\pm 1,\pm 2,\cdots, \pm m,\infty\}$ by 
\begin{equation}\label{eq order SPM}
   \begin{split}
    0<_{\SPM}<1<_{\SPM} 2<\cdots<_{\SPM} n <_{\SPM}(-n)<_{\SPM}-(n-1)<_{\SPM}\cdots<_{\SPM} -1\\
   1<_{\SPM} 2<\cdots<_{\SPM} m <\infty  <_{\SPM}(-m)<_{\SPM}-(m-1)<_{\SPM}\cdots<_{\SPM} -1
\end{split} 
\end{equation}
The following lemma computes the type $(s,\cO_\sigma)$ for each  simple reflection $s$ in $\ov W_1 \times  W_2$ and the orbit $\cO_\sigma$ with $\sigma\in \SPM(m,n)$. 
\begin{lemma}\label{Lem. type G or U type I}
Let $s\in \ov \sfS_1\sqcup \sfS_2$ be a simple reflection in $\ov W_1 \times  W_2$, and let $\sigma = (I, J, \mu) \in \SPM(m,n)$. Then the pair $(s, \cO_\sigma)$ is of type (G) or (U). More precisely, 
\begin{enumerate}
    \item If  $s = s_i$ for $1 \leq i \leq  m-1$, then $(s, \cO_\sigma)$ is of
    \[
    \begin{cases}
        \mbox{type $G$}\quad &\mbox{if $\sigma^\dagger(i)=\sigma^\dagger(i+1)$}\\
         \mbox{type $U^-$}\quad &\mbox{if $\sigma^\dagger(i)<_{\SPM}\sigma^\dagger(i+1)$}\\
        \mbox{type $U^+$}\quad &\mbox{if $\sigma^\dagger(i)>_{\SPM}\sigma^\dagger(i+1)$}.
    \end{cases}
    \] 
    \item If  $s = s_i'$ for $1 \leq i \leq  n-1$, then $(s, \cO_\sigma)$ is of
    \[
    \begin{cases}
        \mbox{type $G$}\quad &\mbox{if $\sigma_\dagger(i)=\sigma_\dagger(i+1)$}\\
         \mbox{type $U^-$}\quad &\mbox{if $\sigma_\dagger(i)<_{\SPM}\sigma_\dagger(i+1)$}\\
        \mbox{type $U^+$}\quad &\mbox{if $\sigma_\dagger(i)>_{\SPM}\sigma_\dagger(i+1)$}.
    \end{cases}
    \]
    \item  If  $s = s_n'$, then $(s, \cO_\sigma)$ is of
    \[
    \begin{cases}
        \mbox{type $G$}\quad &\mbox{if $\sigma_\dagger(n)=\infty $}\\
         \mbox{type $U^-$}\quad &\mbox{if $\sigma_\dagger(n)<_{\SPM}\infty$}\\
        \mbox{type $U^+$}\quad &\mbox{if $\sigma_\dagger(n)>_{\SPM}\infty$}.
    \end{cases}
    \]
\end{enumerate}
In case $U$, the $s$-companion of $\cO_\sigma$ is $\cO_{s * \sigma}$. 
\end{lemma}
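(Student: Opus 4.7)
The plan is to follow the same blueprint as the proof of \Cref{Lem. type G or U type II}, adapted to the ortho-symplectic setting. For each simple reflection $s\in\ov\sfS_1\sqcup\sfS_2$, I will consider the minimal parabolic $P_s$ containing $\ov B_1\times B_2$, form the action map $\widetilde{a}_s\colon P_s\times^{\ov B_1\times B_2}\cN_{\bL_1}\to\cN_{\bL_1}$, and study the fiber $D_{x_\sigma}:=\widetilde a_s^{-1}(x_\sigma)\cong\bP^1$ over the representative $x_\sigma$ from \eqref{def:Tsigma type I}. The type of $(s,\cO_\sigma)$ is then read off from the image $Q_{x_\sigma}$ of $\Stab_{P_s}(x_\sigma)$ inside $\Aut(D_{x_\sigma})\cong\PGL_2$, exactly as in \Cref{Sec. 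Hk mod}.

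Parts~(1) and~(2) should reduce to the type~II analysis already carried out. When $s=s_i\in\ov\sfS_1$, the relevant Levi factor of $P_s$ is $\GL(V_{s_i})$ with $V_{s_i}:=\langle e_i,e_{i+1}\rangle\subset L_1$, and the computation of $Q_{x_\sigma}$ depends only on $x_\sigma|_{V_{s_i}}$; no isotropy condition on $V_2$ enters, so the calculation is word-for-word that of \Cref{Lem. type G or U type II}, now with $\sigma^\dagger$ valued in the signed set ordered by $<_{\SPM}$. When $s=s_j'\in\sfS_2$ with $j<n$, the $2$-dimensional subspace $V_{s_j'}:=\langle f_j,f_{j+1}\rangle\subset L_2$ is automatically isotropic in $V_2$, so again no new constraint arises; the type~II calculation applies to $\mu^{-1}$ in the role of $\mu$, and the resulting trichotomy is recorded by $\sigma_\dagger$.

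The genuinely new case is~(3), $s=t_n'$ (denoted $s_n'$ in the statement). Here $P_{t_n'}$ stabilizes the partial isotropic flag $\langle f_1\rangle\subset\cdots\subset\langle f_1,\ldots,f_{n-1}\rangle$; its Levi contains a factor $\mathrm{Sp}(\Pi)$ acting on the $2$-dimensional symplectic space $\Pi:=\langle f_{n-1},\ldots,f_1\rangle^\perp/\langle f_1,\ldots,f_{n-1}\rangle$, canonically identified with $\langle f_n,f_{-n}\rangle$. Every line in $\Pi$ is isotropic, so $D_{x_\sigma}\cong\bP(\Pi)\cong\bP^1$ with $\mathrm{PSp}(\Pi)=\PGL_2$ acting. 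Writing $k:=\sigma_\dagger(n)$: if $k=\infty$ then $\{n,-n\}\cap J=\emptyset$, so $\Pi\cap x_\sigma(L_1)=0$ and the entire $\mathrm{Sp}(\Pi)$ factor fixes $x_\sigma$ modulo $\ov B_1$, giving $Q_{x_\sigma}=\mathrm{PSp}(\Pi)$ and type~(G); otherwise the isotropy of $x_\sigma(L_1)$ forces exactly one of $\pm n$ to lie in $\mu(I^+)$, giving $x_\sigma(L_1)\cap\Pi=\langle f_n\rangle$ if $k\in\{1,\ldots,m\}$ and $\langle f_{-n}\rangle$ if $k\in\{-m,\ldots,-1\}$, so $Q_{x_\sigma}$ is a Borel of $\mathrm{PSp}(\Pi)$ in both cases, yielding type~(U). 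The $\pm$ distinction will then be settled by a direct dimension count: sending $\mu(k)=n$ to $\mu(k)=-n$ moves the relevant image from the $B_2$-fixed point $[f_n]\in\bP(\Pi)$ (the closed $B_2$-orbit) into the open $B_2$-orbit, so $\dim\cO_{t_n'*\sigma}=\dim\cO_\sigma+1$ when $k\in\{1,\ldots,m\}$ and $\dim\cO_{t_n'*\sigma}=\dim\cO_\sigma-1$ when $k\in\{-m,\ldots,-1\}$, giving types~(U$^-$) and (U$^+$) respectively. The main technical point to get right is precisely this isotropy bookkeeping in~(3): it forces at most one of $\pm n$ to lie in $\mu(I^+)$ whenever $\sigma_\dagger(n)\neq\infty$, which collapses the analysis into the trichotomy of the lemma and rules out any type~(T) phenomenon for $s=t_n'$; once this is in place, the identification of the $s$-companion with $\cO_{s*\sigma}$ follows immediately from \eqref{action Sm Sn on PM} and \eqref{def:Tsigma type I}.
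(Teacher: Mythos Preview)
Your proposal is correct and follows exactly the approach the paper indicates: the paper's proof reads in its entirety ``The proof is similar to that of \Cref{Lem. type G or U type II} and is omitted here,'' and you have carried out precisely that adaptation, including the only genuinely new case $s=t_n'$. Your stabilizer analysis for part~(3) and the resulting $U^\pm$ dichotomy are right; one small remark is that the reason exactly one of $\pm n$ lies in $\mu(I^+)$ is already built into the definition of $\SPM(m,n)$ (via $\mu(-a)=-\mu(a)$ and $I^+\subset\{1,\dots,m\}$), so you need not invoke isotropy of $x_\sigma(L_1)$ separately there.
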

\begin{proof}
The proof is similar to that of \Cref{Lem. type G or U type II} and is omitted here.
\end{proof}
\trivial[h]{
We write down the details for the case $\dim L_1=2$ and $\dim V_2=4$. $\barG_1\times G_2=\GL(L_1)\times \Sp(V_2)$ acting on $\cNL{1}$. Fix a basis $\langle e_1,e_2\rangle$ of $L_1$ and basis $\langle f_1,f_2,f_{-2},f_{-1}\rangle$ of $V_2$. The orbits are parametrized by $\SPM(2,2)$. It has 
\begin{enumerate}
    \item rank $0$, the zero map $\sigma_0$
    \item rank $1$, $\sigma_{1\mapsto 1}$, $\sigma_{1\mapsto 2}$, $\sigma_{1\mapsto -2}$, $\sigma_{1\mapsto -1}$,  $\sigma_{2\mapsto 1}$, $\sigma_{2\mapsto 2}$, $\sigma_{2\mapsto -2}$, $\sigma_{2\mapsto -1}$;
    \item rank $2$, $\sigma_{1\mapsto 1, 2\mapsto2}$,  $\sigma_{1\mapsto 1, 2\mapsto -2}$, $\sigma_{1\mapsto 2,2\mapsto 1}$,$\sigma_{1\mapsto 2,2\mapsto -1}$, $\sigma_{1\mapsto -2, 2\mapsto 1}$, $\sigma_{1\mapsto -2,2\mapsto -1}$, $\sigma_{1\mapsto -1, 2\mapsto2}$, $\sigma_{1\mapsto -1,2\mapsto -2}$. 
\end{enumerate}
The Weyl group $\ov W_1=\langle s_1\rangle, W_2=\langle s'_1, s_2'\rangle$. 

\begin{enumerate}
    \item The simple reflection $s_1$. The corresponding parabolic is $\barG_1\times B_2$. We compute $\Stab_{\barG_1\times B_2}(x_\sigma)$ and see how it acts on $\mathbb P(L_1)$. We write $g_1\in \barG_1, g_2\in B_2$. The action is given by $(g_1,g_2)\cdot T= g_2\cdot T\cdot g_1^{-1}$. 
    \begin{enumerate}
        \item For rank $1$-orbit: 
        \begin{itemize}
    \item $\sigma_{1\mapsto 1}$ corresponds to $T(e_1)=f_1, T(e_2)=0$. Let $(g_1,g_2)\in \Stab_{\barG_1\times B_2}(T)$, then we have 
    \[
    \begin{split}
  g_2 \cdot T \cdot g_1^{-1}(e_2)=T(e_2)=0 \Rightarrow     T\cdot g_1^{-1}(e_2)= 0 \Rightarrow g_1^{-1} (e_2)\in \Ker(T)=\langle e_2\rangle. 
    \end{split}
    \]
So $g_1$ stabilizes the line $\langle e_2\rangle$ and we conclude that $\sigma_{1\mapsto 1}$ is of type $U^+$; 
    \item Similar arguments holds for $\sigma_{1\mapsto 2}$, $\sigma_{1\mapsto -2}$, $\sigma_{1\mapsto -1}$, they are both of type $U^+$; 
    \item $\sigma_{2\mapsto -1}$ corresponds to $T(e_1)=0, T(e_2)=f_{-1}$. Let $(g_1,g_2)\in \Stab_{\barG_1\times B_2}(T)$, then we have 
    \[
    \begin{split}
  g_2 \cdot T \cdot g_1^{-1}(e_1)=T(e_1)=0 \Rightarrow     T\cdot g_1^{-1}(e_1)= 0 \Rightarrow g_1^{-1} (e_1)\in \Ker(T)=\langle e_1\rangle. 
    \end{split}
    \]
So $g_1$ stabilizes the line $\langle e_1\rangle$ and we conclude that $\sigma_{1\mapsto 1}$ is of type $U^-$; 
\end{itemize}
\zjl{To summary, if $(g_1,g_2)\in \Stab_{\barG_1\times B_2}(T)$, then we know that $g_1$ stabilizes $\ker(T)$. }
\item For rank $2$-orbit:
\begin{itemize}
    \item $\sigma_{1\mapsto 1, 2\mapsto2}$ corresponds to $T(e_1)=f_1, T(e_2)=f_2$. 
     \[
    \begin{split}
  g_2 \cdot T \cdot g_1^{-1}(e_1)=T(e_1)=f_1 \Rightarrow     T\cdot g_1^{-1}(e_1)= g_2^{-1}(f_1)=\lambda f_1 \Rightarrow g_1^{-1} (e_1)=T^{-1}(\lambda f_1)=\lambda e_1 . 
    \end{split}
    \]
So $g_1$ stabilizes the line $\langle e_1\rangle$ and we conclude that $\sigma_{1\mapsto 1, 2\mapsto2}$ is of type $U^-$;
 \item $\sigma_{1\mapsto -2, 2\mapsto -1}$ corresponds to $T(e_1)=f_{-2}, T(e_2)=f_{-1}$. 
     \[
    \begin{split}
  & g_2 \cdot T \cdot g_1^{-1}(e_1)=T(e_1)=f_{-2} \Rightarrow     T\cdot g_1^{-1}(e_1)= g_2^{-1}(f_{-2})=\lambda  f_{-2}+\mu f_2+\gamma f_1 \\
  &  \Rightarrow \mu=\gamma=0 ,\,\, g_1^{-1} (e_1)=\lambda e_1 
    \end{split}
    \]
So $g_1$ stabilizes the line $\langle e_1\rangle$ and we conclude that $\sigma_{1\mapsto -2, 2\mapsto -1}$ is of type $U^-$;
\item Similar arguments works for other orbits, we conclude that \\
Type $U^-$: $\sigma_{1\mapsto 1, 2\mapsto2}$,  $\sigma_{1\mapsto 1, 2\mapsto -2}$, $\sigma_{1\mapsto 2,2\mapsto -1}$, $\sigma_{1\mapsto -2,2\mapsto -1}$; \\
Type $U^+$: $\sigma_{1\mapsto 2,2\mapsto 1}$, $\sigma_{1\mapsto -2, 2\mapsto 1}$,$\sigma_{1\mapsto -1, 2\mapsto2}$, $\sigma_{1\mapsto -1,2\mapsto -2}$. 
\end{itemize}
\zjl{To summary: if $(g_1,g_2)\in \Stab_{\barG_1\times B_2}(T)$, then we know that $g_1$ stabilizes $T^{-1}(f)$ where $f$ is the first vector in the order $<_{\SPM}$.}
    \end{enumerate}
\item The simple reflection $s_1'$. The corresponding parabolic is $\barB_1\times P_{s_1'}$ where $P_{s_1'}$ is the Siegel parabolic of $G_2$ stabilizing $L_2$. We compute $\Stab_{ \barB_1\times P_{s_1'}}(x_\sigma)$ and see how it acts on $\mathbb P(L_2)$.
\begin{enumerate}
    \item For rank $1$ orbits: 
    \begin{itemize}
    \item $\sigma_{1\mapsto 1}$ corresponds to $T(e_1)=f_{1}, T(e_2)=0$.
         \[
    \begin{split}
  g_2 \cdot T \cdot g_1^{-1}(e_1)=T(e_1)=f_1 \Rightarrow  
  g_2 \cdot T \cdot \lambda e_1 =f_1\Rightarrow 
 g_2 \cdot \lambda f_1 =f_1 
    \end{split}
    \]
So $g_2$ stabilizes the line $\langle f_1\rangle$ and we conclude that $\sigma_{1\mapsto 1}$ is of type $U^-$; 
    \item $\sigma_{1\mapsto -2}$ corresponds to $T(e_1)=f_{-2}, T(e_2)=0$. Similarly, we get $g_2 \cdot  f_{-2} =\lambda f_{-2}$. So $g_2$ stabilizes the line $\langle f_{-2}\rangle$, which will also stabilize the line $\langle f_1\rangle $. We conclude that $\sigma_{1\mapsto -2}$ is of type $U^-$; 
     \item $\sigma_{2\mapsto -1}$ corresponds to $T(e_2)=f_{-1}, T(e_1)=0$. 
       \[
    \begin{split}
  g_2 \cdot T \cdot g_1^{-1}(e_2)=T(e_2)=f_{-1} \Rightarrow  
  g_2 \cdot T \cdot (\lambda e_2+\mu e_1) =f_{-1}\Rightarrow 
 g_2 \cdot \lambda f_{-1} =f_{-1} 
    \end{split}
    \]
     So $g_2$ stabilizes the line $\langle f_{-1}\rangle$, which will also stabilize the line $\langle f_2\rangle $. We conclude that $\sigma_{2\mapsto -1}$ is of type $U^+$; 
     \item Similar arguments works for other orbits, we conclude that \\
Type $U^-$: $\sigma_{1\mapsto 1}$,  $\sigma_{2\mapsto 1}$, $\sigma_{1\mapsto -2}$, $\sigma_{2\mapsto -2}$; \\
Type $U^+$: $\sigma_{1\mapsto 2}$, $\sigma_{2\mapsto 2}$,$\sigma_{1\mapsto -1}$, $\sigma_{2\mapsto -1}$.\\

\zjl{To summary: if $(g_1,g_2)\in \Stab_{\barG_1\times B_2}(T)$, then we know that $g_2$ stabilizes $T(e)$ where $e$ is the first vector in the order $<_{\SPM}$.}
 \end{itemize} 
\item For rank $2$-orbits; 
\end{enumerate}
\end{enumerate}
}
\begin{remark}\label{remark assupmtion orbit type I}
The proof of \Cref{Lem. type G or U type I} also holds when $\bF_q$ is replaced by any field $F$. Thus, the action of $\ov B_1 \times  B_2$ on $\cNL{1}$ satisfies the assumption \eqref{orbit assumption bFq bC}.
\end{remark}

\begin{cor}\label{Knop action type I}
The action of $\ov W_1 \times W_2$ on $\bZ[\SPM(m,n)]$ via \Cref{Cor. K0 M1} (4) is given by
\[
w \cdot \one_\sigma = \one_{w * \sigma} \quad \text{for all } w\in \ov W_1\times  W_2, \sigma \in \SPM(m,n).
\]
\end{cor}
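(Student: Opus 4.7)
The plan is to reduce to the case of simple reflections and then combine the type classification in \Cref{Lem. type G or U type I} with the general formula in \Cref{Cor. W action sph orb}.

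Since $\ov W_1\times W_2$ is generated by $\ov\sfS_1\sqcup\sfS_2$, it suffices to verify the identity $s\cdot\one_\sigma=\one_{s*\sigma}$ for every simple reflection $s\in\ov\sfS_1\sqcup\sfS_2$ and every $\sigma\in\SPM(m,n)$. By \Cref{Lem. type G or U type I}, the pair $(s,\cO_\sigma)$ is always of type (G), (U$+$) or (U$-$)---crucially, type (T) never occurs in this setting. Hence \Cref{Cor. W action sph orb} reduces to just two cases: if $(s,\cO_\sigma)$ is of type (G), then $s\cdot\one_\sigma=\one_\sigma$; if $(s,\cO_\sigma)$ is of type (U), then $s\cdot\one_\sigma=\one_{\sigma'}$, where $\cO_{\sigma'}$ is the $s$-companion of $\cO_\sigma$ distinct from $\cO_\sigma$ itself.

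The remaining task is purely combinatorial: identify $\sigma'$ with $s*\sigma$ in type (U) and check that $s*\sigma=\sigma$ in type (G). Both assertions can be read directly from \Cref{Lem. type G or U type I}. For instance, for $s=s_i$ the equality $\sigma^\dagger(i)=\sigma^\dagger(i+1)$ forces $\{i,i+1\}\cap I=\emptyset$ (the map $\mu$ is a bijection on $I$, so two elements of $I$ cannot share a value), and then $s_i*\sigma=\sigma$ by the definition of the $*$-action in \eqref{action Sm Sn on PM}. The analogous verifications for $s=s'_j$ with $1\le j\le n-1$ and for $s=t'_n$ are immediate from the definitions of $\sigma_\dagger$ and the order $<_{\SPM}$. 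Finally, the last sentence of \Cref{Lem. type G or U type I} already identifies the $s$-companion in type (U) as $\cO_{s*\sigma}$, completing the verification.

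There is no genuine obstacle: the statement is essentially an assembly of the type classification in \Cref{Lem. type G or U type I} with Knop's specialization formula recalled in \Cref{Cor. W action sph orb}. The only minor point requiring attention is the disappearance of type (T), which ensures that the simple-reflection action on the basis $\{\one_\sigma\}$ is a genuine permutation (rather than the signed or two-term formulas appearing in type (T$\pm$)); this is precisely what makes the $*$-action on $\SPM(m,n)$ compatible with the Hecke module specialization at $v=1$.
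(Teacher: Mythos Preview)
Your proof is correct and follows the same approach as the paper's own proof, which simply cites \Cref{Cor. W action sph orb} and \Cref{Lem. type G or U type I} after reducing to simple reflections. You have just spelled out the details (in particular, why type (G) forces $s*\sigma=\sigma$ and why the absence of type (T) is what makes the action a genuine permutation), which the paper leaves implicit.
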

\begin{proof}
It suffices to check the case for $w\in \ov \sfS_1\sqcup \sfS_2$ ,  in which case the statement follows from \Cref{Cor. W action sph orb} and \Cref{Lem. type G or U type I}.
\end{proof}

Next we give generating objects of $\sfM_1$ under the $\ov \sfH_1\ot_{R} \sfH_2$ action. Let $\SPM(m,n)_0\subset \SPM(m,n)$ be the subset of signed partial matchings $\s$ such that for any simple reflection $s\in \ov \sfS_1\sqcup \sfS_2$, $(s,\cO_\s)$ is either of type (G) or type (U-). This is consistent with the notation $I_0$ introduced in \Cref{def. I0}.

Let $0\le i\le \min\{m,n\}$. Consider the signed partial matching $\s_i\in \SPM(m,n,i)$ by 
\begin{equation*}
    \s_i=(I=\{\pm(m-i+1),\cdots, \pm m\}\xr{\mu_i} \{\pm 1,\pm 2,\cdots, \pm i\}=J)
\end{equation*}
where $\mu_i$ is the unique order-preserving bijection with respect to the order $<_\SPM$  defined in \Cref{eq order SPM}. 
\begin{lemma}\label{Lem. I0 for SPM}
    $\SPM(m,n)_0=\{\s_i|0\le i\le \min\{m,n\}\}$.
\end{lemma}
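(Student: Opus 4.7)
The plan is to translate membership in $\SPM(m,n)_0$ into three combinatorial monotonicity conditions via \Cref{Lem. type G or U type I}, and then show that these conditions cut out exactly the family $\{\sigma_i\}$. Concretely, for each simple reflection $s\in \ov\sfS_1\sqcup \sfS_2$, the lemma characterizes type~(G) or (U-) as ``$\le_{\SPM}$'' comparisons between consecutive values of $\sigma^\dagger$ and $\sigma_\dagger$. Thus $\sigma=(I,J,\mu)\in\SPM(m,n)_0$ if and only if
\begin{itemize}
    \item[(a)] $\sigma^\dagger(i)\le_{\SPM}\sigma^\dagger(i+1)$ for all $1\le i\le m-1$;
    \item[(b)] $\sigma_\dagger(j)\le_{\SPM}\sigma_\dagger(j+1)$ for all $1\le j\le n-1$;
    \item[(c)] $\sigma_\dagger(n)\le_{\SPM}\infty$, i.e.\ either $n\notin J$ or $\mu^{-1}(n)\in I^+$.
\end{itemize}

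First, I would verify the inclusion $\{\sigma_i\}\subseteq \SPM(m,n)_0$ by direct inspection: for $\sigma_i$, the sequence $\sigma_i^\dagger$ reads $0,\dots,0,1,2,\dots,i$, which is strictly increasing in $<_{\SPM}$ after the initial block of zeros; the sequence $\sigma_{i,\dagger}$ reads $m-i+1,m-i+2,\dots,m,\infty,\dots,\infty$, which is also non-decreasing and ends in $\{1,\dots,m,\infty\}$. So (a), (b), (c) all hold.

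For the converse, suppose $\sigma=(I,J,\mu)\in\SPM(m,n)_0$ with $|I^+|=|J^+|=i$. Condition~(a) forces the zeros of $\sigma^\dagger$ to occur in an initial segment, since $0$ is the $<_{\SPM}$-minimum and any nonzero value is strictly larger: hence $I^+=\{m-i+1,\dots,m\}$. Next, condition~(c) combined with monotonicity~(b) forces $\sigma_\dagger(k)\in\{1,\dots,m,\infty\}$ for \emph{all} $k\in\{1,\dots,n\}$, because values beyond $\infty$ in $<_{\SPM}$ are the negatives $-m,\dots,-1$, which cannot be followed by $\infty$. This means $\mu^{-1}(J^+)\subseteq I^+$; by the sign-equivariance $\mu(-a)=-\mu(a)$ and a counting argument using $|J^+|=|J^-|=i$ (from $J=-J$), I would conclude $\mu(I^+)=J^+$ and $\mu(I^-)=J^-$. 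Then monotonicity of $\sigma_\dagger$ on the set where it equals $\mu^{-1}(k)\in I^+$ (on which $<_{\SPM}$ is the usual order) and on the set where it equals $\infty$ forces $J^+=\{1,2,\dots,i\}$ with $\mu^{-1}$ order-preserving, hence $\mu=\mu_i$ and $\sigma=\sigma_i$.

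The argument is essentially book-keeping, and the only subtle point is the interleaving in condition~(c) with the $s'_n$ reflection: the order $<_{\SPM}$ places $\infty$ between $m$ and $-m$, so one must rule out the possibility of negative values appearing in $\sigma_\dagger$ before the $\infty$ block. This is handled by combining (c) with monotonicity (b), which forms the main (though still routine) step of the proof.
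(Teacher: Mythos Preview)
Your proof is correct and follows exactly the approach the paper has in mind: the paper's proof consists of the single sentence ``This follows from \Cref{Lem. type G or U type I},'' and you have carefully unpacked precisely that reduction by translating the type-(G)/(U$^-$) conditions into the monotonicity constraints (a)--(c) and solving them. The only comment is that your argument is more detailed than what the paper records, but the logic is the same.
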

\begin{proof}
    This follows from \Cref{Lem. type G or U type I}.
\end{proof}

The following is a direct calculation.
\begin{lemma}[of \Cref{Lem. KL poly ch}]\label{Lem. linear IC type I}
    For each $0\le i\le \min\{m,n\}$ let  $\bL^i_1\subset \bL_1$ be the linear subspace consisting of maps $T: L_1\to V_2$ satisfying
    \begin{itemize}
        \item $T(e_1)=\cdots=T(e_{m-i})=0$;
        \item $T(e_{m-i+a})\subset \langle f_1,\cdots, f_{a}\rangle$ for $1\le a\le i$.
    \end{itemize}
    Then $\ov {\cO_{\s_{i}}}=\bL^i_1$ and $d_{\s_i}=\dim \bL^i_1=\frac{i(i+1)}{2}$. Moreover, 
    \begin{equation}\label{C' for sigma i}
        C'_{\s_i}=v^{-d_{\s_i}}\sum_{\s\le \s_i}\one_\s=v^{-\frac{i(i+1)}{2}}\sum_{\s\le \s_i}\one_\s.
    \end{equation}
\end{lemma}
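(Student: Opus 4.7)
The plan is to mimic the proof sketch of the type II counterpart \Cref{Lem. linear IC type II}. The key observation is that $\bL_1^i$ is a linear, hence smooth, $\overline B_1 \times B_2$-stable subspace of $\cN_{\bL_1}$ that contains $\cO_{\s_i}$ as a dense open subset. Once this is verified, the formula for $C'_{\s_i}$ is extracted from \eqref{Kazhan-Lustig smooth} and \Cref{Lem. KL poly ch}.

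First I would check that $\bL_1^i$ is $\overline B_1 \times B_2$-stable and lies inside $\cN_{\bL_1}$. Since $\overline B_1$ preserves the flag $\langle e_1,\dots,e_k\rangle\subset L_1$ for each $k$, the defining conditions ``$T(e_1)=\cdots=T(e_{m-i})=0$'' and ``$T(e_{m-i+a})\in\langle f_1,\dots,f_a\rangle$'' translate into conditions on the images of the flag, which are therefore preserved under pre-composition with elements of $\overline B_1$. Since $B_2$ preserves the flag $\langle f_1,\dots,f_a\rangle\subset V_2$, the conditions are also preserved under post-composition with elements of $B_2$. Moreover, for any $T\in\bL_1^i$ the image $T(L_1)$ is contained in $\langle f_1,\dots,f_i\rangle$, which is isotropic in $V_2$, so $T\in \cN_{\bL_1}$.

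Next, I would verify that $x_{\s_i}\in\bL_1^i$. By \eqref{def:Tsigma type I}, $x_{\s_i}(e_j)=0$ for $j\le m-i$; for $1\le a\le i$ the bijection $\mu_i$ defined relative to $<_\SPM$ sends $m-i+a$ to $a$, so $x_{\s_i}(e_{m-i+a})=f_a\in\langle f_1,\dots,f_a\rangle$. Thus $\cO_{\s_i}\subseteq\bL_1^i$. To show the reverse closure equality $\overline{\cO_{\s_i}}=\bL_1^i$, it is enough to prove $\cO_{\s_i}$ is open and dense in the irreducible linear space $\bL_1^i$. I would exhibit a Zariski open dense set $U\subset\bL_1^i$ consisting of those $T$ such that the coefficient of $f_a$ in $T(e_{m-i+a})$ is non-zero for every $1\le a\le i$, and then show that every $T\in U$ can be brought to $x_{\s_i}$ by the $\overline B_1\times B_2$-action. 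This is a Gaussian-elimination argument: starting from $a=i$ and working downwards, I would use the upper-triangular row operations in $B_2$ (preserving the isotropic flag) together with upper-triangular column operations in $\overline B_1$ to successively clear all entries except $f_a$ in $T(e_{m-i+a})$, so that $T$ becomes $x_{\s_i}$. The dimension count $\dim\bL_1^i=1+2+\cdots+i=i(i+1)/2$ is then also the dimension of $\cO_{\s_i}$.

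Finally, since $\overline{\cO_{\s_i}}=\bL_1^i$ is a linear subspace and hence smooth, the IC sheaf is the shifted constant sheaf $\IC_{\s_i}=\underline{\Ql}_{\bL_1^i}\langle d_{\s_i}\rangle$, which is manifestly $*$-pure of weight zero and $*$-parity with parity $d_{\s_i}$. By \eqref{Kazhan-Lustig smooth}, all Kazhdan--Lusztig polynomials satisfy $P_{\b,\s_i}(v)=1$ for $\b\le\s_i$ and vanish otherwise. Substituting into the formula of \Cref{Lem. KL poly ch} yields
\[
C'_{\s_i}=v^{-d_{\s_i}}\sum_{\b\le\s_i}P_{\b,\s_i}(v)\,\one_\b=v^{-i(i+1)/2}\sum_{\s\le\s_i}\one_\s,
\]
as claimed. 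The only mildly delicate point is the Gaussian-elimination step in paragraph two, where one must carry out both $\overline B_1$ and $B_2$ operations simultaneously and verify that the upper-triangular structure (in particular, that $B_2$ never leaves the flag $\langle f_1,\dots,f_a\rangle$) suffices to reduce a generic element of $\bL_1^i$ to $x_{\s_i}$; all remaining steps are bookkeeping.
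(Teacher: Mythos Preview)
Your proposal is correct and carries out precisely the ``direct calculation'' that the paper omits (the paper states the lemma with the preface ``The following is a direct calculation'' and gives no proof). Your approach—showing $\bL_1^i$ is a $\overline B_1\times B_2$-stable linear (hence smooth) subspace with $\cO_{\s_i}$ open dense, then invoking \eqref{Kazhan-Lustig smooth} and \Cref{Lem. KL poly ch}—is exactly the intended one, paralleling the type II case in \Cref{Lem. linear IC type II}.
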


\begin{cor}[of \Cref{Lem. heart gen}]
    As a $\ov \sfH_1\ot_R \sfH_2$-module, $\sfM_1$ is generated by $C'_{\s_i}$ for $0\le i\le \min\{m,n\}$. In particular, as a $\bZ[\ov W_1\times  W_2]$-module, $K_0(\cM^k_1)\cong \sfM_{1,v=1}$ is generated by $\chi(\IC_{\s_i})$ for $0\le i\le \min\{m,n\}$.
\end{cor}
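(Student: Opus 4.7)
The plan is to invoke \Cref{Lem. heart gen} directly, since the setup of \Cref{Sec. Hk mod} has been fully verified in this context via \Cref{SPM vs orbits} and \Cref{Cor. K0 M1}. Concretely, I would identify the index set $I$ of \Cref{def. I0} with $\SPM(m,n)$ via the bijection $\sigma \mapsto \cO_\sigma$ of \Cref{SPM vs orbits}(1), so that the Coxeter generators acting on $I$ are $\ov \sfS_1 \sqcup \sfS_2$, and point out that \Cref{Lem. type G or U type I} identifies the subset $I_0 \subset I$ of orbits for which every $(s,\cO_\alpha)$ is of type (G), (U$^-$), or (T$^-$) with the subset $\SPM(m,n)_0$ defined just before \Cref{Lem. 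I0 for SPM}. By \Cref{Lem. I0 for SPM} the latter equals $\{\sigma_i : 0 \le i \le \min\{m,n\}\}$.

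With this dictionary in place, \Cref{Lem. heart gen}(2), applied to the Hecke algebra $\ov \sfH_1 \otimes_R \sfH_2$ acting on the module $\sfM_1$ of \Cref{Cor. K0 M1}(2), immediately yields the first assertion: $\sfM_1$ is generated over $\ov \sfH_1 \otimes_R \sfH_2$ by $\{C'_{\sigma_i} : 0 \le i \le \min\{m,n\}\}$.

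For the second assertion, I would specialize the $\ov \sfH_1 \otimes_R \sfH_2$-action on $\sfM_1$ at $v=1$. By \Cref{Cor. K0 M1}(4) this gives the natural $\bZ[\ov W_1 \times W_2]$-action on $\sfM_{1,v=1} = \bZ[\SPM(m,n)]$, and the commutative diagram \eqref{diag cat} together with \Cref{Lem. ch isom} identifies this module with $K_0(\cM^k_1)$ so that the class $[\IC_\sigma]$ maps to $\chi(\IC_\sigma)$. Since $C'_{\sigma_i} = (-1)^{-d_{\sigma_i}} \ch(\IC_{\sigma_i})$ specializes at $v=1$ to $(-1)^{-d_{\sigma_i}}\chi(\IC_{\sigma_i})$, which differs from $\chi(\IC_{\sigma_i})$ only by a sign, the generation property passes to the specialization, giving the second claim.

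No real obstacle is anticipated: the corollary is essentially a translation of \Cref{Lem. heart gen}. The only small bookkeeping is to confirm that the notion of $I_0$ from \Cref{def. I0} coincides with $\SPM(m,n)_0$ under the combinatorial identification, which is transparent from \Cref{Lem. type G or U type I}.
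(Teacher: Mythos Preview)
Your proposal is correct and matches the paper's approach exactly: the corollary is stated as a direct consequence of \Cref{Lem. heart gen} with no separate proof given, and your identification of $I_0$ with $\{\sigma_i\}$ via \Cref{Lem. I0 for SPM} followed by specialization at $v=1$ is precisely the intended reasoning.
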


\begin{cor}\label{Cor. M1 IC} For any $\s\in \SPM(m,n)$,  $\IC_\s$ is Tate, $*$-pure of weight zero, and $*$-parity with parity $l(\s)$.
\end{cor}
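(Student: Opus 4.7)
The plan is to mirror the argument used in the type II setting for \Cref{Cor. M1 IC typeII}: propagate the three properties from a small generating family of orbits to all orbits via the Hecke action, and verify the properties directly on the generators.

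First, I would invoke \Cref{Lem. type G or U type I}, which ensures that for every simple reflection $s\in \ov\sfS_1\sqcup \sfS_2$ and every $\s\in\SPM(m,n)$, the pair $(s,\cO_\s)$ is of type (G) or (U). This is exactly the hypothesis required by all three parts of \Cref{Cor. I0 implies all I}, which transports Tate-ness, $\ast$-purity of weight zero, and $\ast$-parity (with parity $d_\b$) from the subset $I_0=\SPM(m,n)_0$ of generating orbits to every $\s\in\SPM(m,n)$. Consequently it suffices to check the three properties for $\IC_{\s_i}$ as $i$ ranges over $0\le i\le \min\{m,n\}$, since by \Cref{Lem. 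I0 for SPM} these elements exhaust $\SPM(m,n)_0$.

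Next, I would appeal to \Cref{Lem. linear IC type I}, which identifies $\overline{\cO_{\s_i}}$ with the linear subspace $\bL_1^i\subset \bL_1$. Since a linear subspace is smooth, the IC sheaf reduces to the shifted and twisted constant sheaf
\[
\IC_{\s_i}\;\cong\;\underline{\Qlbar}_{\bL_1^i}\langle d_{\s_i}\rangle.
\]
From this presentation, the three properties are immediate: the constant sheaf is manifestly Tate and $\ast$-pure of weight zero on its support, and the only nonzero stalk cohomology lives in degree $-d_{\s_i}$, so it is $\ast$-parity with parity $d_{\s_i}=l(\s_i)$ (using the convention $l(\s)=\dim\cO_\s$ consistent with \eqref{length H1}).

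I do not anticipate any real obstacle here: all the heavy lifting is done once \Cref{Cor. I0 implies all I} is available, and the verification on the generating set is essentially a tautology because $\overline{\cO_{\s_i}}$ happens to be a linear subspace. The only point requiring a moment's care is ensuring that the hypothesis of \Cref{Cor. I0 implies all I} on types is global (valid for all $\s$, not just for $\s_i$), which is precisely the uniform conclusion of \Cref{Lem. type G or U type I}.
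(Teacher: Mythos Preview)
Your proposal is correct and follows essentially the same route as the paper: reduce to the generating set $\SPM(m,n)_0=\{\s_i\}$ via \Cref{Cor. I0 implies all I} (whose type-(G)/(U) hypothesis is supplied by \Cref{Lem. type G or U type I}), and then verify the properties directly using \Cref{Lem. linear IC type I}, which identifies $\overline{\cO_{\s_i}}$ with a linear subspace so that $\IC_{\s_i}$ is a shifted constant sheaf.
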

\begin{proof} 
The proof mirrors that of \Cref{Cor. M1 IC typeII}, with \Cref{Lem. linear IC type I} replacing \Cref{Lem. linear IC type II}.
\end{proof}
We have the Kazhdan-Lusztig polynomials $P_{\s',\s}\in\bZ[t]$ in our situation, as defined in general in \Cref{Sec. Hk mod}. By \Cref{Cor. M1 IC} and \Cref{Lem. KL poly ch}, $P_{\s',\s}$ has only even degree terms and  $P_{\s',\s}\in\bZ_{\geq 0} [t]$.
\begin{lemma}\label{Lem. KL indep p} For any $\s\in \SPM(m,n)$, the element $C'_\s\in  \sfM_1$ is independent of $\bF_q$. In particular, for any $\s, \s'\in \SPM(m,n)$, the Kazhdan-Lusztig polynomial $P_{\s',\s}$ is independent of $\bF_q$. 
\end{lemma}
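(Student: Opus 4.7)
The plan is to adapt the induction from \Cref{Lem. KL indep p type II} essentially verbatim, proceeding by induction on $d_\s$. The base case $d_\s = 0$ forces $\cO_\s = \{0\}$, so $\IC_\s$ is a skyscraper and $C'_\s = \one_\s$ is manifestly independent of $\bF_q$.

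For the inductive step, I will split into two cases based on \Cref{Lem. I0 for SPM}. If $\s \in \SPM(m,n)_0$, then $\s = \s_i$ for some $i$, and the explicit description \eqref{C' for sigma i} in \Cref{Lem. linear IC type I} exhibits $C'_{\s_i}$ directly, independently of $\bF_q$. Otherwise there exists a simple reflection $s \in \ov\sfS_1 \sqcup \sfS_2$ such that $(s,\cO_\s)$ is of type (U+); let $\s' = s * \s$ be its $s$-companion, with $d_{\s'} = d_\s - 1$. By \Cref{Lem. conv ICs}(2) applied with $\s'$ in the role of $\a$ and $\s$ in the role of $\a'$, one has
\[
\IC_s \star \IC_{\s'} \;\cong\; \IC_\s \;\oplus\; \bigoplus_{\b \le \s',\, s \in \cD(\b)} \IC_\b \otimes H^{-d_\b-1}(\IC_{\s'}|_{x_\b})\bigl(-\tfrac{d_\b+1}{2}\bigr).
\]
Applying $\ch$, the identity $\ch(\IC_s) = -v^{-1}(\sfT_s+1)$, and solving for $C'_\s$, one obtains an expression of $C'_\s$ as an $R$-linear combination of $(\sfT_s+1) \cdot C'_{\s'}$ together with $C'_\b$ for certain $\b < \s'$, whose coefficients are read off from the Kazhdan-Lusztig $\mu$-values $P_{\b,\s'}$. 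By the inductive hypothesis each $C'_\b$ and each polynomial $P_{\b,\s'}$ is independent of $\bF_q$; and the action of $\sfT_s+1$ on the standard basis is given by the explicit $\bF_q$-free formulas of \Cref{Prop ICs preserves Tate}. It follows that $C'_\s$ is independent of $\bF_q$, completing the induction.

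The second assertion, independence of $P_{\s',\s}$ from $\bF_q$, then follows at once from \Cref{Lem. KL poly ch}, which identifies the $P_{\s',\s}(v)$ as the coefficients of $C'_\s$ expanded in the standard basis $\{\one_{\s'}\}$.

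I expect no real obstacle: the only ingredient that could in principle behave differently in the type I setting is whether type (T) orbits might appear and spoil the explicit $(\sfT_s+1)$-action; but \Cref{Lem. type G or U type I} already rules this out, showing that the $\ov B_1 \times B_2$-action on $\cN_{\bL_1}$ has only types (G) and (U). Consequently every step of the type II argument transports without modification.
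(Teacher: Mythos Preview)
Your proposal is correct and follows essentially the same approach as the paper's proof, which simply refers back to the type II argument (\Cref{Lem. KL indep p type II}) with \eqref{C' for sigma i} replacing \eqref{C' for bar sigma i}. Your identification of the relevant ingredients---\Cref{Lem. I0 for SPM}, \Cref{Lem. linear IC type I}, \Cref{Lem. conv ICs}(2), \Cref{Prop ICs preserves Tate}, and the absence of type (T) via \Cref{Lem. type G or U type I}---matches the paper exactly.
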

\begin{proof}
The argument follows that of \Cref{Lem. KL indep p type II}, except that we use \Cref{C' for sigma i} in place of \eqref{C' for bar sigma i}.
\end{proof}

\subsubsection{Generators as $\bsfH_1\ot_{R}  \bsfH_2$-module}
Next we give generating objects of $\sfM_1$ as $\bsfH:=\bsfH_1\ot_{R} \bsfH_2$-modules. This will be needed in the passage from $\bF_q$ to $\bC$ in \Cref{Sec. Fq to C}.

Let $\SPM(m,n)_\hs\subset \SPM(m,n)$ be the subset of signed partial matchings $\s$ such that for any simple reflection $s\in  \ov \sfS_1\sqcup \ov \sfS_2$ in $\ov W_1\times \ov W_2$, $(s,\cO_\s)$ is either of type (G) or type (U-). This is consistent with the notation $I_0$ introduced in \Cref{def. I0} with respect to the $\bcH$-action on $\cM_1$.

Let $0\le i\le m, 0\le j\le n$ and $\ell:=i+j\le \min\{m, n\}$. Let $I^+_\ell\subset \{1,2,\cdots, m\}$ be the subset
\begin{equation}\label{I+ ell}
    I^+_\ell=\{m-\ell+1,\cdots, m-1, m\}.
\end{equation}
Let $I_\ell=I^+_\ell\sqcup(-I^+_\ell)$. Let $J^{+}_{i,j}\subset \{\pm1, \cdot, \pm n\}$ be the subset
\begin{equation}\label{J+ ij}
    J^{+}_{i,j}=\{1,2,\cdots, i, -n, -(n-1), \cdots, -(n-j+1)\}.
\end{equation}
Let $J_{i,j}=J^{+}_{i,j}\sqcup(-J^+_{i,j})$. Let $\mu^+_{i,j}$ 
be unique order-preserving bijection $I^+_\ell\to J^{+}_{i,j}$ with respect to the order $<_\SPM$  defined in \Cref{eq order SPM}. Let $\mu_{i,j}: I_\ell\isom J_{i,j}$ be the unique bijective extension of $\mu^+_{i,j}$ such that $\mu_{i,j}(-a)=-\mu_{i,j}(a)$. We thus get an element
\begin{equation}\label{define sij}
    \s_{i,j}=(I_\ell, J_{i,j}, \mu_{i,j})\in \SPM(m,n).
\end{equation}


\begin{lemma}\label{Lem. SPM min} We have
\begin{equation*}
    \SPM(m,n)_\hs=\{\s_{i,j}|0\le i\le m, 0\le j\le n, i+j\le \min\{m,n\}\}.
\end{equation*}
\end{lemma}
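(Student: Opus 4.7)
The plan is to characterize $\SPM(m,n)_\hs$ via the type criterion from Lemma \ref{Lem. type G or U type I}, then translate the resulting combinatorial conditions into the explicit form of the $\sigma_{i,j}$'s. Note that simple reflections in $\ov\sfS_1\sqcup\ov\sfS_2$ are exactly $\{s_1,\dots,s_{m-1}\}\cup\{s_1',\dots,s_{n-1}'\}$ (excluding $t_m$ and $t_n'$). By parts (1) and (2) of Lemma \ref{Lem. type G or U type I}, a signed partial matching $\sigma=(I,J,\mu)$ lies in $\SPM(m,n)_\hs$ if and only if the two sequences
\[
\sigma^\dagger(1),\ \sigma^\dagger(2),\ \dots,\ \sigma^\dagger(m)
\quad\text{and}\quad
\sigma_\dagger(1),\ \sigma_\dagger(2),\ \dots,\ \sigma_\dagger(n)
\]
are both non-decreasing with respect to $<_{\SPM}$.

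First I will analyze the condition on $\sigma^\dagger$. Recall that $\sigma^\dagger(i)=0$ for $i\notin I^+$ and $\sigma^\dagger(i)=\sigma(i)\in\{\pm 1,\dots,\pm n\}$ for $i\in I^+$. Since $0$ is the smallest element in $<_{\SPM}$ on $\{0,\pm 1,\dots,\pm n\}$ and the non-zero values are distinct (as $\sigma$ is injective), being non-decreasing forces all zeros to appear first and then the non-zero values strictly increasingly in $<_{\SPM}$. Hence there exists $\ell\ge 0$ with $I^+=\{m-\ell+1,\dots,m\}$, and writing $\sigma(I^+)=\{b_1,\dots,b_i\}\sqcup\{c_1,\dots,c_j\}$ with $b_k\in\{1,\dots,n\}$ and $c_k\in\{-n,\dots,-1\}$ (split according to the sign), we must have $i+j=\ell$, $b_1<\dots<b_i$ in the natural order, $c_1<\dots<c_j$ in the natural order, $\sigma(m-\ell+k)=b_k$ for $1\le k\le i$, and $\sigma(m-\ell+i+k)=c_k$ for $1\le k\le j$.

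Next I will extract further constraints from the condition on $\sigma_\dagger$. A straightforward computation of $\sigma_\dagger$ from the expression of $\sigma$ above yields: for $j'\in\{1,\dots,n\}$,
\[
\sigma_\dagger(j')=\begin{cases} m-\ell+k & \text{if } j'=b_k,\\ -(m-\ell+i+k) & \text{if } j'=-c_k,\\ \infty & \text{otherwise.}\end{cases}
\]
Since in $<_{\SPM}$ on $\{\pm 1,\dots,\pm m,\infty\}$ the finite positives precede $\infty$ which precedes the finite negatives, the non-decreasing requirement forces the subset $B:=\{b_1,\dots,b_i\}\subset\{1,\dots,n\}$ to be an initial segment and $-C:=\{-c_1,\dots,-c_j\}\subset\{1,\dots,n\}$ to be a final segment of $\{1,\dots,n\}$ (with any $\infty$-positions forming the gap between them). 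This forces $B=\{1,2,\dots,i\}$ and $-C=\{n-j+1,\dots,n\}$, which determines $b_k=k$ and $c_k=-(n-k+1)$, and also requires disjointness $i+j\le n$. Combined with $\ell=i+j\le m$ from $I^+\subset\{1,\dots,m\}$, this gives $i+j\le\min(m,n)$. Comparing with the definitions \eqref{I+ ell}, \eqref{J+ ij} and \eqref{define sij} shows exactly $\sigma=\sigma_{i,j}$.

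For the converse direction, I will verify directly that each $\sigma_{i,j}$ with $0\le i\le m$, $0\le j\le n$, $i+j\le\min(m,n)$ satisfies both non-decreasing conditions; this is a routine check using the explicit formula for $\mu^+_{i,j}$ together with the definition of the orderings $<_{\SPM}$ in \eqref{eq order SPM}. The main (and only mildly delicate) step is the second one: carefully tracking how the positive/negative/infinite branches of $\sigma_\dagger$ interleave under $<_{\SPM}$. Once this bookkeeping is done, the identification with $\sigma_{i,j}$ is immediate, and no deeper input is needed.
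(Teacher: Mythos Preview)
Your argument is correct and follows exactly the route indicated by the paper, which simply cites Lemma~\ref{Lem. type G or U type I}: you have unpacked that reference by translating the type (G)/(U$^-$) conditions for the reflections in $\ov\sfS_1\sqcup\ov\sfS_2$ into the two monotonicity constraints on $\sigma^\dagger$ and $\sigma_\dagger$, and then solved those constraints to recover the explicit $\sigma_{i,j}$'s. This is the intended proof, just written out in full detail.
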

\begin{proof}
This follows from \Cref{Lem. type G or U type I}. 
\end{proof}

\begin{cor}[of \Cref{Lem. heart gen}]\label{Lem. M1 gen}
    As a $\ov\sfH=\ov \sfH_1\ot_R \ov \sfH_2$-module, $\sfM_1$ is generated by $C'_{\s_{i,j}}$ for $\s_{i,j}\in \SPM(m,n)_\hs$. In particular, as a $\bZ[\ov W_1\times \ov W_2]$-module, $K_0(\cM^k_1)\cong \sfM_{1,v=1}$ is generated by $\chi(\IC_{\s_{i,j}})$ for $\s_{i,j}\in \SPM(m,n)_\hs$.
\end{cor}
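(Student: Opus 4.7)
The plan is to apply \Cref{Lem. heart gen} (part 2) to the action of $\bcH = \bcH_1 \otimes \bcH_2$ on $\cM_1 = D^b_{\ov B_1 \times B_2}(\cN_{\bL_1})$, and to identify the relevant set $I_0$ with $\{\sigma_{i,j}\}$ via \Cref{Lem. SPM min}.

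First I would observe that \Cref{Lem. heart gen} applies in this slightly non-standard setting, where the Hecke algebra acting is only a subalgebra of the full $\ov\sfH_1 \otimes \sfH_2$-action on $\sfM_1$. Its proof reduces to \Cref{Lem. conv ICs}, which describes $\IC_s \star \IC_\sigma$ in terms of the type of $(s, \cO_\sigma)$ for a simple reflection $s$. For $s \in \ov\sfS_1 \sqcup \ov\sfS_2$, the associated parabolic $P_s$ is contained in $\ov G_1 \times \ov G_2$, so convolution by $\IC_s$ preserves $\cM_1$; the types are moreover read off directly from \Cref{Lem. type G or U type I}. The inductive argument of \Cref{Lem. heart gen} then goes through verbatim: for any $\sigma \notin \SPM(m,n)_\hs$ there is some $s \in \ov\sfS_1 \sqcup \ov\sfS_2$ such that $(s,\cO_\sigma)$ is of type (U+), producing an $s$-companion of strictly smaller dimension, and one expresses $C'_\sigma$ as an element of the $\bcH$-span of $\{C'_{\sigma_{i,j}}\}$ by descending induction on $d_\sigma$.

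Combining this with \Cref{Lem. SPM min} yields the first assertion. For the ``in particular'' statement I would simply specialize $v = 1$: the image of $C'_{\sigma_{i,j}}$ in $\sfM_{1, v=1} \cong \bZ[\SPM(m,n)]$ coincides with $\pm \chi(\IC_{\sigma_{i,j}})$ by the definition of the Kazhdan--Lusztig basis and the bottom-left square of diagram \eqref{diag cat}, and the $\bcH$-action specializes to the $\bZ[\ov W_1 \times \ov W_2]$-action from \Cref{Cor. K0 M1}(4), so generation is inherited.

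I do not expect any substantive obstacle here; the only point deserving care is the sanity check that the inductive scheme in \Cref{Lem. heart gen} applies to the restricted $\bcH$-action, that is, that all its inputs---chiefly \Cref{Lem. conv ICs}---depend only on convolution with $\IC_s$ for simple reflections in $\ov\sfS_1 \sqcup \ov\sfS_2$, which is visibly the case.
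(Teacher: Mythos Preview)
Your proposal is correct and follows exactly the paper's approach: the corollary is labeled ``(of \Cref{Lem. heart gen})'' and is obtained by applying that lemma to the $\bcH$-action on $\cM_1$, using \Cref{Lem. SPM min} to identify the set $I_0$ (relative to the simple reflections $\ov\sfS_1\sqcup\ov\sfS_2$) with $\{\sigma_{i,j}\}$. Your remark that the inputs to \Cref{Lem. heart gen} (namely \Cref{Lem. conv ICs}) depend only on convolution by $\IC_s$ for the relevant simple reflections is the right sanity check, and the paper implicitly makes the same observation when it says the definition of $\SPM(m,n)_\hs$ is ``consistent with the notation $I_0$ \ldots\ with respect to the $\bcH$-action on $\cM_1$.''
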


\subsubsection{Changing Schr\"odinger model}\label{ss:moment cone L2}
We now switch the roles of $V_1$ and $V_2$. Recall that $L_2 \subset V_2$ is a Lagrangian subspace, and set $\bL_2 := \Hom(L_2, V_1)$. We consider the moment cone $\cN_{\bL_2}$ under the action of $G_1 \times \ov G_2$. Since $G_1$ is disconnected, we indicate here how to adapt the general framework of \Cref{Sec. geom gen} to accommodate this setting.

Let $G_1^\circ := \SO(V_1) \subset G_1$ denote the identity component, and let $W_1^\circ \subset W_1$ denote the Weyl group of $G_1^\circ$. Then the set of simple reflections in $W_1^\circ$ is given by
\[
\sfS_1^\circ := \{s_1, s_2, \dots, s_{m-1}, s_m\},
\]
where $s_m := t_m s_{m-1} t_m$ corresponds to the element in $G_1^\circ$ that permutes $e_{m-1}$ and $e_{-m}$ (and hence also $e_m$ and $e_{-(m-1)}$). 

Let $\cH_1^\circ := D^b(B_1 \backslash G_1^\circ / B_1)$ denote the Hecke category associated to $G_1^\circ$, and similarly define the mixed and Tate versions $\cH_1^{\circ, \mix}$ and $\cH_1^{\circ, \Tate}$. For the full (disconnected) group $G_1$, we define its Hecke category as $\cH_1 := D^b(B_1 \backslash G_1 / B_1)$, along with its variants $\cH_1^{\mix}$ and $\cH_1^{\Tate}$.

There is a natural decomposition:
\[
\cH_1 \cong \cH_1^\circ \oplus \cH_1^\bullet,
\]
where $\cH_1^\bullet$ consists of sheaves supported on the non-identity component of $G_1$.

Let $B_1^+ \subset G_1$ be the normalizer of $B_1$. Then $B_1^+ / B_1 \cong G_1 / G_1^\circ \cong \{\pm 1\}$. The nontrivial double coset $B_1\bs (B^+_1-B_1)/B_1$ corresponds to the element $t_m$, and we write $\IC_{t_m}$ for the constant sheaf supported on this coset. It satisfies a canonical isomorphism
\[
\IC_{t_m} \star \IC_{t_m} \cong \IC_1,
\]
where $\IC_1$ denotes the unit object. Left and right convolution with $\IC_{t_m}$ induces equivalences:
\[
\IC_{t_m} \star (-): \cH_1^\circ \to \cH_1^\bullet, \quad \text{and} \quad (-) \star \IC_{t_m}: \cH_1^\circ \to \cH_1^\bullet.
\]
Moreover, conjugation by $\IC_{t_m}$ defines a monoidal auto-equivalence of $\cH_1^\circ$ that sends $\IC_w$ to $\IC_{t_m w t_m}$ for all $w \in W_1^\circ$. In particular, we have
\[
\IC_{t_m} \star \IC_{s_{m-1}} \star \IC_{t_m} \cong \IC_{s_m}.
\]
Similar statements hold for the mixed and Tate versions of the Hecke category.

The discussions of \Cref{Sec. geom gen} applies to the $G_1^\circ\times \ov G_2$-action on $\cN_{\bL_2}$. We get categories
\begin{equation*}
\cM^\Tate_2=D^{\Tate}_{B_1\times \barB_2}(\cN_{\bL_2}) \hookrightarrow \cM^\mix_2=D^{\mix}_{B_1\times \barB_2}(\cN_{\bL_2})
\hookrightarrow \cM_2=D^{b}_{B_1\times \barB_2}(\cN_{\bL_2})
\to\cM^k_{2}=D^{b}_{B_{1,k}\times \barB_{2,k}}(\cN_{\bL_2,k}).
\end{equation*}
Because $G_1\times\ov G_2$ acts on $\cN_{\bL_1}$,  these are module categories over $\cH
^{\Tate}_1\ot \ov\cH_2^{\Tate}$, $\cH^{\mix}_1\ot \ov\cH_2^{\mix}$, $\cH_1\ot \ov\cH_2$ and $\cH^{k}_{1}\ot \ov\cH^k_{2}$ respectively. 

The $B_1\times \ov B_2$-orbits on $\cN_{\bL_2}$ are indexed by $\SPM(n,m)$. There is a natural $*$-action of $W_1\times W_2$ on $\SPM(n,m)$ defined according to \eqref{action Sm Sn on PM}. We denote element in $\SPM(n,m)$ by the triple $\rho=(J,I,\nu)$ with $J\subseteq \{\pm 1,\cdots, \pm n\}, I\subseteq \{\pm 1,\cdots, \pm m\}$ and $\nu: J\rightarrow I$ is a bijection with $\nu(-a)=-\nu(a)$ for every $a\in J$. For each $\rho=(J,I,\nu)\in \SPM(n,m)$, we define 
\begin{equation}\label{extended sigma 5}
    \rho^{\dagger}: \{\pm 1,\cdots, \pm n\} \longrightarrow \{0,\pm 1,\pm 2,\cdots, \pm m\}
\end{equation}
by 
\[
\rho^{\dagger}(j)=\begin{cases}
    \rho(j) \quad & \mbox{if $j\in J$}\\
    0 \quad & \mbox{if $j \notin J$}.
\end{cases}
\]
We also define 
\begin{equation}\label{extended sigma 6}
    \rho_{\dagger}: \{\pm 1,\cdots, \pm m\} \longrightarrow \{ \pm 1,\pm 2,\cdots, \pm n,\infty\}
\end{equation}
by 
\[
\rho_{\dagger}(i)=\begin{cases}
    \rho^{-1}(i) \quad & \mbox{if $i\in I$}\\
    \infty & \mbox{if $i \notin I$}.
\end{cases}
\]
We define total orders $<_{\SPM}$ on the sets \( \{0, \pm 1, \pm 2, \dots, \pm m\} \) and \( \{\pm 1, \pm 2, \dots, \pm n, \infty\} \) as in \Cref{eq order SPM}. 

The following lemma computes the type $(s,\cO_\rho)$ for each  simple reflection $s$ in $ W^\circ _1 \times  \ov W_2$ and the orbit $\cO_\rho$ with $\rho\in \SPM(n,m)$.
\begin{lemma}\label{Lem. type G or U type I model 2}
Let $s\in  \sfS^\circ_1\sqcup \ov \sfS_2$ be a simple reflection in $W^\circ _1 \times \ov W_2$, and let $\rho = (J, I, \nu) \in \SPM(n,m)$. Then the pair $(s, \cO_\rho)$ is of type (G) or (U). More precisely, 
\begin{enumerate}
    \item If  $s = s'_i$ for $1 \leq i \leq  n-1$, then $(s, \cO_\rho)$ is of
    \[
    \begin{cases}
        \mbox{type $G$}\quad &\mbox{if $\rho^\dagger(i)=\rho^\dagger(i+1)$}\\
         \mbox{type $U^-$}\quad &\mbox{if $\rho^\dagger(i)<_{\SPM}\rho^\dagger(i+1)$}\\
        \mbox{type $U^+$}\quad &\mbox{if $\rho^\dagger(i)>_{\SPM}\rho^\dagger(i+1)$}.
    \end{cases}
    \] 
    \item If  $s = s_i$ for $1 \leq i \leq  m-1$, then $(s, \cO_\rho)$ is of
    \[
    \begin{cases}
        \mbox{type $G$}\quad &\mbox{if $\rho_\dagger(i)=\rho_\dagger(i+1)$}\\
         \mbox{type $U^-$}\quad &\mbox{if $\rho_\dagger(i)<_{\SPM}\rho_\dagger(i+1)$}\\
        \mbox{type $U^+$}\quad &\mbox{if $\rho_\dagger(i)>_{\SPM}\rho_\dagger(i+1)$}.
    \end{cases}
    \]
    \item  If  $s = s_m $, then $(s, \cO_\rho)$ is of
    \[
    \begin{cases}
        \mbox{type $G$}\quad &\mbox{if $\rho_\dagger(m-1)=\rho_\dagger(-m)$}\\
         \mbox{type $U^-$}\quad &\mbox{if $\rho_\dagger(m-1)<\rho_\dagger(-m)$}\\
        \mbox{type $U^+$}\quad &\mbox{if $\rho_\dagger(m-1)>\rho_\dagger(-m)$}.
    \end{cases}
    \]
\end{enumerate}
In case $U$, the $s$-companion of $\cO_\rho$ is $\cO_{s * \rho}$. 
\end{lemma}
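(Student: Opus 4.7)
The plan is to mirror the argument of \Cref{Lem. type G or U type II} (and its type I analogue \Cref{Lem. type G or U type I}), adapted to the second Schr\"odinger model. The geometric input is the same in all three cases: for a simple reflection $s$ and $\rho\in\SPM(n,m)$, I compute the stabilizer $P_{s,x_\rho}\subset P_s$ of the representative $x_\rho\in \cN_{\bL_2}$ defined by the analogue of \eqref{def:Tsigma type I}, project it onto $\Aut(D_{x_\rho})\cong\PGL_2$ to obtain the subgroup $Q_{x_\rho}$, and read off whether $Q_{x_\rho}$ is all of $\PGL_2$ (type (G)), or a Borel fixing the line spanned by the ``earlier'' basis vector (type (U$^-$)) or the ``later'' one (type (U$^+$)).

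First I would dispose of case (1), where $s=s'_i$ acts on $L_2$ via $\ov G_2=\GL(L_2)$. Here $D_{x_\rho}\cong \bP(V_{s'_i})$ for $V_{s'_i}=\langle f_i,f_{i+1}\rangle\subset L_2$, and the analysis of $Q_{x_\rho}$ is literally the computation of \Cref{Lem. type G or U type II} applied to the $\ov G_2$-factor: the three possibilities $\rho^\dagger(i)=\rho^\dagger(i+1)$, $\rho^\dagger(i)<_{\SPM}\rho^\dagger(i+1)$, $\rho^\dagger(i)>_{\SPM}\rho^\dagger(i+1)$ correspond respectively to $x_\rho|_{V_{s'_i}}$ being zero, having kernel $\langle f_i\rangle$, or having kernel $\langle f_{i+1}\rangle$ (or, when both indices lie in $J$, to comparing the positions of $\nu(i)$ and $\nu(i+1)$ under $<_{\SPM}$).

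Next I would handle case (2), where $s=s_i$ for $1\le i\le m-1$ acts on $V_1$ by swapping $e_i\leftrightarrow e_{i+1}$ and $e_{-i}\leftrightarrow e_{-(i+1)}$. The fiber $D_{x_\rho}$ is identified with $\bP(V_{s_i})$ for $V_{s_i}=\langle e_i,e_{i+1}\rangle$, and the condition determining the type is whether $x_\rho(L_2)$ meets $V_{s_i}$ in $0$, in $\langle e_i\rangle$, or in $\langle e_{i+1}\rangle$. The correct way to read this off is via the function $\rho_\dagger$ of \eqref{extended sigma 6}: one checks that $\rho_\dagger(i)=\rho_\dagger(i+1)$ exactly when $x_\rho^{-1}(V_{s_i})=0$, while the orderings $\rho_\dagger(i)\lessgtr_{\SPM}\rho_\dagger(i+1)$ pin down which of $\langle e_i\rangle$, $\langle e_{i+1}\rangle$ is the image, and hence which line in $V_{s_i}$ is fixed by $Q_{x_\rho}$. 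The computation is the same as for case (2) of \Cref{Lem. type G or U type I}, with the roles of source and target exchanged.

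Finally I would treat case (3), $s=s_m\in \sfS_1^\circ$, which is the one genuinely new feature coming from the fact that we are working with the connected component of $\O(V_1)$. Here the relevant two-plane is $V_{s_m}=\langle e_{m-1},e_{-m}\rangle$ and $D_{x_\rho}\cong \bP(V_{s_m})$; the type is controlled by the intersection of $x_\rho(L_2)$ with $V_{s_m}$. Since $x_\rho$ maps $J^+$ to $I$ in the order prescribed by \eqref{def:Tsigma type I}, the relevant comparison becomes $\rho_\dagger(m-1)$ versus $\rho_\dagger(-m)$ under $<_{\SPM}$, and a direct verification as in the previous cases gives the three alternatives in the statement. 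In the type (U) cases one checks that the $s$-companion is obtained by exchanging the preimages of the two indices under $\rho$, which is precisely the $*$-action of $s$ on $\rho$ as in \eqref{action Sm Sn on PM}.

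The only point requiring any care, and hence the main potential obstacle, is case (3): the simple reflection $s_m$ is not a reflection in $\ov W_2$ nor does it correspond to a permutation of adjacent basis vectors in the standard flag of $L_1$, so the orderings have to be set up correctly. Once the bookkeeping is done, the local computation reduces to the two-dimensional $\PGL_2$ analysis that is shared by all cases, and the result follows exactly as in the proofs of \Cref{Lem. type G or U type II} and \Cref{Lem. type G or U type I}.
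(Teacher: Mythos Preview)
Your proposal is correct and follows the same approach as the paper: the paper's proof simply states that it is similar to that of \Cref{Lem. type G or U type I} (which in turn refers back to \Cref{Lem. type G or U type II}) and is omitted. Your plan to reduce each case to the local $\PGL_2$ stabilizer computation, with special care for the new simple reflection $s_m$ coming from $\SO(V_1)$, is exactly what is intended.
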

\begin{proof}
The proof is similar to that of \Cref{Lem. type G or U type I} and is omitted here.
\end{proof}
\trivial[h]{
The flag variety for $G_1^\circ$ is better thought of as a flag 
\[
0 \subset \langle e_1 \rangle \subset \cdots \subset \langle e_1, \dots, e_{m-1} \rangle
\]
together with $ \langle e_1, \dots, e_{m-1} \rangle \subseteq \langle e_1, \dots, e_{m-1},e_{m} \rangle$ and $ \langle e_1, \dots, e_{m-1} \rangle \subseteq \langle e_1, \dots,e_{m-1}, e_{-m} \rangle$. In this case, $G_1^\circ/P_{s_{m-1}}$ corresponds to the partial flag 
\[
0 \subset \langle e_1 \rangle \subset \cdots \subset \langle e_1, \dots, e_{m-2} \rangle\subset  \langle e_1, \dots, e_{m-1},e_{m}\rangle 
\]
and 
$G_1^\circ/P_{s_{m}}$ corresponds to the partial flag 
\[
0 \subset \langle e_1 \rangle \subset \cdots \subset \langle e_1, \dots, e_{m-2} \rangle\subset \langle e_1, \dots, e_{m-1},e_{-m}\rangle 
\]
}

The action of $\IC_{t_m}$ on $\cM_2$ is induced by the involution on the stack $B_1\times \ov B_2\bs \cN_{\bL_2}$ given by the non-neutral coset of $B_1^+/B_1$. 
\begin{lemma}\label{l:involution s0}
    For $\rho \in \SPM(n,m)$ we have \begin{equation*}
        \IC_{t_m}\star \IC_\rho \cong \IC_{t_m* \rho}.
    \end{equation*}
and 
\[
\sfT_{t_m} \one_{\rho}= \one_{t_m *\rho}.
\]
\end{lemma}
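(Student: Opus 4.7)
The plan is to identify convolution by $\IC_{t_m}$ on $\cM_2$ with pullback along an involution of the stack $[B_1 \times \ov B_2 \bs \cN_{\bL_2}]$ induced by left multiplication by $t_m$, and then check that this involution matches the combinatorial $*$-action of $t_m$ on $\SPM(n,m)$.

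First I would observe that since $t_m$ normalizes $B_1$ (and acts trivially on $\ov B_2$), the double coset $B_1 t_m B_1$ reduces to the single $B_1$-coset $t_m B_1$, so the length function of $t_m$ vanishes and $\IC_{t_m} = \Delta_{t_m}$ is the shifted trivial sheaf on a point in $B_1\bs G_1/B_1$. Unwinding the convolution diagram
\[
[B_1\times \ov B_2\bs G_1\times^{B_1}\cN_{\bL_2}]\xrightarrow{\,a\,} [B_1\times \ov B_2\bs\cN_{\bL_2}]
\]
along the fiber of $p_G$ over $[t_m]$, which (because $t_m$ normalizes $B_1$) is identified with $[B_1\times \ov B_2\bs\cN_{\bL_2}]$ itself with action map given by left multiplication by $t_m$, we conclude that $\IC_{t_m}\star(-)\cong \iota^*$ as endofunctors of $\cM_2$, where $\iota$ is the involution on $[B_1\times \ov B_2\bs\cN_{\bL_2}]$ induced by $t_m$.

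Next I would verify that $\iota(\cO_\rho)=\cO_{t_m*\rho}$. Using the standard representative $x_\rho\in \cO_\rho$ (given by the analogue of \eqref{def:Tsigma type I} for $\cN_{\bL_2}$) together with the fact that $t_m$ acts on $V_1$ by swapping $e_m$ and $e_{-m}$, a direct check shows that $t_m\cdot x_\rho\in \cO_{t_m*\rho}$, where $t_m*\rho$ is the element of $\SPM(n,m)$ obtained by applying the transposition $m\leftrightarrow -m$ on the target labels as in \eqref{action Sm Sn on PM}. Since $\iota$ is a stack isomorphism restricting to an isomorphism of these two orbits, in particular $d_\rho=d_{t_m*\rho}$. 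Applying $\iota^*$ to the IC and standard sheaves then gives
\[
\IC_{t_m}\star \IC_\rho\cong \IC_{t_m*\rho} \mand \IC_{t_m}\star \Delta_\rho\cong \Delta_{t_m*\rho},
\]
which proves the first identity.

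For the second identity I would apply the weight-polynomial map $\ch$ of Section~\ref{Sec. Cat gen}. Since $l(t_m)=0$, one has $\IC_{t_m}=\Delta_{t_m}$ and therefore $\ch(\IC_{t_m})=\sfT_{t_m}$ by \eqref{p:ch Hecke}. Combining this with $\ch(\Delta_\rho)=(-v)^{-d_\rho}\one_\rho$ and the dimension equality $d_\rho=d_{t_m*\rho}$, applying $\ch$ to $\IC_{t_m}\star \Delta_\rho\cong \Delta_{t_m*\rho}$ yields $\sfT_{t_m}\cdot(-v)^{-d_\rho}\one_\rho=(-v)^{-d_{t_m*\rho}}\one_{t_m*\rho}$, whence $\sfT_{t_m}\one_\rho=\one_{t_m*\rho}$. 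The only delicate step is the combinatorial verification that left multiplication by $t_m$ on $x_\rho$ produces the $*$-action on the signed partial matching, but this is a straightforward unraveling of the definitions.
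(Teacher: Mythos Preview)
Your proposal is correct and follows essentially the same approach as the paper: both identify convolution by $\IC_{t_m}$ with pullback along the involution of $[B_1\times\ov B_2\bs\cN_{\bL_2}]$ induced by a lift $\dot t_m\in B_1^+\setminus B_1$, and then check on the standard orbit representatives that $\dot t_m\cdot x_\rho=x_{t_m*\rho}$. Your write-up is more detailed (especially the derivation of the Hecke identity via $\ch$ and the dimension equality $d_\rho=d_{t_m*\rho}$), but the underlying argument is the same.
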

\begin{proof}
Let $\dot t_m\in B^+_1-B_1$ be the element that swaps $e_m$ with $e_{-m}$ and fixes the other basis elements of $V_1$. For the orbit representatives $x_\s$ for $\rho \in \SPM(n,m)$, one readily checks that $\dot t_m\cdot x_\rho=x_{t_m * \rho}$. The conclusion follows. 
\end{proof}
The analogs of \Cref{Cor. M1 IC} and \Cref{Cor. K0 M1} remain valid in the present setting. In particular, we obtain an $\sfH_1 \otimes_R \bsfH_2$-action on $\sfM_2$, whose specialization at $v = \sqrt{q}$ recovers the $H_1 \otimes \ov H_2$-action on $M_2$ defined in \Cref{oscillator Bi-module Schrodinger model 2}, via a fixed isomorphism $\iota: \overline{\bQ}_\ell \cong \bC$. The following result is the analogue of \Cref{Knop action type I}:

\begin{cor}\label{Knop action type I change}
The specialization of the $\sfH_1 \otimes_R \bsfH_2$-action on $\sfM_2$ at $v=1$ induces an action of $W_1 \times \ov W_2$ on $\bZ[\SPM(n,m)]$. It is explicitly given by
\[
w \cdot \one_\rho = \one_{w * \rho} \quad \text{for all } w \in W_1 \times \ov W_2, \ \rho \in \SPM(n,m).
\]
\end{cor}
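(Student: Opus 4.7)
The strategy will be to verify the formula $w\cdot \one_\rho = \one_{w*\rho}$ on a generating set of $W_1\times \ov W_2$ and then extend by the algebra action. A convenient generating set is $\sfS_1^\circ \sqcup \ov\sfS_2 \sqcup \{t_m\}$: the simple reflections of the connected reductive groups $G_1^\circ$ and $\ov G_2$ generate $W_1^\circ\times \ov W_2$, and the outer involution $t_m$ promotes $W_1^\circ$ to $W_1=W_1^\circ\rtimes \langle t_m\rangle$. The argument has two cases, corresponding to whether the generator sits inside the connected component or not.

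For a simple reflection $s\in \sfS_1^\circ \sqcup \ov\sfS_2$, the $v=1$ specialization of the Hecke action is governed by Knop's formula (Corollary~\ref{Cor. W action sph orb}), whose output depends on the type of $(s,\cO_\rho)$. The crucial input is Lemma~\ref{Lem. type G or U type I model 2}, which asserts that every such pair is of type (G) or (U), and identifies the $s$-companion in type (U) as $\cO_{s*\rho}$. In type (G) both sides reduce to $\one_\rho$: the defining condition ($\rho^\dagger(i)=\rho^\dagger(i+1)$, $\rho_\dagger(i)=\rho_\dagger(i+1)$, or $\rho_\dagger(m-1)=\rho_\dagger(-m)$) forces the two indices exchanged by $s$ to both lie outside the relevant subset $I$ or $J$, so that $s*\rho=\rho$ directly from the definition of $*$ in \eqref{action Sm Sn on PM}. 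In type (U), Knop's formula gives $s\cdot \one_\rho = \one_{s*\rho}$ by the companion identification.

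For the remaining generator $t_m$, the required identity is already recorded in the second displayed equation of Lemma~\ref{l:involution s0} at the level of the generic Hecke algebra, so a fortiori it holds after specializing $v=1$. Combining the two cases finishes the verification on generators, and hence establishes the formula on all of $W_1\times \ov W_2$. I do not foresee any substantial obstacle: the result is essentially a bookkeeping combination of Knop's formula, the orbit-type computation of Lemma~\ref{Lem. type G or U type I model 2}, and the $t_m$-equivariance of Lemma~\ref{l:involution s0}. If any care is needed, it is in confirming that the ``type (G) $\Rightarrow$ $s*\rho=\rho$'' implication really follows from the explicit definitions of $\rho^\dagger$ and $\rho_\dagger$, but this is a direct check in each of the three cases of Lemma~\ref{Lem. type G or U type I model 2}.
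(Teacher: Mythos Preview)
Your proof is correct and follows essentially the same approach as the paper: the paper's proof simply cites \Cref{Cor. W action sph orb}, \Cref{Lem. type G or U type I model 2}, and \Cref{l:involution s0}, which is exactly the combination you describe. Your explicit verification that type (G) implies $s*\rho=\rho$ is a detail the paper leaves implicit (as in the parallel proof of \Cref{Knop action type I}), but otherwise the arguments coincide.
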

\begin{proof}
The statement follows from \Cref{Cor. W action sph orb}, together with \Cref{Lem. type G or U type I model 2} and \Cref{l:involution s0}.
\end{proof}

\subsection{Equivariant Fourier-Deligne transform}\label{Sec Fourier}
In this subsection, we use equivariant Fourier-Deligne transform 
to construct an equivalence of the categories 
\[
\CM_{\cM_1}: \cM_1=D^b_{\barB_1\times B_2}(\NLone) \longrightarrow D^b_{B_1\times \barB_2}(\NLtwo)=\cM_2
\]
as well as its mixed, Tate and non-mixed variants. This will enable us to construct the generic oscillator bimodule and the corresponding $W_1\times W_2$-graph in the next subsection. 


We first recall some basic properties of the equivariant Fourier-Deligne transform. A reference is \cite[\S 6.9]{MR4337423}. Let $G$ be an algebraic group over $F=\bF_q$ and $S$ be a $G$-scheme over $\bF_q$. Let $p: E\rightarrow S$ be a $G$-equivariant vector bundle of rank $r$ and $p': E'\rightarrow S$ be the dual $G$-equivariant vector bundle. Consider the following diagram of quotient stacks,
\begin{equation}\label{Diagram equivariant Fourier Deligne}
\begin{tikzcd}
& E/G\times_{S/G} E'/G \arrow[ld,"\pi"] \arrow[rd,"\pi'"]   \arrow[r, "m"]& \bA^1/G\\
E/G & & E'/G 
\end{tikzcd}
\end{equation}
where $m$ is the natural pairing map, $\pi,\pi'$ are the projection maps, and the action of $G$ on $\bA_1$ is trivial. Fix a  non-trivial additive character $\psi$ of $\bF_q$, giving rise to an Artin-Schreier local system $\AS_\psi$ on $\bA^1$. the equivariant Fourier-Deligne transform functor 
\[
\CM_{E}: D_{G}^b(E)\longrightarrow D_{G}^b(E')
\]
is defined by 
\[
\CM_{E} (\sF)= \pi'_{!}\left(\pi^* \sF\otimes m^*(\AS_\psi)\right)\langle r \rangle,\quad \sF\in D_{G}^b(E).
\]

The main feature of $\FT_E$ is that it is a perverse $t$-exact equivalence that also preserves pure complexes of weight $i$ for any $i$. Its inverse is given by $\FT_{E'}$ defined using $\psi^{-1}$ instead of $\psi$.

The above discussion also works for the mixed and non-mixed variants. We use 
\[
\CM^{\mix}_E: D_{G}^{\mix}(E)\longrightarrow D_{G}^{\mix}(E')
\]
and 
\[
\CM^{k}_{E}: D_{G_k}^{b}(E_k)\longrightarrow D_{G_k}^{b}(E'_k)
\]
to denote the mixed and non-mixed versions of equivariant Fourier-Deligne transform.

By \Cref{L1L2 decomp} and the trace pairing between  $\ov \bL_1$  and $\ov \bL_2$, we view $\bL_1$ and $\bL_2$ as dual vector bundles over $\bL^b$. Taking $G=\ov B_1\times \ov B_2, E=\bL_1, E'=\bL_2$ and $S=\bL^b$ in \Cref{Diagram equivariant Fourier Deligne}, we get the equivariant Fourier Deligne transform 
\begin{equation*}
    \FT_{\bL_1}: D^b_{\ov B_1\times \ov B_2}(\bL_1)\isom D^b_{\ov B_1\times \ov B_2}(\bL_2).
\end{equation*} 


\begin{remark}
    Since the action of $\ov B_1\times \ov B_2$ on $\bL_i$ contains the scaling action, the functor $\FT_{\bL_1}$ above is independent of the choice of the additive character $\psi$ on $\bF_q$. 
\end{remark}

Since $U_2$ is unipotent, it follows from  \Cref{exact B_1} and \cite[Theorem 6.6.15]{MR4337423} that the forgetful functor $D^b_{\ov B_1\times B_2}(\bL_1)\to D^b_{\ov B_1\times \ov B_2}(\bL_1)$ 
is fully faithful. Therefore we may identify $\cM_1=D^b_{\ov B_1\times B_2}(\cN_{\bL_1})$ with the full subcategory of $D^b_{\ov B_1\times \ov B_2}(\bL_1)$ consisting of objects that are $U_2$-equivariant (which is a property rather than structure) and supported on the closed subscheme $\cN_{\bL_1}$. Similarly, we may identify $\cM_2=D^b_{B_1\times \ov B_2}(\cN_{\bL_2})$ with the full subcategory of $D^b_{\ov B_1\times \ov B_2}(\bL_2)$ consisting of objects that are $U_1$-equivariant and supported on  $\cN_{\bL_2}$. 

\begin{prop}\label{Fourier-transform 2}
Let $\sF\in D^b_{\ov B_1\times \ov B_2}(\bL_1)$. Then 
\begin{enumerate}
    \item $\sF$ is $U_2$-equivariant if and only if $\FT_{\bL_1}(\sF)$ is supported on $\cN_{\bL_2}$;
    \item  $\sF$ is supported on $\cN_{\bL_1}$ if and only if $\FT_{\bL_1}(\sF) $ is $U_1$-equivariant.
\end{enumerate}
\end{prop}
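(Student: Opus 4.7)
The plan is to reduce the statement to a relative Fourier--translation duality over $\bL^b$: for a vector bundle $p \colon E \to S$ with dual $p' \colon E' \to S$, a sheaf on $E$ is translation-invariant along a subspace of the fiber if and only if its relative Fourier transform is set-theoretically supported on the annihilator. Here the plan is to take $E = \bL_1$, $E' = \bL_2$, $S = \bL^b$, identify $U_2$ with a family of fiberwise translations on $E$, and show that the annihilator of the corresponding translation subspace coincides precisely with the moment cone $\cN_{\bL_2}$.

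I would first make the $U_i$-actions geometrically explicit. Writing $\bL_1 = \ov\bL_1 \oplus \bL^b$, the element $u(c) \in U_2$ (with $c \colon L_2^\vee \to L_2$ satisfying $c^* = -c$) acts on $V_2$ as the identity on $L_2$ and by $v' \mapsto v' + c(v')$ on $L_2^\vee$, hence sends $T = (T_1, T_b) \in \bL_1$ to $(T_1 + c \circ T_b,\, T_b)$. Thus $U_2$ preserves the projection $\bL_1 \to \bL^b$ and acts fiberwise by translations, with the translation subspace at $T_b$ being $V(T_b) := \{\, c \circ T_b \mid c \in \Herm(L_2^\vee, L_2)\,\} \subset \ov\bL_1$. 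An entirely analogous computation realizes $U_1$ as fiberwise translations on $\bL_2 = \ov\bL_2 \oplus \bL^b$ with translation subspace $V'(S_b) := \{\, b \circ S_b \mid b \in \Herm(L_1^\vee, L_1)\,\}$ at $S_b$.

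Next I would invoke the standard Fourier--translation duality relative to $\bL^b$: translation by $v \in \ov\bL_1$ in a fiber is exchanged by $\FT_{\bL_1}$ with tensoring by the Artin--Schreier sheaf $\AS_\psi(\langle v, -\rangle)$ on the dual fiber, where $\langle\,,\,\rangle$ is the trace pairing. Since $U_2$ is a connected unipotent group, $U_2$-equivariance of $\sF$ is equivalent to the existence of isomorphisms $u(c)^*\sF \simeq \sF$ for all $c$ (coherence being automatic), and this condition is transported by $\FT_{\bL_1}$ to $\AS_\psi(\langle c \circ S_b, -\rangle) \otimes \FT_{\bL_1}(\sF) \simeq \FT_{\bL_1}(\sF)$ for all $c$. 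This forces the support of $\FT_{\bL_1}(\sF)$ into $\bigcup_{S_b} V(S_b)^\perp \times \{S_b\}$, and conversely any such support condition yields the required isomorphisms.

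It then remains to identify $V(S_b)^\perp$ with the fiber of $\cN_{\bL_2}$ over $S_b$. Under the trace pairing $\langle A, B\rangle = \tr_{L_2}(A \circ B)$ for $A \in \ov\bL_1,\ B \in \ov\bL_2$, the condition $\tr(c \circ S_b \circ S_1) = 0$ for all $c$ in $\Herm(L_2^\vee, L_2)$ translates, after identifying $c$ with a symmetric bilinear form on $L_2^\vee$ and $S_b \circ S_1 \colon L_2 \to L_2^\vee$ with a bilinear form on $L_2$, into the requirement that this latter bilinear form be skew-symmetric. A short direct expansion shows that this is precisely the isotropy condition $\langle (S_1, S_b)(v),\, (S_1, S_b)(v')\rangle_{V_1} = 0$ defining $\cN_{\bL_2}$. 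This proves (1). For (2) I would apply (1) with the roles of $(\bL_1, U_2, \cN_{\bL_2})$ and $(\bL_2, U_1, \cN_{\bL_1})$ interchanged, using the Fourier inversion $\FT_{\bL_2} \circ \FT_{\bL_1} \simeq [-1]^*$ (up to Tate twist) and the fact that $\cN_{\bL_1}$ is stable under negation. The main obstacle is the bookkeeping in this last identification: one has to track the dualities between $L_i$ and $L_i^\vee$ induced by the forms on $V_i$, and verify that the ``Hermitian'' conditions $b^* = -b,\ c^* = -c$ match the symmetry types (skew on $L_i^\vee$, symmetric on $L_i$, etc.) appearing in the isotropy defining $\cN_{\bL_i}$, so that the pairing between the two is perfect.
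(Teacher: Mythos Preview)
Your proposal is correct and follows essentially the same approach as the paper. The paper packages the core step as an abstract lemma: for a morphism $f \colon W \to V$ of vector bundles over a base $S$, with $W$ acting on $V$ by translation through $f$, the Fourier transform exchanges $W$-equivariance with support on $\ker(\check f) \subset V^\vee$. It then applies this with $S = [\ov\bB \backslash \bL^b]$, $V = [\ov\bB \backslash \bL_1]$, and $W$ the trivial $U_2$-bundle over $S$, identifying $\cN_{\bL_2}$ with $\ker(\check f)$ by exactly the bilinear-form computation you sketch.

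Two minor remarks. First, your pointwise formulation ``$u(c)^*\sF \simeq \sF$ for all $c$'' is slightly weaker than the universal condition $\act^*\sF \cong p^*\sF$ on $U_2 \times_S \bL_1$; the paper works directly with the latter, which makes the Fourier step cleaner (it becomes $\check{\act}_! \sG \cong \check{p}_! \sG$, and comparing supports is immediate). Your parenthetical ``coherence being automatic'' suggests you have the universal version in mind, and indeed for a unipotent group the two are equivalent, but the universal phrasing avoids the need to justify this. Second, for part (2) the paper simply says the proof is similar; your deduction of (2) from (1) via Fourier inversion and the $(-1)$-stability of $\cN_{\bL_1}$ is a perfectly good alternative.
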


We begin the proof by the following. 
\begin{lemma}\label{lem:FourierSupport}
Let $S$ be a stack and $f: W\to V$ be a morphism between two vector bundles over $S$.
Now $W$ acts on $V$ via $w\cdot_f v = f(w) + v$. 
Let $D^b_{W,f}(V)$ be the $W$-equivariant derived category of $V$ with respect to the $\cdot_f$-action.

Let $\ckff : V^\vee \to W^\vee$ is the dual map of $f$, and 
$\cN_f := \ckff^{-1}(0_S) \subset V^\vee$ where $0_S$ denotes the zero section of the vector bundle $W^\vee\to S$.

Then the equivariant Fourier-Deligne transform $\FT_V: D^b(V)\isom D^b(V^\vee)$ restricts to an equivalence of full subcategories 
\[
\FT_V: D^b_{W,f}(V) \isom D^b(\cN_f). 
\]
\end{lemma}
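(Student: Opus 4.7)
First, I would recast $W$-equivariance as a property rather than extra structure. Since $W$ is a vector bundle over $S$ — hence a commutative unipotent group scheme — the argument invoked earlier in the paper for $U_2$ via \cite[Theorem~6.6.15]{MR4337423} implies that an object $\sF \in D^b(V)$ lies in $D^b_{W,f}(V)$ if and only if there is a canonical isomorphism $a^*\sF \cong p^*\sF$, where $a,p : W \times_S V \to V$ are the action and projection. The goal then reduces to characterizing this condition on $\FT_V(\sF)$.

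Second, the plan is to apply the family version of the standard ``translation becomes character'' identity of Fourier-Deligne transform. Performing the partial Fourier-Deligne transform in the $V$-direction of $W \times_S V$, I expect the identities
\[
\FT_V(a^*\sF) \cong m^*\AS_\psi \otimes p'^*\FT_V(\sF), \qquad \FT_V(p^*\sF) \cong p'^*\FT_V(\sF),
\]
where $p' : W \times_S V^\vee \to V^\vee$ is the projection and $m : W \times_S V^\vee \to \bA^1$ is the pairing $(w,\xi) \mapsto \langle w, \ckff(\xi)\rangle = \langle f(w),\xi\rangle$. This can be verified directly from the definition of $\FT$ using base change along $p'$, the projection formula, and additivity of the pairing in the $V$-variable.

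Third, granted the above, the equivariance condition $a^*\sF \cong p^*\sF$ translates under $\FT$ into the statement that $m^*\AS_\psi \otimes p'^*\cG \cong p'^*\cG$, where $\cG := \FT_V(\sF)$. Since $\AS_\psi$ is nontrivial on $\bA^1 \setminus \{0\}$, the sheaf $m^*\AS_\psi$ is geometrically trivial precisely on the zero locus $\{m = 0\} = W \times_S \cN_f$. Therefore the isomorphism holds if and only if $p'^*\cG$ is supported on $W \times_S \cN_f$, equivalently $\Supp(\cG) \subset \cN_f$. Combined with the underlying equivalence $\FT_V : D^b(V) \isom D^b(V^\vee)$, this will give the claimed equivalence of full subcategories.

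The main obstacle is the second step: verifying the family-level identity $\FT_V(a^*\sF) \cong m^*\AS_\psi \otimes p'^*\FT_V(\sF)$ with the correct normalizations and functorialities. The cleanest route is a diagram chase using the compatibilities of the Fourier-Deligne transform with $!$-pushforward and $*$-pullback along morphisms of vector bundles over $S$, reducing Zariski-locally to the case of trivial bundles where the identity is the classical pointwise statement. Once this identity is in hand, the passage from an object-level characterization to an equivalence of categories is automatic, precisely because equivariance is a property in the unipotent $\ell$-adic setting.
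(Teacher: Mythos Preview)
Your approach is correct but takes a different route from the paper. Both begin identically: since $W$ is unipotent over $S$, equivariance is the property $\act^*\sF \cong p^*\sF$. The divergence is in Step~2. You apply the partial transform in the $V$-direction (relative to $W$) and invoke the translation-to-character identity, obtaining $m^*\AS_\psi \otimes p'^*\sG \cong p'^*\sG$ for $\sG=\FT_V(\sF)$. The paper instead treats $\act,p: W\times_S V \to V$ as linear maps of vector bundles \emph{over $S$} and applies the full transform $\FT_{W\times_S V}$, using the compatibility $\FT(g^*-) \cong \check g_!\,\FT(-)$ to get $\ckact_!\sG \cong \check p_!\sG$ on $W^\vee\times_S V^\vee$, where $\check p(\xi)=(0,\xi)$ and $\ckact(\xi)=(\check f(\xi),\xi)$. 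This buys a cleaner endgame: the forward direction is an immediate comparison of images of two closed embeddings (forcing $\check f|_{\Supp\sG}=0$), and the converse is trivial since $\ckact$ and $\check p$ literally agree on $\cN_f$. Your Step~3 also works, but the phrase ``$\AS_\psi$ is nontrivial on $\bA^1\setminus\{0\}$'' is imprecise (it is nontrivial on all of $\bA^1$); the cleanest way to justify the ``only if'' assertion is to apply $p'_!$ together with the projection formula, using that $H^*_c$ of a nontrivial Artin--Schreier pullback on affine space vanishes, which yields $(i_{\cN_f})_! i_{\cN_f}^*\sG \cong \sG$.
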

\begin{proof}
Consider the map $\act_f : W\times_S V \to V $ by $(w,v)\mapsto f(w) + v$ and $p : W\times_S V \to V$ by $(w,v)\mapsto v$. Since $W$ is a unipotent group scheme over $S$, 
$D^b_{W,f}(V)$ is the full subcategory of $D^b(V)$ consisting of $\sF \in D_W(V)$ such that $\act_f^* \sF \cong p^* \sF$. 

Now $\act_f$ and $p$ are both morphisms of vector bundles over $S$. Denote their dual maps by $\ckact_f$ and $\ckpp$ respectively. We have for $\ckvv\in V^\vee$,
\[
\ckpp(\ckvv) = (0,\ckvv) \AND \ckact(\ckvv) = (\ckff(\ckvv), \ckvv). 
\]

Let $\sF\in D^b(V)$ and $\sG=\FT_V(\sF)\in D^b(V^\vee)$. If $\sF\in D^b_{W,f}(V)$, then $\act^* \sF \cong p^* \sF$, hence by the well-known properties of Fourier-Deligne transform we have isomorphisms
\begin{equation*}
    \ckact_! \sG\cong \FT_{W\times_S V}(\act^* \sF)\cong \FT_{W\times_S V}(p^* \sF)\cong \ckpp_! \sG.
\end{equation*}
In particular, $\Supp( \ckact_!\sG) = \Supp (\ckpp_! \sG)$. This forces $\Supp (\sG) \subset \cN_f$ since 
$\ckpp_! \sG$ is obviously supported on $0_S \times_S \ckV$. 

Conversely, suppose $\sG \in D^b(\cN_f)$. Note that $\ckpp$ and $ \ckact $ restrict to the same map $\cN_f\to W^\vee\times_S V^\vee$. This implies $\ckact_!\sG\cong \ckpp_!\sG$. Applying similar properties of the inverse Fourier transform we obtain
\begin{equation*}
    \act^* \sF \cong   \FT^{-1}_{W\times_S V}(\ckact_!\sG) \cong \FT^{-1}_{W\times_S V}(\ckpp_!\sG) \cong p^* \sF.
\end{equation*}
This finishes the proof. 
\end{proof}

\begin{proof}[Proof of {\Cref{Fourier-transform 2}}]
We only present the proof of (1) since the proof of (2) is similar. 

Let $S:=[\ov\bB\bs \bL^{-}]$,  $V := [\ov\bB\bs \bL_1]$. Note that $U_2$ is naturally a linear subspace of $\Hom(L_2^\vee, L_2)$. Let  $W := [\ov\bB\bs(U_2\times \bL^-)]$ as a vector bundle over $S$ with fiber $U_2$ over $0$. 

Consider the map  $f: W \to \bL_1$  induced by $\Hom(L_2^\vee,L_2)\times \bL^-= \Hom(L_2^\vee,L_2)\times \Hom(L_1, L_2^\vee)\ni (w,t^-) \mapsto  (w\circ t^-,t^-) \in \bL_1^+ \times \bL_1^- = \bL_1$. 
It is easy to see that 
\[
\cN_{\bL_2} = \set{ (t^+,t^-) | t^+(t^-)^* = 0  } = 
 \set{ (t^+,t^-) | \forall w \in U_2, t^+( wt^-)= 0  } = \ckff^{-1}(0). 
\]

On the other hand, $U_2$-equivariance for an object in $D^b_{\ov\bB}(\bL_1)$ is  the same as $W$-equivariance under the translation action of $W$ on $\bL_1$ via $f$. We finish the proof by applying
\Cref{lem:FourierSupport}.
\end{proof}

\begin{cor}\label{Cor. FT equiv bcH}
Fourier transform $\FT_{\bL_1}$ restricts to an equivalence of categories
\begin{eqnarray*}
    \FT_{\cM_1}: \cM_1 \xrightarrow{\sim} \cM_2.
\end{eqnarray*}
The same is true for the mixed and $k$-versions.
Moreover, these equivalences carry canonical equivariant structures under the actions of the Hecke category $\bcH=\bcH_1\ot \bcH_2$ and its mixed and $k$-versions.


    
\end{cor}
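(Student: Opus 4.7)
The plan is to prove the corollary in two steps: first establish the equivalence of categories, then upgrade it to a Hecke-equivariant equivalence.

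For the equivalence of categories, \Cref{Fourier-transform 2} does essentially all the work. Recall the identification of $\cM_1$ with the full subcategory of $D^b_{\ov B_1\times\ov B_2}(\bL_1)$ consisting of objects that are $U_2$-equivariant and supported on $\cN_{\bL_1}$, and similarly for $\cM_2$. Combining parts (1) and (2) of \Cref{Fourier-transform 2} shows that $\sF\in D^b_{\ov B_1\times\ov B_2}(\bL_1)$ lies in $\cM_1$ precisely when $\FT_{\bL_1}(\sF)$ lies in $\cM_2$; with $\FT_{\bL_2}$ as a quasi-inverse (using $\psi^{-1}$), this yields the desired equivalence $\FT_{\cM_1}:\cM_1\isom \cM_2$. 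The same argument applies verbatim at the mixed and $k$-levels, since the equivariant Fourier--Deligne transform preserves mixedness and is compatible with base change to $k$, and the proof of \Cref{Fourier-transform 2} uses only general six-functor formalism.

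For the Hecke-equivariance, the key observation is that $\bL_1\to\bL^b$ and $\bL_2\to\bL^b$ are dual $\ov G_1\times\ov G_2$-equivariant vector bundles. Concretely, both actions preserve the decompositions $\bL_i=\ov\bL_i\oplus\bL^b$, the two induced actions on $\bL^b=L_1^\vee\ot L_2^\vee$ coincide, and the trace pairing $\inn{T_1}{T_2}=\tr(T_2T_1)$ between the fibers $\ov\bL_1$ and $\ov\bL_2$ is $\ov G_1\times\ov G_2$-invariant:
\[
\inn{g_2 T_1 g_1^{-1}}{g_1 T_2 g_2^{-1}}=\tr(g_1 T_2 T_1 g_1^{-1})=\inn{T_1}{T_2}.
\]
By standard properties of the equivariant Fourier--Deligne transform for a dual pair of equivariant vector bundles with an invariant pairing, $\FT_{\bL_1}$ then carries a canonical $\ov G_1\times\ov G_2$-equivariant structure.

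Since the convolution action of $\bcH=\bcH_1\ot\bcH_2$ on $\cM_1$ and $\cM_2$ is constructed from the underlying $\ov G_1\times\ov G_2$-actions via the action map and $!$-pushforward, applying $\FT$ fiberwise to the action diagram and using its $\ov G_1\times\ov G_2$-equivariance together with proper base change yields a natural isomorphism
\[
\FT_{\cM_1}(\sE\star\sF)\cong \sE\star\FT_{\cM_1}(\sF),\qquad \sE\in\bcH,\ \sF\in\cM_1,
\]
functorial in both arguments, and likewise at the mixed and $k$-levels. The main subtlety I expect is bookkeeping the monoidal compatibility of this natural isomorphism (i.e.\ coherence under associativity of $\star$ in the $\sE$-variable); this is not a new obstacle, but it requires a careful diagram chase, which can be handled either by invoking the uniqueness of the $\ov G_1\times\ov G_2$-equivariant structure on $\FT_{\bL_1}$ or by spelling out the base change isomorphisms on the extended convolution diagram.
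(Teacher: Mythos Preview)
Your proposal is correct and follows essentially the same approach as the paper. For the equivalence you use \Cref{Fourier-transform 2} exactly as the paper does; for the Hecke-equivariance, the paper compresses your argument into a single phrase (``functoriality of Fourier transform with respect to arbitrary base change''), which amounts to the same observation that $\bL_1\to\bL^b$ and $\bL_2\to\bL^b$ are dual $\ov G_1\times\ov G_2$-equivariant bundles, so that $\FT$ commutes with the base-change/pushforward operations defining convolution.
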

\begin{proof}
    By \Cref{Fourier-transform 2}, for $\sF\in \cM_1$, $\sF\in \cM_1$ if and only if   $\FT_{\bL_1}(\sF)\in \cM_2$. Therefore $\FT_{\bL_1}$ restrict to an equivalence between $\cM_1$ and $\cM_2$. The equivariance under $\bcH$ follows from the functoriality of Fourier transform with respect to arbitrary base change. The argument for the mixed and $k$-versions is similar.
\end{proof}

\trivial[h]{
{\color{red} The following proof dose not work. } 
We make some preparations for the proof of \Cref{Fourier-transform 2}.  Taking $G=\barB , E= U_2 \times \bL_1$, $E'=U_2 \times  \bL_2$ and $S=U_2\times  \Hom(L_1,L_2^\vee)$ in \Cref{Fourier transform}, we get the (partial) Fourier transform 
\[
\Four_{\bL_1\times U_2}: D_{m,\overline{B}_1\times \overline{B}_2}^b(\bL_1\times U_2)\longrightarrow D_{m,\overline{B}_1\times \overline{B}_2}^b(\bL_2\times U_2). 
\]
Let 
\begin{equation}\label{Projection map}
   p_i\colon \bL_i\times U_2 /\Gm\times\barB\longrightarrow \bL_i/\Gm\times \barB, \,\,\text{for } i=1,2
\end{equation}
be the natural projection maps. 
\begin{lemma}\label{Fourier pull back}
   For any $\sF\in \DGmBB(\bL_1)$, we have a natural isomorphism 
    \[
\Four_{\bL_1\times U_2} (p_1^*(\sF))\cong p_2^* (\Four_{\bL_1}(\sF)).
    \]
\end{lemma}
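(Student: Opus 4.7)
The plan is to recognize this identity as an instance of the base-change compatibility of the Fourier-Deligne transform for dual pairs of equivariant vector bundles.

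First, I would identify the partial Fourier transform $\Four_{\bL_1\times U_2}$ as a fiberwise Fourier transform along a pullback of the original situation. Viewing $\bL_1$ and $\bL_2$ as mutually dual $\barB_1\times \barB_2$-equivariant vector bundles over $\bL^b$ under the trace pairing (see \Cref{bL1 dual complex} and \Cref{L1L2 decomp}), the projection $\pr\colon U_2\times \bL^b\to \bL^b$ yields cartesian squares
\[
\begin{tikzcd}
\bL_1\times U_2 \arrow[r,"p_1"] \arrow[d] & \bL_1 \arrow[d] \\
U_2\times \bL^b \arrow[r,"\pr"] & \bL^b
\end{tikzcd}
\qquad\text{and}\qquad
\begin{tikzcd}
\bL_2\times U_2 \arrow[r,"p_2"] \arrow[d] & \bL_2 \arrow[d] \\
U_2\times \bL^b \arrow[r,"\pr"] & \bL^b.
\end{tikzcd}
\]
Thus $\bL_1\times U_2$ and $\bL_2\times U_2$ become dual $\barB_1\times \barB_2$-equivariant vector bundles over $U_2\times \bL^b$, with duality given by the trace pairing pulled back along $\pr$, and $\Four_{\bL_1\times U_2}$ is by construction the equivariant Fourier-Deligne transform between them.

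Next, I would invoke the standard base-change identity: for any morphism $g\colon S'\to S$ and vector bundle $E\to S$ with dual $E^\vee\to S$, the pullback $E':= E\times_S S'$ and its dual $(E')^\vee = E^\vee\times_S S'$ are dual vector bundles over $S'$, and the induced maps $\bar g\colon E'\to E$ and $\bar g^\vee\colon (E')^\vee\to E^\vee$ fit into a canonical natural isomorphism
\[
\FT_{E'}\circ \bar g^*\;\cong\;(\bar g^\vee)^*\circ \FT_E,
\]
obtained by applying proper base change and the projection formula to the correspondence defining $\FT$. Specializing to $g=\pr$ and $E=\bL_1$ gives $\bar g=p_1$ and $\bar g^\vee=p_2$, so the displayed isomorphism is precisely the lemma at the non-equivariant level. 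To upgrade to $\Gm\times \barB_1\times \barB_2$-equivariance, I would observe that both cartesian squares and the trace pairing are $\Gm\times \barB_1\times \barB_2$-equivariant ($\Gm$ scaling fibers, and $\barB_2\subset P_2$ acting on $U_2$ through the adjoint action of $P_2$ on its unipotent radical), so the naturality of the base-change isomorphism descends it to the corresponding quotient stacks, yielding the desired isomorphism in $\DGmBB(\bL_2)$.

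The main obstacle is purely notational bookkeeping: one has to pin down how each factor of $\Gm\times \barB_1\times \barB_2$ acts on $U_2$, on the $\bL_i$, and on $\bL^b$, and verify that the trace pairing used to define $\Four_{\bL_1\times U_2}$ agrees, after pullback to $U_2\times \bL^b$, with the one inducing the duality between $\bL_1\times U_2$ and $\bL_2\times U_2$ described above. These checks are tautological once the actions are spelled out, and the lemma then follows from the base-change property of the Fourier-Deligne transform, which is entirely standard.
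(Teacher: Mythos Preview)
Your proposal is correct and follows essentially the same route as the paper: both recognize the statement as an instance of the base-change compatibility of the Fourier--Deligne transform (pulling back the dual pair of bundles $\bL_1,\bL_2$ over $\bL^b$ along $U_2\times \bL^b\to \bL^b$) and reduce to proper base change plus the projection formula. The paper writes out the general diagram-chase for an arbitrary base change $R\to S$ and then specializes, while you invoke the identity as standard; the content is the same.
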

\begin{proof}
  Let $R\to S$ be a morphism of $G$-variety. 
  Consider the map $p : E\times_S R \to R$. 
\[  
 \begin{tikzcd}[ampersand replacement=\&]
 E\times_S R /\Gm \ar[d,"p"] \& \ar[l,"\pr'_1"'] (E\times_S E')\times_S R /\Gm \ar[r,"\pr'_2"] \ar[d,"p"] \& E'\times_S R /\Gm^2 \ar[d,"p'"] \\ 
 E/\Gm \& \ar[l,"\pr_1"']  (E\times_S E')/\Gm^2\ar[r,"\pr_2"] \ar[d,"m"]\& E'/\Gm  \\
 \&  \bA^1/\Gm \& \\
 \end{tikzcd}
\]
  \[
    \begin{split}
      & \Four_{E\times_S R} (p^*(\sF))\\
      & = {\pr'_2}_!(\pr'^*_1 p^* (\sF)\otimes (pm)^* \Psi_R) \\
      & = {\pr'_2}_!(p^* \pr_2^*(\sF)\otimes p^* m^* \Psi_S) \\
      & \text{(Proper base change)} \\
      & = p^* {\pr_2}_!(\pr_2^*(\sF)\otimes m^*\Psi_S) \\
      & = p^* \Four_{E}(\sF). 
    \end{split}
  \]

  Here consider 
  \[
 \begin{tikzcd}[ampersand replacement=\&]
  \Gm / \Gm \times_S R \ar[r,"u'"] \ar[d,"p'"]\& \bA^1\times_S R /\Gm  \ar[d,"p"]
  \& (E\times_S E')\times_S R /\Gm \ar[d,"p"] \ar[l,"m'"]\\
  \Gm / \Gm  \ar[r,"u"] \& \bA^1/\Gm  
  \& E\times_S E'  \Gm  \ar[l,"m"]
  \\ 
 \end{tikzcd} 
  \]

There is a natural morphism 
\[
   p^* u_* \kk  \to u'_* p'^* \kk. 
\]
  
  Now 
  \[
   \Psi_R = u'_* \kk[1](1) =  u'_* p^*  \kk [1](1)
   =  p^* \Psi_S.   
  \]
  and 
  \[
  m'^*\Psi_R = m'^* p^* \Psi_S = p^* m^* \Psi_S
  \]
 Apply the above to $R = U_2\times \Hom(L_1,L_2^\vee)$ and $S = \Hom(L_1,L_2^\vee)$ yields the lemma. 
\end{proof}

\textbf{The universal geometric action}
Let $\sK := k_! (\kk_{U_2\times \bL_1^-})  \in  \DGmBB(U_2 \times \bL_1)$
where $i$ is the map 
\[
\begin{tikzcd}[ampersand replacement=\&,row sep=0em]
  k:  \quot{ U_2\times \bL_1^- }{\barB} \ar[r]\&\quot{ U_2\times \bL_1^-\times \bL_1^+}{\GmBB}\\
    (1+x,T_1^-) \ar[r,maps to] \& (1+x,T_1^-,- x T_1^-).
\end{tikzcd}
\]
Let  $\sF_i\in \DGmBB(U_2 \times \bL_i),i=1,2$, we define the convolution $\sF_1*\sF_2 \in D_{m,\overline{B}_1\times \overline{B}_2}^b(\bL_1\times U_2)$ using \Cref{Equa: convolution}. 

The action map is 
\[
\act : U_2 \times \bL_1\to \bL_1
\]

\begin{lemma}\label{action formula by convolution}
  Let $r = \dim \bL_1^+$.
    For any $\sF\in \DGmBB(\bL_1)$, there is a natural isomorphism 
    \[
 \act^*(\sF)\cong p_2^*(\sF) *_{\bL_1^+} \sK. 
\]
where $\act$ and $p_2$ are the action and projection maps from $U_2 \times \bL_1/\GmBB$ to $\bL_1/\GmBB$.
\end{lemma}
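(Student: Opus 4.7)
My plan is to unwind both sides by a direct base-change plus projection-formula computation, with all intermediate maps expressed in coordinates coming from the polarization. Using $V_2 = L_2\oplus L_2^\vee$, every element of $U_2$ has the form $u(x) = 1+x$ with $x\in \Hom(L_2^\vee,L_2)$, and a direct matrix computation gives $\act(u(x),T^+,T^-) = (T^+ + xT^-,\, T^-)$. Viewing $V := U_2\times \bL_1$ as a rank-$r$ vector bundle over $S := U_2\times \bL_1^-$ with fiber $\bL_1^+$, this exhibits $\act$ as the composition of a fiberwise translation automorphism of $V/S$ by the linear section $\tau: S\to \bL_1^+,\ (u(x),T^-)\mapsto xT^-$, followed by the projection $p_2: V\to \bL_1$ forgetting the $U_2$-factor.

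Next, I would identify $\sK$ in this framework: the map $k$ is precisely the closed embedding of $S$ into $V$ as the graph of $-\tau$, so $\sK = k_!\kk_S$ is the delta sheaf on that graph. The technical core of the argument is then the standard ``convolution with a section-delta acts by translation'' identity: denoting by $\Psi:V\to V$ the fiberwise translation $(u,T^-,T^+)\mapsto (u,T^-,T^+-xT^-)$, one has a canonical isomorphism
\[
\sG *_{\bL_1^+}\sK \;\cong\; \Psi_!\sG
\]
for every $\sG\in \DGmBB(V)$. This will be proved by applying proper base change to express $\pi_2^*\sK$ as the pushforward of $\kk_V$ along the closed embedding $(\id_V,\,k\circ q):V\hookrightarrow V\times_S V$ (where $q:V\to S$ is the bundle projection), and then applying the projection formula to collapse $\add_!(\pi_1^*\sG\otimes \pi_2^*\sK)$ to $(\add\circ (\id_V,k\circ q))_!\sG$, which equals $\Psi_!\sG$ by a direct calculation.

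Finally, I would apply this identity to $\sG = p_2^*\sF$. Since $\Psi$ is an automorphism over $S$, we have $\Psi_! = (\Psi^{-1})^*$, so $\Psi_! p_2^*\sF = (p_2\circ \Psi^{-1})^*\sF$. A direct check shows $p_2\circ \Psi^{-1}: (u,T^-,T^+)\mapsto (T^-,T^++xT^-)=\act(u,T^-,T^+)$, yielding the desired $\act^*\sF\cong p_2^*\sF *_{\bL_1^+}\sK$. All maps in sight are $\Gm\times \barB$-equivariant since they are built from the $\barB$-equivariant polarization and the commutator pairing $x\otimes T^-\mapsto xT^-$, so no extra work is needed to upgrade the isomorphism to the equivariant setting.

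The main obstacle I expect is the bookkeeping of Tate twists and signs. The shift ``$r = \dim \bL_1^+$'' flagged in the statement suggests that either $\sK$ or the convolution $*_{\bL_1^+}$ should carry an implicit twist $\langle r\rangle$, and the naive calculation above produces the identity on the nose only under one specific normalization. Pinning down this normalization so that the sign choice $-xT^-$ in the definition of $k$ matches the sign in $\act$ and the convention in $\Psi$ versus $\Psi^{-1}$, and that the convolution bi-functor $*_{\bL_1^+}$ is defined with the shift that makes the proper pushforward along the isomorphism $\Psi$ preserve the heart—rather than contribute an extra $\langle r\rangle$—is the delicate step; this is presumably where the author's crossed-out attempt runs aground.
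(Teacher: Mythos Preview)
Your proposal is correct and essentially identical to the paper's proof: both use proper base change (to rewrite $\pr_2^*\sK$ as the pushforward of $\kk$ along the graph embedding of $-\tau$) followed by the projection formula to collapse $p_2^*(\sF) *_{\bL_1^+} \sK$ to $\Psi_! p_2^*\sF$ for the fiberwise translation $\Psi$, and then verify $\act = p_2\circ \Psi^{-1}$. Your concern about the shift is unfounded: in the paper's normalization the convolution $*_{\bL_1^+}$ carries no shift and $\sK = k_!\kk$ is unshifted, so the isomorphism holds on the nose and the ``$r = \dim \bL_1^+$'' in the statement is vestigial.
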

\begin{proof}
  Let $K$ be the image of $k$. 
Consider 
\[
 \begin{tikzcd}[ampersand replacement=\&]
 \quot{U_2\times \bL_1}{\GmBB}\ar[r,equal,"{m'=\id\times k}"]  \&  \ar[dl,"\add'"'] (\quot{U_2 \times \bL_1}{\GmBB})\times_{\quot{U_2\times \bL_1^-}{\barB}} \quot{K}{\GmBB} \ar[r,"\pr'_2"] \ar[d,"i'"]\&  \quot{K}{\GmBB}  \ar[d,"i"] \\
\quot{U_2\times \bL_1}{\GmBB} \ar[d,"\act"] \& \ar[l,"\add"'] (\quot{U_2 \times \bL_1}{\GmBB})\times_{\quot{U_2\times \bL_1^-}{\barB}} (\quot{U_2\times  \bL_1}{\GmBB}) \ar[r,"\pr_2"] \ar[dr,"\pr_1"]\ar[d,"p_2\circ \pr_1"] \& \quot{U_2\times\bL_1}{\GmBB}  \\
\quot{\bL_1}{\GmBB}\ar[r,equal]  \& \quot{\bL_1}{\GmBB} \&  \quot{U_2 \times \bL_1 } {\GmBB}\ar[l,"p_2"] \\
 \end{tikzcd} 
\]
Recall that $\act(u,T^+,T^-) = (T^++(u-1)T^-,T^-)$.
Here $m' : (u,T^+,T^-)\mapsto (u,T^+,T^-, (1-u)T^-)$ and $\act \add'(u,T^+,T^-,(1-u)T^-) =  \act(u,T^+ +(1-u)T^-,T^-) = (T^+,T^-)$

Then 
\[
\begin{split}
   & \add_!(\pr_1^*(p_2^*\sF) \otimes \pr_2^* \sK) \\
   & = \add_!(\pr_1^*(p_2^*\sF) \otimes \pr_2^* i_!\Ql)\\
   & \text{(Proper base change)} \\
   & = \add_!(\pr_1^*(p_2^*\sF) \otimes  i'_! \pr'^*_2 \kk)\\
   & \text{(Projection formula)} \\
   & = \add_!(i'_!(i'^* \pr_1^* p_2^*\sF \otimes \pr'^*_2\kk)) \\
   & = \add'_!(p_2\circ \pr_1\circ i')^* \sF ) \\
   & \text{(Proper base change)} \\
   & = act^* \sF \\
\end{split}
\]

\end{proof}

\textbf{The universal character action}
The restriction of the moment map $\mu_2: \VV\rightarrow  \mathfrak g_2^*$ to $\bL_2\subseteq \VV$ has image in $\Herm (L_2,L_2^\vee)$ (\zjl{Maybe change this to symmtric and wedge}). Let 
\begin{equation*}
    \begin{split}
    Q:  \quot{U_2 \times \bL_2}{\GmBB} & \longrightarrow \quot{\bA^1}{\GmBB}\\
        (1+x, T) & \mapsto  -\half \tr(\mu_2 (T) x). 
    \end{split}
\end{equation*}
where the action of $\barB_1\times\barB_2$ on $\bA_1$ is trivial and $\Gm$ action on $U_2$ is trivial.  We define 
\[
\sK' \coloneqq Q^*(\Psi)  \in \DGmBB (U_2\times\bL_2).
\]
\begin{prop}
\label{Geo to character universal}
    There is a natural isomorphism 
    \[
    \Four_{U_2 \times \bL_1} (\sK) \cong \sK'.
    \]
\end{prop}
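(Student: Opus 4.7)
First I would recognize that $\sK = k_!\kk$ is the constant sheaf on the graph of the $(U_2\times \bL_1^-)$-linear map $\phi\colon U_2 \times \bL_1^- \to \bL_1^+$, $(1+x,T_1^-)\mapsto -xT_1^-$, viewed as a section of the vector bundle $U_2 \times \bL_1 \to U_2 \times \bL_1^-$ with fiber $\bL_1^+ = \ov\bL_1$. The overall plan is then to apply the standard Fourier transform formula for constant sheaves on graphs of linear sections, followed by a coordinate calculation of the moment map $\mu_2$ to identify the resulting Artin--Schreier pullback with $\sK'$. Since the Fourier transform is carried out along the fiber $\bL_1^+$, and its fiberwise dual is canonically identified with $\ov\bL_2$ via the trace pairing \eqref{bL1 dual complex}, the target of $\Four_{U_2\times \bL_1}(\sK)$ is $U_2 \times \ov\bL_2 \times \bL_1^- = U_2 \times \bL_2$, matching that of $\sK'$.

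Second I would invoke the standard fact that the Fourier transform of a constant sheaf on the graph of a morphism of vector bundles is the pullback of the Artin--Schreier sheaf along the associated dual pairing. Writing $k = \tau_\phi\circ \iota_0$ as the composition of the zero section with the fiberwise translation by $\phi$, and combining the elementary identity $\Four_{U_2\times \bL_1}(\iota_{0!}\kk)\cong \kk\langle r\rangle$ (with $r = \dim \bL_1^+$) with the translation-versus-character duality of Fourier--Deligne, I would obtain a canonical isomorphism
\[
\Four_{U_2 \times \bL_1}(\sK)\;\cong\; p_\phi^*\AS_\psi\langle r\rangle,
\]
where $p_\phi\colon U_2 \times \bL_2 \to \bA^1$ is the composition of the graph with the tautological pairing, explicitly $p_\phi(1+x, T_2^+, T_2^-) = -\tr(xT_1^-T_2^+)$, with $T_1^-\in \Hom(L_1, L_2^\vee)$ corresponding to $T_2^-\in \Hom(L_2, L_1^\vee)$ via the transposition identification of the two descriptions of $\bL^b$. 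Any necessary shifts and twists are absorbed into the normalization of $\Psi$ in $\sK' = Q^*\Psi$.

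Third, to identify $p_\phi$ with $Q$, I would carry out a direct coordinate calculation of $\mu_2$. Unwinding the defining formula $\mu_2(T)(X_2) = \tfrac{1}{2}\langle X_2 T, T\rangle_\VV$ for $X_2 = u(B) \in \Lie(U_2)$ and $T = (T_2^+, T_2^-) \in \bL_2$ in coordinates induced by the polarizations $V_1 = L_1 \oplus L_1^\vee$ and $V_2 = L_2 \oplus L_2^\vee$, and using the identification $\fgg_2^* \cong \fgg_2$ via the trace form together with the symmetry of $B$, one expresses $\tr(\mu_2(T)\,x)$ as an explicit combination of matrix products of $T_2^+$, $T_2^-$, and $x$, to be matched against the formula for $p_\phi$.

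The main obstacle is this last step: careful bookkeeping of the identifications $\bL_2 \hookrightarrow \VV$, the symplectic form on $\VV$, the trace pairing between $\ov\bL_1$ and $\ov\bL_2$, the transposition $\bL^b = \Hom(L_1,L_2^\vee)\cong \Hom(L_2,L_1^\vee)$, and the symmetry condition on $B$ defining $\Lie(U_2)$, is required to align all combinatorial factors. As the scratch note ``the following proof does not work'' already signals, a naive computation produces a factor-of-$2$ discrepancy between $p_\phi$ and $Q$; reconciling this almost certainly requires either adjusting the factor $\tfrac12$ in the definition of $Q$ (equivalently rescaling the additive character $\psi$ used on one side) or refining the normalization of $\mu_2$. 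Once these conventions are fixed, steps one and two are formal consequences of the standard properties of the equivariant Fourier--Deligne transform applied in the relative setting over $U_2 \times \bL_1^-$.
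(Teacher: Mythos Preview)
Your approach coincides with the paper's: both unwind the Fourier transform via proper base change and the projection formula (your ``standard fact'' for Fourier transforms of graph sheaves is exactly this sequence of moves, carried out explicitly in the paper with the pulled-back section $k'$ and the observation $\pr_2\circ k'=\id$), reducing the statement to the coordinate identity $m\circ k' = Q$, that is, $-\tr(xT_1^-T_2^+) = -\tfrac{1}{2}\tr(\mu_2(T)x)$. Neither proof carries out this last verification, and your caution about the factor of two is on point---this unchecked step is precisely why the passage is hidden behind a ``the following proof does not work'' flag.
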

\begin{proof}
Consider the following commutative diagram   
\[
\begin{tikzcd}[ampersand replacement=\&]
  \& \quot{\bA^1}{\GmBB} \ar[rr,equal,"\id"] \& \& \quot{\bA^1 }{\GmBB}\\
\quot{U_2 \times \bL_1^- \times \bL_2^+}{\GmBB} \ar[r,"k'"] \ar[d,"\pr'_1"] \ar[ru,"m\circ i'"]\& \quot{U_2 \times \bL_1^+ \times \bL_1^- \times \bL_2^+}{\Gm\times \GmBB} \ar[d,"\pr_1"] \ar[r,"\pr_2"] \ar[u,"m"] \& \quot{U_2\times \bL_1^-\times \bL_2^+}{\GmBB}  \ar[ru,"Q"]\\
\quot{U_2 \times \bL_1^-}{\barB} \ar[r,"k"] \& \quot{U_2 \times \bL_1^+ \times \bL_1^-}{\GmBB}\\
\end{tikzcd}
\]
The lower square and top parallelogram are Cartesian.
\trivial[]{
$m \circ i' (1+x,T_1^+,T_2^+) = (1+x,-xT_1^-,T_1^-,T_2^+) = \tr(-x T_1^- T_2^+) = Q(x, ({T_1^-}^*,T_2^+))$

$T_1^- \in \Hom(L_1,L_2^\vee) = \Hom(L_2^\vee,L_1^\vee)$

}
Here $i \colon (1+x,T_1^-)\mapsto (u,-xT_1^-,T_1^-)$. 
We have 
\[
\begin{split}
  & \Four_{U_2\times \bL_1}(\sK) \\ 
  & = {\pr_2}_!(\pr_1^*(k_!\kk)\otimes (m)^*(\Psi)) \\
  & \text{(Proper base change)} \\
  & = {\pr_2}_!(k'_!(\pr'^*_1\Ql)\otimes (m)^*(\Psi)) \\
  & \text{(Projection formula)} \\
  & = {\pr_2}_!(k'_!(\kk)\otimes (m)^*(\Psi)) \\
  & = {\pr_2}_!(k'_!((k'\circ m)^*(\Psi)) \\
  & \text{(Proper base change)} \\
  & = Q^*(\id_!(\Psi)) = Q^*(\Psi) = \sK' \\
\end{split}
\]
\end{proof}

\begin{cor}\label{Geo to character universal II}
    For any $\sF\in D_{m,\overline{B}_1\times \overline{B}_2 }^b(\bL_1)$, we have natural isomorphisms 
    \[
    \Four_{U_2\times \bL_1}( p_1^*(\sF) * \sK)  \cong \Four_{U_2\times \bL_1}( p_1^*(\sF))\otimes \sK' [-r] \cong p_2^*(\Four_{\bL_1}(\sF))\otimes \sK'[-r]. 
    \]
\end{cor}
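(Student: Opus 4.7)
The plan is to derive both isomorphisms by combining two inputs already at hand—\Cref{Fourier pull back} and \Cref{Geo to character universal}—with the general compatibility between the Fourier-Deligne transform and relative convolution.

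First, I would invoke (or quickly establish) the following general principle: if $E\to S$ is a $G$-equivariant vector bundle of rank $r$ with dual $E'\to S$, and $\sG_1,\sG_2\in D^b_G(E)$, then fibered convolution $*_S$ on $E$ is converted by Fourier transform into tensor product on $E'$, up to a shift,
\[
\Four_E(\sG_1*_S\sG_2)\cong \Four_E(\sG_1)\otimes \Four_E(\sG_2)[-r].
\]
This is a formal consequence of proper base change and the projection formula, applied to the Cartesian square comparing the addition map $\mathrm{add}:E\times_S E\to E$ that defines $*_S$ with the pairing map $m:E\times_S E'\to \mathbb A^1$ used to define $\Four_E$; it is completely parallel in spirit to the calculations already carried out in the proofs of \Cref{Fourier pull back} and \Cref{Geo to character universal}.

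Applying this principle in our setting—where $E=U_2\times\bL_1$ is a rank $r=\dim\bL_1^+$ vector bundle over $S=U_2\times\bL_1^-$, the relative convolution is the one appearing in \Cref{action formula by convolution}, and $\Four_E=\Four_{U_2\times\bL_1}$—with $\sG_1=p_1^*(\sF)$ and $\sG_2=\sK$ yields
\[
\Four_{U_2\times\bL_1}(p_1^*(\sF)*\sK)\cong \Four_{U_2\times\bL_1}(p_1^*(\sF))\otimes \Four_{U_2\times\bL_1}(\sK)[-r].
\]
Substituting $\Four_{U_2\times\bL_1}(\sK)\cong\sK'$ from \Cref{Geo to character universal} then produces the first claimed isomorphism. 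The second isomorphism is immediate from \Cref{Fourier pull back}, which gives $\Four_{U_2\times\bL_1}(p_1^*(\sF))\cong p_2^*(\Four_{\bL_1}(\sF))$; tensoring both sides by $\sK'[-r]$ concludes.

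The main obstacle is essentially bookkeeping: one has to identify the correct base $S=U_2\times\bL_1^-$ over which the convolution $*$ of \Cref{action formula by convolution} is fibered, verify that this agrees with the base over which $U_2\times\bL_1$ and $U_2\times\bL_2$ are realized as dual vector bundles for $\Four_{U_2\times\bL_1}$, and check that cohomological shifts and Tate twists combine to exactly $[-r]$. No new geometric input is needed beyond functoriality of the Fourier-Deligne transform.
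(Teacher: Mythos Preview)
Your proposal is correct and follows essentially the same approach as the paper's proof, which simply cites \Cref{Convolution vs tensor}, \Cref{Fourier pull back}, and \Cref{Geo to character universal}. The convolution-to-tensor principle you sketch is exactly \Cref{Convolution vs tensor}, so the three ingredients and their combination match the paper precisely.
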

\begin{proof}
    This follows from \Cref{Convolution vs tensor},\Cref{Fourier pull back} and \Cref{Geo to character universal}. 
\end{proof}
\begin{proof}[Proof for \Cref{Fourier-transform 2}]
    The proof of (1) and (2) are the same and we give the proof of (1) here.
    Let $r = \dim \Hom(L_1,L_2)$. 
    Let $\sF\in D_{m,\overline{B}_1\times \overline{B}_2 }^b(\bL_1)$ and $$\sF'= \Four_{\bL_1} (\sF).$$ Assume that $\sF\in D_{m,\overline{B}_1\times B_2 }^b(\bL_1)$.  
    Then by  \Cref{action formula by convolution}, we know that there is an isomorphism 
    \begin{equation}\label{Fourier-transform equation 1}
     p_2^*(\sF)\cong \act^*(\sF)\cong  p_2^*(\sF)* \sK.   
    \end{equation}
    Applying the functor $ \Four_{\bL_1\times U_2}$ to \Cref{Fourier-transform equation 1} and by \Cref{Fourier pull back} , \Cref{Geo to character universal II},  we deduce that  
    \begin{equation}\label{Fourier-transform equation 2}
          p_2^*( \sF') \otimes \sK' [-r]\cong  p_2^*( \sF').  
    \end{equation}

    Take $T \in \bL_2$. 
 Consider the following diagram 
 \[
 \begin{tikzcd}[ampersand replacement=\&]
 \quot{U_2}{\Gm\times \barB_1\times \barB_2 }\arrow[r,"i_T"] \& 
 \quot{U_2\times \bL_2}{\Gm\times \barB_1\times \barB_2}   \arrow[r,"p_2"]    \&  \quot{\bL_2}{\Gm \times \barB_1\times \barB_2}
 \end{tikzcd}
 \]
 where $i_T$ is the map $u\mapsto (u, T)$. 

 \begin{claim}
  Let $q : U_2 \rightarrow U_2/\GmBB$. 
  If $T\in \NLtwo$, then 
  $q^* i_T^*\sK'$ is the constant sheaf $\kk[-1]$, otherwise
  $q^* i_T^*\sK'$ is not isomorphic the constant. 
 \end{claim}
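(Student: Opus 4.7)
The plan is to reduce everything to the pullback of the Artin--Schreier sheaf by a single linear function on $U_2$. By definition $\sK' = Q^*\Psi$, where $Q(1+x, T) = -\tfrac{1}{2}\tr(\mu_2(T)\,x)$. Hence the composite map to $\bA^1$ appearing in $q^*i_T^*\sK' = q^*i_T^*Q^*\Psi$ is the single linear function
\[
\ell_T \colon U_2 \cong \Herm(L_2^\vee, L_2) \longrightarrow \bA^1,\qquad x \longmapsto -\tfrac{1}{2}\tr\bigl(\mu_2(T)\,x\bigr),
\]
so $q^*i_T^*\sK'$ is $\ell_T^*\widetilde{\Psi}$, where $\widetilde{\Psi}$ is the pullback of $\Psi$ from $[\bA^1/\GmBB]$ to $\bA^1$ (essentially $\AS_\psi$ up to the normalizing shift and twist fixed in the definition of $\Psi$).

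Next I would detect when $\ell_T$ vanishes identically. Using the polarization $V_2 = L_2 \oplus L_2^\vee$, one computes $\mu_2(T) = \tfrac{1}{2}\,T^* T \in \Herm(L_2, L_2^\vee) \cong \mathfrak{u}_2^*$, where $T^*\colon V_1 \to L_2^\vee$ is the adjoint of $T$. Since the trace pairing $\Herm(L_2, L_2^\vee) \times \Herm(L_2^\vee, L_2) \to \bA^1$ is perfect, the linear form $\ell_T$ vanishes identically if and only if $\mu_2(T) = 0$. In turn, $\mu_2(T) = 0$ if and only if $\langle T v, T v' \rangle_{V_1} = 0$ for all $v, v' \in L_2$, i.e.\ $T(L_2)$ is isotropic in $V_1$, which is precisely the defining condition $T \in \NLtwo$.

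To finish I would split into two cases. If $T \in \NLtwo$, then $\ell_T \equiv 0$, so $\ell_T^* \AS_\psi$ is the pullback of the stalk $(\AS_\psi)_0 \cong \kk$ along the constant map, i.e.\ the constant sheaf $\kk$ on $U_2$; once the normalizing shift/twist absorbed in $\Psi$ is undone, this gives exactly $\kk[-1]$ as claimed. If $T \notin \NLtwo$, then $\ell_T$ is a non-zero linear form on the affine space $U_2$ and is therefore surjective onto $\bA^1$. Restricting to any affine line $\iota\colon \bA^1 \hookrightarrow U_2$ transverse to $\ker \ell_T$, the composition $\ell_T \circ \iota$ is an affine map of degree one, hence $(\ell_T \circ \iota)^*\AS_\psi \cong \AS_\psi$ up to translation, which is a nontrivial rank-one local system on $\bA^1$. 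Thus the restriction of $q^* i_T^* \sK'$ to this line is not constant, and so the sheaf itself is not isomorphic to the constant sheaf on $U_2$.

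The main obstacle is purely bookkeeping: matching the cohomological shift and Tate twist implicit in the symbol $\Psi$ on the stack $[\bA^1/\GmBB]$ against the plain Artin--Schreier sheaf on $\bA^1$, so that the constant sheaf in the first case lands with the exact normalization $\kk[-1]$. The conceptual content---non-degeneracy of the trace pairing between $\mathfrak{u}_2$ and its dual identifying $\mu_2(T) = 0$ with $T \in \NLtwo$, together with the standard fact that $\AS_\psi$ pulled back by a non-zero linear functional is non-constant---is routine.
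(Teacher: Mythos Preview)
Your approach is essentially the same as the paper's: both reduce $q^*i_T^*\sK'$ to the pullback of $\Psi$ along the single linear functional $\ell_T(x)=-\tfrac12\tr(\mu_2(T)x)$ on $U_2$, and use that $\ell_T\equiv 0$ iff $\mu_2(T)=0$ iff $T(L_2)$ is isotropic, i.e.\ $T\in\NLtwo$. In the non-vanishing case the paper argues directly from surjectivity of $\ell_T$ (pulling back a non-constant sheaf along a smooth surjection stays non-constant), whereas you restrict to a transverse line; both work.

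The one place where your write-up is genuinely looser than the paper's is the identification of $\Psi$. In this section the paper is using the Fourier--Laumon kernel $\Psi=u_*\kk$ on $[\bA^1/\GmBB]$, with $u$ the open inclusion of the nonzero orbit, \emph{not} the Artin--Schreier local system. These are not related by a mere shift and twist: pulled back to $\bA^1$, the Laumon kernel becomes $j_*\kk_{\Gm}$, which is not a rank-one local system. So the phrase ``essentially $\AS_\psi$ up to the normalizing shift and twist'' is not accurate here. The paper obtains the value $\kk[-1]$ by first identifying $i_0^*\Psi$ with $q_{0!}\kk[1](1)$ for the $\Gm$-torsor $q_0\colon [0/\BB]\to[0/\GmBB]$, and then computing $q^*q_!\kk[1](1)$ via base change. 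Your ``bookkeeping'' remark correctly flags this as the delicate point, but it is a genuine stack-theoretic computation rather than a normalization to be absorbed.
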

 \begin{proof}
 Suppose $T\in \NLtwo$. 
 Consider the following commutative diagram 
 \[
 \begin{tikzcd}[ampersand replacement=\&]
 U_2 /\BB \ar[r,"q"] \ar[d,"a'"] \& U_2/\GmBB \arrow[r,"i_T"] \ar[d, "a"]\& 
  U_2\times \bL_2/\GmBB   \arrow[d,"Q"] \\   
 0/\BB\ar[r,"q_0"] \&  0/\GmBB \ar[r,"i_{0}"] \&  \bA^1/\GmBB\\
 \end{tikzcd}
 \]
 So we conclude that $q^*i_T^* \sK' = q^* i_T^* Q^* \Psi = q^*a^* i_0^* \Psi = q^* a^* {q_0}_! \kk[1](1) = q^* q_!\kk[1](1) = \kk[-1]$.

 Suppose $T\notin \NLtwo$. 
 Consider the following commutative diagram 
 \[
 \begin{tikzcd}[ampersand replacement=\&]
 U_2 \ar[r,"q"] \ar[d,"a'"] \&
  U_2/\GmBB \arrow[r,"i_T"] \ar[d, "a"]\& 
  U_2\times \bL_2/\GmBB   \arrow[d,"Q"] \\   
 \bA^1 \ar[r,"q"] \&  
 \bA^1/\GmBB \ar[r,"i_{0}"] \&  \bA^1/\GmBB\\
 \end{tikzcd}
 \]
 Here $a$ is induced by the map $a' : (1+x)\mapsto \tr(x\mu(T))$ which is an epimorphism. 
 Now $q^* i_T^* \sK' = q^* i_T^* Q^* \Psi = a'^* q^* i_0^*\Psi$.
  Since $\Psi$ is not isomorphic to the constant sheaf, 
  $q^* i_T^* \sK'$ is not isomorphic to the constant sheaf. 
 \end{proof}

  Assume that $x\in \mathrm{Supp} (\sF')$, i.e., $\sF'_{x}\neq 0 \in D^b(\Spec(\overline F))$.

  Suppose $T\notin \cN_{\bL_2}$. 
    By \eqref{Fourier-transform equation 2}, we have 
    \[
     i_T^*p^*_2(\sF') \cong i^*_{T} (p_2^*(\sF') \otimes i^*_{T}\sK' 
    \] 
    If $i_T^* p^*_2 \sF'$ is non-zero, then it is the constant sheaf. 
    This implies $i^*_T\sK'$ is isomorphic to the constant sheaf $\kk$, and 
    leads to a contradiction. 

    This proves the ``only if'' part.

    \medskip

    Now assume that $\sF'$ supported on $\NLtwo$. 
    Consider the following diagram 
 \[
 \begin{tikzcd}[ampersand replacement=\&]
 \quot{U_2\times\NLtwo }{\GmBB} \ar[r,"p'_2"] \ar[d,"i'"] \&
  \quot{\NLtwo}{\GmBB}  \ar[d,"i"]\\   
 \quot{U_2\times \bL_2 }{\GmBB} \ar[r,"p_2"] \&  
 \quot{\bL_2}{\GmBB}  \\
 \end{tikzcd}
 \]
 By definition, there is $\sG\in \DGmBB(\NLtwo)$ such that 
 $\sF' = i_! \sG$. 

Note that $i'^*\sK' \cong \kk$. Now 
 \[
 \begin{split}
  p_2^* \sF' &= p_2^* (i_!\sG) \cong i'_! p'^*_2(\sG) \cong i'_!(p'^*_2(\sG)\otimes \kk) \\
  &\cong  i'_!(p'^*_2(\sG)\otimes i'^*\sK') \cong i'_!(p'^*_2(\sG)) \otimes \sK'= p_2^* \sF'\otimes \sK' 
 \end{split}
 \]
By \Cref{Geo to character universal II}, $\act^* \sF \cong \sF * \sK \cong p_2^*(\sF)$, which implies that $\sF$ is $U_2$-equivariant. This finished the proof of the ``if'' part.  

\end{proof}

Let $\barG:= \GL(L_1)\times \GL(L_2)$ with the fixed Borel $\barB := \barB_1\times \barB_2$. 
Let $\bcH$ be the Hecke category for $\barG$ and then $\barIC_s$ for $s\in \barW$ generats $\bcH$.  
Let $\ICone_s$ and $\ICtwo_s$  be the corresponding objects in $\bcH_1\otimes \cH_2$ and $\cH_1\otimes \bcH_2$ respectively.  

The following lemma is easy.  
\begin{lemma}
The map $\barIC_s \mapsto \ICone_s$ induces an embedding $\iota : \bcH \to \bcH_1\otimes \cH_2$.  
Moreover, 
\[
\sT \star \sF  =  \iota(\sT) \star \sF
\] 
for all $\sT\in \bcH$ and $\sF \in D_{m,\barB_1\times B_2}^b(\bL_1)$.  Similar statement holds for $\bL_2$. 
\end{lemma}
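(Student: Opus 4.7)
The plan is to construct the embedding $\iota$ in two pieces and then verify the convolution identity by an orbit-by-orbit base-change computation. Since $\bcH = \bcH_1 \otimes \bcH_2$, I will set $\iota = \mathrm{id}_{\bcH_1} \otimes \iota_2$, where $\iota_2 \colon \bcH_2 \hookrightarrow \cH_2$ is a fully faithful monoidal functor to be constructed. The key input is the Siegel parabolic $P_2 \subset G_2$ stabilizing $L_2$, whose Levi quotient is $\ov G_2 = \GL(L_2)$. The closed immersion $j \colon P_2/B_2 \hookrightarrow G_2/B_2$ identifies $P_2/B_2$ with the fiber over the base point of $G_2/P_2$. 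Because $U_2 \subset B_2$ and $U_2$ acts trivially on $P_2/B_2$ (via the identification $P_2/B_2 \cong \ov G_2/\ov B_2$ induced by $P_2 \twoheadrightarrow \ov G_2$), there is a natural equivalence $D^b_{B_2}(P_2/B_2) \cong D^b_{\ov B_2}(\ov G_2/\ov B_2) = \bcH_2$. I will define $\iota_2$ as the composition of the inverse of this equivalence with $j_!$. Full faithfulness follows from $j_!$ being fully faithful along a closed immersion; monoidality reduces to the identity $P_2 \cdot B_2 = P_2$, which forces the convolution of two objects supported on $P_2/B_2$ to land again on $P_2/B_2$. On simple reflections, $\iota_2(\barIC_s) = \IC_s$ for $s \in \ov \sfS_2$, because the Schubert cell in $\ov G_2/\ov B_2$ indexed by $s$ matches the corresponding cell in $G_2/B_2$ set-theoretically and as a smooth subvariety.

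Next, I will verify the convolution compatibility $\sT \star \sF = \iota(\sT) \star \sF$ for all $\sT \in \bcH$ and $\sF \in \cM_1$. By monoidality it suffices to check this for $\sT = \barIC_s$ with $s$ running over simple reflections in $\ov \sfS_1 \sqcup \ov \sfS_2$. The case $s \in \ov \sfS_1$ is immediate, as both sides are computed from the same $\ov G_1$-action on $\cN_{\bL_1}$ while the $\cH_2$-factor remains the monoidal unit. For $s \in \ov \sfS_2$, the object $\iota(\barIC_s)$ is supported on $P_2/B_2$, so by base change the $\cH_2$-convolution $\iota(\barIC_s) \star \sF$ is computed as a pushforward along the action map $a \colon P_2 \times^{B_2} \cN_{\bL_1} \to \cN_{\bL_1}$. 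The $U_2$-equivariance of $\sF$, combined with the triviality of the $U_2$-action on $P_2/B_2$, will allow the descent of this computation along $P_2 \twoheadrightarrow \ov G_2$: writing $B_2 = U_2 \rtimes \ov B_2$ and applying equivariant base change, one identifies the convolution stack with $\ov G_2 \times^{\ov B_2} \cN_{\bL_1}$ equipped with the action map induced by the Levi embedding $\ov G_2 \hookrightarrow G_2$. This is precisely the diagram that defines the $\bcH_2$-convolution $\barIC_s \star \sF$.

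The analogous statement for $\bL_2$ will follow from the identical argument with the roles of $V_1$ and $V_2$ swapped, using the Siegel parabolic $P_1 \subset G_1$ in place of $P_2$. The only mild complication is that $G_1 = \O(V_1)$ is disconnected; however, $P_1 \subset G_1^\circ$, so the embedding $\bcH_1 \hookrightarrow \cH_1$ factors through the Hecke category $\cH_1^\circ$ for the neutral component and extends canonically under the decomposition $\cH_1 = \cH_1^\circ \oplus \cH_1^\bullet$ recalled in \Cref{ss:moment cone L2}. The remainder of the proof is unchanged.

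The main obstacle will be the descent step in the second paragraph. Although $U_2$ does not act trivially on the whole of $\cN_{\bL_1}$, the $U_2$-equivariance of $\sF$ together with the triviality of the $U_2$-action on $P_2/B_2$ should force the convolution stack $[B_2 \backslash (P_2 \times \cN_{\bL_1})/B_2]$ to admit a natural identification with $[\ov B_2 \backslash (\ov G_2 \times \cN_{\bL_1})/\ov B_2]$ compatible with the action maps. Making this rigorous at the level of the equivariant derived category, so that the base-change and projection formulas intertwine the $\cH_2$- and $\bcH_2$-convolutions, is the principal technical hurdle in the argument.
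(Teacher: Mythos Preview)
Your proposal is correct, and the core mechanism matches the paper's: both reduce to the group-theoretic fact that the relevant parabolic on the $G_2$-side decomposes as (Levi part) $\ltimes\, U_2$, with $U_2$ contained in $B_2$, so that the two convolution stacks coincide. The differences are only in packaging.

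For the embedding, the paper simply declares it ``standard'' without elaboration; your explicit construction via $j_!$ along $P_2/B_2 \hookrightarrow G_2/B_2$ is correct and more informative. (One small imprecision: monoidality uses $P_2\cdot P_2 = P_2$, not $P_2\cdot B_2 = P_2$.) For the convolution identity, the paper works at the level of the \emph{minimal} parabolic rather than the Siegel one: for a simple reflection $s$ it takes $P\subset \barG_1\times\barG_2$ and $\wtP\subset \barG_1\times G_2$ and observes that the map
\[
P \times^{\barB} \bL_1 \longrightarrow \wtP \times^{\barB_1\times B_2} \bL_1
\]
is an isomorphism commuting with the action maps, then applies proper base change. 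This isomorphism is the descent step you flag as the ``principal technical hurdle,'' and it is in fact elementary: any class $[pu,x]$ with $p\in P$, $u\in U_2$ equals $[p,ux]$ since $u\in B_2$, and the representative is unique because $P\cap U_2=1$. No $U_2$-equivariance of $\sF$ is needed here---the identification is of varieties, not of sheaves. Your Siegel-level version ($P_2\times^{B_2}\cN_{\bL_1}\cong \ov G_2\times^{\ov B_2}\cN_{\bL_1}$) is the same computation one parabolic up; since $\IC_s$ is already supported on the minimal $P_s/B_2\cong\bP^1$, the paper's choice is marginally tidier but not essentially different.
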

\begin{proof}
  The first claim is standard. 
  It suffices to consider the case when $\sT = \barIC_s$.
  Let $P$ and $\wtP$ be the parablic subgroup in $\sfL$ and $\GL(L_1)\times G_2$ corresponding to $s$ respectively.  

  Consider the diagram.   
\[
 \begin{tikzcd}[ampersand replacement=\&,row sep=2em]
  P \times \bL_1 \ar[r,"\wtiota"] \ar[d,"q"]\& \wtP \times \bL_1 \ar[r,"\pr_{\bL_1}"]\ar[d,"\wtqq"] \& \bL_1\\
  P \times^{\barB} \bL_1 \ar[r,"\iota"] \ar[d,"a"]\& \wtP \times^{\barB_1\times B_2} \bL_1 \ar[d,"\wtaa"] \& \\
 \bL_1 \ar[r,equal,"\id"] \& \bL_1 
 \end{tikzcd}
\]

Let $\tsG \in D_{m}^{b}(P \times^{\barB} \bL_1)$ such that $\wtqq^* \tsG = \pr_{\bL_1} \sF$. 
Now 
\[
\begin{split}
\IC_s \star \sF &=  \wtaa_! \tsG = \id^*\wtaa_!\tsG \\
& = a_! \iota^* \tsG.
\end{split}
\]
Here $q^*\iota^*\tsG = \wtiota^* \wtqq^* \tsG = \wtiota^* \pr_{\bL_1}^* \sF$. 
Hence $a_!\tsG = \barIC_s \star \sF$ and the lemma is proved. 
\end{proof}

\begin{thm}
  For any $\sS\in \DbarBB(\bL_1)$ and $\sT \in \bcH$, then 
  \[
  \Four_{\bL_1}(\sT\star \sS) = \sT \star \Four_{\bL_1}(\sS)
  \] 
\end{thm}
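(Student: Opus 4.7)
The strategy is to reduce the statement to generators of $\bcH$ and then invoke the compatibility of the equivariant Fourier-Deligne transform with base change.

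First, we reduce to the case $\sT=\barIC_s$ for a simple reflection $s\in\barW$. The full subcategory of objects $\sT\in\bcH$ for which the identity $\FT_{\bL_1}(\sT\star\sS)\cong\sT\star\FT_{\bL_1}(\sS)$ holds functorially in $\sS$ is closed under shifts, extensions, direct summands, and convolution (the latter because $(\sT_1\star\sT_2)\star\sS\cong\sT_1\star(\sT_2\star\sS)$ and similarly on the right). Since $\bcH$ is generated under these operations by $\{\barIC_s\mid s\in\barW\text{ simple}\}$, it suffices to verify the claim for $\sT=\barIC_s$. Via the embedding $\iota:\bcH\hookrightarrow\bcH_1\otimes\cH_2$ of the preceding lemma, we obtain the explicit formula $\barIC_s\star\sS=a_!\iota^*\tsG$ where $\wtqq^*\tsG=\pr_{\bL_1}^*\sS$, involving a parabolic $P\subset \barG$ and the associated action/projection diagram.

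Second, I would argue that the Fourier-Deligne kernel used to define $\FT_{\bL_1}$, namely $m^*\AS_\psi$ pulled back from the natural trace pairing $m:\bL_1\times_{\bL^b}\bL_2\to\bA^1$, is canonically $\barG$-equivariant, because this pairing is $\barG$-invariant (the action of $\barG$ on $\bL_1$ and $\bL_2$ is dual over the fixed base $\bL^b$). Consequently, all base change diagrams that define the convolution $\barIC_s\star(-)$ on the $\bL_1$-side lift to analogous diagrams on the $\bL_2$-side, and the Fourier transform intertwines the two. Concretely, one writes down the parallel diagram
\[
\begin{tikzcd}[ampersand replacement=\&,row sep=2em]
  P\times^{\barB}\bL_2 \ar[r,"\iota"]\ar[d,"a"]\& \wtP\times^{\barB_1\times B_2}\bL_2\ar[d,"\wtaa"]\\
  \bL_2\ar[r,equal]\& \bL_2
\end{tikzcd}
\]
and then repeatedly applies proper base change and the projection formula together with the basic fact that $\FT$ commutes with $f^*$ and $f_!$ for morphisms $f$ of dual vector bundles compatible with the fibration over $\bL^b$. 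This reduces the equivariance statement to the observation that the Artin-Schreier kernel on $\bL_1\times_{\bL^b}\bL_2$ pulls back identically under the parabolic action.

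The main technical obstacle will be verifying the compatibility of the parabolic action with the vector-bundle structure over $\bL^b$: the parabolic $P\subset\barG$ acts on both $\bL_1$ and $\bL_2$, but for the Fourier transform (which is partial, taken fiberwise over $\bL^b$) to commute with the convolution, one must confirm that $P$ preserves the projection to $\bL^b$ and acts on the dual fibers $\ov\bL_1$ and $\ov\bL_2$ by transpose-inverse so that the trace pairing $m$ is $P$-invariant. Once this compatibility is spelled out, the remainder is a diagram chase using proper base change, the projection formula, and the equivariance of the Artin-Schreier kernel. I expect this verification to be essentially formal once the diagrams are correctly drawn, and to yield the same conclusion as the ``functoriality with respect to arbitrary base change'' argument sketched in the proof of \Cref{Cor. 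FT equiv bcH}.
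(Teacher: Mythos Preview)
Your proposal is correct and captures the essential mechanism. The paper in fact contains two proofs of this fact: the one attached to this theorem (in an appendix of material ``not used now'') is a fully explicit diagram chase, while the version used in the body of the paper (\Cref{Cor. FT equiv bcH}) gives exactly your one-line justification, namely ``the equivariance under $\bcH$ follows from the functoriality of Fourier transform with respect to arbitrary base change.'' Your write-up sits between these: you identify the conceptual reason (the trace pairing $\langle T_1,T_2\rangle=\tr(T_1T_2)$ is $\barG$-invariant, so the Artin--Schreier kernel is $\barG$-equivariant) and correctly flag the one verification needed (that $P\subset\barG$ preserves the direct-sum decomposition $\bL_1=\ov\bL_1\oplus\bL^b$ and acts on $\ov\bL_1,\ov\bL_2$ as mutually transpose-inverse, which is immediate from the formulas $(g_1,g_2)\cdot T_1=g_2T_1g_1^{-1}$ and $(g_1,g_2)\cdot T_2=g_1T_2g_2^{-1}$).

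The paper's explicit proof unwinds both $\Four_{\bL_1}(\barIC_s\star\sF)$ and $\barIC_s\star\Four_{\bL_1}(\sF)$ by hand: for each it introduces an auxiliary isomorphism ($b$ resp.\ $c$) twisting by the $P$-action, and shows that both expressions equal $p_!\tsG''$ for a common sheaf $\tsG''$ on $P/B\times\bL_1^+\times\bL_1^-\times\bL_2^+$. The crucial step there is the identity $\wtmm\circ b^{-1}=\wtmm$, which is nothing other than the $P$-invariance of the trace pairing you invoke. So the two arguments are the same in content; the paper's detailed version simply makes the base-change/projection-formula steps explicit rather than packaging them as ``functoriality.''
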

\begin{proof}
Take $s\in S$ let $P$ be the parabolic subgroup in $\barG$ corresponds to $s$.
Let $\pr_{\bL_i} \colon P\times \bL_i \rightarrow \bL_i$.

First, we compute $\Four_{\bL_1}(\barIC_s \star \sF)$. Consider the following diagram. 
\[
 \begin{tikzcd}[ampersand replacement=\&,row sep=2em]
 P\times \bL_1^+\times \bL_1^- \times \bL_2^+ \ar[d,"q''"] \& \ar[l,"b"'] P\times \bL_1^+\times \bL_1^- \times \bL_2^+ \ar[d,"q'"] \ar[r,"\pr'_1"]\& P\times \bL_1 \ar[d,"q"] \ar[r,"\pr_{\bL_1}"] \& \bL_1\\ 
 P/B\times \bL_1^+\times \bL_1^- \times \bL_2^+  \ar[dd,"p"] \& \ar[l,"b^B"'] P\times^B (\bL_1^+\times \bL_1^- )\times \bL_2^+ \ar[d,"a'"] \ar[r,"\pr^B_1"] \&  P\times^B \bL_1 \ar[d,"a"]\\ 
\& \bL_1^+\times \bL_1^- \times \bL_2^+ \ar[d,"\pr_2"'] \ar[r,"\pr_1"] \ar[dr,"m"] \& \bL_1\\
 \bL_1^-\times \bL_2^+ \& \ar[l,equal] \bL_1^-\times \bL_2^+ \& \bA^1\\
 \end{tikzcd}
\]

Let $\sG\in D^b_{m}(P\times^B \bL_1)$ be the sheaf such that $q^*\sG = \pr_{\bL_1}^* \sF$.  
\[
\begin{split}
\pr^*_1 (\barIC_s\star \sF) = \pr_1^* a_! \sG
= a'_! (\pr^B_1)^* \sG
\end{split}
\]
Let $\sG' := (\pr^B_1)^*(\sG)$. 
Then 
\[
q'^*(\sG') = q'^* (\pr^B_1)^* (\sG) = {\pr'_1}^* q^* \sG = {\pr'_1}^* \pr_{\bL_1}^* (\sF)   
\]

The map $b$ is defined by 
\[
b(p,x,y,z) = (p,p\cdot x, p \cdot x, z )
\]
is $B$-equivariant isomorphism and induces $b^B$ 
\[
b^B([p,x,y],z) = (pB, px, py, z).
\] Here we let $[p,x,y]$ represent the its image in 
$P\times^B (\bL_1^+\times \bL_1^-)$.
Let $\sG'' := ({b^B}^{-1})^* \sG'$. Then 
\[
\begin{split}
  \Four_{\bL_1}(\barIC_s\star \sF) 
 & = {\pr_2}_!(a'_! (\pr^B_1)^* \sG  \otimes m^*\Psi) \\
 & = (\pr_2\circ a')_! ({\pr^B_1}^* \sG  \otimes (m a')^*\Psi)) \\
 & = (\pr_2\circ a')_! (\sG'  \otimes (m a')^*\Psi)) \\
 & = p_!({b^B}^{-1})^*(\sG' \otimes (m a')^* \Psi) \\
 & = p_!(\sG'' \otimes (m a' {b^B}^{-1})^* \Psi). \\
\end{split}
\]

Let $u: P\times \bL_1^+\times \bL_1^-\times \bL_2^+$ be the map $(p,x,y,z)\mapsto (p^{-1}x,p^{-1}y)$. Then 
$u  = \pr_{\bL_1}\circ \pr'_1\circ b^{-1}$ and
\[
\begin{split}
  q''^* (\sG'') & =
q''^* ({b^B}^{-1})^*\sG' = (b^{-1})^* q'^* \sG' = (b^{-1})^*  
= (b^{-1})^* {\pr'_1}^* \pr_{\bL_1}^* \sF \\ 
& = (\pr_{\bL_1} \circ \pr'_1\circ b^{-1})^* \sF\\ 
& = u^* \sF
\end{split}
\]
Let $\wtmm  : P \times \bL_1^+\times \bL_1^-\times \bL_+^+ \to \bA^1$  be the map defined by $(p,x,y,z)\mapsto \inn{x}{z}$.
Then $\wtmm := q'' m a' (b^B)^{-1}$. 

Note that $q''^*$ is fully faithful.  
In summary, 
\begin{equation} \label{eq:FourICs}
  \Four_{\bL_1}(\barIC_s\star \sF) = p_!(\tsG'' )
\end{equation}
where $\tsG''$ is the sheaf in $D_{m}^{b}(P/B\times \bL_1^+\times \bL_1^-\times \bL_2^+)$ such that 
\begin{equation}\label{eq:tsG''}
q''^*(\tsG'') = u^*\sF \otimes \wtmm \Psi. 
\end{equation}

\medskip

We now compute $\barIC_s\star \Four_{\bL_1}(\sF)$.  
Consider the diagram
\[
 \begin{tikzcd}[ampersand replacement=\&,row sep=2em,column sep=3em]
 \& \bA^1 \& \\
  P\times \bL_1^-\times \bL_2^+ \ar[d,"q"] \ar[dr,"c"]
 \& \ar[l,"\pr_2"' ]  P \times \bL_1^+\times \bL_1^- \times \bL_2^+ \ar[r,"\pr_{\bL_1}\circ\pr_1"]\ar[u,"\wtmm"] 
 \ar[dr,"b"]
 \& \bL_1^+\times \bL_1^-  \\ 
  P \times^B (\bL_1^-\times \bL_2^+) \ar[d,"a"] \ar[dr,"c^B"] \& P\times \bL_1^- \times \bL_2^+  \ar[d,"q'"] 
  \&  P \times \bL_1^+\times \bL_1^- \times \bL_2^+  \ar[l,"\pr_2"'] \ar[d,"q''"]\\ 
\bL_1^- \times \bL_2^+ \& \ar[d,"p'"] P/B \times \bL_1^- \times \bL_2^+ \& P/B \times\bL_1^+ \times \bL_1^- \times \bL_2^+  \ar[l,"\pr'_2"'] \ar[dl,"p"]\\
\& \bL_1^-\times \b_2^+ \ar[ul,equal]\& \\
 \end{tikzcd}
\]


By base change, $\pr^*_{\bL_2} \Four_{\bL_1}(\sF) = \Four_{P\times \bL_1}(\pr^*_{\bL_1}(\sF))$ where $\Four_{P\times \bL_1}$ denote the Fourier-Deligne transform with respect to the basis $P\times \bL_1^-$.   

Let $\tsG \in D_{m}^b(P\times^B(\bL_1^-\times \bL_2^+))$ be the sheaf such that 
\[
\begin{split}
q^*\tsG & = \pr_{\bL_2}^* \Four_{\bL_1}(\sF)
= \Four_{P\times \bL_1}(\pr^*_{\bL_1}\sF) \\
&= {\pr_2}_!((\pr_{\bL_1}\circ \pr_1)^*\sF\otimes \wtmm^*\Psi) \\
\end{split}
\] 
Let $c'$ and $c^B$ are isomorphisms induced by $b$ which are given by 
\[
c(p,y,z) = (p,p\cdot y, p\cdot z) \quad \text{and} \quad 
c^B([p,y,z]) = (pB,p\cdot y, p\cdot z). 
\]
So 
\[
\begin{split}
q'^*({c^B}^{-1})^* \tsG  &  = (c^{-1})^* q^* \tsG 
=  (c^{-1})^* {\pr_2}_!((\pr_{\bL_1}\circ \pr_1)^*\sF\otimes \wtmm^*\Psi) \\
& = {\pr_2}_! (b^{-1})^*((\pr_{\bL_1}\circ \pr_1)^*\sF\otimes \wtmm^*\Psi)\\
& = {\pr_2}_! (((\pr_{\bL_1}\circ \pr_1 \circ b^{-1})^*\sF\otimes (\wtmm\circ b^{-1})^*\Psi)\\
\end{split}
\]
It is straightforward to verify that $\wtmm\circ b^{-1} = \wtmm$ and  
$ \pr_{\bL_1}\circ \pr_1 \circ b^{-1} = u$. 
By \eqref{Eq:tsG''}, we have 
\[
q'^*({c^B}^{-1})^* \tsG = {\pr_2}_!(q''^* \tsG'') = q'^* {\pr'_2}_! \tsG'',
\]
i.e. $({c^B}^{-1})^* \tsG \cong {\pr'_2}_!\tsG''$.

Now 
\[
\barIC_s\star \Four_{\bL_1}(\sF) = a_! \tsG
 = p'_! ({c^B}^{-1})^* \tsG  = p'_! {\pr'_2}_! \tsG''  =  p_! \tsG''  
\]

Comparing with \eqref{eq:FourICs}, we prove the lemma. 
\end{proof}

\mjj{Hint to pass to $\bC$}
Let $\sT_1\in \cH_1$ and $\sS\in \cM$, we define 
\begin{equation}\label{Transfer monoid action}
    \sT_1 \star\sS\coloneqq \Four_{\bL_2}^{-1} (\sT_1 \star \Four_{\bL_1}(\sS)) \in \cM.
\end{equation}
By the above discussion, this extends the monoid action of $\bcH_1$  on  $\cM$ to  $\cH_1$.

$X$ is a variety defined over $\bZ$. 

$\sF$ constructable on $X$. 

$X_\bC$ and $X_{p}$. 

Let $X$ be a $U_2$-orbit in $\bL_2$. 

Consider $R$ of finite type over $\bZ$. 

$X := (U_2)_R\to \bL_1$. 

$X_p := X\times_{\bZ} \Spec(\overline{\bF_p}) \to X$. 

$X$ is finite type over $\bZ$. 
If $\sF\in Const(\bL_1)$. 
 
$\sF_{X_p}$ is constant.  

Claim: Since $\sF$ is constructable, it implies $\sF_{X_\bC}$ is constant. 
}

%
%

\begin{lemma}\label{FT tate type I}
    The Fourier transform $\FT_{\cM_1}$ restricts to an equivalence
    \begin{equation}\label{FT Mmix}
        \FT^\Tate_{\cM_1}: \cM^\Tate_1\isom \cM_2^\Tate.
    \end{equation}
\end{lemma}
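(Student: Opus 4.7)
The strategy is to exploit the three key properties of the equivariant Fourier--Deligne transform recalled in the preceding discussion: (i) $\FT_{\cM_1}$ admits a mixed refinement $\FT_{\cM_1}^\mix\colon \cM_1^\mix\isom \cM_2^\mix$ which is perverse $t$-exact and preserves purity of weight zero; (ii) it is equivariant for the common $\bcH^\mix$-action by \Cref{Cor. FT equiv bcH}; (iii) Tate subcategories are closed under shifts, Tate twists, extensions, direct summands, and convolution by $\bcH^\Tate$ (by \Cref{Cor. K0 M1} and its analog for $\cM_2$). Under assumption \eqref{orbit assumption} (from \Cref{SPM vs orbits}(2) and its analog for $\cN_{\bL_2}$), the simple perverse objects of $\cM_i^\mix$ that are pure Tate of weight zero are precisely the $\IC_\sigma$, and by \Cref{Cor. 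M1 IC} these all lie in $\cM_i^\Tate$. Combining this with the weight filtration, every object of $\cM_1^\Tate$ is a successive extension of shifts and Tate twists of the $\IC_\sigma$'s. In view of (i)--(iii), it therefore suffices to prove that $\FT_{\cM_1}^\mix(\IC_\sigma)\in \cM_2^\Tate$ for every $\sigma\in \SPM(m,n)$; the symmetric argument applied to the inverse Fourier transform (defined via $\psi^{-1}$) will yield the reverse inclusion.

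I plan to prove the claim by induction on $d_\sigma$. For the base case, consider $\sigma=\sigma_{i,j}\in \SPM(m,n)_\hs$, enumerated by \Cref{Lem. SPM min}. By a direct extension of \Cref{Lem. linear IC type I} to the two-parameter family $\sigma_{i,j}$, the closure $\ov{\cO_{\sigma_{i,j}}}$ is a linear subspace $\bL_1^{i,j}\subset \bL_1$, so $\IC_{\sigma_{i,j}}$ is, up to shift, the constant sheaf on $\bL_1^{i,j}$. Standard properties of the Fourier--Deligne transform then identify $\FT_{\cM_1}^\mix(\IC_{\sigma_{i,j}})$ with the constant sheaf on the perpendicular linear subspace $(\bL_1^{i,j})^\perp\subset \bL_2$ (up to shift and Tate twist). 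Since $\IC_{\sigma_{i,j}}\in \cM_1$ is automatically $U_2$-equivariant, \Cref{Fourier-transform 2}(1) forces $(\bL_1^{i,j})^\perp\subset \cN_{\bL_2}$, and the resulting shifted constant sheaf on a smooth linear subspace is manifestly Tate.

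For the inductive step, assume $\sigma\notin \SPM(m,n)_\hs$. Then by the definition of $\SPM(m,n)_\hs$ there exists $s\in \ov\sfS_1\sqcup \ov\sfS_2$ such that $(s,\cO_\sigma)$ is of type $U^+$; let $\sigma':=s*\sigma$ be the $s$-companion, so $d_{\sigma'}=d_\sigma-1$. By \Cref{Lem. conv ICs}(2), $\IC_\sigma$ is a direct summand of $\IC_s\star \IC_{\sigma'}$. Applying $\FT_{\cM_1}^\mix$ and using its $\bcH^\mix$-equivariance, $\FT_{\cM_1}^\mix(\IC_\sigma)$ is a direct summand of $\IC_s\star \FT_{\cM_1}^\mix(\IC_{\sigma'})$. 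By the induction hypothesis $\FT_{\cM_1}^\mix(\IC_{\sigma'})\in \cM_2^\Tate$, and by (iii) its $\bcH^\Tate$-convolution with $\IC_s$ remains Tate; Tate-ness of the direct summand follows since it is a stalkwise property.

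The main technical input is the explicit computation in the base case: identifying $\FT(\IC_{\sigma_{i,j}})$ as the shifted constant sheaf on $(\bL_1^{i,j})^\perp$ and verifying this perpendicular lies inside $\cN_{\bL_2}$. Once this is established, the $\bcH^\Tate$-equivariant induction proceeds routinely and yields the lemma.
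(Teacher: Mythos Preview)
Your inductive step via $\bcH$-equivariance is sound, but the base case has a genuine gap: the claim that $\ov{\cO_{\sigma_{i,j}}}$ is a linear subspace of $\bL_1$ is \emph{false} once $j\ge 2$. Take $m=n=2$ and $(i,j)=(0,2)$, so $\sigma_{0,2}$ sends $e_1\mapsto f_{-2}$, $e_2\mapsto f_{-1}$. Writing $T(e_1)=u$, $T(e_2)=v$, one computes that the $\ov B_1\times B_2$-orbit closure is
\[
\ov{\cO_{\sigma_{0,2}}}=\bigl\{(u,v)\in \langle f_1,f_2,f_{-2}\rangle\times V_2 \;:\; \langle u,v\rangle_{V_2}=0\bigr\},
\]
a six-dimensional quadric in $\bA^7$. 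It is singular along $\{u=0,\ v\in\langle f_1\rangle\}$, so $\IC_{\sigma_{0,2}}$ is not a shifted constant sheaf and the ``perpendicular linear subspace'' description of its Fourier transform is unavailable. The obstruction is structural: for $j\ge2$ two of the $e_a$'s land in $L_2^\vee$, and the isotropy condition on $T(L_1)$ becomes the nontrivial quadratic constraint $\langle T(e_{m-1}),T(e_m)\rangle_{V_2}=0$, which cannot be part of a linear description. Even in the cases $j\le 1$ where the closure \emph{is} linear, note that $\FT_{\cM_1}$ is a \emph{partial} Fourier transform along $\ov\bL_1$ over the base $\bL^b$; for $j=1$ the subspace already projects nontrivially to $\bL^b$, so ``constant sheaf on the perpendicular'' requires more care than you indicate.

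The paper's argument bypasses all of this and is both shorter and more robust. Since $\FT_{\cM_1}$ is $t$-exact, $\FT_{\cM_1}(\IC_\sigma)\cong\IC_\tau\otimes V$ for some $\tau\in\SPM(n,m)$ and a rank-one Frobenius module $V$; the analogue of \Cref{Cor. M1 IC} for $\cM_2$ already gives that $\IC_\tau$ is Tate, so one only needs $V$ Tate. This reduces to the single stalk computation $\FT_{\cM_1}(\IC_\sigma)_0\cong R\Gamma_c(\cN_{\bL_1},\IC_\sigma)$, which is Tate because each $\cO_\sigma$ is an affine-space bundle over a torus. No induction and no explicit Fourier computation is needed; your $\bcH$-equivariant reduction, while correct, ends up doing more work and still requires the paper's stalk argument (or something equivalent) to handle the non-linear base cases.
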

\begin{proof}    
    It suffices to show that $\FT_{\cM_1}$ sends $\cM^\Tate_1$ to $\cM^\Tate_2$, because the same argument for the inverse Fourier transform implies that $\FT^\Tate_{\cM_1}$ is an equivalence.

    Now $\cM^\Tate_1$ is generated as a triangulated category by $\IC_\s(i/2)$ for $\s\in \SPM(m,n)$ and $i\in \ZZ$, it suffices to show that $\FT_{\cM_1}(\IC_\s)\in \cM^\Tate_2$. Since $\FT_{\cM_1}$ sends simple perverse sheaves to simple perverse sheaves, $\FT_{\cM_1}(\IC_\s)$ must be of the form $\IC_{\t}\ot V$ for some $B_1\times \ov B_2$-orbit $\cO_\t$ in $\cN_{\bL_2}$, and a one-dimensional $\Frob$-module $V$. Since $\IC_\t$ is Tate by the analogue of \Cref{Cor. M1 IC} for $\cM_2$, it suffices to show that $V$ is Tate as an object of $D^b(\Spec \bF_q)$, i.e., the scalar by which $\Frob$ acts on $V$ belongs to $\{q^{i/2}; i\in \ZZ\}$. The stalk of $\IC_\t$ at $0\in \cN_{\bL_2}$ is nonzero and Tate, therefore $V$ is Tate if and only if the stalk $\FT_{\cM_1}(\IC_\s)_0$ is. 

    We reduce to showing the following slightly general statement: if $\sF\in \cM^\Tate_1$, then   $\FT_{\cM_1}(\sF)_0$ is Tate. Recall that $\FT_{\cM_1}(\sF)_0\cong R\Gamma_c(\cN_{\bL_1}, \sF)$, which is a successive extension of $R\Gamma_c(\cO_\s, j^*_\s\sF)$. Choose a point $x_\s\in \cO_\s(\bF_q)$, then $j^*_\s\sF\cong \un{\Qlbar}\ot \sF_{x_\s}$, hence 
    \begin{equation}\label{O sig sF}
        R\Gamma_c(\cO_\s, j^*_\s\sF)\cong R\Gamma_c(\cO_\s,\Qlbar)\ot \sF_{x_\s}.
    \end{equation} 
    Since $\cO_\s\cong (\ov B_1\times B_2)/\Stab(x_\s)$, we see that $\cO_\s$ is an affine space fibration over a torus, hence $R\Gamma_c(\cO_\s,\Qlbar)$ is Tate. Since $\sF_{x_\s}$ is assumed to be Tate, the right side of \eqref{O sig sF} is Tate. This implies that the left side of \eqref{O sig sF} is Tate, hence $\FT_{\cM_1}(\sF)_0\cong R\Gamma_c(\cN_{\bL_1},\sF)$ is Tate. This finishes the proof.
\end{proof}
Below we will simply use $\FT$ 
to denote various Fourier-Deligne transform, if the meaning is clear from the context.
\begin{cor} \label{FT type I generic}
Passing to Grothendieck groups, Fourier transform induces an isomorphism of $\ov \sfH= \ov\sfH_1\ot_R\ov\sfH_2$-modules
    \begin{equation}\label{Fourier generic M12}    
    \phi: \sfM_1=R[\SPM(m,n)]\xrightarrow{\ch^{-1}} K_0(\cM_1^\Tate)\xrightarrow{K_0(\FT)}K_0(\cM_2^\Tate)\xrightarrow{\ch}\sfM_2= R[\SPM(n,m)]
    \end{equation}
    and a $\ov W_1\times \ov W_2$-equivariant isomorphism 
    \begin{equation}\label{Fourier W12}    
    \phi^k: \sfM_{1,v=1}=\bZ[\SPM(m,n)]\xrightarrow{\chi^{-1}}K_0(\cM^k_1)\xrightarrow{K_0(\FT)}K_0(\cM_2^k)\xrightarrow{\chi}\sfM_{2,v=1}=\bZ[\SPM(n,m)].
    \end{equation}
Via the sheaf-to-function map, Fourier transform induces an isomorphism of $\ov H= \ov H_1\ot_{\bC} \ov H_2$-modules
    \begin{equation}\label{Fourier function M12}    
    \phi^{F}: M_1 \rightarrow M_2
    \end{equation}
    and this is the same as the Fourier transform map given in \Cref{partial FT function}  (using the isomorphism $\io:\Qlbar\cong \bC$). 
\end{cor}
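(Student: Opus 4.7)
The plan is to assemble the corollary from the equivalence $\FT^{\Tate}_{\cM_1}\colon \cM_1^{\Tate}\isom \cM_2^{\Tate}$ of \Cref{FT tate type I}, its Hecke‑equivariance from \Cref{Cor. FT equiv bcH}, and the general functoriality of \eqref{diag cat}, together with the classical identification of the Fourier–Deligne transform with the finite‐field Fourier transform via the sheaf‐to‐function dictionary.

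First, for part (1), I would note that \Cref{Cor. FT equiv bcH} gives an equivalence of $\ov\cH^{\Tate}$-module categories between $\cM_1^{\Tate}$ and $\cM_2^{\Tate}$, and that by \Cref{Cor. Hk ring isom} the Chern character $\ch^{\Tate}$ induces a ring isomorphism $K_0(\ov\cH^{\Tate})\cong \ov\sfH$. Since $\ch^{\Tate}\colon K_0(\cM_i^{\Tate})\isom \sfM_i$ is $\ov\sfH$-linear by \Cref{Cor. K0 M1}(2), the composite $\phi$ in \eqref{Fourier generic M12} is automatically a map of $\ov\sfH$-modules.

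For part (2), I would argue via specialization. The forgetful functor $\cM_i^{\mix}\to \cM_i^{k}$ sits inside the commutative diagram \eqref{diag cat}, and Fourier–Deligne commutes with base change to $k=\ov{\bF_q}$, so $K_0(\FT)$ on $K_0(\cM_i^{k})$ is compatible under $\chi$ with $K_0(\FT^{\Tate})$ on $K_0(\cM_i^{\Tate})\cong \sfM_i$ after setting $v=1$. Combined with \Cref{Cor. K0 M1}(4) (and its $\cM_2$-analog), this upgrades $\phi^k$ to a $\ZZ[\ov W_1\times \ov W_2]$-equivariant isomorphism.

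For part (3), I would use the sheaf-to-function dictionary: for a character $\psi$, sheaf-to-function sends $\AS_\psi$ to $\psi$, and it commutes with proper pushforward, pullback, and tensor product. Applying these to the defining diagram \eqref{Diagram equivariant Fourier Deligne} with $E=\bL_1$, $E'=\bL_2$, $S=\bL^b$, the categorical Fourier–Deligne transform $\FT_{\bL_1}$ descends under $\phi$ of \eqref{phi} to the integral transform
\[
f\longmapsto q^{-\frac12\dim\ov\bL_1}\!\!\sum_{T_1\in\ov\bL_1(\bF_q)} f(T_1,T)\,\psi(\langle T_1,T_2\rangle),
\]
which is exactly $\CM^{\bF_q}_{\bL_1}$ of \eqref{Equa. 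Partial Fourier}; the normalizing shift $\langle r\rangle$ in the definition of $\FT$ accounts for the factor $q^{-\frac12\dim\ov\bL_1}$ under $\phi$, and $\iota\colon \Qlbar\cong\bC$ is applied throughout. The $\ov H$-equivariance of $\phi^{F}$ then follows from part~(1) together with the bottom square of \eqref{diag cat} and \Cref{lem two hecke action}.

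The most delicate point is the normalization check in part (3): one must track the Tate twist $\langle r\rangle$, the sign in the definition of $\ch$, and the identification of $\Qlbar$ with $\bC$ via $\iota$ to confirm that the two Fourier transforms agree \emph{on the nose} (not merely up to a scalar). Once these conventions are aligned, the rest is formal.
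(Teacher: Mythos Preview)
Your proposal is correct and is exactly the argument the paper has in mind: the corollary is stated without proof, as it is meant to follow formally from \Cref{Cor. FT equiv bcH} (Hecke-equivariance of $\FT$), \Cref{FT tate type I} (Tate restriction), \Cref{Cor. K0 M1} and its $\cM_2$-analog (identifying $K_0$ with $\sfM_i$ as Hecke modules), and the standard compatibility of Fourier--Deligne with the sheaf-to-function dictionary. Your explicit tracking of the normalization $\langle r\rangle$ in part~(3) is precisely the ``routine check'' the paper suppresses.
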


For any $\s\in \SPM(m,n)$, the Fourier transform $\FT(\IC_\s)\in \cM^k_2$ is a simple perverse sheaf, hence is isomorphic to $\IC_\t$ for a unique $\t\in \SPM(n,m)$. The assignment $\s\mapsto \t$ defines a bijection
\begin{equation}\label{define Phi}
   \Phi= \Phi_{m,n}: \SPM(m,n)\isom \SPM(n,m).
\end{equation}
A priori, the bijection $\Phi$ may depend on the choice of the field $k = \ov{\bF}_q$. The following proposition shows that it does not:
\begin{prop}\label{c:Fourier indep k}
The bijection $\Phi$ is independent of $k$.
\end{prop}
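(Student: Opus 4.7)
The plan is to leverage the $\ov\sfH$-equivariance of $\FT$ and the independence of the Kazhdan--Lusztig data from $k$. Concretely, since $\FT:\cM_1^k \to \cM_2^k$ intertwines the $\ov\sfH = \ov\sfH_1\otimes\ov\sfH_2$-actions (\Cref{Cor. FT equiv bcH}), the induced isomorphism $\phi:\sfM_1\to\sfM_2$ is $\ov\sfH$-linear. By \Cref{Lem. M1 gen}, the $\ov\sfH$-module $\sfM_1$ is generated by the Kazhdan--Lusztig elements $\{C'_{\s_{i,j}}\}$ for $\s_{i,j}\in \SPM(m,n)_\hs$, and by \Cref{Lem. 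KL indep p} together with \Cref{Prop ICs preserves Tate}, both these generators and the $\ov\sfH$-module structure on either side are independent of $k$. Therefore the independence of $\Phi$ from $k$ reduces to verifying that $\phi(C'_{\s_{i,j}})$ is independent of $k$ for every generator $\s_{i,j}$.

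To carry out this verification, I will show, extending \Cref{Lem. linear IC type I} from the case $j=0$ to arbitrary $j$, that for each minimal element $\s_{i,j}\in \SPM(m,n)_\hs$ the orbit closure $\ov{\cO_{\s_{i,j}}}\subset \bL_1$ is a linear subspace, so that $\IC_{\s_{i,j}}$ is the appropriately shifted and twisted constant sheaf on this subspace. The Fourier--Deligne transform of a constant sheaf on a linear subspace $V\subset \bL_1$ is a standard computation: it equals the constant sheaf on the annihilator $V^\perp\subset \bL_2$ with an explicit cohomological shift and Tate twist depending only on $\dim V$. Identifying $V^\perp$ as the closure of a unique orbit $\cO_{\Phi(\s_{i,j})}\subseteq \cN_{\bL_2}$ then pins down $\phi(C'_{\s_{i,j}})$ as an explicit $R$-multiple of $C'_{\Phi(\s_{i,j})}$. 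Both identifications are algebraic and intrinsic to the underlying $\bZ$-scheme structures, hence independent of $k$.

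Combining these steps, $\phi$ is determined on a set of $\ov\sfH$-generators by $k$-independent data, and $\ov\sfH$-equivariance propagates this determination to all of $\sfM_1$. In particular the bijection $\Phi$ on orbit labels is independent of $k$. The main difficulty in executing this plan lies in the combinatorial verification of the second paragraph: describing $\ov{\cO_{\s_{i,j}}}$ as a linear subspace for general $j\geq 1$ and identifying its annihilator inside $\bL_2$ with a specific orbit closure in $\cN_{\bL_2}$. This generalizes \Cref{Lem. linear IC type I} and involves a careful case analysis of signed partial matchings, somewhat delicate because the relevant geometry now mixes both the $L_2$ and $L_2^\vee$ components of $V_2$; but it is amenable to direct bookkeeping starting from the definition \eqref{define sij}.
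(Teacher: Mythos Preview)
Your overall strategy — reducing to generators $\s_{i,j}\in\SPM(m,n)_\hs$ via $\ov\sfH$-equivariance and \Cref{Lem. M1 gen}, then computing $\Phi$ on those generators directly — is sound in spirit and mirrors the argument the paper uses in the type II case (see the proof of \Cref{c:Fourier indep k type II}). The gap is in your second paragraph: the claim that $\ov{\cO_{\s_{i,j}}}\subset\bL_1$ is a linear subspace for all $(i,j)$ is false once $j\ge 2$.

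For a concrete counterexample take $m=n=2$, $i=0$, $j=2$, so $\s_{0,2}$ sends $1\mapsto -2$, $2\mapsto -1$. One computes $d_{\s_{0,2}}=6$, and the closure is
\[
\ov{\cO_{\s_{0,2}}}=\{T:\ T(e_1)\in\langle f_1,f_2,f_{-2}\rangle,\ T(e_2)\in V_2,\ \langle T(e_1),T(e_2)\rangle_{V_2}=0\},
\]
a nondegenerate quadric hypersurface in a $7$-dimensional linear space, not a linear subspace. Hence $\IC_{\s_{0,2}}$ is not a shifted constant sheaf on a linear subspace, and your Fourier computation does not apply. More generally, whenever $j\ge 2$ the orbit $\cO_{\s_{i,j}}$ has rank $\ge 2$ into the $L_2^\vee$-direction, forcing nontrivial isotropy constraints that obstruct linearity.

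The paper avoids this obstruction by a different route: \Cref{Prop FT SPM via PM} gives an explicit formula for $\Phi(\s)$ valid for \emph{every} $\s\in\SPM(m,n)$, not just the generators. The idea is to restrict $\IC_\s$ to the fiber $p_1^{-1}(p_1(x_\s))$ over the fixed ``negative part'' $\mu_2^+:I_2^+\to J_2^-$; on that fiber the sheaf factors as a type II IC sheaf on $\Hom(L_1(\Sigma_1),L_2(\Sigma_2))$ (handled by the type II bijection $\Psi$, already shown $k$-independent in \Cref{c:Fourier indep k type II}) tensored with a constant sheaf on a genuinely linear ``Hermitian'' subspace. The partial Fourier transform then reduces to the type II Fourier transform plus a straightforward linear computation. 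So linearity does enter, but only after the fibration step has stripped off the problematic isotropy constraints.

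If you wanted to salvage your generator-based approach, you would still need the key content of \Cref{Prop FT SPM via PM} to compute $\Phi(\s_{i,j})$ for $j\ge 2$; there is no shortcut via global linearity of the orbit closure.
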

As the argument is somewhat technical, we defer the proof of \Cref{c:Fourier indep k} to \Cref{subsection Fourier bijection}. In \Cref{subsection Fourier bijection}, we will also determine $\Phi$ explicitly. 
\begin{lemma}\label{l:phi IC}
    For any $\s\in \SPM(m,n)$, we have
    \begin{equation*}
        \phi(C'_\s)=C'_{\Phi(\s)}\in \sfM_2.    
    \end{equation*}
\end{lemma}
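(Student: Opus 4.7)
The plan is to reduce the identity $\phi(C'_\sigma)=C'_{\Phi(\sigma)}$ to an isomorphism of mixed $\IC$-sheaves and then extract the corresponding identity of weight polynomials. The essential inputs are the purity of $\IC_\sigma$, the perverse $t$-exactness and weight preservation of the Fourier--Deligne transform, and a rigidity principle for pure simple perverse sheaves.

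First I will invoke \Cref{Cor. M1 IC}, which says that $\IC_\sigma\in\cM_1^{\Tate}$ is a simple perverse sheaf that is $*$-pure of weight zero. The Fourier--Deligne transform $\FT$ (with the shift $\langle r\rangle$ built into its definition, $r=mn$) is perverse $t$-exact on the equivariant derived category of a vector bundle and preserves pure complexes of each weight, by Laumon--Brylinski; see the discussion around \eqref{Diagram equivariant Fourier Deligne}. Combining these two facts, I conclude that $\FT(\IC_\sigma)\in\cM_2^{\Tate}$ is a simple perverse sheaf, pure of weight zero.

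Next I would promote the definition of $\Phi$ to a mixed statement. By the very definition of $\Phi$ given in \eqref{define Phi}, there is an isomorphism $\FT(\IC_\sigma)|_k\cong\IC_{\Phi(\sigma)}|_k$ in $\cM_2^k$. On the other hand, $\IC_{\Phi(\sigma)}$ is also a simple perverse sheaf in $\cM_2^{\Tate}$ that is pure of weight zero. The standard rigidity principle---any simple perverse sheaf over $\bF_q$, once required to be pure of a specified weight, is unique up to isomorphism within its geometric equivalence class, because its one-dimensional endomorphism algebra acquires a definite Frobenius eigenvalue from purity---then lifts the geometric isomorphism to a mixed isomorphism $\FT(\IC_\sigma)\cong\IC_{\Phi(\sigma)}$ in $\cM_2^{\Tate}$.

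Finally, applying the weight polynomial map $\ch$ yields $\ch(\FT(\IC_\sigma))=\ch(\IC_{\Phi(\sigma)})$. Unwinding the definition of $\phi$ in \eqref{Fourier generic M12} and the normalization $C'_\alpha=(-1)^{-d_\alpha}\ch(\IC_\alpha)$, this becomes $\phi(C'_\sigma)=(-1)^{d_{\Phi(\sigma)}-d_\sigma}C'_{\Phi(\sigma)}$, so the claim will follow once the parity compatibility $d_{\Phi(\sigma)}\equiv d_\sigma\pmod 2$ is established. This parity match is the main obstacle; I expect it to be handled by invoking the explicit combinatorial description of $\Phi$ to be proved in \Cref{subsection Fourier bijection}, where the orbit dimensions on the two sides of the bijection can be read off and compared directly.
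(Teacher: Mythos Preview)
Your approach coincides with the paper's: reduce to comparing $\FT(\IC_\sigma)$ and $\IC_{\Phi(\sigma)}$ as pure weight-zero perverse sheaves, then apply $\ch$.  Two comments.

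\emph{On the rigidity step.}  The justification you give is not quite right: purity of a fixed weight alone does not determine a simple mixed perverse sheaf within its geometric isomorphism class, since the one-dimensional twist $L$ may have Frobenius eigenvalue any $\lambda$ with $|\lambda|=1$.  What does pin it down here is that both $\FT(\IC_\sigma)$ and $\IC_{\Phi(\sigma)}$ lie in $\cM_2^{\Tate}$ (the first by \Cref{FT tate type I}, the second by the analogue of \Cref{Cor. M1 IC}); then $L$ is simultaneously Tate and of weight $0$, forcing $\lambda=q^{0}=1$.  The paper does not attempt this lift: it is content to note $\FT(\IC_\sigma)\cong\IC_\tau\otimes L$ with $L$ pure of weight $0$, hence $\ch(L)=1$, which is all one needs for the $\ch$-identity.

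\emph{On the sign.}  You are correct that what drops out is $\phi(C'_\sigma)=(-1)^{d_{\Phi(\sigma)}-d_\sigma}C'_{\Phi(\sigma)}$, so the stated lemma needs $d_\sigma\equiv d_{\Phi(\sigma)}\pmod 2$.  The paper's proof writes ``$\phi(C'_\sigma)=\ch(\FT(\IC_\sigma))=\ch(\IC_\tau)=C'_\tau$'' and thereby silently discards a factor $(-1)^{d_\sigma}$ in the first equality and $(-1)^{d_\tau}$ in the last; these cancel exactly under the parity condition you isolate.  So the paper's argument has the same gap and does not address it.  Your proposal to close it via the explicit description of $\Phi$ in \Cref{subsection Fourier bijection} is legitimate, though one could also try to track $*$-parity through the partial Fourier transform directly.
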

\begin{proof}
   Let $\t=\Phi(\s)$. We know $\FT(\IC_\s)$ is isomorphic to $\IC_{\t}$ over $k$, therefore $\FT(\IC_\s)\cong \IC_{\t
    }\ot L$ for some one-dimensional $\Frob$-module. Both $\FT(\IC_\s)$ and $\IC_{\t}$ are pure of weight zero, the same is true for $L$. Taking $\ch$ we conclude that $\phi(C'_\s)=\ch(\FT(\IC_\s))=\ch(\IC_\t)=C'_\t$.
\end{proof}

\subsection{Generic oscillator bimodule and  oscillator $W_1\times W_2$-graph}\label{Sec gen oscillator bimod}
\begin{cons}[Generic oscillator bimodule]
    There is a canonical extension of $\ov\sfH_1\ot_R \sfH_2$ on $\sfM_1$ to an action of $\sfH_1\ot_R \sfH_2$. Indeed, from the presentations of $\sfH_1$ and $\sfH_2$, we see that $\sfH_1\ot_R \sfH_2$ is the coproduct $\ov\sfH_1\ot_R\sfH_2$ and $\sfH_1\ot_R\ov\sfH_2$ over $\ov\sfH_1\ot_R\ov\sfH_2$, in the category of associative $R$-algebras. Therefore the $\ov\sfH_1\ot_R \sfH_2$-action on $\sfM_1$, and the transport of the $\sfH_1\ot_R\ov\sfH_2$-action on $\sfM_1$ via the isomorphism $\phi$, which agree on $\ov\sfH_1\ot_R\ov\sfH_2$, together give an action of $\sfH_1\ot_R \sfH_2$-module on $\sfM_1$.
    
    We denote the resulting $\sfH_1\ot \sfH_2$-module by $\sfM$, and call it the {\em generic oscillator bimodule}. Concretely, for $s\in \ov S_1\sqcup S_2$, $\sfT_s$ acts the way it does on $\sfM_1$. For the remaining simple reflection $t_m\in \sfS_1$, $\sfT_{t_m}$ acts on $\sfM_1$ via $\phi^{-1}\circ \sfT_{t_m}\circ \phi$. This is the same as the bimodule $\sfM$ constructed in \cite[\S 5]{MR4677077}. By the same consideration, the $\ov W_1\times W_2$-action on $\bZ[\SPM(m,n)]=K_0(\cM^k_1)$ extends to an action of $W_1\times W_2$.
\end{cons}

Using \Cref{Cor. K0 M1} and its analog for $\sfM_2$, we see that:
\begin{cor}\label{c:specialize generic bimod}
\begin{enumerate}
    \item The specialization $\sfM_{v=\sqrt q}$ is canonically isomorphic to the oscillator $H_1\ot H_2$-bimodule $M$.
    \item The specialization $\sfM_{v=1}$ is equipped with a canonical $W_1\times W_2$-action extending the $\ov W_1\times W_2$-action under the identification $\sfM_{v=1}=\sfM_{1,v=1}$, and extending the $W_1\times \ov W_2$-action under the identification $\phi^k: \sfM_{v=1}=\sfM_{1,v=1}\cong \sfM_{2,v=1}$. 
\end{enumerate}
\end{cor}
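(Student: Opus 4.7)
My plan is to deduce both parts from the construction of the $\sfH_1\ot_R\sfH_2$-action on $\sfM$, which is set up precisely so that its restriction to $\ov\sfH_1\ot_R\sfH_2$ is the module structure from \Cref{Cor. K0 M1}(1), and its restriction to $\sfH_1\ot_R\ov\sfH_2$ is transported from $\sfM_2$ along the Fourier isomorphism $\phi$ of \Cref{FT type I generic}. Since $\sfH_1\ot_R\sfH_2$ is the pushout of these two subalgebras over $\ov\sfH_1\ot_R\ov\sfH_2$ in the category of associative $R$-algebras, and extension of scalars along a ring map preserves pushouts, specializing at a point $v=r$ will yield an action of the specialized pushout provided the two restricted actions continue to agree on the common subalgebra after specialization.

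For (1) I will first invoke \Cref{Cor. K0 M1}(3) to identify $\sfM_{1,v=\sqrt q,\bC}$ with the oscillator $\ov H_1\ot H_2$-module $M_1$ coming from the Schr\"odinger model $\omega_{\bL_1}$, and its analog for $\sfM_2$ to identify $\sfM_{2,v=\sqrt q,\bC}$ with the $H_1\ot \ov H_2$-module $M_2$ coming from $\omega_{\bL_2}$. I will then appeal to \Cref{FT type I generic}, which specializes $\phi$ at $v=\sqrt q$ to the sheaf-to-function map $\phi^F$ and moreover identifies $\phi^F$ with the classical partial Fourier transform $\CM_{\bL_1}^{\bF_q}$ of \Cref{partial FT function}. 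Because $\CM_{\bL_1}^{\bF_q}$ intertwines the two Schr\"odinger models of the full Weil representation $\omega$, the $H_1\ot\ov H_2$-action transported to $M_1$ through this Fourier transform agrees with the restriction of the $H_1\ot H_2$-action on $M$. Combined with the $\ov H_1\ot H_2$-action already identified, this assembles the full $H_1\ot H_2$-action on $M$ and gives the desired canonical isomorphism $\sfM_{v=\sqrt q,\bC}\cong M$.

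For (2) the argument is entirely parallel with $v=\sqrt q$ replaced by $v=1$: \Cref{Cor. K0 M1}(4) together with its counterpart for $\sfM_2$ produce the $\ov W_1\times W_2$-action and the $W_1\times \ov W_2$-action on the two identifications of $\sfM_{v=1}$, while the $\ov W_1\times \ov W_2$-equivariance of $\phi^k$ from \Cref{FT type I generic} guarantees that these two actions restrict to the same action of the common subgroup $\ov W_1\times\ov W_2$. They therefore glue to a canonical $W_1\times W_2$-action on $\sfM_{v=1}$ that extends each piece in the stated sense.

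I do not expect a serious obstacle in this corollary, since its entire content is the compatibility of specialization with the pushout construction of the $\sfH_1\ot\sfH_2$-action. The one substantive point to verify is that the categorical Fourier--Deligne transform $\FT_{\cM_1}$ descends, via the sheaf-to-function correspondence, to the classical partial Fourier transform $\CM_{\bL_1}^{\bF_q}$ on functions. This is the last assertion of \Cref{FT type I generic}, and in turn follows from the standard compatibility of $\FT_{\cM_1}$ with traces of Frobenius.
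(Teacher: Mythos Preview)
Your proposal is correct and follows essentially the same approach as the paper. The paper's own justification is the one-liner ``Using \Cref{Cor. K0 M1} and its analog for $\sfM_2$, we see that:'' immediately preceding the corollary; your write-up simply unpacks this reference, making explicit the role of the pushout description of $\sfH_1\ot_R\sfH_2$ and of the identification $\phi^F=\CM_{\bL_1}^{\bF_q}$ from \Cref{FT type I generic}.
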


Let $\s\in \SPM(m,n)$ and $\IC_\s\in \cM^k_1$. By \Cref{l:involution s0}, we know that $\FT^{-1} \circ \IC_{t_m} \circ \FT (\IC_{\s})\in \cM_1^k$ is a simple perverse sheaf, hence is isomorphic to $\IC_\t$ for a unique $\t\in \SPM(m,n)$. The assignment $\s\mapsto \t$ defines a involution  
\begin{equation}\label{iota tm}
    \iota_m: \SPM(m,n)\rightarrow \SPM(m,n). 
\end{equation}
Using \Cref{l:involution s0} and \Cref{define Phi}, we obtain the explicit formula
\begin{equation}\label{Foueire t m iota m}
  \iota_m(\sigma)=\Phi^{-1}\bigl(t_m * \Phi(\sigma)\bigr).
\end{equation}
We caution that, in general, $\iota_m(\sigma)\neq t_m * \sigma$ for $\sigma\in \SPM(m,n)$; see \Cref{ss: type I W graph examples}.

\begin{cons}[Oscillator $W_1\times W_2$-graph]
    For the moment cone $\cN_{\bL_1}$ as a spherical variety for $\ov G_1\times G_2$,  by \Cref{cons:W graph}, we have attached a $\ov W_1\times W_2$-graph 
    \begin{equation*}
        \bG_1=(\SPM(m,n), \{\cD_1(\s)\}_{\s\in \SPM(m,n)}, E_1, \{\mu_1(e)\}_{e\in E_1}).
    \end{equation*}
    Here $E_1$ is the set of oriented edges of $\bG_1$. 
    
    For the moment cone $\cN_{\bL_2}$ as a spherical variety for $G^\circ_1\times \ov G_2$, by \Cref{cons:W graph}, we have attached a $W^\circ_1\times \ov W_2$-graph 
    \begin{equation*}
        \bG^\circ_2=(\SPM(n,m), \{\cD_2(\rho)\}_{\rho\in \SPM(n,m)}, E_2, \{\mu_2(e)\}_{e\in E_2}).
    \end{equation*}
    Note here $\cD_2(\rho)$ is a subset of $\sfS_1^\circ\sqcup \ov \sfS_2$, not of $\sfS_1\sqcup \ov \sfS_2$.

    The involution $t_m\in W_1$ induces an involution on the graph $\bG^\circ_2$: its action on the vertex set is given by the multiplication $*$, its action on $\sfS_1^\circ\sqcup \ov \sfS_2$ swaps $s_{m-1}$ and $s_m$ in $\sfS_1^\circ$ and fixes all other elements. We have
    \begin{equation}\label{involution edge}
         \mu_2(e)=\mu_2(t_m* e),\qquad e\in E_2. 
    \end{equation}
    Here we define $t_m*e= (t_m* \t \mapsto t_m * \s)$ if $e= (\t \mapsto \s)\in E_2$.

We now define a decorated directed graph $\bG^\circ$ by combining $\bG_1$ and $\bG^\circ_2$ using the Fourier bijection $\Phi$ in \eqref{define Phi} to identify their set of vertices. Namely, the vertex set of $\bG^\circ$ is still $\SPM(m,n)$. For $\s\in \SPM(m,n)$, let 
    \begin{equation*}
        \cD(\s)=\cD_1(\s)\cup \cD_2(\Phi(\s))\subset \sfS_1^\circ\sqcup \sfS_2.
    \end{equation*}
    The edge set for $\bG^\circ$ is
    \begin{equation*}
        E=E_1\cup \Phi^{-1}(E_2).
    \end{equation*}
    Here we use $\Phi$ to also denote the bijection between the set of ordered pairs in $\SPM(m,n)$ and the set of ordered pairs in $\SPM(n,m)$, so that $\Phi^{-1}(E_2)$ makes sense as a subset of ordered pairs in $\SPM(m,n)$. Finally, for $e\in E$ we define
    \begin{equation*}
        \mu(e)=\begin{cases}
        \mu_1(e), & \mbox{if } e\in E_1;\\
        \mu_2(\Phi(e)), & \mbox{if } \Phi(e)\in E_2.
        \end{cases}
    \end{equation*}
    The well-definedness of $\mu$ is verified in \Cref{p:W12 graph}.
  
 The involution $t_m\in W_1$ also induces an involution on the decorated direct graph $\bG^\circ$.  Its action on the vertex set $\SPM(m,n)$ is via \Cref{iota tm}, and action on $\sfS_1^\circ\sqcup  \sfS_2$ swaps $s_{m-1}$ and $s_m$ in $\sfS_1^\circ$ and fixes all other elements. 
By \Cref{involution edge} and the well-definedness of $\mu$, we know that for any edge $e\in E$, 
   \begin{equation}\label{s0 preserves mu}
        \mu(e)=\mu(\iota_m(e))
    \end{equation}
Here we use $\iota_m$ to also denote the bijection between the set of ordered pairs in $\SPM(m,n)$ induced by \Cref{iota tm}.
\end{cons}

\begin{prop}\label{p:W12 graph}
    The function $\mu: E\to \ZZ$ is well-defined. The action of $\sfH_1\ot_R\sfH_2$ on $\sfM$ is determined by the formulas, where $s\in \sfS_1^\circ\sqcup \sfS_2$ and $\s\in \SPM(m,n)$
    \begin{eqnarray*}
        \sfT_s\cdot C'_\s&=&\begin{cases}
            v^2C'_\s & s\in \cD(\s)\\
            -C'_\s+v\sum_{e=(\s\to \tau)\in E, s\in \cD(\tau)}\mu(e)C'_\tau, & s\notin \cD(\s).
        \end{cases}\\
  \sfT_{t_m}\cdot C'_{\s}&=&C'_{ \iota_m(\s)}
    \end{eqnarray*}
\end{prop}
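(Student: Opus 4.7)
The plan is to derive each of the two formulas separately and then extract the well-definedness of $\mu$ from their compatibility. First I would handle the easier formula $\sfT_{t_m}\cdot C'_\s = C'_{\iota_m(\s)}$. Since $l(t_m)=0$, one has $C'_{t_m}=\sfT_{t_m}$ in $\sfH_1$ and $\IC_{t_m}$ is concentrated on a single $\bF_q$-point of $\cB_1$. By \Cref{l:involution s0}, $\IC_{t_m}\star \IC_\rho\cong \IC_{t_m*\rho}$ in $\cM^k_2$, and passing to Grothendieck groups and applying $\ch$ yields $\sfT_{t_m}\cdot C'_\rho = C'_{t_m*\rho}$ in $\sfM_2$. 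Transporting this to $\sfM$ via $\phi^{-1}$, using $\phi^{-1}(C'_\rho)=C'_{\Phi^{-1}(\rho)}$ from \Cref{l:phi IC} together with the defining formula \eqref{Foueire t m iota m} for $\iota_m$, gives the desired identity.

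Next I would establish the $\sfT_s$-formula for $s\in \sfS_1^\circ\sqcup\sfS_2$ in two halves. For $s\in \ov\sfS_1\sqcup\sfS_2$ I apply \Cref{Cor. Hk action on KL basis} to the $\ov\sfH_1\otimes\sfH_2$-module $\sfM_1$ with the KL basis $\{C'_\s\}$; purity and parity of the relevant $\IC_\sigma$ come from \Cref{Cor. M1 IC}. This produces exactly the claimed formula with the decoration, edges, and labels of $\bG_1$. For $s\in \sfS_1^\circ\sqcup\ov\sfS_2$ I apply the same corollary to the $\sfH_1\otimes\ov\sfH_2$-module $\sfM_2$ with KL basis $\{C'_{\Phi(\s)}\}$, and then transport via $\phi^{-1}$ using \Cref{l:phi IC}. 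This yields a formula in terms of $\cD_2(\Phi(\cdot))$, $\Phi^{-1}(E_2)$, $\mu_2(\Phi(\cdot))$. By the very definition of the combined graph $\bG^\circ$ one has $\cD(\s)=\cD_1(\s)\cup\cD_2(\Phi(\s))$ and $E=E_1\cup\Phi^{-1}(E_2)$, and (granted well-definedness of $\mu$) both halves are instances of the single formula in the statement. The two formulas together determine the whole $\sfH_1\otimes\sfH_2$-action because $\sfS_1^\circ\cup\{t_m\}\cup\sfS_2$ generates $W_1\times W_2$.

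The hard part is the well-definedness of $\mu$ on $E_1\cap \Phi^{-1}(E_2)$. My plan is to exploit the fact that $\sfM$ is a single $\sfH_1\otimes\sfH_2$-module, so the coefficient of $C'_\tau$ in $\sfT_s\cdot C'_\s$ is intrinsic. When $e=(\s\to\tau)\in E_1\cap \Phi^{-1}(E_2)$ admits a triggering reflection in the overlap, i.e.\ some $s\in (\cD_1(\tau)\setminus\cD_1(\s))\cap(\ov\sfS_1\sqcup\ov\sfS_2)$, comparing the formulas from $\sfM_1$ and from $\sfM_2$ directly forces $\mu_1(e)=\mu_2(\Phi(e))$, together with the overlap compatibilities $\cD_1(\s)\cap(\ov\sfS_1\sqcup\ov\sfS_2)=\cD_2(\Phi(\s))\cap(\ov\sfS_1\sqcup\ov\sfS_2)$. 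The remaining case, where the only triggering reflections are $t'_n$ on the $E_1$ side and $s_m$ on the $\Phi^{-1}(E_2)$ side, is handled using the conjugation relation $\sfT_{s_m}=\sfT_{t_m}\sfT_{s_{m-1}}\sfT_{t_m}$ in $\sfH_1$. Computing $\sfT_{s_m}\cdot C'_\s$ through the right-hand side, applying the $\sfT_{t_m}$-formula twice and the $\sfT_{s_{m-1}}$-formula (which sits in $\ov\sfS_1$, so is controlled by $\sfM_1$) in between, expresses the $s_m$-coefficients in terms of $\mu_1$ evaluated on the $\iota_m$-translate of the edge; matching with the direct $\sfT_{s_m}$-formula from $\sfM_2$ yields $\mu_1(\iota_m(e))=\mu_2(\Phi(e))$, and combining this with the symmetry $\mu_1(e)=\mu_1(\iota_m(e))$ (which one deduces from the same braid computation applied to $\iota_m(\s)$) closes the argument. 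This braid-and-involution bookkeeping is where I expect the main technical difficulty; everything else is formal consequence of \Cref{Cor. Hk action on KL basis}, \Cref{l:phi IC}, and \Cref{l:involution s0}.
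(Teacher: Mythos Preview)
Your proposal is correct and follows essentially the same route as the paper: the $\sfT_s$-formulas come from \Cref{Cor. Hk action on KL basis} applied to $\sfM_1$ (for $s\in\ov\sfS_1\sqcup\sfS_2$) and to $\sfM_2$ transported via $\phi$ (for $s\in\sfS_1^\circ\sqcup\ov\sfS_2$), the $\sfT_{t_m}$-formula from \Cref{l:involution s0} and \Cref{l:phi IC}, and well-definedness of $\mu$ from comparing coefficients when a triggering reflection lies in the overlap $\ov\sfS_1\sqcup\ov\sfS_2$.

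The only divergence is in the residual case. The paper does not try to establish $\mu_1(e)=\mu_1(\iota_m(e))$, which is the weak spot in your sketch: $\iota_m$ is defined through $\Phi$ and $\bG_2^\circ$, not through $\bG_1$, so there is no direct reason for $\mu_1$ to be $\iota_m$-invariant, and a second braid computation does not obviously yield it. Instead the paper routes everything through $\mu_2$, using the already-established fact that $t_m$ is a graph automorphism of $\bG_2^\circ$ and hence $\mu_2(t_m*\,\cdot\,)=\mu_2(\cdot)$ (equation~\eqref{involution edge}). Applying $\iota_m$ moves the distinguishing reflection from $s_m$ to $s_{m-1}\in\ov\sfS_1$, so the overlap case gives $\mu_1(\iota_m(e))=\mu_2(\Phi(\iota_m(e)))=\mu_2(t_m*\Phi(e))=\mu_2(\Phi(e))$. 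Your conjugation identity $\sfT_{s_m}=\sfT_{t_m}\sfT_{s_{m-1}}\sfT_{t_m}$ encodes exactly this relation, so once you drop the $\mu_1$-invariance claim and argue via $\mu_2$ instead, your argument and the paper's coincide.
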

\begin{proof} The formula for the $\sfT_s$-action follows from \Cref{Cor. Hk action on KL basis} applied to $\sfM_1$ and $\sfM_2$, as long as we check that $\mu$ is well-defined. 

It thus remains to check that $\mu$ is well-defined: if $e=(\s\to \t)\in E_1$ and $\Phi(e)=(\Phi(\s)\to \Phi(\t))\in E_2$, then $\mu_1(e)=\mu_2(\Phi(e))$.

For $s\in \ov \sfS_1\sqcup \ov \sfS_2$ and any $\s\in \SPM(m,n)$, $s\in \cD_1(\s)$ if and only if $\sfT_s\cdot C'_\s=v^2C'_\s$; similarly, $s\in \cD_2(\Phi(\s))$ if and only if $\sfT_s\cdot C'_{\Phi(\s)}=v^2C'_{\Phi(\s)}$. By \Cref{FT type I generic}, we know that $\cD_1(\s)$ and $\cD_2(\s)$ have the same intersection with $\ov \sfS_1\sqcup \ov \sfS_2$. In other words, $\cD(\s)\cap (\ov \sfS_1\sqcup \ov \sfS_2)=\cD_i(\s)\cap (\ov \sfS_1\sqcup \ov \sfS_2)$ for $i=1,2$. 

Now suppose $e=(\s\to \t)\in E_1$ and $\Phi(e)=(\Phi(\s)\to \Phi(\t))\in E_2$. If there exists $s\in \ov \sfS_1\sqcup \ov \sfS_2$ such that $s\in \cD(\t)-\cD(\s)$, then $v\mu_1(e)$ is the coefficient of $C'_\t$ in $\sfT_s\cdot C'_\s$, and $v\mu_2(\Phi(e))$ is the coefficient of $C'_{\Phi(\t)}$ in $\sfT_s\cdot C'_{\Phi(\s)}$. Since $\phi$ intertwines the $\sfT_s$-action and sends $C'_\s$ (resp. $C'_\t$) to $C'_{\Phi(\s)}$ (resp. $C'_{\Phi(\t)}$) by \Cref{l:phi IC}, we see that $\mu_1(e)=\mu_2(\Phi(e))$.

The remaining case is when $\cD_1(\t)-\cD_1(\s)=\{s_m\}$. In this case, we have $\cD_1(\io_m(\t))-\cD_1(\io_m(\s))=\{s_{m-1}\}$ which is in $\ov S_1$, hence 
\begin{equation}\label{comptible 1}
    \mu_1(\io_m(e))=\mu_2(\Phi(\io_m(e))=\mu_2(t_m* \Phi(e))
\end{equation}
 by the previous discussion and \Cref{Foueire t m iota m}. By \Cref{involution edge}, we also have 
 \begin{equation}\label{comptible 2}
  \mu_2(t_m* \Phi(e))=\mu_2(\Phi(e))
\end{equation}
Combining \Cref{comptible 1} and \eqref{comptible 2}, we have $\mu_1(e)=\mu_2(\Phi(e))$. 

The formula for the action of $\sfT_{t_m}$ follows from \Cref{l:involution s0}. This finishes the proof.
\end{proof}
\begin{remark}
    Using the $W_1^\circ\times W_2$-graph $\bG^\circ$, we can apply the procedure of \Cref{def:cell} to define cells for $\SPM(m,n)$, and the cell filtration for $\sfM$ as an $\sfH_1^\circ\ot_R \sfH_2$-bimodule .
\end{remark}

\subsection{A $W$-graph example for type I theta correspondence}\label{ss: type I W graph examples}
In this section, we computing explicitly the $W$-graph associated with the oscillator Hecke module arising from the \emph{type I} theta correspondence in the case when $m=2,n=1$. In this case, we have $W_1^\circ=\langle s_1,s_2\rangle$, $W_1=\langle s_1,t_2\rangle$ and $W_2=\langle s_1'\rangle$. 
\begin{enumerate}
        \item The computations for $\ov W_1\times W_2$-graph $\bG_1$ is similar to \Cref{section W-graph}. We therefore omit the details and record the resulting $W$-graph below.
    \begin{center}
\begin{tikzpicture}[node distance=18mm]
\WNode[below left]{v0}{(-2,2.8)}{$\varnothing$}{$\{s_1,s'_1\}$}
\WNode[below right]{v1}{( 2,2.8)}{$2\mapsto -1$}{$\{s'_1\}$}


\WNode[below left]{v2}{(0,1.2)}{$2\mapsto 1$}{$\{\emptyset\}$}

\WNode[below left]{v3}{(0,-1.5)}{$1\mapsto 1$}
  {$\{s_1\}$}

\WNode[below]{v4}{(0,-5)}{$1\mapsto -1$}{$\{s_1,s'_1\}$}

\WEdge{v2}{v0}{0}
\WEdge{v2}{v1}{0}
\WEdge{v2}{v3}{0}
\WEdge{v3}{v4}{0}

\WEdge{v1}{v4}{20}
\end{tikzpicture}
\end{center}
\item The $W_1^\circ\times \ov W_2=W_1^\circ$-graph $\bG_2$ is given as follows. Here we use the $\leftrightarrow$ to denote the involution induced by $t_2$ on $\bG_2$. 
\begin{center}
\begin{tikzpicture}[node distance=20mm]

\WNode[below left]{v0}{(-2,2.8)}{$1\mapsto 2$}{$\{s_1\}$}
\WNode[below right]{v1}{( 2,2.8)}{$1\mapsto -2$}{$\{s_2\}$}

\WEdgeBoth{v0}{v1}{0}

\WNode[below left]{v2}{(0,1.2)}{$1\mapsto 1$}{$\{\emptyset\}$}

\WNode[below left]{v3}{(0,-1.5)}{$\emptyset$}
  {$\{s_1,s_2\}$}

\WNode[below]{v4}{(0,-5.4)}{$1\mapsto -1$}{$\{s_1,s_2\}$}

\WEdge{v2}{v0}{0}
\WEdge{v2}{v1}{0}
\WEdge{v2}{v3}{0}
\WEdge{v0}{v4}{-20}
\WEdge{v1}{v4}{20}
\end{tikzpicture}
\end{center}
\item By applying the results in \Cref{subsection Fourier bijection}, we compute the Fourier transform: 
    \begin{equation}\label{Fourier type I example}
    \begin{split}
        \Phi (\emptyset)& = (1\mapsto 2) \\
        \Phi(2\mapsto 1)&=  (1\mapsto 1)\\
        \Phi(1\mapsto 1)&= (\emptyset)\\
        \Phi(2\mapsto -1)&= (1\mapsto -2)\\
         \Phi(1\mapsto -1)&= (1\mapsto -1)\\
    \end{split}
    \end{equation}
    \item Combining all above, we get the $W_1^\circ\times  W_2$-graph $\bG$ as follows. We use$\leftrightarrow$ to denote the involution of $\iota_2$ on $\bG$.  
\begin{center}
\begin{tikzpicture}[node distance=20mm]

\WNode[below left]{v0}{(-2,2.8)}{$\varnothing$}{$\{s_1,s'_1\}$}
\WNode[below right]{v1}{( 2,2.8)}{$2\mapsto -1$}{$\{s_2,s'_1\}$}

\WEdgeBoth{v0}{v1}{0}

\WNode[below left]{v2}{(0,1.2)}{$2\mapsto 1$}{$\{\emptyset\}$}

\WNode[below left]{v3}{(0,-1.5)}{$1\mapsto 1$}
  {$\{s_1,s_2\}$}

\WNode[below]{v4}{(0,-5.4)}{$1\mapsto -1$}{$\{s_1,s_2,s'_1\}$}

\WEdge{v2}{v0}{0}
\WEdge{v2}{v1}{0}
\WEdge{v2}{v3}{0}
\WEdge{v3}{v4}{0}

\WEdge{v0}{v4}{-20}
\WEdge{v1}{v4}{20}
\end{tikzpicture}
\end{center}
\end{enumerate}
In this case, each vertex is a cell and there are $5$-cells. 
\trivial[h]{Some details computation. 
\begin{Exa}\label{W graph example type I}
    Let $m=2, n=1$. We have $W_1^\circ=\langle s_1,s_2\rangle$, $W_1=\langle s_1,t_2\rangle$ and $W_2=\langle s_1'\rangle$. 
\begin{enumerate}
    \item We first construct the $\ov W_1\times W_2$-graph $\bG_1$. The Bruhat order for $\SPM(2,1)$ is given as follows 
    \begin{center}
\begin{tikzpicture}[node distance=20mm]

\WNode[below]{vtop}{(0,2.8)}{$1\mapsto -1$}{$d_\s=3$}

\WNode[below left]{vL}{(-1.8,1.0)}{$1\mapsto 1$}{$d_\s=2$}
\WNode[below right]{vR}{( 1.8,1.0)}{$2\mapsto -1$}{$d_\s=2$}

\WNode[below left]{vmid}{(0,-0.6)}{$2\mapsto 1$}{$d_\s=1$}

\WNode[below]{vbot}{(0,-2.6)}{$\varnothing$}{$d_\s=0$}

\WEdge{vtop}{vL}{0}
\WEdge{vtop}{vR}{0}

\WEdge{vL}{vmid}{0}
\WEdge{vR}{vmid}{0}

\WEdge{vmid}{vbot}{0}
\end{tikzpicture}
\end{center}
One checks that
\begin{itemize}
    \item $\overline{\cO}_\sigma$ is smooth (in fact a linear subspace of $\Hom(L_1,V_2)$) unless $\sigma=1\mapsto -1$.  In the smooth cases we have
    \[
      \IC_\sigma=\underline{\Ql}_{\overline{\cO}_\sigma}\langle d_\sigma\rangle
    \]
    and $P_{\sigma',\sigma}(t)=1$ for $\sigma'<\sigma$. 
    \item If $\sigma=1\mapsto -1$, then
    \[
       \overline{\cO}_\sigma=\{T\in\Hom(L_1,V_2)\mid\rank(T)=1\}.
    \]
    This is the determinantal variety of $2\times2$ matrices of rank $1$, with a singularity at $0$. By the same computations in \Cref{Example W-graph type II}, we deduce that 
    \begin{equation}\label{Kazhan-lustig type I}
         P_{\sigma',\sigma}(t)=\begin{cases}
        1+t^2 \qquad & \mbox{if $\sigma'=\emptyset$}\\
        1 \quad & \mbox{otherwise}. 
    \end{cases} 
    \end{equation}
 \end{itemize}   
Combining the Bruhat graph, \Cref{Kazhan-lustig type I} and \Cref{Lem. type G or U type I}, we get the $\ov W_1\times W_2$-graph $\bG_1$ as follows 
\begin{center}
\begin{tikzpicture}[node distance=20mm]

\WNode[below]{vtop}{(0,2.8)}{$1\mapsto -1$}{$\{s_1,s'_1\}$}

\WNode[below left]{vL}{(-1.8,1.0)}{$1\mapsto 1$}{$\{s_1\}$}
\WNode[below right]{vR}{( 1.8,1.0)}{$2\mapsto -1$}{$\{s'_1\}$}

\WNode[below left]{vmid}{(0,-0.6)}{$2\mapsto 1$}{$\emptyset$}

\WNode[below]{vbot}{(0,-2.6)}{$\varnothing$}{$\{s_1,s'_1\}$}

\WEdge{vL}{vtop}{0}
\WEdge{vR}{vtop}{0}

\WEdge{vmid}{vL}{0}
\WEdge{vmid}{vR}{0}

\WEdge{vmid}{vbot}{0}

\end{tikzpicture}
\end{center}

    \item Next we construct the $W_1^\circ\times \ov W_2=W_1^\circ$-graph $\bG_2$. The Bruhat order for $\SPM(1,2)$ is given as follows 
    \begin{center}
\begin{tikzpicture}[node distance=20mm]
\WNode[below]{vtop}{(0,2.8)}{$1\mapsto -1$}{$d_\s=3$}

\WNode[below left]{vL}{(-1.8,1.0)}{$1\mapsto 2$}{$d_\s=2$}
\WNode[below right]{vR}{( 1.8,1.0)}{$1\mapsto -2$}{$d_\s=2$}

\WNode[below left]{vmid}{(0,-0.6)}{$1\mapsto 1$}{$d_\s=1$}

\WNode[below]{vbot}{(0,-2.6)}{$\varnothing$}{$d_\s=0$}

\WEdge{vtop}{vL}{0}
\WEdge{vtop}{vR}{0}

\WEdge{vL}{vmid}{0}
\WEdge{vR}{vmid}{0}

\WEdge{vmid}{vbot}{0}
\end{tikzpicture}
\end{center}
One checks that
\begin{itemize}
    \item $\overline{\cO}_\sigma$ is smooth (in fact a linear subspace of $\Hom(L_2,V_1)$) unless $\sigma=1\mapsto -1$.  
    \item If $\sigma=1\mapsto -1$, then $\overline{\cO}_\sigma=\cN_{\bL_2}$ is the variety of isotropic vectors in $V_1$, with a singularity at $0$. One can 
compute the Kazhdan-Lusztig polynomial using the resolution 
\[
\wt \cN_{\bL_2}=\{(T,L)| L\subseteq V_1, L\,\,\mbox{isotropic},\,\, T\in \Hom(L_1,L)\}
\] 
with
\[
\pi: \wt \cN_{\bL_2} \longrightarrow \cN_{\bL_2}, \quad (T,L)\mapsto T.
\]
The exceptional fibre over the singular point $0$ is the set of isotropic line in $V_1$, which is isomorphic to 
    \[
      \pi^{-1}(0)\cong \bP^1\times \bP_1 .
    \]
We have 
    \begin{equation}\label{Kazhan-lustig type I 2}
         P_{\sigma',\sigma}(t)=\begin{cases}
        1+t^2 \qquad & \mbox{if $\sigma'=\emptyset$}\\
        1 \quad & \mbox{otherwise}. 
    \end{cases} 
    \end{equation}
 \end{itemize}   
 Combining the Bruhat graph, \Cref{Kazhan-lustig type I 2} and \Cref{Lem. type G or U type I model 2}, we get the $W_1^\circ \times \ov W_2$-graph $\bG_2$ as follows 
\begin{center}
\begin{tikzpicture}[node distance=20mm]

\WNode[below]{vtop}{(0,2.8)}{$1\mapsto -1$}{$\{s_1,s_2\}$}

\WNode[below left]{vL}{(-1.8,1.0)}{$1\mapsto -2$}{$\{s_2\}$}
\WNode[below right]{vR}{( 1.8,1.0)}{$1\mapsto 2$}{$\{s_1\}$}

\WNode[below left]{vmid}{(0,-0.6)}{$1\mapsto 1$}{$\emptyset$}

\WNode[below]{vbot}{(0,-2.6)}{$\varnothing$}{$\{s_1,s_2\}$}

\WEdge{vL}{vtop}{0}
\WEdge{vR}{vtop}{0}

\WEdge{vmid}{vL}{0}
\WEdge{vmid}{vR}{0}

\WEdge{vmid}{vbot}{0}

\end{tikzpicture}
\end{center}
    \item The Fourier transform and bijection in \Cref{define Phi}. In this case, the orbit closure is linear subspace expect for one case, using , we get 
    \begin{equation}\label{Fourier type I example}
    \begin{split}
        \Phi (\emptyset)& = (1\mapsto 2) \\
        \Phi(2\mapsto 1)&=  (1\mapsto 1)\\
        \Phi(1\mapsto 1)&= (\emptyset)\\
        \Phi(2\mapsto -1)&= (1\mapsto -2)\\
         \Phi(1\mapsto -1)&= (1\mapsto -1)\\
    \end{split}
    \end{equation}
    The map $\iota_2$ in \Cref{iota tm} is computed as follows: 
    \begin{equation}\label{iota tm example}
    \begin{split}
        \iota_2 (\emptyset)& = (2\mapsto -1) \\
        \iota_2(2\mapsto 1)&=  (2\mapsto 1)\\
        \iota_2(1\mapsto 1)&= (1\mapsto 1)\\
        \iota_2(2\mapsto -1)&= (\emptyset)\\
         \iota_2(1\mapsto -1)&= (1\mapsto -1)\\
    \end{split}
    \end{equation}
    \item Combining all above, we get the $W_1^\circ\times  W_2$-graph $\bG$. Here we use the $\leftrightarrow$ to denote the involution of $\iota_2$ on $\bG$.  
\begin{center}
\begin{tikzpicture}[node distance=20mm]

\WNode[below left]{v0}{(-2,2.8)}{$\varnothing$}{$\{s_1,s'_1\}$}
\WNode[below right]{v1}{( 2,2.8)}{$2\mapsto -1$}{$\{s_2,s'_1\}$}

\WEdgeBoth{v0}{v1}{0}

\WNode[below left]{v2}{(0,1.2)}{$2\mapsto -1$}{$\{\emptyset\}$}

\WNode[below left]{v3}{(0,-1.5)}{$1\mapsto 1$}
  {$\{s_1,s_2\}$}

\WNode[below]{v4}{(0,-5.4)}{$1\mapsto -1$}{$\{s_1,s_2,s'_1\}$}

\WEdge{v2}{v0}{0}
\WEdge{v2}{v1}{0}
\WEdge{v2}{v3}{0}
\WEdge{v3}{v4}{0}

\WEdge{v0}{v4}{-20}
\WEdge{v1}{v4}{20}
\end{tikzpicture}
\end{center}
\end{enumerate}
\end{Exa}
}

\subsection{From  $\bF_q$ to $\bC$}\label{Sec. Fq to C}
Our previous discussions about the Hecke category $\cH^k_{i}$, the Hecke module categories $\cM^k_{i}$ ($i=1,2$) work equally well when $k$ is replaced by $\bC$. The only modification needed is to replace Fourier-Deligne transform by Fourier-Sato transform, which works for $\Gm$-monodromic sheaves on vector bundles over stacks over $\bC$. 

We use a superscript $\bC$ to indicate the counterpart over $\bC$ of what we considered over $\bF_q$. For example, the symplectic and quadratic spaces $V^{\bC}_{1}$ and $V^{\bC}_{2}$, their isometry groups $G^{\bC}_{i}$, their Hecke categories  $\cH^{\bC}_{i}$, the $\bcH_1^{\bC}\ot \cH_2^{\bC}$-module category $\cM^{\bC}_1$ (depending on the choice of a Lagrangian $L_1^{\bC}\subset V_1^{\bC}$) and the $\cH_1^{\bC}\ot \bcH_2^{\bC}$-module category $\cM^{\bC}_2$ (depending on the choice of a Lagrangian $L_2^{\bC}\subset V_2^{\bC}$). The same argument as \Cref{Cor. Hk ring isom} shows that $\chi$ gives a ring isomorphism for $i=1,2$
\begin{equation}\label{chiC Hi}
    \chi^{\bC}_{\cH_i}: K_0(\cH^{\bC}_{i})\isom \bZ[W_i].
\end{equation}

Similarly, by \Cref{Lem. ch isom}, $\chi$ gives an isomorphism:
\begin{eqnarray*}
    \chi^{\bC}_{\cM_1}: K_0(\cM^{\bC}_{1})\isom \bZ[\underline{\ov B^{\bC}_{1}\times B^{\bC}_{2}\bs \cN^{\bC}_{\bL_{1}}}].
\end{eqnarray*}
compatible with the actions of the both sides by $K_0(\bcH^\bC_1)\ot K_0(\cH^\bC_2)$ and $\bZ[\ov W_1]\ot \bZ[W_2]$ which are identified via \eqref{chiC Hi} and its analog for $\bcH_i$. Likewise, we have an isomorphism
\begin{equation*}
    \chi^{\bC}_{\cM_2}: K_0(\cM^{\bC}_{2})\isom \bZ[\underline{ B^{\bC}_{1}\times \ov B^{\bC}_{2}\bs \cN^{\bC}_{\bL_{2}}}]
\end{equation*}
compatible with the $K_0(\cH^\bC_1)\ot K_0(\bcH^\bC_2)$ and $\bZ[ W_1]\ot \bZ[\ov W_2]$ on both sides.

The same argument of \Cref{Fourier-transform 2} shows that partial Fourier-Sato transform gives an equivalence
\begin{equation*}
    \FT^\bC_{\cM_1}:\cM_1^\bC\isom  \cM_2^\bC.
\end{equation*}
Passing to Grothendieck groups it defines a $\ov W_1\times \ov W_2$-equivariant isomorphism
\begin{equation*}
    \phi^\bC: K_0(\cM_1^\bC)\isom  K_0(\cM_2^\bC).
\end{equation*}
The bijection on $\IC$ sheaves induced by $\FT^\bC_{\cM_1}$ gives a bijection
    \begin{equation*}
        \Phi^\bC: \SPM(m,n)\isom \SPM(n,m).
    \end{equation*}

By \Cref{SPM vs orbits}, both orbit sets $\underline{\ov B^{\bC}_{1}\times B^{\bC}_{2}\bs \cN^{\bC}_{\bL_{1}}}$ and $\underline{\ov B_{1,k}\times B_{2,k}\bs \cN_{\bL_{1},k}}$ are canonically identified with $\SPM(m,n)$. The same is true for $\underline{ B^{\bC}_{1}\times \ov B^{\bC}_{2}\bs \cN^{\bC}_{\bL_{2}}}$ and $\underline{B_{1,k}\times \ov B_{2,k}\bs \cN_{\bL_{2},k}}$. We therefore get a canonical isomorphism of abelian groups
\begin{equation}\label{comp isom}
   \Comp_{\cM_i}: \chi^{\bC,-1}_{\cM_i}\circ \chi^k_{\cM_i}: K_0(\cM^k_i)\cong \ZZ[\SPM(m,n)]\cong K_0(\cM^{\bC}_i).
\end{equation}

\begin{lemma}\label{Lem. compare Fq and C}
\begin{enumerate}
    \item The isomorphism $\Comp_{\cM_1}$ is $\ov W_1\times W_2$-equivariant; 
    \item The isomorphism $\Comp_{\cM_1}$ sends $\chi_{\cM_1}(\IC_\s)\in \cM^k_1$ to $\chi^\bC_{\cM_1}(\IC_\s)\in K_0(\cM_1^{\bC})$ for any $\s\in \SPM(m,n)$.
\end{enumerate}
Parallel statements also holds for $\Comp_{\cM_2}$ and $\rho\in \SPM(n,m)$. 
\end{lemma}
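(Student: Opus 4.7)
The plan is to deduce both assertions from a single principle: the combinatorial data controlling each side of $\Comp_{\cM_1}$ — the orbit-type classification and the Kazhdan–Lusztig polynomials — is independent of the base field. I will treat the two parts separately and then indicate how the same strategy handles $\Comp_{\cM_2}$.

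For part (1), the $\ov W_1\times W_2$-actions on both $K_0(\cM_1^k)$ and $K_0(\cM_1^{\bC})$ arise from the respective Hecke category actions specialized at $v=1$, via (the complex analogue of) \Cref{Lem. Hecke mod specialization}. By Knop's formula \Cref{Cor. W action sph orb}, the action of a simple reflection $s\in \ov \sfS_1\sqcup \sfS_2$ on a basis element $\one_\s\in \bZ[\SPM(m,n)]$ depends only on the type of the pair $(s,\cO_\s)$ and on the $s$-companion combinatorics. By \Cref{remark assupmtion orbit type I}, the type classification of \Cref{Lem. type G or U type I} is uniform in the base field. Hence the two $\ov W_1\times W_2$-actions coincide on the shared basis $\{\one_\s\}$, and $\Comp_{\cM_1}$ is equivariant.

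For part (2), unwinding the definition of $\Comp_{\cM_1}$ in \eqref{comp isom} reduces the claim to showing that the Euler characteristics $\chi^k_{\cM_1}([\IC_\s])$ and $\chi^{\bC}_{\cM_1}([\IC_\s^{\bC}])$ agree in $\bZ[\SPM(m,n)]$. Stalk-by-stalk, these are determined by the dimensions $\dim H^i(j_\b^*\IC_\s)$; by \Cref{Cor. M1 IC} only even cohomological degrees contribute, and the resulting signed sum is, up to a global sign depending only on $d_\s$, the evaluation $P_{\b,\s}(1)$. The induction in \Cref{Lem. KL indep p} — based on the explicit formula \Cref{Lem. linear IC type I} for $\s\in \SPM(m,n)_0$ and the type-driven Hecke action of \Cref{Prop ICs preserves Tate} — goes through verbatim in the complex setting with Fourier–Sato in place of Fourier–Deligne, so $P_{\b,\s}$ is independent of the base field. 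This gives (2). The parallel statement for $\Comp_{\cM_2}$ follows by the same argument, using \Cref{Lem. type G or U type I model 2} for the orbit types of $W_1^\circ\times \ov W_2$ and the combinatorial formula $\sfT_{t_m}\one_\rho=\one_{t_m*\rho}$ of \Cref{l:involution s0} to handle the extra element $t_m\in W_1\setminus W_1^\circ$; this formula is manifestly field-independent because it reflects the geometric involution $\dot t_m$ defined integrally on $\cN_{\bL_2}$.

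The main obstacle I anticipate is a routine but careful bookkeeping: one must verify that each complex analogue of an $\bF_q$-statement invoked along the way — in particular the complex versions of \Cref{Cor. M1 IC}, the Kazhdan–Lusztig basis construction, and the purity/parity properties of $\IC_\s^{\bC}$ — holds with the same combinatorial output. These analogues all rest on the formal six-functor formalism and on the fact that Fourier–Sato is a perverse $t$-exact equivalence sharing the purity-preservation properties of Fourier–Deligne, so no genuinely new phenomenon appears. Once these analogues are in place, the proof reduces to the two combinatorial inputs above.
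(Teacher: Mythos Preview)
Your approach is correct and essentially matches the paper's proof: part (1) via \Cref{Cor. W action sph orb} together with the base-field independence of the orbit types, and part (2) by the induction underlying \Cref{Lem. KL indep p}, with initial cases coming from \Cref{Lem. linear IC type I}. One small remark: the Fourier--Sato functor plays no role in the argument for part (2) (the induction uses only the Hecke convolution formula of \Cref{Lem. conv ICs} and \Cref{Prop ICs preserves Tate}), so that clause can be dropped.
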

\begin{proof}
    (1) The actions of simple reflections both for $K_0(\cM^k_1)$ and for $K_0(\cM^\bC_1)$ are given by \Cref{Cor. W action sph orb}.

    (2) The calculations of $\IC_s\star \IC_\s$ over $k$ and over $\bC$ are both given by \Cref{Lem. conv ICs}. The rest of the argument is similar to that of \Cref{Lem. KL indep p}, by induction on $d_\s$, with initial cases being $\s=\s_i$, in which case $\chi(\IC_{\s_i})=(-1)^{d_{\s_i}}\sum_{\s\le \s_i}\one_{\s}$ holds both over $k$ and over $\bC$. 
\end{proof}

\begin{cor}\label{Cor. compare FT Fq vs C}
\begin{enumerate}
   \item $\Phi^\bC=\Phi$.
    \item The following diagram commutes:
    \begin{equation*}       \xymatrix{K_0(\cM^k_1)\ar[r]^-{\phi^{k}}\ar[d]_{\Comp_{\cM_1}} & K_0(\cM^k_2)\ar[d]^{\Comp_{\cM_2}}\\     K_0(\cM^{\bC}_1)\ar[r]^-{\phi^{\bC}} & K_0(\cM^{\bC}_2)}
    \end{equation*}
\end{enumerate}
\end{cor}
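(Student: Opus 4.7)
The plan is to establish part (1) first; part (2) will then follow by a direct calculation on the basis of IC classes. For part (1), I need to show that Fourier--Deligne transform over $k=\overline{\bF_q}$ and Fourier--Sato transform over $\bC$ induce the same bijection $\SPM(m,n)\leftrightarrow \SPM(n,m)$, once the two orbit sets on both sides are identified combinatorially with $\SPM$. Both transforms are $t$-exact equivalences that send simple perverse sheaves to simple perverse sheaves, so each of $\Phi$ and $\Phi^{\bC}$ is well-defined and is characterized by the property that $\FT(\IC_\sigma)\cong \IC_{\Phi(\sigma)}$ (resp.\ $\IC_{\Phi^\bC(\sigma)}$), up to a one-dimensional twist on a point.

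To prove $\Phi=\Phi^\bC$, my preferred route is to appeal to the explicit combinatorial description of the Fourier bijection carried out in Appendix~\ref{subsection Fourier bijection} (which is what proves Proposition~\ref{c:Fourier indep k}). That description is formulated purely in terms of the combinatorics of signed partial matchings and of orbit geometry on $\cN_{\bL_1}$ and $\cN_{\bL_2}$, none of which depends on the characteristic; running the same analysis with Fourier--Sato in place of Fourier--Deligne produces the same formula. A cleaner conceptual alternative would be to put everything (the vector bundles $\bL_1,\bL_2$ over $\bL^b$, the group actions, and the orbit closures) over an integral model $\Spec\bZ[1/N]$, and to invoke the compatibility of Fourier--Deligne and Fourier--Sato transforms under specialization of geometric fibers; this compatibility preserves the stratification by orbits and hence the bijection on IC sheaves.

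Assuming (1), part (2) is a direct diagram chase on the basis $\{\chi^k_{\cM_1}(\IC_\sigma)\mid\sigma\in \SPM(m,n)\}$ of $K_0(\cM_1^k)$. Going down then right: $\Comp_{\cM_1}$ sends $\chi^k_{\cM_1}(\IC_\sigma)$ to $\chi^\bC_{\cM_1}(\IC_\sigma)$ by Lemma~\ref{Lem. compare Fq and C}(2), and $\phi^\bC$ sends the latter to $\chi^\bC_{\cM_2}(\IC_{\Phi^\bC(\sigma)})$ by definition of $\Phi^\bC$ (the scalar twist dies under $\chi^\bC$). Going right then down: $\phi^k$ sends $\chi^k_{\cM_1}(\IC_\sigma)$ to $\chi^k_{\cM_2}(\IC_{\Phi(\sigma)})$ by definition of $\Phi$, and $\Comp_{\cM_2}$ sends this to $\chi^\bC_{\cM_2}(\IC_{\Phi(\sigma)})$ by the analogue of Lemma~\ref{Lem. compare Fq and C}(2) for $\cM_2$. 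By (1) the two outputs agree, so the square commutes on a basis, hence everywhere.

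The main obstacle is (1): genuinely comparing an $\ell$-adic Fourier--Deligne transform with a Betti Fourier--Sato transform at the level of individual IC sheaves. The simplest way around it is to avoid the comparison itself and instead observe that the bijection on the set of IC sheaves is determined by combinatorial data (orbit inclusions, ranks, and the companion structure under $\ov B_1\times B_2$ and $B_1\times \ov B_2$) that is unchanged when passing from $k$ to $\bC$; the explicit formula in Appendix~\ref{subsection Fourier bijection} makes this transparent and applies verbatim in both settings.
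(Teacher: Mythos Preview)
Your approach is essentially the same as the paper's. For (1) you appeal to the explicit description of $\Phi$ in Appendix~\ref{subsection Fourier bijection} (specifically \Cref{Prop FT SPM via PM}) and note that the argument there works verbatim with Fourier--Sato over $\bC$, which is exactly what the paper does. For (2) you check commutativity on the basis of IC classes using \Cref{Lem. compare Fq and C}(2) and part (1); the paper states this more tersely but the content is identical. One small cosmetic point: in your diagram chase you write $\chi^k_{\cM_1}(\IC_\sigma)$ when you mean the class $[\IC_\sigma]\in K_0(\cM_1^k)$, and over $\bC$ there is no Frobenius twist to kill, so $\FT^\bC(\IC_\sigma)\cong \IC_{\Phi^\bC(\sigma)}$ on the nose.
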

\begin{proof}
$\Phi$ is determined in \Cref{Prop FT SPM via PM}, and the discussions in \Cref{Prop FT SPM via PM} also work over $\bC$, hence $\Phi^\bC=\Phi$. The commutativity in (2) then follows from (1) together with \Cref{Lem. compare Fq and C}(2).
\end{proof}

\begin{cor}\label{c:W12 action over C}
    The $\ov W_1\times W_2$-action on $K_0(\cM^{\bC}_1)$ extends canonically to an action of $W_1\times W_2$ such that under the isomorphism $\phi^{\bC}$, it extends the $ W_1\times \ov W_2$-action on $K_0(\cM^{\bC}_2)$. Moreover, the isomorphism $\Comp_{\cM_1}: \sfM_{v=1,\bZ}\isom K_0(\cM^{\bC}_1)$ is $W_1\times W_2$-equivariant. 
\end{cor}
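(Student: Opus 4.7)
The plan is to define a $t_m$-action on $K_0(\cM^\bC_1)$ by transport through $\phi^\bC$ from the known $t_m$-action on $K_0(\cM^\bC_2)$, to glue it with the given $\ov W_1\times W_2$-action via a pushout argument for group algebras, and then to obtain $W_1\times W_2$-equivariance of $\Comp_{\cM_1}$ by chasing the commutative square of \Cref{Cor. compare FT Fq vs C}(2).

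First I would establish the complex analog of \Cref{Cor. FT equiv bcH}: the partial Fourier--Sato transform $\FT^\bC_{\cM_1}\colon\cM^\bC_1\isom\cM^\bC_2$ carries a canonical equivariant structure under the monoidal action of $\bcH^\bC_1\otimes\bcH^\bC_2$. The proofs of \Cref{lem:FourierSupport} and \Cref{Fourier-transform 2} transpose verbatim to the analytic setting over $\bC$ with Fourier--Sato in place of Fourier--Deligne, because both lemmas rely only on general properties of the Fourier transform for vector bundles that hold equally well in the analytic Fourier--Sato framework. Passing to Grothendieck groups and using the ring isomorphisms of \Cref{chiC Hi}, this shows that $\phi^\bC$ is $\bZ[\ov W_1]\otimes\bZ[\ov W_2]$-equivariant for the restrictions of the two given actions.

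Next, I would glue the two actions. Following the argument at the start of \Cref{Sec gen oscillator bimod}, the group algebra $\bZ[W_1\times W_2]=\bZ[W_1]\otimes\bZ[W_2]$ is the pushout, in the category of associative $\bZ$-algebras, of $\bZ[\ov W_1]\otimes\bZ[W_2]$ and $\bZ[W_1]\otimes\bZ[\ov W_2]$ amalgamated over $\bZ[\ov W_1]\otimes\bZ[\ov W_2]$. Indeed, the only generator of $\bZ[W_1]$ outside $\bZ[\ov W_1]$ is $t_m$, and it commutes trivially with all of $\bZ[W_2]$ inside the tensor product, so no cross-relations obstruct the gluing. The given $\ov W_1\times W_2$-action on $K_0(\cM^\bC_1)$ and the $W_1\times\ov W_2$-action on $K_0(\cM^\bC_2)$, transported to $K_0(\cM^\bC_1)$ via $\phi^\bC$, agree on $\bZ[\ov W_1]\otimes\bZ[\ov W_2]$ by the first step; hence by the pushout property they glue to a unique $W_1\times W_2$-action on $K_0(\cM^\bC_1)$. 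By construction this extends the given $\ov W_1\times W_2$-action and, under $\phi^\bC$, extends the $W_1\times\ov W_2$-action on $K_0(\cM^\bC_2)$.

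Finally, for the $W_1\times W_2$-equivariance of $\Comp_{\cM_1}\colon\sfM_{v=1,\bZ}\isom K_0(\cM^\bC_1)$, the $\ov W_1\times W_2$-equivariance is \Cref{Lem. compare Fq and C}(1). The parallel statement there also gives that $\Comp_{\cM_2}$ is $W_1\times\ov W_2$-equivariant, in particular $t_m$-equivariant. By construction, $t_m$ acts on $\sfM_{v=1,\bZ}=K_0(\cM^k_1)$ (via the generic oscillator bimodule of \Cref{Sec gen oscillator bimod}) and on $K_0(\cM^\bC_1)$ (by the construction just described) as the conjugate of the $t_m$-action on the respective second model by $\phi^k$ and $\phi^\bC$. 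Pre- and post-composing the commutative square of \Cref{Cor. compare FT Fq vs C}(2) with these $t_m$-actions, and using $t_m$-equivariance of $\Comp_{\cM_2}$, yields $\Comp_{\cM_1}\circ t_m=t_m\circ\Comp_{\cM_1}$. Combined with \Cref{Lem. compare Fq and C}(1) and the pushout presentation of $\bZ[W_1\times W_2]$, this gives the full $W_1\times W_2$-equivariance. I do not anticipate a serious obstacle: the Hecke-category equivariance of the Fourier--Sato transform over $\bC$ is a direct transposition of the $\bF_q$ case, and everything else is a formal diagram chase.
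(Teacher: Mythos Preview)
Your proposal is correct and follows exactly the approach the paper intends. The paper states this corollary without explicit proof, treating it as an immediate consequence of \Cref{Lem. compare Fq and C} and \Cref{Cor. compare FT Fq vs C}; you have correctly unpacked the argument using the same pushout presentation of the group algebra that the paper invokes in \Cref{Sec gen oscillator bimod} and the same diagram chase through the commutative square of \Cref{Cor. compare FT Fq vs C}(2).
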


\subsection{Characteristic cycles and proof of the main theorem}\label{section proof main thm}

In this subsection we work over $\bC$ until before the proof of \Cref{thm: Springer for theta}. 
Retain the notations from \Cref{section Ortho-symplectic}.
By \Cref{pro Lambda}, the union of the conormals of $\ov B_1\times B_2$-orbits on $\cN_{\bL_1}$ (conormals calculated in $T^*\bL_1$) is identified with $\Lambda_{\VV}$ at least up to taking the reduced structure. 
Applying \Cref{CC gen Q} to the case of $\ov G_1\times G_2$ action on $\cN_{\bL_1}$ and $\bL_1$, we get an isomorphism of abelian groups
\begin{equation}\label{CC M1}
    \CC_\bC: \sfM_{v=1,\bC}=K_0(\cM_1^{\bC})\otimes \bC\to \hBM{\TOP}{\Lambda_{\VV},\bC}\cong \hBM{\TOP}{\St_{\bV},\bC}.
\end{equation}

In \Cref{section Ortho-symplectic}, we obtain a $W_1\times W_2$-action on $\hBM{\TOP}{\St_{\bV},\bC}$ by relative Springer theory. On the other hand, by \Cref{c:W12 action over C}, $K_0(\cM^{\bC}_1)\otimes \bC$ also carries a canonical $W_1\times W_2$-action, which is intertwined with the $W_1\times W_2$-action on $\sfM_{v=1,\bC}$ constructed in \Cref{c:specialize generic bimod} via the comparison isomorphism \eqref{comp isom}.

\begin{prop}\label{prop:CC W12 eq}
    The map $\CC_\bC$ in \eqref{CC M1} is $W_1\times W_2$-equivariant up to tensoring with the sign character of $W_1\times W_2$.
\end{prop}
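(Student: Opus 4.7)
The strategy is to verify the sign-twisted equivariance separately on the two subgroups
$\ov W_1 \times W_2$ and $W_1 \times \ov W_2$ of $W_1 \times W_2$, using Proposition~\ref{Prop. CC W eq} (together with Remark~\ref{r:CC W eq fullO} to handle the disconnected group $G_1$) in each case, and then combine them using the two Schr\"odinger models $\cM_1^{\bC}$ and $\cM_2^{\bC}$ together with the partial Fourier equivalence $\FT^{\bC}_{\cM_1}$. Since $(\ov W_1 \times W_2)\cup (W_1 \times \ov W_2)$ generates $W_1 \times W_2$, the two partial results combine to the full statement.

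First I would apply Proposition~\ref{Prop. CC W eq} directly to the connected reductive group $\ov G_1 \times G_2$ acting on $X = \cN_{\bL_1}$ embedded in the smooth variety $Y = \bL_1$, with $\hS_1 = (T^{*}\bL_1)|_X$. By Proposition~\ref{pro Lambda}, the reduced Lagrangian $\L_{\hS_1}^{\red}$ equals $\L_{\VV}$, so the target of the resulting CC map agrees with $\hBM{\TOP}{\L_{\VV},\bC} \cong \hBM{\TOP}{\St_{\VV},\bC}$. Proposition~\ref{Prop. CC W eq} then yields
\begin{equation*}
\CC_{\bC}(w \cdot [\sF]) = (-1)^{l(w)}\, w \cdot \CC_{\bC}([\sF]), \qquad w\in \ov W_1\times W_2,\ \sF\in \cM_1^{\bC},
\end{equation*}
where the $\ov W_1 \times W_2$-action on the target is the one coming from the $\ov G_1 \times G_2$-Springer theory (via the Springer resolution of $\ov{\cN}_1\times \cN_2$).

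Second, I would carry out the symmetric argument using the other polarization: apply Proposition~\ref{Prop. CC W eq} together with Remark~\ref{r:CC W eq fullO} to the $G_1 \times \ov G_2$-action on $X = \cN_{\bL_2}$ inside $Y = \bL_2$, producing a CC map
\begin{equation*}
\CC_{\bC}^{(2)}: K_0(\cM_2^{\bC})\otimes \bC \longrightarrow \hBM{\TOP}{\L_{\VV}\subset T^{*}\bL_2,\bC}
\end{equation*}
which is $W_1\times \ov W_2$-equivariant up to $\sgn$. The key new ingredient is the commutative diagram
\begin{equation*}
\xymatrix{
K_0(\cM_1^{\bC})\otimes \bC \ar[r]^-{\CC_{\bC}} \ar[d]_{\phi^{\bC}} & \hBM{\TOP}{\L_{\VV}\subset T^{*}\bL_1,\bC} \ar@{=}[d] \\
K_0(\cM_2^{\bC})\otimes \bC \ar[r]^-{\CC_{\bC}^{(2)}} & \hBM{\TOP}{\L_{\VV}\subset T^{*}\bL_2,\bC}
}
\end{equation*}
whose right vertical identification uses Proposition~\ref{pro Lambda} applied to both polarizations to identify the two conormal Lagrangians as the same subvariety $\L_{\VV}\subset\VV$ under $T^{*}\bL_1\cong \VV\cong T^{*}\bL_2$. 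Commutativity is the standard compatibility between the partial Fourier--Sato transform and characteristic cycles: for a vector bundle $E\to S$ with dual $E^{*}$, the Fourier--Sato functor identifies conormal cycles of $\bG_m$-conic subvarieties under the canonical symplectic isomorphism $T^{*}E \cong T^{*}E^{*}$. Transporting the $W_1\times \ov W_2$-equivariance of $\CC_{\bC}^{(2)}$ back through $\phi^{\bC}$, and invoking Corollary~\ref{c:W12 action over C} (which says the $W_1\times W_2$-action on $K_0(\cM_1^{\bC})\otimes \bC$ is defined precisely so that $\phi^{\bC}$ is $W_1\times \ov W_2$-equivariant), gives the sign-twisted equivariance of $\CC_{\bC}$ for all $w\in W_1\times \ov W_2$.

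The main obstacle is a compatibility check I glossed over in the first step (and analogously in the second): verifying that the $\ov W_1\times W_2$-action on $\hBM{\TOP}{\St_{\VV},\bC}$ coming from $\ov G_1\times G_2$-relative Springer theory (which is the action used by Proposition~\ref{Prop. CC W eq} in the first step) coincides with the restriction of the full ortho-symplectic $W_1\times W_2$-action of Section~\ref{section Ortho-symplectic} to the subgroup $\ov W_1\times W_2$. This is a statement relating Springer actions for the group $G_1$ and its Levi $\ov G_1 = \GL(L_1) \subset G_1$; it should follow by factoring the Springer resolution $\tcN_{G_1}\to \cN_{G_1}$ through the partial resolution associated to the Siegel parabolic $P_1$, and identifying the $\ov W_1$-action on $\hBM{\TOP}{\cB_{1,e_1},\bC}$ with parabolic Springer theory. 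Consistency of the sign character on $W_1$ (where $l(t_m)=0$ by \eqref{length H1}) matches the convention of Remark~\ref{r:CC W eq fullO}, so no additional sign correction is needed.
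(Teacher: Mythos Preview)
Your proposal follows the same two-model approach as the paper's proof and is essentially correct, but the ``main obstacle'' you flag is unnecessary. You set out to verify sign-twisted equivariance on the full subgroups $\ov W_1\times W_2$ and $W_1\times \ov W_2$, which forces you to compare the $\ov W_1$-Springer action coming from $\ov G_1$ with the restriction of the $W_1$-Springer action coming from $G_1$ (and symmetrically for $\ov W_2$). That Levi-compatibility is true but not needed here.

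The cleaner route, and the one the paper takes implicitly, is to extract only the $W_2$-equivariance from the first application of Proposition~\ref{Prop. CC W eq} and only the $W_1$-equivariance from the second. In your step~1 the group is $\ov G_1\times G_2$, so the $W_2$-action on $\hBM{\TOP}{\Lambda_{\VV},\bC}$ produced by Proposition~\ref{Prop. CC W eq} comes from the $G_2$-Springer sheaf via the second moment map $\mu_2$; this is literally the same $W_2$-action as in Section~\ref{section Ortho-symplectic}, where the $G_2$-factor is unchanged. Likewise in step~2 the group is $G_1\times \ov G_2$, so the $W_1$-action obtained (together with Remark~\ref{r:CC W eq fullO}) is the genuine $G_1$-Springer action. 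Since $W_1$ and $W_2$ commute, sign-twisted equivariance for each factor separately gives the full statement, and no comparison between $\ov G_i$- and $G_i$-Springer theory is required. With this simplification your argument coincides with the paper's.
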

\begin{proof}
    By \Cref{Prop. CC W eq}, $\CC_{\bC}$ is $\ov W_1\times W_2$-equivariant up to the sign character. Now apply the same consideration to the $B_1\times \ov B_2$-action on $\cN_{\bL_2}$, we get an isomorphism
    \begin{equation*}
        \CC_{\bL_2,\bC}: K_0(\cM_2^{\bC})\otimes \bC\isom \hBM{\TOP}{\Lambda_{\VV},\bC}\cong \hBM{\TOP}{\St_{\bV},\bC}
    \end{equation*}
    that is $W_1\times\ov W_2$-equivariant up to the sign character. Here we are using \Cref{r:CC W eq fullO} to deal with the full orthogonal group. We denote the map \eqref{CC M1} by $\CC_{\bL_1,\bC}$. By \cite[]{KS}, Fourier-Sato transform preserves characteristic cycles, therefore the diagram
    \begin{equation*}
        \xymatrix{K_0(\cM_1^{\bC})\otimes \bC\ar[d]_{\phi^{\bC}}\ar[r]^-{\CC_{\bL_1,\bC}} & \hBM{\TOP}{\Lambda_{\VV},\bC}\ar@{=}[d]\\
        K_0(\cM_2^{\bC})\otimes \bC \ar[r]^-{\CC_{\bL_2,\bC}} & \hBM{\TOP}{\Lambda_{\VV},\bC}
        }
    \end{equation*}
    is commutative. Since the $W_1$-action on $\sfM_{v=1,\bC}=\sfM_{1,v=1,\bC}$ is obtained by identifying it with $K_0(\cM_2^{\bC})\otimes \bC$ via $\phi^{\bC}$, the above diagram implies that $\CC_{\bC}=\CC_{\bL_1,\bC}$ is also $W_1$-equivariant up to the sign character. This finishes the proof. 
\end{proof}

Now we can give the proof of our main result on theta correspondence.

\begin{proof}[Proof of \Cref{thm: Springer for theta}]

Combining \Cref{thm:Htop},  \Cref{c:W12 action over C} and \Cref{prop:CC W12 eq}, we obtain a $W_1 \times W_2$-equivariant isomorphism
\begin{equation} \label{M nu1 decomp}
    \sfM_{v = 1, \bC} \otimes \sgn_{W_1\times W_2} \cong \bigoplus_{(\cO, \cO_1, \cL_1, \cO_2, \cL_2) \in \cR^\hs_{G_1,G_2}(\VV)} E_{\cO_1, \cL_1} \otimes E_{\cO_2, \cL_2}.
\end{equation}
On the other hand, by \Cref{c:specialize generic bimod}, we have an isomorphism
\[
\sfM_{v = \sqrt{q}, \bC} \cong M
\]
as $H_1 \otimes H_2$-modules (using the fixed isomorphism $\iota: \Qlbar \cong \bC$).

The remainder of the proof then follows by the same argument as in the proof of \Cref{thm:intro spherical}, by applying \Cref{TitsLusztig} to Lusztig's homomorphism
\[
\lambda_{W_1, \bC} \otimes \lambda_{W_2, \bC} : \left( \sfH_1 \otimes_R \sfH_2 \right) \otimes_R \bC[v, v^{-1}] \xrightarrow{\sim} \bC[v, v^{-1}][W_1 \times W_2],
\]
the $(\sfH_1 \otimes_R \sfH_2) \otimes_R \bC[v, v^{-1}]$-module $\sfM \otimes_R \bC[v, v^{-1}]$, and the specializations $v = 1$ and $v = \sqrt{q}$. 


\end{proof}

\appendix 
\def\Out{\mathrm{Out}}
\def\Inn{\mathrm{Inn}}
\def\Av{\mathrm{Av}}
\def\tcNO{{\widetilde{\cN}^\circ}}
\section{Springer correspondence for even orthogonal groups}\label{sec:orthogonal Springer}
The Springer correspondence for connected reductive group is generalized to disconnected reductive groups in \cite{MR3845761} and \cite{dillery2023stacky}. The goal of this appendix is to make the Springer correspondence for even orthogonal groups explicit. We begin by formulating the Springer correspondence for (possibly disconnected) reductive groups, adopting the framework in \cite{clausen2008springer}. We also prove the compatibility between the Springer correspondence for a reductive group and that for its identity component.



We work over \( \bC \) throughout this section, and . Let \( G \) be a (possibly disconnected) reductive group, with identity component \( G^\circ \). Let \( \pi_0(G) = G/G^\circ \) denote the group of connected components. To simply notation, we write $\pi_0(G)=\Gamma$. We have an exact sequence 
\begin{equation}\label{GG0}
    1\longrightarrow G^\circ \longrightarrow G \longrightarrow \Gamma \longrightarrow 1. 
\end{equation}
\trivial[h]{
This induce an map 
\[
\Gamma \longrightarrow \Aut(G^\circ)/\Inn(G^\circ)=\Out(G^\circ).
\]
By choosing a pinning \( (G^\circ, B^\circ, T^\circ, \{X_\alpha\}) \) of \( G^\circ \), we get a section of $\Out(G^\circ)$ in $\Aut(G_0)$ as pinned isomorphism $\Aut(G^\circ, B^\circ, T^\circ, \{X_\alpha\})$. This gives a group homomorphism 
\[
\Gamma=\pi_0(G)\rightarrow \Aut(G^\circ, B^\circ, T^\circ, \{X_\alpha\})
\]
In practice, this means that given for each $z\in \pi_0(G)$, one can choose a section $\widetilde{z}\in G$ 
 such that the conjugate action of $\widetilde{x}$ on $G_0$ lies in $\Aut(G^\circ, B^\circ, T^\circ, \{X_\alpha\})$. Note that the lifting $z\mapsto \widetilde{z}$ may not be a group homomorphism and the obstruction is given by
 \[
 H^2(\pi_0(G), Z(G_0)) . 
 \]
 }
We fix a Borel subgroup $B $ of $G^\circ$ and a maximal torus $T \subseteq B$ of $G^\circ$. Let $\frak b$ and $\frak t$ be the Lie algebras of $B$ and $T$, respectively, and $\frak u=[\frak b,\frak b]$ be the nil-radical of $\frak b$. 

Let \( W = N_G(T)/T \) and \( W^\circ = N_{G^\circ}(T)/T\) denote the Weyl groups of \( G \) and \( G^\circ \), respectively. 
The choice of $B$ gives a set of simple reflections in $W^\circ$, making it a Coxeter group. Since all maximal tori of $G^\circ$ are conjugate in $G^\circ$, the restriction of the surjection $G\rightarrow \Gamma$ to $N_G(T)$ is still surjective and it induce an exact sequence 
\[
1\longrightarrow W^\circ \longrightarrow W \longrightarrow \Gamma\longrightarrow 1. 
\]
One also checks that $\Gamma\cong N_{G}(B,T)/T$ and this provides a section of the above exact sequence and hence a semi-direct product decomposition
\[
W \cong W^\circ \rtimes \Gamma,
\]
where $\Gamma$ acts on $W^\circ$ by Coxeter group automorphisms.
Let \(\fgg\) be the Lie algebra of \(G^\circ\), and let \(\cN \subset \fgg\) be the nilpotent cone. The forgetful functor
\[
\For_{G^\circ}^G: D^b_G(\cN) \longrightarrow D^b_{G^\circ}(\cN)
\]
admits both left and right adjoints, denoted \(\Av_{G^\circ,!}^G\) and \(\Av_{G^\circ,*}^G\). Since \(G^\circ\) has finite index in \(G\), \cite[Cor.~6.6.3]{MR4337423} implies that
\[
\Av_{G^\circ,!}^G \cong \Av_{G^\circ,*}^G
\quad\text{and both are \(t\)-exact.}
\]
We henceforth write \(\Av_{G^\circ}^G := \Av_{G^\circ,!}^G \cong \Av_{G^\circ,*}^G\). In particular, $\Av_{G^\circ}^G$ restricts to an exact functor
\[
\Av_{G^\circ}^G: \Perv_{G^\circ}(\cN) \longrightarrow \Perv_{G}(\cN).
\]

Let \(\LSG\) and \(\LSGO\) be as in \Cref{eq local system}. Then
\[
\Irr\bigl(\Perv_G(\cN)\bigr)=\{\IC(\cO,\cL)\mid (\cO,\cL)\in\LSG\},\qquad
\Irr\bigl(\Perv_{G^\circ}(\cN)\bigr)=\{\IC(\cO,\cL)\mid (\cO,\cL)\in\LSGO\}.
\]
We say that a \(G\)-orbit \(\cO\subset\cN\) and a \(G^\circ\)-orbit \(\cO^\circ\subset\cN\) are \emph{related} if \(G\cdot \cO^\circ=\cO\); in this case we write \(\cO \overset{\mathrm{rel}}{\sim} \cO^\circ\). Then there is a natural injective homomorphism of component groups
\[
i_{\cO^\circ,\cO}: A^\circ_{\cO^\circ}\hookrightarrow A_{\cO}.
\]
For \((\cO,\cL)\in\LSG\) and \((\cO^\circ,\cL^\circ)\in\LSGO\) with \(\cO \overset{\mathrm{rel}}{\sim} \cO^\circ\), set
\[
m(\cL,\cL^\circ)
:= \dim \Hom_{A_{\cO^\circ}}\!\bigl(\cL|_{A^\circ_{\cO^\circ}},\,\cL^\circ\bigr)
 = \dim \Hom_{A_{\cO}}\!\bigl(\cL,\,\Ind_{A^\circ_{\cO^\circ}}^{A_{\cO}} \cL^\circ\bigr).
\]
The following lemma is easy to verify.
\begin{lemma}\label{Perv G and Gcirc}
\leavevmode
\begin{enumerate}
    \item For each \((\cO,\cL)\in\LSG\),
    \[
       \For_{G^\circ}^G\bigl(\IC(\cO,\cL)\bigr)
       \cong \bigoplus_{\substack{(\cO^\circ,\cL^\circ)\in\LSGO\\ \cO \overset{\mathrm{rel}}{\sim} \cO^\circ}}
          m(\cL,\cL^\circ)\,\IC(\cO^\circ,\cL^\circ).
    \]
    \item For each \((\cO^\circ,\cL^\circ)\in\LSGO\),
    \[
       \Av_{G^\circ}^G\bigl(\IC(\cO^\circ,\cL^\circ)\bigr)
       \cong \bigoplus_{\substack{(\cO,\cL)\in\LSG\\ \cO \overset{\mathrm{rel}}{\sim} \cO^\circ}}
          m(\cL,\cL^\circ)\,\IC(\cO,\cL).
    \]
\end{enumerate}
\end{lemma}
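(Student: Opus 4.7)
The plan is to prove (1) directly by decomposing the underlying local system on the $G$-orbit, and then deduce (2) by biadjunction together with the semisimplicity of both perverse sheaf categories.

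For (1), I would first observe that $\For_{G^\circ}^G$ is $t$-exact and commutes with the intermediate extension functor. This commutation is a formal consequence of the fact that the perverse $t$-structure and the open-closed recollement on $\cN$ are invariant under the forgetful morphism of equivariant stacks $[\cN/G^\circ]\to[\cN/G]$, so the characterization of $j_{!*}$ as the image of $j_!$ in $j_*$ in the heart is preserved. Consequently $\For_{G^\circ}^G(\IC(\cO,\cL))$ is the $G^\circ$-equivariant intermediate extension of $\For(\cL)[\dim\cO]$ from $\cO$ to $\overline{\cO}$. Now write $\cO=\bigsqcup_i \cO^\circ_i$ as a disjoint union of its $G^\circ$-orbits (all of dimension $\dim\cO$); since $\For(\cL)$ splits as $\bigoplus_i \cL|_{\cO^\circ_i}$, we get
\[
\For_{G^\circ}^G(\IC(\cO,\cL)) \;\cong\; \bigoplus_i \IC\!\bigl(\cO^\circ_i,\,\cL|_{\cO^\circ_i}\bigr).
\]
Under the equivalence $\mathrm{Loc}_{G^\circ}(\cO^\circ_i)\cong\Rep(A^\circ_{\cO^\circ_i})$, the local system $\cL|_{\cO^\circ_i}$ corresponds, via the injection $i_{\cO^\circ_i,\cO}\colon A^\circ_{\cO^\circ_i}\hookrightarrow A_\cO$, to the restricted character of $\cL$. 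Decomposing this restricted representation into irreducibles produces exactly the multiplicities $m(\cL,\cL^\circ)$, completing (1).

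For (2), I would invoke the biadjunction $\Av_{G^\circ}^G\dashv\For_{G^\circ}^G\dashv\Av_{G^\circ}^G$. Both $\Perv_G(\cN)$ and $\Perv_{G^\circ}(\cN)$ are semisimple, so the exact functor $\Av_{G^\circ}^G$ sends the simple object $\IC(\cO^\circ,\cL^\circ)$ to a semisimple perverse sheaf, whose support lies in $\overline{G\cdot\cO^\circ}=\overline{\cO}$. The multiplicity of a simple $\IC(\cO',\cL')$ in $\Av_{G^\circ}^G(\IC(\cO^\circ,\cL^\circ))$ is computed by
\[
\dim\Hom_G\!\bigl(\Av_{G^\circ}^G\IC(\cO^\circ,\cL^\circ),\,\IC(\cO',\cL')\bigr)
=\dim\Hom_{G^\circ}\!\bigl(\IC(\cO^\circ,\cL^\circ),\,\For_{G^\circ}^G\IC(\cO',\cL')\bigr),
\]
which by part (1) equals $m(\cL',\cL^\circ)$ when $\cO'\overset{\mathrm{rel}}{\sim}\cO^\circ$ and vanishes otherwise. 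Finally, Frobenius reciprocity for the inclusion $A^\circ_{\cO^\circ}\hookrightarrow A_\cO$ gives the equivalent expression $\dim\Hom_{A_\cO}(\cL,\Ind^{A_\cO}_{A^\circ_{\cO^\circ}}\cL^\circ)$ for $m(\cL,\cL^\circ)$, matching the definition.

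The main technical point I expect to have to spell out carefully is the commutation of $\For_{G^\circ}^G$ with intermediate extension in the equivariant setting; this is where the argument genuinely uses that $G^\circ$ is a normal subgroup of finite index, ensuring that $t$-exactness of the forgetful functor is preserved on the nose. The semisimplicity of $\Perv_G(\cN)$, used in part (2), will be justified by reducing to Lusztig's semisimplicity theorem for $\Perv_{G^\circ}(\cN)$ and averaging along the finite group $\Gamma$; the remaining bookkeeping with component groups and characters is then routine.
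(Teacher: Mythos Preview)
Your proposal is correct and is precisely the standard argument; the paper itself says only ``The following lemma is easy to verify'' and omits the proof entirely, so you are supplying exactly the expected details (exactness of $\For$ plus compatibility with $j_{!*}$ for (1), and biadjunction plus semisimplicity for (2)).
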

Now let $\star\in \{\emptyset, \circ\}$.
Let 
\[
\tcN^\star \coloneqq  
G^\star \times^B \frak u 
\]
along with the Springer resolution $\pi^\star :\tcN^\star \rightarrow \cN,\quad (g, u) \mapsto \Ad(g) u$. We also let 
\[
\tcG^\star \coloneqq  
G^\star \times^B \frak b
\] 
along with the \emph{Grothendieck-Springer simultaneous resolution} $\Pi^\star:\tcG^\star \rightarrow \frak g,\quad (g, b)\mapsto \Ad(g) b$.   Let $\frak g_{\rs}\subseteq \frak g$ be the set of regular semi-simple element of $\frak g$ and let $\widetilde{\frak g}^\star_{\rs}=\Pi^{-1}(\frak g_{\rs})$ and $\Pi^\star_{\rs}=\Pi|_{\widetilde{\frak g}^\star_{\rs}}$. The key players are summarized in the following commutative diagram:
\[
\begin{tikzcd}
  \widetilde{\frak g}^\star_{\rs} \ar[r] \ar[d,"\Pi^\star_{\rs}"] & \tcG^\star  \ar[d,"\Pi^\star"]  & \ar[l] \tcN^\star \ar[d,"\pi^\star"] \\
  \frak g_{\rs} \ar[r] & \frak g & \cN  \ar[l] 
\end{tikzcd}
\]

\begin{lemma}\label{Grs}
\begin{enumerate}
    \item $\widetilde{\frak g}^{\star}_{\rs}$ carries commutating left $G^\star$-action and right $W^\star$-action. The map  $\Pi^\star_{\rs}$ is $G^\star$-equivariant and is a $W^\star$-torsor. 
    \item There are a canonical $G\times W^\circ$-equivariant isomorphism
    \begin{equation*}
        \widetilde{\frak g}_{\rs}= G\times^{G^\circ} \widetilde{\frak g}^\circ_{\rs} 
    \end{equation*}
    and a canonical $G^\circ\times W$-equivariant isomorphism
    \begin{equation*}
        \widetilde{\frak g}_{\rs} \cong \widetilde{\frak g}^\circ_{\rs}\times^{W^\circ} W. 
    \end{equation*}
\end{enumerate}
\end{lemma}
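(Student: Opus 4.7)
The plan is to reduce the case $\star=\emptyset$ to the connected case $\star=\circ$ (which is classical, recalled just above the lemma and proved e.g.\ in \cite[Lemma~8.2.3]{MR4337423}) by exploiting the semidirect product decomposition $W \cong W^\circ \rtimes \Gamma$ together with the pinning-based section $\Gamma \to N_G(B,T)/T$ of \eqref{GG0}. Part~(2) will then follow essentially from the definitions combined with the torsor structure established in~(1).

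For part~(1), first observe that the identification $\widetilde{\fgg} = G\times^B\frb = G\times^{G^\circ}(G^\circ\times^B\frb) = G\times^{G^\circ}\widetilde{\fgg}^\circ$ already delivers the first isomorphism of part~(2), and similarly for the $\rs$-loci since the map $\Pi$ is $G$-equivariant and $\frg_\rs$ is $G$-stable. This reduces the construction of the right $W^\circ$-action and the torsor assertion on $\widetilde{\fgg}^\circ_\rs$ to the classical case. To upgrade to a $W$-action on $\widetilde{\fgg}_\rs$, I will use the identification of the fiber of $\Pi_\rs$ over a regular semisimple element $x\in \frt^\rs$ with $N_G(T_x)/T_x$, where $T_x$ is the unique maximal torus containing $x$; right multiplication by $N_G(T_x)/T_x\cong W$ then defines a right $W$-action on $\widetilde{\fgg}_\rs$ commuting with the left $G$-action, and restricting to the known $W^\circ$-action. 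The fact that $\Pi_\rs$ has fiber of cardinality $|W|$ and that the $W$-action is simply transitive on fibers shows that $\Pi_\rs$ is a $W$-torsor; equivariance for $G$ is built into the construction.

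For the second isomorphism of part~(2), I will construct a canonical $G^\circ\times W$-equivariant map
\[
\Phi: \widetilde{\fgg}^\circ_\rs\times^{W^\circ} W \longrightarrow \widetilde{\fgg}_\rs
\]
from the natural $W^\circ$-equivariant inclusion $\widetilde{\fgg}^\circ_\rs\hookrightarrow \widetilde{\fgg}_\rs$ (induced by $G^\circ\hookrightarrow G$) by the universal property of the balanced product with respect to the right $W$-action from~(1). Both source and target are $W$-torsors over $\frg_\rs$ via $\Pi^\circ_\rs$ and $\Pi_\rs$, and both are $G^\circ$-stable. The map $\Phi$ is therefore a morphism of $W$-torsors over $\frg_\rs$ and hence an isomorphism. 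The main subtlety to watch is the compatibility of the right $W$-action on $\widetilde{\fgg}_\rs$ from~(1) with the inclusion $W^\circ\hookrightarrow W$ and with the $\Gamma$-component; this is where the choice of pinning section enters, guaranteeing that the $\Gamma$-action on $\widetilde{\fgg}^\circ_\rs$ induced by conjugation inside $G$ matches the $\Gamma$-action inherited from the right $W$-action restricted to a set of coset representatives.

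The main obstacle is the bookkeeping for the $\Gamma$-part of the $W$-action: one must check that translating by a lift $\dot{z}\in N_G(B,T)$ of $z\in\Gamma$ gives the same endomorphism of $\widetilde{\fgg}_\rs$ whether interpreted as left $G$-action (via the inclusion $G^\circ\hookrightarrow G$) or as right $W$-action (via $\Gamma\hookrightarrow W$). This is true because the two actions only differ by a simultaneous conjugation that fixes the Borel–torus pair, but making this precise on points of $\widetilde{\fgg}_\rs$ requires a short computation at the level of representatives $(g,b)\in G\times \frb$.
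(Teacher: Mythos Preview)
Your outline is correct and will lead to a valid proof, but it takes a more circuitous route than the paper's. The paper proves part~(1) \emph{uniformly} for both $\star\in\{\circ,\emptyset\}$: it first identifies $\widetilde{\frg}^{\star}_{\rs}\cong G^{\star}\times^{T}\frt_{\rs}$ (via the inclusion $\frt_{\rs}\hookrightarrow\frb_{\rs}$, checked on $\bC$-points and tangent spaces), then writes down the right $W^{\star}$-action by the explicit formula $(g,x)\cdot w=(g\dot w^{-1},\Ad(\dot w)x)$ for any lift $\dot w\in N_{G^{\star}}(T)$, and finally shows $(G^{\star}\times^{T}\frt_{\rs})/W^{\star}=G^{\star}\times^{N_{G^{\star}}(T)}\frt_{\rs}\to\frg_{\rs}$ is an isomorphism by interpreting it as the variety of pairs (Cartan subalgebra, regular semisimple element in it). None of these steps uses connectedness of $G^{\star}$, so the classical argument runs verbatim. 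Part~(2) then follows by simply tracking the $G^{\star}$- and $W^{\star}$-actions through this common presentation $G^{\star}\times^{T}\frt_{\rs}$.

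By contrast, your plan reduces the disconnected case to the connected one and then has to ``upgrade'' the $W^{\circ}$-action to a $W$-action via the $\Gamma$-part, which is exactly the bookkeeping you flag as the main obstacle. The paper's uniform approach bypasses this entirely. Two small remarks on your write-up: first, your fiberwise description of the $W$-action (via $N_G(T_x)/T_x$) only handles fibers over $\frt_{\rs}$, and extending globally to $\frg_{\rs}$ via $G$-equivariance amounts to the identification $G\times^{T}\frt_{\rs}$ anyway; second, no pinning is needed for the section $\Gamma\to W$, since $\Gamma\cong N_G(B,T)/T$ is already canonical.
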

\begin{proof}
The proof of (1) follows from the same argument in \cite[Lemma 8.2.3]{MR4337423} with a little modification. For the convenience of the reader, we provide a proof here. 

Let \(\mathfrak{b}_{\rs} = \mathfrak{b} \cap \mathfrak{g}_{\rs}\) and
\(\mathfrak{t}_{\rs} = \mathfrak{t} \cap \mathfrak{g}_{\rs}\).
We then have \(\widetilde{\mathfrak{g}}^\star_{\rs} = G^\star \times^{B} \mathfrak{b}_{\mathrm{rs}}\).
The inclusion map \(\mathfrak{t}_{\mathrm{rs}} \hookrightarrow \mathfrak{b}_{\mathrm{rs}}\)
induces a map
\begin{equation}
  G^\star \times^{T} \mathfrak{t}_{\mathrm{rs}}
  \longrightarrow
  G^\star  \times^{B} \mathfrak{b}_{\mathrm{rs}}
  = \widetilde{\mathfrak{g}}^\star_{\mathrm{rs}}.
  \tag{8.2.2}\label{eq:8.2.2}
\end{equation}
It is straightforward to check that this is a bijection on $\bC$-points and induces an isomorphism on tangent spaces, hence an isomorphism. 

Define a right action of \(W^\star\) on \(G^\star \times^{T} \mathfrak{t}_{\mathrm{rs}}\) as follows: for
\(w \in W^\star \), we set
\[
(g,x)\cdot w = (gw^{-1},\operatorname{Ad}(\dot{w})(x)),
\]
where $\dot w$ is any lift of $w$ in $N_{G^\star}(T)$. This action commute with the left action of $G^\star$ on \(G^\star \times^{T} \mathfrak{t}_{\mathrm{rs}}\). One also checks that this action is free, and \(\Pi^{\star}_{\mathrm{rs}}\) factors through the quotient to
induce a map
\begin{equation}
  \ov{p}:(G^\star \times^{T} \mathfrak{t}_{\mathrm{rs}})/W^\star
  \longrightarrow
  \mathfrak{g}_{\mathrm{rs}}.
  \tag{8.2.3}\label{eq:8.2.3}
\end{equation}

To finish the proof of (1), we must show that $\ov p$ is an
isomorphism of varieties.  
The map $p: G^\star\times^{T} \mathfrak{t}_{\mathrm{rs}}\to \frg_{\rs}$ is \'etale by checking on tangent spaces, hence $p$ is \'etale, and so is $\ov p$. Thus it is
enough to show that $\ov p$ is a bijection on $\bC$-points.


Note that $(G^\star \times^{T} \mathfrak{t}_{\mathrm{rs}})/W^\star=G^\star \times^{N_{G^\star}(T)} \mathfrak{t}_{\mathrm{rs}}$ , which classifies pairs $(\frh,x)$ where $x\in \frg_{\rs}$ and $\frh$ is a Cartan subalgebra of $\frg$ containing $x$. Since every 
regular semisimple $x$ lies in a unique Cartan subalgebra (namely, its centralizer), $\ov p$ is a bijection on $\bC$-points. 
This finished the proof of (1).  
The proof of (2) follows from tracking $G^\star$ and $W^\star$ action the proof in (1). 

\end{proof}

The maps $\pi^{\star}$ and $\Pi^{\star}$ are $G^{\star}$-equivariant and semi-small (see \cite[Lemma 8.1.6, Lemma 8.2.5]{MR4337423}).  Let 
\[
\cS^\star_{\cN} = \pi^\star_* \underline{\bC}_{\tcN^\star} [\dim \tcN^\star]\in \Perv_{G^\star}(\cN)\quad \cS^\star_{\frak g}\coloneqq  \Pi^\star_* \underline{\bC}_{\tcG^\star} [\dim \tcG^\star]\in \Perv_{G^\star}(\frak g)
\]
be the Springer sheaf and \emph{Grothendieck-Springer sheaf} for $G^\star$. We fix a $G^\star$-invariant bilinear form on $\frak g$ and let 
\[
\Four_{\frak g}: D^b_{G^\star}(\frak g) \rightarrow  D^b_{G^\star}(\frak g) 
\]
be the \emph{equivariant Fourier-Deligne transform}, which is a self equivalence of categories. 
\begin{lemma}\label{Fourier}
    Let $i_{\cN}: \cN\hookrightarrow \frak g$ be the inclusion map. There is a canonical isomorphism 
    \begin{equation}\label{Springer 2}
          \Four_{\frak g} (\cS^\star_{\frak g})\cong i_{\cN,*} \cS^\star_{\cN}.
    \end{equation}
\end{lemma}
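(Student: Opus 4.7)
The plan is to realize both $\Pi^\star$ and $\pi^\star$ as compositions of closed embeddings into the trivial vector bundle $\cB^\star\times\frg\to\cB^\star$ followed by the projection to $\frg$, and then exploit the fact that fiberwise Fourier transform carries a sub-vector-bundle to its orthogonal complement.

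First I would set up the geometry. The assignment $(g,b)\mapsto (gB,\Ad(g)b)$ defines a $G^\star$-equivariant closed embedding $j_{\frb}:\tcG^\star\hookrightarrow\cB^\star\times\frg$ whose image is the universal Borel subalgebra $\cE_{\frb}=\{(B',x)\mid x\in\Lie(B')\}$, a sub-vector-bundle of $\cB^\star\times\frg$ over $\cB^\star$ with fiber $\frb$ over $B$. Likewise $j_{\fru}:\tcN^\star\hookrightarrow\cB^\star\times\frg$ cuts out the sub-vector-bundle $\cE_{\fru}$ with fiber $\fru$ over $B$. Letting $q:\cB^\star\times\frg\to\frg$ denote the second projection (proper since $\cB^\star$ is projective), we have $\Pi^\star=q\circ j_{\frb}$ and $\pi^\star=i_{\cN}\circ q\circ j_{\fru}$.

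Next I would invoke the invariant bilinear form on $\frg$: because $\fru=\frb^\perp$ as subspaces of $\frg$, the sub-vector-bundles $\cE_{\frb}$ and $\cE_{\fru}$ are orthogonal complements of each other inside the trivial bundle $\cB^\star\times\frg\to\cB^\star$ (under the identification $\frg\cong\frg^*$). Consider the fiberwise Fourier--Deligne transform
\[
\Four^{\rel}:D^b_{G^\star}(\cB^\star\times\frg)\longrightarrow D^b_{G^\star}(\cB^\star\times\frg)
\]
along this trivial bundle. By the standard Fourier transform of a sub-vector-bundle (see e.g.\ \cite[Prop.~3.7.12]{KS} or the equivariant analogue in \cite[\S6.9]{MR4337423}), applied fiberwise, one obtains a canonical $G^\star$-equivariant isomorphism
\[
\Four^{\rel}\bigl(j_{\frb,!}\underline{\bC}_{\tcG^\star}[\dim\tcG^\star]\bigr)\;\cong\; j_{\fru,!}\underline{\bC}_{\tcN^\star}[\dim\tcN^\star]
\]
(up to an evident Tate twist, which I will track at the end).

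The main step, and the one requiring the most care, is to check that pushforward along $q$ intertwines the relative Fourier transform $\Four^{\rel}$ with the Fourier transform $\Four_{\frg}$ on the base $\frg$. Since $\cB^\star\times\frg\to\frg$ is the trivial $\cB^\star$-family, a direct diagram chase with the defining pull--tensor--push data of the Fourier transform --- using proper base change along the square obtained by multiplying everything by $\frg$, together with the fact that $q$ is proper --- yields a natural isomorphism $\Four_{\frg}\circ q_*\cong q_*\circ\Four^{\rel}$. Applying this to $j_{\frb,!}\underline{\bC}_{\tcG^\star}[\dim\tcG^\star]$ and identifying $q\circ j_{\fru}=i_{\cN}\circ\pi^\star$ gives
\[
\Four_{\frg}(\cS^\star_{\frg})\;\cong\; q_*\,j_{\fru,!}\underline{\bC}_{\tcN^\star}[\dim\tcN^\star]\;\cong\; i_{\cN,*}\cS^\star_{\cN}.
\]
The main obstacle is bookkeeping of shifts and Tate twists in this identification, and verifying that the compatibility $\Four_{\frg}\circ q_*\cong q_*\circ\Four^{\rel}$ is canonical rather than just an abstract isomorphism; once these are pinned down, the lemma follows.
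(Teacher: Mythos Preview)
Your argument is correct and is precisely the standard proof of this fact. The paper itself does not spell out an argument here: it simply cites \cite[Theorem~8.2.8]{MR4337423} for the connected case and remarks that the same proof handles the disconnected case. Your outline is essentially what that reference contains.

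Two small remarks. First, your worry about Tate twists is moot in this appendix: the paper works over $\bC$ with $\bC$-coefficients, so there are no Tate twists to track. Second, in the disconnected case $\star=\emptyset$, the claim that $\cB^\star=G/B$ is projective (hence $q$ is proper) deserves a word: since $G/G^\circ=\Gamma$ is finite, $G/B$ is a disjoint union of $|\Gamma|$ copies of the flag variety $G^\circ/B$, so it is still projective and your argument goes through verbatim.
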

\begin{proof}
See \cite[Theorem 8.2.8]{MR4337423} for the case when $\star=\circ$. The other case follows by the same proof. 
\end{proof}
\begin{lemma}\label{Springer sheave main}
\begin{enumerate}
    \item We have a canonical  isomorphism of $\bC$-algebras
    \begin{equation*}\label{Springer 1}
          \End_{\Perv_{G^\star}(\frak g)}(\cS^\star_{\frak g}) \cong \bC[W^\star]
    \end{equation*}
    and 
    \[
    \For_{G^\circ}^G \cS_{\frak g} \cong \Ind_{W^\circ}^W \cS^\circ_{\frak g}:=\cS^\circ_{\frg}\otimes_{\bC[W^\circ]} \bC[W] \in \Perv_{G^\circ\times W}(\frak g).
    \]
    \[
\Av_{G^\circ}^G \cS^\circ_{\frak g} \cong \Res_{W^\circ}^{W} \cS_{\frak g} \in \Perv_{G\times W^\circ}(\frak g).
    \]
    \item We have   
    \begin{equation}\label{Springer 3}
             \End_{\Perv_{G^\star }(\cN)}(\cS^\star_{\cN}) \cong \bC[W^\star] 
    \end{equation}
    and 
    \begin{equation}\label{Springer 4}
    \begin{split}
        \For_{G^\circ}^G \cS_{\cN} &\cong \Ind_{W^\circ}^W \cS^\circ_{\cN}=\cS^\circ_{\cN}\otimes_{\bC[W^\circ]} \bC[W] \in \Perv_{G^\circ\times W}(\cN).   \\
        \Av_{G^\circ}^G \cS^\circ_{\cN} &\cong \Res_{W^\circ}^{W} \cS_{\cN} \in \Perv_{G\times W^\circ}(\cN).
    \end{split}
    \end{equation}
\end{enumerate}
\end{lemma}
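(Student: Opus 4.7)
The plan is to prove part (1) first by analyzing the Grothendieck-Springer resolution over $\frg_{\rs}$, then deduce part (2) by transporting the result via the Fourier-Deligne transform of \Cref{Fourier}.

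For part (1), I would first observe that $\Pi^\star$ is proper and $G^\star$-equivariant with smooth source, and that $\cS^\star_{\frg}$ is the intermediate extension from the open dense $\frg_{\rs}\subset\frg$ of the shifted local system $L^\star := (\Pi^\star_{\rs})_*\underline{\bC}[\dim\tcG^\star]$. By \Cref{Grs}(1), $\Pi^\star_{\rs}$ is a $W^\star$-torsor, so $L^\star$ is the local system attached to the regular representation of $W^\star$ and has $G^\star$-equivariant endomorphism algebra canonically $\bC[W^\star]$; since intermediate extension preserves endomorphism algebras of simple summands, this yields the End formula. Next, \Cref{Grs}(2) realizes $\Pi_{\rs}$ as the $W$-torsor induced from $\Pi^\circ_{\rs}$ along $W^\circ\hookrightarrow W$, so pushing forward gives a canonical isomorphism
\[
\For_{G^\circ}^G L \;\cong\; L^\circ \otimes_{\bC[W^\circ]} \bC[W]
\]
of $G^\circ\times W$-equivariant local systems on $\frg_{\rs}$; applying the intermediate extension, which commutes with the $t$-exact forgetful functor, yields $\For_{G^\circ}^G\cS_{\frg}\cong \Ind_{W^\circ}^W\cS^\circ_{\frg}$. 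The $\Av$ formula then follows by a formal Frobenius reciprocity argument, using that $\Av_{G^\circ}^G$ is biadjoint to $\For_{G^\circ}^G$ (the index $|\Gamma|$ being finite) and that $\Ind_{W^\circ}^W$ is biadjoint to $\Res_{W^\circ}^W$.

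For part (2), I would observe that $\Four_{\frg}$ is a $G$-equivariant (hence also $G^\circ$-equivariant) autoequivalence that commutes with $\For_{G^\circ}^G$ and $\Av_{G^\circ}^G$. Combining this with the identification $\Four_{\frg}(\cS^\star_{\frg})\cong i_{\cN,*}\cS^\star_{\cN}$ from \Cref{Fourier} and the full faithfulness of $i_{\cN,*}: \Perv_{G^\star}(\cN)\hookrightarrow \Perv_{G^\star}(\frg)$, the End, For and Av formulas for $\cS^\star_{\frg}$ transport directly to the corresponding formulas for $\cS^\star_{\cN}$.

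The main technical subtlety will be matching the two $W$-actions on $\For_{G^\circ}^G\cS_{\frg}$: the geometric one induced from the $W$-torsor $\Pi_{\rs}$, and the algebraic one on $\Ind_{W^\circ}^W\cS^\circ_{\frg}$ arising from right multiplication on $\bC[W]$. Making this compatibility canonical requires tracking carefully the splitting $W\cong W^\circ\rtimes\Gamma$ produced by the pinning of $G^\circ$ and verifying that the isomorphism of torsors in \Cref{Grs}(2) intertwines the actions on the nose rather than merely up to a non-canonical automorphism; this is the step where the disconnectedness of $G$ enters in an essential way.
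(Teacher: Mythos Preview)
Your proposal is correct and follows essentially the same route as the paper: part (1) via \Cref{Grs} and the fact that $\Pi^\star$ is small (so $\cS^\star_{\frg}$ is the IC extension of the regular-representation local system on $\frg_{\rs}$), and part (2) by transporting along the Fourier transform of \Cref{Fourier}. You have simply spelled out the details more fully than the paper's two-line proof, including the Frobenius reciprocity step for the $\Av$ formula and the compatibility of Fourier transform with $\For$ and $\Av$, which the paper leaves implicit.
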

\begin{proof}
    (1) follows from the \Cref{Grs} and the fact that the map $\Pi^\star$ is small and is a $W^\star$-torsor over $\frak g_{\rs}\subseteq \frak g$ (see \cite[Lemma 8.2.5]{MR4337423}) . (2) follows from (1) and \Cref{Fourier}. 
\end{proof}

By \cite[Theorem 6.5]{MR0732546} we know that $\Perv_{G^\circ}(\cN)$ is semi-simple (hence also $\Perv_{G}(\cN)$). We define the \emph{Springer functors} using \Cref{Springer 3}:
\begin{equation}\label{Springer functor}
\begin{split}
   \Spr_{G^\star} := \cS^\star_{\cN} \otimes_{\bC[W]} - & :  \Rep(W^\star ) \longrightarrow \Perv_{G^\star}(\cN), \quad \star=\{\emptyset, \circ\}
\end{split}
\end{equation}

\begin{thm}\label{thm:Springer compatibility}
\leavevmode
\begin{enumerate}
    \item The Springer functors $\Spr_{G^\star}$ embeds $\Rep(W^\star)$ as a full subcategory of $\Perv_{G^\star}(\cN)$. In particular, they induce injective maps
    \begin{equation}\label{gener Springer map}
        \Spr_{G^\star}: \Irr(W^\star) \hookrightarrow \LSGS 
    \end{equation}
    \item The functors $\Spr_G$ and $\Spr_{G^\circ}$ satisfy the following compatibilities:
    \begin{enumerate}
        \item They intertwine the restriction functor $\Res_{W^\circ}^{W}$ and the forgetful functor $\For_{G^\circ}^G$:
        \begin{equation}\label{Springer compatibility I}
            \begin{tikzcd}
                \Rep(W) \ar[r, "\Spr_G"] \ar[d, "\Res_{W^\circ}^{W}"'] & \Perv_G(\cN) \ar[d, "\For_{G^\circ}^G"] \\
                \Rep(W^\circ) \ar[r, "\Spr_{G^\circ}"] & \Perv_{G^\circ}(\cN)
            \end{tikzcd}
        \end{equation}

        \item They intertwine the induction functor $\Ind_{W^\circ}^W$ and the averaging functor $\Av_{G^\circ}^G$:
        \begin{equation}\label{Springer compatibility II}
            \begin{tikzcd}
                \Rep(W^\circ) \ar[r, "\Spr_{G^\circ}"] \ar[d, "\Ind_{W^\circ}^W"'] & \Perv_{G^\circ}(\cN) \ar[d, "\Av_{G^\circ}^G"] \\
                \Rep(W) \ar[r, "\Spr_G"] & \Perv_G(\cN)
            \end{tikzcd}
        \end{equation}
    \end{enumerate}
\end{enumerate}
\end{thm}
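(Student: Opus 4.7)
The proof splits naturally into establishing part (1) first by a Schur--Weyl style argument, and then deducing part (2) by direct algebraic manipulation using the canonical isomorphisms of \Cref{Springer sheave main}(2).

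For part (1), my plan is as follows. Since $\pi^\star$ is proper and semismall, the decomposition theorem shows that $\cS^\star_\cN$ is a semisimple object of $\Perv_{G^\star}(\cN)$, and \Cref{Springer sheave main}(2) gives $\End_{\Perv_{G^\star}(\cN)}(\cS^\star_\cN)\cong \bC[W^\star]$. A standard double-centralizer argument in a semisimple $\bC$-linear category then produces a canonical decomposition
\[
\cS^\star_\cN \;\cong\; \bigoplus_{E \in \Irr(W^\star)} E^\vee \boxtimes P^\star_E,
\]
where each $P^\star_E$ is a simple object of $\Perv_{G^\star}(\cN)$, pairwise non-isomorphic as $E$ varies. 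None of the $P^\star_E$ can be zero, for otherwise the endomorphism algebra would collapse strictly below $\bC[W^\star]$. Since every simple object of $\Perv_{G^\star}(\cN)$ is of the form $\IC(\cO,\cL)$ with $(\cO,\cL)\in\LSGS$, the assignment $E \mapsto P^\star_E$ defines the injective map $\Spr_{G^\star}\colon \Irr(W^\star)\hookrightarrow \LSGS$, and a short computation of Hom spaces using the decomposition shows that $\Spr_{G^\star}(E) \cong P^\star_E$ and that the functor is fully faithful on all of $\Rep(W^\star)$.

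For part (2), I would deduce both diagrams from \Cref{Springer sheave main}(2) by purely module-theoretic manipulations. Since $\For_{G^\circ}^G$ is $\bC$-linear and commutes with finite colimits (being a two-sided adjoint), for any $V\in \Rep(W)$ we have
\[
\For_{G^\circ}^G \Spr_G(V) \;=\; \For_{G^\circ}^G\bigl(\cS_\cN \otimes_{\bC[W]} V\bigr) \;\cong\; \bigl(\Ind_{W^\circ}^W \cS^\circ_\cN\bigr)\otimes_{\bC[W]} V \;\cong\; \cS^\circ_\cN \otimes_{\bC[W^\circ]} V \;=\; \Spr_{G^\circ}(\Res^W_{W^\circ} V),
\]
proving compatibility (a). For (b), the same reasoning with $\Av_{G^\circ}^G$ gives
\[
\Av_{G^\circ}^G \Spr_{G^\circ}(V) \;\cong\; \bigl(\Res^W_{W^\circ}\cS_\cN\bigr)\otimes_{\bC[W^\circ]} V \;\cong\; \cS_\cN \otimes_{\bC[W]} \bigl(\bC[W]\otimes_{\bC[W^\circ]} V\bigr) \;=\; \Spr_G(\Ind_{W^\circ}^W V).
\]

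The main technical obstacle I anticipate is ensuring the two isomorphisms of \Cref{Springer sheave main}(2) are genuinely equivariant for both the $G^\circ$- (resp.\ $G$-) structure \emph{and} the $W$- (resp.\ $W^\circ$-) action used to form the tensor products above. More precisely, one must verify that the Weyl-group action on $\cS^\star_\cN$ produced from $\End(\cS^\star_\cN)\cong\bC[W^\star]$ is compatible with the natural geometric action arising from the $W^\star$-torsor $\widetilde{\mathfrak g}^\star_{\rs}\to \mathfrak g_{\rs}$, and that the identifications $\widetilde{\mathfrak g}_{\rs}\cong G\times^{G^\circ}\widetilde{\mathfrak g}^\circ_{\rs}\cong \widetilde{\mathfrak g}^\circ_{\rs}\times^{W^\circ} W$ from \Cref{Grs}(2) intertwine these two actions in the correct way. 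Once this compatibility is in place, the displayed chains of isomorphisms are formal; the only subtle point is this bi-equivariance, which is most cleanly obtained by transporting the $W$-structure through the Fourier--Deligne isomorphism of \Cref{Fourier} and tracking it on the generic locus $\mathfrak g_{\rs}$.
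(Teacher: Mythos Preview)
Your proposal is correct and follows essentially the same approach as the paper, which simply cites \eqref{Springer 3} for part (1) and \eqref{Springer 4} together with \cite[Proposition~A.2]{clausen2008springer} for part (2); you have unpacked precisely the double-centralizer and module-theoretic arguments that these references encode. Your concern about bi-equivariance is already addressed by the fact that the isomorphisms in \Cref{Springer sheave main}(2) are stated in $\Perv_{G^\circ\times W}(\cN)$ and $\Perv_{G\times W^\circ}(\cN)$, so no additional verification is needed.
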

\begin{proof}
(1) is a consequence of \Cref{Springer 3}. (2) is a consequence of \Cref{Springer 4} (See \cite[Proposition~A.2]{clausen2008springer}).
\end{proof}

Let $(\cO, \cL) \in \LSG$ be in the image of $\Spr_G$, and define $E_{\cO, \cL} := \Spr_G^{-1}(\cO, \cL) \in \Irr(W)$. Similarly, for $(\cO^\circ, \cL^\circ) \in \LSGO$ in the image of $\Spr_{G^\circ}$, define $E^\circ_{\cO^\circ, \cL^\circ} := \Spr_{G^\circ}^{-1}(\cO^\circ, \cL^\circ) \in \Irr(W^\circ)$.

\begin{cor}\label{cor:Springer compatibility decomposition}
\leavevmode
\begin{enumerate}
    \item For each $(\cO, \cL) \in \LSG$ in the image of $\Spr_G$, we have an isomorphism in $\Rep(W)$
    \[
    \Res_{W^\circ}^{W} E_{\cO, \cL} = \bigoplus_{\substack{(\cO^\circ, \cL^\circ) \in \LSGO \\ \cO \overset{\mathrm{rel}}{\sim} \cO^\circ}} m(\cL, \cL^\circ) \cdot E^\circ_{\cO^\circ, \cL^\circ}.
    \]

    \item For each $(\cO^\circ, \cL^\circ) \in \LSGO$ in the image of $\Spr_{G^\circ}$, we have an isomorphism in $\Rep(W^\circ)$
    \[
    \Ind_{W^\circ}^{W} E^\circ_{\cO^\circ, \cL^\circ} = \bigoplus_{\substack{(\cO, \cL) \in \LSG \\ \cO \overset{\mathrm{rel}}{\sim} \cO^\circ}} m(\cL, \cL^\circ) \cdot E_{\cO, \cL}.
    \]
\end{enumerate}
\end{cor}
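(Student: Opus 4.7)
The plan is to derive this corollary as a direct consequence of the functorial compatibilities established in Theorem~\ref{thm:Springer compatibility} combined with the explicit decompositions in Lemma~\ref{Perv G and Gcirc}. The two statements are strictly parallel, so I will describe part (1) in detail and note the analogous argument for part (2).

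For part (1), I would start from the definition $\Spr_G(E_{\cO,\cL}) = \IC(\cO,\cL)$ and apply the forgetful functor $\For_{G^\circ}^G$ to both sides. Using the commutative diagram \eqref{Springer compatibility I} from Theorem~\ref{thm:Springer compatibility}(2)(a), the left side becomes $\Spr_{G^\circ}\bigl(\Res_{W^\circ}^{W} E_{\cO,\cL}\bigr)$, while Lemma~\ref{Perv G and Gcirc}(1) rewrites the right side as
\[
  \bigoplus_{\substack{(\cO^\circ,\cL^\circ)\in\LSGO\\ \cO \overset{\mathrm{rel}}{\sim} \cO^\circ}} m(\cL,\cL^\circ)\,\IC(\cO^\circ,\cL^\circ).
\]
This gives the identity $\Spr_{G^\circ}\bigl(\Res_{W^\circ}^{W} E_{\cO,\cL}\bigr) \cong \bigoplus m(\cL,\cL^\circ)\,\IC(\cO^\circ,\cL^\circ)$ inside $\Perv_{G^\circ}(\cN)$.

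Next I would invoke the full faithfulness of $\Spr_{G^\circ}$ from Theorem~\ref{thm:Springer compatibility}(1). Since both $\Rep(W^\circ)$ and $\Perv_{G^\circ}(\cN)$ are semisimple, a fully faithful additive functor between them reflects direct sum decompositions into simple summands: every summand $\IC(\cO^\circ,\cL^\circ)$ appearing on the right must lie in the essential image of $\Spr_{G^\circ}$, and hence corresponds to some $E^\circ_{\cO^\circ,\cL^\circ}\in\Irr(W^\circ)$ (with the convention $E^\circ_{\cO^\circ,\cL^\circ}=0$ when $(\cO^\circ,\cL^\circ)$ is not in the image, in which case necessarily $m(\cL,\cL^\circ)=0$). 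Applying $\Spr_{G^\circ}^{-1}$ to the above isomorphism then produces the desired decomposition of $\Res_{W^\circ}^{W} E_{\cO,\cL}$.

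For part (2), the argument is entirely analogous: begin with $\Spr_{G^\circ}(E^\circ_{\cO^\circ,\cL^\circ})=\IC(\cO^\circ,\cL^\circ)$, apply $\Av_{G^\circ}^G$, use the commutative diagram \eqref{Springer compatibility II} to rewrite the left side as $\Spr_G(\Ind_{W^\circ}^{W} E^\circ_{\cO^\circ,\cL^\circ})$, use Lemma~\ref{Perv G and Gcirc}(2) to decompose the right side, and invoke the full faithfulness of $\Spr_G$ to read off the decomposition. There is no real obstacle here; the entire argument is essentially a bookkeeping exercise once Theorem~\ref{thm:Springer compatibility} and Lemma~\ref{Perv G and Gcirc} are in hand, and the only point requiring mild care is the semisimplicity-based argument that a fully faithful functor between semisimple abelian categories reflects simple summand decompositions, which guarantees that the multiplicities match those computed by $m(\cL,\cL^\circ)$.
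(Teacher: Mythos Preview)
Your proposal is correct and follows exactly the approach indicated by the paper, which simply says the corollary follows immediately from Lemma~\ref{Perv G and Gcirc} and Theorem~\ref{thm:Springer compatibility}. You have merely spelled out the details of applying the commutative squares~\eqref{Springer compatibility I} and~\eqref{Springer compatibility II} together with the full faithfulness of the Springer functors, which is precisely what the paper leaves implicit.
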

\begin{proof}
    This follows immediately from \Cref{Perv G and Gcirc} and \Cref{thm:Springer compatibility}.
\end{proof}
We now make the discussion explicit for \(G=\rO(V)\) with \(\dim V=2m\), so that \(G^\circ=\SO(V)\).
In this case the Weyl groups are
\[
W \cong \bS_m \rtimes (\bZ/2\bZ)^m \quad\text{(type \(BC_m\))},\qquad
W^\circ \cong \bS_m \rtimes (\bZ/2\bZ)^{m-1} \quad\text{(type \(D_m\))}.
\]

\begin{defn}
\begin{enumerate}
\item An orthogonal partition \(\lambda\in\cP_+(2m)\) (see \Cref{defn orthogonal symplectic partition})
is called \emph{very even} if \(\odd_\lambda=\varnothing\); equivalently, all parts of \(\lambda\) are even.
Write \(\cP_+^{\even}(2m)\subseteq\cP_+(2m)\) for the subset of very even partitions.
\item For
\[
\lambda=\bigl[\underbrace{t_1,\ldots,t_1}_{z_1},\ldots,\underbrace{t_\ell,\ldots,t_\ell}_{z_\ell}\bigr]\in\cP_+(2m),
\quad t_1>\cdots>t_\ell>0,
\]
recall
\[
\cS_\lambda=\bigoplus_{t\in\odd_\lambda}\bZ/2\bZ\cdot a_t.
\]
Define the character
\[
\det_\lambda:\cS_\lambda\to\bZ/2\bZ,\qquad a_t\longmapsto 1,
\]
and set \(\cS_\lambda^+:=\ker(\det_\lambda)\).
\end{enumerate}
\end{defn}

\begin{lemma}\label{Even orthogonal and special even orbit}
\begin{enumerate}
\item \(G\)-orbits in the nilpotent cone \(\cN\) are parametrized by \(\cP_+(2m)\); denote by \(\cO_\lambda\) the orbit attached to \(\lambda\).
There is a canonical isomorphism of component groups \(A_{\cO_\lambda}\cong\cS_\lambda\).

\item If \(\lambda\in \cP_+^{\even}(2m)\), the \(G\)-orbit \(\cO_\lambda\) splits into two
\(G^\circ\)-orbits, denoted \(\cO_\lambda^{I}\) and \(\cO_\lambda^{II}\).
If \(\lambda\notin \cP_+^{\even}(2m)\), then \(\cO_\lambda\) remains a single \(G^\circ\)-orbit, which we still denote by \(\cO_\lambda\). 

\item 
If \(\lambda\in\cP_+^{\even}(2m)\), we have \(A^\circ_{\cO_{\lambda}^{*}}\cong\cS_\lambda^+=1\) for \(*\in \{I,II\}\). If \(\lambda\not \in\cP_+^{\even}(2m)\), the natural map on component groups
\(A^\circ_{\cO_{\lambda}}\hookrightarrow A_{\cO_\lambda}\) corresponds to the inclusion
\(\cS_\lambda^+\hookrightarrow\cS_\lambda\). 



\end{enumerate}
\end{lemma}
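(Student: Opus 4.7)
The plan is to analyze the centralizer $C_G(e)$ for a representative $e$ of each nilpotent orbit via Jacobson--Morozov, and then extract the classification and component-group information by tracking the determinant character $G\to\{\pm1\}$.

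First, for part (1), I would choose an $\mathfrak{sl}_2$-triple containing $e$ and decompose $V=\bigoplus_t V(t)\otimes W_t$, where $V(t)$ denotes the $t$-dimensional irreducible representation of $\mathfrak{sl}_2$ and $W_t$ is the multiplicity space.  The invariant symmetric form on $V$, together with the known signs of invariant pairings on the $V(t)$ (symmetric for odd $t$, skew for even $t$), forces $W_t$ to carry a symmetric form when $t$ is odd and a skew form when $t$ is even; in particular each $W_t$ with $t$ even has even dimension, so indeed $\lambda\in\cP_+(2m)$.  The reductive part of $C_G(e)$ is then $\prod_{t\text{ odd}}\rO(k_t)\times\prod_{t\text{ even}}\Sp(k_t)$ with $k_t=\dim W_t$.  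Since $\Sp(k_t)$ is connected while $\pi_0(\rO(k_t))=\bZ/2\bZ$, we obtain $A_{\cO_\lambda}\cong(\bZ/2\bZ)^{\odd_\lambda}=\cS_\lambda$.  This is classical and can be read off from, e.g., Collingwood--McGovern or Carter.

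For parts (2) and (3), the key is to compute the composite $A_{\cO_\lambda}\hookrightarrow C_G(e)\to G/G^\circ=\{\pm1\}$ (the determinant).  A generating reflection of the $\rO(k_t)$-factor of $C_G(e)$ acts trivially on $W_s$ for $s\ne t$ and as a reflection on $W_t$, hence on $V(t)\otimes W_t$ with determinant $(-1)^{\dim V(t)}=(-1)^t$, and trivially on every other summand.  For $t$ odd this equals $-1$, so under the identification $A_{\cO_\lambda}\cong(\bZ/2\bZ)^{\odd_\lambda}$ the induced map $A_{\cO_\lambda}\to\bZ/2\bZ$ is precisely the character $\det_\lambda$ sending each generator $a_t$ to $1$.

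Given this, the remainder is formal.  The orbit $\cO_\lambda$ splits into $[G:G^\circ\!\cdot\! C_G(e)]$ many $G^\circ$-orbits, which equals $2$ when $\det_\lambda$ is trivial (i.e.\ $\odd_\lambda=\varnothing$, hence $\lambda\in\cP_+^{\even}(2m)$) and $1$ otherwise.  In the very-even case $C_G(e)\subset G^\circ$, so $A^\circ_{\cO_\lambda^*}=A_{\cO_\lambda}=1=\cS_\lambda^+$ for $*\in\{I,II\}$.  In the non-very-even case, the short exact sequence $1\to C_{G^\circ}(e)\to C_G(e)\to\bZ/2\bZ\to 1$ (surjectivity from $\det_\lambda$) descends to an exact sequence of component groups, using that $C_{G^\circ}(e)^\circ=C_G(e)^\circ$; this yields $A^\circ_{\cO_\lambda}=\ker\det_\lambda=\cS_\lambda^+\hookrightarrow\cS_\lambda$ as claimed.

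The main technical point is the sign computation in the second paragraph: identifying the map $A_{\cO_\lambda}\to\bZ/2\bZ$ induced by $\det_V$, and in particular verifying that a generator of $\rO(k_t)\subset C_G(e)$ for odd $t$ acts on $V$ with determinant $-1$.  Everything else is bookkeeping from Jacobson--Morozov together with elementary component-group arithmetic, so the argument is self-contained once the sign lemma is in place.
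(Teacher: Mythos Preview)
Your argument is correct and is exactly the standard computation; the paper itself simply cites \cite[\S 6.1]{MR1251060} (Collingwood--McGovern) for this lemma. What you have written is essentially the content of that reference: the Jacobson--Morozov decomposition $V=\bigoplus_t V(t)\otimes W_t$, the identification of the reductive centralizer as $\prod_{t\text{ odd}}\rO(k_t)\times\prod_{t\text{ even}}\Sp(k_t)$, and the determinant computation showing that the generator $a_t\in\cS_\lambda$ maps to $(-1)^t=-1$ under $\det_V$, so that the induced character $A_{\cO_\lambda}\to\bZ/2\bZ$ is $\det_\lambda$.
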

\begin{proof}
    See \cite[\S 6.1]{MR1251060}.
\end{proof}

\begin{remark}
For $\chi\in \Irr(\cS_\l)$ with the corresponding local system $\cL_\chi$ on $\cO_\l$, we denote $E_{\cO,\cL_\chi}$ simply by $E_{\cO,\chi}$. When $\cS_\l=1$, we denote $E_{\cO_\l, 1}$ simply by $E_{\cO_\l}$. Similarly we write $E^\circ_{\cO_\l,\chi}$ and $E^\circ_{\cO^I_\l}$, etc. 
\end{remark}

\begin{thm}
For \(\lambda\in\cP_+(2m)\) and \(\chi\in\Irr(\cS_\lambda)\), the restriction of Springer representations satisfies
\[
\begin{aligned}
E_{\cO_\lambda,\chi}\big|_{W^\circ} &\cong
E^\circ_{\cO_\lambda,\ \chi|_{\cS_\lambda^+}}
&&\text{if }\lambda\notin\cP_+^{\even}(2m),\\
E_{\cO_\lambda}\big|_{W^\circ} &\cong
E^\circ_{\cO_\lambda^{I}}\ \oplus\ E^\circ_{\cO_\lambda^{II}}
&&\text{if }\lambda\in\cP_+^{\even}(2m).
\end{aligned}
\]
\end{thm}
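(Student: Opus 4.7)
The plan is to deduce both formulas from the general restriction rule in \Cref{cor:Springer compatibility decomposition}(1), which identifies $\Res_{W^\circ}^{W}E_{\cO_\lambda,\chi}$ with
\[
 \bigoplus_{\substack{(\cO^\circ,\cL^\circ)\in\LSGO\\ \cO_\lambda \overset{\mathrm{rel}}{\sim}\cO^\circ}} m(\cL_\chi,\cL^\circ)\,E^\circ_{\cO^\circ,\cL^\circ},
\]
with $m(\cL_\chi,\cL^\circ)=\dim\Hom_{A^\circ_{\cO^\circ}}\!\bigl(\chi|_{A^\circ_{\cO^\circ}},\cL^\circ\bigr)$. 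So the proof reduces to three concrete tasks: list the $G^\circ$-orbits related to $\cO_\lambda$, compute the branching of $\chi$ to $A^\circ_{\cO^\circ}$, and confirm that each summand that comes out actually lies in the image of $\Spr_{G^\circ}$.

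First I would use \Cref{Even orthogonal and special even orbit}(2) to list the related orbits. If $\lambda\notin\cP_+^{\even}(2m)$, the only related $G^\circ$-orbit is $\cO_\lambda$ itself, and by \Cref{Even orthogonal and special even orbit}(3) the embedding $A^\circ_{\cO_\lambda}\hookrightarrow A_{\cO_\lambda}$ is identified with $\cS_\lambda^+\hookrightarrow\cS_\lambda$. Since $\cS_\lambda$ is an elementary abelian $2$-group, every $\cL^\circ$ is one-dimensional, so Frobenius reciprocity gives
\[
m(\cL_\chi,\cL^\circ)=\dim\Hom_{\cS_\lambda^+}(\chi|_{\cS_\lambda^+},\cL^\circ)=\begin{cases}1,&\cL^\circ=\chi|_{\cS_\lambda^+},\\0,&\text{otherwise.}\end{cases}
\]
If instead $\lambda\in\cP_+^{\even}(2m)$, then by \Cref{Even orthogonal and special even orbit}(2)--(3) the orbit $\cO_\lambda$ splits into $\cO_\lambda^{I}\sqcup\cO_\lambda^{II}$, both with trivial component group; moreover $\cS_\lambda$ is trivial so $\chi$ is trivial, and $m(\cL_{\triv},\cL^\circ_{\triv})=1$ on each piece.

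Second, I would check that every summand $E^\circ_{\cO^\circ,\cL^\circ}$ that the above bookkeeping produces is actually nonzero, i.e.\ lies in the image of $\Spr_{G^\circ}$. This is where the argument is really doing work: a priori the formula of \Cref{cor:Springer compatibility decomposition}(1) only sums over pairs in the image, so I have to know on the nose which pairs for $\SO(V)$ are Springer pairs. In the non-very-even case I need that $(\cO_\lambda,\chi|_{\cS_\lambda^+})$ is a Springer pair for $W^\circ$ whenever $\chi$ is; in the very-even case I need that both $(\cO_\lambda^{I},\triv)$ and $(\cO_\lambda^{II},\triv)$ are. Both facts are classical consequences of the explicit Springer correspondence for type $D_m$ (see e.g.\ \cite[\S13.3]{MR1251060}); I would quote them here.

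The main obstacle is the second step. The first step is essentially a formal consequence of \Cref{cor:Springer compatibility decomposition} once component groups and their inclusions are matched via \Cref{Even orthogonal and special even orbit}. The subtlety is that the right-hand side of the restriction formula is a sum indexed only over Springer pairs for $G^\circ$, so the decomposition yields the clean formula in the statement only because, miraculously but classically, no Springer pair for $G$ restricts to a non-Springer pair for $G^\circ$. As a sanity check and to rule out the possibility of hidden summands, I would finally compare dimensions: in the non-very-even case $\dim E_{\cO_\lambda,\chi}=\dim E^\circ_{\cO_\lambda,\chi|_{\cS_\lambda^+}}$ because $[\cS_\lambda:\cS_\lambda^+]=2$ and the relevant induction is transitive, while in the very-even case $\dim E_{\cO_\lambda}=\dim E^\circ_{\cO_\lambda^{I}}+\dim E^\circ_{\cO_\lambda^{II}}$ is the well-known dimension identity for very even nilpotent orbits.
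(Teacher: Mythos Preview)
Your first step is exactly the paper's proof: combine \Cref{cor:Springer compatibility decomposition}(1) with the orbit and component-group data from \Cref{Even orthogonal and special even orbit}, and observe that since $\cS_\lambda$ is abelian, $\chi|_{\cS_\lambda^+}$ is automatically irreducible. That is all the paper does.

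Your second step, however, is unnecessary, and this is the main point worth noting. You worry that the pairs $(\cO^\circ,\cL^\circ)$ produced by the multiplicity computation might fall outside the image of $\Spr_{G^\circ}$, and propose to rule this out by invoking the explicit Springer correspondence for type $D_m$. But this is already forced by \Cref{thm:Springer compatibility}: the commutative square \eqref{Springer compatibility I} gives
\[
\Spr_{G^\circ}\bigl(\Res^W_{W^\circ} E_{\cO_\lambda,\chi}\bigr)\;\cong\;\For^G_{G^\circ}\bigl(\IC(\cO_\lambda,\cL_\chi)\bigr)\;\cong\;\bigoplus m(\cL_\chi,\cL^\circ)\,\IC(\cO^\circ,\cL^\circ),
\]
and since $\Spr_{G^\circ}$ is a full embedding sending irreducibles to simples (\Cref{thm:Springer compatibility}(1)), every simple $\IC(\cO^\circ,\cL^\circ)$ occurring on the right with positive multiplicity is automatically in its essential image. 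So no appeal to the classification is needed; this is precisely why the proof of \Cref{cor:Springer compatibility decomposition} goes through without it.

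Your proposed dimension check at the end is also superfluous (and the justification ``because $[\cS_\lambda:\cS_\lambda^+]=2$'' does not directly give a dimension equality for Springer representations). Once you know the decomposition of $\Res^W_{W^\circ}E_{\cO_\lambda,\chi}$ has exactly the terms listed, with all of them nonzero by the argument above, there is nothing left to check.
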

\begin{proof}
Combine \Cref{cor:Springer compatibility decomposition} with \Cref{Even orthogonal and special even orbit}. Note that \(\cS_\lambda\) is abelian, the restriction \(\chi|_{\cS_\lambda^+}\) is irreducible for every \(\chi\in\Irr(\cS_\lambda)\). 
\end{proof}

\section{Tits' deformation and Lusztig's homomorphism}\label{TitsLusztigsec}

Let $F$ be an algebraically closed field of characteristic zero. Suppose $A$ is a (commutative) domain over $F$, and $W$ is a finite group.  Let $H$ be a finite locally free (not necessarily commutative) $A$-algebra with an $A$-algebra homomorphism $\phi: H \to A[W]$.

For any homomorphism $x: A\to F$, let $H_x=H\ot_{A,x}F$, and $\phi_x:H_x\to F[W]$ be the base change of $\phi$. For any $A$-module $N$, let $N_x=N\ot_{A,x}F$ be the specialized $H_x$-module.

\begin{lemma}\label{TitsLusztig} Let $N$ be an $H$-module that is finite locally free as an $A$-module. Let $x,y: A\to F$ be two homomorphisms such that $\phi_x$ and $\phi_y$ are isomorphisms. Then the specializations $N_x$ and $N_y$ are isomorphic as $F[W]$-modules, via the isomorphisms $\phi_x$ and $\phi_y$.    
\end{lemma}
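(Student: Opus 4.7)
The plan is to invert a single element of $A$ that trivializes the failure of $\phi$ to be an isomorphism, and then to read off the $F[W]$-module structure of any specialization from one global Wedderburn decomposition. In this way both $N_x$ and $N_y$ will be exhibited as specializations of a common $A_\Delta[W]$-module whose isotypic multiplicities are $A_\Delta$-ranks, hence independent of the specialization point.

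First I would fix local bases of $H$ and $A[W]$ and set $\Delta\in A$ to be the determinant of the matrix of $\phi$. Since $\phi_x$ is an isomorphism and $H$ is finite locally free, $H$ has constant rank $|W|$, and $\Delta(x)\ne 0$; similarly $\Delta(y)\ne 0$. Hence both $x$ and $y$ factor uniquely through the localization $A_\Delta:=A[\Delta^{-1}]$. Over $A_\Delta$ the map $\phi$ becomes an isomorphism of $A_\Delta$-algebras, so via $\phi_{A_\Delta}^{-1}$ the module $N_{A_\Delta}$ acquires the structure of an $A_\Delta[W]$-module, still finite locally free over $A_\Delta$.

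Next I would invoke the Wedderburn decomposition $F[W]\cong\prod_{V}\End_F(V)$, with $V$ ranging over $\Irr(F[W])$, and the corresponding central idempotents $e_V\in F[W]\subseteq A_\Delta[W]$. The decomposition $N_{A_\Delta}=\bigoplus_V e_V N_{A_\Delta}$ is a decomposition of $A_\Delta$-modules, and each summand is a direct summand of the finite locally free $A_\Delta$-module $N_{A_\Delta}$, hence itself finite locally free. Each $e_V N_{A_\Delta}$ is a module over $e_V A_\Delta[W]\cong\End_{A_\Delta}(V\otimes_F A_\Delta)\cong M_{\dim V}(A_\Delta)$, and Morita equivalence between $M_{\dim V}(A_\Delta)$ and $A_\Delta$ canonically identifies it with $(V\otimes_F A_\Delta)\otimes_{A_\Delta}M_V$ for a single $A_\Delta$-module $M_V=\Hom_{A_\Delta[W]}(V\otimes_F A_\Delta,\,N_{A_\Delta})$; this $M_V$ is a direct summand, up to $\dim V$ copies, of a finite locally free module, so it is finite locally free of a well-defined rank $r_V$ over the domain $A_\Delta$.

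Finally, base change along any $z:A_\Delta\to F$ is exact and preserves the idempotent decomposition, which yields
\[
N_z\;\cong\;\bigoplus_{V\in\Irr(F[W])} V^{\oplus r_V}
\]
as $F[W]$-modules, with multiplicities $r_V$ that do not depend on $z$. Applying this to $z=x$ and $z=y$ gives the desired isomorphism $N_x\cong N_y$ of $F[W]$-modules. The one place to be careful will be the Morita step, where I need the equivalence to hold integrally over the (not necessarily local) domain $A_\Delta$; this is standard, because $V\otimes_F A_\Delta$ is a faithful finitely generated projective generator for $M_{\dim V}(A_\Delta)$-modules, but it is the one non-formal input. Apart from this, the argument is routine bookkeeping with central idempotents and ranks of projective modules over a domain.
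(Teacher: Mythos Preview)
Your proof is correct, and shares its opening move with the paper: both invert the determinant of $\phi$ to reduce to the case $H=A_\Delta[W]$, then use the central idempotents $e_V\in F[W]\subset A_\Delta[W]$ to read off multiplicities. The difference is in how the multiplicity of $V$ is extracted. You invoke Morita theory for $M_{\dim V}(A_\Delta)$ to realize $e_V N_{A_\Delta}\cong (V\otimes_F A_\Delta)\otimes_{A_\Delta} M_V$ with $M_V$ finite projective, and then read the multiplicity as the rank of $M_V$ over the domain $A_\Delta$. The paper instead computes the scalar $\tr(e_\chi\mid N)\in A_\Delta$ and observes that its image in the algebraic closure $\overline K$ of the fraction field is $\deg\chi$ times the multiplicity of $\chi$ in $N_{\overline K}$, hence an integer; since $A_\Delta$ is a domain containing $F\supset\bZ$, this forces $\tr(e_\chi\mid N)$ to be constant in $A_\Delta$, and its value at any $F$-point is that same integer.

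Both arguments are short; the trace computation is a bit lighter, since it avoids any appeal to Morita and only uses that trace commutes with base change. Your route has the mild advantage of actually constructing the multiplicity space $M_V$ over $A_\Delta$, not just its rank. The Morita step you flagged is indeed routine: $e_{11}\cdot(e_V N_{A_\Delta})$ is a direct summand of $N_{A_\Delta}$ (via the idempotent $e_{11}e_V$), hence finite projective over $A_\Delta$, with no locality assumption needed.
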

\begin{proof} By localizing $A$ we may assume $H$ is free over $A$. Since there are $x,y: A\to F$ such that $\phi_x$ and $\phi_y$ are isomorphisms, we know that there exists $U\subset \Spec A$, a Zariski dense open affine subset, where $\phi$ is an isomorphism (it is the non-vanishing locus of the determinant of $\phi$ under a basis of $H$ over $A$). Replacing $\Spec A$ by  $U$, we may assume that $\phi$ is an isomorphism. Therefore we may assume $H=A[W]$.

We need to show that,
for every irreducible character $\chi$ of $W$ over $F$,  the multiplicity  of $\chi$ in  $N_x$ is  independent of the $F$-point $x$ of $\Spec A$.

Let $e_\chi\in F[W]\subset A[W]$ be the corresponding idempotent. Let $\tr(e_\chi|N)\in A$ be the trace of $ e_\chi\in A[W]$ on $N$. Let $K$ be the fraction field of $A$, and $\ov K$ be an algebraic closure of $K$. We have $$\tr(e_\chi|N)/\deg\chi =\tr(e_\chi|N_{\ov K})/\deg \chi \in \ol K.$$
The latter is the multiplicity of $\chi_{\ol K}$ in $N_{\ol K}=N\ot_A\ov K$, and thus in $\bZ_{\ge0}$. 
So the restriction of $\tr(e_\chi|N)$ to any $F$-point $x$ of $\Spec A$,  which is the multiplicity of $\chi $ in $N_x$,
is independent of $x$. The lemma follows.
\end{proof}

\section{An explicit description of $\cRV$ and $\cRL$}\label{subsection explicit relavant}
In this appendix, we prove \Cref{prop GL orbits} and \Cref{orthosymplectic orbits}, which provide an explicit combinatorial description of $\cRV$ and $\cRL$ defined in \Cref{def relevant GL} and \Cref{def relevant orthsymplectic}.

\subsection{Nilpotent orbits of classical groups and their component groups}
We begin by reviewing some preliminaries on nilpotent orbits, following \cite{MR1251060}. Let $G$ be a connected reductive group over $\mathbb{C}$, $\mathfrak{g}$ be its Lie algebra and  $\mathcal{N}$ be the set of nilpotent elements in $\mathfrak{g}$. Define
\[
\mathcal{A}_{\text{triple}}(\frg) = \{(h, e, f) \mid h, e, f \in \mathfrak{g}, [h, e] = 2e, [h, f] = -2f, [e, f] = h\},
\]
the set of $\mathfrak{sl}_2$-triples in $\mathfrak{g}$. By the Jacobson--Morozov theorem (see \cite[\S 3.3]{MR1251060}), the map sending an $\mathfrak{sl}_2$-triple $\Gamma = (h, e, f)$ to its nilpositive element $e$ induces a bijection between the set of $G$-orbits in $\mathcal{A}_{\text{triple}}(\frg)$ and the set of $G$-orbits in $\cN$.

For $\Gamma = (h, e, f) \in \mathcal{A}_{\text{triple}}(\frg)$, let $G_\Gamma$ and $G_e$ denote the stabilizers of $\Gamma$ and $e$ in $G$, respectively. Their component groups are $A_\Gamma = \pi_0(G_\Gamma)$ and $A_e = \pi_0(G_e)$. The inclusion $G_\Gamma \hookrightarrow G_e$ induces an isomorphism $A_\Gamma \cong A_e$ \cite[Lemma 3.7.3]{MR1251060}, allowing us to identify $A_e$ with $A_\Gamma$.

\subsubsection{General linear group}
We now turn to the case of general linear groups. Let $L$ be a finite-dimensional vector space, and let $G = \GL(L)$ be the general linear group. Given an $\mathfrak{sl}_2$-triple $\Gamma = \{h, e, f\}$ in $\mathfrak{g}$, we obtain an $\SL_2(\bC)$-representation on $L$ with decomposition
\begin{equation}\label{decompso L}
    L = \bigoplus_{k=1}^{l} S_k \otimes M_k,
\end{equation}
where $S_{k}\coloneqq \Sym^{k-1}(\bC^2)$ is the $k-1$-th symmetric power of the standard representation and $M_k$ is the multiplicity space. One easily checks that this gives a bijection between the set of $G$-orbits in $\mathcal{A}_{\text{triple}}(\frg)$ and the isomorphism classes of $\SL_2(\bC)$-representations with underlining space $L$.

By \Cref{decompso L}, we can associate to each $\Gamma\in  \mathcal A_{\text{triple}}(\frg)$ a partition: 
\[
\lambda = [\underbrace{l,\cdots, l}_{m_l}, \cdots,\underbrace{1,\cdots, 1}_{m_1}]\in \cP(\dim L)
\]
where $m_k=\dim M^k$, and $\lambda$ determines the isomorphism class of $L$ as $\SL_2(\bC)$-representation. This implies that the nilpotent orbits of $G$ are in bijection with $\cP(\dim L)$. 

We also have 
\[
G_{\Gamma}\cong \prod_{k=1}^{l} \GL(M_k) \quad \mbox{and}\quad A_e\cong \pi_0(G_\Gamma)\cong 1.
\]

\subsubsection{Orthogonal and symplectic groups}
Now consider even orthogonal and symplectic groups. Let $(V_1, \langle , \rangle_{V_1})$ be a $2m$-dimensional orthogonal space and $(V_2, \langle , \rangle_{V_2})$ a $2n$-dimensional symplectic space. Let $G_i$ ($i = 1, 2$) be the corresponding orthogonal or symplectic group with Lie algebra $\frg_i$. Note that $G_1$ is not connected; let $G_1^\circ = \SO(V_1) \subset G_1$ be its neutral component. 

The Jacobson--Morozov theorem remains valid for $G_1$ with the following modification: the set of $G_1$-orbits of $\mathfrak{sl}_2$-triples in $\mathfrak{g}_1$ corresponds bijectively to the set of $G_1$-orbits in $\cN_1$.

Given an $\mathfrak{sl}_2$-triple $\Gamma = \{h, e, f\}$ in $\mathfrak{g}_i$, we obtain an $\SL_2(\bC)$-representation on $V_i$ with decomposition
\[
V_i = \bigoplus_{k=1}^{l} S_k \otimes M_k^i,
\]
where  
$M_k^i$ is the multiplicity space of $S_k$. Recall that $S_k=\Sym^{k-1}(\bC^2)$ is orthogonal (resp. symplectic) if $k$ is odd (resp. even). The  $\SL_2(\bC)$-invariant bilinear form on $S_k$ is unique up to scalar, and we fix one such form $\langle \cdot, \cdot \rangle_{S_k}$.

The restriction of $\langle \cdot, \cdot \rangle_{V_i}$ to each isotypic component $S_k \otimes M_k^i$ remains non-degenerate, inducing a non-degenerate bilinear form $\langle \cdot, \cdot \rangle_{M_k^i}$ on $M_k^i$ such that
\begin{equation}\label{decomposition Vi}
(V_i, \langle \cdot, \cdot \rangle_{V_i}) \cong \bigoplus_{k=1}^l  \left(S_k, \langle \cdot, \cdot \rangle_{S_k}\right) \otimes \left(M_k^i, \langle \cdot, \cdot \rangle_{M_k^i}\right)
\end{equation}
as formed spaces. In particular, $\langle \cdot, \cdot \rangle_{M_k^i}$ is skew-symmetric (and hence $\dim M_k^i$ is even) if $i = 1$ and $k$ is even, or if $i = 2$ and $k$ is odd. Otherwise, $\langle \cdot, \cdot \rangle_{M_k^i}$ is symmetric.


By \Cref{decomposition Vi}, we can associate to each $\Gamma\in  \mathcal A_{\text{triple}}(\frg_i)$ an orthogonal or symplectic partition (cf. \Cref{defn orthogonal symplectic partition}): 
\[
\lambda_i = [\underbrace{l,\cdots, l}_{m_l^i}, \cdots, \underbrace{1,\cdots, 1}_{m_1^i}]\in \cP_{(-1)^{i+1}}(\dim V_i)
\]
where $m_k^i = \dim M_k^i$, and $\lambda_i$ determines the isomorphism class of $(V_i, \langle \cdot, \cdot \rangle_{V_i})$ as $\SL_2(\bC)$-representation, which is in bijection with the $G_i$-orbits in $\mathcal A_{\text{triple}}(\frg_i)$. This implies that the nilpotent orbits of $G_i$ are in bijection with $\cP_{(-1)^{i+1}}(\dim V_i)$. 

Let $I_i^+$ be the set consisting of $k$ in \Cref{decomposition Vi} such that $\langle \cdot, \cdot \rangle_{M^k_i}$ is nonzero and orthogonal, and $I_i^+$ the set consisting of $k$ such that $\langle \cdot, \cdot \rangle_{M^k_i}$ is nonzero and symplectic. Then
\[
(G_i)_\Gamma \cong \prod_{k \in I_i^+} \rO(M^k_i) \times \prod_{k \in I_i^-} \Sp(M^k_i)\quad \mbox{and}\quad A_e \cong \pi_0((G_i)_\Gamma) = \prod_{k \in I_i^+} (\bZ/2\bZ) a_k.
\]

\subsection{The GL-orbits of nilpotent pairs}\label{Sec GL nilpotent}
This section is devoted to the proof of \Cref{prop GL orbits}.

Fix an $\mathfrak{sl}_2$-triple $(h, e, f)$ in $\mathfrak{sl}_2(\mathbb{C})$, and define $\widetilde{\mathrm{SL}}_2(\mathbb{C}) := \SL_2(\mathbb{C}) \rtimes \mathbb{Z}/2\mathbb{Z} = \mathrm{SL}_2(\mathbb{C}) \rtimes \langle \sigma \rangle$, with $\sigma$ acts on $\Lie(\mathrm{SL}_2(\mathbb{C}))=\mathfrak{sl}_2(\mathbb{C})$ by
\[ \sigma(h) = h, \quad \sigma(e) = -e, \quad \sigma(f) = -f. \]

Let $\{v_{-n}, v_{-n+2}, \dots, v_{n-2}, v_n\}$ be a basis for $S_{n+1}= \Sym^n(\mathbb{C}^2)$ satisfying:
\begin{itemize}
  \item $h v_i = i v_i$ for $-n \leq i \leq n$;
  \item $e v_i = v_{i+2}$ for $-n \leq i \leq n - 2$ and $e v_n = 0$.
\end{itemize}

The following lemma is straightforward to verify.

\begin{lemma}\label{lem irr tilde SL2}
The irreducible representations of $\widetilde{\mathrm{SL}}_2(\mathbb{C})$ fall into the following four types:
\begin{itemize}
  \item $S_{k+1,k} := \Sym^{2k}(\mathbb{C}^2)$ with $\sigma$ acting as $+1$ on $\langle v_{-2k}, v_{-2k+4}, \dots, v_{2k} \rangle$ and $-1$ on \\$\langle v_{-2k+2}, v_{-2k+6}, \dots, v_{2k-2} \rangle$;
  \item $S_{k,k+1} := \Sym^{2k}(\mathbb{C}^2)$ with $\sigma$ acting as $-1$ on $\langle v_{-2k}, v_{-2k+4}, \dots, v_{2k} \rangle$ and $+1$ on\\ $\langle v_{-2k+2}, v_{-2k+6}, \dots, v_{2k-2} \rangle$;
  \item $S^+_{k,k} := \Sym^{2k-1}(\mathbb{C}^2)$ with $\sigma$ acting as $+1$ on $\langle v_{-2k+1}, v_{-2k+5}, \dots, v_{2k-3} \rangle$ and $-1$ on\\ $\langle v_{-2k+3}, v_{-2k+7}, \dots, v_{2k-1} \rangle$;
  \item $S^-_{k,k} := \Sym^{2k-1}(\mathbb{C}^2)$ with $\sigma$ acting as $-1$ on $\langle v_{-2k+1}, v_{-2k+5}, \dots, v_{2k-3} \rangle$ and $+1$ on\\ $\langle v_{-2k+3}, v_{-2k+7}, \dots, v_{2k-1} \rangle$.
\end{itemize}
\end{lemma}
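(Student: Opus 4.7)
\medskip

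The plan is to use Clifford theory to describe every irreducible representation of $\widetilde{\SL}_2(\bC) = \SL_2(\bC) \rtimes \langle \sigma \rangle$ as an extension of an ordinary $\SL_2(\bC)$-irreducible $V_n = \Sym^n(\bC^2)$, and then to classify the possible extensions explicitly in the weight basis. Observe first that the involution $\sigma$ acts on $\mathfrak{sl}_2$ as the inner automorphism $\Ad(\diag(1,-1))$, so the action of $\sigma$ on $\SL_2$ itself is conjugation by an element of $\mathrm{PGL}_2$. In particular $\sigma \cdot V_n$ is again irreducible of dimension $n+1$, hence isomorphic to $V_n$ as an $\SL_2$-representation.

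Given an irreducible representation $\rho$ of $\widetilde{\SL}_2(\bC)$, its restriction to $\SL_2(\bC)$ is either irreducible or a sum of two non-isomorphic irreducibles swapped by $\sigma$. The second case is ruled out by the previous paragraph, so $\rho\big|_{\SL_2} \cong V_n$ for some $n\ge 0$. Standard Clifford/Frobenius reciprocity then shows that there are exactly two such extensions for each $n$, differing from each other by twisting with the nontrivial character of $\widetilde{\SL}_2(\bC)/\SL_2(\bC) \cong \bZ/2\bZ$. This confirms that the total count of inequivalent irreducibles is $2\cdot|\{n\ge 0\}|$, which agrees with the union of the four infinite families in the statement (two families in each parity of $n$).

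To match this abstract count with the four explicit families, fix the weight basis $\{v_{-n}, v_{-n+2},\dots, v_n\}$ of $V_n$ and write $\sigma v_i = \epsilon_i v_i$ (the action is diagonal in this basis because $\sigma$ commutes with $h$). The relation $\sigma e \sigma^{-1} = -e$ in $\widetilde{\SL}_2(\bC)$ translates on $V_n$ into $\sigma\cdot e v_i = -e\cdot \sigma v_i$; substituting $e v_i = v_{i+2}$ (and $v_{i+2}=0$ for $i=n$) yields the recursion
\[
\epsilon_{i+2} = -\epsilon_i.
\]
Combined with $\sigma^2 = 1$ (so $\epsilon_i = \pm 1$), this forces the sign pattern on the weight basis to alternate, and hence is determined by $\epsilon_{-n}\in\{+1,-1\}$. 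The four cases $n=2k$ with $\epsilon_{-n}=\pm 1$ and $n=2k-1$ with $\epsilon_{-n}=\pm 1$ produce exactly the representations $S_{k+1,k}$, $S_{k,k+1}$, $S^+_{k,k}$, $S^-_{k,k}$ as listed.

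The main obstacle is essentially bookkeeping: one must keep straight the commutation $\sigma e = -e\sigma$ in the semidirect product and translate it correctly into the action on the weight basis to obtain the alternation $\epsilon_{i+2} = -\epsilon_i$. Once that is in hand, irreducibility of each of the four families (inherited from $V_n$), pairwise non-isomorphism (different dimensions across different $n$, and distinguished by $\epsilon_{-n}$ for the same $n$), and exhaustiveness are all immediate from the Clifford-theoretic count.
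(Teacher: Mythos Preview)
Your proof is correct and is essentially what the paper has in mind: it simply says the lemma ``is straightforward to verify'' and gives no further argument, so your Clifford-theory-plus-weight-basis verification is exactly the kind of check being left to the reader. The key recursion $\epsilon_{i+2}=-\epsilon_i$ from $\sigma e\sigma^{-1}=-e$ is the heart of the matter, and you have it right.
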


\begin{remark}
In \cite[\S 6]{MR694606}, these four types are described via $ab$-strings:
\begin{equation}\label{ab string}
    \underbrace{abab\cdots aba}_{2k+1}, \quad \underbrace{bab\cdots bab}_{2k+1}, \quad \underbrace{abab\cdots ab}_{2k}, \quad \underbrace{bab\cdots ba}_{2k}.
\end{equation}
We denote them as $ (k+1,k), (k,k+1), (k,k)^+$ and $ (k,k)^- $.
\end{remark}

\begin{proof}[Proof of \Cref{prop GL orbits}]
We retain the notation from \Cref{section type II}. Let $\ov L =  L_1 \oplus  L_2$ and define $\ov K = \GL( L_1 \oplus  L_2)$ with Lie algebra $\ov{\mathfrak{k}} = \mathfrak{gl}( L_1 \oplus  L_2)$. Define an inner involution $\ov \theta := \mathrm{Int}(\ov J)$ on $\ov K$ where
\[ \ov J := \begin{pmatrix} \Id_{ L_1} & 0 \\ 0 & -\Id_{ L_2} \end{pmatrix}. \]
This yields a $\mathbb{Z}/2\mathbb{Z}$-grading $\ov{\mathfrak{k}} = \ov{\mathfrak{k}}^0 \oplus \ov{\mathfrak{k}}^1$ with
\begin{equation}\label{identification ovK}
    \ov{\mathfrak{k}}^0 = \mathfrak{gl}( L_1) \oplus \mathfrak{gl}( L_2), \quad \ov{\mathfrak{k}}^1 = \Hom( L_1,  L_2) \oplus \Hom( L_2,  L_1) = \ov \bL. 
\end{equation} 

Let $\mathcal{N} \subseteq \ov{\mathfrak{k}}$ be the nilpotent cone of $\ov K$. Then we have $\mathcal{N}_{\ov \bL} = \mathcal{N} \cap \ov{\mathfrak{k}}^1$ under the identification \Cref{identification ovK}. Define the set of $\ov \theta$-stable $\mathfrak{sl}_2$-triples in $\ov{\mathfrak{k}}$ as
\[ \mathcal{A}_{\ov \theta, \text{triple}} (\ov\frk)= \{ (H, X, Y) \mid H \in \ov{\mathfrak{k}}^0, \; X, Y \in \ov{\mathfrak{k}}^1, \; [H, X] = 2X, [H, Y] = -2Y, [X, Y] = H \}. \]
This set carries a natural $\ov K^{\ov \theta} = \ov G_1 \times \ov G_2$-action. The graded Jacobson--Morosov theorem \cite[\S2.3]{MR3697026} gives a bijection between $\ov G_1 \times \ov G_2$-orbits in $\mathcal{N}_{\ov \bL}$ and those in $\mathcal{A}_{\ov \theta, \text{triple}}(\ov\frk)$, by mapping a triple $(H, X, Y)$ to its nilpositive element $X$.

Each triple $\Gamma = (H, X, Y)$ defines a $\widetilde{\mathrm{SL}}_2(\mathbb{C})$-representation on $\ov L =  L_1 \oplus  L_2$, with $h \mapsto H, e \mapsto X, f \mapsto Y, \sigma \mapsto \ov J$. One checks that this gives a bijection between $\ov G_1 \times \ov G_2$-orbits in $\mathcal{A}_{\ov \theta, \text{triple}}(\ov\frk)$ and isomorphism classes of $\widetilde{\mathrm{SL}}_2(\mathbb{C})$-representations on $\ov L$ such that $\s$ acts as $\ov J$.

Using \Cref{lem irr tilde SL2}, any $\Gamma\in \mathcal{A}_{\ov \theta, \text{triple}}(\ov\frk)$ gives a decomposition
\begin{equation}\label{Eq decomposition ovL}
\ov L \cong \bigoplus_{k\ge 0} \left( M_{k+1,k} \otimes S_{k+1,k} \oplus M_{k,k+1} \otimes S_{k,k+1} \right) \oplus \bigoplus_{k\ge 0}\left( M^+_{k,k} \otimes S^+_{k,k} \oplus M^+_{k,k} \otimes S^-_{k,k} \right),
\end{equation}
where $M^{\ep}_{*,*}$ are the multiplicity spaces, whose dimensions determine a decorated bipartition 
\begin{equation}\label{Gamma type II}
\begin{split}
   &\gamma=\\
   &\{ \underbrace{(k+1,k),\cdots,(k+1,k)}_{\dim M_{k+1,k}}, \underbrace{(k,k+1),\cdots,(k,k+1)}_{\dim M_{k,k+1}},\underbrace{(k,k)^+,\cdots,(k,k)^+}_{\dim M^+_{k,k}}, \underbrace{(k,k)^-,\cdots, (k,k)^-}_{\dim M^-_{k,k}}|k\in \bZ_{\geq 0}\}\\
   &\in \BP(m,n).
  \end{split}  
\end{equation}

Let $\cO_\gamma$ be the $\ov G_1 \times \ov G_2$-orbit in $\cN_{\ov \bL}$ corresponding to $\gamma$ as in \Cref{Gamma type II}. By \cite[\S5.3]{MR549399},
\begin{equation}\label{eq dimesion moment map type II}
\dim \cO_\gamma = \tfrac{1}{2}(\dim \mu_1(\cO_\gamma) + \dim \mu_2(\cO_\gamma) + \dim \ov \bL) - \mathcal{D}_\gamma,
\end{equation}
where 
\begin{equation}\label{Dgamma}
    \mathcal{D}_\gamma := \sum_k \dim M_{k+1,k} \cdot \dim M_{k,k+1}.
\end{equation} 
Combining with \Cref{def relevant GL}, \Cref{defn ab diagram}, we see $\cO_\gamma \in \cR_{\ov \bL}$ iff $\cD_\gamma=0$, which means that $M_{k,k+1}$ and $M_{k+1,k}$ are not both nonzero for each $k\ge0$, i.e., $\gamma \in \RBP(m,n)$.

Let $\LS^{\ep}_{*,*}, \RS^{\ep}_{*,*}\subset S^{\ep}_{*,*}$ be the $+1$ and $-1$ eigenspaces of $\sigma$ on $S^{\ep}_{*,*}$. The decomposition \Cref{Eq decomposition ovL} induce  decompositions 
\begin{equation}\label{Eq decomposition ov L1}
\begin{split}
     L_1= \bigoplus_{k\ge0} \left( M_{k+1,k}\otimes  \LS_{k+1,k}\oplus  M_{k,k+1} \otimes  \LS_{k,k+1}\right)\oplus  \bigoplus_{k\ge0} \left( M^+_{k,k}\otimes  \LS^+_{k,k} \oplus M^-_{k,k} \otimes   \LS^-_{k,k}\right)\\
     L_2= \bigoplus_{k\ge0} \left( M_{k+1,k}\otimes  \RS_{k+1,k}\oplus  M_{k,k+1} \otimes  \RS_{k,k+1}\right)\oplus  \bigoplus_{k\ge0} \left( M^+_{k,k}\otimes  \RS^+_{k,k} \oplus M^-_{k,k} \otimes   \RS^-_{k,k}\right),
\end{split}
\end{equation}
Write $X=(T_1,T_2)$ where $T_1\in \Hom( L_1, L_2)$ and $T_2\in \Hom( L_2, L_1)$. The nilpotent endomorphisms $\mu_1(X) = T_2 T_1$ of $ L_1$ and $\mu_2(X) = T_1 T_2$ of $ L_2$ preserve the above decompositions. 
Taking lengths of Jordan blocks of $\mu_i(X)$ we get partitions:
\[
\mu^\dagger_1(\gamma)= \left\{ \underbrace{k+1,\cdots,k+1 }_{\dim M_{k+1,k}}, \underbrace{k,\cdots,k }_{\dim M_{k,k+1}},\underbrace{k,\cdots,k}_{\dim M^+_{k,k}}, \underbrace{k,\cdots, k}_{\dim M^-_{k,k}}\middle|k\in \bZ_{\geq 0}\right\}\in \cP(m),
\] 
\[
\mu^\dagger_2(\gamma)= \left\{ \underbrace{k,\cdots,k }_{\dim M_{k+1,k}}, \underbrace{k+1,\cdots,k+1 }_{\dim M_{k,k+1}},\underbrace{k,\cdots,k}_{\dim M^+_{k,k}}, \underbrace{k,\cdots, k}_{\dim M^-_{k,k}}\middle|k\in \bZ_{\geq 0}\right\}\in \cP(n).
\]
Hence $\mu(X)$ maps to the pair $(\cO_1, \cO_2)$ with $\cO_1$ and $\cO_2$ the orbits corresponding to these partitions. 
\end{proof}

\subsection{Ortho-symplectic orbits and their component groups}\label{Sec Orth-symplectic}
This section is devoted to the proof of \Cref{orthosymplectic orbits}.

Fix a primitive fourth root of unity $\zeta\in \bC$. Define an automorphism $\theta$ of $\widetilde{\SL}_2(\mathbb{C})$ by
\[
\theta(h) = h, \quad \theta(e) = \zeta e, \quad \theta(f) = -\zeta f, \quad \theta(\sigma) = \sigma.
\]
\begin{defn}\label{theta twisted pairing}
    Let $L_1$ and $L_2$ be representations of $\widetilde{\SL}_2(\mathbb{C})$. A non-degenerate bilinear pairing
\[
\langle\cdot, \cdot\rangle_{L_1, L_2} \colon L_1 \otimes L_2 \to \mathbb{C}
\]
is said to be \emph{$\theta$-twisted invariant} if
\begin{equation}\label{form theta twisted}
\langle \theta(g)(v_1), g(v_2) \rangle_{L_1, L_2} = \langle v_1, v_2 \rangle_{L_1, L_2} \quad \text{for all } g \in \widetilde{\SL}_2(\mathbb{C}), \ v_1 \in L_1, \ v_2 \in L_2.
\end{equation}
When $L_1 = L_2 = L$, we simply the notation $\langle \cdot, \cdot \rangle_{L_1,L_2}$ by $\langle \cdot, \cdot \rangle_L$ and refer to it as a $\theta$-twisted invariant form on $L$. The form $\langle \cdot, \cdot \rangle_L$ is said to be 
\begin{enumerate}
    \item \emph{$\sigma$-sign} $+1$ if
\[
\langle v, w \rangle_L = \langle \sigma(w), v \rangle_L \quad \text{for all } v, w \in L,
\]
\item \emph{$\sigma$-sign} $-1$ if
\[
\langle v, w \rangle_L = -\langle \sigma(w), v \rangle_L \quad \text{for all } v, w \in L.
\]
\end{enumerate}
\end{defn}
\trivial[h]{
Such a form defines a $\widetilde{\SL}_2(\mathbb{C})$-equivariant isomorphism
\[
\begin{aligned}
L_1^\theta &\cong L_2^\vee, \\
v &\mapsto \langle v, \cdot \rangle,
\end{aligned}
\]
where $L_1^\theta$ denotes the $\theta$-twist of $L_1$.
}

Recall from \Cref{lem irr tilde SL2} that the irreducible representations of $\widetilde{\SL}_2(\mathbb{C})$ are of the following four types:
\[
S_{k+1,k}, \quad S_{k,k+1}, \quad S^+_{k,k}, \quad S^-_{k,k} \quad \text{for } k \geq 0.
\]

\begin{lemma}\label{form tilde sl2 representation}
\begin{enumerate}
    \item There exist $\theta$-twisted invariant forms $\langle\cdot, \cdot\rangle_{S_{k+1,k}}$ and $\langle\cdot, \cdot\rangle_{S_{k,k+1}}$ on $S_{k+1,k}$ and $S_{k,k+1}$, respectively. These forms are unique up to scalar and have $\sigma$ signs $(-1)^k$ and $(-1)^{k-1}$. 
    \item There exist $\theta$-twisted invariant pairings
    \[
    \langle \cdot, \cdot \rangle_{S^+_{k,k}, S^-_{k,k}} \quad \text{and} \quad \langle \cdot, \cdot \rangle_{S^-_{k,k}, S^+_{k,k}},
    \]
    unique up to scalar, and related by the identity
    \[
    \langle v, w \rangle_{S^+_{k,k}, S^-_{k,k}} = (-1)^k \langle \sigma(w), v \rangle_{S^-_{k,k}, S^+_{k,k}} \quad \forall v \in S^+_{k,k}, \ w \in S^-_{k,k}.
    \]
    Define a $\theta$-twisted invariant form on
    \[
    S_{k,k} \coloneqq S^+_{k,k} \oplus S^-_{k,k}
    \]
    by
    \[
    \langle v_1 + w_1, v_2 + w_2 \rangle_{S_{k,k}} = \langle v_1, w_2 \rangle_{S^+_{k,k}, S^-_{k,k}} + \langle w_1, v_2 \rangle_{S^-_{k,k}, S^+_{k,k}}.
    \]
    This form has $\sigma$ sign $(-1)^k$. 
\end{enumerate}
\end{lemma}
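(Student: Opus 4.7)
The plan is to reduce to a weight-space analysis. Applying the $\theta$-twisted invariance condition \eqref{form theta twisted} to $g = \exp(th)$ and using $\theta(h)=h$ forces any $\theta$-twisted invariant pairing $L_1 \otimes L_2 \to \bC$ to pair the $i$-weight space of $L_1$ only with the $(-i)$-weight space of $L_2$. Since all weight spaces in the irreducibles listed in \Cref{lem irr tilde SL2} are one-dimensional, this leaves at most one scalar per matched weight pair. Differentiating \eqref{form theta twisted} along $\exp(te)$ and using $\theta(e)=\zeta e$ together with $\zeta^{-1}=-\zeta$ (since $\zeta^2=-1$) yields the recursion
\[
\langle ev_1, v_2\rangle \;=\; \zeta\,\langle v_1, ev_2\rangle,
\]
and the analogous recursion with $f$ is automatically compatible. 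These recursions determine all matched weight-pair scalars from a single overall constant, proving existence and uniqueness up to scalar.

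For part (1), take the weight basis $\{v_{-2k+2j}\}_{0\le j\le 2k}$ on $S_{k+1,k}$ and iterate the $e$-recursion to obtain $\langle v_{-2k+2j},v_{2k-2j}\rangle=\zeta^j\langle v_{-2k},v_{2k}\rangle$. Combining with the explicit action $\sigma(v_{-2k+2j})=(-1)^j v_{-2k+2j}$ on $S_{k+1,k}$ recalled in \Cref{lem irr tilde SL2} and using $\zeta^2=-1$, one computes
\[
\langle \sigma v_{2k-2j},v_{-2k+2j}\rangle \;=\; (-1)^j\,\zeta^{2k-j}\,\langle v_{-2k},v_{2k}\rangle \;=\; (-1)^k\,\langle v_{-2k+2j},v_{2k-2j}\rangle,
\]
giving $\sigma$-sign $(-1)^k$. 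The computation for $S_{k,k+1}$ is identical except that $\sigma$ acts with the opposite sign on every weight vector, producing an extra minus sign and yielding $\sigma$-sign $(-1)^{k-1}$.

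For part (2), the same recursion produces pairings $\langle\cdot,\cdot\rangle_{S^+_{k,k},S^-_{k,k}}$ and $\langle\cdot,\cdot\rangle_{S^-_{k,k},S^+_{k,k}}$, each unique up to scalar (no $\sigma$-sign constraint arises since source and target are distinct irreducibles). Expressing both pairings on weight vectors via the recursion and plugging in the $\sigma$-actions from \Cref{lem irr tilde SL2} shows that the identity $\langle v,w\rangle_{S^+,S^-}=(-1)^k\langle \sigma w,v\rangle_{S^-,S^+}$ reduces to a single scalar relation between the two overall normalizations; we rescale one of them so that the identity holds. Finally, to compute the $\sigma$-sign of $\langle\cdot,\cdot\rangle_{S_{k,k}}$, apply this identity together with its symmetric counterpart (which follows from the $\sigma$-invariance $\langle \sigma v,\sigma w\rangle=\langle v,w\rangle$ implied by $\theta(\sigma)=\sigma$) to the two cross terms in the definition; each cross term picks up a factor $(-1)^k$, yielding $\sigma$-sign $(-1)^k$ on $S_{k,k}$.

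The main obstacle is purely combinatorial bookkeeping: no step is conceptually deeper than the weight decomposition and the two recursions, but the identities $\zeta^{-1}=-\zeta$ and $\zeta^2=-1$ must be applied consistently to convert powers of $\zeta$ into signs, and the $\sigma$-sign computations depend on carefully tracking the parities of $k$ and $j$ throughout.
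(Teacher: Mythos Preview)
Your proposal is correct and follows essentially the same approach as the paper's proof, which is extremely terse: the paper simply invokes Schur's lemma for uniqueness and writes down one explicit formula for $\langle\cdot,\cdot\rangle_{S_{k+1,k}}$ on the weight basis, leaving everything else to the reader. Your weight-space analysis and $e$-recursion amount to a hands-on derivation of that same explicit form together with a built-in proof of uniqueness, and your $\sigma$-sign computations are exactly the direct checks the paper omits.
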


\begin{proof}
Uniqueness up to scalar follows from Schur's lemma. The form $\langle \cdot, \cdot \rangle_{S_{k+1,k}}$ is given by 
\[
\langle v_{-2k}, v_{2k} \rangle = 1, \quad \langle v_{-2k+2}, v_{2k-2} \rangle = -\zeta, \quad \ldots, \quad \langle v_{2k}, v_{-2k} \rangle = (-1)^k.
\]
The others are similar. 
\end{proof}

\begin{proof}[Proof of \Cref{orthosymplectic orbits}]
We retain the notations from \Cref{section Ortho-symplectic}. Let $V = V_1 \oplus V_2$, and define a bilinear form $\langle\, ,\,\rangle_V$ on $V$ by
\[
\langle v_1 + v_2, w_1 + w_2 \rangle_V = \langle v_1, w_1 \rangle_{V_1} + \langle v_2, w_2 \rangle_{V_2}, \quad \text{for all } v_i, w_i \in V_i.
\]
This form satisfies the identity
\begin{equation}\label{J sign}
 \langle v, w \rangle_V = \langle J(w), v \rangle_V, \quad \forall v, w \in V,
\end{equation}
where
\begin{equation}\label{Defin of J}
J := 
\begin{pmatrix}
\Id_{V_1} & 0 \\
0 & -\Id_{V_2}
\end{pmatrix}.
\end{equation}
Let $K = \GL(V)$ with Lie algebra $\frak k = \mathfrak{gl}(V)$. Define an automorphism $\theta: K \to K$ by the condition
\[
\langle \theta(g)(v), g(w) \rangle_V = \langle v, w \rangle_V \quad \text{for all } v, w \in V,\ g \in K.
\]
We denote by the same symbol $\theta$ the induced automorphism on the Lie algebra $\frak k$.

One directly checks that $\theta^2 = \mathrm{Int}(J)$ hence $\theta^4 = \Id$. We write elements in $K$ and $\frak k$ as block matrices with respect to the decomposition $V = V_1 \oplus V_2$. One easily finds
\[
\begin{aligned}
K^{\theta} =
\left\{
\begin{pmatrix}
    g_1 & 0 \\
    0 & g_2
\end{pmatrix}
\,\bigg\rvert\,
g_1 \in \rO(V_1), g_2 \in \Sp(V_2)
\right\}
= G_1\times G_2, \\
 K^{\theta^2} =
\left\{
\begin{pmatrix}
    g_1 & 0 \\
    0 & g_2
\end{pmatrix}
\,\bigg\rvert\,
g_1 \in \GL(V_1), g_2 \in \GL(V_2)
\right\}
= \GL(V_1) \times \GL(V_2).
\end{aligned}
\]
We obtain a $\mathbb{Z}/4\mathbb{Z}$-grading of $\frak k$:
\[
\frak k = \frak k^0 \oplus \frak k^1 \oplus \frak k^2 \oplus \frak k^3, \quad \frak k^i := \left\{x \in \frak{k} \mid \theta(x) = \zeta^i x\right\},
\]
with  
\begin{equation}\label{identification K}
\begin{aligned}
\frak k^0 &= \left\{
\begin{pmatrix}
    x_1 & 0 \\
    0 & x_2
\end{pmatrix}
\,\bigg\rvert\,
x_1 \in \frak{o}(V_1), x_2 \in \frak{sp}(V_2)
\right\}
= \frak{o}(V_1) \oplus \frak{sp}(V_2), \\
\frak k^0 \oplus \frak k^2 &= \left\{
\begin{pmatrix}
    x_1 & 0 \\
    0 & x_2
\end{pmatrix}
\,\bigg\rvert\,
x_1 \in \frak{gl}(V_1), x_2 \in \frak{gl}(V_2)
\right\}
= \frak{gl}(V_1) \oplus \frak{gl}(V_2), \\
\frak k^1 &= \left\{
\begin{pmatrix}
    0 & \zeta B^* \\
     B & 0
\end{pmatrix}
\,\bigg\rvert\,
B \in \Hom(V_1, V_2)
\right\}
= \Hom(V_1, V_2), \\
\frak k^3 &= \left\{
\begin{pmatrix}
    0 & -\zeta B^* \\
    B & 0
\end{pmatrix}
\,\bigg\rvert\,
B \in \Hom(V_1, V_2)
\right\}
= \Hom(V_1, V_2).
\end{aligned}
\end{equation}
where for $T \in \Hom(V_i, V_j)$, we define $T^* \in \Hom(V_j, V_i)$ by
\[
\langle Tu, v \rangle_{V_j} = \langle u, T^*v \rangle_{V_i}, \quad \forall u \in V_i, \ v \in V_j.
\]

Let $\mathcal{N} \subset \mathfrak{k}$ be the nilpotent cone. Then under the identification above, we have $\mathcal{N}_\VV = \mathcal{N} \cap \mathfrak{k}^1$.

Define the set of $\theta$-stable $\mathfrak{sl}_2$-triples in $\mathfrak{k}$ by
\[
\mathcal{A}_{\theta, \text{triple}}(\frk) = \{(H, X, Y) \mid H \in \frak k^0, X \in \frak k^1, Y \in \frak k^3, [H, X] = 2X, [H, Y] = -2Y, [X, Y] = H\}.
\]
This set carries a natural action of $G_1 \times G_2$. The graded Jacobson–Morozov theorem (cf. \cite[\S 2.3]{MR3697026}) provides a bijection between the set of $G_1 \times G_2$-orbits in $\mathcal{A}_{\theta, \mathrm{triple}}(\frk)$ and in $\mathcal{N}_\VV$ via $(H, X, Y) \mapsto X$.

Each triple $\Gamma = (H, X, Y)$ defines a representation of $\widetilde{\SL}_2(\mathbb{C})$ on $V$ by setting
\[
h \mapsto H, \quad e \mapsto X, \quad f \mapsto Y, \quad \sigma \mapsto J.
\]
One checks that the bilinear form $\langle \cdot, \cdot \rangle_V$ is $\theta$-twisted invariant in the sense of \Cref{form theta twisted} and of $\sigma$-sign $+1$ (see \Cref{J sign}). This defines a bijection between the $G_1 \times G_2$-orbits on $\mathcal{A}_{\theta, \mathrm{triple}}(\frk)$ and the isomorphism classes of representations of $\widetilde{\SL}_2(\mathbb{C})$ on the formed space $(V, \langle\cdot, \cdot\rangle_V)$ satisfying:
\begin{enumerate}
    \item $\sigma$ acts as $J$, and $\langle\cdot, \cdot\rangle_V$ has $\sigma$-sign $+1$;
    \item $\langle\cdot, \cdot\rangle_V$ is $\theta$-twisted invariant.
\end{enumerate}
\trivial[h]{The isomorphism is required to preserve the form $\langle \,,\,\rangle_V$.}
By \Cref{lem irr tilde SL2}, the triple $\Gamma = (H, X, Y) \in \mathcal{A}_{\theta, \mathrm{triple}}(\frk)$ induces a decomposition of $V$ as a $\widetilde{\SL}_2(\mathbb{C})$-representation:
\begin{equation}\label{Eq decomposition V}
V \cong \bigoplus_{k\ge0} \left( M_{k+1,k} \otimes S_{k+1,k} \oplus M_{k,k+1} \otimes S_{k,k+1} \right) \oplus \bigoplus_{k\ge0} \left( M^+_{k,k} \otimes S^+_{k,k} \oplus M^-_{k,k} \otimes S^-_{k,k} \right),
\end{equation}
where $M^{\ep}_{*,*}$ are multiplicity spaces. By \Cref{form tilde sl2 representation}, the restriction of $\langle\cdot, \cdot\rangle_V$ to each isotypic component in \eqref{Eq decomposition V} gives:
\begin{itemize}
    \item A non-degenerate bilinear form $\langle \cdot, \cdot \rangle_{M_{k+1,k}}$ on $M_{k+1,k}$ of sign $(-1)^k$. In particular, this form is:
    \begin{itemize}
        \item skew-symmetric if $k$ is even (hence $\dim M_{k+1,k}$ is even),
        \item symmetric if $k$ is odd;
    \end{itemize}
    
    \item A non-degenerate bilinear form $\langle \cdot, \cdot \rangle_{M_{k,k+1}}$ on $M_{k,k+1}$ of sign $(-1)^{k-1}$. In particular:
    \begin{itemize}
        \item skew-symmetric if $k$ is odd (hence $\dim M_{k,k+1}$ is even),
        \item symmetric if $k$ is even;
    \end{itemize}
    
    \item Perfect pairings between $M^+_{k,k}$ and $M^-_{k,k}$, hence a non-degenerate bilinear form $\langle \cdot, \cdot \rangle_{M^+_{k,k} \oplus M^-_{k,k}}$ of sign $(-1)^k$, implying $\dim M^+_{k,k} = \dim M^-_{k,k}$.
\end{itemize}
\trivial[h]{
We first give a pairing between $\Hom(\AS_{k,k},V)$ and $ \Hom(\BS_{k,k},V)$ using the pairing of $\AS_{k,k}$ and $\BS_{k,k}$, and the self-pairing of $V$. The show that this pairing is $\widetilde{\frak{sl}}_2$-invariant. Restricted this pairing to the invariant subspace $\Hom_{\widetilde{\frak{sl}}_2}(\AS_{k,k},V)=\AM_{k,k}$ and  $\Hom_{\widetilde{\frak{sl}}_2}(\BS_{k,k},V)=\BM_{k,k}$ is still non-degenerate. 
\[
J(x\otimes y)= \sigma(x)\otimes y,\quad   x\in \AS_{k,k+1}, y\in \AM_{k,k+1}
\]
\[
\langle x\otimes y, u\otimes v\rangle_{V} =\langle x,u\rangle_{\AS_{k,k+1}} \langle y, u\rangle_{\AM_{k,k+1}}. 
\]
\[
\langle x\otimes y, u\otimes v\rangle_{V}= \langle J(u\otimes v), x\otimes y\rangle_{V}
\]
\[
\langle x,u\rangle_{\AS_{k,k+1}}= (-1)^k\langle \sigma(u),x\rangle_{\AS_{k,k+1}}
\]
\[
 \langle y, u\rangle_{\AM_{k,k+1}}=  (-1)^k \langle (u), y\rangle_{\AM_{k,k+1}}
\]
}
such that the isomorphism \Cref{Eq decomposition V} is an isomorphism of formed spaces. This determines an ortho-symplectic partition
\begin{equation}\label{gamma type I}
\begin{split}
\gamma = \Big\{ 
&\underbrace{(k+1,k), \dots, (k+1,k)}_{\dim M_{k+1,k}},\ 
\underbrace{(k,k+1), \dots, (k,k+1)}_{\dim M_{k,k+1}},\\
&\underbrace{(k,k)^+, \dots, (k,k)^+}_{\dim M^+_{k,k}},\
\underbrace{(k,k)^-, \dots, (k,k)^-}_{\dim M^-_{k,k}} 
\ \Big|\ k \in \mathbb{Z}_{\geq 0}
\Big\} \in \OSP(2m, 2n).
\end{split}
\end{equation}

For $\Gamma = (H, X, Y) \in \mathcal{A}_{\theta, \mathrm{triple}}(\frk)$, let $\Stab_{G_1 \times G_2}(\Gamma)$ and $\Stab_{G_1 \times G_2}(X)$ denote the stabilizers of $\Gamma$ and $X$ in $G_1 \times G_2$, respectively. Their component groups are denoted
\[
A_\Gamma := \pi_0(\Stab_{G_1 \times G_2}(\Gamma)), \quad A_X := \pi_0(\Stab_{G_1 \times G_2}(X)).
\]
The inclusion $\Stab_{G_1 \times G_2}(\Gamma) \hookrightarrow \Stab_{G_1 \times G_2}(X)$ induces an isomorphism $A_\Gamma \cong A_X$, and we henceforth identify $A_X$ with $A_\Gamma$.

We identify $\GL(M^+_{k,k})$ with $\GL(M^-_{k,k})$ via the perfect pairings. 
Then,
\begin{equation}\label{Stab G1G2 Gamma}
\Stab_{G_1 \times G_2}(\Gamma) \cong 
\prod_{(x,y) \in \oddeven_\gamma} \rO(M_{x,y}) 
\times \prod_{(x,y) \in \evenodd_\gamma} \Sp(M_{x,y}) 
\times \prod_{(x,y)^+ \in \eveneven_\gamma \sqcup \oddodd_\gamma} \GL(M^+_{x,y}),
\end{equation}
and
\begin{equation}\label{eq component group gamma}
A_X = A_\Gamma \cong 
\prod_{(x,y)\in \oddeven_\gamma} \mathbb{Z}/2\mathbb{Z} \cdot a_{(x,y)} 
= \mathcal{S}_\gamma.
\end{equation}

Let $\cO_\gamma$ be the $G_1 \times G_2$-orbit in $\cN_\VV$ corresponding to $\gamma$ from \Cref{gamma type I}. Then \cite[\S 7.1]{MR694606} shows:
\begin{equation}\label{eq dimesion moment map type II}
\dim \cO_\gamma = \tfrac{1}{2} \left( \dim \mu_1(\cO_\gamma) + \dim \mu_2(\cO_\gamma) + \dim \VV - \mathcal{D}_\gamma \right),
\end{equation}
where $\mathcal{D}_\gamma$ is defined in \Cref{Dgamma}. Combining this with \Cref{def relevant orthsymplectic} and \Cref{defn orth-symplectic partitions}, we conclude that $\cO_\gamma \in \mathcal{R}_\VV$ if and only if $\cD_\gamma=0$, i.e., $\gamma \in \ROSP(2m, 2n)$.

The decomposition in \Cref{Eq decomposition V} induces a decomposition of the formed spaces $(V_1,\langle\,,\rangle_{V_1})$ and $(V_2,\langle\,,\rangle_{V_2})$ as in \Cref{Eq decomposition ov L1}:
\begin{equation}\label{Eq decomposition V1}
\begin{split}
    V_1&= \bigoplus_{k\ge0} \left( M_{k+1,k}\otimes  \LS_{k+1,k}\oplus  M_{k,k+1} \otimes  \LS_{k,k+1}\right)\oplus  \bigoplus_{k\ge0} \left( M^+_{k,k}\otimes  \LS^+_{k,k} \oplus M^-_{k,k} \otimes   \LS^-_{k,k}\right)\\
    V_2&= \bigoplus_{k\ge0} \left( M_{k+1,k}\otimes  \RS_{k+1,k}\oplus  M_{k,k+1} \otimes  \RS_{k,k+1}\right)\oplus  \bigoplus_{k\ge0} \left( M^+_{k,k}\otimes  \RS^+_{k,k} \oplus M^-_{k,k} \otimes   \RS^-_{k,k}\right).
\end{split}
\end{equation}
Identify $X$ with $T\in \Hom(V_1,V_2)$, the nilpotent endomorphisms $\mu_1(X) = T^*T$ of $V_1$ and $\mu_2(X) = T T^*$ of $V_2$ preserve the above decompositions. Taking lengths of Jordan blocks, we see that $\mu_1(X)$ and $\mu_2(X)$ correspond to orthogonal and symplectic partitions:

\[
\mu_1^\dagger(\gamma) = \left\{ 
\underbrace{k+1, \dots, k+1}_{\dim M_{k+1,k}},\
\underbrace{k, \dots, k}_{\dim M_{k,k+1}},\
\underbrace{k, \dots, k}_{\dim M^+_{k,k}},\
\underbrace{k, \dots, k}_{\dim M^-_{k,k}} 
\ \middle|\ k \in \mathbb{Z}_{\geq 0}
\right\} \in \cP^+(2m),
\]
\[
\mu_2^\dagger(\gamma) = \left\{
\underbrace{k, \dots, k}_{\dim M_{k+1,k}},\
\underbrace{k+1, \dots, k+1}_{\dim M_{k,k+1}},\
\underbrace{k, \dots, k}_{\dim M^+_{k,k}},\
\underbrace{k, \dots, k}_{\dim M^-_{k,k}} 
\ \middle|\ k \in \mathbb{Z}_{\geq 0}
\right\} \in \cP^-(2n).
\]
Hence $\mu(\cO_\g)\subset \cO_1\times \cO_2$ with $\cO_1$ and $\cO_2$ the nilpotent orbits corresponding to these partitions.

For $i=1,2$, we extend $\mu_i(X)$ to an $\frak{sl}_2$-triple $\Gamma_i$ in $\frak g_i$ persevering the decomposition \Cref{Eq decomposition V1}. Then we have:
\[
\begin{split}
C_{G_1}(\Gamma_1) &\cong 
\prod_{k \mbox{ odd} 
} \rO\left(M_{k,k+1} \oplus M_{k,k-1} \oplus M^+_{k,k} \oplus M^-_{k,k} \right) \\
&\quad \times \prod_{k \mbox{ even} 
} \Sp\left(M_{k,k+1} \oplus M_{k,k-1} \oplus M^+_{k,k} \oplus M^-_{k,k} \right), \\
C_{G_2}(\Gamma_2) &\cong 
\prod_{k \mbox{ even}
} \rO\left(M_{k+1,k} \oplus M_{k-1,k} \oplus M^+_{k,k} \oplus M^-_{k,k} \right) \\
&\quad \times \prod_{k \mbox{ odd}
} \Sp\left(M_{k+1,k} \oplus M_{k-1,k} \oplus M^+_{k,k} \oplus M^-_{k,k} \right).
\end{split}
\]


Under these isomorphisms and \Cref{Stab G1G2 Gamma}, the maps $C_{G_1}(\Gamma_1) \rightarrow \Stab_{G_1 \times G_2}(\Gamma)$ and $C_{G_2}(\Gamma_2) \rightarrow \Stab_{G_1 \times G_2}(\Gamma)$ are given by the inclusions. For example, the factor $\GL(M^+_{k,k})\cong \GL(M^-_{k,k})$ in \Cref{Stab G1G2 Gamma} maps into $\rO(M^+_{k,k}\oplus M^-_{k,k})$ or $\Sp(M^+_{k,k}\oplus M^-_{k,k})$ as the Siegel-Levi subgroup stabilizing the exhibited polarizations. These inclusions induce the morphism $\Delta^\dagger$ on the component groups defined in \Cref{map component group}. 
\trivial[h]{
 We extend $X_1\coloneqq  X^*X$ to be a $\frak{sl}_2$-triple $\gamma_1=\{ H_1, X_1, Y_1\}$ in $\frak{g}_1$ by defining the action of $H_1$ on $S^+_{k+1,k},  S^+_{k,k+1}, \overrightarrow{S^+_{k,k}}$ and $\overrightarrow{S^+_{k,k}}$ as follows 
\begin{itemize}
    \item We have $S^+_{k+1,k}=\langle v_{-2k}, v_{-2k+4},\cdots , v_{2k}\rangle$ and $X_1(v_{2i})=v_{2i+4}$ for $-k\leq i\leq k-2$ and $X_1(v_{2k})=0$. Define $H_1(v_{2i})= i v_{2i}$ for $i=-k,-k+2,\cdots, k$;
 \item $S^+_{k,k+1}=\langle v_{-2k+2}, v_{-2k+6},\cdots , v_{2k-2}\rangle$ and $X_1(v_{2i})=v_{2i+4}$ for $-k+1\leq i\leq k-3$ and $X_1(v_{2k-2})=0$. Define $H_1(v_{2i})= i v_{2i}$ for $ -k+1,-k+3,\cdots k-1$;
 \item $\overrightarrow{S^+_{k,k}}=\langle v_{-2k+1}, v_{-2k+5},\cdots , v_{2k-3}\rangle$ and $X_1(v_{2i-1})=v_{2i+3}$ for $-k+1\leq i\leq k-3$ and $X_1(v_{2k-3})=0$. Define $H_1(v_{2i-1})= i v_{2i-1}$ for $ -k+1, -k+3\cdots , k-1$;
  \item $\overleftarrow{S^+_{k,k}}=\langle v_{-2k+3}, v_{-2k+7},\cdots , v_{2k-1}\rangle$ and $X_1(v_{2i+1})=v_{2i+5}$ for $-k+1\leq i\leq k-3$ and $X_1(v_{2k-1})=0$. Define $H_1(v_{2i+1})= i v_{2i+1}$ for $ -k+1,-k+3,\cdots  k-1$. 
\end{itemize}
The action of $Y_1$ is uniquely determined by the action of $H_1$ and $X_1$. 
}
\end{proof}

\section{Fourier bijection of orbits}\label{subsection Fourier bijection}
The purpose of this section is to determine the bijection $$\Phi = \Phi_{m,n}: \SPM(m,n)\isom \SPM(n,m)$$ in \Cref{define Phi}, and show that $\Phi$ is independent of the base field $k$, as promised in \Cref{c:Fourier indep k}. 
\subsection{Fourier bijection vs conormal bijection}\label{sec: Fourier bijection vs conormal bijection}

We work over $\bC$ throughout this section. Let $V$ be a finite-dimensional $\bC$-vector space equipped with a linear action of a {\em connected} algebraic group $H$ that contains a dilation and has finitely many orbits on $V$. We first introduce an \emph{conormal bijection} between the set of $H$-orbits in $V$ and in $V^*$  by identifying the closures of their conormal bundles. We then prove that this conormal-bundle bijection agrees with the \emph{Fourier bijection} induced by the Fourier--Sato transform on simple perverse sheaves. The results of this section will be used in \Cref{App FT Type I}.


\begin{lemma}\label{l:fin orb}
    The group $H$ has finitely many orbits on $V$ if and only if it has finitely many orbits on $V^*$.
\end{lemma}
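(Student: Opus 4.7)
The strategy is to realize both orbit sets as the set of irreducible components of a common closed subvariety of $V\oplus V^*$, from which the finiteness equivalence (and in fact a natural bijection of the two orbit sets) will follow.

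The first step is to introduce the Hamiltonian $H$-action on the cotangent bundle $T^*V\cong V\oplus V^*$ with moment map $\mu\colon V\oplus V^*\to \frh^*$ defined by $\mu(v,\xi)(X)=\xi(X\cdot v)$ for $X\in\frh$. A direct calculation gives $(v,\xi)\in\mu^{-1}(0)$ iff $\xi$ annihilates $\frh\cdot v$. Because $H$ is connected and acts transitively on each orbit $V_\alpha$, we have $\frh\cdot v=T_v V_\alpha$ for $v\in V_\alpha$, whence
\[
\mu^{-1}(0)=\bigsqcup_{\alpha\in\un{H\bs V}} T^*_{V_\alpha}V
\]
as sets. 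Since $V_\alpha$ is smooth and irreducible (as the orbit of a connected group), the conormal bundle $T^*_{V_\alpha}V$ is a vector bundle over $V_\alpha$, hence smooth and irreducible of dimension $\dim V$. The closures $\overline{T^*_{V_\alpha}V}$ are therefore pairwise distinct irreducible closed subvarieties of the common dimension $\dim V$, and so are precisely the irreducible components of $\mu^{-1}(0)$. This yields a canonical bijection between $\un{H\bs V}$ and the set of irreducible components of $\mu^{-1}(0)$.

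The second step is to repeat the construction for the dual action on $V^*$. The symplectic varieties $T^*V$ and $T^*V^*$ share the underlying variety $V\oplus V^*$, their symplectic forms differ by a sign, and a short calculation gives $\mu_{V^*}=-\mu_V$; hence $\mu_V^{-1}(0)=\mu_{V^*}^{-1}(0)$ as subvarieties of $V\oplus V^*$. Applying the argument of the previous paragraph on the dual side identifies the irreducible components of this common fiber with the $H$-orbits on $V^*$, producing a canonical bijection $\un{H\bs V}\leftrightarrow\un{H\bs V^*}$, from which the lemma follows at once. No serious obstacle is anticipated; the only point requiring verification is that no conormal closure $\overline{T^*_{V_\alpha}V}$ is contained in another, which is immediate from the uniform dimension count. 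The dilation hypothesis is not needed for this particular lemma (it will only play a role in comparing the conormal bijection with the Fourier bijection in what follows).
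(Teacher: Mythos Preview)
Your approach coincides with the paper's: both rest on the moment map $\mu\colon V\times V^*\to\frh^*$ and the set-theoretic decomposition $\mu^{-1}(0)=\bigsqcup_\alpha T^*_{V_\alpha}V$. There is, however, a circularity in the presentation. You assert that the closures $\overline{T^*_{V_\alpha}V}$ ``are precisely the irreducible components of $\mu^{-1}(0)$,'' but this already uses finiteness of orbits on $V$: you need $\dim\mu^{-1}(0)=\dim V$ (so that a $\dim V$-dimensional irreducible closed subset is maximal) and you need the closures to cover $\mu^{-1}(0)$ (closure of a \emph{finite} union is the union of closures). Without finiteness both can fail---take $H=\{1\}$, where $\mu^{-1}(0)=V\times V^*$ is irreducible of dimension $2\dim V$ while the conormals $\{v\}\times V^*$ are not components at all. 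So one cannot first establish both bijections unconditionally and then read off finiteness from the Noetherianity of $\mu^{-1}(0)$; that would prove every $H$ has finitely many orbits on every $V$.

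The paper phrases the argument through the intermediate condition ``$\mu^{-1}(0)$ is Lagrangian in $V\times V^*$'' (equivalently, of pure dimension $\dim V$) and observes that finiteness of orbits on $V$ is equivalent to it, and symmetrically for $V^*$. The forward implication is exactly your argument; for the converse, once $\dim\mu^{-1}(0)=\dim V$, each conormal closure is an irreducible component, distinct orbits give distinct components, and there are only finitely many. Your plan is easily repaired along these lines: assume finiteness on one side, deduce $\dim\mu^{-1}(0)=\dim V$, then run the conormal argument on the dual side to bound the number of $V^*$-orbits by the (finite) number of irreducible components. The ``only point requiring verification'' you flagged (non-containment of conormal closures) is fine, but it is not the crux.
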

\begin{proof}

    Let $\mu: V\times V^*\to \frh^*$ be the moment map for the Hamiltonian action of $H$ on $T^*V=V\times V^*$. Then the finiteness of $H$-orbits in $V$ and $V^*$ are both equivalent to that the zero fiber of $\mu$ is a Lagrangian in $V\times V^*$.
\end{proof}

We say that the $H$-action on $V$ {\em  contains dilation} if $H$ contains a subgroup isomorphic to $\Gm$ that acts on $V$ with a single nonzero weight. Clearly if the $H$-action on $V$ contains dilation, the same holds for its action on $V^*$. In particular, all $H$-equivariant sheaves on $V$ and $V^*$ are monodromic under the dilation $\Gm$-actions, and the Fourier-Sato transform is defined, giving an equivalence
\begin{equation*}
    \FT_{H,V}: D_H(V)\isom D_{H}(V^*).
\end{equation*}

\subsubsection{Conormal bijection}\label{sss:conormal bij}
Now we assume $H$ has finitely many orbits on $V$ (hence on $V^*$ as well, by Lemma \ref{l:fin orb}). Let $I$ be the set of $H$-orbits in $V$, and $I^*$ be the set of $H$-orbits in $V^*$. For each $\s\in I$, let $O_\s\subset V$ be the corresponding $H$-orbit. Let $\L^\circ_\s:=T^*_{O_\s}V\subset V\times V^*$ be the conormal bundle of $O_\s$. Similarly, for $\r\in I^*$ we have the conormal bundle $\L^\circ_\r:=T^*_{O_\r}V^*\subset V\times V^*$.

Let $\mu: V\times V^*\to \frh^*$ be the moment map for the Hamiltonian action of $H$ on $T^*V=V\times V^*$. The zero fiber $\mu^{-1}(0)$ is thus identified with the union of conormals in two ways
\begin{equation*}
   \bigsqcup_{\s\in I}\L^\circ_\s= \mu^{-1}(0)=\bigsqcup_{\r\in I^*}\L^\circ_\rho.
\end{equation*}
Thus we get canonical bijections between the set of  irreducible components of $\mu^{-1}(0)$ with both $I$ and $I^*$
\begin{equation}\label{eq conoral bijection}
    I\bij \Irr(\mu^{-1}(0))\bij I^*.
\end{equation}
The composition is a ``conormal bijection'':
\begin{equation*}
    \Phi_{H,V}: I\isom I^*.
\end{equation*}
This bijection is characterized as follows: for $\s\in I$ and any point $v\in O_\s$, there is an open dense subset of the conormal fiber $\L^\circ_\s|_v=(\frg\cdot v)^\bot\subset V^*$ that lies in an orbit $O_\r$. Then $\Phi_{H,V}(\s)=\r$.

\begin{lemma}\label{l:conn stab}
    Suppose $V$ has finitely many $H$-orbits and all $H$-stabilizers are connected, then the same is true for the $H$-action on $V^*$.
\end{lemma}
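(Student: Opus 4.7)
The plan is to count simple $H$-equivariant perverse sheaves on both $V$ and $V^*$ and to compare the two counts using the Fourier--Sato equivalence. Set $\Perv_H(V)$ and $\Perv_H(V^*)$ for the corresponding categories of $H$-equivariant perverse sheaves; since the $H$-action contains dilation, every $H$-equivariant sheaf is automatically $\Gm$-monodromic, so the Fourier--Sato transform $\FT_{H,V}\colon D_H(V)\isom D_H(V^*)$ is defined. Standard properties of the (equivariant) Fourier--Sato transform imply that $\FT_{H,V}$ is perverse $t$-exact (with our shift convention) and therefore induces a bijection on isomorphism classes of simple objects
\[
\FT_{H,V}\colon \Irr\bigl(\Perv_H(V)\bigr)\ \bij\ \Irr\bigl(\Perv_H(V^*)\bigr).
\]

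Next, by the standard description of simple equivariant perverse sheaves,
\[
\#\Irr\bigl(\Perv_H(V)\bigr)\ =\ \sum_{\s\in I}\, \bigl|\widehat{\pi_0(H_v)}\bigr|,
\qquad
\#\Irr\bigl(\Perv_H(V^*)\bigr)\ =\ \sum_{\r\in I^*}\, \bigl|\widehat{\pi_0(H_\xi)}\bigr|,
\]
where $v\in O_\s$ and $\xi\in O_\r$ are chosen representatives and $\widehat{(-)}$ denotes the set of irreducible complex characters. The assumption that every $H$-stabilizer on $V$ is connected means $\pi_0(H_v)=1$ for all $\s\in I$, so the left-hand count collapses to $\#I$.

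The conormal bijection $\Phi_{H,V}\colon I\isom I^*$ constructed in~\S\ref{sss:conormal bij} gives $\#I=\#I^*$. Combining this with the Fourier--Sato equality $\#\Irr(\Perv_H(V))=\#\Irr(\Perv_H(V^*))$ and the two displayed formulas yields
\[
\#I^*\ =\ \sum_{\r\in I^*}\, \bigl|\widehat{\pi_0(H_\xi)}\bigr|.
\]
Since each summand is a positive integer that is at least $|\pi_0(H_\xi)|\ge 1$, equality forces $|\pi_0(H_\xi)|=1$ for every $\r\in I^*$, i.e., every $H$-stabilizer on $V^*$ is connected.

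The only nontrivial input is the perverse $t$-exactness of the equivariant Fourier--Sato transform and the resulting bijection on simple objects; once that is granted, the counting argument is essentially formal, using the conormal bijection already established.
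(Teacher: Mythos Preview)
Your argument is correct and follows essentially the same approach as the paper's proof: count simple equivariant perverse sheaves on both sides via the Fourier--Sato equivalence, use the hypothesis to collapse the $V$-side count to $\#I$, and use the conormal bijection $\Phi_{H,V}$ to get $\#I=\#I^*$, forcing every component group on $V^*$ to be trivial.

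One small slip: you write ``each summand is a positive integer that is at least $|\pi_0(H_\xi)|\ge 1$''. In fact $|\widehat{\pi_0(H_\xi)}|$ equals the number of conjugacy classes of $\pi_0(H_\xi)$, which is at most $|\pi_0(H_\xi)|$, not at least. This does not affect the argument, since all you actually use is that each summand is $\ge 1$; from $\sum_{\r\in I^*}|\widehat{\pi_0(H_\xi)}|=\#I^*$ it follows that each summand is exactly $1$, hence each $\pi_0(H_\xi)$ is trivial.
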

\begin{proof}
    The t-exact equivalence $\FT_{H,V}$ implies that the number of simple perverse sheaves up to isomorphism is the same for $D_H(V)$ and for $D_H(V^*)$, which we denote by $N$. By assumption, $N=\#I$. By the bijection $\Phi_{H,V}$, we have $\#I=\#I^*$, hence the number of simple perverse sheaves up to isomorphism in $D_H(V^*)$ is equal to $\#I^*$, showing that each orbit only carries the trivial $H$-equivariant local system, i.e., stabilizers of $H$ on $V^*$ are all connected.
\end{proof}

Let $\IC_\s$ denote the intersection complex of the closure of $O_\s$, for $\s\in I$ or $I^*$. If the assumptions of Lemma \ref{l:conn stab} are satisfied, then
\begin{equation*}
    \{\IC_{\s}\}_{\s\in I} \mbox{ and }\{\IC_{\r}\}_{\r\in I^*}
\end{equation*}
are the set of isomorphism classes of simple perverse sheaves in $D_H(V)$ and $D_H(V^*)$ respectively. The equivalence $\FT_{H,V}$ gives a bijection between them, hence a ``Fourier bijection'' between $I$ and $I^*$.
The next proposition says that the Fourier bijection is the same as $\Phi_{H,V}$.

\begin{prop}[Fourier bijection=conormal bijection]\label{p:conormal bij Fourier bij}
    Suppose the action of $H$ on $V$ contains dilation, has finitely many orbits and all stabilizers are connected. Then for any $\s\in I$ we have an isomorphism of perverse sheaves in $D_H(V^*)$
    \begin{equation*}
        \FT_{H,V}(\IC_\s)\cong \IC_{\Phi_{H,V}(\s)}.
    \end{equation*}
\end{prop}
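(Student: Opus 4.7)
The plan proceeds in three steps, combining $t$-exactness of $\FT_{H,V}$ with its compatibility with characteristic cycles. First, I observe that $\FT_{H,V}$ is a perverse $t$-exact equivalence and therefore sends simple perverse sheaves to simple perverse sheaves. By \Cref{l:conn stab}, the connected-stabilizer hypothesis passes to the $H$-action on $V^*$, so the simple perverse sheaves in $D_H(V^*)$ are exactly $\{\IC_\r\}_{\r\in I^*}$. Therefore $\FT_{H,V}(\IC_\s)\cong \IC_{\tau(\s)}$ for a unique bijection $\tau\colon I\to I^*$, and the problem reduces to showing $\tau(\s)=\Phi_{H,V}(\s)$ for all $\s\in I$.

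Second, I will invoke the standard compatibility between Fourier--Sato transform and characteristic cycles (due to Kashiwara--Schapira and Brylinski--Malgrange): under the natural swap identification $T^*V=V\times V^*\cong V^*\times V=T^*V^*$, one has $\CC(\FT_{H,V}\sF)=\mathrm{swap}_*\CC(\sF)$ for every monodromic $\sF$. For $\IC_\s$ the characteristic cycle has the standard expansion
$$\CC(\IC_\s)=[\overline{\L^\circ_\s}]+\sum_{\s'<\s}m_{\s'}[\overline{\L^\circ_{\s'}}],\qquad m_{\s'}\in\bZ_{\ge 0},$$
reflecting that $\IC_\s$ restricts to a shifted constant sheaf on $O_\s$. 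By the defining property of the conormal bijection, the swap carries the irreducible component $[\overline{\L^\circ_{\s'}}]$ of $V\times V^*$ to the irreducible component $[\overline{\L^\circ_{\Phi_{H,V}(\s')}}]$ of $V^*\times V$ (this is literally how $\Phi_{H,V}$ was defined via the two parametrizations of $\Irr(\mu^{-1}(0))$ in \eqref{eq conoral bijection}). Comparing with $\CC(\IC_{\tau(\s)})=[\overline{\L^\circ_{\tau(\s)}}]+\sum_{\r<\tau(\s)}n_\r[\overline{\L^\circ_\r}]$ and using linear independence of the classes $[\overline{\L^\circ_\r}]$, I conclude $\Phi_{H,V}(\s)\le \tau(\s)$ in the closure order on $I^*$, since $[\overline{\L^\circ_{\Phi_{H,V}(\s)}}]$ appears with multiplicity one in $\CC(\IC_{\tau(\s)})$ and all components there are indexed by $\r\le \tau(\s)$.

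Third, I promote this containment to an equality by a dimension count. Since $\Phi_{H,V}$ and $\tau$ are both bijections $I\to I^*$,
$$\sum_{\s\in I}\dim O_{\Phi_{H,V}(\s)}\;=\;\sum_{\r\in I^*}\dim O_\r\;=\;\sum_{\s\in I}\dim O_{\tau(\s)}.$$
The pointwise inequality $\dim O_{\Phi_{H,V}(\s)}\le \dim O_{\tau(\s)}$ coming from the containment must therefore be an equality for every $\s$; combined with irreducibility of orbit closures and $\overline{O_{\Phi_{H,V}(\s)}}\subset \overline{O_{\tau(\s)}}$, equal dimensions force $O_{\Phi_{H,V}(\s)}=O_{\tau(\s)}$, completing the proof.

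The principal obstacle will be citing or briefly justifying the CC-compatibility of the Fourier--Sato transform in the precise form needed. There are mild sign/antipode subtleties in the identification $T^*V\cong T^*V^*$, but these do not affect the set-theoretic identification of irreducible components of $\mu^{-1}(0)$ on which the argument depends, so only the swap needs to be tracked. The multiplicity-one appearance of $[\overline{\L^\circ_\s}]$ in $\CC(\IC_\s)$ is classical, following from the shifted-constant-sheaf description of $\IC_\s$ on $O_\s$ and the behavior of CC under open restriction.
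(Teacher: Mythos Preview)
Your proof is correct and follows essentially the same line as the paper's. Both define the Fourier bijection $\tau$ (called $\Phi'$ in the paper), extract from Fourier--Sato invariance the inequality $\Phi_{H,V}(\s)\le\tau(\s)$ in the closure order on $I^*$, and then force equality by summing dimensions over the two bijections. The only cosmetic difference is that the paper uses the weaker statement that $\FT$ preserves singular support (together with $p_{V^*}(SS(\IC_{\tau(\s)}))=\overline{O_{\tau(\s)}}$), whereas you invoke the finer characteristic-cycle compatibility; the former avoids any need to mention effectivity or multiplicities, but both routes land on the same inequality and the same dimension count.
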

\begin{proof}
    We denote $\Phi_{H,V}$ simply by $\Phi$.
    Let $\Phi': I\isom I^*$ be the Fourier bijection, i.e., $\FT_{H,V}(\IC_\s)\cong \IC_{\Phi'(\s)}$. Since Fourier-Sato transform preserves singular support, we see that
    \begin{equation*}
        \L^\circ_{\s}\subset SS(\IC_\s)=SS(\IC_{\Phi'(\s)}).
    \end{equation*}
    Let $p_{V^*}: V\times V^*\to V^*$ be the projection. Then
    \begin{equation}\label{pV* inclusion}
        p_{V^*}(\L^\circ_{\s})\subset p_{V^*}(SS(\IC_{\Phi'(\s)}))=\ov O_{\Phi'(\s)}.
    \end{equation}
    By the characterization of $\Phi_{H,V}$ in Section \ref{sss:conormal bij}, $\L^\circ_{\s}$ contains a Zariski dense subset of $\L^\circ_{\Phi(\s)}$, hence $p_{V^*}(\L^\circ_{\s})$ contains the orbit $ O_{\Phi(\s)}$. By \eqref{pV* inclusion}, we get $O_{\Phi(\s)}\subset \ov O_{\Phi'(\s)}$. In particular, for any $\s\in I$
    \begin{equation}\label{dim ineq}
        \dim O_{\Phi(\s)}\le \dim O_{\Phi'(\s)}
    \end{equation}
    and equality holds only when $\Phi(\s)=\Phi'(\s)$.
    Now adding up the inequalities \eqref{dim ineq} for all $\s\in I$, using that both $\Phi$ and $\Phi'$ are bijections, we get
    \begin{equation*}
        \sum_{\r\in I^*}\dim O_\r=\sum_{\s\in I}\dim O_{\Phi(\s)}\le \sum_{\s\in I}\dim O_{\Phi'(\s)}=\sum_{\r\in I^*}\dim O_\r.
    \end{equation*}
    This forces equality in \eqref{dim ineq} to hold for all $\s\in I$, which implies that $\Phi(\s)=\Phi'(\s)$ for all $\s\in I$.
\end{proof}

\subsection{Fourier bijection for type II theta correspondence}\label{App FT Type I} 


The goal of this subsection is to determine the Fourier bijection of orbits in the setting of type II theta correspondence. This will be achieved in \Cref{c:Fourier bij type II} by identifying it with known combinatorics of mazes introduced in \cite{MR4865134}.

We retain the notation in \Cref{ex: GL theta}.
Throughout this subsection—until the final paragraph—we work over the finite field $F = \bF_q$. In particular, we consider the $\ov\bL_1=\Hom(L_1, L_2)$ under the $\ov B_1\times \ov B_2$-action. Switching the roles of $ L_1$ and $ L_2$ in the discussion of \Cref{Geometrization of the oscillator bimodule type II} and \Cref{another desc type II}, we get categories 
\begin{equation*}
\bcM_2^\Tate = D^{\Tate}_{\ov B_1 \times \ov B_2}(\ov \bL_2) \hookrightarrow 
\bcM_2^\mix = D^{\mix}_{\ov B_1 \times \ov B_2}(\ov \bL_2) \hookrightarrow 
\bcM_2 = D^{b}_{\ov B_1 \times \ov B_2}(\ov \bL_2) \to 
\bcM_2^k = D^{b}_{\ov B_{1,k} \times \ov B_{2,k}}(\ov \bL_2)
\end{equation*}
with the commuting Hecke category actions. The $\ov B_1 \times \ov B_2$-orbits on $\ov \bL_2$ are in natural bijection with $\PM(n,m)$. Passing to Grothendieck groups we obtain a $\ov\sfH_1\ot_R \ov\sfH_2$-module $\ov\sfM_2$ that specializes to the $\ov H_1\ot \ov H_2$-module $\ov M_2$ at $v=\sqrt{q}$, and to a $\ov W_1\times \ov W_2$-module $\ZZ[\PM(n,m)]$ at $v=1$.

In this subsection, we study the equivariant Fourier-Deligne transform
\begin{equation}\label{FT type II}
    \CM_{\ov \bL_1}: \bcM_1= D^{b}_{\ov B_1 \times \ov B_2}(\ov \bL_1)  \longrightarrow \bcM_2= D^{b}_{\ov B_1 \times \ov B_2}(\ov \bL_2)
\end{equation}
as well as its mixed, Tate and non-mixed variants.


\begin{remark}
    Since the action of $\ov B_1 \times \ov B_2$ on $\ov\bL_i$ contains the scaling action, the functor $\FT_{\bL_1}$ above is independent of the choice of the additive character $\psi$ on $\bF_q$. 
\end{remark}

\begin{lemma}\label{FT tate type II}
    The Fourier transform $\FT_{\ov \bL_1}$ restricts to an equivalence
    \begin{equation}\label{FT Mmix type II}
        \FT^\Tate_{\ov\bL_1}: \bcM^\Tate_1\isom \bcM_2^\Tate.
    \end{equation}
\end{lemma}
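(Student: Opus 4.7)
The argument follows the same template as the analogous statement in the type I setting (\Cref{FT tate type I}), where the key inputs are: (a) Fourier--Deligne transform is a perverse $t$-exact equivalence that sends simple perverse sheaves to simple perverse sheaves and preserves purity of weight zero; (b) the intersection cohomology sheaves $\IC_\s$ ($\s\in\PM(m,n)$ or $\PM(n,m)$) are Tate (\Cref{Cor. M1 IC typeII}) and generate the relevant Tate categories as triangulated categories under shifts and twists; and (c) the orbit stabilizers for the $\ov B_1\times \ov B_2$-action on $\ov\bL_i$ are extensions of tori by unipotent groups.

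First I will reduce to showing only one direction of containment, namely that $\FT_{\ov\bL_1}(\bcM_1^\Tate)\subset \bcM_2^\Tate$, since the same argument applied to the inverse functor $\FT_{\ov\bL_2}$ (defined via the character $\psi^{-1}$) then yields the reverse inclusion and hence an equivalence. Since $\bcM_1^\Tate$ is generated as a triangulated category by the shifts and Tate twists of $\{\IC_\s\}_{\s\in\PM(m,n)}$, it suffices to verify that $\FT_{\ov\bL_1}(\IC_\s)$ is Tate for every such $\s$. Perverse $t$-exactness of the Fourier transform implies $\FT_{\ov\bL_1}(\IC_\s)\cong \IC_\t\otimes V$ for a unique orbit $\cO_\t\subset \ov\bL_2$ and a one-dimensional Frobenius module $V$; by the type II analogue of \Cref{Cor. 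M1 IC typeII} (proved by exactly the same argument using \Cref{Cor. I0 implies all I} and \Cref{Lem. linear IC type II}, applied with the roles of $L_1$ and $L_2$ swapped), we know $\IC_\t$ is Tate, so the problem reduces to showing $V$ is Tate.

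Because the stalk of $\IC_\t$ at $0\in\ov\bL_2$ is itself nonzero and Tate, it is enough to show that the stalk $\FT_{\ov\bL_1}(\IC_\s)_0$ is Tate. More generally, I will prove: if $\sF\in\bcM_1^\Tate$, then $\FT_{\ov\bL_1}(\sF)_0$ is Tate. The usual description of the Fourier--Deligne transform gives $\FT_{\ov\bL_1}(\sF)_0\cong R\Gamma_c(\ov\bL_1,\sF)$ up to a shift and twist, which is a successive extension of the pieces $R\Gamma_c(\cO_\s, j_\s^*\sF)\cong R\Gamma_c(\cO_\s,\Qlbar)\otimes \sF_{x_\s}$, using that $j_\s^*\sF$ is a constant sheaf on $\cO_\s$ (since the $\ov B_1(\bF_q)\times\ov B_2(\bF_q)$-orbit on the $\bF_q$-points coincides with the geometric orbit by \Cref{PM vs orbits}). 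By assumption, each stalk $\sF_{x_\s}$ is Tate, and since $\cO_\s\cong (\ov B_1\times\ov B_2)/\Stab(x_\s)$ is an iterated affine space fibration over a torus (the Levi quotient of the stabilizer), $R\Gamma_c(\cO_\s,\Qlbar)$ is Tate as well. Taking tensor products and successive extensions preserves the Tate property, which completes the argument. The main step to verify is simply the analogue of \Cref{Cor. M1 IC typeII} for $\bcM_1$ in the type II setting, which goes through verbatim.
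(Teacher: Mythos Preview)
Your proof is correct and follows essentially the same approach as the paper: the paper's own proof simply states ``The proof is analogous to that of \Cref{FT tate type I}, and will be omitted,'' and you have accurately unwound what that analogy entails, step by step.
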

\begin{proof}  
The proof is analogous to that of \Cref{FT tate type I}, and will be omitted.
\end{proof}

Below we will simply use $\FT$ to denote various Fourier-Deligne transform, if the meaning is clear from the context.
\begin{cor} \label{FT type II generic}
Passing to Grothendieck groups, Fourier transform induces an isomorphism of $\ov\sfH:=\ov\sfH_1\ot_R\ov\sfH_2$-modules
    \begin{equation}\label{Fourier generic M12 type II}    
    \psi: \ov\sfM_1=R[\PM(m,n)]\xrightarrow{\ch^{-1}} K_0(\ol\cM_1^\Tate)\xrightarrow{K_0(\FT)}K_0(\ov\cM_2^\Tate)\xrightarrow{\ch}\ov\sfM_2= R[\PM(n,m)]
    \end{equation}
    and a $\ov W_1\times \ov W_2$-equivariant isomorphism 
    \begin{equation}\label{Fourier W12 type II}    
    \psi^k: \ov\sfM_{1,v=1}=\bZ[\PM(m,n)]\xrightarrow{\chi^{-1}}K_0(\ov\cM^k_1)\xrightarrow{K_0(\FT)}K_0(\ov\cM_2^k)\xrightarrow{\chi}\ov\sfM_{2,v=1}=\bZ[\PM(n,m)].
    \end{equation}
Via the sheaf-to-function map, Fourier transform induces an isomorphism of $\ov H:=\ov H_1\ot_{\bC} \ov H_2$-modules
    \begin{equation}\label{Fourier function M12 type II}    
    \psi^{F}: \ov M_1 \rightarrow \ov M_2
    \end{equation}
    and this is the same as the Fourier transform map given in \Cref{Fourier transform function}  (using the isomorphism $\io:\Qlbar\cong \bC$).  
\end{cor}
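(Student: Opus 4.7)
\medskip

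The plan is to assemble the three isomorphisms from the categorical equivalence of \Cref{FT tate type II} together with standard compatibilities of the Fourier--Deligne transform. Since the Tate equivalence $\FT^{\Tate}_{\ov\bL_1}$ has already been established, only the additional structures (Hecke equivariance, specializations, and the function-level formula) remain to be checked.

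First I would verify the categorical Hecke equivariance: the partial Fourier--Deligne transform $\FT_{\ov\bL_1}$ is canonically equivariant under the commuting $\bcH_1 \otimes \bcH_2$-action, with the analogous statement holding for the mixed and $k$-variants. This is the analogue of \Cref{Cor. FT equiv bcH} in the present (type~II) setting, and is substantially easier than the type~I case treated in the body of the paper: both $\ov\bL_1$ and $\ov\bL_2$ are honest vector spaces on which $\ov G_1 \times \ov G_2$ acts linearly, so no auxiliary base scheme appears. Hence the equivariance reduces to proper base change and the projection formula applied to the diagram defining Fourier--Deligne transform, combined with the fact that convolution by $\IC_w$ on either $\cB_1$ or $\cB_2$ is computed by a proper pullback/pushforward along the Bott--Samelson-type resolutions and commutes with $\FT_{\ov\bL_1}$ term by term. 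This is the step I expect to require the most care: once it is in place the rest of the corollary follows formally.

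Passing to Grothendieck groups and using the weight polynomial isomorphism $\ch^\Tate$ of \Cref{Lem. ch isom} on both sides (combined with the identifications $K_0(\bcH_i^\Tate) \cong \bsfH_i$ from \Cref{Cor. Hk ring isom}), this categorical equivariance yields exactly the $R$-linear isomorphism $\psi\colon \ov\sfM_1 \xrightarrow{\sim} \ov\sfM_2$ of $\ov\sfH = \bsfH_1 \otimes_R \bsfH_2$-modules claimed in \eqref{Fourier generic M12 type II}. Specializing $v = 1$ and using the analogue of \Cref{Cor. K0 M1 type II}(4) on both sides produces $\psi^k$ and endows it with the $\ov W_1 \times \ov W_2$-equivariance in \eqref{Fourier W12 type II}; alternatively, one may realize $\psi^k$ directly as the isomorphism $K_0(\chi)\colon K_0(\ov\cM_1^k) \xrightarrow{\sim} K_0(\ov\cM_2^k)$ induced by $\FT^k_{\ov\bL_1}$ and use the compatibility of $K_0(\FT)$ with the two commuting Hecke--category actions.

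For the function-level assertion, I would pass to the specialization at $v = \sqrt{q}$ and use the commutative square
\[
\begin{tikzcd}[column sep=large]
K_0(\bcM_1^{\Tate}) \ar[r, "K_0(\FT)"] \ar[d, "\phi"'] & K_0(\bcM_2^{\Tate}) \ar[d, "\phi"] \\
\ov M_1 \ar[r, "\psi^F"'] & \ov M_2
\end{tikzcd}
\]
from \eqref{diag cat}. The identification of $\psi^F$ with the explicit partial Fourier transform \eqref{Equa. Partial Fourier} is then the standard sheaf-to-function dictionary applied to the Fourier--Deligne transform: the trace of Frobenius on a stalk of $\FT_{\ov\bL_1}(\sF)$ at $T_2 \in \ov\bL_2(\bF_q)$ is the sum over $T_1 \in \ov\bL_1(\bF_q)$ of $\phi(\sF)(T_1) \cdot \psi(\langle T_1, T_2\rangle)$, weighted by the Tate twist $\langle \dim \ov\bL_1\rangle$ that contributes the factor $q^{-\frac{1}{2}\dim \ov\bL_1}$. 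This matches \eqref{Equa. Partial Fourier} on the nose under the fixed isomorphism $\iota\colon \Qlbar \cong \bC$, completing the proof.
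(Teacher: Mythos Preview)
Your proposal is correct and follows essentially the same approach the paper takes implicitly (the paper states this corollary without proof, as an immediate consequence of \Cref{FT tate type II} and the standard compatibilities you list). One simplification: your invocation of Bott--Samelson resolutions for the Hecke equivariance is heavier than needed. As in the proof of the analogous type~I statement \Cref{Cor. FT equiv bcH}, the equivariance under $\bcH_1\otimes\bcH_2$ follows directly from the functoriality of the Fourier--Deligne transform under base change: the convolution action $\sE\star\sF = a_!(p_G^*\sE\otimes p_X^*\sF)$ involves only pullback along the fiber-linear map $p_X$, tensoring with a sheaf constant in the linear direction, and proper pushforward along the fiber-linear action map $a$, each of which commutes with $\FT$ on the nose. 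No reduction to simple reflections or resolutions is required.
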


For any $\s\in \PM(m,n)$, the Fourier transform $\FT(\IC_\s)\in \ov\cM^k_2$ is a simple perverse sheaf, hence is isomorphic to $\IC_\t$ for a unique $\t\in \PM(n,m)$. The assignment $\s\mapsto \t$ defines a bijection
\begin{equation}\label{define Phi type II}
   \Psi= \Psi_{m,n}: \PM(m,n)\isom \PM(n,m).
\end{equation}

A priori, the bijection $\Psi$ may depend on the choice of the field $k = \ov{\bF}_q$. The following proposition shows that it does not:

\begin{prop}\label{c:Fourier indep k type II}
The bijection $\Psi$ is independent of $k$.
\end{prop}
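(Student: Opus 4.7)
The plan is to identify the Fourier bijection $\Psi$ with the conormal bijection $\Phi_{H,\ov\bL_1}$ introduced in \Cref{sss:conormal bij}, which is defined purely in terms of orbit closures and the moment map geometry; since the latter construction makes sense over $\bZ$, independence of $k$ becomes manifest.

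The first step is to verify that the hypotheses of \Cref{p:conormal bij Fourier bij} are satisfied for $H=\ov B_1\times \ov B_2$ acting on $V=\ov \bL_1$. The group $H$ is connected; the action contains dilation, e.g.\ via the central $\Gm\subset \ov G_1$ (or $\ov G_2$); $H$ has finitely many orbits on $\ov\bL_1$ by \Cref{PM vs orbits}(1); and all $H$-stabilizers are connected by \Cref{PM vs orbits}(2) combined with \Cref{remark assupmtion orbit type II 2}. Hence \Cref{l:conn stab} applies and the analogous hypotheses hold on $V^{\vee}=\ov\bL_2$. Note that \Cref{p:conormal bij Fourier bij} is stated over $\bC$ using the Fourier--Sato transform, so the second step is to establish its analogue for the equivariant Fourier--Deligne transform over $k=\ov\bF_q$. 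The argument transfers verbatim, since Fourier--Deligne also preserves singular support (equivalently, characteristic cycles) on monodromic sheaves, and the proof of \Cref{p:conormal bij Fourier bij} uses only this preservation together with a dimension-counting argument. This identifies $\Psi$ with $\Phi_{H,\ov\bL_1}$.

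The final step is to observe that the conormal bijection depends only on the stratification $\ov\bL_1=\sqcup_{\sigma}\cO_{\sigma}$, the dual stratification $\ov\bL_2=\sqcup_{\tau}\cO_{\tau}$, and the moment map $\mu:\ov\bL_1\times\ov\bL_2\to\ov\frg_1^*\times\ov\frg_2^*$, each of which is defined over $\bZ$. Concretely, $\Phi_{H,\ov\bL_1}(\sigma)$ is characterized by the property that the conormal fibre $(\frh\cdot x_\sigma)^{\perp}\subset \ov\bL_2$ meets $\cO_{\tau}$ in a Zariski dense subset. Since the parametrizations $\underline{H\backslash \ov\bL_1}\leftrightarrow \PM(m,n)$ and $\underline{H\backslash \ov\bL_2}\leftrightarrow \PM(n,m)$ from \Cref{PM vs orbits} depend only on ranks of matrix blocks (a combinatorial datum invariant under base change), and since dimension and Zariski density of orbit intersections are preserved under base change between $\bC$ and $\ov\bF_q$, the bijection $\Phi_{H,\ov\bL_1}$ is given by the same combinatorial rule over every algebraically closed field. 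An explicit description, matching the maze combinatorics of \cite{MR4865134}, can then be recorded as a corollary.

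The main obstacle I anticipate is purely bookkeeping: verifying that the Fourier--Deligne analogue of \Cref{p:conormal bij Fourier bij} really goes through without modification, in particular that the singular support / characteristic cycle preservation statement used in the proof holds in the equivariant monodromic setting over $\ov\bF_q$. Once this is in place, the argument reduces to unwinding definitions, and the same scheme will furthermore yield the analogous statement (\Cref{c:Fourier indep k}) for the type I bijection $\Phi_{m,n}$ by applying it to $H=\ov B_1\times B_2$ acting on $\cN_{\bL_1}\hookrightarrow \bL_1$ via a smooth ambient embedding, using \Cref{r:CC W eq fullO} to handle the disconnected group $G_1$.
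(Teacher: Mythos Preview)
Your approach is genuinely different from the paper's, and the difference matters.

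The paper does \emph{not} identify $\Psi$ with the conormal bijection over $k=\ov\bF_q$. Instead it argues entirely algebraically: the special partial matchings $\overline{\sigma}_i\in\PM(m,n)_0$ lie in $\PP{m}{n}$ (they correspond to linear subspaces), so by \Cref{Lem. type A Fourier linear} their Fourier transforms are constant sheaves on orthogonal complements and $\Psi(\overline{\sigma}_i)$ is computed by an explicit, field-independent formula. Since $\{\IC_{\overline{\sigma}_i}\}$ generate $\bcM_1^k$ under the Hecke action (\Cref{Lem. type II gene}), and the Fourier transform is $\bcH^k$-equivariant, and the decomposition of $\IC_s\star\IC_\alpha$ into simples is governed by field-independent Kazhdan--Lusztig data (\Cref{Lem. conv ICs}, \Cref{Lem. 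KL indep p type II}), one bootstraps by induction on orbit dimension to conclude $\Psi$ is field-independent everywhere. Only \emph{after} this does the paper pass to $\bC$ and invoke \Cref{p:conormal bij Fourier bij} to obtain the explicit maze description (\Cref{c:Fourier bij type II}).

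Your plan short-circuits this by asserting that Fourier--Deligne over $\ov\bF_q$ preserves singular support, so that \Cref{p:conormal bij Fourier bij} holds directly over $k$. This is not bookkeeping. Characteristic cycles for $\ell$-adic sheaves in positive characteristic are defined via the Beilinson--Saito theory, and the compatibility of Fourier--Deligne with singular support is a substantive theorem (due to T.~Saito and others) that the paper neither cites nor proves; indeed the paper deliberately confines all characteristic-cycle arguments to $\bC$ (\Cref{ss:cc gen}) and uses the algebraic generator argument precisely to avoid this issue. If you are willing to import that external input your argument is more direct and yields both independence and the explicit formula at once, but as written the step you flag as the ``main obstacle'' is a real gap, not a formality.

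Your proposed extension to type~I has a further issue: the equivalence $\FT_{\cM_1}$ is a \emph{partial} Fourier transform along the fibers of $\bL_1\to\bL^b$, not a full Fourier transform on a vector space, and $\cN_{\bL_1}$ is not a linear representation of $\ov B_1\times B_2$. So \Cref{p:conormal bij Fourier bij} does not apply directly even over $\bC$; the paper instead reduces type~I to type~II via \Cref{Prop FT SPM via PM}, which is a fiberwise argument and a separate piece of work.
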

We needed some preparations to prove \Cref{c:Fourier indep k type II}.
\def\PP#1#2{\mathrm{PP}(#1,#2)}
\begin{defn}
Let $\PP{m}{n} := \Set{(I,J) | I \subset \{ 1,\cdots, m\}, J\subset \{ 1,\cdots, n\} \text{ and } |I|=|J|}$ denote the pairs of subsets of $\{ 1,\cdots, m\}$ and $\{ 1,\cdots, n\}$ with same size.  
\end{defn}

For any $\mu = (I,J)\in \PP{m}{n}$ with $I = \{i_1\leq  i_2\leq  \ldots\leq i_k\}$ and $J = \{j_1\leq j_2\leq \ldots\leq j_k\}$, we define 
 $\ov \bL_1^\mu\subset \ov \bL_1$ be the linear subspace consisting of maps $T: L_1 \to L_2$ satisfying the following condition for all $1 \leq \lambda \leq m$:
 \begin{itemize}
     \item $T(e_{\lambda})=0$ if $\lambda< i_1$;
     \item $T(e_{\lambda})\in \langle f_1,f_2,\cdots, f_{j_l}\rangle\,\, \mbox{with} \,\,,  i_l\leq \lambda < i_{l+1}$
 \end{itemize}
Under the identification $\bL_1 = \Hom(L_1, L_2) \cong \Mat_{n \times m}$ via the fixed bases, the subspace $\ov \bL_1^\mu$ corresponds to a block matrix configuration. See \Cref{fig:pp} for an example, where the shaded region indicates the allowed matrix entries, with its corners marked by filled circles.
\begin{figure}[h]
\[\left(\raisebox{-6em}{
\begin{tikzpicture}

\coordinate (p1) at (0,5);
\coordinate (p2) at (2,5);
\coordinate (p3) at (2,3);
\coordinate (p4) at (4,3);
\coordinate (p5) at (4,2);
\coordinate (p6) at (6,2);
\coordinate (p7) at (6,0);
\coordinate (p8) at (8,0);



\draw[thick] (p1) -- (p2) -- (p3) -- (p4) -- (p5) -- (p6) -- (p7) -- (p8);

\draw[thick] ($(p2)-(0.2,0.2)$) circle (0.1);
\draw[thick,fill=black] ($(p3)+(0.2,0.2)$) circle (0.1);
\draw[thick] ($(p4)-(0.2,0.2)$) circle (0.1);
\draw[thick,fill=black] ($(p5)+(0.2,0.2)$) circle (0.1);
\draw[thick] ($(p6)-(0.2,0.2)$) circle (0.1);
\draw[thick,fill=black] ($(p7)+(0.2,0.2)$) circle (0.1);

\node at (1.1,4.7) {$(q_1,p_1)$};
\node at (2.9,3.3) {$(j_1,i_1)$};
\node at (3.1,2.7) {$(q_2,p_2)$};
\node at (4.9,2.3) {$(j_2,i_2)$};
\node at (5.1,1.7) {$(q_3,p_3)$};
\node at (6.9,.3) {$(j_3,i_3)$};

\node at (1.5,2) {$\ov \bL_2^{\nu}$};
\node at (6.5,3.5) {$\ov \bL_1^{\mu}$};

\begin{scope}
\clip (p2)--(p3)--(p4)--(p5)--(p6)--(p7)--(p8)--(8,5);
\fill[gray,opacity=0.2] (0,5) rectangle (8,0);
\end{scope}

\begin{scope}
\clip (p1)--(p2)--(p3)--(p4)--(p5)--(p6)--(p7)--(0,0);
\fill[pattern={Lines[angle=-25,distance={20pt/sqrt(2)}]}, opacity=0.5] (0,5) rectangle (8,0);
\end{scope}
\end{tikzpicture}
}
\right)
\]
\caption{The space $\ov \bL_1^{\mu}$ and $\ov \bL_2^{\nu}$}    
\label{fig:pp}
\end{figure}
Since $\ov \bL_1^\mu$ is an irreducible subvariety, there exists a unique $\sigma \in \PM(m,n)$ such that
\[
\ov \bL_1^\mu = \overline{\cO_\sigma},
\]
where $\cO_\sigma$ denotes the $\ov B_1 \times \ov B_2$-orbit corresponding to the partial matching $\sigma$ (see \Cref{PM vs orbits}). The following lemma gives an algorithm of the correspondence $\mu \mapsto \sigma$. In what follows, we identify a partial matching $\sigma$ with its graph, viewed as a subset of $\{1,\dots,m\} \times \{1,\dots,n\}$.
\begin{lemma}\label{PPmn subset}
Given $\mu = (I,J)\in \PP{m}{n}$ with $I = \{i_1\leq  i_2\leq  \ldots\leq i_k\}$ and $J = \{j_1\leq j_2\leq \ldots\leq j_k\}$, the partial matching $\sigma\in \PM(m,n)$ such that 
$V_\mu = \overline{\cO_\sigma}$ can be determined using the following algorithm:   

    \begin{enumerate}[label=\arabic*.]
        \item Initialize an empty partial matching $\sigma$ and an empty stack $S$ (in the sense of data type).
        \item For each position $p$ from $1$ to $k$:
        \begin{enumerate}[label=\alph*.]
            \item Push the integers $j_{p-1}+1, \cdots, j_{p}-1, j_{p}$ into the stack $S$ one by one (where $j_0 = 0$ for the first iteration).
            \item For each integer $x$ from $i_p$ to $i_{p+1}-1$ (where $i_{k+1} = m+1$ for the last iteration):
            \begin{enumerate}
                \item[$\bullet$] If the stack is not empty, pop an element $y$ from the stack and add the pair $(x, y)$ to the partial matching $\sigma$.
                \item[$\bullet$] If the stack is empty, break out of the inner loop and proceed to the next position.
            \end{enumerate}
        \end{enumerate}
        \item Return the constructed partial matching $\sigma$.
    \end{enumerate}
  Note that $\sigma$ will be empty if $\mu$ is empty, as determined by the algorithm.
\end{lemma}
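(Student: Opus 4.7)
The plan is to identify the partial matching $\sigma$ satisfying $\ov\bL_1^\mu=\overline{\cO_\sigma}$ by comparing the generic rank pattern on $\ov\bL_1^\mu$ with the rank pattern of the orbit representative $x_\sigma$. Recall that $\ov B_1\times\ov B_2$-orbits in $\ov\bL_1$ are classified by the rank function $r_T(\lambda,\kappa):=\dim(T(E_\lambda)\cap F_\kappa)$, where $E_\lambda=\langle e_1,\dots,e_\lambda\rangle$ and $F_\kappa=\langle f_1,\dots,f_\kappa\rangle$; for a partial matching $\tau$ one has $r_{x_\tau}(\lambda,\kappa)=|\{a\in\mathrm{dom}(\tau):a\le\lambda,\;\tau(a)\le\kappa\}|$.

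First I would verify that $\ov\bL_1^\mu$ is $\ov B_1\times\ov B_2$-stable, using the monotonicity $W_1\subseteq\cdots\subseteq W_m$ built into its definition together with the flag-preservation by the Borels. Being a linear subspace, $\ov\bL_1^\mu$ is irreducible, and by \Cref{PM vs orbits} it contains only finitely many orbits; hence $\ov\bL_1^\mu$ is the closure of a unique dense orbit $\cO_\sigma$, and the task reduces to identifying $\sigma$ with the algorithm's output.

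Second, I would compute the generic rank function $r^{\mathrm{gen}}_\mu(\lambda,\kappa)$ on $\ov\bL_1^\mu$. For $i_l\le\lambda<i_{l+1}$, a generic $T\in\ov\bL_1^\mu$ is obtained by adjoining generic vectors $T(e_{i_s}),\dots,T(e_{i_{s+1}-1})\in F_{j_s}$ for $s=1,\dots,l-1$ and $T(e_{i_l}),\dots,T(e_\lambda)\in F_{j_l}$. A standard generic-position argument then gives a closed-form expression for $r^{\mathrm{gen}}_\mu(\lambda,\kappa)$ as a sum over $s\le l$ of explicit contributions from each staircase step in Figure~\ref{fig:pp}.

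Third, by induction on the outer loop index $p$, I would verify that the algorithm produces exactly the matching $\sigma$ with $r_{x_\sigma}=r^{\mathrm{gen}}_\mu$. The key invariant is that, at each moment of the execution, the stack records the unused target levels currently available in $F_{j_p}$ in increasing order from bottom to top; pushing the block $j_{p-1}+1,\dots,j_p$ corresponds geometrically to opening new levels when $\lambda$ crosses $i_p$, while the LIFO pop rule assigns each new source $e_x$ to the highest available such level. This matches the generic behaviour in which $T(e_x)$ extends $T(E_{x-1})$ by a vector whose top non-zero component lies at the newest available floor of $F_{j_{p(x)}}$. Once $r^{\mathrm{gen}}_\mu=r_{x_\sigma}$ is established, the generic $T\in\ov\bL_1^\mu$ lies in $\cO_\sigma$, whence irreducibility of $\ov\bL_1^\mu$ forces $\overline{\cO_\sigma}=\ov\bL_1^\mu$. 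The hardest part is this combinatorial identity; it is where the LIFO character of the stack genuinely enters (a FIFO queue would produce the wrong matching), and I expect it to reduce to a recursive formula for $r^{\mathrm{gen}}_\mu(\lambda,\kappa)-r^{\mathrm{gen}}_\mu(\lambda-1,\kappa)$ matching term-by-term the single new pair $(\lambda,y)$ added by the algorithm at source index $\lambda$.
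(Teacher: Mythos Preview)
The paper's proof is literally ``We leave the proof to the reader,'' so there is nothing to compare against. Your approach via rank patterns is the natural one and is correct: the $\ov B_1\times\ov B_2$-orbits on $\ov\bL_1$ are classified by the functions $r_T(\lambda,\kappa)=\dim(T(E_\lambda)\cap F_\kappa)$, and the LIFO invariant you isolate---that at each moment the stack records, from top to bottom, exactly those levels $\le j_p$ not yet realized as the pivot of some $T(e_a)$ modulo $T(E_{a-1})$---is precisely what matches the algorithm's output to the generic rank pattern on $\ov\bL_1^\mu$.
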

\begin{proof}
    We leave the proof to the reader.
\end{proof}
We use \Cref{PPmn subset} to identify $\PP{m}{n}$ as a subset of $\PM(m,n)$. By switching the roles of $L_1$ and $L_2$, we similarly obtain $\PP{n}{m} := \Set{(Q,P) | Q \subset \{ 1,\cdots, n\}, P\subset \{ 1,\cdots, m\} \text{ and } |Q|=|P|}$, which parametrizes certain linear subspaces in $\ov \bL_2$, and may be viewed as a subset of $\PM(n,m)$.
\begin{defn}
   For each subset \( X \subset \{1,\dotsc, m\} \), we define the \emph{cycle right shift} and \emph{cycle left shift} by 
\[
X_{+1} := \{ (x+1) \bmod m \mid x \in X \}, \qquad 
X_{-1} := \{ (x-1) \bmod m \mid x \in X \},
\]
where we adopt the convention that \( m+1 \equiv 1 \) and \( 0 \equiv m \) modulo \( m \), so that elements are viewed cyclically. That is, the set \( \{1, \dotsc, m\} \) is treated as a circle with wrap-around indexing. Similar notation applies for subsets of \( \{1, \dotsc, n\} \).
\end{defn}
For instance, if \( m = 5 \) and \( X = \{1,2,5\} \), then
 \[
 X_{+1} = \{2, 3,1\}, \qquad X_{-1} = \{5,1,4\}.
 \] 

\trivial[h]{
Suppose $V$ is a vector space with basis $\set{e_1, e_2,\cdots, e_n}$. 
For $I=\set{i_1 < i_2 <\cdots<i_l}\subset \{ 1,\cdots, n\}$, 
define grading 
\[
\Gr^{(I]}_{k}(V):= \braket{e_a | i_{k} <   a \leq  i_{k+1}}
\]
and 
\[
\Gr^{[I)}_{k}(V):= \braket{e_a | i_{k}\leq   a < i_{k+1}}
\]
where $i_0 =0$ and $i_{l+1} = \infty$ by convention. 

For $I\subset \{ 1,\cdots, n\}$, 
define
\[
I^+ :=  (I+1)\cap \{ 1,\cdots, n\} 
\]
and 
\[
a^{++} := (\set{1}\cup (I+1)) \cap \{ 1,\cdots, n\}. 
\]
Then, if $n\notin I$ (i.e. $i_l \neq n$),
\[
\Gr^{[I^+)}(V)_k =
\Gr^{(I]}(V)_k 
\]
and 
\[
\Gr^{[I^{++})}(V)_{k+1} =
\Gr^{(I]}(V)_k. 
\]
If $n\in I$, 
then 
the above equation still holds since 
$\Gr^{(I]}_l = 0$.

Also define
\[
I^- :=  (I-1)\cap \{ 1,\cdots, n\} 
\]
and
\[
I^{--} :=  ((I-1)\cup \set{n})\cap \{ 1,\cdots, n\} 
\]

Suppose $1\notin I$. Then 
\[
\Gr^{(I^-]}(V)_k =
\Gr^{(I^{--}]}(V)_k= 
\Gr^{[I)}(V)_k
\]
Suppose $1\in I$, 
Then 
\[
\Gr^{(I^-]}(V)_k  = \Gr^{(I^{--}]}(V)_k =
\Gr^{[I)}(V)_{k+1}
\]

Consider the space 
\[
\bL_{I,J}:= \sum_{j < i} \Hom(\Gr^{[I)}_i(L_1),\Gr^{(J]}_j(L_2)). 
\]
Then 
\[
\begin{split}
(\bL_{I,J})^\perp &=
\sum_{j\geq i}\Hom(\Gr^{(J]}_j(L_2),\Gr^{[I)}_i(L_1)) \\
\end{split}
\]
Suppose $1\in I$ and $n\in J$, 
then 
\[
\begin{split}
RHS &=  
\sum_{i\leq j}\Hom(\Gr^{[J^+)}_j(L_2),\Gr^{(I^-]}_{i-1}(L_1)) \\
&=\sum_{i< j}\Hom(\Gr^{[J^+)}_j(L_2),\Gr^{(I^-]}_{i}(L_1))
= \bL_{Q,P}. 
\end{split}
\]
Suppose $1\in I$ and $n\notin J$, 
then 
\[
\begin{split}
RHS &=  
\sum_{i\leq j}\Hom(\Gr^{[J^+)}_j(L_2),\Gr^{(I^{--}]}_{i-1}(L_1)) \\
&=\sum_{i< j}\Hom(\Gr^{[J^+)}_j(L_2),\Gr^{(I^{--}]}_{i}(L_1))
= \bL_{Q,P}. 
\end{split}
\]
Suppose $1\notin I$ and 
$n\in J$, 
then 
\[
\begin{split}
RHS &=  
\sum_{i\leq j}\Hom(\Gr^{[J^{++})}_{j+1}(L_2),\Gr^{(I^-]}_{i}(L_1)) \\
&=\sum_{i< j}\Hom(\Gr^{[J^{++})}_{j}(L_2),\Gr^{(I^-]}_{i}(L_1))
= \bL_{Q,P}. 
\end{split}
\]
Suppose $1\notin I$ and 
$n\notin J$, 
then 
\[
\begin{split}
RHS &=  
\sum_{i\leq j}\Hom(\Gr^{[J^{++})}_{j+1}(L_2),\Gr^{(I^-]}_{i}(L_1)) \\
&=\sum_{i< j}\Hom(\Gr^{[J^{++})}_{j}(L_2),\Gr^{(I^{--}]}_{i}(L_1))
= \bL_{Q,P}. 
\end{split}
\]
}

\def\Psibu{\Psi^\bullet}
\begin{lemma}\label{Lem. type A Fourier linear} 
The bijection $\Psi_{m,n}$ restricts to a bijection
\begin{equation*}
    \Psibu=\Psibu_{m,n}: \PP{m}{n}\isom \PP{n}{m}.
\end{equation*}
More precisely, if $\mu=(I,J)\in \PP{m}{n}$, then $\nu\coloneqq \Psibu_{m,n}(\mu)=(Q,P)$ with
\[
(Q,P)=
\begin{cases}
   ( J_{+1}\cup \{1\}, I_{-1}\cup \{m\} ) \quad &\mbox{if $1\notin I, n\notin J$} \\
    ( J_{+1}\backslash \{1\} , I_{-1}\backslash \{m\})\quad & \mbox{if $1\in I, n\in J$}\\
     ( J_{+1}, I_{-1}) \quad & \mbox{otherwise}.
\end{cases}
\]
In particular, $\Psibu_{m,n}$ is independent of the choice of $\bF_q$.
\end{lemma}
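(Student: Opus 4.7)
The plan is to reduce to the following classical fact: for any linear subspace $W\subset V$, the Fourier--Deligne transform sends $\IC_W$ to $\IC_{W^\perp}$, up to a Tate twist (this is a standard calculation using the defining diagram of $\FT$: the stalk at $v^*\in V^*$ of $\FT(j_!\underline{\bC}_W)$ is $R\Gamma_c(W,\AS_\psi(\langle -,v^*\rangle))$, which vanishes unless $v^*\in W^\perp$ and is otherwise the top-dimensional constant cohomology). I would apply this to $W=\ov\bL_1^\mu$. Since $\ov\bL_1^\mu=\overline{\cO_\sigma}$ by \Cref{PPmn subset} is a linear (hence smooth) subvariety of $\ov\bL_1$, the IC sheaf $\IC_\sigma$ is simply the constant sheaf on $\ov\bL_1^\mu$ (shifted and twisted), so the cited fact shows that $\FT(\IC_\sigma)$ is (up to a Tate twist) the constant sheaf on $(\ov\bL_1^\mu)^\perp$. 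Because $(\ov\bL_1^\mu)^\perp$ is again a linear subspace of $\ov\bL_2$, it is the closure of a unique $\ov B_1\times\ov B_2$-orbit, which necessarily lies in the distinguished family $\{\ov\bL_2^\nu\}_{\nu\in\PP{n}{m}}$; this shows $\Psi_{m,n}(\mu)\in\PP{n}{m}$, so $\Psibu_{m,n}$ is well-defined, and it is explicitly determined by computing $(\ov\bL_1^\mu)^\perp$.

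To carry out this computation, I would fix coordinates by writing elements of $\ov\bL_1$ and $\ov\bL_2$ as matrices $(t_{j\lambda})$ and $(t'_{\lambda j})$ in the bases $\{e_i\}$, $\{f_j\}$, so that the trace pairing \eqref{bL1 dual complex} reads $\langle T,T'\rangle=\sum_{j,\lambda}t_{j\lambda}t'_{\lambda j}$. The subspace $\ov\bL_1^\mu$ is then the linear span of the matrix units $E_{j\lambda}$ for pairs $(j,\lambda)$ lying in the ``staircase'' region $S_\mu\subset\{1,\dots,n\}\times\{1,\dots,m\}$ pictured in \Cref{fig:pp}: concretely, $(j,\lambda)\in S_\mu$ iff $\lambda\ge i_1$ and $j\le j_l$ for the unique $l$ with $i_l\le\lambda<i_{l+1}$ (with $i_{k+1}=m+1$). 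Consequently, $(\ov\bL_1^\mu)^\perp$ is the span of the matrix units $E^*_{\lambda j}$ with $(j,\lambda)\notin S_\mu$, which is itself a staircase region. A column-by-column enumeration of the defining inequalities shows that the inner corners of this complementary staircase are located at $(j_l+1,\,i_{l+1}-1)$ for $l=1,\dots,k-1$, supplemented, when $1\notin I$, by a leftmost corner $(1,\,i_1-1)$, and, when $n\notin J$, by a rightmost corner $(j_k+1,\,m)$. Reading off the pair $(Q,P)$ from these corners and comparing with the cyclic-shift definitions $J_{+1}$, $I_{-1}$ produces the four explicit cases stated in the lemma.

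The main obstacle is precisely this boundary case analysis. Whether $1\in I$ determines whether the leftmost columns of $\ov\bL_1^\mu$ are entirely zero, which in turn decides whether $(\ov\bL_1^\mu)^\perp$ gains an extra ``full'' column on the left; under the cyclic-shift convention this corresponds to either appending or omitting the wrap-around element $1\equiv(n+1)\bmod n$ in $J_{+1}$. The symmetric phenomenon at the top edge, controlled by whether $n\in J$, governs the inclusion or exclusion of $m$ in $I_{-1}$. Tracking these four combinations yields exactly the piecewise formula in the statement. Since the entire argument uses only the trace pairing and elementary linear algebra over $\bF_q$, neither the cardinality of $\bF_q$ nor the auxiliary prime $\ell$ enters the final answer, so the independence of $\Psibu_{m,n}$ from the base field is immediate.
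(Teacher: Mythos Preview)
Your proposal is correct and follows essentially the same approach as the paper: both reduce to the classical fact that Fourier transform sends the constant sheaf on a linear subspace $W$ to the constant sheaf on $W^\perp$, then identify $(\ov\bL_1^\mu)^\perp$ with $\ov\bL_2^\nu$ via the staircase picture. In fact your boundary case analysis is more explicit than the paper's, which simply asserts ``One verifies that ${\ov\bL_1^\mu}^\perp$ coincides with $\ov\bL_2^\nu$'' and refers to \Cref{fig:pp}.
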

\begin{proof}
Since $\overline{\cO_\sigma} = \ov \bL_1^\mu$ is smooth, we have that $\IC_{\cO_\sigma}$ is the constant sheaf (up to shift and twist) on $\ov \bL_1^\mu$. Its Fourier transform is then the constant sheaf (up to shift and twist) supported on the orthogonal complement ${\ov \bL_1^\mu}^\perp \subset \ov \bL_2$. One verifies that ${\ov \bL_1^\mu}^\perp$ coincides with $\ov \bL_2^\nu$ for $\nu = (Q, P) \in \PP{n}{m}$ as described above. See \Cref{fig:pp} for an illustration, where the matrix blocks of $\ov \bL_1^\mu$ are represented by shaded squares, and those of $\ov \bL_2^\nu$ are indicated by dashed lines with corners marked by hollow circles.
\end{proof}
\trivial[h]{
Recall that for each $0\leq i\leq \min\{m,n\}$, we defined $\overline\s_i\in \PM(m,n)$ in \Cref{def sigmai}, and the elements $\{C'_{\overline\s_i} \mid 0 \leq i \leq \min(m,n)\}$ generate $\ov\sfM_1$ as an $\ov\sfH$-module
\begin{cor}\label{Cor. type A Fourier linear}
For each $0 \leq i \leq \min\{m,n\}$, we have 
\[
 \Psi(\overline\s_i)=
 \begin{cases}
  (\{1,\cdots, i+1\}, \{m-i,\cdots, m\},\nu)   &\qquad \mbox{if $i<\min\{m,n\}$}\\
    (\{1,\cdots, n\}, \{m-n,\cdots, m-1\},\nu)  & \qquad \mbox{if $i=n<m$}\\
    (\{2,\cdots, m+1\}, \{1,\cdots, m\},\nu)  &\qquad \mbox{if $i=m<n$}\\
    (\{2,\cdots, n-1\}, \{1,\cdots, m-1\},\nu)  & \qquad \mbox{if $i=m=n$}. 
 \end{cases}
\]
 In all cases, $\nu$ is the unique order preserving bijection. 
\end{cor}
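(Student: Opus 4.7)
The plan is to reduce the corollary to a mechanical application of \Cref{Lem. type A Fourier linear}. First I would identify $\overline{\s}_i$ as an element of $\PP{m}{n}\subset \PM(m,n)$. By \Cref{Lem. linear IC type II}, the orbit closure $\overline{\cO_{\overline{\s}_i}}$ equals the linear subspace $\ov\bL_1^i\subset \ov\bL_1$. Matching the defining conditions of $\ov\bL_1^i$ against those of $\ov\bL_1^\mu$ for $\mu=(I,J)\in \PP{m}{n}$, one sees that $\overline{\s}_i$ corresponds to
\[
(I,J) = \bigl(\{m-i+1,\, m-i+2,\,\ldots,\,m\},\ \{1,\,2,\,\ldots,\,i\}\bigr)\in \PP{m}{n},
\]
and the algorithm of \Cref{PPmn subset} applied to this pair indeed recovers the order-preserving partial matching $\overline{\s}_i$.

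Next I would compute $\Psibu_{m,n}(I,J)$ by direct substitution into the four cases of \Cref{Lem. type A Fourier linear}. The conditions ``$1\in I$'' and ``$n\in J$'' translate, respectively, into ``$i=m$'' and ``$i=n$'', which is precisely what splits the computation into the four cases appearing in the corollary. Computing the cyclic shifts, one finds $J_{+1}=\{2,\ldots,i+1\}$ whenever $i<n$, and $I_{-1}=\{m-i,\ldots,m-1\}$ whenever $i<m$; at the boundaries $i=n$ or $i=m$ one keeps track of the wrap-around $n+1\equiv 1 \bmod n$ and $0\equiv m \bmod m$, and then applies the appropriate ``$\cup\{1\}$/$\cup\{m\}$'' or ``$\setminus\{1\}$/$\setminus\{m\}$'' correction prescribed by the lemma.

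Finally, I would convert the resulting pair $(Q,P)\in\PP{n}{m}$ back into an element of $\PM(n,m)$ using the algorithm of \Cref{PPmn subset} (with the roles of $L_1$ and $L_2$ exchanged). Because $Q$ and $P$ are sets of consecutive integers, the algorithm outputs the unique order-preserving bijection $\nu:Q\to P$, which yields the stated formula. The main obstacle is purely bookkeeping: one must verify in each of the four boundary regimes that $|Q|=|P|$, so that the output genuinely lies in $\PP{n}{m}$, and that the resulting shifts match the expressions on the right-hand side of the corollary; no further geometric input is needed, since independence from $\bF_q$ is already supplied by \Cref{c:Fourier indep k type II} (equivalently \Cref{Lem. type A Fourier linear}).
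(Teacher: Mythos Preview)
Your approach is the same as the paper's: recognize $\overline\s_i \in \PP{m}{n}$, apply \Cref{Lem. type A Fourier linear} to obtain $(Q,P) \in \PP{n}{m}$, and invoke \Cref{PPmn subset}. The paper's one-line proof says exactly this and no more. Your identification $(I,J) = (\{m-i+1,\ldots,m\},\{1,\ldots,i\})$ and the case split on the cyclic shifts are correct.

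There is, however, a genuine error in your final sentence: the algorithm of \Cref{PPmn subset} applied to $(Q,P)$ does \emph{not} in general output the order-preserving bijection $\nu\colon Q \to P$. Already for $i=0$ one has $(Q,P)=(\{1\},\{m\})$, whose associated linear subspace $\ov\bL_2^{(Q,P)}$ is all of $\ov\bL_2$; the corresponding partial matching is therefore the open orbit, not the rank-one match $1\mapsto m$. Running the algorithm explicitly for $m=n=2$ yields $(1\mapsto 2,\ 2\mapsto 1)$. More generally, whenever $i+1<n$ and $i<m-1$ the stack still contains $1,\ldots,m-i-1$ after the first pop at position $i+1$, and the inner loop keeps popping for $x=i+2,i+3,\ldots$, giving a matching with domain strictly larger than $Q$. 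This imprecision is in fact already present in the corollary's statement (note also the fourth case, where $|Q|=n-2 \ne n-1=|P|$). The correct output of your argument is only that $\Psi(\overline\s_i)$ equals the image of the listed pair $(Q,P)$ under the embedding $\PP{n}{m}\hookrightarrow\PM(n,m)$ of \Cref{PPmn subset}, not that its domain and codomain are literally $Q$ and $P$.
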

\begin{proof}
Since $\overline{\sigma}_i\in \PP{m}{n}$, the claim follows from
\Cref{Lem. type A Fourier linear} together with \Cref{PPmn subset}.
\end{proof}
}
\begin{proof}[Proof of \Cref{c:Fourier indep k type II}]
Recall that for each \(0\le i\le \min\{m,n\}\) we defined
\(\overline{\sigma}_i\in \PM(m,n)\) in \Cref{def sigmai}. A direct check shows
\(\overline{\sigma}_i\in \PP{m}{n}\). Hence, by \Cref{Lem. type A Fourier linear},
\(\Psi(\overline{\sigma}_i)\) is independent of \(k\). Moreover,
by \Cref{Lem. type II gene}, the objects
\(\{\IC_{\overline{\sigma}_i}\mid 0\le i\le \min(m,n)\}\) generate
\(\overline{\cM}_1^k\) under the \(\overline{\cH}^k\)-action. It follows that
\(\Psi\) is independent of \(k\). 
\end{proof}

\subsubsection{Over $\bC$}
From now on, we works over $\bC$. Our previous discussions about the Hecke module categories $\bcM_i^k$ ($i=1,2$) work equally well when $k$ is replaced by $\bC$. The only modification needed is to replace Fourier-Deligne transform by Fourier-Sato transform, which works for $\Gm$-monodromic sheaves, and gives an equivalence
\begin{equation*}
    \FT^\bC_{\ov\bL_1}:\bcM_1^\bC\isom  \bcM_2^\bC.
\end{equation*}
and it induce a bijection 
\[
\Psi^\bC: \PM(m,n) \isom \PM(n,m). 
\]
Similar to \Cref{Cor. compare FT Fq vs C}, we have 
\begin{equation}\label{PsiC vs Psi}
  \Psi^\bC=\Psi. 
\end{equation}
The action of $\ov B^\bC=\ov B^\bC_1\times \ov B^\bC_2$ on $\ov\bL_1^\bC$ 
fits into the framework in \Cref{sec: Fourier bijection vs conormal bijection}. Therefore, \Cref{p:conormal bij Fourier bij} implies that $\Psi^\bC$ equals to the conormal bijection defined using \Cref{eq conoral bijection}. The conormal bijection is described explicitly in \cite[Lemma 4.9.1]{MR4865134} using the combinatorics of rook placements and mazes \cite[Definition 4.2.1]{MR4865134}. The set $\PM(m,n)$ can be identified with the set of $m\times n$ rook placements in an obvious way: for $(I,J,\mu)\in \PM(m,n)$, put a hook at entry $(i,\mu(i))$ for each $i\in I$. 

Summarizing, we obtain:
\begin{cor}\label{c:Fourier bij type II}
    For $k$ either a finite field or $\bC$, the Fourier bijection $\Psi_{m,n}: \PM(m,n) \isom \PM(n,m)$ is the bijection given by the involution $\iota$ in \cite[Definition 4.2.1]{MR4865134}, after identifying both $\PM(m,n)$ and $\PM(n,m)$ first with $m\times n$ rook placements and with $m\times n$ mazes via \cite[Lemma 4.2.2]{MR4865134}. 
\end{cor}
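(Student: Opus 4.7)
The plan is to deduce the statement directly from the machinery already assembled in the excerpt, so the proposal is essentially a verification-and-bookkeeping exercise rather than a new argument. First I would reduce the finite field case to the complex case: by \Cref{c:Fourier indep k type II} the bijection $\Psi_{m,n}$ is independent of the choice of finite field $k = \ov{\bF}_q$, and by \eqref{PsiC vs Psi} it coincides with $\Psi^{\bC}_{m,n}$ coming from the Fourier--Sato transform. Hence it suffices to identify $\Psi^{\bC}_{m,n}$ with the involution $\iota$ from \cite[Definition~4.2.1]{MR4865134}.

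Next I would verify the hypotheses of \Cref{p:conormal bij Fourier bij} for the action of $H = \ov B_1^{\bC} \times \ov B_2^{\bC}$ on $V = \ov \bL_1^{\bC}$. Finite orbits and the existence of dilation are immediate from \Cref{PM vs orbits} (together with the scaling $\Gm$ inside $\ov B_i^{\bC}$). Connectedness of stabilizers was already established in part~(2) of \Cref{PM vs orbits} over $\bF_q$, and the same argument (Gaussian elimination producing a connected stabilizer scheme) works verbatim over $\bC$. Granted these hypotheses, \Cref{p:conormal bij Fourier bij} tells us that $\Psi^{\bC}_{m,n}$ coincides with the conormal bijection $\Phi_{H,V}$ of \Cref{sss:conormal bij}, characterized by the fact that $\overline{T^\ast_{\cO_\sigma} \ov \bL_1^{\bC}} = \overline{T^\ast_{\cO_\rho} \ov \bL_2^{\bC}}$ as irreducible components of the zero fiber of the moment map.

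Finally, I would invoke the combinatorial description of the conormal bijection given in \cite[Lemma~4.9.1]{MR4865134}: under the identifications of $\PM(m,n)$ with $m \times n$ rook placements and with $m \times n$ mazes via \cite[Lemma~4.2.2]{MR4865134}, the pairing of orbit closures of conormal bundles in $\ov\bL_1^{\bC}$ and $\ov\bL_2^{\bC}$ is exactly computed by the involution $\iota$ of \cite[Definition~4.2.1]{MR4865134}. Matching the orbit labeling from \Cref{def:Tsigma} with the rook placement labeling in \cite{MR4865134} is a routine unwinding of conventions, and this finishes the identification.

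The only step requiring genuine care is the last one: translating between the two labelings of orbits. Our convention encodes a partial matching $(I,J,\mu)$ by sending $e_i \mapsto f_{\mu(i)}$, whereas the rook placement in \cite{MR4865134} records the same data visually by placing a rook in row $i$, column $\mu(i)$. One must then check that the resulting maze (obtained by the recipe of \cite[Lemma~4.2.2]{MR4865134}) matches the linear-algebra description of $\overline{\cO_\sigma}$ in terms of rank conditions, so that $\iota$ transports our labeling on $\PM(m,n)$ to our labeling on $\PM(n,m)$. This is the only combinatorial content of the proof, and a small example (as in \Cref{Lem. type A Fourier linear}, where both sides reduce to the orthogonal-complement calculation of \Cref{fig:pp}) already pins down the convention uniquely.
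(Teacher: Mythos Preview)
Your proposal is correct and follows essentially the same route as the paper: reduce to $\bC$ via \Cref{c:Fourier indep k type II} and \eqref{PsiC vs Psi}, verify the hypotheses of \Cref{p:conormal bij Fourier bij} for the $\ov B_1^{\bC}\times \ov B_2^{\bC}$-action on $\ov\bL_1^{\bC}$, apply that proposition to identify $\Psi^{\bC}$ with the conormal bijection, and then quote \cite[Lemma~4.9.1]{MR4865134} for the combinatorial description via rook placements and mazes. The paper presents this as a summarizing paragraph rather than a formal proof, but the logical content is the same.
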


\subsection{Fourier bijection for type I theta correspondence} 
We will give a formula for the bijection $\Phi$ defined in \Cref{define Phi} for the partial Fourier transform appearing in the type I theta correspondence, based on the knowledge of the Fourier bijection $\Psi$ studied in \Cref{App FT Type I}. This is \Cref{Prop FT SPM via PM}, which in particular implies that $\Phi$ is independent of the base field $k$.
 
It will be convenient to allow a bit more flexibility in the notation. If $\Sigma_1$ and $\Sigma_2$ are two linearly ordered finite sets of cardinality $m$ and $n$ respectively, we define $\PM(\Sigma_1,\Sigma_2)$ to be the set of partial matchings between subsets of $\Sigma_1$ and subsets of $\Sigma_2$. Of course we have a canonical bijection $\PM(\Sigma_1,\Sigma_2)\cong \PM(m,n)$. The bijection $\Psi_{m,n}$ induces a bijection
\begin{equation*}
    \Psi_{\Sigma_1,\Sigma_2}: \PM(\Sigma_1,\Sigma_2)\isom \PM(\Sigma_2,\Sigma_1).
\end{equation*}

Recall that for $\s=(I,J,\mu)\in \SPM(m,n)$, we defined $I^+=I\cap \{ 1,\cdots, m\}$ in \Cref{def signed partial matching}. We consider the decomposition
\begin{equation}\label{decomp I J}
    I^+=I^+_1\sqcup I^+_2, \quad \mu(I^+)=J^+_1\sqcup J^-_2
\end{equation}
where $I^+_1=\{i\in I^+|\mu(i)>0\}, I^+_2=\{i\in I^+|\mu(i)<0\}$, $J^+_1=\mu(I^+_1)$ and $J^-_2=\mu(I^+_2)$. Let $I^-_i=-I^+_i$ and $J^-_i=-J^+_i$ for $i=1,2$.  Let $I_i=I^+_i\sqcup I^-_i$ and $J_i=J^+_i\sqcup J^-_i$. Then $\mu$ restricts to bijections
\begin{eqnarray*}
    \mu_i: I_i\isom J_i, \quad i=1,2
\end{eqnarray*}
and
\begin{equation*}
    \mu^+_1: I^+_1\isom J^+_1, \quad \mu^+_2: I^+_2\isom J^-_2.
\end{equation*}

\begin{prop}\label{Prop FT SPM via PM}
Let $\s=(I,J,\mu)\in \SPM(m,n)$. Using the decomposition \eqref{decomp I J}, let
\begin{equation*}
    \Sigma_1=\{1,2,\cdots, m\}-I^+_2, \quad
\Sigma_2=\{1,2,\cdots, n\}-J^+_2.
\end{equation*}
Let $\s^+_1=(I_1^+, J_1^+, \mu_1^+)\in \PM(\Sig_1,\Sig_2)$. Write
\begin{equation*}
    \Psi_{\Sigma_1, \Sigma_2}(\s^+_1)=(Q^+,P^+,\nu^+)\in \PM(\Sigma_2, \Sigma_1).
\end{equation*}
Let $Q=Q^+\sqcup(-Q^+), P=P^+\sqcup(-P^+)$, and $\nu:Q\isom P$   the unique extension of $\nu^+$ such that $\nu(-x)=-\nu^+(x)$ for $x\in Q^+$. Then 
\begin{equation*}
    \Phi(I,J,\mu)=(Q\sqcup J_2, P\sqcup I_2, \nu\sqcup\mu_2^{-1}).
\end{equation*}
\end{prop}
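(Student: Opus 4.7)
The plan is to characterize $\Phi$ over $\bC$ via the conormal bundle bijection of \Cref{p:conormal bij Fourier bij}, which simultaneously settles \Cref{c:Fourier indep k} since the conormal bundle characterization is manifestly field-independent. Concretely, for $\sigma \in \SPM(m,n)$, the image $\Phi(\sigma) \in \SPM(n,m)$ will be read off as the unique $\tau$ such that $\overline{T^*_{\cO_\sigma}\bL_1}$ and $\overline{T^*_{\cO_\tau}\bL_2}$ coincide as irreducible components of $\Lambda_\VV$ (see \Cref{pro Lambda}).

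First I would exploit the polarization $V_2 = L_2 \oplus L_2^\vee$ to decompose $\bL_1 = \ov\bL_1 \oplus \bL^b$ and $\bL_2 = \ov\bL_2 \oplus \bL^b$ (as in \Cref{L1L2 decomp}). By the defining formula \Cref{Equa. Partial Fourier}, the partial Fourier transform is the ordinary Fourier--Sato transform in the $\ov\bL_1 \leftrightarrow \ov\bL_2$ direction combined with the identity on $\bL^b$. Consequently the conormal bijection respects this product structure: the $\ov\bL_1$/$\ov\bL_2$ factor transforms according to the unsigned Fourier bijection $\Psi$ of \Cref{App FT Type I}, while the $\bL^b$ coordinate is preserved.

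Next, for $\sigma = (I, J, \mu)$ decomposed as in \Cref{decomp I J}, the representative $x_\sigma$ from \Cref{def:Tsigma type I} splits as $(x_\sigma^+, x_\sigma^-) \in \ov\bL_1 \oplus \bL^b$: $x_\sigma^+$ encodes the positive-to-positive matching $\mu_1^+ \colon I_1^+ \to J_1^+$, while $x_\sigma^-$ encodes the positive-to-negative matching $\mu_2^+ \colon I_2^+ \to J_2^-$. Since the negative block already uses the positive indices $I_2^+$ (on the $L_1$ side) and $J_2^+ = -J_2^-$ (on the $L_2$ side, by the symmetry $\mu(-a) = -\mu(a)$), these are precisely the indices removed from $\Sigma_1$ and $\Sigma_2$. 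The technical heart of the proof is to verify that on a dense open subset of $\cO_\sigma$, once the negative part $x^-$ is fixed in its natural $\ov B_1 \times B_2$-orbit, the residual action on the $\ov\bL_1$ coordinate factors through a Borel subgroup of $\GL(\Sigma_1) \times \GL(\Sigma_2)$, and the $\ov B_1 \times B_2$-orbit of $x_\sigma$ becomes a fibration over the orbit of $x_\sigma^-$ whose fiber identifies with the $\PM(\Sigma_1, \Sigma_2)$-orbit $\cO_{\sigma_1^+}$.

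Granting this orbit-decoupling, applying the product-form Fourier transform yields on the $\ov\bL_1$/$\ov\bL_2$ side the image $\Psi_{\Sigma_1, \Sigma_2}(\sigma_1^+) = (Q^+, P^+, \nu^+)$, while on the $\bL^b$ side the datum $\mu_2^+$ is untouched; however, viewed from $\bL_2 = \Hom(L_2, V_1)$ rather than from $\bL_1 = \Hom(L_1, V_2)$, the identification $\Hom(L_1, L_2^\vee) \cong \Hom(L_2, L_1^\vee)$ re-parametrizes $\mu_2^+ \colon I_2^+ \to J_2^-$ as its inverse $\mu_2^{-1} \colon J_2 \to I_2$. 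Reassembling the two pieces into a signed partial matching in $\SPM(n,m)$ delivers the announced formula $\Phi(\sigma) = (Q \sqcup J_2,\, P \sqcup I_2,\, \nu \sqcup \mu_2^{-1})$. The hard part will be establishing the orbit-decoupling: the Siegel unipotent radical $U_2 \subset B_2$ couples the $\ov\bL_1$ and $\bL^b$ factors through affine shifts depending on $T^+$, so one must verify by a careful stabilizer and dimension computation at the model point $x_\sigma$ that this coupling is transversal enough for the orbit to factor cleanly into its positive and negative pieces. Once this is in place, an appeal to \Cref{Lem. type A Fourier linear} (for the linear-support case) together with \Cref{c:Fourier bij type II} (in general) for the positive piece completes the proof.
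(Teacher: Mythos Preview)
Your overall plan — preserve the $\bL^b$ coordinate and reduce the $\ov\bL_1\leftrightarrow\ov\bL_2$ direction to the type~II bijection $\Psi$ on a smaller index set — is exactly the structure of the paper's proof. But there are two places where your sketch diverges from what actually has to be done.

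First, the conormal-bijection framing via \Cref{p:conormal bij Fourier bij} does not apply as stated: that proposition is about a group $H$ acting on a vector space $V$ with finitely many orbits, but $\ov B_1\times B_2$ does not have finitely many orbits on all of $\bL_1$ (only on the moment cone $\cN_{\bL_1}$), and the transform here is a \emph{partial} Fourier transform along the fibers of $p_1:\bL_1\to\bL^b$. One can adapt the dimension-counting argument of \Cref{p:conormal bij Fourier bij} to this setting using \Cref{pro Lambda}, but that is extra work you do not mention, and in any case the paper bypasses it entirely.

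Second, your ``orbit-decoupling'' claim is not quite right: the slice $\cO_\sigma\cap p_1^{-1}(p_1(x_\sigma))$ is \emph{not} just the $\ov B_1(\Sigma_1)\times \ov B_2(\Sigma_2)$-orbit $\cO_{\sigma_1^+}$. The paper's equation \eqref{Os} shows it is $\pi_1^{-1}\bigl(\cO_{\sigma_1^+}\times \Herm_{p_1(x_\sigma)}\bigr)$, where the additional Hermitian linear factor $\Herm_{p_1(x_\sigma)}\subset\Hom(L_1(I_2^+),L_2(J_2^+))$ records the isotropy constraint coming from $\cN_{\bL_1}$, and there is also the kernel of $\pi_1$. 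Your claim that the residual action ``factors through a Borel of $\GL(\Sigma_1)\times\GL(\Sigma_2)$'' is therefore incomplete. The paper handles this by writing $\IC(Z_\sigma)\cong i_{1*}\pi_1^*\bigl(\IC(\cO_{\sigma_1^+})\boxtimes\iota_{1*}\Qlbar\bigr)$ and using the compatibility of Fourier transform with the linear maps $i_1,\pi_1$: the linear Hermitian pieces transform to dual linear pieces $\Herm'$, and only the $\IC(\cO_{\sigma_1^+})$ factor carries the nontrivial type~II bijection $\Psi_{\Sigma_1,\Sigma_2}$. This is the calculation that your sketch hides behind ``transversal enough'' — and it is the actual content of the proof.
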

\begin{proof}
Let $\tau := \Phi(\s) \in \SPM(n,m)$. We write $\tau = (J',I', \mu')$ and define the decomposition
\begin{equation*}
    J'=J'_1\sqcup J'_2, \quad I'=I'_1\sqcup I'_2, \mu'=\mu'_1\sqcup \mu'_2
\end{equation*}
according to the same rule of \eqref{decomp I J}. Similarly we use superscripts $\pm$ to denote the restrictions to positive or negative elements, e.g., $\mu'^+_2: J'^+_2\isom I'^-_2$.

Retain the notations in \Cref{Sec type I finite field}. 
Let $p_i: \bL_i\to \bL^b$ and $q_i: \bL_i\to \ov \bL_i$ be the projections, $i=1,2$. Since $\FT(\IC_\s)\cong \IC_\t$ is the Fourier transform along the fibers of $p_1$, we have $p_1(\ov \cO_\s)=p_1\Supp(\IC_\s)=p_2\Supp(\cO_\t)=p_2(\ov \cO_\t)$. This implies $p_1(\cO_\s)=p_2(\cO_\t)$. Recall the point $x_\s\in \cO_\s$ defined in \eqref{def:Tsigma}. Then $p_1(x_\s)\in \Hom(L_1,L_2^\vee)$ is the map
\begin{equation*}
    p_1(x_\s)(e_i)=\begin{cases}
        f_{\mu(i)} & \mbox{if $i\in I^+$ and $\mu(i)<0$}\\
        0 & \mbox{otherwise.}
    \end{cases}
\end{equation*}
Define $x_\t\in \cO_\t $ in a similar way using the same standard basis, then $p_2(x_\t)\in \Hom(L_2,L_1^\vee)$ is the map 
\begin{equation*}
    p_2(x_\t)(f_j)=\begin{cases}
        e_{\mu'(j)} & \mbox{if $j\in J'^+$ and $\mu'(j)<0$}\\
        0 & \mbox{otherwise.}
        \end{cases}
\end{equation*}
Since $p_1(\cO_\s)=p_2(\cO_\t)$, $p_1(x_\s)$ is in the same $\ov\bB$-orbit as $p_2(x_\t)$. This forces $p_2(x_\t)$ to be the transpose of $p_1(x_\s)$ , which implies 
\begin{equation*}
    J'^+_2=J^+_2, \quad I'^-_2=I^-_2, \quad \mu'^+_2=(\mu^-_2)^{-1}.
\end{equation*}

It remains to determine $(J'^+_1, I'^+_1, \mu'^+_1)\in \PM(n,m)$. For this we only need to determine $\IC_\tau |_{p_2^{-1}(p_2(T_\t))}$, which is the Fourier transform of $\IC_\s|_{p_1^{-1}(p_1(x_\s))}$.

Let 
\begin{equation*}
    Z_\s :=q_1(\ov\cO_\s\cap p_1^{-1}(p_1(x_\s)))\subset \Hom(L_1,L_2).
\end{equation*}
Since $p_1(x_\s)$ is in the open $\ov\bB$-orbit of $p_1(\ov \cO_\s)$, 
   $ q_{1*}(\IC_\s|_{p_1^{-1}(p_1(x_\s))})$ is $  \IC(Z_\s)$ up to shift and twist.

Let us first determine $\IC(Z_\s)$, up to shift and twist. We need some notations.
For a subset $\Sig\subset \{1,2,\cdots, m\}$, let $L_1(\Sig)\subset L_1$ be the span of $\{e_i|i\in \Sig\}$ and $L_1^*(\Sigma) \subset L_1^*$ be the span of ${f_i|i\in \Sigma}$. Similarly define $L_2(\Sig)\subset L_2$ and $L_2^*(\Sigma)\subset L_2^*$ for $\Sig\subset\{1,2,\cdots, n\}$. If $T\in \cN_{\bL_1}\cap p_1^{-1}(p_1(x_\s))$, then the fact that $T(L_1)$ is isotropic implies that we have $T(L_1(\Sig_1))\subset L_2(\Sig_2)$. Thus the image of $\cN_{\bL_1}\cap p_1^{-1}(p_1(x_\s))\xrightarrow{q_1} \Hom(L_1,L_2)$ lies in the linear subspace
\begin{equation*}
    \bL^+_1(\Sig_1,\Sig_2):=\{T\in \Hom(L_1,L_2)|T(L_1(\Sig_1))\subset L_2(\Sig_2)\}\xrightarrow{\ i_1 \ }\bL_1 = \Hom(L_1,L_2).
\end{equation*}
where $i_1$ is the natural inclusion. 
Furthermore, we have a naturl linear surjection
\begin{equation*}
    \pi_1: \bL^+_1(\Sig_1,\Sig_2)\to \Hom(L_1(\Sig_1), L_2(\Sig_2))\times \Hom(L_1(I^+_2), L_2(J^+_2)).
\end{equation*}
\trivial[h]{
    Note that the fiber of $\pi_1$ is $\Hom(L_1(I^+_2),L_2(\Sigma_2))$. 
    Moreover, the proejction of $B_1\times B_2 \cdot x_\sigma$ to $\Hom(L_1(I^+_2),L_2(\Sigma_2))$ can take any value because of the $B_2$-action. 
    it can take any value. 
}
The image of $T$ is the pair,  whose first component is $T|_{L_1(\Sig_1)}$ and second component is the induced map between the quotients $L_1(I^+_2)\cong L_1/L_1(\Sig_1)$ and $ L_2(J^+_2)\cong L_2/L_2(\Sig_2)$. 

Let $\ov B_i(\Sig_i)=\ov B_i\cap \GL(L_i(\Sig_i))$ for $i = 1,2$. Then $\s^+_1=(I_1^+, J_1^+, \mu_1^+)\in \PM(\Sig_1,\Sig_2)$ defines a $\ov B_1(\Sig_1)\times \ov B_2(\Sig_2)$-orbit $\cO_{\s_1^+}$ on $\Hom(L_1(\Sig_1), L_2(\Sig_2))$.
For an isomorphism $\gimel : L_1(I_2^+)\to L_2(J_2^-)$, we define  $\Herm_\gimel$ be the subspace of $\Hom(L_1(I^+_2),L_2(J_2^+))$ given by 
\[
\begin{split}
&\Herm_\gimel
:= \Set{ X  | \forall v_2, v'_2\in L_2(J_2^-),  \inn{X \gimel^{-1} v_2}{v'_2}_{V_2}  + \inn{v_2}{X \gimel^{-1} v'_2}_{V_2}=0}, 
\end{split}
\]
i.e. $X \in \Herm(L_1(I^+_2), L_2(J^+_2))$ if and only if $X \gimel^{-1}$ is
``skew-symmetric'' 
under the perfect pairing between $L_2^*(J_2^+)$ and  $L_2(J_2^+)$. 
\trivial[h]{
The map $X\to \Hom(L_2,L_2^*)$ is called skew-symmetric if 
\[
\inn{Tv}{w} + \inn{v}{Tw} = 0,
\]
i.e. $T^* + T = 0$ where 
$T^*$ is defined by $ 
\inn{T^*v}{w}=\inn{v}{Tw}$ where $v,w\in L_2$ and $\inn{}{}$ denote the natural paring.

Let us make more concentrate definition of 
$\Herm_{\gimel}$. 
We take the standard basis of $L_1(I_2^+)$ and
$L_2(J_2^+)$.
Now $\gimel$ is a permutation matrix.  
Then the equation become
Assume $V_2$ is quadratic.  then the equation became 
\[
X \gimel^{-1} + (X \gimel^{-1})^T = 0.
\]
i.e.  
\[
 X =  - (\gimel^{-1})^T X^T \gimel.
\]
We know that $\dim \Herm_{\gimel} = d(d-1)/2$ where $d = |I_1^+|$.

Similarly, the 
equation for
$\Herm'_{\gimel}$ is given by 
\[
Y (\gimel^T)^{-1} - (Y (\gimel^T)^{-1})^T = 0.
\]  
i.e.
\[
Y = \gimel^{-1} Y^T \gimel^T.
\]
We know that $\Herm'_{\gimel^T} = d(d+1)/2$.  

Now 
\[
\tr(XY) = \tr(- (\gimel^{-1})^T X^T \gimel \gimel^{-1} Y^T \gimel^T) = 
\tr(-X^T Y^T) = \tr(-YX) = -\tr(XY). 
\]
This implies $\tr(XY) = 0$. 

By dimension counting, we conclude that 
$\Herm_{\gimel}$ and $\Herm'_{\gimel^T}$ are orthogonal complements for each other. 
}

One checks that
\begin{equation}\label{Os}
    q_1(\cO_\s\cap p_1^{-1}(p_1(x_\s)))=i_1(\pi_1^{-1}(\cO_{\s^+_1}\times\Herm_{p_1(x_\sigma)})).
\end{equation}
Moreover a similar formula holds $\sigma$ is replaced by any other $\sigma'$ such that $p_1(T_{\sigma'})=p_1(x_\sigma)$, which implies that 
\begin{equation*}
    Z_\s=i_1(\pi_1^{-1}(\ov\cO_{\s^+_1}\times\Herm_{p_1(x_\sigma)}).
\end{equation*}
Therefore, up to shift and twist,
\begin{equation}\label{ICZs}
    \IC(Z_\s)\cong i_{1*}\pi_1^*(\IC(\cO_{\sigma^+_1})\boxtimes \iota_{1*}\Qlbar)
\end{equation}
where $$\iota_1: \Herm_{p_1(x_\sigma)}\hookrightarrow\Hom(L_1(I^+_2), L_2(J^+_2))$$ is the inclusion.

Having determined $\IC(Z_\s)$, now we compute
$\FT_{\bL_1^+}(\IC(Z_\s))$.
Consider the linear inclusion
\begin{equation*}
    i_2: \bL^+_2(\Sig_2,\Sig_1):=\{T'\in \Hom(L_2,L_1)|T'(L_2(\Sig_2))\subset L_1(\Sig_1)\}\rightarrow \bL_2^+=\Hom(L_2,L_1)
\end{equation*}
and the linear surjection
\begin{equation*}
    \pi_2: \bL^+_2(\Sig_2,\Sig_1)\to \Hom(L_2(\Sig_2), L_1(\Sig_1))\times \Hom(L_2(J^+_2), L_1(I^+_2)).
\end{equation*}
Then we have the following Cartesian diagram 
\[
\begin{tikzcd}[ampersand replacement=\&]
\bL_2^+(\Sigma_2,\Sigma_1) \ar[r,"i_2"]\ar[d,"\pi_2"]
    \& \Hom(L_2,L_1) \ar[d,"i'_1"]  \\
\Hom(L_2(\Sigma_2),L_1(\Sigma_1))\oplus \Hom(L_2(J^+_2),L_1(I^+_2)) \ar[r,"\pi'_1"]\&
\bL_1^+(\Sigma_1,\Sigma_2)^* 
\end{tikzcd}
\]
where $\pi'_1$ and $i'_1$ denote the transpose of $\pi_1$ and $i_1$ respectively.

Let $\tau^+_1:=\Psi_{\Sigma_1,\Sigma_2}(\sigma^+_1)=(Q^+,P^+, \nu^+)$, i.e., $\FT(\IC(\cO_{\sigma^+_1}))\cong \IC(\cO_{\t^+_1})$
where $\cO_{\tau^+_1}$ denote the $\barB_1(\Sigma_1)\times \barB_2(\Sigma_2)$-orbit in $\Hom(L_2(\Sigma_2), L_1(\Sigma_1))$ determined by the partial matching $\tau^+_1$.  

By \eqref{ICZs} and by the compatibility of the Fourier transform under linear maps, we have up to shift and twist
\begin{equation}\label{FT ICZs}
    \begin{split}
    &\FT_{\bL_1^+}(\IC(Z_\s))\cong
    {\pi'_1}^* {i'_{1}}_*(\FT(\IC_{\s^+_1})\boxtimes\FT(\iota_{1*}\Qlbar)) \\
  &\cong  {i_{2}}_*\pi_2^*(\FT(\IC(\cO_{\s^+_1}))\boxtimes\FT(\iota_{1*}\Qlbar)) \\
  &\cong {i_{2}}_*\pi_2^*( \IC(\cO_{\tau^+_1})\boxtimes\FT(\iota_{1*}\Qlbar)).
    \end{split}
\end{equation}
\trivial[h]{
\[
\begin{split}
\FT_{\bL^+_1}(\IC(Z_\sigma)) 
&\cong \FT_{\bL^+_1}(i_{1!}\pi_1^*(\IC_{\s^+_1}\boxtimes \iota_{1*}\Qlbar)) \\
&= i_1'^* {\pi'_1}_!(\FT(\IC_{\sigma^+_1})\boxtimes \FT(\iota_{1*}{\Qlbar}))\\
&=i_1'^* {\pi'_1}_*(\FT(\IC_{\sigma^+_1})\boxtimes \FT(\iota_{1*}{\Qlbar}))
\end{split}
\]
Here $i'_1$ and $\pi'_1$ are the transpose of $i_1$ and $\pi_1$. 

Now we consider the diagram involves $i'_1$ and $\pi'_2$ 
To ease notation write $A_i=L_i(\Sigma_i)$, $B_1 = L_1(I^+_2)$ and $B_2 = L_2(J^+_2)$. 
Then the transpose of the diagram 
\[
\begin{tikzcd}[ampersand replacement=\&]
\Hom(A_1,A_2)\oplus \Hom(B_1,A_2)\oplus \Hom(B_1,B_2)\ar[r,"i_1"] \ar[d,"\pi_1"]\&
\Hom(L_1,L_2)\\
\Hom(A_1,A_2)\oplus \Hom(B_1,B_2) \& 
\end{tikzcd}
\] 
gives the following Cartesian diagram.
\[
\begin{tikzcd}[ampersand replacement=\&]
\Hom(A_2,A_1)\oplus \Hom(B_2,A_1)\oplus \Hom(B_2,B_1) \ar[r,"i_2"]\ar[d,"\pi_2"]
    \& \Hom(L_2,L_1) \ar[d,"i'_1"]  \\
\Hom(A_2,A_1)\oplus \Hom(B_2,B_1) \ar[r,"\pi'_1"]\&
\Hom(A_2,A_1)\oplus \Hom(A_2,B_1)\oplus \Hom(B_2,B_1) 
\end{tikzcd}
\] 
Since $\pi'_1$ is a closed embedding (proper), we get
\[
i_1'^* {\pi'_1}_*(\FT(\IC_{\sigma^+_1})\boxtimes \FT(\iota_{1*}{\Qlbar})) \cong i_{2*}\pi_2^* (\FT(\IC_{\sigma^+_1})\boxtimes \FT(\iota_{1*}{\Qlbar})).\] 

Note that $p_2(T_\tau)$ is the transpose of $p_1(x_\sigma)$
}

We now compute $\FT(\io_{1*}\Qlbar)$.  
For an isomorphism $\gimel : L_2(J_2^+) \to L_1(I_2^+)$ define 
\[
\Herm'_{\gimel}
:= \Set{ X  | \forall v_1, v'_1\in L_1(J_2^+),  \inn{X \gimel^{-1} v_1}{v'_1}_{V_1}  + \inn{v_2}{X \gimel^{-1} v'_1}_{V_1}=0}. 
\] and 
\[
\iota_2: \Herm'_{p_2(T_\tau)}, L_1(I^+_2))\hookrightarrow\Hom(L_2(J^+_2), L_1(I^+_2))
\] be the inclusion.
It is straightforward to check that  and  $\Herm_{p_1(x_\sigma)}$ and $\Herm'_{p_2(T_\tau)}$ are mutual orthogonal complements under the trace pairing between $\Hom(L_1(I^+_2),L_2(J^+_2))$ and $\Hom(L_2(J^+_2), L_1(I^+_2))$. Therefore, up to shift and twist, we have $\FT(\io_{1*}\Qlbar)\cong \io_{2*}\Qlbar$. By \eqref{FT ICZs}, this implies up to shift and twist 
\begin{equation}\label{eq:FT ICZs}
    \FT_{\bL_1^+}(\IC(Z_\s))={i_2}_*(\pi^*_2(\IC(\cO_{\tau^+_1})\boxtimes {\iota_1}_* \Qlbar)).
\end{equation}
By the same analysis for $\IC_\sigma$, the equation \eqref{eq:FT ICZs} implies that $\tau$ is determined by the procedure outlined in the proposition's statement.
\end{proof}

\begin{proof}[Proof of \Cref{c:Fourier indep k}]
This follows from \Cref{Prop FT SPM via PM} and \Cref{c:Fourier indep k type II}. 
\end{proof}

\section{Hotta's local formula revisited}\label{Sec. Hotta}

The goal of this appendix is to give a modern proof of Hotta's formula \cite{Hotta} for the Springer action of a simple reflection.  A slight improvement from Hotta's original proof is that it now works over any base field in the context of \'etale sheaves, and it has a uniform formulation on all homology classes, without distinguishing vertical or horizontal cases.
In the rest of the section, let $\kk$ be the coefficient system such that $\mathrm{char}\, \kk \neq p$ or $\mathrm{char} \, \kk =0$. 
\subsection{The setup}
Let $\cN$ denote the nilcone of the Lie algebra $\sl_2$ over an arbitrary base field, with the adjoint action of $H=\PGL_2$. Let $\nu: \wt\cN\to \cN$ be the Springer resolution of $\cN$. Consider a Cartesian diagram of stacks
\begin{equation}\label{Hotta setup Cart}
    \xymatrix{X \ar[r]^-{\wt f} \ar[d]^{\pi} & [\wt\cN/H]\ar[d]^{\nu}\\
    Y\ar[r]^-{f} & [\cN/H]}
\end{equation}
Let $Y_0\subset Y$ be the preimage of the zero orbit $\{0\}/H$ under $f$. Let $X_0=\pi^{-1}(Y_0)$ with closed embedding $i:X_0\hookrightarrow X$, and a $\bP^1$-fibration $\pi_0:=\pi|_{X_0}: X_0\to Y_0$. We view $X_0$ as a correspondence between $X$ and $Y_0$
\begin{equation}\label{corr X0}
    \xymatrix{Y_0& \ar[l]_-{\pi_0} X_0 \ar@{^{(}->}[r]^-{i}& X}
\end{equation}

We have the Springer action of $S_2=\{1,s\}$ on the complex $\nu_{*}\bD_{[\wt\cN/H]}\in D([\cN/H])$. Here the dualizing complex $\bD_{[\wt\cN/H]}$ of $[\wt\cN/H]$ is isomorphic to the constant sheaf $\kk[-2]$.
\begin{conv}\label{con:Springer}
Our convention is that $s$ acts trivially on the constant sheaf summand of $\nu_{*}\DD_{[\wt\cN/H]}$ and acts by $-1$ on the skyscraper summand.
\end{conv}

By proper base change, we get an action of $s$ on the complex
\begin{equation*}
f^!\nu_{*}\bD_{[\wt\cN/H]}\cong \pi_*\wt f^!\bD_{[\wt\cN/H]}\cong \pi_* \bD_X\in D(Y).
\end{equation*}
Taking global sections we get an action of $s$ on $\hBM{*}{X}$. The goal is to describe this action more explicitly.

\subsection{The description of the action of $s$}

\begin{prop}[c.f. \cite{Hotta}]\label{p:Hotta}
    The action of $s-1$ on $\hBM{*}{X}$ is given by the composition
    \begin{equation}\label{Hotta}
        \hBM{*}{X}\xr{i^!}\hBM{*-2}{X_0}(1)\xr{\pi_{0*}}\hBM{*-2}{Y_0}(1)\xr{\pi_0^!}\hBM{*}{X_0}\xr{i_*}\hBM{*}{X}.
    \end{equation}
    Here $i^!$ will be defined in \Cref{sss:Gysin i}; $\pi_0^!$ denotes the smooth pullback on Borel-Moore homology along $\pi_0$; $i_*$ and $\pi_{0*}$ are the pushforwards on Borel-Moore homology along proper maps.

\end{prop}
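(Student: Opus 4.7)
The approach is to reduce to the universal case $Y = [\cN/H]$ via proper base change, since every operator in \eqref{Hotta} commutes with $f^!$-pullback by the standard base-change formulas; both sides then arise by applying $f^!$ to endomorphisms of the Springer sheaf $\nu_*\bD_{[\wt\cN/H]}$ on $[\cN/H]$. It therefore suffices to verify the identity as endomorphisms of $\nu_*\bD$ in $D([\cN/H])$.

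By the decomposition theorem applied to the semi-small resolution $\nu$, we have $\nu_*\bD \cong \mathcal{A}\oplus\mathcal{B}$, where $\mathcal{A} = \IC_{[\cN/H]}$ is the constant summand and $\mathcal{B}$ is the skyscraper summand supported on $[\{0\}/H]$. Since these are non-isomorphic simple perverse sheaves with different supports, $\End(\nu_*\bD) = \kk\cdot\id_\mathcal{A}\oplus \kk\cdot\id_\mathcal{B}$ is two-dimensional. By \Cref{con:Springer}, $s-1$ acts as $0$ on $\mathcal{A}$ and as $-2\cdot\id$ on $\mathcal{B}$, so it suffices to show the composition on the right of \eqref{Hotta}, call it $T$, does the same.

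The vanishing $T|_\mathcal{A} = 0$ is essentially formal: all four operators factor through complexes supported on or above $[\{0\}/H]$, since $X_0$ lies over $Y_0 = [\{0\}/H]$. Hence, viewed as an endomorphism of $\nu_*\bD$, the composition $T$ factors through a complex supported on $[\{0\}/H]$. Because $\mathcal{A} = \IC_{[\cN/H]}$ admits no nonzero morphism (in the cohomological degree at hand) to any complex supported on the closed subset $[\{0\}/H]$, by the support/costalk conditions of the IC extension, $T$ kills $\mathcal{A}$.

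It remains to show $T|_\mathcal{B} = -2\cdot\id_\mathcal{B}$. Up to shift and twist, $\mathcal{B}$ is the pushforward of the constant sheaf along the closed embedding $[\{0\}/H]\hookrightarrow[\cN/H]$. After base change, the operator $T|_\mathcal{B}$ reduces to a composition built from the $\bP^1$-bundle $\pi_{00}: [\bP^1/H]\to [\{0\}/H]$ together with the zero-section embedding $i_0: [\bP^1/H]\hookrightarrow[\wt\cN/H]$, whose normal bundle is $\Omega^1_{\bP^1} = \mathcal{O}(-2)$. Combining the projection formula for $\pi_{00}$ with the Gysin formula for $i_0$, one checks that $T|_\mathcal{B}$ is scalar multiplication, and the scalar evaluates to $-2$. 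The main computational subtlety will be to keep track of all shifts, Tate twists, and orientation signs so that the $\bP^1$-bundle contribution and the normal-bundle Euler-class contribution combine to the predicted $-2$ rather than cancelling or doubling; a convenient alternative is to evaluate $T$ on a single well-chosen nonzero class of the $\mathcal{B}$-summand of $\hBM{*}{X}$ and read off the scalar directly.
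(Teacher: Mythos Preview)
Your approach is correct and follows essentially the same strategy as the paper—reduce to the universal case $Y=[\cN/H]$ and compute there using the self-intersection number $-2$ of the zero section in $T^*\bP^1$—but the packaging differs in a way worth noting.

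The paper does not directly assert that each of $i^!,\pi_{0*},\pi_0^!,i_*$ commutes with $f^!$. Instead it introduces cohomological correspondences $\zeta,\zeta'$ supported on $[\bP^1/H]$ (the fundamental class, viewed two ways), proves in a separate lemma that $\zeta'_\#\circ\zeta_\# = s-1$ as endomorphisms of $\nu_*\bD_{[\wt\cN/H]}$ by checking on $\hBM{4}{\wt\cN}$ and $\hBM{2}{\wt\cN}$, and then uses the general pullback functoriality of cohomological correspondences to descend to arbitrary $Y$. The identities $(f^!\zeta)_\#=\pi_{0*}\circ i^!$ and $(f^!\zeta')_\#=i_*\circ\pi_0^!$ are the final step, declared ``standard diagram-chasing.'' Your single assertion that ``every operator commutes with $f^!$-pullback'' amounts to the same content, but the Gysin map $i^!$ is the nontrivial one: its construction involves a base-change morphism that one must verify is natural under further base change. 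The correspondence formalism is the paper's way of organizing that verification.

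For the universal computation, the paper works directly on $\hBM{*}{\wt\cN}$: on $\hBM{4}{\wt\cN}$ (the $\mathcal{A}$-part) the composition factors through $\hBM{2}{\pt}=0$; on $\hBM{2}{\wt\cN}=\kk[\bP^1]$ (the $\mathcal{B}$-part) one gets $-2[\bP^1]$ via the self-intersection. Your argument via the splitting $\mathcal{A}\oplus\mathcal{B}$ is equivalent. One small imprecision: the vanishing $\Hom^0(\mathcal{A},a_*\bD_{Y_0}[2](1))=0$ is not purely a ``support/costalk'' statement—it is a degree count (the stalk of $\mathcal{A}$ at $0$ sits in degree $-4$ while the target sits in degree $-2$). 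The cleaner phrasing is that $T|_{\mathcal{A}}$ is a scalar which can be read off over the open stratum $\cN\setminus\{0\}$, where the intermediate object vanishes.
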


\subsubsection{Gysin pullback along $i$}\label{sss:Gysin i}

Since $[\wt\cN/H]=[\bA^1/\Gm]\times \bB\bG_a$, the map $\wt f$ induces a map
\begin{equation*}
    \ph: X\to [\bA^1/\Gm].
\end{equation*}
It is clear that $X_0=\ph^{-1}(\{0\}/\Gm)$. Since we will be applying the setup to cases where $X$  is possibly reducible, $X_0$ may not be a divisor of $X$: it may contain irreducible components of $X$.

We note the Cartesian diagram
\begin{equation*}
    \xymatrix{X_0\ar[d]^{\wt f_0} \ar[r]^-{i} & X\ar[d]^{\ph}   \\
     [\{0\}/\Gm] \ar[r]^-{\io} & [\bA^1/\Gm] }
\end{equation*}
We have a canonical map obtained from the proper base change by adjunction
\begin{equation}\label{adj to bc}
    i^*\bD_X\cong i^*\wt f^!\bD_{[\wt\cN/H]} \cong i^* \varphi^! \bD_{[\bA^1/\bG_m]}\to \wt{f}^!_0\io^*\bD_{[\bA^1/\Gm]}.
\end{equation}
Since $\io$ is a regular embedding of codimension $1$, we have a canonical isomorphism  $\io^*\bD_{[\bA^1/\Gm]}\cong \kk_{[0/\Gm]}\cong \bD_{[0/\Gm]}[2](1)$. We therefore get
\begin{equation*}
    f^!_0\,\iota^*\bD_{[\bA^1/\Gm]}\cong \wt{f}^!_0\bD_{[0/\Gm]}[2](1)\cong \bD_{X_0}[2](1).
\end{equation*}
Composing with \eqref{adj to bc} we get a canonical map $i^*\bD_X\to \bD_{X_0}[2](1)$, or equivalently
\begin{equation}\label{pre Gysin}
    \bD_X\to i_*\bD_{X_0}[2](1).
\end{equation}
Taking global sections gives a map
\begin{equation*}
    i^!: \hBM{*}{X}\to \hBM{*-2}{X_0}(1).
\end{equation*}
This is the homological version of the Gysin pullback of algebraic cycles.


\subsubsection{Vertical case} Consider an  irreducible subvariety $C\subset X_0$. It is called {\em $s$-vertical} if $C=\pi_0^{-1}(\pi_0(C))$. 

\begin{cor}
    For $C\subset X_0$ an $s$-vertical irreducible subvariety of dimension $n$, with cycle class $[C]\in \hBM{2n}{X}(-n)$,
    we have
    \begin{equation*}
        s[C]=-[C].
    \end{equation*}
\end{cor}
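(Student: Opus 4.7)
The plan is to substitute the fundamental class $[C]$ into the formula of \Cref{p:Hotta} and evaluate each step. Because $C\subset X_0$, I would first rewrite $[C]=i_*[C_0]$, where $[C_0]\in\hBM{2n}{X_0}(-n)$ is the fundamental class of $C$ regarded inside $X_0$. The hypothesis that $C$ is $s$-vertical means $C=\pi_0^{-1}(D)$ for $D:=\pi_0(C)$, which has dimension $n-1$; since $\pi_0$ is a smooth $\bP^1$-fibration, this also gives the clean identity $[C_0]=\pi_0^![D]$.

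Next I would compute $i^![C]=i^!i_*[C_0]$. Let $\varphi:X\to[\bA^1/\Gm]$ denote the composition of $\wt f$ with the projection onto the $[\bA^1/\Gm]$-factor of the decomposition $[\wt\cN/H]=[\bA^1/\Gm]\times\bB\bG_a$ recalled just before \Cref{p:Hotta}; then $X_0=\varphi^{-1}([0/\Gm])$. The embedding $\iota:[0/\Gm]\hookrightarrow[\bA^1/\Gm]$ is a regular closed embedding of codimension one, with normal line bundle $N_{\iota}$. Writing $\varphi_0:X_0\to[0/\Gm]$ for the restriction, proper base change together with the codimension-one excess intersection formula give
\[
  i^!\circ i_* \;=\; c_1(N)\cap(-),\qquad N:=\varphi_0^*N_{\iota}\in\mathrm{Pic}(X_0),
\]
and so $i^![C]=c_1(N)\cap[C_0]\in\hBM{2n-2}{X_0}(-n+1)$.

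The heart of the argument is the computation $N|_{\bP^1}\cong\cO_{\bP^1}(-2)$ on every Springer $\bP^1$-fibre of $\pi_0$. Using the identification $[\wt\cN/H]=[T^*\bP^1/\PGL_2]=[\fru/B]$ (with $B$ a Borel of $\PGL_2$ and $\fru$ its nilpotent radical), under which the $[\bA^1/\Gm]$-factor is $[\fru/T]$, this reduces to the standard fact that the normal bundle of the zero section $\bP^1\subset T^*\bP^1$ is the cotangent bundle $\Omega^1_{\bP^1}\cong\cO(-2)$. Granting this, fibre integration along the $\bP^1$-bundle $\pi_0|_C:C\to D$ yields
\[
  \pi_{0*}\bigl(c_1(N)\cap[C_0]\bigr)\;=\;\deg\bigl(N|_{\bP^1}\bigr)\cdot[D]\;=\;-2\,[D].
\]
Then $\pi_0^!(-2[D])=-2[C_0]$ by the identity noted in the first paragraph, and $i_*(-2[C_0])=-2[C]$. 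Altogether \Cref{p:Hotta} gives $(s-1)[C]=-2[C]$, i.e.\ $s[C]=-[C]$.

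The only genuine obstacle is the $\cO(-2)$ computation in the third paragraph: care must be taken to match the sign conventions built into the identification $[\wt\cN/H]=[\bA^1/\Gm]\times\bB\bG_a$ (in particular, which weight of $T$ governs the action on $\fru$), so that the degree of $N|_{\bP^1}$ comes out to $-2$ rather than $+2$. A convenient sanity check is to specialize to $X=[\wt\cN/H]$ with $C$ the zero-section $\bP^1$; the outcome $s[C]=-[C]$ must then agree with \Cref{con:Springer}, according to which $s$ acts by $-1$ on the skyscraper summand of $\nu_*\bD_{[\wt\cN/H]}$.
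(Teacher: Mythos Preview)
Your proposal is correct and follows essentially the same route as the paper: apply \Cref{p:Hotta}, identify the line bundle governing $i^!$ on classes supported in $X_0$, observe its restriction to each $\bP^1$-fibre is $\omega_{\bP^1}\cong\cO(-2)$, and conclude $(s-1)[C]=-2[C]$. The only cosmetic difference is that the paper first replaces $X$ by $C$ and $Y$ by $\pi_0(C)$ to reduce to the case $X=X_0=C$, whereas you work directly via the self-intersection formula $i^!i_*=c_1(N)\cap(-)$; both arrive at the same computation.
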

\begin{proof}
Replacing $X$ by $C$ and $Y$ by $\pi_0(C)$, 
we reduce to the case where $C=X=X_0$, and we would like to show that $s[X]=-[X]$.

    The map $i^!: \hBM{*}{X}\to \hBM{*-2}{X_0}(1)$ in this case is the cap product with the Chern class of the line bundle $\cL$ given by the composition 
\begin{equation*}
    X\xr{\ph}[\bA^1/\Gm]\to \bB\Gm.
\end{equation*}
The line bundle is the pullback of the line bundle $\cO(\bP^1)$ on $\wt\cN\cong T^*\bP^1$.  In particular, the restriction of $\cL$ to each fiber of $\pi_0$ is isomorphic to $\om_{\bP^1}\cong \cO(-2)$. Therefore the composition
\begin{equation*}
    \pi_{0*}i^!: \hBM{*}{X}\to \hBM{*-2}{Y}(1)
\end{equation*}
sends $[X]$ to $-2[Y]$, which gets sent to $-2[X]$ by the smooth pullback $\pi_0^!$. Hence \Cref{p:Hotta} implies that $(s-1)[X]=-2[X]$, therefore $s[X]=-[X]$. 
\end{proof}

\subsection{Proof of \Cref{p:Hotta}}

\subsubsection{Cohomological correspondences}
We will use the formalism of cohomological correspondences in \cite[III,\S3]{SGA5}. More details can be found in \cite{SGA5} and \cite[Appendix A]{Yun}. Consider a correspondence between $X_1$ and $X_2$ over $S$
\begin{equation}\label{C corr}
    \xymatrix{ & C \ar[dl]_-{p_2}\ar[dr]^-{p_1}\\
    X_2 \ar[dr]_-{\pi_2} &  & X_1\ar[dl]^-{\pi_1}\\
    &S}
\end{equation}
A cohomological correspondence supported on $C$ between the dualizing complexes $\DD_{X_1}$ and $\DD_{X_2}$ is a map 
\begin{equation}\label{corr coho}
    \z: p_1^*\DD_{X_1}\to p_2^!\DD_{X_2}\cong \DD_C.
\end{equation}
We denote the vector space of such maps to be
\begin{equation*}
    \Corr_C(\DD_{X_1}, \DD_{X_2}).
\end{equation*}
Replacing $\DD_{X_2}$ by $\DD_{X_2}[n]$ (or a Tate twist of it), we similarly define $\Corr_C(\DD_{X_1}, \DD_{X_2}[n])$. Denote
\begin{equation*}
    \Corr^*_C(\DD_{X_1}, \DD_{X_2})=\bigoplus_{n\in\ZZ}\Corr_C(\DD_{X_1}, \DD_{X_2}[n]).
\end{equation*}
We call $C$ is {\em target-proper} if $p_2$ is proper.  Now we assume that $p_2$ is target proper. Applying $p_{2*}$ to $\z$ and using adjunctions, we get a map
\begin{equation*}
    \pi_2^*\pi_{1*}\DD_{X_1}\to p_{2*}p_1^*\DD_{X_1}\xr{p_{2*}\z} p_{2*}p_2^!\DD_{X_2}=p_{2!}\pr_{2}^!\DD_{X_2}\to \DD_{X_2}.
\end{equation*}
By adjunction this gives a map
\begin{equation*}
    \z_{\#}: \pi_{1*}\DD_{X_1}\to \pi_{2*}\DD_{X_2}.
\end{equation*}
Taking global sections we get a map $\hBM{*}{X_1}\to \hBM{*}{X_2}$. Of course the same constructions works if we replace $\DD_{X_2}$ by a shift $\DD_{X_2}[n]$. This construction gives a linear map
\begin{equation*}
    (-)_{\#}: \Corr^*_C(\DD_{X_1}, \DD_{X_2})\to \Ext^*_S(\pi_{1*}\DD_{X_1}, \pi_{2*}\DD_{X_2}).
\end{equation*}

When $X_1$ is smooth of dimension $d_1$, the fundamental class of $X_1$ gives an isomorphism $\DD_{X_1}\cong \kk_{X_1}[2d_1](d_1)$. This gives an identification 
\begin{equation}\label{eq:CorrHBM}
\Corr_C(\DD_{X_1},\DD_{X_2}[n]) \cong \hBM{2d_1-n}{C}(-d_1).
\end{equation}
In fact,
$\z\in\Corr_C(\DD_{X_1}, \DD_{X_2}[n])$ is the same datum as a map $\kk_{C}[2d_1](d_1) = p_1^* \kk_{X_1}[2d_1](d_1)\to p_2^! \DD_{X_2}[n] = \DD_C[n]$, or the same datum as a class $\z\in \hBM{2d_1-n}{C}(-d_1)$. We thus get a map
\begin{equation*}
    (-)_{\#}: \hBM{2d_1-*}{C}(-d_1)\to \Ext^*_S(\pi_{1*}\DD_{X_1}, \pi_{2*}\DD_{X_2}).
\end{equation*}

Given correspondences $C_1$ between $X_1$ and $X_2$, and $C_2$ between $X_2$ and $X_3$, we have the composition $C=C_2\times_{X_2}C_1$, as a correspondence between $X_1$ and $X_3$. For $\z_1\in\Corr^*_{C_1}(\DD_{X_1},\DD_{X_2})$ and $\Corr^{*}_{C_2}(\DD_{X_2},\DD_{X_3})$, one defines their composition with degrees added
\begin{equation*}
    \z=\z_2\circ\z_1\in \Corr^*_{C}(\DD_{X_1},\DD_{X_3}).
\end{equation*}
See \cite[A.2]{Yun}. When $C_1$ and $C_2$ are both target-proper, so is $C$, and we have (see \cite[\S5.2]{SGA5:FL}) 
\begin{equation*}
    (\z_2\circ\z_1)_{\#}=(\z_2)_\#\circ(\z_1)_\#\in \Ext^*_S(\pi_{1*}\DD_{X_1},\pi_{3*}\DD_{X_3}).
\end{equation*}

\subsubsection{Pullback} If we are given a map $b: S'\to S$, we can base change all spaces in \eqref{C corr} from $S$ to $S'$ to obtained a correspondence $C'=C\times_S S'$ between $X'_1=X_1\times_S S'$ and $X'_2=X_2\times_S S'$. We have a map of correspondences where both squares are Cartesian
\begin{equation*}
    \xymatrix{X'_2 \ar[d]^{f_2} & C'\ar[d]^{c} \ar[l]_-{p'_2}\ar[r]^-{p'_1} & X'_1\ar[d]^{f_1}\\
    X_2 & \ar[l]_-{p_2}  C \ar[r]^-{p_1} & X_1}
\end{equation*}

Then we have a pullback map
\begin{equation*}
    c^!: \Corr^*_{C}(\DD_{X_1}, \DD_{X_2})\to \Corr^*_{C'}(\DD_{X'_1}, \DD_{X'_2})
\end{equation*}
defined by sending $\z: p_1^*\DD_{X_1}\to \DD_C[n]$ to the composition
\begin{equation*}
    p'^*_1\DD_{X'_1}=p_1'^*f_1^!\DD_{X_1}\to c^!p_1^*\DD_{X_1}\xr{c^!\z} c^!\DD_C[n]=\DD_{C'}[n].
\end{equation*}
\trivial[h]{
The map $p_1'^*f_1^!\DD_{X_1}\to c^!p_1^*\DD_{X_1}$ is another type of base change. See \cite[Prop 3.1.9 (iii)]{KS}
}
Here the first map is obtained by adjunction from the proper base change isomorphism attached to the right Cartesian square above (see \cite[Proposition~3.1.9~(iii)]{KS}). A simple diagram chase shows that the following diagram is commutative
\begin{equation}\label{pullback coho corr}
    \xymatrix{\Corr^*_C(\DD_{X_1}, \DD_{X_2}) \ar[r]^{c^!}\ar[d]^{(-)_{\#}} & \Corr^*_{C'}(\DD_{X'_1}, \DD_{X'_2})\ar[d]^{(-)_{\#}}\\
    \Ext^*_S(\pi_{1*}\DD_{X_1}, \pi_{2*}\DD_{X_2})\ar[r]^-{b^!} & \Ext^*_{S'}(\pi'_{1*}\DD_{X'_1}, \pi'_{2*}\DD_{X'_2})}
\end{equation}
Here $b^!$ sends $\ph: \pi_{1*}\DD_{X_1}\to \pi_{2*}\DD_{X_2}[n]$ to the composition
\begin{equation*}
\pi'_{1*}\DD_{X'_1}=\pi'_{1*}f_1^!\DD_{X_1}\cong b^!\pi_{1*}\DD_{X_1}\to b^!\pi_{2*}\DD_{X_2}[n]\cong \pi'_{2*}f_2^!\DD_{X_2}[n]=\pi'_{1*}\DD_{X'_1}[n].
\end{equation*}

\subsubsection{}
Consider the correspondence over $S=[\cN/H]$ 
\begin{equation}\label{corr P1}
    \xymatrix{[\pt/H] & \ar[l]_-{p}\ar[r]^-{z}[\bP^1/H] & [\wt\cN/H]=[T^*\bP^1/H]}
\end{equation}
where $z$ is the inclusion of the zero section, $\pt$ to $\cN$ is the inclusion of the zero element, and its transpose
\begin{equation}\label{P1 transp}
    \xymatrix{[\wt\cN/H]=[T^*\bP^1/H] & \ar[l]_-{z}\ar[r]^-{p}[\bP^1/H] & [\pt/H]}
\end{equation}
Since $[\wt\cN/H]$ is smooth of dimension $-1$, \eqref{eq:CorrHBM} gives 
\begin{equation*}
    \Corr_{[\bP^1/H]}(\DD_{[\wt\cN/H]}, \DD_{[\pt/H]}[2](1))\cong \hBM{-4}{[\bP^1/H]}(-2).
\end{equation*}
We take $\z=[\bP^1/H]\in \hBM{-4}{[\bP^1/H]}(-2)$ to be the fundamental class. Recall the projection $\nu: [\wt\cN/H]\to [\cN/H]$, and let $a: [\pt/H]\to [\cN/H]$ be the inclusion of the zero orbit. Then 
\begin{equation*}
    \z_{\#}: \nu_*\DD_{[\wt\cN/H]}\to a_*\DD_{[\pt/H]}[2](1).
\end{equation*}
The same class $[\bP^1/H]$ but viewed as a cohomological correspondence supported on the transpose diagram \eqref{P1 transp} defines $\z'\in \Corr_{[\bP^1/H]}(\DD_{[\pt/H]}[2](1), \DD_{[\wt\cN/H]})$ and
\begin{equation*}
    \z'_{\#}: a_*\DD_{[\pt/H]}[2](1)\to \nu_*\DD_{[\wt\cN/H]}.
\end{equation*}

\begin{lemma}\label{l:sl2 Spr}
    The Springer action of $s-1$ on $\nu_*\DD_{[\wt\cN/H]}$ is the composition $\z'_{\#} \circ \z_{\#}$. 
\end{lemma}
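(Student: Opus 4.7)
\emph{Plan.} The plan is to verify the claimed identity of endomorphisms of $\nu_*\DD_{[\wt\cN/H]}$ summand-by-summand in its BBD decomposition, reducing the nontrivial content to the self-intersection of the zero section $\bP^1\hookrightarrow T^*\bP^1$. Since $\cN\cong\bA^2/\{\pm 1\}$ is rationally smooth, BBD combined with proper base change at $0$ (which gives stalk $H^*(\bP^1)[4](2)$ via $\tilde{a}^*\DD_{[\wt\cN/H]}\cong \DD_{[\bP^1/H]}[2](1)$) yields a decomposition
\[
\nu_*\DD_{[\wt\cN/H]}\;\cong\;\DD_{[\cN/H]}\;\oplus\;a_*\DD_{[\pt/H]}[2](1).
\]
By Convention~\ref{con:Springer}, $s-1$ acts as $0$ on the first summand and as $-2\cdot\id$ on the second, so it suffices to show that $\z'_\#\circ\z_\#$ has the same two effects.

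The vanishing on $\DD_{[\cN/H]}$ is essentially automatic: the composition $\z'_\#\circ\z_\#$ factors through $a_*\DD_{[\pt/H]}[2](1)$, and $\Hom_{D([\cN/H])}(\DD_{[\cN/H]},\,a_*\DD_{[\pt/H]}[2](1))=0$ by adjunction, since after applying $a^*$ one sees a Hom between complexes concentrated in incompatible cohomological degrees. On the skyscraper summand, $\z'_\#\circ\z_\#$ is a scalar $\lambda\cdot\id$, and the main task is to show $\lambda=-2$. Using the Gysin identifications $z^*\DD_{[\wt\cN/H]}\cong \DD_{[\bP^1/H]}[2](1)\cong p^!\DD_{[\pt/H]}[2](1)$ for the zero section of a line bundle, both $\z$ and $\z'$ correspond under \eqref{eq:CorrHBM} to the identity endomorphism of $\DD_{[\bP^1/H]}[2](1)$. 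Unwinding $(-)_\#$, the scalar $\lambda$ is realized as the composite
\[
\DD_{[\pt/H]}[2](1)\xrightarrow{\mathrm{unit}} p_*p^*\DD_{[\pt/H]}[2](1)\xrightarrow{\sim} p_*z^!\DD_{[\wt\cN/H]}\xrightarrow{\mathrm{counit}}\DD_{[\pt/H]}[2](1),
\]
i.e., the self-Euler class of the zero section $\bP^1\hookrightarrow T^*\bP^1$; by the Euler class formula this equals $\deg c_1(T^*\bP^1)=-2$.

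The main obstacle will be the careful unwinding in the scalar computation, particularly tracking shifts and Tate twists through the unit/counit maps and matching the resulting endomorphism with the classical self-intersection number. A cleaner sanity check (and perhaps a cleaner proof altogether) is to verify the identity directly after taking global sections and translating through \eqref{eq:CorrHBM}: the generator $[\wt\cN]\in\hBM{4}{\wt\cN}$ of the constant-sheaf contribution satisfies $\pi_{0*}\,i^![\wt\cN]=\pi_{0*}[\bP^1]=0$, while the generator $[\bP^1]\in\hBM{2}{\wt\cN}$ of the skyscraper contribution is mapped by \eqref{Hotta} to $i_*\pi_0^!\pi_{0*}(c_1(T^*\bP^1)\cap[\bP^1])=-2[\bP^1]$. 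These two values match the Springer action of $s-1$ on each summand. Since $\nu_*\DD_{[\wt\cN/H]}$ is semisimple with two non-isomorphic simple summands (hence its endomorphism algebra is two-dimensional), agreement on global sections forces agreement as endomorphisms of $\nu_*\DD_{[\wt\cN/H]}$, completing the proof.
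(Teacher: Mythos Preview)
Your proof is correct, and your second approach (reduce to $\hBM{*}{\wt\cN}$ via injectivity of $\End(\nu_*\DD_{[\wt\cN/H]})\to\End(\hBM{*}{\wt\cN})$, then compute on the generators $[\wt\cN]$ and $[\bP^1]$ using the self-intersection $[\bP^1]^2=-2$ in $T^*\bP^1$) is exactly the paper's proof; your first summand-by-summand approach in $D([\cN/H])$ is a valid reorganization of the same computation. One expository caution: citing \eqref{Hotta} here for the action of $\z'_{\#}\circ\z_{\#}$ looks circular, since Proposition~\ref{p:Hotta} is precisely what this lemma is invoked to prove---you should instead justify $\z_{\#}=\pi_{0*}i^!$ and $\z'_{\#}=i_*\pi_0^!$ directly from the pull--push description of $(-)_{\#}$ for the fundamental-class correspondence (this is what your ``translating through \eqref{eq:CorrHBM}'' amounts to, and is effectively what the paper does via Poincar\'e duality).
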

\begin{proof}
    The map induced by forgetting equivariance and taking global sections
    \begin{equation*}
        \End(\nu_*\DD_{[\wt\cN/H]})\to \End(\nu_*\DD_{[\wt\cN]})\to \End(\hBM{*}{\wt\cN})
    \end{equation*}
    is injective: the left side is $\kk\oplus \kk$, with basis given by the  projectors to the constant and skyscraper summands of $\nu_*\DD_{[\wt\cN/H]}$, and it is easy to check that their induced maps on $\hBM{*}{\wt\cN}$ are linearly independent. Therefore it suffices to check that $s-1$ and $\z'_{\#} \circ \z_{\#}$ have the same effect on $\hBM{*}{\wt\cN}$ after taking global sections. 
   Under \Cref{con:Springer}, we known that $s$ acts on the top homology $\hBM{4}{\wt\cN}$ by $1$ and on $\hBM{2}{\wt\cN}=\kk [\bP^1]$ by $-1$. Thus it suffices to show that
    \begin{equation}\label{s-1 sl2}
        \mbox{$\z'_{\#}\circ \z_{\#}$ acts by $0$ on $\hBM{4}{[\wt\cN]}$ and by $-2$ on $\hBM{2}{\wt\cN}$.}
    \end{equation}
    By Poincar\'e duality,  $\z_{\#}: \hBM{i}{\wt\cN}\to \hBM{i-2}{\bP^1}(1)$ can be identified with the restriction map $\cohog{4-i}{\wt\cN}\to \cohog{4-i}{\bP^1}$ (up to Tate twists). From this we see $\z_{\#}$ is zero on $\hBM{4}{\wt\cN}$, and sends $[\bP^1]\in \cohog{2}{\wt\cN}(1)$ to $-2\in \cohog{2}{\bP^1}(1)$ because  the self-intersection number of $\bP^1$ in $\wt\cN=T^*\bP^1$ is $-2$. From this \eqref{s-1 sl2} follows easily.
\end{proof}

\subsubsection{Finish of the proof of  \Cref{p:Hotta}}
We view the correspondence \eqref{corr X0} as over $Y$, then it is obtained from the correspondence \eqref{corr P1} by base change along $f: Y\to [\cN/H]$. The same holds for their transposes. We have the pullbacks $f^!\z\in\Corr_{X_0}(\DD_X, \DD_{Y_0}[2](1))$, and $f^!\z'\in \Corr_{X_0}(\DD_{Y_0}[2](1), \DD_{X})$. By \Cref{l:sl2 Spr}, the action of $s-1$ on $\nu_*\DD_{[\wt\cN/H]}$ is given by $\z'_{\#}\circ \z_{\#}$. Therefore by the diagram \eqref{pullback coho corr}, the action of $s-1$ on $f_*\DD_{X}$ is given by the composition $(f^!\z')_{\#}\circ (f^!\z)_{\#}$.

On the other hand, letting $\wt a: Y_0\hookrightarrow Y$ be the inclusion, the composition \eqref{Hotta} is the effect on global sections of the composition
\begin{equation*}
    \pi_*\DD_X\xr{i^!}(\wt a\pi_{0})_{*}\DD_{X_0}[2](1)\xr{\pi_{0*}}\wt a_*\DD_{Y_0}[2](1)\xr{\pi_0^!}(\wt a\pi_{0})_{*}\DD_{X_0}\xr{i_*}\pi_{*}\DD_{X}.
\end{equation*}
Here $i^!$ is defined by applying $f_*$ to the map \eqref{pre Gysin}. 

It remains to show that 
\begin{eqnarray}
\label{fz}    (f^!\z)_{\#}=\pi_{0*}\circ i^!: \pi_*\DD_X\to \wt a_*\DD_{Y_0}[2](1)\\
\label{fz'}    (f^!\z')_{\#}=i_{*}\circ \pi_{0}^!: \wt a_*\DD_{Y_0}[2](1)\to \pi_*\DD_X. 
\end{eqnarray}
Both identities follow from the definitions by standard diagram-chasing. 

\bibliographystyle{alpha}
\bibliography{Thetacell}


\clearpage
\ 
\end{document}